\documentclass[leqno,a4paper]{scrbook}
\usepackage{a4wide}

\usepackage{amssymb,amsfonts,amsthm,mathrsfs}

\usepackage[sumlimits,intlimits,namelimits]{amsmath}

\usepackage{graphicx}

\usepackage[automark]{scrpage2}

\usepackage[scaled=0.90]{helvet}



\usepackage{makeidx}

\renewcommand{\theequation}{\thesection.\arabic{equation}}

\parindent0em
\parskip6pt

\newcommand{\unfavsupp}[2]{
	{\operatorname{U}}^{\mathrm #1}(#2) 
}

\newcommand{\ksupp}[1] {
	\ensuremath{\operatorname{K}^{#1}}
}

\newcommand{\slsupp}[1] {
	\ensuremath{\operatorname{S}^{#1}}
}

\newcommand{\domainset}[1]{
\ensuremath{\operatorname{D}^{#1}}
}

\newcommand{\m}[1]{
\ensuremath{\,{\mathrm d}{#1}}
}

\newcommand{\gnormalb}[1]{
\ensuremath{{\mathrm N}^{#1}}
}

\newcommand{\ran}[1]{
	 {\operatorname{Ran}}(#1)
}

\newcommand{\ch}[1]{
	 {\operatorname{ch}}(#1)
}

\newcommand{\ech}[1]{
	 {\operatorname{ess\,ch}}(#1)
}

\newcommand{\csub}{
 \Subset
}

\newcommand{\ode}{
\dot{\xi}(s) = a(s, \xi(s))\ \ a.e., \quad \xi(t)=x
}

\newcommand{\colode}{
\dot{\xi}(s) = A(s, \xi(s))\ \, \quad \xi(\widetilde{t})=\widetilde{x}
}

\newcommand{\odie}{
\dot{\xi}(s) \in A_{s, \xi(s)}\ \ a.e., \quad \xi(t)=x
}

\newcommand{\cp}[1]{
\ensuremath{ {\operatorname{CP}} \left(#1 \right)}
}

\newcommand{\colmod}[1]{
\ensuremath{\mathcal{E}_{M}\left(#1\right)}
}

\newcommand{\colneg}[1]{
\ensuremath{\mathcal{N}\left(#1\right)}
}

\newcommand{\wf}[1]{
\ensuremath{\operatorname{WF}(#1)}
}

\newcommand{\cf}[1]{
\ensuremath{\mathcal{G}\left(#1\right)}
}

\newcommand{\colmap}[2]{
\ensuremath{\mathcal{G}\!\left[#1,#2\right]}
}

\newcommand{\colcomp}[1]{
\ensuremath{\mathcal{G}_c\left(#1\right)}
}

\newcommand{\four}[1]{
\mathcal{F}(#1)
}

\newcommand{\ggraph}[1]{
 \ensuremath{ \operatorname{Graph}(#1) }
}

\newcommand{\graph}[1]{
 \ensuremath{ \operatorname{graph}(#1) }
}

\newcommand{\esup}{ \operatorname{ess\,sup} }

\newcommand{\AC}{\ensuremath{AC}}
\newcommand{\Con}{\ensuremath{C}}

\newcommand{\D}{\ensuremath{{\cal D}}}

\newcommand{\loc}{\ensuremath{\text{loc}}}


\newcommand{\mb}[1]{\ensuremath{\mathbb{#1}}}
\newcommand{\N}{\mb{N}}

\newcommand{\R}{\mb{R}}
\newcommand{\C}{\mb{C}}


\newcommand{\G}{\ensuremath{{\cal G}}}




\newcommand{\singsupp}[1]{\ensuremath{\mathrm{sing\,supp}(#1)}}

\renewcommand{\d}{\ensuremath{\partial}}
\newcommand{\diff}[1]{\frac{d}{d#1}}

\renewcommand{\div}{\ensuremath{\mbox{\rm div}\,}}


\newfont{\bl}{msbm10 scaled \magstep2}

\newtheorem{theorem}{Theorem}[section]
\newtheorem{lemma}[theorem]{Lemma}
\newtheorem{proposition}[theorem]{Proposition}
\newtheorem{definition}[theorem]{Definition}
\newtheorem{corollary}[theorem]{Corollary}

\theoremstyle{definition}
\newtheorem{remark}[theorem]{Remark}
\newtheorem{example}[theorem]{Example}

\newcommand{\beq}{\begin{equation}}
\newcommand{\eeq}{\end{equation}}



\newcommand{\col}{\colon}


\newcommand{\dis}[2]{\langle #1 , #2 \rangle}
\newcommand{\inp}[2]{\langle #1 | #2 \rangle}  
\newcommand{\notmid}{\mid\kern-0.5em\not\kern0.5em}

\newcommand{\norm}[2]{{\| #1 \|}_{#2}}

\newcommand{\Norm}[1]{\norm{#1}{}}

\newcommand{\al}{\alpha}

\newcommand{\eps}{\varepsilon}

\newcommand{\vphi}{\varphi}



\newcommand{\supp}{\mathop{\mathrm{supp}}}

\renewcommand{\Re}{\ensuremath{\mathop{\mathrm{Re}}}}

\newcommand{\ovl}[1]{\overline{#1}}

\makeindex 

\begin{document}
\pagestyle{scrheadings}

\frontmatter




\begin{titlepage}
\vspace*{-2cm}  
\begin{flushright}
    \includegraphics{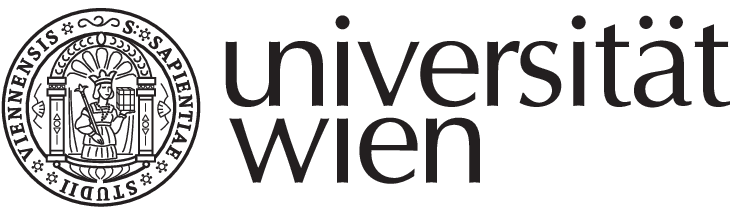}
\end{flushright}
\vspace{1cm}

\begin{center}  
    \Huge{\textbf{\textsf{\MakeUppercase{
        Dissertation
    }}}}
    \vspace{2cm}

    \vspace{.1cm}

    \LARGE{\textsf{  
        Generalized regularity and solution concepts for differential equations
    }}
    \vspace{3cm}

    \large{\textsf{  
        Verfasser
    }}

    \Large{\textsf{  
        Simon Haller
    }}
    \vspace{3cm}

    \large{\textsf{
        angestrebter akademischer Grad  
    }}

    \Large{\textsf{  
        Doktor der Naturwissenschaften (Dr.rer.nat)
    }}
\end{center}
\vspace{2cm}

\noindent\textsf{Wien, im Juni 2008}  
\vfill

\noindent\begin{tabular}{@{}ll}
\textsf{Studienkennzahl lt.\ Studienblatt:}
&
\textsf{A 091 405}  
\\
\textsf{Dissertationsgebiet lt. Studienblatt:}
&
\textsf{Mathematik}  
\\
\textsf{Betreuer:}
&
\textsf{Dr. G\"unther H\"ormann}  
\end{tabular}

\end{titlepage}

\chapter{Preface}
The present doctoral thesis is the result of my research work\footnote{supported by FWF-grant Y237-N13} in the DIANA research group
under supervision of G\"unther H\"ormann.

As the title ``Generalized regularity and solution concepts for differential equations'' suggests, the main topic of my thesis
is the investigation of generalized solution concepts for differential equations, in particular 
first order hyperbolic partial differential equations with real-valued, non-smooth coefficients and their characteristic system of ordinary
differential equations.

In the case of smooth coefficients classical distribution theory offers a convenient framework for solving such partial differential equations.
If the coefficients are non-smooth (or even discontinuous) the well-known fact that the multiplication of distributions cannot be carried out unrestrictedly
limits the scope of distributional techniques.

Note that the product of two distributions can formally be written as the pullback of a tensor product of the two factors by the diagonal map $\delta:x \mapsto (x,x)$,
so another viewpoint of this limitation is that the pullback of a general distribution by a $C^{\infty}$-function, 
as defined in \cite[Theorem 8.2.4]{Hoermander:V1}, exists only if the normal bundle of the $C^{\infty}$-function
intersected with the wave front set of the distribution is empty.

The relation between problems arising from non-smooth pullbacks and multiplication of distributions, is emphasized by the fact that the pullback of the initial condition by the characteristic backward flow is a solution candidate for a homogeneous partial differential equation of first order.

Generalized functions in the sense of Colombeau extend distribution theory in a way that it becomes a differential algebra with 
a product that preserves the classical product $\cdot: C^{\infty} \times C^{\infty} \rightarrow C^{\infty}$.
In addition \cite[Proposition 1.2.8]{GKOS:01} states that the Colombeau algebra of generalized functions allows the definition of
a pullback by any c-bounded generalized function.

So it seems that the Colombeau algebra of generalized functions offers a suitable framework for dealing with the various problems caused by multiplication and pullbacks.

By means of regularization it is easy to carry over any partial differential equation with non-smooth coefficient from distribution theory to Colombeau theory.
In Colombeau theory there have been developed existence results (f.e. \cite[Theorem 1.5.2]{GKOS:01},\cite{LO:91}) that yield solutions for ordinary and partial differential equations beyond the scope of
classical approaches. Nevertheless this comes at the price of sacrificing regularity (in general a Colombeau solution may even lack a distributional shadow).
It is prevailing in the Colombeau setting that the question of mere existence of solutions is much easier to answer than to determine their regularity properties (i.e. if a distributional shadow exists and how regular it is).

This was my motivation for doing a microlocal analysis of the pullback of c-bounded Colombeau generalized functions, since the solution of the (homogeneous) partial differential equation can be written as a pullback of the initial condition by the characteristic backward flow. The results of these investigations have been collected in an article \cite{HalSim:07} and are presented in Chapter 5 and Appendix B in a slightly adapted form. The first section of Chapter 2 is also based on this article.

A further main topic of my thesis is to compare the Colombeau techniques for solving ordinary and partial differential equations to other generalized solution concepts, which has led to a joint article \cite{HalHoer:08}  with Dr. G\"unther H\"ormann. Chapter 4 contains a slightly modified version of this article. 

Chapter 3 contains generalized solution concepts for ordinary differential equations, which are of relevance when studying
the characteristic systems of ordinary differential equations.

Finally I want to point out the important role the generalized graph, as introduced in Chapter 2, plays throughout my thesis:
In Chapter 3 it enables us to give a precise characterization of Colombeau generalized solutions of ordinary differential equations and its relation
to other concepts as the Filippov generalized solutions. In Chapter 5 it serves as a technical tool, playing a crucial role in many of the proofs. 
Nevertheless I believe there are still further applications for the concept of the generalized graph that are yet to be discovered.

I would like to thank Dr. G\"unther H\"ormann for his excellent support.
\newpage

\begin{center}
{\Large{\textbf{to Petra}}}
 \end{center}

\tableofcontents

\mainmatter

\setcounter{section}{-1}

\chapter{Introduction}

\section{Basic notation and overview}

\paragraph{Notation for sets:}
By
$$
B_{\rho}(x) := \{y \in \R^n \mid |x-y| \le \rho \}
$$
we denote the closed ball around $x \in \R^n$ with radius $\rho >0$. If $K$ is a compact subset of $\R^n$ we define
$$
B_{\rho}(K) := \{y \in \R^n \mid \exists x\in K: |x-y| \le \rho \}.
$$
We write $S^{n-1}$ for the unit sphere $\{x \in R^n \mid |x| =1 \}$.

Let $\Omega$ denote an open subset of $\R^n$.
We use the notation $K \Subset \Omega$, if $K$ is a compact subset of $\Omega$.
The Lebesgue $\sigma$-algebra of $\Omega$ is denoted by $\mathcal{L}(\Omega)$ and the Borel $\sigma$-algebra of $\Omega$
is denoted by  $\mathcal{B}(\Omega)$.

\paragraph{Nets, subnets and clusterpoints:}

Let $K \csub \Omega$. If $(\lambda_{\varepsilon})_{\varepsilon \in ]0,1]}$ is a net with $\lambda_{\varepsilon} \in K \csub \R^n$, then we denote 
\begin{equation} \label{set_of_clusterpoints} \index{set of clusterpoints}
\cp{(\lambda_{\varepsilon})_{\varepsilon}}
\end{equation}
as the \emph{set of clusterpoints} of the net $(\lambda_{\varepsilon})_{\varepsilon}$. For sake of brevity we skip the index set $]0,1]$ in our notation. 

A \emph{subnet} of $(\lambda_{\varepsilon})_{\varepsilon}$ is defined by \index{subnet}
$$
(\lambda_{\tau(\varepsilon)})_{\varepsilon}
$$
for some map $\tau$ in the set 
\begin{equation} \label{big_tau}
\mathcal{T} := \left\{ \tau: ]0,1] \rightarrow ]0,1] \mid \lim_{\varepsilon \rightarrow 0} \tau(\varepsilon)=0 \right\}.
\end{equation}
This definition of a subnet is due to \cite[p.70]{kelley:55}.
It holds that
$\lambda \in \cp{(\lambda_{\varepsilon})_{\varepsilon}}$ if and only if there exists some 
$\tau \in \mathcal{T}$ with $\lambda = \lim_{\varepsilon\rightarrow 0} \lambda_{\tau(\varepsilon)}$.

Sometimes we will consider the subsequence of a net $(\lambda_{\varepsilon})_{\varepsilon}$, 
i.e. a sequence $(\lambda_{\varepsilon_j})_{j\in \N}$
where $\varepsilon_j \in ]0,1]$ tending to zero for $j\rightarrow \infty$.

According to \cite[Definition 2.2]{HO:04} a net $(\lambda_{\varepsilon})_{\varepsilon}$ in $\mathbb{R}$ 
is said to be \emph{slow scaled}\index{slow scaled}, if
\begin{eqnarray*}
\exists \varepsilon'\in]0,1]: \forall t \ge 0, \exists C_t>0 \rm{\ such\ that\ } |\lambda_{\varepsilon}|^t \le C_t \varepsilon^{-1}, \rm{\ for\ all\ } \varepsilon \in ]0,  \varepsilon']
\end{eqnarray*}
holds.

\paragraph{Convex sets and functions:} \label{convex_intro} 
A set $C \subseteq \R^n$ is called convex, if for all $x_1,x_2\in C$ it follows 
that $\lambda_1 x_1 +\lambda_2 x_2 \in C$, when $\lambda_1,\lambda_2 \ge 0$ and $\lambda_1+ \lambda_2 =1$.

Let $A$ be a closed subset of $\R^n$, then the set
$$
\ch{A} := \left\{ x \in \R^n \mid x= \sum_{k=1}^N \lambda_k a_k : a_k \in A, \lambda_k \ge 0, \sum_{k=1}^N \lambda_k = 1, N \in \N \right\} 
$$
is called \emph{convex hull} of $A$ \index{convex hull}. It is the smallest convex subset containing $A$.

A function $f: C \rightarrow \R \cup \{ +\infty\}$ is called convex, if 
$$
f(\lambda_1 x_1 +\lambda_2 x_2 ) \le \lambda_1 f(x_1) + \lambda_2 f(x_2),
$$
where $x_1,x_2 \in C$ and $\lambda_1,\lambda_2 \ge 0$ and $\lambda_1+ \lambda_2 =1$.

A function 
$h: \R^n \rightarrow \R \cup \{ +\infty\}$ with $h \not \equiv +\infty$,
which is convex, positively homogeneous and lower semi-continuous is called a \emph{convex supporting function}.\index{convex supporting function}
Due to \cite[Theorem 2.2.8]{Hoermander:94} or \cite[Theorem 13.2]{Rockafellar:70} it 
uniquely defines a non-empty, closed and convex set by
\begin{equation} \label{convex_set_by_supporting_function}
C := \{x \in \R^n \mid \forall w\in\R^n : \langle x,w \rangle \le h(w) \}
\end{equation}
and it holds that
$$
h(w) = \sup_{x \in C} \langle x, w \rangle,
$$
so there is a one-to-one correspondence between
the non-empty, convex and closed subsets of $\R^n$ and their convex supporting functions.

Some important examples:
The supporting function of the space $\R^n$ is defined by 
$h_1(0)=0$ and $h_1(w)=+\infty$ for $w\not\in 0$. 
If $x_0\in\R^n$, the supporting function of
the point set $\{x_0\}$ is defined by $h_2(w) := \langle x_0, w\rangle$. The closed unit ball $B_{\rho}(0)$
gives rise to the supporting function $h_3 (w) := \rho |w|$.

If $C_1$ and $C_2$ are convex subsets of $\R^n$ and $h_1, h_2$ their convex supporting functions,
the set 
$$
C_1 + C_2 =\{z \in \R^n \mid \exists x\in C_1, \exists y\in C_2 : z= x+y \}
$$ 
is convex and its supporting is defined by $h(w) := h_1(w) + h_2(w)$. It follows by the examples above, that the
closed ball around $x_0$ with radius $\rho$ has the supporting function $w \mapsto \langle x_0, w\rangle +\rho |w|$.

If $C_1, C_2$ are convex, closed subsets of $\R^n$ with the convex supporting functions $h_1,h_2$, then we have $C_1 \subseteq C_2$ if and only if 
$h_1(w) \le h_2(w)$ for all $w\in \R^n$. For a proof see \cite[Corollary 13.1.1]{Rockafellar:70}.

Let $A$ be some (non-empty) closed subset of $\R^n$. 
Then the function defined by $h(w) := \sup_{x \in A} \langle x, w \rangle$
is the supporting function of the convex hull ${\rm ch}{(A)}$, thus
$$
{\rm ch}{(A)} =  \{x \in \R^n \mid \forall w\in\R^n : \langle x,w \rangle \le h(w) \}.
$$

At this point we refer to the books \cite{Hoermander:94} or \cite{Rockafellar:70} for a detailed presentation of the theory of convex sets. 
See also the brief presentation in the beginning of Chapter 4.3 in \cite{Hoermander:V1}.

\paragraph{Function spaces and distributions:}
Let $\Omega$ denote an open subset of $\R^n$.

The letter $T$ will always be used for real number such that $T > 0$.
We often write $\Omega_T$ to mean $]0,T[ \times \R^n$ with closure $\ovl{\Omega_T} = [0,T] \times \R^n$.
The space $C^\infty(\ovl{\Omega})$ consists of smooth functions on $\Omega$ all whose derivatives have continuous extensions 
to $\ovl{\Omega}$. For any $s \in \R$ and $1 \leq p \leq \infty$ we have the Sobolev space $W^{s,p}(\R^n)$ 
(such that $W^{0,p} = L^p$), in particular $H^s(\R^n) = W^{s,2}(\R^n)$. 
Our notation for $H^s$-norms and inner products will be ${\Norm{.}}_s$ and $\dis{.}{.}_s$, 
in particular, this reads ${\Norm{.}}_0$ and $\dis{.}{.}_0$ for the standard $L^2$ notions.

We will also make use of the variants of Sobolev and $L^p$ spaces of functions on an interval 
$J \subseteq \R$ with values in a Banach space $E$, for which we will employ a notation as in $L^1(J;E)$, for example. 
(For a compendious treatment of the basic constructions we refer to \cite[Sections 24 and 39]{Treves:75}.) 
Furthermore, as usually the subscript 'loc' with such spaces will mean that upon multiplication by a smooth cutoff we have 
elements in the standard space. We occasionally write $AC(J;E)$ instead of $W^{1,1}_{\rm{loc}}(J;E)$ to emphasize the 
property of absolute continuity. 

The subspace of distributions of order $k$ on $\Omega$ ($k \in \N$, $k \geq 0$) will be denoted by $\D'^k(\Omega)$. 
We identify $\D'^0(\Omega)$ with the space of complex Radon measures $\mu$ on $\Omega$, i.e.,
 $\mu = \nu_+ - \nu_- + i(\eta_+ - \eta_-)$, where $\nu_\pm$ and $\eta_\pm$ are positive Radon measures on $\Omega$, i.e.,
 locally finite (regular) Borel measures. 

As an alternative regularity scale with real parameter $s$ we will often refer to the H\"older-Zygmund classes $C_\ast^s(\R^n)$  (cf.\ \cite[Section 8.6]{Hoermander:97}). In case $0 < s < 1$ the corresponding space comprises the continuous bounded functions $u$ such that there is $C > 0$ with the property that for all $x \neq y$ in $\R^n$ we have
$$
   \frac{|u(x) - u(y)|}{|x - y|^s} \leq C.
$$

\paragraph{Notation and basic outline of the theory of Colombeau algebras:}
Let us recall the basic construction: A Colomb\-eau 
(generalized) function on some open set $\Omega \subseteq \mathbb{R}^n$ is defined as equivalence class $(u_{\varepsilon})_{\varepsilon}$ of
nets of smooth functions $u_{\varepsilon} \in C^{\infty}(\Omega)$ and $\varepsilon\in]0,1]$ subjected to some asymptotic norm conditions (with respect to $\varepsilon$) for their derivatives on compact sets. We have the following:
\begin{enumerate}
\item \label{labelfirst} Moderate nets $\colmod{\Omega}$: $(u_{\varepsilon})_{\varepsilon}\in C^{\infty}(\Omega)^{]0,1]}$ such that, for all $K\csub\Omega$ and
$\alpha \in \mathbb{N}^n$, there exists $p\in \mathbb{R}$ such that
\begin{equation} \label{modest}
\sup_{x\in K} \|\partial^{\alpha} u_{\varepsilon}(x)\| = O(\varepsilon^{-p})\ \rm{as} \ \varepsilon\rightarrow 0.
\end{equation}
\item Negligible nets $\colneg{\Omega}$: $(u_{\varepsilon})_{\varepsilon}\in C^{\infty}(\Omega)^{]0,1]}$ such that, for all $K\csub\Omega$ and
all $q \in \mathbb{R}$ an estimate
\begin{equation*}
\sup_{x\in K} \|u_{\varepsilon}(x)\| = O(\varepsilon^{q})\ \rm{as} \ \varepsilon\rightarrow 0.
\end{equation*}
holds.
\item \label{labellast} $\colmod{\Omega}$ is a differential algebra with operations defined at fixed $\varepsilon$, $\colneg{\Omega}$ is an ideal
and  $\G{(\Omega)}:=\colmod{\Omega}/ \colneg{\Omega}$ is the special Colombeau algebra. 
\item If we replace the nets of smooth functions by nets of real numbers in (i)-(iii) we obtain the ring of generalized numbers $\widetilde{\mathbb{R}}$.
\item There are embeddings, $\sigma: C^{\infty}(\Omega) \hookrightarrow \G{(\Omega)}$ as subalgebra and \\ $\iota: \mathcal{D}'(\Omega) \hookrightarrow \G{(\Omega)}$ as linear space, commuting with partial derivatives.
\item $\Omega \rightarrow \G{(\Omega)}$ is a fine sheaf and $\colcomp{\Omega}$ denotes the subalgebra of elements with compact support; using a cut-off in a neighborhood of the support, one can always obtain representing nets with supports contained in a joint compact set.
\end{enumerate}

\subparagraph{Regular Colombeau functions:} The subalgebra $\mathcal{G}^{\infty}(\Omega)$ of regular Colomb\-eau (generalized) functions 
consists of those elements in $\G{(\Omega)}$ possessing representatives
such that the estimate (\ref{modest}) holds for a certain $m$ uniformly over all $\alpha \in \mathbb{N}^n$.
\subparagraph{Rapidly decreasing Colombeau functions:}
A Colombeau function in $\G{(\mathbb{R}^n)}$ is called \emph{rapidly decreasing}\index{rapidly decreasing} in the directions $\Gamma \subseteq S^{n-1}$, 
if it has a representative with the property that there exists a $N\in\mathbb{N}_0$, such that
for all $p \in \mathbb{N}_0$
\begin{equation*}
\sup_{(\lambda, \xi_1) \in \mathbb{R}^{+} \times \Gamma } (1+\lambda^2)^{p/2}|u_{\varepsilon}(\lambda \xi_1)| = O(\varepsilon^{-N}) \ \rm{as} \ \varepsilon\rightarrow 0
\end{equation*}
holds. 
\subparagraph{Fourier transform of a compactly supported Colombeau function:}
The \emph{Fourier transform}\index{Fourier transform} of $u \in \colcomp{\Omega}$ is the Colombeau function in $\G{(\mathbb{R}^n)}$ defined by 
\begin{equation*}
\mathcal{F}(u) := \left( \int_{\Omega} u_{\varepsilon}(x) e^{-i \langle x, \cdot \rangle} \m{x}\right)_{\varepsilon} + \colneg{\mathbb{R}^n},
\end{equation*}
where $(u_{\varepsilon})_{\varepsilon}$ is a representative of $u$ with joint compact support in $\Omega$.
\subparagraph{$\mathcal{G}^{\infty}$-wavefront set of a Colombeau function:}
If $v \in \colcomp{\Omega}$, we define the set $\Sigma(v) \subset S^{n-1}$ to be the complement of those points having open neighborhoods $\Gamma\subseteq S^{n-1}$ such that $\four{v}$ is rapidly decreasing in the directions $\Gamma$. $\Sigma(v)$ is a closed subset of $S^{n-1}$.
Now let $u \in \G{(\Omega)}$. Then we define the cone of irregular directions at $x_0$ by
\begin{eqnarray*}
\Sigma_{x_0}(u) = \bigcap_{\varphi \in C^{\infty}_c(\Omega), \varphi(x_0)\ne 0} \Sigma(\varphi \cdot u ). 
\end{eqnarray*}
Then the \emph{(generalized) wave front set}\index{wave front set, generalized} of $u$ is the set
\begin{eqnarray*}
\wf{u}:=\left\{ (x,\xi)\in \Omega \times S^{n-1} | \xi \in \Sigma_x(u) \right\}
\end{eqnarray*}
We denote the projection onto the first component by
\begin{eqnarray*}
\singsupp{u}:= \left\{ x\in \Omega | (x,\xi) \in WF(u)\right\}
\end{eqnarray*}
and call this \emph{(generalized) singular support}\index{singular support, generalized} of $u$.

\subparagraph{C-bounded Colombeau maps:}\label{colmap_cbounded}
Let $\Omega_1 \subseteq \R^n$ and $\Omega_2 \subseteq \R^m$ be open sets.
By $\colmap{\Omega_1}{\Omega_2}$ we denote the generalized maps $F\in\mathcal{G}(\Omega_1)^m$ with the property that 
$F$ is c-bounded on $\Omega_1$ (into $\Omega_2$), 
i.e. it possesses a representative $(F_{\varepsilon})_{\varepsilon}$ with $F_{\varepsilon} \in C^{\infty}(\Omega_1, \Omega_2)$ 
satisfying the condition
\begin{eqnarray*}
\forall K \csub \Omega_1, \exists K' \csub \Omega_2 : \textrm{such that }  F_{\varepsilon}(K ) \subseteq K' \textrm{ for all }\varepsilon \in ]0, 1].
\end{eqnarray*}

Our standard references for
the foundations and some applications of Colombeau's nonlinear theory of generalized functions 
are \cite{Colombeau:84,Colombeau:85,O:92,GKOS:01}. 
We will employ the so-called special variant of Colombeau algebras, denoted by $\G^s$ in \cite{GKOS:01},
although here we shall simply use the letter $\G$ instead.

For microlocal analysis in context of Colombeau theory we refer to \cite{DPS:98,NPS:98,Hoermann:99,HK:01,GH:05}.

\section{Set-valued Maps} \label{set_valued_map_section}

Here we introduce the notion of a \emph{set-valued map} (also referred to as \emph{correspondence}) and a few of their properties. 
This concept plays an important role in the field of mathematical economics and game theory. 
A main tool here is the generalization of Brower's theorem by Kakutani 
(see \cite{Kakutani:41}) for set-valued maps.
Nevertheless we are not intending to give a complete overview of the topic and restrict 
ourselves to a few basic results, which will be needed in the existence proof 
for ordinary differential inclusions.

The interested reader is referred to \cite{KleinThompson:84} and  \cite{AubinFrankowska:90}.

Let $\Omega, \Omega_1$ be arbitrary non-empty subsets of $\R^n$ and let $\Omega_2$ be a non-empty subset of $\R^m$ .

Throughout this section we define the projection maps of the product space of $\Omega_1 \times \Omega_2$
onto $\Omega_1$ resp. $\Omega_2$ by
$$
\pi_1: \Omega_1 \times \Omega_2 \rightarrow \Omega_1,  \  (x,y) \mapsto x \quad \text{resp.} \quad
\pi_2: \Omega_1 \times \Omega_2 \rightarrow \Omega_2,  \ (x,y) \mapsto y.
$$

\begin{definition} 
We introduce the following notions:
\begin{enumerate}
\item $\mathcal{P}_0(\Omega)$ denotes the set of non-empty subsets of $\Omega$,
\item $\mathcal{F}_0(\Omega)$ denotes the set of non-empty, closed subsets of $\Omega$, and
\item $\mathcal{K}_0(\Omega)$ denotes the set of non-empty, closed and convex subsets of $\Omega$.
\end{enumerate}

Note that for $M,N \in \mathcal{P}_0(\Omega)$ we have 
$$
\lambda \cdot M := \{y \in \Omega \mid \exists x\in M: y = \lambda \cdot x \} \in \mathcal{P}_0(\Omega)
$$ for $\lambda\in \R$, and
$$
M + N := \{ y\in \Omega \mid \exists x_1 \in M, x_2 \in N : y = x_1+ x_2 \} \in \mathcal{P}_0(\Omega).
$$
It follows that $\mathcal{P}_0(\Omega)$ is a linear space and $\mathcal{F}_0(\Omega)$ is a linear subspace of $\mathcal{P}_0(\Omega)$.
Furthermore $\mathcal{K}_0(\Omega)$ is a linear subspace of $\mathcal{F}_0(\Omega)$.

For any $M,N \in \mathcal{F}_0(\Omega)$ we put 
$$
\rho_{+}(M,N) := \inf{ \{ \rho \in [0, \infty] :  N \subseteq B_{\rho}(M) \}}, \quad \rho_{-}(M,N):= \rho_{+}(N,M),
$$
then we define the (extended) Hausdorff metric by
$$
d(M,N):= \max{(\rho_{+}(M,N),\rho_{-}(M,N))}.
$$ 
for all $M,N \in \mathcal{F}_0(\Omega)$. Note that in case of $\mathcal{P}_0(\Omega)$ the map $d:\mathcal{P}_0(\Omega)\times \mathcal{P}_0(\Omega)$ is no longer a metric, since
$d(B_1(0), B_1(0)^{\circ})=0$, but $B_1(0) \backslash B_1(0)^{\circ} = S^{n-1} \ne \emptyset$.

We put $|M| := d(M,\{0\}) = \sup_{x\in M} |x|$.
\end{definition}

\begin{proposition}
The linear space $(\mathcal{F}_0(\R^n), d)$ is a complete metric space. Its linear subspace 
$(\mathcal{K}_0(\R^n), d\mid_{\mathcal{K}_0(\R^n) \times \mathcal{K}_0(\R^n)} )$ is again complete.
\end{proposition}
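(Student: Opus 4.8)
The plan is to establish completeness of $(\mathcal{F}_0(\R^n), d)$ directly from the definition of the Hausdorff metric, and then obtain completeness of $(\mathcal{K}_0(\R^n), d)$ for free by showing that $\mathcal{K}_0(\R^n)$ is a closed subset of $\mathcal{F}_0(\R^n)$. So let $(M_k)_{k \in \N}$ be a Cauchy sequence in $\mathcal{F}_0(\R^n)$. First I would produce the candidate limit set: the natural choice is
$$
M := \bigcap_{j \in \N} \overline{\bigcup_{k \ge j} M_k},
$$
i.e.\ the set of all $x \in \R^n$ that are limits of sequences $x_k \in M_k$ (more precisely, limits along a subsequence, $x = \lim_l x_{k_l}$ with $x_{k_l} \in M_{k_l}$). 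One checks $M$ is closed by construction. One should also note that a Cauchy sequence in this metric is uniformly bounded: there is $R > 0$ with $|M_k| \le R$, hence $M_k \subseteq B_R(0)$ for all $k$, which in particular guarantees that $M$ is non-empty (any sequence $x_k \in M_k$ lies in the compact ball $B_R(0)$ and thus has a convergent subsequence), so $M \in \mathcal{F}_0(\R^n)$.

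Next I would show $d(M_k, M) \to 0$ by controlling $\rho_+(M, M_k)$ and $\rho_-(M, M_k)$ separately. Fix $\delta > 0$ and pick $N$ so that $d(M_k, M_l) \le \delta$ for all $k, l \ge N$. For the bound $\rho_-(M, M_k) = \rho_+(M_k, M) \le \delta$, i.e.\ $M_k \subseteq B_\delta(M)$: given $x \in M_k$ with $k \ge N$, use $M_k \subseteq B_\delta(M_l)$ for all $l \ge k$ to extract points $y_l \in M_l$ with $|x - y_l| \le \delta$; these lie in a compact ball, so a subsequence converges to some $y$, which belongs to $M$ by definition of $M$, and $|x - y| \le \delta$. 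For the bound $\rho_+(M, M_k) \le \delta$, i.e.\ $M \subseteq B_\delta(M_k)$: given $x \in M$, write $x = \lim_l x_{k_l}$ with $x_{k_l} \in M_{k_l}$; for $k \ge N$ and $k_l \ge N$ large we have $M_{k_l} \subseteq B_\delta(M_k)$, so $\dist{x_{k_l}}{M_k} \le \delta$, and passing to the limit in $l$ gives $\dist{x}{M_k} \le \delta$. Hence $d(M_k, M) \le \delta$ for $k \ge N$, proving convergence.

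Finally, for the subspace $\mathcal{K}_0(\R^n)$, since a closed subspace of a complete metric space is complete, it suffices to show that if each $M_k$ is convex and $M_k \to M$ in $d$, then $M$ is convex. Given $x, y \in M$ and $\lambda_1, \lambda_2 \ge 0$ with $\lambda_1 + \lambda_2 = 1$, approximate $x, y$ by points $x_k, y_k \in M_k$ with $|x - x_k|, |y - y_k| \to 0$ (possible because $\rho_+(M, M_k) \to 0$); then $\lambda_1 x_k + \lambda_2 y_k \in M_k$ by convexity, and this sequence converges to $\lambda_1 x + \lambda_2 y$, which therefore lies in $M$ by the characterization of $M$ as the set of subsequential limits of sequences $z_k \in M_k$ — here the full sequence already converges, so the limit is in $\overline{\bigcup_{k \ge j} M_k}$ for every $j$. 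Thus $M$ is convex and $\mathcal{K}_0(\R^n)$ is complete.

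The main obstacle is the non-emptiness and correctness of the limit set $M$: one must be careful that $M$ as defined is genuinely non-empty (this is exactly where uniform boundedness of the Cauchy sequence, hence a compactness argument in a fixed ball, enters) and that it simultaneously satisfies both one-sided estimates. The convexity step and the closedness of $M$ are routine once the right description of $M$ (as subsequential limits) is in hand.
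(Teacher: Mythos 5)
The overall structure of your argument is the standard and correct route (and the paper itself gives no proof, only a citation to Klein--Thompson, so a self-contained argument is reasonable): the candidate limit $M:=\bigcap_{j\in\N}\overline{\bigcup_{k\ge j}M_k}$, the two one-sided estimates, and closedness of the convex sets under Hausdorff limits are all fine. However, one step fails as stated: the claim that a Cauchy sequence in $(\mathcal{F}_0(\R^n),d)$ is uniformly bounded. The space $\mathcal{F}_0(\R^n)$ contains unbounded closed sets and $d$ is only an extended metric; for instance the constant sequence $M_k=\R^n$, or any constant sequence consisting of an unbounded closed set, is Cauchy while $|M_k|=\infty$ for every $k$. (Cauchyness forces uniform boundedness only if the members are already bounded sets.) Since this false claim is your sole justification for $M\ne\emptyset$, the non-emptiness step — which you yourself single out as the main obstacle — has a genuine gap.

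The gap is local and easily repaired, because non-emptiness should come from completeness of $\R^n$ rather than compactness of a common ball: pass to a subsequence with $d(M_{k_j},M_{k_{j+1}})<2^{-j}$, pick $x_1\in M_{k_1}$ arbitrarily, and inductively choose $x_{j+1}\in M_{k_{j+1}}$ with $|x_j-x_{j+1}|\le 2^{-j+1}$ (possible since $M_{k_j}\subseteq B_{2^{-j+1}}(M_{k_{j+1}})$ and $M_{k_{j+1}}$ is closed, so the distance is attained); then $(x_j)_j$ is Cauchy in $\R^n$ and its limit is a subsequential limit of points of the $M_{k_j}$, hence lies in $M$. Note that the remainder of your proof never actually uses uniform boundedness: in the estimate $\rho_-(M,M_k)\le\delta$ the points $y_l$ lie in the compact ball $B_\delta(x)$ around the \emph{fixed} point $x$, not in a ball containing all the $M_k$, and both the $\rho_+$ estimate and the convexity argument are purely metric. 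With the non-emptiness argument replaced as above, your proof is correct, including the deduction that $\mathcal{K}_0(\R^n)$ is closed in $\mathcal{F}_0(\R^n)$ and hence complete.
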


\begin{proof}
We refer to \cite[Proposition 4.3.10]{KleinThompson:84} and \cite[Proposition 4.3.11]{KleinThompson:84}. 
\end{proof}

\begin{remark}
In the case of $\mathcal{K}_0(\Omega)$ we can write the Hausdorff metric in terms of convex supporting functions.
Recall from the introduction that each $M \in \mathcal{K}_0(\Omega)$ has a uniquely determined convex supporting function $h:\Omega \rightarrow \R\cup\{+\infty\}$, 
defined by
$$
h(w) = \sup_{a \in M} \langle a, w \rangle
$$ 
and the set $M$ satisfies 
$$
M = \{ a \in \Omega \mid  w\in \R^n : \langle a, w \rangle \le h(w) \}.
$$
Then we can write $\rho_{\pm}(M,N)$ and the Hausdorff metric $d(M,N)$ in terms of the convex supporting function by
$\rho_{+}(M,N) = \sup_{w\in\R^n} \max{\{h_{N}(w/|w|) - h_{M}(w/|w|),0\}}$ and $\rho_{-}(M,N) = \sup_{w\in\R^n} \max{\{ h_{M}(w/|w|) - h_{N}(w/|w|) ),0\}} $, thus 
$$
d(M,N) = \max{(\rho_{+}(M,N), \rho_{-}(M,N))} = \sup_{w\in \R^n}|h_{M}(w/|w|) - h_{N}(w/|w|)|. 
$$
and we have $|M|:= d(M,\{0\}) = \sup_{w\in \R^n}|h_{M}(w/|w|)|$.
\end{remark}

\begin{proposition}
Let $(M_{\iota})_{\iota \in ]0,1]}$ be a net in $\mathcal{F}_0(\Omega)$ such that for some $K\csub \Omega$,
it holds that $M_{\iota}\subseteq K$ for all $\iota\in ]0,1]$. If $M \in \mathcal{F}_0(\Omega)$, then the following statements hold:
\begin{itemize}
\item $\lim_{\iota \rightarrow 0} \rho_{+}(M_{\iota}, M) \rightarrow 0 \quad \Rightarrow \quad M \supseteq \limsup_{\iota \rightarrow 0} M_{\iota}$.
\item $\lim_{\iota \rightarrow 0} \rho_{-}(M_{\iota}, M) \rightarrow 0 \quad \Rightarrow \quad M \subseteq \liminf_{\iota \rightarrow 0} M_{\iota} $.
\item $\lim_{\iota \rightarrow 0} d(M_{\iota}, M) \rightarrow 0 \quad \Leftrightarrow \quad M=\liminf_{\iota \rightarrow 0} M_{\iota} = \limsup_{\iota \rightarrow 0} M_{\iota}$.
\end{itemize}
\end{proposition}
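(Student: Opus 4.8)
The plan is to translate the Hausdorff-type quantities $\rho_\pm$ and the Kuratowski limits into statements about the $1$-Lipschitz point-to-set distance $\dist{x}{A}=\inf_{a\in A}|x-a|$, and then to exploit that every $M_\iota$, and hence $\limsup_{\iota\to 0}M_\iota$ as well, lies in the fixed compact $K$. First I would record the two elementary reformulations
\[
\rho_+(M_\iota,M)=\sup_{y\in M_\iota}\dist{y}{M},\qquad
\rho_-(M_\iota,M)=\sup_{y\in M}\dist{y}{M_\iota},
\]
read off from the definitions of $B_\rho(\cdot)$ and $\rho_\pm$, so that $\rho_+(M_\iota,M)\to 0$ expresses that $M_\iota$ gets uniformly squeezed into arbitrarily small neighbourhoods of $M$, while $\rho_-(M_\iota,M)\to 0$ says the same with the roles of $M$ and $M_\iota$ exchanged. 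I will also use the distance descriptions $\limsup_{\iota\to 0}M_\iota=\{x:\liminf_{\iota\to 0}\dist{x}{M_\iota}=0\}$ and $\liminf_{\iota\to 0}M_\iota=\{x:\lim_{\iota\to 0}\dist{x}{M_\iota}=0\}$; note that $\liminf_{\iota\to 0}M_\iota\subseteq\limsup_{\iota\to 0}M_\iota$ always, that both sets are closed, and that both are contained in $K$ since all $M_\iota$ are.

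For the first implication I would assume $\rho_+(M_\iota,M)\to 0$ and take $x\in\limsup_{\iota\to 0}M_\iota$. Along a suitable subnet $(M_{\tau(\iota)})_\iota$ with $\tau\in\mathcal{T}$ one may pick $y_\iota\in M_{\tau(\iota)}$ with $|x-y_\iota|\le\dist{x}{M_{\tau(\iota)}}+\iota\to 0$; since $\dist{y_\iota}{M}\le\rho_+(M_{\tau(\iota)},M)\to 0$, the triangle inequality for $\dist{\cdot}{M}$ gives $\dist{x}{M}\le|x-y_\iota|+\dist{y_\iota}{M}\to 0$, and as $M$ is closed, $x\in M$; hence $M\supseteq\limsup_{\iota\to 0}M_\iota$. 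The second implication is immediate: if $\rho_-(M_\iota,M)\to 0$ and $x\in M$, then $\dist{x}{M_\iota}\le\rho_-(M_\iota,M)\to 0$, i.e.\ $x\in\liminf_{\iota\to 0}M_\iota$, so $M\subseteq\liminf_{\iota\to 0}M_\iota$. Neither of these two steps uses the compactness hypothesis.

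For the equivalence I would use $d(M_\iota,M)=\max\{\rho_+(M_\iota,M),\rho_-(M_\iota,M)\}$, so that $d(M_\iota,M)\to 0$ is the conjunction of $\rho_+(M_\iota,M)\to 0$ and $\rho_-(M_\iota,M)\to 0$. If both hold, the two implications above give $\limsup_{\iota\to 0}M_\iota\subseteq M\subseteq\liminf_{\iota\to 0}M_\iota$, which together with $\liminf_{\iota\to 0}M_\iota\subseteq\limsup_{\iota\to 0}M_\iota$ forces $M=\liminf_{\iota\to 0}M_\iota=\limsup_{\iota\to 0}M_\iota$. For the converse, assume this common value is $M$; then $M\subseteq K$, so $M$ is compact. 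If $\rho_+(M_\iota,M)\not\to 0$, then there are $\delta>0$ and a subnet with points $y_\iota\in M_{\tau(\iota)}$ satisfying $\dist{y_\iota}{M}>\delta$; by compactness of $K$, after shrinking the subnet we may assume $y_\iota\to y\in K$, and then $\dist{y}{M_{\tau(\iota)}}\le|y-y_\iota|\to 0$ shows $y\in\limsup_{\iota\to 0}M_\iota=M$, contradicting $\dist{y}{M}=\lim\dist{y_\iota}{M}\ge\delta$. If instead $\rho_-(M_\iota,M)\not\to 0$, then there are $\delta>0$ and a subnet with points $y_\iota\in M$ satisfying $\dist{y_\iota}{M_{\tau(\iota)}}>\delta$; by compactness of $M$, after shrinking the subnet we may assume $y_\iota\to y\in M=\liminf_{\iota\to 0}M_\iota$, whence $\dist{y}{M_{\tau(\iota)}}\to 0$ and therefore $\dist{y_\iota}{M_{\tau(\iota)}}\le|y_\iota-y|+\dist{y}{M_{\tau(\iota)}}\to 0$, a contradiction. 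Hence $d(M_\iota,M)\to 0$.

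The routine part is the two one-sided implications, which are a mere unwinding of the definitions. The step I expect to be the main obstacle is the converse in the equivalence: the conditions $M=\liminf_{\iota\to 0}M_\iota=\limsup_{\iota\to 0}M_\iota$ only deliver convergence of distances at each fixed point, whereas $\rho_\pm(M_\iota,M)\to 0$ requires uniform control over all of $M_\iota$, respectively all of $M$. This is exactly where the standing hypothesis that $M_\iota\subseteq K$ with $K$ compact (and the consequent compactness of $M$) is indispensable, since it supplies the convergent subnets on which the two contradiction arguments rest.
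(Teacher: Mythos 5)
Your proof is correct, and it is genuinely more informative than what the paper offers: the paper's ``proof'' is a bare citation to Klein--Thompson (Proposition 4.2.2 and Theorem 3.3.11), whereas you give a self-contained argument via the point-to-set distance $\dist{x}{A}$, with the right division of labour -- the two one-sided implications and the forward half of the equivalence are definition-unwinding and need no compactness, while the converse half of (iii) is exactly where $M_\iota\subseteq K\csub\Omega$ (hence compactness of $M$) is used to extract convergent subnets for the two contradiction arguments; this matches the structure of the cited results and makes the role of the hypothesis transparent. One caveat you should be aware of, though it concerns the paper's notation rather than your argument: with the paper's literal definition $\rho_+(M,N)=\inf\{\rho: N\subseteq B_\rho(M)\}$ one gets $\rho_+(M_\iota,M)=\sup_{y\in M}\dist{y}{M_\iota}$ and $\rho_-(M_\iota,M)=\sup_{y\in M_\iota}\dist{y}{M}$, i.e.\ the transpose of the reformulation you start from; likewise the paper recalls the purely set-theoretic $\limsup/\liminf$ (without closure), under which item (ii) would actually be false (take $M=\{0\}$, $M_\iota=\{\iota\}$). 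You silently adopted the readings -- excess of $M_\iota$ over $M$ for $\rho_+$, and Kuratowski (distance/cluster-point) upper and lower limits -- under which the proposition is true and which agree with the cited source, so your proof establishes the intended statement; it would be worth one sentence in a write-up noting this choice of convention, since as literally printed the paper's definitions and the proposition's argument order do not fit together.
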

Recall that $\limsup_{\iota \rightarrow 0} M_{\iota}  :=\bigcap_{\iota' \in ]0,1]} \bigcup_{\iota \in ]0,\iota']} M_{\iota}$
resp. $\liminf_{\iota \rightarrow 0}M_{\iota} := \bigcup_{\iota' \in ]0,1]} \bigcap_{\iota \in ]0,\iota']} M_{\iota}$, denote the set-theoretic
limes superior resp. limes inferior.

\begin{proof}
Follows by \cite[Proposition 4.2.2]{KleinThompson:84} and \cite[Theorem 3.3.11]{KleinThompson:84}.
\end{proof}

\begin{definition} \label{set_valued_map_def} \index{set-valued map} 
A map defined by
$$
F: \Omega_1 \rightarrow \mathcal{P}_0(\Omega_2), \quad x \rightarrow F_{x},
$$
is called a \emph{set-valued map}. 
If $F$ maps into $\mathcal{F}_0(\Omega_2)$ resp. $\mathcal{K}_0(\Omega_2)$ we call $F$ \emph{closed-valued} resp. \emph{closed- and convex-valued}.

If $X\subseteq \Omega_1$ we use the notation $F_X := \bigcup_{x\in X} F_x$.

Putting
$$
\ggraph{F} := \{ (x,y) \in \Omega_1 \times \Omega_2 \mid y \in F_x \},
$$
we say $F$ is \emph{closed}, 
if $\ggraph{F}$ is closed in the relative topology of $\Omega_1 \times \Omega_2$. \index{graph!of a set-valued map}
Note that we use the upper-case spelling $\ggraph{F}$ to distinguish it from the classical graph $\graph{f}$ of a continuous function $f$.

$F$ is called \emph{proper}, if $\ggraph{F} \cap \Omega_1 \times K \csub \Omega_1 \times \Omega_2$ for all $K\csub \Omega_2$. \index{proper}

$F$ is called \emph{upper semi-continuous at} $x$, if for any $W \subseteq \Omega_2$ being an open neighborhood
of $F_x$ there exists some neighborhood $X \subseteq \Omega_1$ of $x$ such that
$F_X \subseteq W$ holds. 
We say $F$ is \emph{upper semi-continuous}, if it is upper semi-continuous for all $x\in \Omega_1$. \index{upper semi-continuous}

$F$ is called \emph{locally bounded}, if for all $x$ there exists a neighborhood $X\subseteq \Omega_1$ such that
$F_X$ is bounded. \index{set-valued map!locally bounded}

$F$ is called \emph{bounded}, if the \emph{range} of $F$ denoted
by $\ran{F} := F_{\Omega_1}$ is a bounded subset of $\Omega_2$. 
\end{definition}

\begin{definition} \label{selection} \index{selection}
Let $f:\Omega_1\rightarrow\Omega_2$ be a (single-valued) function, then we say $f$ is a \emph{selection} of 
the set-valued map $F$, if $f(x) \in F_x$ for all $x\in \Omega_1$.
\end{definition}

\begin{definition} \label{supporting_function} \index{supporting function!of a set-valued map}
Let $F: \Omega_1 \rightarrow \mathcal{F}_0(\Omega_2)$ be a set-valued map. Then the function
$$
H_F :  \Omega_1 \times \R^m \rightarrow \R \cup \{+\infty\} ,\quad (x,w) \mapsto h_{x}(w), 
$$ 
where $h_{x}(w)= \sup_{a \in F_x}\langle a, w\rangle$ is called the \emph{supporting function} of the set-valued map $F$.
It is the convex supporting function (cf. Introduction) of the convex hull of $F_x$. In particular, if $F$ is convex-valued
it holds that $h_x$ is the convex supporting function of $F_x$, which implies 
$$
F_x = \{ a\in \Omega_2 \mid \forall w\in \R^m : \langle a, w \rangle \le H_F(x,w)\}.
$$

If $F: \Omega_1 \rightarrow \mathcal{F}_0(\Omega_2)$ is a set-valued map (allowing non-convex values), then
we define the \emph{convex hull} of $F$ by        \index{convex hull!of a set-valued map}
$$
\ch{F}: \Omega_1 \rightarrow \mathcal{K}_0(\Omega_2), \quad x \mapsto \ch{F_x}. 
$$
It holds that $\ch{F}_x = \{ a \in \Omega_2 \mid \forall w\in \R^m : \langle a, w \rangle \le H_F(x,w)\}$.
\end{definition}

\begin{example} \label{supportfunction_heaviside_example}
Let $F: \R \mapsto \mathcal{K}_0(\R)$ be the set-valued map defined by  
\begin{equation*}
F_x := \left\{ \begin{array}{cl} 
\{0\} & x \in ]-\infty,0[ \\
\left[ \alpha, \beta \right] & x = 0\\
\{1\} & x\in ]0, +\infty[ \quad,
\end{array} 
\right.
\end{equation*}
then $F$ is upper semi-continuous if and only if $\alpha \le 0$ and $\beta \ge 1$.
Obviously $F$ is bounded. Its supporting function $H(x,w)$ is defined by
$$
H(x,+1) := \left\{ \begin{array}{cl} 
0 & x \in ]-\infty,0[ \\
\beta  & x = 0\\
1 & x\in ]0, +\infty[ 
\end{array} 
\right. 
$$
and 
$$
H(x,-1) := \left\{ \begin{array}{cl} 
0 & x \in ]-\infty,0[ \\
-\alpha  & x = 0\\
-1 & x\in ]0, +\infty[ \quad ,
\end{array} 
\right. 
$$
which are upper semi-continuous in $x$ if and only if $\alpha \le 0$ and $\beta \ge 1$.
\end{example}

\begin{lemma} \label{corr_approx_lemma}
Let $F: \Omega_1 \rightarrow \mathcal{F}_0(\Omega_2)$ be a closed set-valued map, then it holds that
$$
\bigcap_{\varepsilon \in ]0,1]} \bigcup_{y\in B_{\varepsilon}(x)\cap \Omega_1} F_{y} = F_x.
$$
\end{lemma}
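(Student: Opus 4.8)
The plan is to prove the two inclusions separately, the first being essentially trivial and the second using the closedness of $\ggraph{F}$ together with a compactness argument. Write $G_x := \bigcap_{\varepsilon \in ]0,1]} \bigcup_{y \in B_\varepsilon(x) \cap \Omega_1} F_y$ for the left-hand side. Since $x \in B_\varepsilon(x) \cap \Omega_1$ for every $\varepsilon$, we have $F_x \subseteq \bigcup_{y \in B_\varepsilon(x) \cap \Omega_1} F_y$ for each $\varepsilon$, and intersecting over $\varepsilon$ gives $F_x \subseteq G_x$. That takes care of one direction.

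For the reverse inclusion $G_x \subseteq F_x$, fix $z \in G_x$. Then for every $n \in \N$ (taking $\varepsilon = 1/n$) there is a point $y_n \in B_{1/n}(x) \cap \Omega_1$ with $z \in F_{y_n}$, i.e.\ $(y_n, z) \in \ggraph{F}$. Clearly $y_n \to x$ as $n \to \infty$, so the sequence $(y_n, z)$ in $\ggraph{F}$ converges to $(x,z)$ in $\Omega_1 \times \Omega_2$ (with its relative topology, and noting $x \in \Omega_1$). Since $F$ is closed, i.e.\ $\ggraph{F}$ is closed in the relative topology of $\Omega_1 \times \Omega_2$, the limit point $(x,z)$ lies in $\ggraph{F}$, which is exactly the statement $z \in F_x$. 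Hence $G_x \subseteq F_x$, and combining the two inclusions yields equality.

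The only point requiring a little care — and the one I expect to be the main (minor) obstacle — is the topological bookkeeping: one must remember that $\ggraph{F}$ is closed only in the \emph{relative} topology of $\Omega_1 \times \Omega_2$, so it is important that the limit point $(x,z)$ actually belongs to $\Omega_1 \times \Omega_2$; this is guaranteed because $x \in \Omega_1$ (it is the base point) and $z \in F_{y_1} \subseteq \Omega_2$. A net-theoretic phrasing using the index set $]0,1]$ rather than the sequence $1/n$ works identically and is perhaps more in keeping with the notation of the excerpt, but the sequential version suffices here since metric spaces are first countable.
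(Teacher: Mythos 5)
Your proof is correct and uses essentially the same ingredient as the paper: the closedness of $\ggraph{F}$ in the relative topology of $\Omega_1 \times \Omega_2$ applied to points $(y_\varepsilon, z)$ with $y_\varepsilon \to x$. The paper merely phrases it contrapositively (assuming $a' \in M \setminus F_x$ and extracting a product neighborhood $X \times W$ disjoint from the graph), whereas you argue directly via a convergent sequence; the mathematical content is identical.
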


\begin{proof}
We put $M:=\bigcap_{\varepsilon \in ]0,1]} \bigcup_{y\in B_{\varepsilon}(x)\cap \Omega_1} F_y \subseteq \Omega_2$,
 then $M \supseteq F_{x}$ follows. We are going to show that $M = F_{x}$. We prove by contradiction: 
Assume there exists some $a' \in M/ F_{x}$, 
then it follows that $(x,a')$ is contained in the open set  $(\Omega_1 \times \Omega_2) \backslash \ggraph{F}$. 
So there exists some neighborhood $X \subseteq \Omega_1$ of $x$ and $W \subseteq \Omega_2$ of $a'$ such that 
$(X\times W) \bigcap \ggraph{F} = \emptyset$.
In particular $a' \not \in F_y$ for $y\in X$, so $\bigcup_{y\in B_{\varepsilon}(x)\cap \Omega_1}  F_{y} \subseteq W^c$ for $\varepsilon$ small enough.
It immediately follows that 
$$
M = \bigcap_{\varepsilon \in ]0,1]} \bigcup_{y\in B_{\varepsilon}(x)\cap \Omega_1} F_{y}  \subseteq  W^c,
$$ 
a contradiction to $a' \in M$, so the statement follows.
\end{proof}

\begin{proposition} \label{closed_corr_proposition}
Let $F: \Omega_1 \rightarrow \mathcal{P}_0(\Omega_2)$ be a set-valued map, then the following statements
are equivalent:
\begin{enumerate}
\item $F$ is closed and locally bounded.
\item $\ggraph{F} \cap K \times \Omega_2 \csub \Omega_1 \times \Omega_2$ for all $K\csub \Omega_1$.
\end{enumerate}
\end{proposition}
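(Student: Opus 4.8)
The plan is to prove the two implications in Proposition~\ref{closed_corr_proposition} separately, the implication (ii) $\Rightarrow$ (i) being the more routine one and (i) $\Rightarrow$ (ii) carrying the actual compactness argument. For (i) $\Rightarrow$ (ii), fix $K \csub \Omega_1$. First I would check that $F_K$ is bounded: by local boundedness each $x \in K$ has a relative neighbourhood $X_x \subseteq \Omega_1$ with $F_{X_x}$ bounded, and since $K$ is compact finitely many of the $X_x$ already cover $K$, so $F_K = \bigcup_{x\in K} F_x$ is a finite union of bounded sets, hence bounded. Therefore $A := \ggraph{F} \cap (K \times \Omega_2)$ is contained in the bounded set $K \times \overline{F_K}$. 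It remains to see that $A$ is a \emph{compact} subset of $\Omega_1 \times \Omega_2$. Since $K$ is closed in $\Omega_1$, the set $K \times \Omega_2$ is closed in $\Omega_1 \times \Omega_2$, so $A$ --- the intersection of this set with the closed set $\ggraph{F}$ --- is closed in the relative topology of $\Omega_1 \times \Omega_2$; together with boundedness and the fact that, by compactness of $K$ and the bound on $F_K$, no $\R^{n+m}$-limit point of $A$ can leave $\Omega_1 \times \Omega_2$, this yields $A \csub \Omega_1 \times \Omega_2$.

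For (ii) $\Rightarrow$ (i) I would verify the two defining properties in turn. Closedness: let $(x_j,y_j) \in \ggraph{F}$ with $(x_j,y_j) \to (\bar x,\bar y) \in \Omega_1 \times \Omega_2$. Then $K := \{\bar x\} \cup \{x_j : j \in \N\}$ is a convergent sequence together with its limit, hence a compact subset of $\Omega_1$, and by (ii) the set $\ggraph{F} \cap (K \times \Omega_2)$ is compact, in particular closed in $\Omega_1 \times \Omega_2$; it contains every $(x_j,y_j)$, hence contains their limit $(\bar x,\bar y) \in \Omega_1 \times \Omega_2$, whence $(\bar x,\bar y) \in \ggraph{F}$. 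Local boundedness: arguing by contradiction, if $F$ is not locally bounded at some $x$, then for each $j$ the relative neighbourhood $X_j := B_{1/j}(x) \cap \Omega_1$ has $F_{X_j}$ unbounded, so we may pick $x_j \in X_j$ and $y_j \in F_{x_j}$ with $|y_j| \ge j$; then $x_j \to x$, so $K := \{x\} \cup \{x_j : j \in \N\} \csub \Omega_1$, while $\{(x_j,y_j)\}_j \subseteq \ggraph{F} \cap (K \times \Omega_2)$ is unbounded, contradicting the compactness asserted by (ii).

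The step I expect to be the main obstacle is the passage, in (i) $\Rightarrow$ (ii), from ``$A$ is closed in the relative topology of $\Omega_1 \times \Omega_2$ and bounded'' to ``$A$ is a compact subset of $\Omega_1 \times \Omega_2$'': one must genuinely rule out that a limit point of $A$ escapes the product set. Escape in the first coordinate is excluded by compactness of $K$, and escape in the second coordinate is controlled by local boundedness, which over the compact set $K$ furnishes a fixed bound on the values $F_x$ and thereby traps the second coordinate of any limit point; once the limit point is known to lie in $\Omega_1 \times \Omega_2$, the closedness of $\ggraph{F}$ finishes the argument. For (ii) $\Rightarrow$ (i) the corresponding care is only in choosing, for the given data, a compact $K \csub \Omega_1$ to which hypothesis (ii) applies, and the ``convergent sequence plus its limit'' trick does this uniformly, without any regularity assumption on $\Omega_1$.
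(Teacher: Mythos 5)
Your overall structure is the same as the paper's: for (i)~$\Rightarrow$~(ii) you bound $F_K$ by combining local boundedness with a finite subcover of $K$ and then argue that $\ggraph{F}\cap(K\times\Omega_2)\subseteq K\times\overline{F_K}$ is compact; for (ii)~$\Rightarrow$~(i) you obtain local boundedness by projecting and closedness from compactness of $\ggraph{F}\cap(K\times\Omega_2)$. Your (ii)~$\Rightarrow$~(i) is in fact slightly more robust than the paper's: the paper takes a compact neighbourhood of $x$ (implicitly using local compactness of $\Omega_1$) and, for local boundedness, a compact set of $\Omega_1$ containing a given bounded subset, whereas your ``convergent sequence together with its limit'' set $K=\{\bar x\}\cup\{x_j\}$ works for an arbitrary subset $\Omega_1\subseteq\R^n$, and sequences suffice since the relative topology is metrizable.

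The genuine problem sits exactly at the step you single out in (i)~$\Rightarrow$~(ii). Local boundedness bounds the second coordinates of points of $A=\ggraph{F}\cap(K\times\Omega_2)$, but it does not force the second coordinate of an $\R^{n+m}$-limit point of $A$ to lie in $\Omega_2$: it may land in $\overline{\Omega_2}\setminus\Omega_2$. Since $\ggraph{F}$ is only assumed closed in the relative topology of $\Omega_1\times\Omega_2$, ``relatively closed and bounded'' does not yield compactness, so your justification (``the bound on $F_K$ traps the second coordinate of any limit point'') is a non sequitur. Concretely, take $\Omega_2=\,]0,1[$ and $F_x:=\,]0,1[$ for all $x\in\Omega_1$: then $\ggraph{F}=\Omega_1\times\,]0,1[$ is closed in $\Omega_1\times\Omega_2$ and $F$ is bounded, yet $\ggraph{F}\cap(K\times\Omega_2)=K\times\,]0,1[$ is not compact. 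To be fair, the paper's own proof makes the same silent jump (``$\subseteq K\times F_K$ is closed and bounded''), and the implication as stated really needs an additional hypothesis, e.g.\ $\Omega_2$ closed in $\R^m$, or $\overline{F_K}\subseteq\Omega_2$ (which is what holds in the paper's applications, where c-boundedness places the values in a compact subset of $\Omega_2$). Under such a hypothesis your argument closes up and coincides with the paper's.
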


\begin{proof}
$(i) \Rightarrow (ii):$ 
Let $K\csub \Omega_1$, then $\ggraph{F} \cap K \times \Omega_2 \subseteq K \times F_K$ is closed and bounded 
($F_K$ is bounded since $F$ is locally bounded). 
$(ii) \Rightarrow (i):$ Let $X$ be some bounded subset of $\Omega_1$, then there exists some 
compact set $K$ with $X\subseteq K$. It follows that $F_X =\pi_2(\ggraph{F} \cap X \times \Omega_2) \subseteq 
 \pi_2(\ggraph{F} \cap K \times \Omega_2)$ is bounded, thus $F$ is locally bounded. 
Let $(x_{\varepsilon},a_{\varepsilon}) \in \ggraph{F}$ be a net converging to some $(x,a) \in \Omega_1 \times \Omega_2$,
then there exists some companeighborhoodood $K$ of $x$. Since $\ggraph{F} \cap K \times \Omega_2 \csub \Omega_1 \times \Omega_2$
and $x_{\varepsilon} \in K$ for $\varepsilon$ small enough, we obtain $(x,a) \in \ggraph{F} \cap K \times \Omega_2$, 
thus $F$ is closed.
\end{proof}

\begin{proposition} \label{proper_corr_proposition}
Let $F: \Omega_1 \rightarrow \mathcal{P}_0(\Omega_2)$ be a proper set-valued map. Then for all $A \subseteq \Omega_1$ closed, we have that
$F_A$ is a closed subset of $\Omega_2$.
\end{proposition}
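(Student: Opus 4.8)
The plan is to prove that $F_A$ is closed by taking a convergent net $(a_n)$ in $F_A$ with limit $a \in \Omega_2$ and showing $a \in F_A$. For each $n$ pick $x_n \in A$ with $a_n \in F_{x_n}$, so that $(x_n, a_n) \in \ggraph{F}$. The key point is that the $a_n$ all lie in a fixed compact set: indeed, $\{a_n : n\} \cup \{a\}$ is contained in some $K \csub \Omega_2$ (a convergent net together with its limit is relatively compact, and we may enlarge to a compact neighborhood inside the open set $\Omega_2$). By properness of $F$, the set $\ggraph{F} \cap (\Omega_1 \times K)$ is a compact subset of $\Omega_1 \times \Omega_2$. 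Since $(x_n, a_n)$ lies in this compact set for all $n$, it has a cluster point $(x, b)$ in $\ggraph{F} \cap (\Omega_1 \times K)$; passing to a subnet we may assume $(x_n, a_n) \to (x, b)$.

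From this I would extract the conclusion in two steps. First, the second coordinate forces $b = a$, since $a_n \to a$ already and limits in the Hausdorff topology of $\Omega_2 \subseteq \R^m$ are unique. Second, the first coordinate satisfies $x \in \ovl{A} = A$ because $A$ is closed and $x_n \in A$ for all $n$. Hence $(x, a) \in \ggraph{F}$ with $x \in A$, which means $a \in F_x \subseteq F_A$. This shows $F_A$ is closed.

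Alternatively — and perhaps more cleanly — one could phrase the argument via Proposition~\ref{closed_corr_proposition}: properness implies in particular that $\ggraph{F} \cap (\Omega_1 \times K) \csub \Omega_1 \times \Omega_2$ for every $K \csub \Omega_2$, which is a strong enough compactness property to run the net argument. One writes $F_A = \pi_2\bigl( \ggraph{F} \cap (A \times \Omega_2) \bigr)$ and intersects with $K$: for any $K \csub \Omega_2$, $F_A \cap K \subseteq \pi_2\bigl( \ggraph{F} \cap (A \times K) \bigr)$, and since $A$ is closed and $\ggraph{F} \cap (\Omega_1 \times K)$ is compact, the set $\ggraph{F} \cap (A \times K) = \ggraph{F} \cap (\Omega_1 \times K) \cap (A \times \Omega_2)$ is compact; its image under the continuous map $\pi_2$ is compact, hence closed in $\Omega_2$. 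A subset of $\Omega_2$ whose intersection with every compact set is closed is itself closed (since $\Omega_2 \subseteq \R^m$ is locally compact and first countable), and one checks $F_A \cap K$ actually equals $\pi_2(\ggraph{F}\cap(A\times K))$, giving the result.

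The main obstacle is bookkeeping about topologies on the (possibly non-closed, non-open) subsets $\Omega_1, \Omega_2$ of Euclidean space: one must be careful that "closed in $\Omega_1$" and "compact" interact correctly, and that the local-compactness argument "closed intersection with every compact set implies closed" is valid in the relative topology — it is, because $\Omega_2$ is metrizable and locally compact, so closedness is detected by sequences and every point has a compact neighborhood. Once that is pinned down, the proof is a routine compactness-and-net argument; the essential input is precisely the defining property of properness, that $\ggraph{F}$ meets each slab $\Omega_1 \times K$ in a compact set.
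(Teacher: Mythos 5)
Your argument is correct and is essentially the paper's: the paper simply observes that properness makes $\pi_2\mid_{\ggraph{F}}$ a proper continuous map, hence a closed map, so $F_A = \pi_2(\ggraph{F}\cap (A\times\Omega_2))$ is closed whenever $A$ is closed, and your proof just unpacks that same fact by a direct compactness argument. Two small caveats: since $\Omega_1,\Omega_2$ are arbitrary subsets of Euclidean space in this section (not assumed open or locally compact), you should argue with sequences rather than general nets (a convergent sequence together with its limit is compact, while a convergent net need not lie in any fixed compact set), and the step ``closed intersection with every compact set implies closed'' needs only metrizability/first countability of $\Omega_2$, not local compactness.
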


\begin{proof}
Since $F$ is proper we have that $\ggraph{F} \cap \Omega_1 \times K$ is compact for all $K\csub \Omega_1$,
thus $\pi_2 \mid_{\ggraph{F}}$ is proper and continuous, implying 
that $F_A = \pi_2(\ggraph{F} \cap A \times \Omega_2)$ is closed, if $A$ is a closed subset of $\Omega_1$.
\end{proof}

\begin{proposition} \label{usc_corr_proposition}
Let $F: \Omega_1 \rightarrow \mathcal{P}_0(\Omega_2)$ be a set-valued map, then $F$ is upper semi-continuous if and only
if $\pi_1( \ggraph{F} \cap (\Omega_1 \times A))$ is closed in $\Omega_1 \times \Omega_2$ for all closed subsets $A$ of $\Omega_2$.  
\end{proposition}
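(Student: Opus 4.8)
The plan is to recognize the set $\pi_1(\ggraph{F}\cap(\Omega_1\times A))$ as the \emph{lower inverse image} $F^{-}(A):=\{x\in\Omega_1\mid F_x\cap A\neq\emptyset\}$ of $A$ under $F$, and then to pass to complements in order to reformulate upper semi-continuity as a statement about preimages of open sets. Everything then reduces to a routine unwinding of definitions.

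First I would record the set-theoretic identity $\pi_1(\ggraph{F}\cap(\Omega_1\times A))=F^{-}(A)$: a point $x$ lies in the left-hand side if and only if there is $y\in A$ with $(x,y)\in\ggraph{F}$, i.e.\ if and only if $F_x\cap A\neq\emptyset$. Introducing also the \emph{upper inverse image} $F^{+}(W):=\{x\in\Omega_1\mid F_x\subseteq W\}$, I would then note the complementation rule $F^{-}(A)=\Omega_1\setminus F^{+}(\Omega_2\setminus A)$, valid for any $A\subseteq\Omega_2$: indeed $x\notin F^{-}(A)$ iff $F_x\cap A=\emptyset$ iff $F_x\subseteq\Omega_2\setminus A$ iff $x\in F^{+}(\Omega_2\setminus A)$. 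Since $A\mapsto\Omega_2\setminus A$ is a bijection between the closed and the open subsets of $\Omega_2$ (in the relative topology), the condition ``$\pi_1(\ggraph{F}\cap(\Omega_1\times A))$ is closed for every closed $A\subseteq\Omega_2$'' is thus equivalent to ``$F^{+}(W)$ is open in $\Omega_1$ for every open $W\subseteq\Omega_2$''.

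It then remains to prove the equivalence: $F$ is upper semi-continuous $\iff$ $F^{+}(W)$ is open for every open $W\subseteq\Omega_2$. For the forward implication, fix an open $W$ and a point $x\in F^{+}(W)$; then $W$ is an open neighborhood of $F_x$, so upper semi-continuity at $x$ yields a neighborhood $X$ of $x$ with $F_X\subseteq W$, i.e.\ $X\subseteq F^{+}(W)$, showing that $F^{+}(W)$ is a neighborhood of each of its points and hence open. For the converse, fix $x\in\Omega_1$ and an open neighborhood $W$ of $F_x$; then $x\in F^{+}(W)$, which is open by hypothesis, so $X:=F^{+}(W)$ is itself a neighborhood of $x$ with $F_X\subseteq W$ by construction — this is exactly upper semi-continuity at $x$. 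Chaining the three steps gives the proposition.

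I do not expect any serious obstacle here; the argument is purely formal. The only points requiring a little care are that the relevant notions of ``open'' and ``neighborhood'' are taken in the relative topologies of $\Omega_1$ and $\Omega_2$, and that the values $F_x$ are non-empty (which holds since $F$ maps into $\mathcal{P}_0(\Omega_2)$), so that ``open neighborhood of $F_x$'' is the correct object to feed into the definition of upper semi-continuity.
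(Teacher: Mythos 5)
Your proof is correct and follows essentially the same route as the paper: the paper's set $U(W)$ is exactly your $F^{+}(W)$, and the paper likewise passes to complements to identify $\pi_1(\ggraph{F}\cap(\Omega_1\times W^c))$ with $\Omega_1\setminus U(W)$ and reduce the statement to openness of $U(W)$ for open $W$. Your explicit remark that the projection is closed in $\Omega_1$ (relative topology) is the correct reading of the statement.
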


\begin{proof}
If $F$ is upper semi-continuous, it holds that for any open neighborhood $W$ of $F_x$, there exists some open neighborhood
$X$ of $x$, such that $F_y \subseteq W$. This is equivalent to the statement:
For all open sets $W \subseteq \Omega_2$ we have that $U(W):=\{x \in \Omega_1 \mid \forall a \in F_x: a \in W\}$ is open.  
Thus $\Omega_1 \backslash U(W) = \{x \in \Omega_1 \mid \exists a \in F_x: a \in W^{c} \} =
\pi_1(\ggraph{F} \cap (\Omega_1 \times W^c))$ is closed in $\Omega_1$, showing that $F$ being upper semi-continuous is equivalent to 
$\pi_1( \ggraph{F} \cap (\Omega_1 \times A))$  being closed, whenever $A \subseteq \Omega_2$ is closed. 
\end{proof}

\begin{example} \label{continuous_map_example}
Let $f:\Omega_1 \rightarrow \Omega_2$ be a (single-valued) function, then the set-valued map 
$F:\Omega_1 \mapsto \mathcal{K}_0(\Omega_2),\  x\mapsto \{f(x)\}$ is upper 
semi-continuous if and only if $f$ is continuous. Note that $\ggraph{F} =\graph{f}$.
Note that the equivalence in Proposition \ref{usc_corr_proposition} translates to the classical equivalence: $f$ is continuous if and only if $f^{-1}(A)$
is closed whenever $A$ is closed. Since $F_x=  \{f(x)\}$ we have
$\pi_1( \ggraph{F} \cap (\Omega_1 \times A)) = \{ x \in \Omega_1 \mid f(x) \in A \} = f^{-1}(A)$.
Furthermore we observe that $F_A := \pi_2(\ggraph{F} \cap A \times \Omega_2) =  f(A)$ is closed whenever $A\subseteq \Omega_1$ is closed,
if and only if $f$ is a closed map.
\end{example}

\begin{theorem} \label{set_valued_map_main_theorem}
Let $F: \Omega_1 \rightarrow \mathcal{F}_0(\Omega_2)$ be a set-valued map, then the following statements hold:
\begin{enumerate}
\item If $F$ is closed and locally bounded, then $F$ is upper semi-continuous and locally bounded.
\item If $F$ is upper semi-continuous, then $F$ is closed.
\item If $F$ is convex-valued, then $F$ is upper semi-continuous and locally bounded if and only if its supporting function 
$H$ has the property that $x\mapsto H(x,w)$ is upper semi-continuous for all $w\in \R^m$.
\end{enumerate}
\end{theorem}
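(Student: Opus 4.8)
The plan is to prove the three statements in the order (ii), (i), (iii), since (ii) is the easiest and (iii) will use the characterisations already established for set-valued maps together with Example \ref{supportfunction_heaviside_example} as a sanity check. Throughout I will freely use Lemma \ref{corr_approx_lemma}, Proposition \ref{closed_corr_proposition}, Proposition \ref{usc_corr_proposition} and the correspondence between convex closed sets and their supporting functions recalled in the Introduction.

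For (ii), suppose $F$ is upper semi-continuous. Let $(x_\iota, a_\iota)$ be a net in $\ggraph{F}$ converging to $(x,a) \in \Omega_1 \times \Omega_2$; I must show $a \in F_x$. If not, then since $F_x$ is closed there is an open ball $W$ around $a$ with $W \cap F_x = \emptyset$, so $F_x \subseteq W^c$, but $W^c$ need not be open. Instead I argue via Proposition \ref{usc_corr_proposition}: the closed set $A := \{a\}$ gives that $\pi_1(\ggraph{F} \cap (\Omega_1 \times \{a\}))$ is closed in $\Omega_1$; but this set is precisely $\{y \in \Omega_1 \mid a \in F_y\}$, which contains every $x_\iota$ (after passing to the tail where $a_\iota$ would have to equal $a$ — this is not automatic). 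The cleaner route is: pick a bounded open neighbourhood $W$ of $a$ whose closure misses $F_x$ is impossible in general, so I instead separate $F_x$ and $a$ by an open $W \supseteq F_x$ with $a \notin \ovl W$; upper semi-continuity gives a neighbourhood $X$ of $x$ with $F_X \subseteq W$, hence $a_\iota \in W$ for large $\iota$, contradicting $a_\iota \to a \notin \ovl W$. This proves $\ggraph F$ is closed.

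For (i), assume $F$ is closed and locally bounded. Local boundedness is immediate (it is part of the hypothesis and is preserved). For upper semi-continuity, fix $x$ and an open $W \supseteq F_x$; I want a neighbourhood $X$ of $x$ with $F_X \subseteq W$. Suppose not: then for every $\varepsilon \in ]0,1]$ there is $y_\varepsilon \in B_\varepsilon(x) \cap \Omega_1$ and $a_\varepsilon \in F_{y_\varepsilon} \setminus W$. By local boundedness the net $(a_\varepsilon)$ stays in a compact set, so it has a cluster point $a$; along a subnet $a_{\tau(\varepsilon)} \to a$ while $y_{\tau(\varepsilon)} \to x$. Since each $a_{\tau(\varepsilon)} \in W^c$ which is closed, $a \in W^c$, so $a \notin F_x$. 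But $(y_{\tau(\varepsilon)}, a_{\tau(\varepsilon)}) \in \ggraph F$ converges to $(x,a)$, and $\ggraph F$ is closed, so $a \in F_x$ — contradiction. (Equivalently one may invoke Lemma \ref{corr_approx_lemma}: $a \in \bigcap_\varepsilon \bigcup_{y \in B_\varepsilon(x)} F_y = F_x$.) The main subtlety here is making sure the cluster-point argument is legitimate, i.e. that $(a_\varepsilon)$ really is confined to a single compact set — this is exactly what local boundedness at $x$ supplies, after shrinking to a bounded neighbourhood of $x$.

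For (iii), assume $F$ is convex-valued. One direction: if $F$ is upper semi-continuous and locally bounded, fix $w \in \R^m$ and show $x \mapsto H(x,w) = \sup_{a \in F_x}\langle a, w\rangle$ is upper semi-continuous, i.e. $\{x \mid H(x,w) < c\}$ is open for every $c \in \R$. If $H(x_0,w) < c$, then $\langle a, w \rangle < c$ for all $a \in F_{x_0}$, and by local boundedness plus compactness one gets a uniform gap: $F_{x_0} \subseteq W := \{a \in \Omega_2 \mid \langle a,w\rangle < c'\}$ for some $c' < c$ (using that $F_{x_0}$ is contained in a compact set on which $\langle \cdot, w\rangle$ attains its sup $< c$). $W$ is open, so u.s.c. gives a neighbourhood $X$ of $x_0$ with $F_X \subseteq W$, hence $H(x,w) \le c' < c$ on $X$. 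Conversely, assume $x \mapsto H(x,w)$ is u.s.c. for every $w$. Local boundedness: for fixed $x_0$, each $H(\cdot, w)$ is bounded above near $x_0$, and applying this to $w = \pm e_1, \dots, \pm e_m$ (and using $|a| \le \sum_j |\langle a, e_j\rangle|$ for $a \in F_x$) bounds $F_X$ on a neighbourhood $X$. Upper semi-continuity: by Proposition \ref{closed_corr_proposition} it suffices (given local boundedness) to show $F$ is closed; if $(x_\iota, a_\iota) \in \ggraph F$ with $(x_\iota,a_\iota)\to(x,a)$, then for each $w$, $\langle a_\iota, w\rangle \le H(x_\iota, w)$, and passing to the limit using continuity of $\langle \cdot, w\rangle$ and upper semi-continuity of $H(\cdot,w)$ gives $\langle a, w\rangle \le \limsup_\iota H(x_\iota,w) \le H(x,w)$; since this holds for all $w \in \R^m$ and $F_x$ is the convex closed set cut out by these inequalities (Definition \ref{supporting_function}, using convex-valuedness), we get $a \in F_x$.

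The step I expect to be the main obstacle is the passage from a pointwise strict inequality ($\langle a, w\rangle < c$ for all $a \in F_{x_0}$) to a uniform one ($\sup_{a \in F_{x_0}} \langle a,w\rangle < c$) in the forward direction of (iii): this fails for a general closed set but is rescued by local boundedness of $F$, which forces $F_{x_0}$ into a compact set where the linear functional attains its supremum. Keeping the bookkeeping of "convex-valued so $F_x$ is recovered from its supporting function" straight is the other place where care is needed, but that is handed to us directly by Definition \ref{supporting_function}.
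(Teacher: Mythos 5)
Your proposal is correct and, for parts (i), (ii) and the backward half of (iii), follows the paper's own proof almost verbatim: (ii) is the same separation argument (choose open $W \supseteq F_x$ with $a \notin \ovl{W}$ and contradict $a_\iota \to a$), (i) is the same cluster-point contradiction, and closedness in (iii) is the same chain $\langle a_\iota, w\rangle \le H(x_\iota,w)$, pass to the $\limsup$, and use convex-valuedness to recover $F_x$ from $H(x,\cdot)$. Where you deviate you are, if anything, simpler: in the forward half of (iii) you apply upper semi-continuity of $F$ to the open half-space $\{a : \langle a,w\rangle < c'\}$ and conclude that sublevel sets of $H(\cdot,w)$ are open, which avoids the paper's detour through Lemma \ref{corr_approx_lemma} and the descending-family Lemma \ref{inf_descending_family_of_sets}; note that the ``uniform gap'' you single out as the main obstacle is automatic, since $H(x_0,w)$ is by definition the supremum over $F_{x_0}$, so any $c'$ with $H(x_0,w) < c' < c$ works and no compactness or attainment is needed. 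Likewise your coordinate-direction bound $|a| \le \sum_j |\langle a, e_j\rangle|$ with $w = \pm e_j$ gives local boundedness more elementarily than the paper's argument via joint upper semi-continuity of $H$ on $K \times S^{m-1}$. Two small repairs: in (i), local boundedness only confines $(a_\varepsilon)$ to a bounded set of $\R^m$, so the cluster point a priori lies in $\ovl{\Omega_2}$ rather than in $\Omega_2$, and relative closedness of $\ggraph{F}$ then gives nothing; you should, as the paper does, invoke Proposition \ref{closed_corr_proposition} to place the tail of the net $(x_\varepsilon,a_\varepsilon)$ in the compact set $\ggraph{F} \cap (B_1(x) \times \Omega_2)$, so that the cluster point lies in the graph itself (your parenthetical appeal to Lemma \ref{corr_approx_lemma} is not a substitute, since the cluster point lies only in the closure of $\bigcup_{y \in B_\varepsilon(x)} F_y$, which need not be closed). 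And at the end of (iii), upper semi-continuity of $F$ follows from part (i) of the theorem (closed plus locally bounded), not from Proposition \ref{closed_corr_proposition} alone, which is also the step the paper leaves implicit.
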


\begin{proof}
$(i)$: 
Assume that $F$ is not upper semi-continuous at $x$. Then there exists an open neighborhood $W$ of $F_x$, 
such that any neighborhood $X$ of $x$ contains some $y$
with $F_y \cap W^c \ne \emptyset$. 
We prove by contradiction: Assume that $F$ is closed and locally bounded. 

Picking the family of neighborhoods $(B_{\varepsilon}(x) \cap \Omega_1)_{{\varepsilon} \in ]0,1]}$ 
we can find
$x_{\varepsilon} \in B_{\varepsilon}(x) \cap \Omega_1$ with $F_{x_\varepsilon}\cap W^c \ne \emptyset$, thus
we can find some net $(x_{\varepsilon},a_{\varepsilon})_{{\varepsilon}\in ]0,1]}$ 
with $a_{\varepsilon} \in F_{x_{\varepsilon}}\cap W^c$. 
By Proposition \ref{closed_corr_proposition} we have that 
$\bigcup_{y\in B_1(x)} \{y\} \times F_{y}$ is a compact subset of $\Omega_2$, implying that
$$
\emptyset \ne \cp{(x_{\varepsilon},a_{\varepsilon})_{{\varepsilon}\in ]0,1] }} \subseteq \left(\bigcup_{y\in B_1(x)} \{y\} \times F_{y}\right),
$$
and since $x_{\varepsilon} \rightarrow x$ we obtain $\cp{(a_{\varepsilon})_{{\varepsilon}\in ]0,1]}} \subseteq F_{x}$.
 But $a_{\varepsilon} \in W^c$ implies $\cp{(a_{\varepsilon})_{{\varepsilon}\in ]0,1]}} \subseteq W^c$, 
thus $F_{x}\cap W^c \supseteq \cp{(a_{\varepsilon})_{{\varepsilon}\in ]0,1]}} \ne \emptyset$ which contradicts 
$F_{x} \subseteq W$. So $F$ cannot be closed and locally bounded.

$(ii)$:
If $F$ is not closed in $\Omega_1 \times \Omega_2$ there exist a net 
 $\{(x_{\varepsilon}, a_{\varepsilon})_{\varepsilon} \} \subseteq \ggraph{F}$ 
converging to some $(x,a) \in \Omega_1 \times \Omega_2$ such that $a\not \in F_x$. 
Let $W$ be an open neighborhood of $F_x$
such that $a \not \in \overline{W}$.
We prove by contradiction: Assume that $F$ is upper semi-continuous at $x$, then there exists a neighborhood $X \subseteq \Omega_1$ of $x$ such that
$F_y \subseteq W$ for all $y\in X$.

It follows that $F_{x_{\varepsilon}} \subseteq W$ (for small $\varepsilon$) which implies 
$a=\lim_{\varepsilon \rightarrow 0} a_{\varepsilon} \in \overline{W}$, yielding a contradiction to the 
initial choice of $W$.

$(iii)$: Assume $F$ is upper semi-continuous and locally bounded. By (ii) $F$ closed.  
Let $w\in \R^n$ be arbitrary, then
$$
\limsup_{y\rightarrow x} H(y,w) = \inf_{\delta \in ]0,1]}{\sup_{y \in B_{\delta}(x)\cap \Omega_1} H(y,w)} =
\inf_{\delta \in ]0,1]} { \sup_{y \in B_{\delta}(x)\cap \Omega_1} \sup_{a \in F_{y}} \langle a, w \rangle}  
$$
and putting $K_{\delta}:= \bigcup_{y \in B_{\delta}(x)\cap \Omega_1} \{y\} \times F_{y}$, which is a descending family of sets, we can estimate
$$
\inf_{\delta \in ]0,1]} { \sup_{y \in B_{\delta}(x)\cap \Omega_1} \sup_{a \in F_{y}} \langle a, w \rangle} \le 
\inf_{\delta \in ]0,1]} \sup_{(z,a) \in K_{\delta} } \langle a,w \rangle.
$$
Lemma \ref{corr_approx_lemma} implies that $\bigcap_{\delta\in ]0,1]} K_{\delta} = \{x\} \times F_{x}$ and 
Lemma \ref{inf_descending_family_of_sets} yields
$$
\inf_{\delta \in ]0,1]} \sup_{(z,a) \in K_{\delta} } \langle a,w \rangle = \sup_{(z,a) \in \bigcap_{\delta \in ]0,1]}K_{\delta} } \langle a,w \rangle =\sup_{a \in F_{x}} \langle a,w \rangle = H(x,w)<\infty,
$$
which shows that $x\mapsto H(x,w)$ is upper semi-continuous for all $w\in \R^n$. 

Assume $x\mapsto H(x,w)$ is upper semi-continuous:
 If $\{(x_{\varepsilon}, a_{\varepsilon})_{\varepsilon}\} \subseteq \bigcup_{y\in \Omega_1} \{y\} \times F_{y}$ is a net converging to some $(x,a) \in \Omega_1 \times \Omega_2$, then
we have that $\langle a_{\varepsilon}, w\rangle \le H(x_{\varepsilon},w)$ for all $w\in\R^n$ and $\varepsilon\in]0,1]$. For $w\in \R^n$ fixed, it holds that
$$
\langle a, w \rangle = \lim_{\varepsilon \rightarrow 0} \langle a_{\varepsilon}, w\rangle 
\le \limsup_{\varepsilon\rightarrow 0} H(x_{\varepsilon},w) \le  H(x,w) <\infty,
$$
for all $w\in \R^n$ because $x \mapsto H(x,w)$ is upper semi-continuous, thus $a \in F_{x}$. It follows that 
$\bigcup_{x\in \Omega_1} \{x\} \times F_{x}$ is a closed subset of $\Omega_1 \times \Omega_2$. 
Let $K\csub \Omega_1$ then $$
\sup_{x \in K} |F_x| = \sup_{x \in K} \sup_{a \in F_x} |a| \le \sup_{x \in K} \sup_{w\in \R^m} H(x, w/|w|) \le \sup_{(x,w)\in K \times S^{m-1}} H(x, w) =  
H(x_0, w_0)<\infty,
$$
for some $(x_0,w_0)\in K \times S^{m-1}$. By \cite[Proposition]{Hoermander:94} the convexity of the map $w \mapsto H(x, w)$ implies its continuity, so $(x,w) \mapsto H(x, w)$ is an upper semi-continuous function and Lemma \ref{supremum_uppersemicontinuous_functions} states that an upper semi-continuous function attains its supremum on a compact set.
\end{proof}

\begin{example} \label{wavefront_map_example}
Let $u \in \mathcal{D}'(\Omega_1)$ be a distribution, then the wave front set of $u$
defines a closed set-valued map by
$$
\Sigma :\singsupp{u} \rightarrow \mathcal{K}_0(S_{n-1}), 
\quad x \mapsto \Sigma_x(u),
$$
where $\Sigma_x(u)$ denotes the cone of irregular directions of $u$ at $x$. 
Since locally boundedness is implied by the compact range of $\Sigma$ we obtain that $\Sigma$ is upper semi-continuous,
i.e. if $x\in \singsupp{u}$, then for any open neighborhood $W$ of $\Sigma_x$, 
there exists an open neighborhood $X \subseteq \Omega_1$ of $x$ such that $\Sigma_{X\cap \singsupp{u}} \subseteq W$.

In the same way the $\mathcal{G}^{\infty}-$wave front set of a Colombeau function $u \in \G{(\Omega_1)}$
defines an upper semi-continuous set-valued map $\Sigma^{\infty}:\singsupp{u} \rightarrow \mathcal{K}_0(S_{n-1})$.
\end{example}

\begin{definition} \label{set_valued_map_inverse} \index{invertible!set-valued map} \index{inverse map!set-valued map}
Let $F:\Omega_1 \mapsto \mathcal{P}_0(\Omega_2)$ be a set-valued map. 

We define the \emph{inverse graph} by 
$$
\ggraph{F}^{-1} = \{(y,x) \in \ran{F} \times \Omega_1 \mid (x,y) \in \ggraph{F}\}.
$$


We define the \emph{inverse map} of $F$ by
$$
F^{-1}: \ran{F} \rightarrow \mathcal{P}_0(\Omega_1),
 \quad y \mapsto \pi_2(\ggraph{F}^{-1} \cap(\{y\} \times \Omega_1)) \in \mathcal{P}_0(\Omega_1).
$$

We say $F$ is \emph{$\mathcal{F}_0(\Omega)$-invertible}\index{invertible, set-valued map} if $\pi_2(\ggraph{F}^{-1} \cap(\{y\} \times \Omega_1)) \in \mathcal{F}_0(\Omega_1)$
for all $y\in \ran{F}$. If $F$ is $\mathcal{F}_0$-invertible it holds that $F^{-1}$ maps $\ran{F}$ into $\mathcal{F}_0(\Omega_1)$.
\end{definition}

\begin{theorem} \label{inverse_set_valued_theorem}
Let $F:\Omega_1 \mapsto \mathcal{F}_0(\Omega_2)$ be a set-valued map, then the following statements hold:
\begin{enumerate}
\item It holds that
\begin{eqnarray*}
\pi_1(\ggraph{F}^{-1} \cap (Y \times X)) =  \pi_2(\ggraph{F} \cap (X \times Y)) \\
\pi_2(\ggraph{F}^{-1} \cap (Y \times X)) =  \pi_1(\ggraph{F} \cap (X \times Y))
\end{eqnarray*}
for all $X\subseteq \Omega_1, Y \subseteq \Omega_2$ and $\ggraph{F^{-1}} = \ggraph{F}^{-1}$.

\item If $F$ is $\mathcal{F}_0$-invertible, then $F^{-1}$ is upper semi-continuous if and only if 
      $F_A$ is closed whenever $A$ is closed. 
\item If $F$ is upper semi-continuous, then $F$ is $\mathcal{F}_0$-invertible and $F^{-1}$ satisfies that $(F^{-1})_A$ is closed whenever $A$ is closed. 
\item If $F$ is upper semi-continuous and locally bounded, then $F$ is $\mathcal{F}_0$-invertible and $F^{-1}$ is proper.
\item If $F$ is proper, then $F$ is $\mathcal{F}_0$-invertible and $F^{-1}$ is upper semi-continuous and locally bounded.
\end{enumerate}
\end{theorem}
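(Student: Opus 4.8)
The plan is to deduce all five items from the identities in (i) together with the basic equivalences already established for set-valued maps (Propositions \ref{closed_corr_proposition}, \ref{proper_corr_proposition}, \ref{usc_corr_proposition}, Theorem \ref{set_valued_map_main_theorem}) and Example \ref{continuous_map_example}. The key observation is that $\ggraph{F^{-1}} = \ggraph{F}^{-1}$ simply swaps the roles of the two factors, so every statement about $F^{-1}$ translates into a statement about $F$ with $\pi_1$ and $\pi_2$ interchanged. Thus the entire theorem is essentially a ``dictionary'' exercise once (i) is in place.

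For (i) I would argue directly from the definitions: $(a,x) \in \ggraph{F}^{-1} \cap (Y\times X)$ means $a \in Y$, $x\in X$ and $(x,a)\in\ggraph{F}$, i.e. $(x,a)\in \ggraph{F}\cap(X\times Y)$; projecting onto the first component of the left side gives the second component of the right side, and vice versa. The identity $\ggraph{F^{-1}} = \ggraph{F}^{-1}$ is then immediate by taking $X=\Omega_1$, $Y=\ran{F}$, together with the definition of $F^{-1}$ as the fibrewise slice of $\ggraph{F}^{-1}$. For (ii): applying Proposition \ref{usc_corr_proposition} to the map $F^{-1}:\ran{F}\to\mathcal{F}_0(\Omega_1)$, upper semi-continuity of $F^{-1}$ is equivalent to $\pi_1(\ggraph{F^{-1}}\cap(\ran{F}\times A))$ being closed for all closed $A\subseteq\Omega_1$; by (i) this projection equals $\pi_2(\ggraph{F}\cap(A\times\Omega_2)) = F_A$, so the equivalence is exactly ``$F_A$ closed whenever $A$ is closed.'' (One must first check $F$ is indeed $\mathcal{F}_0$-invertible here, which needs $F_A$ closed for $A$ a point — a special case of the hypothesis, or one notes the statement is about the implication only once invertibility is secured; I would phrase (ii) carefully so that $\mathcal{F}_0$-invertibility is part of the standing hypothesis as written.)

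For (iii): if $F$ is upper semi-continuous, Theorem \ref{set_valued_map_main_theorem}(ii) gives $F$ closed, hence each fibre $F_x$ and each slice of $\ggraph{F}$ is closed; the fibres of $\ggraph{F}^{-1}$ over $y\in\ran{F}$ are $\{x : (x,y)\in\ggraph{F}\} = \pi_1(\ggraph{F}\cap(\Omega_1\times\{y\}))$, which is the preimage of a point under the closed graph and hence closed in $\Omega_1$ — so $F$ is $\mathcal{F}_0$-invertible. That $(F^{-1})_A$ is closed for closed $A$ follows from (i): $(F^{-1})_A = \pi_2(\ggraph{F^{-1}}\cap(A\times\Omega_1)) = \pi_1(\ggraph{F}\cap(\Omega_1\times A))$, which by Proposition \ref{usc_corr_proposition} is closed precisely because $F$ is upper semi-continuous. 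For (iv), add local boundedness: by Theorem \ref{set_valued_map_main_theorem}(ii) $F$ is closed, so by Proposition \ref{closed_corr_proposition} $\ggraph{F}\cap(K\times\Omega_2)\csub\Omega_1\times\Omega_2$ for all $K\csub\Omega_1$; transporting through $\ggraph{F^{-1}}=\ggraph{F}^{-1}$ this says $\ggraph{F^{-1}}\cap(\Omega_2\times K)$ is compact for all $K\csub\Omega_1$, which is exactly properness of $F^{-1}$. For (v), if $F$ is proper then $\ggraph{F}\cap(\Omega_1\times K)\csub\Omega_1\times\Omega_2$ for all $K\csub\Omega_2$; again via the inverse graph this is $\ggraph{F^{-1}}\cap(K\times\Omega_1)$ compact, i.e. Proposition \ref{closed_corr_proposition}(ii) holds for $F^{-1}$ (with roles of the spaces swapped), so $F^{-1}$ is closed and locally bounded, and by Theorem \ref{set_valued_map_main_theorem}(i) it is then upper semi-continuous and locally bounded; $\mathcal{F}_0$-invertibility of $F$ follows since properness forces the point-fibres of $\ggraph{F}^{-1}$ to be compact, in particular closed and (being slices of a nonempty graph over the range) nonempty.

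The main obstacle I anticipate is purely bookkeeping: keeping straight which projection is which after passing to the inverse graph, and making sure that the domain of $F^{-1}$ is consistently taken to be $\ran{F}$ (not all of $\Omega_2$), so that ``upper semi-continuity of $F^{-1}$'' is interpreted relative to the subspace topology on $\ran{F}$ — otherwise the equivalences inherited from Proposition \ref{usc_corr_proposition} and Theorem \ref{set_valued_map_main_theorem} would be applied on the wrong ambient space. A secondary subtlety is that in (ii)--(iii) one should verify nonemptiness of the fibres of $F^{-1}$, which holds exactly on $\ran{F}$ by construction; I would flag this once and then suppress it.
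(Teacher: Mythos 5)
Your proposal is correct and follows essentially the same route as the paper: prove the projection identities and $\ggraph{F^{-1}}=\ggraph{F}^{-1}$ in (i) directly from the definitions, then transfer Propositions \ref{closed_corr_proposition} and \ref{usc_corr_proposition} and Theorem \ref{set_valued_map_main_theorem} to $F^{-1}$ through those identities to obtain (ii)--(v). Your small variant in (iii) (getting $\mathcal{F}_0$-invertibility from closedness of the graph rather than specializing the closed set $A$ to $\{y\}$) is equivalent to the paper's argument.
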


\begin{proof}
(i) 
Observe that 
\begin{multline*}
\pi_i(\ggraph{F}^{-1}\cap (Y \times X)) =  \pi_i(\{(y,x) \in \Omega_2 \times \Omega_1 \mid (x,y) \in \ggraph{F}\cap (X \times Y)\} )\\ =
\pi_j(\ggraph{F}\cap (X \times Y)), \quad i,j \in \{1,2\}, i\ne j
\end{multline*}
for all $X\subseteq \Omega_1$ and $Y\subseteq \Omega_2$.

If $(y,x) \in \ggraph{F^{-1}}$ it holds that 
$$
x\in \pi_2(\ggraph{F}^{-1} \cap(\{y\} \times \Omega_1)),
$$
thus  $(y,x) \in \ggraph{F}^{-1}$ implying $(x,y) \in \ggraph{F}$.

If $(x,y) \in \ggraph{F}$ we have $y\in \pi_2(\ggraph{F}\cap \{x\} \times \Omega_2) = \pi_1(\ggraph{F}^{-1}\cap \Omega_2 \times \{x\})$, thus $x\in \pi_2( \ggraph{F}^{-1} \cap \{y\} \times \Omega_2 )=(F^{-1})_y$ and $(y,x) \in \ggraph{F}^{-1}$.

(ii) 
Since $F$ is $\mathcal{F}_0$-invertible it follows by (i) that
$$
\pi_1(\ggraph{F^{-1}} \cap (\Omega_2 \times A)) = 
\pi_2(\ggraph{F} \cap A \times \Omega_1) = F_A
$$
and Proposition \ref{usc_corr_proposition} yields that $F^{-1}$ is upper semi-continuous if and only if 
$F_A$ is closed whenever $A$ is closed.

(iii) Proposition \ref{usc_corr_proposition}
yields that $F_A = \pi_1(\ggraph{F} \cap (\Omega_1 \times A))$ is closed, whenever $A \subseteq \Omega_2$ is closed.
By (i) we conclude that $\pi_2(\ggraph{F}^{-1} \cap (A \times \Omega_1)) = \pi_1(\ggraph{F} \cap (\Omega_2 \times A))$
is closed whenever $A$ is closed.
We obtain $(F^{-1})_y =\pi_2(\ggraph{F}^{-1} \cap (\{y\} \times \Omega_1)) \in \mathcal{F}_0(\Omega_1)$ for all $y\in \ran{A}$, so
$A$ is $\mathcal{F}_0$-invertible. Furthermore $(F^{-1})_A$ is closed whenever $A\subseteq \Omega_2$ is closed.

(iv) By (iii) we obtain that $F$ is $\mathcal{F}_0$-invertible and since 
$\pi_2(\ggraph{F^{-1}} \cap ( K \cap \Omega_2)) \csub \Omega_1 \times \Omega_2$ for all $K\csub \Omega_1$ by Proposition \ref{closed_corr_proposition}
we follow by (i) that
$$
\pi_1(\ggraph{F^{-1}} \cap (\Omega_2 \times K)) = \pi_2(\ggraph{F^{-1}} \cap ( K \cap \Omega_2)) \csub \Omega_1 \times \Omega_2,
$$
so $F^{-1}$ is proper.

(v) If $F$ is proper, then we have $\pi_1(\ggraph{F} \cap (\Omega_1 \times K)) \csub \Omega_1 \times \Omega_2$ 
for all $K\csub \Omega_2$, thus
$$
\pi_2(\ggraph{F^{-1}}  \cap (K \times \Omega_2))=\pi_1(\ggraph{F} \cap (\Omega_1 \times K))
$$
is compact. In particular if we set $K=\{y\}$. So $(F^{-1})_y \in \mathcal{F}_0(\Omega_1)$
and Proposition \ref{closed_corr_proposition}
implies that $F^{-1}$ is closed and locally bounded. 
\end{proof}

\begin{example} \label{inverse_image_example}
Let us reconsider Example \ref{continuous_map_example}, where the set-valued map $F: x \mapsto \{f(x)\}$ was obtained from a 
continuous function $f: \Omega_1 \rightarrow \Omega_2$. 
Then the inverse map $F^{-1}$ is defined by the inverse images of $f$, i.e.  $(F^{-1})_y = f^{-1}(\{y\})$. 
Theorem \ref{inverse_set_valued_theorem} yields that $F^{-1}$ is a set-valued map such that $(F^{-1})_A$ is closed whenever 
$A$ is closed. In addition $F^{-1}$ is upper semi-continuous if and only if $f$ is a closed function, i.e. $f(A)\subseteq \Omega_2$ is closed for all $A\subseteq \Omega_1$ closed.
\end{example}

\begin{theorem} \label{set_valued_composition_theorem}
Let $F:\Omega_1 \rightarrow \mathcal{F}_0(\Omega_2)$
and $G:\Omega \rightarrow \mathcal{F}_0(\Omega_1)$ be set-valued maps, such that one of 
the following properties holds:
\begin{itemize}
\item  $G$ is single-valued.
\item  $F$ is upper semi-continuous and locally bounded, and $G$ is locally bounded.
\item $F_A$ is closed whenever $A \subseteq \Omega$ is closed.
\end{itemize}
then we define the \emph{composition map}\index{composition, of set-valued maps}
$$
(F\circ G):\Omega \rightarrow \mathcal{F}_0(\Omega_2), \quad x \mapsto (F\circ G)_x. 
$$
where $(F\circ G)_x  := (F)_{G_x}$. It satisfies the following properties:
\begin{enumerate}
\item If $F_A$ is closed whenever $A \subseteq \Omega_1$ is closed and 
	  $G_B$ is closed whenever $B \subseteq \Omega$ is closed, then
	  $(F\circ G)_B$ is closed whenever $B \subseteq \Omega$ is closed.
\item If $F$ and $G$ are upper semi-continuous, then $(F\circ G)$ is upper semi-continuous.
\item If $F$ and $G$ are locally bounded, then $(F\circ G)$ is locally bounded.
\end{enumerate}
\end{theorem}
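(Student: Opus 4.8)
The plan rests on the elementary identity
$$
(F\circ G)_B \;=\; \bigcup_{x\in B}\bigcup_{y\in G_x} F_y \;=\; \bigcup_{y\in G_B} F_y \;=\; F_{G_B}, \qquad B\subseteq\Omega,
$$
so that, at the level of ranges, composing set-valued maps is ordinary composition; in particular $(F\circ G)_x = F_{G_x}$. The first task is well-definedness, i.e.\ that $x\mapsto(F\circ G)_x$ really maps into $\mathcal{F}_0(\Omega_2)$. Non-emptiness is immediate since $G_x\neq\emptyset$ and each $F_y\neq\emptyset$, and closedness of $F_{G_x}$ in $\Omega_2$ is exactly what the three alternative hypotheses provide: if $G$ is single-valued, $(F\circ G)_x = F_{g(x)}$ is a value of $F$, hence lies in $\mathcal{F}_0(\Omega_2)$; if $F_A$ is closed for every closed $A\subseteq\Omega_1$, then $F_{G_x}$ is closed because $G_x\in\mathcal{F}_0(\Omega_1)$; and if $F$ is upper semi-continuous and locally bounded while $G$ is locally bounded, one combines the fact that $\ggraph{F}$ is closed (Theorem~\ref{set_valued_map_main_theorem}(ii)) with Proposition~\ref{closed_corr_proposition} --- the restriction of $\ggraph{F}$ to a compact base is compact --- to conclude that $F_{G_x} = \pi_2(\ggraph{F}\cap(G_x\times\Omega_2))$ is closed.

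Given the identity, (i) is immediate: for $B\subseteq\Omega$ closed, the hypothesis on $G$ gives $G_B\in\mathcal{F}_0(\Omega_1)$, and the hypothesis on $F$ applied with $A:=G_B$ gives that $(F\circ G)_B = F_{G_B}$ is closed. For (ii) I would invoke the reformulation of upper semi-continuity used in the proof of Proposition~\ref{usc_corr_proposition}: a closed-valued map $F$ is upper semi-continuous if and only if $U(W):=\{y\in\Omega_1\mid F_y\subseteq W\}$ is open for every open $W\subseteq\Omega_2$. Fixing $x\in\Omega$ and an open neighborhood $W$ of $(F\circ G)_x = F_{G_x}$, one has $G_x\subseteq U(W)$ with $U(W)$ open, so upper semi-continuity of $G$ yields a neighborhood $X$ of $x$ with $G_X\subseteq U(W)$, whence $(F\circ G)_X = F_{G_X}\subseteq W$; this is upper semi-continuity of $F\circ G$ at $x$.

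For (iii): given $x\in\Omega$, local boundedness of $G$ provides a neighborhood $X$ of $x$ with $G_X$ bounded; covering the closure $\overline{G_X}$ (compact in $\R^n$) by finitely many neighborhoods $Y_1,\dots,Y_k$ on each of which $F$ is bounded --- which is possible by local boundedness of $F$ together with compactness --- one obtains $(F\circ G)_X = F_{G_X}\subseteq\bigcup_{i=1}^{k}F_{Y_i}$, a bounded set. Hence $F\circ G$ is locally bounded.

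The genuine obstacle is the well-definedness step in the second case: showing $F_{G_x}$ is closed when $F$ is only upper semi-continuous and locally bounded and $G$ is only locally bounded. Unlike the other two cases one cannot quote a closedness property of $F$ outright, but must combine closedness of $\ggraph{F}$ with the local boundedness hypotheses to control the limit points of the relevant nets; the same compactness issue reappears in (iii) when passing from ``$G_X$ bounded'' to a finite subcover. Once this is settled, parts (i)--(iii) are short, resting only on the identity $(F\circ G)_B = F_{G_B}$ and the usc-reformulation above.
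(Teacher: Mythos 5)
Your proposal is correct and follows essentially the same route as the paper's proof: the identity $(F\circ G)_B=F_{G_B}$, the three-case well-definedness argument (in the usc plus locally bounded case via closedness of $\ggraph{F}$ from Theorem \ref{set_valued_map_main_theorem} and compactness over a compact base from Proposition \ref{closed_corr_proposition}), and the same short arguments for (i)--(iii). The only cosmetic differences are that you phrase (ii) through the open set $U(W)=\{y\mid F_y\subseteq W\}$ where the paper takes a union of neighborhoods $Z_y$ of the points of $G_x$, and that in (iii) you spell out the finite-cover step that the paper leaves implicit when passing from local boundedness of $F$ to boundedness of $F_{G_X}$.
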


\begin{proof}
Since $G_x \in \mathcal{F}_0(\Omega_1)$ is non-empty, there exists some $y\in G_x$ and since $F_y \in \mathcal{F}_0(\Omega_2)$ 
we have that $(F\circ G)_x $ is a non-empty subset of $\Omega_2$. 

If $G$ is single-valued it immediately follows that $(F\circ G)_x = (F)_{G_x} \in \mathcal{F}_0(\Omega_2)$. 

In the case where $F$ is upper semi-continuous it 
follows by Theorem \ref{set_valued_map_main_theorem} (ii) that $F$ is closed. 
Since $G$ is locally bounded it follows that
$G_x$ is compact.
By Proposition \ref{closed_corr_proposition} (ii) we have that $\pi_2(\ggraph{F} \cap G_x \times \Omega_2) = \bigcup_{y\in G_x} F_y$ is a compact subset of $\Omega_2$. 
This yields $(F\circ G)_x \in \mathcal{F}_0(\Omega_2)$. 

In the case where $F_A$ is closed whenever $A\subseteq \Omega_1$ is closed, we have that $G_x \in \mathcal{F}_0(\Omega_1)$
implies that $(F\circ G)_x = (F)_{G_x} \in \mathcal{F}_0(\Omega_2)$.  

(i) $(F\circ G)_B = (F)_{G_B}$ is closed for all $B\subseteq \Omega$ closed, since $G_B$ is closed and $F_A$ is closed 
for arbitrary $A \subseteq \Omega_1$ closed. 

(ii) Let $W$ be some open neighborhood of $(F\circ G)_x$, then $W$ is an open neighborhood for 
all $F_y$ with $y\in G_x$. Due to the upper semi-continuity of $F$ we can find neighborhoods $Z_y$
for all $y\in G_x$, such that $F_z \subseteq W$ for $z\in Z_y$.
Put $Z:= \bigcup_{y \in G_x} Z_{y}$, then $Z$ is an open neighborhood of $G_x$ and since 
$G$ is upper semi-continuous we can find an open neighborhood $X$ of $x$ such that $G_w \subseteq Z$ for all $w\in X$.
It follows that $\bigcup_{y\in G_w} F_y \subseteq W$ for all $y\in X$, thus $F\circ B$ is upper semi-continuous.

(iii) Let $X$ be some bounded subset $\Omega$, then
$$
\bigcup_{x \in X}(F\circ G)_x = \bigcup_{x \in X} \bigcup_{y\in G_x} F_y = \bigcup_{y\in C} F_y
$$
where $C:=\bigcup_{x\in X} G_x$. Since $G$ is locally bounded it holds that $C$ is a bounded subset of $\Omega_1$. Due
to the locally boundedness of $F$ we conclude that $\bigcup_{y\in C} F_y$ is a bounded subset of $\Omega_2$, thus
$F\circ G$ is locally bounded. 
\end{proof}

\begin{remark}
Note that if the set-valued map $F:\Omega_1 \rightarrow \mathcal{F}_0(\Omega_2)$ is neither upper semi-continuous
nor closed and $G:\Omega \rightarrow \mathcal{F}_0(\Omega_1)$ is a locally bounded set-valued map which is not single-valued,
the composition $(F\circ G)$ need not define a set-valued map $\Omega \rightarrow \mathcal{F}_0(\Omega_2)$. 
Consider the following example: Let $F$ be defined by
$$
F_x :=  \left\{ \begin{array}{cl}
\{0\} & x \le 0 \\
\{1+x\} & x > 0 \quad ,
\end{array}
\right. 
$$
Observe that it is not upper semi-continuous at $0$ and $F_{[0,+\infty]} = \{0\}\cup ]1,\infty]$. 
Let $G_x:=[-1,1]$ for all $x\in\R$, then  $(F\circ G)_x = \{0\} \cup ]1, 2] \not \in \mathcal{F}_0(\R)$ for any $x\in \R$.

\end{remark}

\begin{corollary} \label{composition_single_valued_function}
Let  $F:\Omega_1 \rightarrow \mathcal{F}_0(\Omega_2)$ be an upper semi-continuous, locally bounded set-valued map.
If $g:\Omega \rightarrow \Omega_1$ is a continuous map, 
then the composition $F\circ g$ is an upper semi-continuous, locally bounded set-valued map 
$\Omega \rightarrow \mathcal{F}_0(\Omega_2)$. 

In the case where $F$ is convex-valued, we obtain that $F\circ g$ is again convex-valued. If $H$ is
the supporting function of $F$, we have that $(x,w) \rightarrow H(g(x),w)$ is the supporting function of $F\circ g$.
\end{corollary}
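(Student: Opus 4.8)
The plan is to obtain this as a special case of Theorem~\ref{set_valued_composition_theorem}, taking the inner set-valued map to be $G:\Omega\to\mathcal{K}_0(\Omega_1)$, $x\mapsto\{g(x)\}$, so that $(F\circ G)_x = (F)_{G_x} = \bigcup_{y\in\{g(x)\}}F_y = F_{g(x)} = (F\circ g)_x$. First I would check that $G$ satisfies the hypotheses needed for the composition theorem. By Example~\ref{continuous_map_example} the continuity of $g$ is equivalent to $G$ being upper semi-continuous, and $G$ takes values in $\mathcal{K}_0(\Omega_1)\subseteq\mathcal{F}_0(\Omega_1)$. Moreover $G$ is single-valued, so the first of the three alternative assumptions of Theorem~\ref{set_valued_composition_theorem} is fulfilled, and hence $F\circ g = F\circ G$ is a well-defined set-valued map $\Omega\to\mathcal{F}_0(\Omega_2)$. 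I would also record that $G$ is locally bounded: around any $x\in\Omega$ pick a neighborhood $X\csub\Omega$; then $G_X\subseteq g(\overline{X})$ is bounded, being the continuous image of a compact set.

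With these facts in hand the first assertion is immediate. Since $F$ is upper semi-continuous by hypothesis and $G$ is upper semi-continuous by the above, part~(ii) of Theorem~\ref{set_valued_composition_theorem} gives that $F\circ g$ is upper semi-continuous; since $F$ and $G$ are both locally bounded, part~(iii) gives that $F\circ g$ is locally bounded.

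For the convex-valued case, if $F$ takes values in $\mathcal{K}_0(\Omega_2)$ then $(F\circ g)_x = F_{g(x)}\in\mathcal{K}_0(\Omega_2)$, so $F\circ g$ is again convex-valued. The statement about supporting functions is then a one-line computation from Definition~\ref{supporting_function}:
\[
  H_{F\circ g}(x,w) \;=\; \sup_{a\in(F\circ g)_x}\langle a,w\rangle \;=\; \sup_{a\in F_{g(x)}}\langle a,w\rangle \;=\; H(g(x),w).
\]
The argument is essentially bookkeeping; the only point requiring a little care is the local boundedness of $G$, which is not automatic for an arbitrary single-valued map and is exactly where the continuity of $g$ enters, through compactness of $g(\overline{X})$. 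Everything else is a direct appeal to the composition theorem together with the definition of the supporting function.
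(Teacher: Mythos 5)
Your proposal is correct and is essentially the paper's own argument: the paper proves the corollary by citing Theorem \ref{set_valued_composition_theorem} and Definition \ref{supporting_function}, and you have simply written out the details of that citation (identifying $g$ with the single-valued map $x\mapsto\{g(x)\}$, checking upper semi-continuity and local boundedness, and computing the supporting function).
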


\begin{proof}
Follows by Theorem \ref{set_valued_composition_theorem} and Definition \ref{supporting_function}.
\end{proof}

For the remaining part of this Section we assume that $\Omega_1$ is an open or closed subset of $\R^m$, such that
the set of all Lebesgue measurable subsets of $\Omega_1$ is complete with respect to the Lebesgue measure.

\begin{definition} \index{set-valued map!Lebesgue measurable}
Let $F$ be a set-valued map from $\Omega_1$ into $\R^m$. 
Then we call $F$ \emph{Lebesgue measurable}, if the set
$$
\{ x \in \Omega_1 \mid  F_x \cap \Omega \ne \emptyset \}
$$
is a Lebesgue measurable set for all open sets $\Omega \subseteq \R^m$.

We denote the set of Lebesgue measurable set-valued maps by $\mathcal{M}(\Omega_1; \mathcal{F}_0(\R^m))$.
\end{definition}

\begin{theorem}
Let $F:\Omega_1 \rightarrow \mathcal{F}_0(\R^m)$ be a set-valued map. Then the following properties are equivalent:
\begin{enumerate}
\item $F$ is Lebesgue measurable.
\item $\ggraph{F}$ belongs to $\mathcal{L}(\Omega_1) \otimes \mathcal{B}(\R^m)$.
\item for all $w\in \R^m$ the map $x\mapsto \inf_{a\in F_x} |a-w|$ is Lebesgue measurable.
\end{enumerate}
\end{theorem}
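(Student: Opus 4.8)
The plan is to establish $(i)\Leftrightarrow(iii)$ by an elementary argument, then $(iii)\Rightarrow(ii)$ by passing through the joint measurability of a distance function, and finally $(ii)\Rightarrow(i)$ by invoking the measurable projection theorem; the completeness of $\mathcal{L}(\Omega_1)$ assumed at the beginning of this part of the section is exactly what makes this last step work.

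To prove $(i)\Leftrightarrow(iii)$, for $w\in\R^m$ I set $d_w(x):=\inf_{a\in F_x}|a-w|=\dist{w}{F_x}$. Since each $F_x$ is non-empty and closed, for every $c>0$ one has $d_w(x)<c$ exactly when $F_x$ meets the open ball $\{a:|a-w|<c\}$, so
$$
\{x\in\Omega_1:d_w(x)<c\}=\{x\in\Omega_1:F_x\cap\{a:|a-w|<c\}\ne\emptyset\}.
$$
If $(i)$ holds this is Lebesgue measurable for every $c>0$, and since the sets $[0,c)$ generate $\mathcal{B}([0,\infty))$, $d_w$ is Lebesgue measurable, i.e. $(iii)$. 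Conversely, any open $\Omega\subseteq\R^m$ is a countable union $\Omega=\bigcup_k\{a:|a-w_k|<r_k\}$ with $w_k\in\Q^m$ and rational $r_k>0$ (second countability of $\R^m$), whence
$$
\{x:F_x\cap\Omega\ne\emptyset\}=\bigcup_k\{x:F_x\cap\{a:|a-w_k|<r_k\}\ne\emptyset\}=\bigcup_k\{x:d_{w_k}(x)<r_k\}
$$
is a countable union of Lebesgue measurable sets, giving $(i)$.

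For $(iii)\Rightarrow(ii)$, I would consider $\Phi:\Omega_1\times\R^m\to[0,\infty)$, $\Phi(x,a):=\dist{a}{F_x}$. For fixed $x$ the map $a\mapsto\Phi(x,a)$ is $1$-Lipschitz, hence continuous; for fixed $a$ the map $x\mapsto\Phi(x,a)$ is Lebesgue measurable by $(iii)$. Such a Carath\'eodory function is $\mathcal{L}(\Omega_1)\otimes\mathcal{B}(\R^m)$-measurable: for each $n$ pick a countable Borel partition $(A^n_j)_j$ of $\R^m$ into sets of diameter $<1/n$ together with points $q^n_j\in A^n_j$; then $\Phi_n(x,a):=\sum_j\Phi(x,q^n_j)\,\chi_{A^n_j}(a)$ is $\mathcal{L}(\Omega_1)\otimes\mathcal{B}(\R^m)$-measurable and $\Phi_n\to\Phi$ pointwise by continuity in $a$. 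Since each $F_x$ is closed, $a\in F_x$ iff $\Phi(x,a)=0$, so $\ggraph{F}=\Phi^{-1}(\{0\})\in\mathcal{L}(\Omega_1)\otimes\mathcal{B}(\R^m)$, which is $(ii)$.

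Finally, for $(ii)\Rightarrow(i)$, let $\Omega\subseteq\R^m$ be open; then $\{x\in\Omega_1:F_x\cap\Omega\ne\emptyset\}=\pi_1\bigl(\ggraph{F}\cap(\Omega_1\times\Omega)\bigr)$, and the set on the right lies in $\mathcal{L}(\Omega_1)\otimes\mathcal{B}(\R^m)$ by $(ii)$. The measurable projection theorem — the projection onto $\Omega_1$ of a set in $\mathcal{L}(\Omega_1)\otimes\mathcal{B}(\R^m)$ is measurable with respect to the completion of Lebesgue measure on $\Omega_1$, see e.g.\ \cite{KleinThompson:84} or \cite{AubinFrankowska:90} — together with the completeness of $\mathcal{L}(\Omega_1)$, shows that this projection is Lebesgue measurable, so $(i)$ holds. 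The main obstacle is precisely this last implication: it is not elementary but rests on the projection theorem from descriptive set theory (projections of sets in a product $\sigma$-algebra over a Polish space are universally measurable), which is why completeness of $\mathcal{L}(\Omega_1)$ had to be assumed; the implications $(i)\Leftrightarrow(iii)$ and $(iii)\Rightarrow(ii)$ are routine.
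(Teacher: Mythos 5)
Your proof is correct. Note, however, that the paper itself does not prove this theorem at all: its ``proof'' consists of the single citation to \cite[Theorem 8.1.4]{AubinFrankowska:90}. What you have written is essentially a self-contained reconstruction of the standard argument behind that reference: the equivalence $(i)\Leftrightarrow(iii)$ via the ball characterization of $\dist{w}{F_x}$ and second countability of $\R^m$, the implication $(iii)\Rightarrow(ii)$ via joint $\mathcal{L}(\Omega_1)\otimes\mathcal{B}(\R^m)$-measurability of the Carath\'eodory function $(x,a)\mapsto\dist{a}{F_x}$ together with the identification of $\ggraph{F}$ as its zero set (this is the one place where closedness of the values is actually needed; in the ball characterization only non-emptiness matters), and $(ii)\Rightarrow(i)$ via the measurable projection theorem. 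Your chain $(i)\Rightarrow(iii)\Rightarrow(ii)\Rightarrow(i)$ closes the cycle, so all three equivalences are established. You also correctly identify where the standing hypothesis of the section --- completeness of $\mathcal{L}(\Omega_1)$ with respect to Lebesgue measure --- enters: it is indispensable for the projection step, since projections of product-measurable sets are in general only measurable for the completed $\sigma$-algebra; that projection theorem (for a $\sigma$-finite complete measure space times a Polish space) is the single non-elementary ingredient and is legitimately quoted rather than proved. In short: the paper delegates the whole statement to the literature, while you supply the proof along the same route the cited source takes; there is no gap.
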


\begin{proof}
See \cite[Theorem 8.1.4]{AubinFrankowska:90}.
\end{proof}

\begin{proposition}
Let $F:\Omega_1 \rightarrow \mathcal{F}_0(\R^m)$ be a set-valued map. If $F$ is measurable, then 
its support function $H$ has the property that $x \mapsto H(x,w)$ is measurable for all $w\in \R^n$. 

If $F$ is convex-valued and locally bounded, then $F$ is measurable if and only if its supporting function $H$ has the property
that $x \mapsto H(x,w)$ is measurable for all $w\in \R^m$. 
\end{proposition}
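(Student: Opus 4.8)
The plan is to derive both parts directly from the definition of measurability of a set-valued map, together with the preceding characterization theorem (measurability of $F$ is equivalent to $\ggraph{F} \in \mathcal{L}(\Omega_1) \otimes \mathcal{B}(\R^m)$).

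For the first assertion, fix $w \in \R^m$. If $w = 0$ then $H(\cdot,0) \equiv 0$ is trivially measurable, so assume $w \neq 0$. For each $c \in \R$ the set $O_{w,c} := \{ a \in \R^m \mid \langle a, w \rangle > c \}$ is open (and non-empty), and from the definition of the supporting function one has the identity
$$
\{ x \in \Omega_1 \mid H(x,w) > c \} = \{ x \in \Omega_1 \mid F_x \cap O_{w,c} \neq \emptyset \}.
$$
Since $F$ is measurable the right-hand side is a Lebesgue measurable set; as $c$ was arbitrary (and $H(x,w) > -\infty$ because $F_x \neq \emptyset$), this shows $x \mapsto H(x,w)$ is measurable. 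Note that this already proves the ``only if'' direction of the second assertion, and uses neither convexity nor local boundedness.

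For the ``if'' direction I would proceed via the graph. Assume $F$ is convex-valued and locally bounded and that $x \mapsto H(x,w)$ is measurable for every $w$. Local boundedness forces $F_x$ to be bounded for each $x$, hence $H(x,\cdot)$ is finite on all of $\R^m$ and therefore, being convex, continuous on $\R^m$ (the same fact about finite convex functions that was used in the proof of Theorem~\ref{set_valued_map_main_theorem}(iii)). Since $F$ is convex- and closed-valued, Definition~\ref{supporting_function} gives $F_x = \{ a \in \R^m \mid \langle a, w\rangle \le H(x,w)\ \forall w \in \R^m\}$. Now fix a countable dense subset $D \subseteq \R^m$; for a fixed pair $(x,a)$ the set $\{ w \in \R^m \mid \langle a, w \rangle \le H(x,w)\}$ is closed (both sides are continuous in $w$), so it equals $\R^m$ as soon as it contains $D$. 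Consequently
$$
\ggraph{F} = \bigcap_{w \in D} \bigl\{ (x,a) \in \Omega_1 \times \R^m \mid \langle a, w \rangle \le H(x,w) \bigr\}.
$$
For each fixed $w$ the function $(x,a) \mapsto H(x,w) - \langle a, w\rangle$ is measurable in $x$ (it is $H(\cdot,w)$ lifted through the projection $\pi_1$) and continuous in $a$, hence $\mathcal{L}(\Omega_1) \otimes \mathcal{B}(\R^m)$-measurable, so each member of the intersection lies in $\mathcal{L}(\Omega_1) \otimes \mathcal{B}(\R^m)$; being a countable intersection, so does $\ggraph{F}$. By the preceding characterization theorem, $F$ is measurable.

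The main obstacle is the passage from the uncountable defining intersection $\bigcap_{w \in \R^m}$ to the countable one $\bigcap_{w \in D}$: this is exactly the point where local boundedness is indispensable, since it makes $H(x,\cdot)$ finite and hence continuous, which is what turns the set of admissible directions $w$ into a closed set so that density of $D$ suffices. Without local boundedness $H(x,\cdot)$ is only lower semicontinuous and this reduction genuinely breaks down. Everything else — the half-space slices being product measurable and the Carath\'eodory-type joint measurability of $(x,a)\mapsto H(x,w)-\langle a,w\rangle$ — is routine bookkeeping.
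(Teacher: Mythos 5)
Your proof is correct. Both halves work: the identity $\{x \mid H(x,w)>c\} = \{x \mid F_x \cap O_{w,c} \neq \emptyset\}$ with $O_{w,c}$ open reduces the first assertion directly to the definition of measurability of $F$ (and, as you note, needs neither convexity nor local boundedness), and the converse correctly passes through the product-measurability of $\ggraph{F}$: local boundedness makes $H(x,\cdot)$ finite, hence continuous as a finite convex function, which legitimizes the reduction of the defining intersection to a countable dense set of directions, after which the Carath\'eodory-type measurability of $(x,a)\mapsto H(x,w)-\langle a,w\rangle$ and the equivalence ``$F$ measurable $\Leftrightarrow$ $\ggraph{F}\in\mathcal{L}(\Omega_1)\otimes\mathcal{B}(\R^m)$'' stated just before the proposition finish the argument. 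The one point of comparison worth making is that the paper does not prove this proposition at all: it simply refers to \cite[Theorem 8.2.10]{AubinFrankowska:90}. Your argument is therefore a self-contained replacement, and it buys something the citation does not, namely that it uses only material already set up in the paper (the graph characterization of measurability, the convex-support-function correspondence, and the continuity of finite convex functions already invoked in the proof of Theorem \ref{set_valued_map_main_theorem}). It is also in the spirit of the reference, which likewise works through half-space representations and countable reductions, so there is no tension between your route and the cited one; just be aware that the graph criterion you invoke silently uses the completeness of the Lebesgue $\sigma$-algebra, which the paper assumes in the paragraph introducing measurable set-valued maps.
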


\begin{proof}
We refer to \cite[Theorem 8.2.10]{AubinFrankowska:90}.
\end{proof}

\begin{definition} \label{set_valued_integral} \index{set-valued map!integral}
Let $K\csub \Omega_1$ and $F\in\mathcal{M}(\Omega_1; \mathcal{K}_0(\R^m))$, 
then we define the integral over $K$ by
$$
\int_{K} F_y \m{y} := \left\{ b\in \R^m \mid \forall w\in \R^m : \langle b, w\rangle \le \int_K H(y, w) \m{y} \right\}.
$$ 
If $F\in\mathcal{M}(J; \mathcal{K}_0(\R^m))$ where $J$ is some subinterval of $\R$, we 
use the notation $\int_t^s F_\tau \m{\tau}  := \int_{[t,s]} F_\tau \m{\tau}$
and $\int_s^t F_\tau \m{\tau}  := -\int_{[t,s]} F_{\tau} \m{\tau}$ for all $s,t\in J$ with $t\le s$.
\end{definition}

\begin{remark}
Note that the integral defined above is only available for set-valued maps with convex values. We do not investigate its
relation to the \emph{Aumann integral} (see \cite[Definition 8.6.1.]{AubinFrankowska:90} or \cite[Definition 17.1.1]{KleinThompson:84}), which
is available (if the domain of integration is compact) for any $F\in \mathcal{M}(\Omega_1; \mathcal{F}_0(\Omega_2))$. 
\end{remark}

\begin{remark}
We define now the space of (Lebesgue) integrable functions by 
$$
\mathcal{L}^1(\Omega_1; \mathcal{K}_0(\Omega_2)) = \{ x\mapsto F_x \in \mathcal{M}(\Omega_1; \mathcal{K}_0(\Omega_2)) \mid \exists C\in \R: \forall K \csub J, \int_K F_y \m{y} \le C\}.
$$
This space can be equipped with a pseudo-metric
$$
d_{\mathcal{L}}(F,G) :=  \sup_{K \csub \Omega_1} \int_K d(F_y,G_y) \m{y} =  
   \int_{\Omega_1} \sup_{w\in\R^n} | H_F(y,w/|w|)-H_G(y,w/|w|) | \m{y} 
$$
where $H_F, H_G$ denote the supporting functions of $F,G \in \mathcal{M}(\Omega_1; \mathcal{K}_0(\Omega_2))$. 
We obtain an equivalence relation by putting $F \sim G$ if $d_{\mathcal{L}}(F,G)=0$. 
By a standard argument the space of equivalence classes 
$L^1(\Omega_1; \mathcal{K}_0(\R^m)) := \mathcal{L}^1(\Omega_1; \mathcal{K}_0(\R^m)) / \sim$ 
is a metric space (with metric $d_{\mathcal{L}}$).

It is straight-forward now to define $L^p(\Omega_1; \mathcal{K}_0(\Omega_2))$ and 
its local version $L^p_{\rm loc}(\Omega_1; \mathcal{K}_0(\Omega_2))$ for all $p\in \N$. 
\end{remark}

\begin{theorem} \label{primitive_set-valued map} \index{set-valued map!selection}
Let $J$ be some subinterval of $\R$. 
Furthermore let $F \in L^1_{\rm loc}(J; \mathcal{K}_0(\R^m))$ and $t\in J$ fixed and $C_0 \in \mathcal{K}_0(\R^m)$ . Then we have that 
$$
s \mapsto C_0 + \int_t^s F_{\tau} \m{\tau} 
$$
defines an upper semi-continuous, locally bounded and convex-valued set-valued map from $J$ to $\R^m$ called the \emph{primitive}
of $F$. Furthermore for any $c\in C_0$ there exists an absolutely continuous selection 
$f : J \rightarrow \R^m$ 
satisfying 
$f(s) \in C_0 + \int_t^s F_{\tau} \m{\tau}$ with $f(t)=c$. It has the property that 
$$
|f(s)-f(r)| \le \int_{\min{(r,s)}}^{\max{(r,s)}} \sup_{w\in \R^m} H(\tau,w/|w|) \m{\tau}.
$$
\end{theorem}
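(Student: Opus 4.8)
The plan is to reduce everything to supporting functions. Write $P_s := C_0 + \int_t^s F_\tau \m{\tau}$ and treat the case $s \ge t$ (the case $s \le t$ being symmetric under the orientation convention of Definition~\ref{set_valued_integral}). By that definition, $\int_t^s F_\tau \m{\tau} = \{b \in \R^m \mid \forall w \in \R^m : \langle b,w\rangle \le \int_t^s H(\tau,w)\m{\tau}\}$, where $H$ is the supporting function of $F$. Since $F \in L^1_{\mathrm{loc}}(J;\mathcal{K}_0(\R^m))$, the scalar map $\tau \mapsto |F_\tau| = \sup_{w\in\R^m} H(\tau,w/|w|)$ is locally integrable, so for fixed $w$ one has $|H(\tau,w)| \le |w|\,|F_\tau| \in L^1_{\mathrm{loc}}(J)$; hence $w \mapsto \int_t^s H(\tau,w)\m{\tau}$ is finite, convex and positively homogeneous, i.e.\ a genuine convex supporting function, and by the correspondence recalled in the Introduction $\int_t^s F_\tau \m{\tau}$ is a non-empty closed convex set with exactly this supporting function; moreover $|\int_t^s F_\tau \m{\tau}| = \sup_{|w|=1}\int_t^s H(\tau,w)\m{\tau} \le \int_t^s |F_\tau|\m{\tau} < \infty$, so it is compact. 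Therefore $P_s$ is the Minkowski sum of $C_0$ with a compact convex set, hence non-empty, closed and convex, so $P$ is convex-valued, and by additivity of supporting functions under Minkowski sums $H_{P_s}(w) = H_{C_0}(w) + \int_t^s H(\tau,w)\m{\tau}$.

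For upper semi-continuity and local boundedness I would invoke Theorem~\ref{set_valued_map_main_theorem}(iii): since $P$ is convex-valued it suffices that $s \mapsto H_{P_s}(w)$ be finite and upper semi-continuous for every fixed $w$. Finiteness of $H_{C_0}(w)$ for all $w$ means precisely that $C_0$ is bounded — this is needed for local boundedness of $P$ and holds in the intended applications, so I take it as a standing hypothesis — and then $H_{P_s}(w)$ is finite. Upper semi-continuity is immediate, indeed more is true: $s \mapsto \int_t^s H(\tau,w)\m{\tau}$ is the primitive of the $L^1_{\mathrm{loc}}$ function $\tau \mapsto H(\tau,w)$, hence locally absolutely continuous, so $s \mapsto H_{P_s}(w)$ is continuous. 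Equivalently one may note $d(P_{s'},P_s) = \sup_{|w|=1}\bigl|\int_s^{s'} H(\tau,w)\m{\tau}\bigr| \le \int_{\min(s,s')}^{\max(s,s')} |F_\tau|\m{\tau} \to 0$, so $P$ is Hausdorff-continuous with compact values, which forces upper semi-continuity.

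For the selection, fix $c \in C_0$. Since $F \in L^1_{\mathrm{loc}}(J;\mathcal{K}_0(\R^m)) \subseteq \mathcal{M}(J;\mathcal{F}_0(\R^m))$ is measurable with closed values, a measurable selection theorem (see \cite{AubinFrankowska:90}) yields a Lebesgue-measurable $g : J \to \R^m$ with $g(\tau)\in F_\tau$ for a.e.\ $\tau$; since $|g(\tau)| \le |F_\tau| \in L^1_{\mathrm{loc}}(J)$, we have $g \in L^1_{\mathrm{loc}}(J;\R^m)$. Put $f(s) := c + \int_t^s g(\tau)\m{\tau}$; then $f$ is locally absolutely continuous and $f(t)=c$, and for any $w\in\R^m$,
$$
\langle f(s),w\rangle = \langle c,w\rangle + \int_t^s \langle g(\tau),w\rangle\m{\tau} \le H_{C_0}(w) + \int_t^s H(\tau,w)\m{\tau} = H_{P_s}(w),
$$
using $c\in C_0$ and $g(\tau)\in F_\tau$; since $P_s$ is closed and convex, Definition~\ref{supporting_function} gives $f(s)\in P_s$. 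Finally $|f(s)-f(r)| = \bigl|\int_r^s g(\tau)\m{\tau}\bigr| \le \int_{\min(r,s)}^{\max(r,s)} |g(\tau)|\m{\tau} \le \int_{\min(r,s)}^{\max(r,s)} |F_\tau|\m{\tau}$, and $|F_\tau| = \sup_{a\in F_\tau}|a| = \sup_{w\in\R^m} H(\tau,w/|w|)$, which is the asserted estimate.

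The supporting-function bookkeeping, the Minkowski-sum facts, and the inner-product test are routine. The one genuinely external ingredient — and the step I expect to require the most care — is the measurable selection theorem: it must be applied to a map whose values are merely closed (not compact), and one must separately check that the selection is locally integrable so that $f$ is a well-defined, locally absolutely continuous curve. A secondary point to keep straight is the orientation convention of Definition~\ref{set_valued_integral} for $s < t$, which has to be propagated consistently through the supporting-function identities above.
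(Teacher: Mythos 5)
Your proof is correct, and the first half (convexity, closedness, upper semi-continuity and local boundedness via the supporting-function identity $H_{P_s}(w)=H_{C_0}(w)+\int_t^s H(\tau,w)\m{\tau}$ together with Theorem~\ref{set_valued_map_main_theorem}(iii)) is essentially the paper's argument. Where you genuinely diverge is the selection: the paper does not use a measurable selection theorem at all. It instead constructs, for each partition of $[t,T_+]$ into $k$ subintervals, a piecewise selection $f_k$ by choosing points $b_{r,s}\in B_{r,s}:=\int_r^s F_\tau\m{\tau}$ and concatenating them using the additivity $B_{t,r}+B_{r,s}=B_{t,s}$; the uniform bound $|f_k(s)-f_k(r)|\le\int_{\min(r,s)}^{\max(r,s)}\sup_w H(\tau,w/|w|)\m{\tau}$ gives equi-boundedness and equi-continuity, Arzela--Ascoli produces a uniform limit $f$, and the supporting-function inequalities and the modulus bound pass to the limit. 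Your route — Kuratowski--Ryll-Nardzewski/Aubin--Frankowska selection of a measurable $g(\tau)\in F_\tau$, then $f(s)=c+\int_t^s g(\tau)\m{\tau}$ — is shorter, and it even yields the stronger conclusion $\dot f(s)=g(s)\in F_s$ a.e.\ (an Aumann-type selection, which the paper deliberately avoids invoking); the price is reliance on an external selection theorem and on local integrability of $\tau\mapsto|F_\tau|=\sup_{|w|=1}H(\tau,w)$, though the latter is implicitly used by the paper as well (its equi-continuity estimate and the asserted modulus for $f$ are exactly this quantity, and in $\R^m$ it follows from integrability of $H(\cdot,\pm e_i)$). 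Your explicit flagging of the boundedness of $C_0$ is also fair: $\mathcal{K}_0(\R^m)$ as defined only requires closed convex non-empty sets, and the paper's own bound $|b|\le\sup_w|h_0(w/|w|)+\int_t^s H(\tau,w/|w|)\m{\tau}|<\infty$ silently assumes the same thing (in the application, $C_0=\{x\}$). Both approaches are sound; the paper's is more self-contained, yours is more economical and delivers a pointwise differential selection for free.
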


\begin{proof}
Let $H$ denote the supporting function of $F$ and let $h_0$ be the convex supporting function of the set $C_0$.
We observe that for all $b \in C_0 + \int_t^s F_{\tau} \m{\tau}$ it holds that 
$$
|b| \le \sup_{w\in \R^m}\left| h_0(w/|w|) + \int_t^s H(\tau,w/|w|) \m{\tau} \right| < \infty,
$$
thus 
$$
\sup_{s\in M} |C_0 + \int_t^s F_{\tau} \m{\tau}| \le 
\sup_{s\in M} \sup_{w\in \R^m} |h_0(w/|w|)+\int_t^s H(\tau,w/|w|) \m{\tau}| <  \infty
$$
for any bounded $M \subseteq \Omega_1$. Since $s\mapsto h_0(w)+\int_t^s H(\tau,w) \m{\tau}$ is upper semi-continuous for fixed $w\in\R^n$ it follows by Theorem \ref{set_valued_map_main_theorem} that $s\mapsto C_0 + \int_t^s F_\tau \m{\tau}$ is upper semi-continuous.

It is sufficient to construct a continuous selection $f$ of $\int_t^s F_\tau \m{\tau}$ with $f(t)=0$. Putting
$f_c(s) := f(s) + c$ for some $c\in C_0$ immediately yields a continuous selection of $C_0 + \int_t^s F_\tau \m{\tau}$ 
with $f_c(t)=c$. So without loss of generality we may assume $C_0=\{0\}$.
  
Let us first consider some interval $[t,T_+] \csub J$ for some $T_+ \in J$ with $T_+ > t$.
For $k\in\N$ we put $\lambda_k:=\frac{T^{+}-t}{k}$ and $t_{k,j}:= t+ j \lambda_k$, so $t_{k,0}=t$ and $t_{k,k}=T_{+}$.
We obtain a decomposition of the interval $J^+$ in $k$ subintervals $[t_{k,j-1}, t_{k,j}]$ for $1\le j \le k$. 

We are going to construct a sequence of continuous functions $(f_k)_{k\in\N}$ such that
$$
\langle f_{k}(s),w\rangle \le \int_t^s H(\tau ,w) \m{\tau} 
$$ 
holds for all $w\in\R^n$ and $k\in\N$.

Put $B_{t,s}:=  \int_t^s F_{\tau} \m{\tau}$.
Any $b_{t,s} \in B_{t,s}$ has the property that 
$\langle b_{t,s} , w \rangle \le \int_t^s H(\tau,w)\m{\tau} = -\int_s^t H(\tau,w)\m{\tau}$
and $-\langle b_{s,t} , w \rangle \ge - \int_t^s H(\tau,w)\m{\tau}$ for all $w\in \R^m$, thus
$$
\langle b_{t,s} , w \rangle \le -\langle b_{s,t} , w \rangle  ,\quad w\in \R^n,
$$
which immediately implies $b_{s,r} = - b_{r,s}$. It follows directly that $b_{s,s} =0$ and $B_{s,s} = \{ 0\}$
and $B_{t,s} = -B_{s,t}$ for any $s,t \in J$. Furthermore we observe that 
$$
\int_t^r H(\tau,w)\m{\tau} + \int_r^s H(\tau,w)\m{\tau}  =  \int_t^s H(\tau,w)\m{\tau} 
$$
for all $w\in\R^m$ which implies $ B_{t,r}+B_{r,s} = B_{t,s}$.

Put $f_{k}(s) = 0$  for all $s\in [t, t+ \lambda_k]$, then
we define iteratively
$$
f_k \mid_{[t+j \lambda_k, t+ (j+1)\lambda_k]} (s) = f_k(t+ j\lambda_k ) + b_{ t+ j \lambda_k,s}, \quad j=1,\dots,k
$$ 
for some $b_{ t+ j \lambda_k,s}\in B_{t+ (j-1)\lambda_k,s}$. 
It follows immediately that 
$$
f_k(s) = \sum_{j=1}^{l}  b_{t+ (j-1) \lambda_k , t+ j \lambda_k}+ b_{t+ l \lambda_k , s} \in B_{t,s}
$$
for $s\in [t+ l \lambda_k,t+ (l+1) \lambda_k]$, which implies that $(f_k)_{k\in\N}$ is an
equi-bounded family.

Assume $s,r \in [t,T_+]$  then we have
\begin{multline} 
\langle f_k(s) - f_k(r), w \rangle =  
\langle \sum_{j=1}^{l_1}  b_{t+ (j-1) \lambda_k , t+ j \lambda_k}+ b_{t+ l_1 \lambda_k , s} -
\sum_{j=1}^{l_2}  b_{t+ (j-1) \lambda_k , t+ j \lambda_k}- b_{t+ l_2 \lambda_k , r},w \rangle \\
 =
 \langle \text{sign}(s-r) \left(-b_{t+ (\min{(l_1,l_2)}-1) \lambda_k, \min{(s,r)}} + \sum_{j=\min{(l_1,l_2)}}^{\max{(l_1,l_2)}}  b_{t+ (j-1) \lambda_k , t+ j \lambda_k}+ b_{t+ \max{(l_1,l_2)} \lambda_k, \max{(s,r)}} \right),w \rangle 
\end{multline}
and since
$-b_{t+ (\min{(l_1,l_2)}-1) \lambda_k, \min{(s,r)}} + \sum_{j=\min{(l_1,l_2)}}^{\max{(l_1,l_2)}}  b_{t+ j \lambda_k , t+ j \lambda_k}+ b_{t+ \max{(l_1,l_2)} \lambda_k }\in B_{\min{(r,s)},\max{(r,s)}}$ we have the estimate
$$
\langle f_k(s) - f_k(r), w \rangle \le  \int_{\min{(r,s)}}^{\max{(r,s)}} H(\tau,w) \m{\tau} 
$$
implying
$$
| f_k(s) - f_k(r)| \le  \int_{\min{(r,s)}}^{\max{(r,s)}} \sup_{w\in \R^m} H(\tau,w/|w|) \m{\tau} 
$$
which show that $(f_k)_{k\in\N}$ is equi-continuous family of continuous functions.

By the theorem of Arzela-Ascoli we can find a convergent subsequence $(f_{k_j})_{j\in\N}$
converging to some continuous function $f \in C([t,T_+])$. It has the property that
$$
\langle f(s),w \rangle = \lim_{j\rightarrow \infty} \langle f_{k_j}(s),w \rangle 
\le \int_t^s H(\tau,w )\m{\tau}, 
$$
so $f$ is a continuous selection of the set-valued map $s\mapsto\int_t^s F_\tau \m{\tau}$ for all $s\in [t,T^{+}]$. 
Absolute continuity of $f$ follows by 
$$
|f(s)-f(r)| \le |f(s)-f_{k_j}(s)|+|f_{k_j}(s)-f_{k_j}(r)|+|f_{k_j}(r)-f(r)|
\stackrel{j\rightarrow \infty}{\rightarrow} \int_{\min{(r,s)}}^{\max{(r,s)}} \sup_{w\in \R^m} H(\tau,w/|w|) \m{\tau}.
$$
The proof for the backward direction $[T_{-},t] \csub J$ for some $T_- \in J$ with $T_- < t$ is analogous.
In the case of a non-compact interval $J$ the extension of the selection 
to the whole interval is straight-forward.
\end{proof}

\chapter{The generalized graph} \label{generalized_graph}

In this section we introduce the concept of a generalized graph for a c-bounded generalized map. 
The generalized graph extends the classical graph of a continuous map, in 
the sense that the generalized graph of the embedded map (which is a c-bounded generalized map) coincides with the classical graph. Furthermore the generalized graph is
closed in the product topology of the domain and the image space of the generalized map.


Throughout the entire Section we let $\Omega_1, \Omega_2$ be open subsets of $\mathbb{R}^n$ resp. $\mathbb{R}^m$. We define the projection maps of the product space of $\Omega_1 \times \Omega_2$
onto $\Omega_1$ resp. $\Omega_2$ by
$$
\pi_1: \Omega_1 \times \Omega_2 \rightarrow \Omega_1,  \  (x,y) \mapsto x \quad \text{resp.} \quad
\pi_2: \Omega_1 \times \Omega_2 \rightarrow \Omega_2,  \ (x,y) \mapsto y.
$$

\section{The generalized graph of a c-bounded Colombeau map}

\begin{definition}
Let $F\in\colmap{\Omega_1}{\Omega_2}$ be a c-bounded generalized map. Then the set 
\begin{equation}\label{generalized_graph_definition}
\begin{split}
\ggraph{F} := & \left\{(x,y) \in \Omega_1 \times \Omega_2 \mid \exists \text{\ a \ net \ }(x_{\varepsilon})_{\varepsilon} 
\text{\ in\ } \Omega_1: \lim_{\varepsilon\rightarrow 0} x_{\varepsilon}=x \in \Omega_1 \right. \\
&\left. \text{ and } y\in\cp{(F_{\tau(\varepsilon)}(x_{\varepsilon}))_{\varepsilon}}  
 \text{for\ some\  map}\ \tau \in \mathcal{T} \right\} 
\end{split}
\end{equation} is called the \emph{generalized graph} of $F$. By an abuse of notation we
use the symbol $\ggraph{F}$ from Definition \ref{set_valued_map_def} for the generalized graph of a Colombeau map.
This is motivated by the observation made further below (cf. Theorem \ref{generalized_graph_theorem}) that the generalized 
graph actually defines a set-valued map by $x\mapsto \pi_2(\ggraph{F}\cap (\{x\} \times \Omega_2))$. 

\end{definition}
Observe that the definition for the generalized graph does not depend on the representative of $F$: Let
$(F_{\varepsilon})_{\varepsilon},(\widetilde{F}_{\varepsilon})_{\varepsilon}$ be two representatives of $F$, then $n_{\varepsilon} := F_{\varepsilon} - \widetilde{F}_{\varepsilon} $ is negligible and
$$
|F_{\tau(\varepsilon)}(x_{\varepsilon})-\widetilde{F}_{\tau(\varepsilon)}(x_{\varepsilon})| \le \sup_{z\in X} |n_{\tau(\varepsilon)}(z)| = O(\tau(\varepsilon))  \rightarrow 0
$$ 
as $\varepsilon \rightarrow 0$ (since $\tau \in \mathcal{T}$), where $X$ is some compact set containing the converging net $x_{\varepsilon}$ for small $\varepsilon$.


We use the notation $\ggraph{F}_{x} := \pi_2(\ggraph{F} \cap \{x\} \times \Omega_2 )$, thus
$$
\ggraph{F} = \bigcup_{x \in \Omega_1} \{x\} \times \ggraph{F}_x.
$$

\begin{definition}
Let $F \in \colmap{\Omega_1}{\Omega_2}$  be a c-bounded generalized map. We say $F$ is of \emph{natural type},
if it has a representative such that 
$$
\varepsilon \mapsto F_{\varepsilon}(x)
$$
is continuous on $]0,1]$ for all $x\in\Omega_1$ fixed. Note that a Colombeau map which is the embedding of a distribution
is of \emph{natural type}. 
\end{definition}

\begin{example}
Consider the c-bounded Colombeau function $F$ of natural type defined by the representative $(F_{\varepsilon})_{\varepsilon}$ with 
$$
F_{\varepsilon} := \sin{(x/\varepsilon)},
$$
then it is straight-forward to calculate its generalized graph
$$
\ggraph{F} = \mathbb{R} \times [-1,1].
$$
\end{example}

\begin{example} \label{heaviside_ggraph_example}
Consider the Colombeau function $F:=\iota(H)$, where $H$ is the Heaviside function. If we set $g:= H \ast \rho$, where $\rho$ is
the mollifier of the embedding $\iota$ with $\int \rho(y)\m{y}=1$ and $\int y^k \rho(y) \m{y} = 0$ for all $k \ge 1$, we have that $F_{\varepsilon}(x):=g(x/\varepsilon)$ defines a representative of $F$.
For all nets $(x_{\varepsilon})_{\varepsilon}$ tending to $x_0 \ne 0$, we obtain that $F_{\varepsilon}(x_{\varepsilon}) \rightarrow \rm{sign}(x_0)$ as $\varepsilon \rightarrow 0$.
Let us consider the nets $(x_{\varepsilon})_{\varepsilon}$ tending to $0$: Since we can find zero-nets $(x_{\varepsilon})_{\varepsilon}$ such that $(x_{\varepsilon} / \varepsilon)$ tends to any point in $\mathbb{R} \cup \{\pm \infty\}$ and $g$ is continuous and bounded, we have that
\begin{equation*}
\ggraph{F} := \left(\{x\in\mathbb{R}\mid x<0\} \times \{0\}\right) \cup \left(\{0\} \times \overline{\ran{g}} \right) \cup \left(\{x\in\mathbb{R}\mid x>0\} \times \{1\}\right).
\end{equation*}
Note that $\overline{\ran{g}}$ is a closed interval that contains $[0,1]$.
\end{example}

\begin{definition}
Let $F\in\colmap{\Omega_1}{\Omega_2}$ be a c-bounded generalized map. 
$F$ is said to be equi-continuous at $x_0$, if there exists a representative $(F_{\varepsilon})_{\varepsilon}$ which is equi-continuous in $x_0$: For all $\gamma>0$,
there exists some $\delta>0$ such that
\begin{equation*}
|F_{\varepsilon}(x) - F_{\varepsilon}(x_0) | < \gamma
\end{equation*}
holds for all $x \in B_{\delta}(x_0)$ and $\varepsilon \in ]0,1]$.
The generalized map $F$ is called \emph{equi-continuous} on some subset $X \subseteq \Omega_1$, if it has a representative which
is equi-continuous in each point $x \in X$.
\end{definition}

\begin{proposition}\label{generalized_graph_equicontinuous_proposition}
Let $F\in\colmap{\Omega_1}{\Omega_2}$ be a c-bounded  generalized map. 
If $F$ is equi-continuous on ${\Omega_1}$,
then it follows that the generalized graph of $F$ can be determined pointwise by
\begin{equation}\label{generalized_graph_equicontinuous}
\ggraph{F} =  \left\{(x,y) \in \Omega_1 \times \Omega_2 \mid y \in \cp{ (F_{\varepsilon}(x))_{\varepsilon}}\ \right\}.
\end{equation}
\end{proposition}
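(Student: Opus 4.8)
The plan is to prove the two inclusions separately. Write $R$ for the right-hand side of \eqref{generalized_graph_equicontinuous}, i.e. the set of $(x,y)$ with $y \in \cp{(F_\varepsilon(x))_\varepsilon}$, where $(F_\varepsilon)_\varepsilon$ is a fixed equi-continuous representative of $F$. The inclusion $R \subseteq \ggraph{F}$ is immediate from the definition \eqref{generalized_graph_definition}: given $(x,y) \in R$, take the constant net $x_\varepsilon \equiv x$; then $y \in \cp{(F_\varepsilon(x))_\varepsilon}$ means there is $\tau \in \mathcal{T}$ with $y = \lim_{\varepsilon\to 0} F_{\tau(\varepsilon)}(x)$, which is exactly a witness for $(x,y) \in \ggraph{F}$. (One should also note that the cluster-point set in \eqref{generalized_graph_equicontinuous} is independent of the representative, using the same negligibility argument already given after the definition of the generalized graph, so it is legitimate to evaluate it on the equi-continuous representative.)

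The substantive direction is $\ggraph{F} \subseteq R$. Let $(x,y) \in \ggraph{F}$, so there is a net $(x_\varepsilon)_\varepsilon$ in $\Omega_1$ with $x_\varepsilon \to x$ and a map $\tau \in \mathcal{T}$ with $y = \lim_{\varepsilon\to 0} F_{\tau(\varepsilon)}(x_\varepsilon)$; by the representative-independence just recalled we may assume $(F_\varepsilon)_\varepsilon$ is the equi-continuous representative. The idea is to replace the varying base points $x_\varepsilon$ by the fixed point $x$, controlling the error by equi-continuity. Fix $\gamma > 0$. Equi-continuity of $(F_\varepsilon)_\varepsilon$ at $x$ gives $\delta > 0$ with $|F_\varepsilon(z) - F_\varepsilon(x)| < \gamma$ for all $z \in B_\delta(x)$ and all $\varepsilon \in \,]0,1]$. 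Since $x_\varepsilon \to x$ and $\tau(\varepsilon) \to 0$, for $\varepsilon$ small enough we have $x_\varepsilon \in B_\delta(x)$, hence
$$
|F_{\tau(\varepsilon)}(x) - y| \le |F_{\tau(\varepsilon)}(x) - F_{\tau(\varepsilon)}(x_\varepsilon)| + |F_{\tau(\varepsilon)}(x_\varepsilon) - y| < \gamma + |F_{\tau(\varepsilon)}(x_\varepsilon) - y|,
$$
and the last term tends to $0$. Thus $\limsup_{\varepsilon\to 0} |F_{\tau(\varepsilon)}(x) - y| \le \gamma$ for every $\gamma > 0$, so $y = \lim_{\varepsilon\to 0} F_{\tau(\varepsilon)}(x)$, which shows $y \in \cp{(F_\varepsilon(x))_\varepsilon}$ and hence $(x,y) \in R$.

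The only point requiring a little care — and the step I would flag as the main (mild) obstacle — is the bookkeeping on quantifiers: equi-continuity is a statement holding for \emph{all} $\varepsilon$ uniformly, but it must be applied along the reparametrized net $\varepsilon \mapsto \tau(\varepsilon)$, and one needs $\tau(\varepsilon) \in \,]0,1]$ (true by definition of $\mathcal{T}$) together with $x_\varepsilon \to x$ (not $x_{\tau(\varepsilon)} \to x$) to land in $B_\delta(x)$. Since the base-point net in the definition of $\ggraph{F}$ is indexed directly by $\varepsilon$ while only the evaluation is reparametrized by $\tau$, the inequality above uses $x_\varepsilon$ at index $\varepsilon$ and $F$ at index $\tau(\varepsilon)$, and everything matches up. I would also remark, as the proposition implicitly does, that this shows the generalized graph of an equi-continuous c-bounded map coincides with the naive pointwise cluster set, which in particular recovers the expected picture in examples such as the embedded Heaviside function.
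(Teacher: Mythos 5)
Your proof is correct and follows essentially the same route as the paper: equi-continuity is used to show $F_{\tau(\varepsilon)}(x_\varepsilon)-F_{\tau(\varepsilon)}(x)\to 0$, so the constant net suffices in the definition of $\ggraph{F}$, and cluster points of the reparametrized net $(F_{\tau(\varepsilon)}(x))_\varepsilon$ lie in $\cp{(F_\varepsilon(x))_\varepsilon}$; the paper merely phrases this by comparing two arbitrary nets tending to $x$ before specializing one to the constant net, and leaves the easy inclusion implicit, which you spell out.
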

\begin{proof}
We can choose a equi-continuous representative $(F_{\varepsilon})_{\varepsilon}$ and proceed along a standard argument:
Let $(x_{\varepsilon})_{\varepsilon},(x'_{\varepsilon})_{\varepsilon}$ be two nets tending to $x \in \Omega_1$. 
Then we have, using the local equi-continuity of $(F_{\varepsilon})_{\varepsilon}$ on $\Omega_1$, 
that for all $\gamma>0$ we can find some $\delta>0$,
such that the distance is bounded by
\begin{eqnarray*}
|F_{\tau(\varepsilon)}(x_{\varepsilon}) - F_{\tau(\varepsilon)}(x_{\varepsilon}')| < \gamma 
\end{eqnarray*}
for all $x_{\varepsilon}, x'_{\varepsilon} \in B_{\delta/2}(x)$ uniformly for all $\tau\in\mathcal{T}$. 
Since $\lim_{\varepsilon \rightarrow 0} x_{\varepsilon} =
\lim_{\varepsilon \rightarrow 0} x_{\varepsilon}^{\prime} =x$ there exists $\varepsilon' \in ]0,1]$ such that
$|x_{\varepsilon} - x'_{\varepsilon}|<\delta$ holds for all $\varepsilon < \varepsilon'$.
Thus $\lim_{\varepsilon\rightarrow0} (F_{\tau(\varepsilon)}(x_{\varepsilon})-F_{\tau(\varepsilon)}(x_{\varepsilon}')) =0$ 
for any $\tau \in \mathcal{T}$ and it suffices to consider the constant net $x_{\varepsilon}:=x$ in 
Definition \ref{generalized_graph_definition}.
As $\tau\in \mathcal{T}$ we have that
 $(F_{\tau(\varepsilon)}(x))_{\varepsilon}$ is a subnet of $(F_{\varepsilon}(x))_{\varepsilon}$ and
$$
\cp{(F_{\tau(\varepsilon)}(x))_{\varepsilon} } \subseteq \cp{ (F_{\varepsilon}(x))_{\varepsilon} },
$$
which yields the statement (\ref{generalized_graph_equicontinuous}).
\end{proof}

\begin{lemma} \label{generalized_graph_lemma}
Let $F\in\colmap{\Omega_1}{\Omega_2}$ be a Colombeau generalized map. 
If $(x,y) \in (\Omega_1 \times \Omega_2) \backslash \ggraph{F}$, then
there exists neighborhoods $Y \subseteq \Omega_2$ of $y$, $X \subseteq \Omega_1$ of $x$ and $\varepsilon' \in ]0,1]$ such that
\begin{equation*}
Y \cap F_{\varepsilon}(X) = \emptyset 
\end{equation*}
for all $\varepsilon <\varepsilon'$.
\end{lemma}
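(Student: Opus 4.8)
The plan is to prove the contrapositive reformulation directly: we are given that $(x,y) \notin \ggraph{F}$, and we must produce neighborhoods $X$ of $x$, $Y$ of $y$, and a threshold $\varepsilon'$ so that $F_\varepsilon(X) \cap Y = \emptyset$ for all $\varepsilon < \varepsilon'$. The natural approach is to argue by contradiction: suppose no such triple exists. The key is to exploit c-boundedness of $F$ to confine the relevant values to a compact set, and then extract a convergent subnet whose limit witnesses membership of $(x,y)$ in the generalized graph, contradicting the hypothesis.

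First I would fix a compact neighborhood $K \Subset \Omega_1$ of $x$, and use c-boundedness to obtain a compact $K' \Subset \Omega_2$ with $F_\varepsilon(K) \subseteq K'$ for all $\varepsilon \in ]0,1]$. Next, take a countable neighborhood basis: set $X_j := B_{1/j}(x) \cap \Omega_1$ (eventually inside $K$) and $Y_j := B_{1/j}(y) \cap \Omega_2$. The negation of the conclusion says that for every $j$ and every candidate threshold, the required disjointness fails; more precisely, for each $j$ there is some $\varepsilon$ arbitrarily small (hence we can pick $\tau(\varepsilon) \to 0$, i.e. a map in $\mathcal{T}$) together with a point in $X_j$ whose image under $F_{\tau(\varepsilon)}$ lands in $Y_j$. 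Carefully organizing this, I would construct a net $(x_\varepsilon)_\varepsilon$ in $\Omega_1$ with $x_\varepsilon \to x$ and a map $\tau \in \mathcal{T}$ such that $F_{\tau(\varepsilon)}(x_\varepsilon) \in B_{r(\varepsilon)}(y)$ with $r(\varepsilon) \to 0$. Since eventually $x_\varepsilon \in K$, the net $(F_{\tau(\varepsilon)}(x_\varepsilon))_\varepsilon$ lies in the compact set $K'$, so it has a cluster point; but that cluster point is forced to equal $y$, because the values converge to $y$. Hence $y \in \cp{(F_{\tau(\varepsilon)}(x_\varepsilon))_\varepsilon}$ with $x_\varepsilon \to x$, so $(x,y) \in \ggraph{F}$ by Definition \ref{generalized_graph_definition}, contradicting the assumption.

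The main obstacle is the bookkeeping in passing from "for all neighborhoods the disjointness fails" to a single net indexed by $]0,1]$ together with a single rescaling map $\tau \in \mathcal{T}$: the failure statement is naturally indexed by pairs $(j, \varepsilon)$, and one must diagonalize this into an honest net over $]0,1]$ so that Definition \ref{generalized_graph_definition} applies verbatim. Concretely, for each $j \in \N$ pick (by the negation) a value $\sigma_j \in ]0, 1/j]$ and a point $z_j \in X_j$ with $F_{\sigma_j}(z_j) \in Y_j$; then define $\tau$ on $]0,1]$ to interpolate so that $\tau(\varepsilon) = \sigma_j$ and $x_\varepsilon = z_j$ for $\varepsilon$ in a suitable subinterval, ensuring $\tau(\varepsilon) \to 0$ and $x_\varepsilon \to x$ as $\varepsilon \to 0$. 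Once this net and this $\tau$ are in hand, the compactness argument via c-boundedness and the characterization of cluster points through subnets (as recalled in Section 0.1) close the proof. A minor point to check is that the constant-net-style extraction of a cluster point is legitimate: since $F_{\tau(\varepsilon)}(x_\varepsilon) \to y$ genuinely, every subnet converges to $y$, so $y$ is the unique cluster point and in particular lies in $\cp{\cdot}$.
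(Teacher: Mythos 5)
Your proposal is correct and follows essentially the same route as the paper's proof: negate the conclusion, use shrinking neighborhoods to build a net $x_{\varepsilon}\rightarrow x$ and a map $\tau\in\mathcal{T}$ with $F_{\tau(\varepsilon)}(x_{\varepsilon})\rightarrow y$, and conclude $(x,y)\in\ggraph{F}$, a contradiction; your countable-basis diagonalization over the intervals $]1/(j+1),1/j]$ merely replaces the paper's direct choice $X'=B_{\varepsilon}(x)\cap\Omega_1$, $Y'=B_{\varepsilon}(y)\cap\Omega_2$, $\varepsilon'=\varepsilon$. The only superfluous element is the appeal to c-boundedness and compactness: since $F_{\tau(\varepsilon)}(x_{\varepsilon})\rightarrow y$ outright, $y$ is automatically a cluster point, as you yourself observe at the end.
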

\begin{proof}
Let $(x,y) \in  (\Omega_1 \times \Omega_2) \backslash \ggraph{F}$. 
The proof proceeds by contradiction: 
Assume that for all neighborhoods $Y' \subseteq \Omega_2$ of $y$ and $X' \subseteq \Omega_1$ of $x$ and for
all $\varepsilon'>0$ there exists some $\widetilde{\tau}: \varepsilon' \mapsto \tau(X',Y',\varepsilon') \le \varepsilon'$ with
\begin{equation*}
Y' \cap F_{\widetilde{\tau}(\varepsilon')}(X') \ne \emptyset.
\end{equation*}
By setting $X':=B_{\varepsilon}(x) \cap \Omega_1$, $Y':=B_{\varepsilon}(y) \cap \Omega_2$ and $\varepsilon':=\varepsilon$
we obtain a map $\tau: \varepsilon \mapsto \widetilde{\tau}(B_{\varepsilon}(x) \cap \Omega_1 ,B_{\varepsilon}(y) \cap \Omega_2,\varepsilon)$.
As $\tau(\varepsilon) \le \varepsilon$ for all $\varepsilon \in ]0,1]$ we have $\tau \in \mathcal{T}$.
We choose a net $x_{\varepsilon} \in B_{\varepsilon}(x) \cap \Omega_1$ satisfying $F_{\tau(\varepsilon)}(x_{\varepsilon}) \in B_{\varepsilon}(y) \cap \Omega_2$ for all $\varepsilon \in ]0,1]$.

Obviously $(x_{\varepsilon})_{\varepsilon}$ is a convergent net in $\Omega_1$ with $\lim_{\varepsilon \rightarrow 0} x_{\varepsilon}=x \in \Omega_1$
and the net $(F_{\tau(\varepsilon)}(x_{\varepsilon}))_{\varepsilon}$ converges to $y$. Since $\tau \in \mathcal{T}$ we obtain
$y \in \cp{(F_{\tau(\varepsilon)} (x_{\varepsilon}))_{\varepsilon}}$, so $(x, y) \in \ggraph{F}$ contradicting the assumption.
\end{proof}

\begin{theorem}[Properties of the generalized graph]\label{generalized_graph_theorem}
Let $F\in\colmap{\Omega_1}{\Omega_2}$ be a c-bounded generalized map,
then the generalized graph $\ggraph{F}$ has the following properties:
\begin{enumerate}
\item It extends the classical notion of a graph (of a continuous function) in the following sense: 
If $F$ is the embedding of a continuous function $G := (G_1, ..., G_m) \in C(\Omega_1,\Omega_2)$, i.e. $F=(\iota(G_1),...,\iota(G_m))$,
then the generalized graph of $F$ coincides with the graph of the continuous function $G$.
\item It defines a closed and locally bounded set-valued map by
$$
\Omega_1 \rightarrow \mathcal{F}_0(\Omega_2), \quad x \mapsto \ggraph{F}_x.
$$
\end{enumerate}
\end{theorem}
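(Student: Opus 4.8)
The plan is to treat the two assertions separately, using Lemma \ref{generalized_graph_lemma} for the topological part and c-boundedness for the remaining structural facts.

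For (i), I would recall that if $F=(\iota(G_1),\dots,\iota(G_m))$ with $G\in C(\Omega_1,\Omega_2)$, then $F$ admits a representative $(F_\varepsilon)_\varepsilon$ converging to $G$ uniformly on compact subsets of $\Omega_1$. Take $(x,y)\in\ggraph{F}$, so there are a net $x_\varepsilon\to x$ and $\tau\in\mathcal{T}$ with $y\in\cp{(F_{\tau(\varepsilon)}(x_\varepsilon))_\varepsilon}$. Choosing a compact neighbourhood $K$ of $x$, we have $x_\varepsilon\in K$ for small $\varepsilon$, and then $|F_{\tau(\varepsilon)}(x_\varepsilon)-G(x)|\le\sup_K|F_{\tau(\varepsilon)}-G|+|G(x_\varepsilon)-G(x)|\to 0$ by uniform convergence on $K$ together with continuity of $G$. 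Hence $\cp{(F_{\tau(\varepsilon)}(x_\varepsilon))_\varepsilon}=\{G(x)\}$, so $y=G(x)$; conversely the constant net $x_\varepsilon\equiv x$ with $\tau=\mathrm{id}$ shows $(x,G(x))\in\ggraph{F}$, whence $\ggraph{F}=\graph{G}$. (Alternatively, one may note that the embedded map is equi-continuous and invoke Proposition \ref{generalized_graph_equicontinuous_proposition}, reducing matters to the pointwise statement $F_\varepsilon(x)\to G(x)$.)

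For (ii), I would establish in turn non-emptiness of each slice, local boundedness, and closedness of $\ggraph{F}$. Non-emptiness: fix $x$ and take $x_\varepsilon\equiv x$; by c-boundedness $(F_\varepsilon(x))_\varepsilon$ lies in a compact $K'\csub\Omega_2$, hence has a cluster point, so $\ggraph{F}_x\neq\emptyset$. Local boundedness: given $x_0$, pick an open $U$ with $\overline U\csub\Omega_1$ and, by c-boundedness, $K'\csub\Omega_2$ with $F_\varepsilon(\overline U)\subseteq K'$ for all $\varepsilon$; for any $x\in U$, any net $x_\varepsilon\to x$ eventually lies in $U$, so $F_{\tau(\varepsilon)}(x_\varepsilon)\in K'$ eventually for every $\tau\in\mathcal{T}$, whence $\ggraph{F}_x\subseteq K'$ and $\bigcup_{x\in U}\ggraph{F}_x\subseteq K'$ is bounded.

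The heart of the proof is the closedness of $\ggraph{F}$, for which Lemma \ref{generalized_graph_lemma} does the real work. I would show the complement is open: for $(x,y)\notin\ggraph{F}$ the Lemma furnishes open neighbourhoods $X\ni x$, $Y\ni y$ and $\varepsilon'>0$ with $Y\cap F_\varepsilon(X)=\emptyset$ for all $\varepsilon<\varepsilon'$. Then $(X\times Y)\cap\ggraph{F}=\emptyset$: if $(x',y')$ lay in the intersection, witnessed by a net $x'_\varepsilon\to x'\in X$ and some $\tau\in\mathcal{T}$, then for small $\varepsilon$ one has $x'_\varepsilon\in X$ and $\tau(\varepsilon)<\varepsilon'$, so $F_{\tau(\varepsilon)}(x'_\varepsilon)\notin Y$, forcing every cluster point — in particular $y'$ — into the (relatively) closed set $\Omega_2\setminus Y$, contradicting $y'\in Y$. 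Hence $\ggraph{F}$ is closed in $\Omega_1\times\Omega_2$, so each slice $\ggraph{F}_x=\pi_2(\ggraph{F}\cap(\{x\}\times\Omega_2))$ is closed in $\Omega_2$; together with non-emptiness this gives $\ggraph{F}_x\in\mathcal{F}_0(\Omega_2)$, and the set-valued map $x\mapsto\ggraph{F}_x$ is closed (its graph being exactly $\ggraph{F}$) and locally bounded. The only delicate point is the interplay of the two limiting parameters $\varepsilon$ and $\tau(\varepsilon)$, which is precisely what Lemma \ref{generalized_graph_lemma} is designed to absorb; everything else is bookkeeping with compactness.
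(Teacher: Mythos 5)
Your proposal is correct and follows essentially the same route as the paper: closedness of $\ggraph{F}$ via Lemma \ref{generalized_graph_lemma}, local boundedness and non-empty slices via c-boundedness, and hence $\ggraph{F}_x\in\mathcal{F}_0(\Omega_2)$. For part (i) your direct uniform-convergence argument is just an unpacking of the paper's appeal to equi-continuity of the embedded map together with Proposition \ref{generalized_graph_equicontinuous_proposition}, an alternative you yourself note.
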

\begin{proof}
\begin{trivlist}
\item (i) follows by Proposition \ref{generalized_graph_equicontinuous_proposition} and the fact that the embedding of a continuous function is an equi-continuous generalized function.
\item (ii) 
$\ggraph{F}_x \ne \emptyset$ for all $x\in \Omega_1$ is trivial.

We proceed showing that $\ggraph{F}$ is closed in $\Omega_1\times \Omega_2$.
Let $(x,y) \in (\Omega_1 \times \Omega_2) \backslash \ggraph{F}$, then we know by Lemma \ref{generalized_graph_lemma} that there exists $X \subseteq \Omega_1, Y \subseteq \Omega_2$ open neighborhoods of $x$ resp. $y$ and $\varepsilon'>0$, such that
\begin{equation*}
F_{\varepsilon}(X) \cap Y = \emptyset
\end{equation*}
for all $\varepsilon < \varepsilon'$.
Now for any point $y \in X$ and any net $(y_{\varepsilon})_{\varepsilon}$ in $\Omega_1$ converging to $y$, there
exists some $\varepsilon''>0$ such that $y_{\varepsilon} \in X$ for all $\varepsilon < \varepsilon''$.
It follows that
\begin{equation*}
\ggraph{F} \cap ( X \times Y ) = \emptyset,
\end{equation*}
so $(\Omega_1 \times \Omega_2) / \ggraph{F}$ is open in  $\Omega_1 \times \Omega_2$ and hence $\ggraph{F}$ is closed in the product topology of $\Omega_1 \times \Omega_2$.

In the next step we proove that $x \mapsto \ggraph{F}_x$ is locally bounded.
Let $B \subseteq \Omega_1$ be bounded, then there exists some $K \csub \Omega_1$ with $B \subseteq K$. Furthermore let 
$K_1$ be some compact neighborhood for all points in $K$. Since $F$ is c-bounded we can find some $\varepsilon'>0$ and 
$K_2 \csub \Omega_2$,
such that 
\begin{equation*}
F_{\varepsilon}(K_1) \subseteq K_2 
\end{equation*}
for all $\varepsilon < \varepsilon'$. So $ \ggraph{F} \cap K \times \Omega_2 \subset \ggraph{F} \cap K \times K_2$, which implies that 
$\ggraph{F} \cap K \times \Omega_2$ is a bounded subset of $\Omega_1 \times \Omega_2$. So $x \mapsto \ggraph{F}_x$ 
is locally bounded.
 
Putting $K=\{x\}$ we obtain that $\ggraph{F}_x = \pi_2 (\ggraph{F} \cap (\{x\}\times \Omega_2))$ is closed, 
since it is the continuous image of a bounded, closed set. So $\ggraph{F}_x \in \mathcal{F}_0(\Omega_2)$ for all $x\in \Omega_1$.
\end{trivlist}
\end{proof} 

\begin{corollary} \label{generalized_graph_usc_corollary}
Since the generalized graph can be seen as a closed and locally bounded set-valued map $x\mapsto \ggraph{F}_x$, it follows by
Theorem \ref{set_valued_map_main_theorem} that it is upper semi-continuous, i.e.
for any open neighboorhood $W$ of $\ggraph{F}_x$, there exists an open neighboorhood $X$ of $x$, such that
$\ggraph{F}_X \subseteq W$. 
\end{corollary}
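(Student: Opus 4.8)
The plan is to chain together the two results just proved, since this corollary is pure bookkeeping. First I would invoke Theorem~\ref{generalized_graph_theorem}(ii), which already establishes that $x \mapsto \ggraph{F}_x$ is a genuine set-valued map $\Omega_1 \to \mathcal{F}_0(\Omega_2)$ (so in particular $\ggraph{F}_x$ is non-empty and closed for every $x$), that its graph $\ggraph{F}$ is closed in the relative topology of $\Omega_1 \times \Omega_2$, and that it is locally bounded. These are exactly the standing hypotheses of Theorem~\ref{set_valued_map_main_theorem}(i).

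Next I would apply Theorem~\ref{set_valued_map_main_theorem}(i) directly: a closed and locally bounded set-valued map with values in $\mathcal{F}_0(\Omega_2)$ is upper semi-continuous (and locally bounded). Hence $x \mapsto \ggraph{F}_x$ is upper semi-continuous at every point of $\Omega_1$.

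Finally I would unwind the definition of upper semi-continuity from Definition~\ref{set_valued_map_def}: being upper semi-continuous at $x$ means that for every open $W \subseteq \Omega_2$ with $\ggraph{F}_x \subseteq W$ there is a neighborhood $X \subseteq \Omega_1$ of $x$ with $\ggraph{F}_X = \bigcup_{y \in X} \ggraph{F}_y \subseteq W$. Since this holds at each $x \in \Omega_1$, the statement of the corollary follows verbatim, and no further estimate or construction is needed.

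There is essentially no obstacle here; the only point requiring care is to confirm that the hypotheses of Theorem~\ref{set_valued_map_main_theorem}(i) are literally met, i.e.\ that $\ggraph{F}$ is closed-valued, closed, and locally bounded — but all three were established in Theorem~\ref{generalized_graph_theorem}(ii), so the corollary is an immediate consequence.
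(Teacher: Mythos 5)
Your proposal is correct and follows exactly the paper's (implicit) argument: the corollary is obtained by feeding the conclusions of Theorem \ref{generalized_graph_theorem}(ii) — closed values, closed graph, local boundedness — into Theorem \ref{set_valued_map_main_theorem}(i) and unwinding the definition of upper semi-continuity. Nothing further is needed.
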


\begin{proposition} \label{generalized_graph_connected_prop}
Let $F\in\colmap{\Omega_1}{\Omega_2}$ be a c-bounded generalized map of natural type, then the generalized graph 
$\ggraph{F}$ has the property that $\ggraph{F}_x$ is connected for all $x\in \R^n$.
\end{proposition}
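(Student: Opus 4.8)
The plan is to realise the fibre $\ggraph{F}_x$ as a decreasing intersection of nonempty compact \emph{connected} sets and then invoke the standard fact that such an intersection is connected. First I would record a sequential description of the fibre, obtained by unwinding Definition \ref{generalized_graph_definition} together with the characterisation of cluster points of $\varepsilon$-nets by subnets in $\mathcal{T}$: a point $y$ lies in $\ggraph{F}_x$ if and only if there are sequences $\varepsilon_k\downarrow 0$ in $]0,1]$ and $x_k\to x$ in $\Omega_1$ with $F_{\varepsilon_k}(x_k)\to y$. One direction is a diagonal extraction; for the other one builds a net $(x_\varepsilon)_\varepsilon$ and a map $\tau\in\mathcal{T}$ that are constant on the intervals $]\varepsilon_{k+1},\varepsilon_k]$ (after thinning $(\varepsilon_k)$ to be strictly decreasing). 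Now fix $x\in\Omega_1$; using c-boundedness of $F$ choose a compact neighbourhood $K_1$ of $x$, a number $\varepsilon_0\in]0,1]$ and $K_2\csub\Omega_2$ with $F_\varepsilon(K_1)\subseteq K_2$ for all $\varepsilon<\varepsilon_0$. With $U_\rho:=\{z\in\R^n\mid |z-x|<\rho\}$, pick $j_0$ so large that $\overline{U_{1/j_0}}\subseteq K_1$ and $1/j_0<\varepsilon_0$, and set
$$
A_j:=\overline{\,\bigcup_{0<\varepsilon<1/j} F_\varepsilon(U_{1/j})\,}\subseteq K_2\qquad(j\ge j_0).
$$
Each $A_j$ is closed in the compact set $K_2$, hence compact, and clearly $A_{j+1}\subseteq A_j$. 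A routine diagonal argument based on the sequential description above (match the ball radius, the $\varepsilon$-range and the target accuracy all to $1/k$) gives $\ggraph{F}_x=\bigcap_{j\ge j_0}A_j$.

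Next I would show that each $A_j$ is connected, and this is where the hypothesis of \emph{natural type} is used. Fix a representative $(F_\varepsilon)_\varepsilon$ for which $\varepsilon\mapsto F_\varepsilon(x)$ is continuous on $]0,1]$. Then $C_j:=\{F_\varepsilon(x)\mid 0<\varepsilon<1/j\}$ is connected, being the continuous image of the interval $]0,1/j[$. For each $\varepsilon$ in that interval the set $F_\varepsilon(U_{1/j})$ is connected (continuous image of the convex, hence connected, ball $U_{1/j}$) and contains $F_\varepsilon(x)\in C_j$; so each $F_\varepsilon(U_{1/j})$ meets the connected set $C_j$. Since a union of connected sets each of which meets one fixed connected set is connected, $\bigcup_{0<\varepsilon<1/j}F_\varepsilon(U_{1/j})$ is connected, and hence so is its closure $A_j$. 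It is nonempty (e.g.\ it contains $F_{1/(j+1)}(x)$, or simply because $\ggraph{F}_x\ne\emptyset$ by Theorem \ref{generalized_graph_theorem}).

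Finally, $(A_j)_{j\ge j_0}$ is a decreasing sequence of nonempty compact connected subsets of $\R^m$, so its intersection is again nonempty, compact and connected; since that intersection equals $\ggraph{F}_x$, the claim follows. (That a nested intersection of nonempty compact connected sets is connected is standard: if it decomposed into two disjoint nonempty relatively closed pieces, one separates them by disjoint open sets, and then by compactness some $A_j$ already lies in that separation, contradicting connectedness of $A_j$.) The one point requiring care is the first step --- the passage between the net/$\mathcal{T}$ formulation of Definition \ref{generalized_graph_definition} and the sequential one --- but since everything relevant stays inside the fixed compact set $K_2$, switching between nets and sequences is harmless.
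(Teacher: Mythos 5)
Your proof is correct, and it takes a genuinely different route from the paper's. The paper argues by contradiction directly from the definition: assuming $\ggraph{F}_x=V\cup W$ with $V,W$ nonempty, disjoint and relatively open, it realizes $a\in V$ and $b\in W$ as limits $F_{\tau(\varepsilon)}(x_\varepsilon)\to a$, $F_{\tau'(\varepsilon)}(x'_\varepsilon)\to b$, and then uses the natural-type hypothesis to interpolate \emph{simultaneously} in the regularization parameter and in the base point via $g_\varepsilon(\alpha)=F_{\alpha\tau(\varepsilon)+(1-\alpha)\tau'(\varepsilon)}(\alpha x_\varepsilon+(1-\alpha)x'_\varepsilon)$; connectedness of $g_\varepsilon([0,1])$ forces values in $(V\cup W)^c$, whose cluster points lie in $\ggraph{F}_x\setminus(V\cup W)$, a contradiction. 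You instead exhibit the fibre as the nested intersection $\bigcap_j A_j$ of nonempty compact sets, prove each $A_j$ connected by chaining the images $F_\varepsilon(U_{1/j})$ through the curve $\varepsilon\mapsto F_\varepsilon(x)$, and invoke the classical theorem on decreasing intersections of compact connected sets; the sequential reformulation of the fibre that you verify first is essentially the net/$\mathcal{T}$ bookkeeping the paper performs implicitly, and your identification $\ggraph{F}_x=\bigcap_j A_j$ checks out. As for what each approach buys: your argument uses only the literal content of the natural-type definition (continuity of $\varepsilon\mapsto F_\varepsilon(x)$ for each fixed $x$, plus continuity of each $F_\varepsilon$ in $x$), whereas the paper's interpolating map $g_\varepsilon$ varies parameter and argument at once and thus tacitly relies on joint continuity of $(\varepsilon,x)\mapsto F_\varepsilon(x)$ along segments, a mild strengthening of the stated definition that your proof avoids; on the other hand, the paper's proof is shorter, stays entirely at the level of nets and cluster points, and needs no auxiliary exhaustion, while your description of the fibre as a decreasing intersection of continua re-derives its compactness and nonemptiness as a byproduct.
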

\begin{proof}
We prove by contradiction: Assume that $\ggraph{F}_x$ is not connected, then there exist $V',W'$ open subsets
of $\R^m$ with $V:=V' \cap \ggraph{F}_x, W:=W' \cap \ggraph{F}_x \ne \emptyset$ such that $V \cap W = \emptyset$ and $V \cup W= \ggraph{F}_x$.
Choose $a\in V, b \in W$, then we can find 
a net $(x_{\varepsilon})_{\varepsilon},(x_{\varepsilon}')_{\varepsilon}$ with $x_{\varepsilon},x'_{\varepsilon} \rightarrow x$ and $\tau,\tau' \in \mathcal{T}$ such that 
$y_{\varepsilon}:=F_{\tau(\varepsilon)}(x_{\varepsilon}) \rightarrow a$ and $y'_{\varepsilon}:=F_{\tau'(\varepsilon)}(x_{\varepsilon}') \rightarrow b$. 
Since $V,W$ are neighboorhoods of $a$ resp. $b$ there exists some $\widetilde{\varepsilon} \in ]0,1]$ such that
$$
F_{\tau(\varepsilon)}(x_{\varepsilon}) \in V, \quad F_{\tau'(\varepsilon)}(x'_{\varepsilon}) \in W
$$
for all $\varepsilon \in ]0, \widetilde{\varepsilon}]$. 
Observe that since $F$ is of natural type we have that 
$$
g_{\varepsilon}: \alpha \mapsto F_{\alpha \tau(\varepsilon) +(1-\alpha)\tau'(\varepsilon) }(\alpha x_{\varepsilon}+ (1-\alpha) x'_{\varepsilon}) 
$$ 
is a continuous map with $g_{\varepsilon}(0)=y_{\varepsilon}$ and $g_{\varepsilon}(1)=y_{\varepsilon}'$. The intermediate value theorem then yields that $g_{\varepsilon}([0,1])$ is connected for fixed $\varepsilon$. 
Since $V \cap W =\emptyset$ and for $\varepsilon\in]0,\widetilde{\varepsilon}]$ we have that $V \cap  g_{\varepsilon}([0,1]),W \cap  g_{\varepsilon}([0,1])$ are non-empty, open sets in the relative topology of $\ran{g_{\varepsilon}}\subseteq \Omega_2$ 
it follows that $(V\cup W) \cap g_{\varepsilon}([0,1]) \ne g_{\varepsilon}([0,1])$
(equality would contradict the connectedness of $g_{\varepsilon}([0,1])$), thus  
there exists some $\alpha_{\varepsilon} \in [0,1]$ with 
$g_{\varepsilon}(\alpha_{\varepsilon}) \in (V\cup W)^c$.

Putting $\mu(\varepsilon):=\alpha_{\varepsilon} \tau(\varepsilon) +(1-\alpha_{\varepsilon})\tau'(\varepsilon)$ and 
$y_{\varepsilon}:=\alpha_{\varepsilon} x_{\varepsilon}+ (1-\alpha_{\varepsilon}) x'_{\varepsilon}$ 
we observe that $\mu \in \mathcal{T}$ and $y_{\varepsilon} \rightarrow x$.
Since $g_{\varepsilon}(\alpha_{\varepsilon}) = F_{\mu(\varepsilon)}(y_{\varepsilon}) \in (V \cup W)^c$ 
it follows that $\emptyset \ne \cp{(F_{\mu(\varepsilon)}(y_{\varepsilon}) )_{\varepsilon} }
 \subseteq (V\cup W)^c \cap  \ggraph{F}_x$ which is a contradiction to $(V \cup W) \cap \ggraph{F}_x= \ggraph{F}_x$.
\end{proof}

\begin{remark} 
If $F\in \colmap{\Omega_1}{\R}$ is of natural type, then Proposition \ref{generalized_graph_connected_prop} yields that $\ggraph{F}_x$ 
is a convex set for all $x\in\R^n$.

For the higher dimensional case ($m>1$), the convexity of $\ggraph{F}_x$ does not hold for arbitrary 
$F \in \colmap{\Omega_1}{\R^m}$ of natural type. 
Consider the Colombeau map in $\colmap{\R}{\R^2}$ defined by 
$$
F_{\varepsilon}(x) := (\sin^{1/\varepsilon}(x/\varepsilon),\cos^{1/\varepsilon}(x/\varepsilon)),
$$
then it is straight-forward to verify $\ggraph{F}_{0} = \bigcup_{\alpha \in [-1,1]} ((\alpha,0) \cup (0,\alpha))$, which is 
connected but not convex.

If $F\in \colmap{\Omega_1}{\R}$ is not of natural type, then $\ggraph{F}_x$ is in general neither convex nor connected. Just consider the simple Colombeau map $F\in\colmap{\R}{\R}$ defined by the representative
$$
F_{\varepsilon}(x) := \left\{ \begin{array}{ll}
1 & \varepsilon \in \bigcup_{j\in\N} ]2^{-2j-1}, 2^{-2j}] \\
0 & \varepsilon \in \bigcup_{j\in\N} ]2^{-2(j+1)}, 2^{-2j-1}]. 
\end{array}
\right. 
$$
Its generalized graph satisfies $\ggraph{F}_{x} = \{0\}\cup\{1\}$ for all $x\in \R$. 
\end{remark}


\begin{lemma} \label{generalized_graph_approximation_property}
Let $F\in \colmap{\Omega_1}{\Omega_2}$ be a c-bounded generalized map. Furthermore,
let $K_1 \csub \Omega_1$ and $K_2 := \pi_2(\ggraph{F} \cap ( K_1 \times \Omega_2))$.
If $Y$ is a neighborhood of $K_2$, then there exists a neighborhood $X \subseteq \Omega_1$ of $K_1$ and $\varepsilon'\in ]0,1]$, such that
\begin{equation*}
F_{\varepsilon}(X) \subseteq Y
\end{equation*}
for all $\varepsilon \le \varepsilon'$. 
\end{lemma}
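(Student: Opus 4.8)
The plan is to argue by contradiction, adapting the proof of Lemma \ref{generalized_graph_lemma} so that the single point there is replaced by the compact set $K_1$. First I would record two harmless reductions. Every neighbourhood of $K_2$ contains an open one, so we may assume $Y$ is open and $K_2 \subseteq Y$. Since $K_1 \csub \Omega_1$ and $\Omega_1$ is open, fix $\delta_0 \in ]0,1]$ with $B_{\delta_0}(K_1) \csub \Omega_1$, and, using c-boundedness of $F$ (which is independent of the representative, as for the generalized graph itself), choose $K' \csub \Omega_2$ with $F_{\varepsilon}(B_{\delta_0}(K_1)) \subseteq K'$ for all $\varepsilon \in ]0,1]$. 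Note that $K_2 = \pi_2(\ggraph{F} \cap (K_1 \times \Omega_2))$ equals $\bigcup_{x \in K_1}\ggraph{F}_x$; by Proposition \ref{closed_corr_proposition} it is even compact, though this will not be needed.

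Now suppose the assertion fails: for every neighbourhood $X \subseteq \Omega_1$ of $K_1$ and every $\varepsilon' \in ]0,1]$ there is some $\varepsilon \le \varepsilon'$ with $F_{\varepsilon}(X) \not\subseteq Y$, i.e.\ with $F_{\varepsilon}(z) \in Y^c$ for some $z \in X$. Apply this with $X := B_{\varepsilon}(K_1)$ (which for $\varepsilon \le \delta_0$ is a neighbourhood of $K_1$ contained in $\Omega_1$) and threshold $\varepsilon' := \varepsilon$: for each $\varepsilon \in ]0,\delta_0]$ we obtain a value $\tau(\varepsilon) \le \varepsilon$ and a point $x_{\varepsilon} \in B_{\varepsilon}(K_1)$ with $F_{\tau(\varepsilon)}(x_{\varepsilon}) \in Y^c$. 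Extending $\tau$ (say by $\tau(\varepsilon) := \delta_0$) and $(x_{\varepsilon})_{\varepsilon}$ arbitrarily to $]0,1]$ we have $\tau \in \mathcal{T}$. For $\varepsilon \le \delta_0$ we have $x_{\varepsilon} \in B_{\delta_0}(K_1)$ and hence $F_{\tau(\varepsilon)}(x_{\varepsilon}) \in K'$, so the net $(x_{\varepsilon}, F_{\tau(\varepsilon)}(x_{\varepsilon}))_{\varepsilon}$ is eventually contained in the compact set $B_{\delta_0}(K_1) \times K'$; consequently $\cp{(x_{\varepsilon}, F_{\tau(\varepsilon)}(x_{\varepsilon}))_{\varepsilon}} \ne \emptyset$.

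Pick a cluster point $(x,y)$, i.e.\ $\sigma \in \mathcal{T}$ with $x_{\sigma(\varepsilon)} \to x$ and $F_{\tau(\sigma(\varepsilon))}(x_{\sigma(\varepsilon)}) \to y$. Since $\dist{x_{\sigma(\varepsilon)}}{K_1} \le \sigma(\varepsilon) \to 0$ and $K_1$ is closed, $x \in K_1 \subseteq \Omega_1$. Because $\tau(\delta) \le \delta$ for all $\delta$, the composition $\tau \circ \sigma$ lies in $\mathcal{T}$, so the net $(x_{\sigma(\varepsilon)})_{\varepsilon}$ converging to $x$ together with the map $\tau \circ \sigma$ witnesses, via Definition \ref{generalized_graph_definition}, that $(x,y) \in \ggraph{F}$; as $x \in K_1$ this yields $y \in \pi_2(\ggraph{F} \cap (K_1 \times \Omega_2)) = K_2 \subseteq Y$. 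On the other hand $F_{\tau(\sigma(\varepsilon))}(x_{\sigma(\varepsilon)}) \in Y^c$ for all sufficiently small $\varepsilon$, and $Y^c$ is closed, so $y \in Y^c$ — a contradiction. Hence neighbourhoods $X$ of $K_1$ and $\varepsilon' \in ]0,1]$ with $F_{\varepsilon}(X) \subseteq Y$ for $\varepsilon \le \varepsilon'$ do exist.

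The only genuine obstacle is enforcing uniformity over all of $K_1$, and the self-referential choice $X = B_{\varepsilon}(K_1)$ with threshold $\varepsilon$ is precisely what handles it: it forces every cluster point of the resulting net of base points to lie in $K_1$, while c-boundedness pins the net of images inside a fixed compact set so that a cluster point exists in the first place. (A direct covering argument is also possible: for fixed $x \in K_1$ apply Lemma \ref{generalized_graph_lemma} to the pairs $(x,y)$ with $y$ in the compact set $K' \cap Y^c$, pass to a finite subcover, intersect the resulting $\Omega_1$-neighbourhoods, and finally cover $K_1$ by finitely many of these; the contradiction argument above is shorter and avoids the bookkeeping.)
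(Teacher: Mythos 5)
Your proof is correct, and it handles the passage from a single point to the compact set $K_1$ differently from the paper. The paper argues in two stages: first it proves the statement for $K_1=\{x\}$ a single point by the same kind of contradiction you use (choosing $X'=B_{\varepsilon}(x)\cap\Omega_1$ with threshold $\varepsilon$ and playing the clusterpoints of $(F_{\tau(\varepsilon)}(x_{\varepsilon}))_{\varepsilon}$, which lie in $K_2$ by definition of the generalized graph, against $\overline{Y^c}$), and then it covers $K_1$ by the finitely many neighbourhoods $X_{z_1},\dots,X_{z_l}$ obtained from the point-wise statement, taking $X:=\bigcup_k X_{z_k}$ and $\varepsilon':=\min_k \varepsilon_{z_k}$. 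You instead run one global contradiction: the self-referential choice $X=B_{\varepsilon}(K_1)$ with threshold $\varepsilon$ forces every cluster point of the base points into $K_1$, c-boundedness confines the image net to a fixed compact $K'\csub\Omega_2$ so that a cluster point $(x,y)$ exists, and the stability of $\mathcal{T}$ under the composition $\tau\circ\sigma$ (using $\tau(\delta)\le\delta$) puts $(x,y)$ into $\ggraph{F}$, giving $y\in K_2\subseteq Y$ against $y\in\overline{Y^c}$. What your route buys is the elimination of the finite-subcover bookkeeping and an explicit justification (via $K'$) of the nonemptiness of the clusterpoint set, which the paper's single-point step uses only implicitly; what the paper's route buys is modularity, since its covering step simply recycles the point-wise case. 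Both arguments are sound, and your parenthetical remark correctly identifies the paper's covering strategy as the alternative.
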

\begin{proof} 
First we proof the Lemma for $K_1:=\{x\}$ containing only a single point $x\in\Omega_1$.
We note that $K_2$ is a compact subset of $\Omega_2$ by Theorem \ref{generalized_graph_theorem}. 
Assume that the statement of the Lemma does not hold, that is for all neighborhoods $X'$ of $x$ and for
all $\varepsilon'\in]0,1]$ there exists some $\widetilde{\tau}(\varepsilon',X') \le \varepsilon'$ and $x_{\varepsilon'} \in X'$ with
\begin{equation*}
F_{\widetilde{\tau}(\varepsilon')}(x_{\varepsilon'}) \not \in Y.
\end{equation*}
Then (by setting $X'=B_{\varepsilon}(x) \cap \Omega_1$ and $\varepsilon'=\varepsilon$) we
define $\tau: \varepsilon  \mapsto \widetilde{\tau}(\varepsilon, B_{\varepsilon}(x) \cap \Omega_1)$
and obtain some net $x_{\varepsilon} \in B_{\varepsilon}(x)\cap \Omega_1$ such that 
$F_{\tau(\varepsilon)}(x_{\varepsilon}) \not \in Y$. Note that $\tau(\varepsilon) \le \varepsilon$ for all 
$\varepsilon \in ]0,1]$ implies $\tau \in \mathcal{T}$.

Since $Y$ is a neighborhood of $K_2$, we have  that $\overline{Y^c} \cap K_2 = \emptyset$.
The net $(x_{\varepsilon})_{\varepsilon}$  converges to $x$ and the net $(F_{\tau(\varepsilon)}(x_{\varepsilon}))_{\varepsilon} \in Y^c$
has the property that the set of clusterpoints $\cp{(F_{\tau(\varepsilon)}(x_{\varepsilon}))_{\varepsilon}}$
is contained in the closure of $Y^c$. 
By definition of the generalized graph $\ggraph{F}$ we have that $\cp{ (F_{\tau(\varepsilon)}(x_{\varepsilon}))_{\varepsilon} } \subseteq K_2$,
which contradicts the fact that $\overline{Y^c} \cap K_2 = \emptyset$. 

Now we consider the general case, when $K_1$ is an arbitrary compact set. Then $Y$ is a neighborhood of $\pi_2(C_f \cap K_1 \times \Omega_2)$ and
it follows that $Y$ is a neighborhood of each set $\pi_2(\{ z\} \times \ggraph{F}_z)$ for $z\in K_1$. We can apply the first part of the proof for each point $z$ and obtain (open) neighborhoods $X_z$ of each point $z$ such that $F_{\varepsilon}(X_z) \subseteq Z$ for all $\varepsilon<\varepsilon_z$,  where $\varepsilon_z\in ]0,1]$ depends on $z$.
Since $K_1$ is compact and $(X_z)_{z\in K_1}$ is an open covering we can choose some finite subcovering $(X_{z_k})_{k=1}^l$. Then $X:=\bigcup_{k=1}^l X_{z_k}$
is a neighborhood of $K_1$ such that $f_{\varepsilon}(X) \subseteq Y$ holds for $\varepsilon < \varepsilon':=\min_{k=1}^l \varepsilon_{z_k}$.
\end{proof}

\section{The generalized graph as a set-valued map}

By Theorem \ref{generalized_graph_theorem} it turns out that we think of the generalized graph of a c-bounded Colombeau map 
$F\in\colmap{\Omega_1}{\Omega_2}$ as a set-valued map
$$
\Omega_1 \rightarrow \mathcal{F}_0(\Omega_2), \quad x \mapsto \ggraph{F}_x.
$$
Since the generalized graph $\ggraph{F}$ is closed in $\Omega_1 \times \Omega_2$ 
and $\ggraph{F}_x$ is compact for all $x\in \Omega_1$ it follows by Proposition \ref{closed_corr_proposition} and 
Theorem \ref{set_valued_map_main_theorem} that $x\mapsto \ggraph{F}_x$ 
is an upper semi-continuous and locally bounded set-valued map.

It turns out that the convex hull of the generalized graph is an important tool to characterize the shadows of 
Colombeau solutions of ordinary differential equations, 
where the right-hand side $A:=[(A_{\varepsilon})_{\varepsilon}]$ is a Colombeau map satisfying certain bounds.

\begin{theorem} \label{generalized_graph_composition}
Let $F \in \colmap{\Omega_1}{\Omega_2}$ and $G \in \colmap{\Omega}{\Omega_1}$, then the composition
$F\circ G \in \colmap{\Omega}{\Omega_2}$ defined by $(F_{\varepsilon} \circ G_{\varepsilon})_{\varepsilon}$ has the property
that 
$$
\ggraph{F \circ G}_x \subseteq (\ggraph{F} \circ \ggraph{G})_x \quad \text{for all } x\in\Omega,
$$ 
where the composition on the right-hand side denotes the composition of set-valued maps (cf. Theorem \ref{set_valued_composition_theorem}).
\end{theorem}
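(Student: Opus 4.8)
The plan is to unravel the definition of the generalized graph on both sides and then chase clusterpoints through the composition. Fix $x \in \Omega$ and let $z \in \ggraph{F \circ G}_x$. By the definition of the generalized graph of $F \circ G$, there is a net $(x_\varepsilon)_\varepsilon$ in $\Omega$ with $x_\varepsilon \to x$ and a map $\tau \in \mathcal{T}$ such that $z \in \cp{(F_{\tau(\varepsilon)}(G_{\tau(\varepsilon)}(x_\varepsilon)))_\varepsilon}$. Passing to a further subnet (i.e.\ composing $\tau$ with another element of $\mathcal{T}$), we may assume without loss of generality that $F_{\tau(\varepsilon)}(G_{\tau(\varepsilon)}(x_\varepsilon)) \to z$ as $\varepsilon \to 0$.

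The key step is to introduce the intermediate points $y_\varepsilon := G_{\tau(\varepsilon)}(x_\varepsilon) \in \Omega_1$ and to show that the net $(y_\varepsilon)_\varepsilon$ has a clusterpoint $y$ lying in $\ggraph{G}_x$. Since $G$ is c-bounded, choose a compact neighborhood $K \csub \Omega$ of $x$; then $x_\varepsilon \in K$ for small $\varepsilon$, and c-boundedness gives $K' \csub \Omega_1$ with $G_{\tau(\varepsilon)}(x_\varepsilon) \in G_{\tau(\varepsilon)}(K) \subseteq K'$ for small $\varepsilon$. Hence $(y_\varepsilon)_\varepsilon$ eventually lies in the compact set $K'$, so its clusterpoint set is non-empty; pick $y \in \cp{(y_\varepsilon)_\varepsilon}$, and after composing $\tau$ with yet another subnet map $\sigma \in \mathcal{T}$ we may assume $y_{\varepsilon} \to y$ (note that $\tau \circ \sigma \in \mathcal{T}$ still, and along this subnet we still have $F_{\tau(\sigma(\varepsilon))}(y_{\sigma(\varepsilon)}) \to z$). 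By the definition of the generalized graph of $G$, applied with the net $(x_\varepsilon)_\varepsilon$ (which still converges to $x$) and the subnet map $\tau \circ \sigma \in \mathcal{T}$, we get $y \in \ggraph{G}_x$.

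It remains to see $z \in \ggraph{F}_y$, which then gives $z \in (F)_{\ggraph{G}_x} = (\ggraph{F} \circ \ggraph{G})_x$ by the definition of composition of set-valued maps (Theorem \ref{set_valued_composition_theorem}); here one should note that $\ggraph{F}$ and $\ggraph{G}$ are upper semi-continuous and locally bounded by Theorem \ref{generalized_graph_theorem} and Corollary \ref{generalized_graph_usc_corollary}, so the composition is well-defined. For this, apply the definition of $\ggraph{F}$ with the net $(y_{\sigma(\varepsilon)})_\varepsilon$ in $\Omega_1$, which converges to $y$, together with the identity map in $\mathcal{T}$ (so $F_{\tau(\sigma(\varepsilon))}(y_{\sigma(\varepsilon)})$ is evaluated at $\tau \circ \sigma \in \mathcal{T}$, applied to $y_{\sigma(\varepsilon)}$); since $F_{\tau(\sigma(\varepsilon))}(y_{\sigma(\varepsilon)}) \to z$, we obtain $z \in \cp{(F_{\tau(\sigma(\varepsilon))}(y_{\sigma(\varepsilon)}))_\varepsilon} \subseteq \ggraph{F}_y$, as desired.

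The main obstacle is purely bookkeeping: one must be careful that when passing to subnets to extract a clusterpoint of $(y_\varepsilon)_\varepsilon$, the \emph{same} reparametrization is applied simultaneously to $(x_\varepsilon)_\varepsilon$, to $\tau$, and to the $F$-values, so that all three convergences ($x_\varepsilon \to x$, $y_\varepsilon \to y$, $F_{\tau(\varepsilon)}(y_\varepsilon) \to z$) hold along a single common subnet with the reparametrized index map still in $\mathcal{T}$. The reason one only gets an inclusion (and not equality) is that the intermediate point $y$ produced by this argument is a \emph{specific} clusterpoint of the net $(G_{\tau(\varepsilon)}(x_\varepsilon))_\varepsilon$, namely one compatible with the chosen subnet realizing $z$; a general point $y' \in \ggraph{G}_x$ together with a general $z' \in \ggraph{F}_{y'}$ need not be realizable by a single net $(x_\varepsilon)_\varepsilon$ and single $\tau$ simultaneously for the composed map, so the reverse containment can fail.
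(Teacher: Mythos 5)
Your proof is correct and follows essentially the same route as the paper's: pass to a subnet so that $F_{\tau(\varepsilon)}(G_{\tau(\varepsilon)}(x_\varepsilon))$ converges to $z$, use c-boundedness of $G$ to extract a further common subnet along which the intermediate points $y_\varepsilon = G_{\tau(\varepsilon)}(x_\varepsilon)$ converge to some $y \in \ggraph{G}_x$, and conclude $z \in \ggraph{F}_y$; your contribution is merely to spell out the "without loss of generality" subnet bookkeeping that the paper leaves implicit. One small notational slip: when verifying $y \in \ggraph{G}_x$ and $z \in \ggraph{F}_y$ the witnessing nets should be the reparametrized ones $(x_{\sigma(\varepsilon)})_\varepsilon$ and $(y_{\sigma(\varepsilon)})_\varepsilon$ together with the index map $\tau\circ\sigma \in \mathcal{T}$ (not the original net $(x_\varepsilon)_\varepsilon$, and not the identity map), but this is exactly what your final paragraph intends.
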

\begin{proof}
Note that $(\ggraph{F} \circ \ggraph{G})_x$ is again an upper semi-continuous, locally bounded set-valued map due to Theorem \ref{set_valued_composition_theorem}.

Let $a\in \ggraph{F \circ G}_x$ then there exists some $\tau \in \mathcal{T}$ and a net $(x_{\varepsilon})_{\varepsilon}$ with $x_{\varepsilon} \rightarrow x$, such that
$$
a = \lim_{\varepsilon \rightarrow 0} F_{\tau(\varepsilon)}(G_{\tau(\varepsilon)}(x_{\varepsilon})).
$$
Without loss of generality we can assume that $y_{\varepsilon}=G_{\tau(\varepsilon)}(x_{\varepsilon})$ converges to some $y\in \ggraph{G}_x$.
It immediately follows that $a\in \ggraph{F}_y$ and we have 
$\ggraph{F \circ G}_x \subseteq \bigcup_{y \in \ggraph{G}_x} \ggraph{F}_y = (\ggraph{F} \circ \ggraph{G})_x$.
\end{proof}

\begin{proposition} \label{generalized_graph_supporting_function_proposition}
Assume $F  \in \G(\Omega_1 , \Omega_2)$ is a c-bounded Colombeau map with representative 
$(F_{\varepsilon})_{\varepsilon}$.
Then the supporting function of the generalized graph $\ggraph{F}$ defined by
$$
H_F(x,w) := \sup_{a \in \ggraph{F}_x} \langle a,w \rangle
$$
can be obtained by
\begin{equation} \label{generalized_graph_supporting_function} \index{generalized graph!supporting function}
H_F(x,w) = \lim_{\delta \rightarrow 0} \sup_{y\in B_{\delta}(x)\cap \Omega_1, \mu \in ]0,\delta]} \langle F_{\mu}(y), w \rangle.
\end{equation}
\end{proposition}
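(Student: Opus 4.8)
The plan is to prove the two inequalities between $H_F(x,w)$ and $\Phi:=\lim_{\delta\to 0}\Phi(\delta)$, where
$$
\Phi(\delta):=\sup_{y\in B_{\delta}(x)\cap\Omega_1,\ \mu\in]0,\delta]}\langle F_{\mu}(y),w\rangle .
$$
First I would dispose of the preliminaries. Since shrinking $\delta$ shrinks the set over which the supremum is taken, $\delta\mapsto\Phi(\delta)$ is non-decreasing; hence the limit in (\ref{generalized_graph_supporting_function}) exists and equals $\inf_{\delta>0}\Phi(\delta)$. Choosing $\delta_0$ with $B_{\delta_0}(x)\csub\Omega_1$ and, by c-boundedness of $F$, a compact $K'\csub\Omega_2$ with $F_\varepsilon(B_{\delta_0}(x))\subseteq K'$ for all $\varepsilon\in]0,1]$, one sees that for $0<\delta\le\delta_0$ every value $\langle F_\mu(y),w\rangle$ with $y\in B_\delta(x)\cap\Omega_1$, $\mu\in]0,\delta]$ lies in a fixed compact interval (the image of $K'$ under $b\mapsto\langle b,w\rangle$); so $\Phi(\delta)$ is a finite real number, and so is $\Phi$. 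Note also $H_F(x,w)=\sup_{a\in\ggraph{F}_x}\langle a,w\rangle<\infty$ because $\ggraph{F}_x$ is compact (Theorem \ref{generalized_graph_theorem}).

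For $H_F(x,w)\le\Phi$ I would argue directly from the definition. Let $a\in\ggraph{F}_x$. By (\ref{generalized_graph_definition}) together with the cluster-point characterization recalled in the introduction (a cluster point of a net is the limit of a subnet indexed by some $\sigma\in\mathcal{T}$, and a composition of elements of $\mathcal{T}$ again lies in $\mathcal{T}$), there exist a map $\widetilde\tau\in\mathcal{T}$ and a net $(\widetilde x_\varepsilon)_\varepsilon$ in $\Omega_1$ with $\widetilde x_\varepsilon\to x$ and $F_{\widetilde\tau(\varepsilon)}(\widetilde x_\varepsilon)\to a$. Fix $\delta>0$. Since $\widetilde x_\varepsilon\to x$ and $\widetilde\tau(\varepsilon)\to 0$, for all sufficiently small $\varepsilon$ one has $\widetilde x_\varepsilon\in B_\delta(x)\cap\Omega_1$ and $\widetilde\tau(\varepsilon)\in]0,\delta]$, hence $\langle F_{\widetilde\tau(\varepsilon)}(\widetilde x_\varepsilon),w\rangle\le\Phi(\delta)$; passing to the limit gives $\langle a,w\rangle\le\Phi(\delta)$, and then $\delta\to 0$ gives $\langle a,w\rangle\le\Phi$. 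Taking the supremum over $a\in\ggraph{F}_x$ yields $H_F(x,w)\le\Phi$.

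For the reverse inequality $\Phi\le H_F(x,w)$ the key tool is Lemma \ref{generalized_graph_approximation_property} with $K_1:=\{x\}$, so that the associated compact set is $K_2=\ggraph{F}_x$. Given $\eta>0$, the set $Y:=\{b\in\Omega_2\mid\langle b,w\rangle<H_F(x,w)+\eta\}$ is an open neighborhood of $\ggraph{F}_x$, so the Lemma produces a neighborhood $X\subseteq\Omega_1$ of $x$ and $\varepsilon'\in]0,1]$ with $F_\varepsilon(X)\subseteq Y$ for all $\varepsilon\le\varepsilon'$. Picking $\delta_1\in]0,\varepsilon']$ small enough that $B_{\delta_1}(x)\cap\Omega_1\subseteq X$, one gets $\langle F_\mu(y),w\rangle<H_F(x,w)+\eta$ for all $y\in B_{\delta_1}(x)\cap\Omega_1$ and $\mu\in]0,\delta_1]$, whence $\Phi\le\Phi(\delta_1)\le H_F(x,w)+\eta$. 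Letting $\eta\to 0$ completes the proof.

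The argument is essentially soft: the only genuinely substantive ingredient is Lemma \ref{generalized_graph_approximation_property}, so the ``hard part'' is really recognizing that this Lemma is what controls the reverse inequality and feeding it the half-space neighborhood $Y$ cut out by $\langle\,\cdot\,,w\rangle$; one must also be mildly careful that c-boundedness is exactly what keeps $\Phi$ finite, and that in the first inequality the passage from a cluster point to a genuinely convergent net stays within the class $\mathcal{T}$. A more hands-on alternative for the reverse inequality would select near-maximizers $(y_k,\mu_k)$ along a sequence $\delta_k\downarrow 0$, extract a convergent subsequence of $(F_{\mu_k}(y_k))_k$ inside the compact set $K'$, and splice it into a net over $]0,1]$ via a step function in $\mathcal{T}$ to realize its limit as a point of $\ggraph{F}_x$; this works but is longer.
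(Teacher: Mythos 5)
Your proof is correct. The easy inequality $H_F(x,w)\le\Phi$ is essentially the paper's own argument (the paper evaluates it at a maximizer $a_w\in\ggraph{F}_x$ and assumes WLOG $\tau(\varepsilon)\le\varepsilon$; you take an arbitrary $a$, use the cluster-point characterization plus closure of $\mathcal{T}$ under composition, and then take the supremum — a touch cleaner, same substance). For the reverse inequality $\Phi\le H_F(x,w)$ you genuinely diverge: the paper constructs an explicit net realizing the limit, using Lemma \ref{supremum_approximating_sequence} to pick near-maximizers $(\mu_{\delta,\gamma},x_{\delta,\gamma})$ and then a diagonal argument producing $\psi\in\mathcal{T}$ with $h(x,w)=\lim_{\varepsilon\to 0}\langle F_{\psi(\varepsilon)}(x_\varepsilon),w\rangle$, after which any cluster point of $(F_{\psi(\varepsilon)}(x_\varepsilon))_\varepsilon$ lies in $\ggraph{F}_x$ and gives the bound — this is precisely the ``hands-on alternative'' you sketch at the end. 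You instead invoke Lemma \ref{generalized_graph_approximation_property} with $K_1=\{x\}$ and the open half-space $Y=\{b\in\Omega_2\mid\langle b,w\rangle<H_F(x,w)+\eta\}$, which immediately caps $\sup_{y\in B_{\delta_1}(x)\cap\Omega_1,\,\mu\le\delta_1}\langle F_\mu(y),w\rangle$ by $H_F(x,w)+\eta$. Your route is softer and shorter, outsourcing the net-splicing to an already-proved lemma (whose proof is itself a contradiction/diagonal argument of the same flavour); the paper's route is self-contained at the level of nets and, as a by-product, exhibits an explicit $\psi\in\mathcal{T}$ and net $(x_\varepsilon)_\varepsilon$ achieving the value, a fact it reuses verbatim in the proof of Theorem \ref{colombeau_uniqueness_theorem}. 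Your finiteness remarks via c-boundedness are a harmless addition the paper leaves implicit.
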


\begin{proof}

Let $w\in\R^m$ be fixed. We put $g(\mu,y):=\langle F_{\mu}(y), w \rangle$ for fixed $w\in\R^n$ and $M_{\delta}(x):=]0,\delta] \times  B_{\delta}(x)$
(which is a descending family of sets), then Lemma \ref{supremum_approximating_sequence} yields
$$
h_{\delta}(x,w):= \sup_{(\mu,y) \in  M_{\delta}(x)} g(\mu,y) = \lim_{\gamma \rightarrow 0} g(\mu_{\delta,\gamma}, x_{\delta,\gamma}) 
$$
for some converging net $(\mu_{\delta,\gamma}, x_{\delta,\gamma})_{\gamma \in ]0,1]}$ in $M_{\delta}(x)$.

For all $\rho,\delta>0$ there exists a $\tau_1(\rho, \delta) \in ]0,\delta]$ such 
that $|h_{\delta}(x,w) -  g(\mu_{\gamma,\delta},y_{\gamma ,\delta})| < \rho/2$
for all $\gamma  \in ]0, \tau_1(\rho,\delta)]$.

Since the right-hand side of (\ref{generalized_graph_supporting_function}) is defined by 
$$
h(x,w)=\lim_{\delta\rightarrow 0} h_\delta(x,w)
$$
there exists for all $\rho > 0$ some $\tau_2(\rho)\in ]0,\rho]$ such that
$|h(x,w) - h_{\delta}(x,w)| < \rho/2$ for all $\delta \le \tau_2(\rho)$. Setting $\psi(\varepsilon):= \mu_{\tau_1(\varepsilon, \tau_2(\varepsilon) ),\tau_1(\varepsilon)}$ (which implies $\psi(\varepsilon) \le \varepsilon$ since $\mu_{\gamma,\delta}\le \delta$, thus $\psi \in \mathcal{T}$) and $x_{\varepsilon} := y_{\tau_1(\varepsilon,\tau_2(\varepsilon)),\tau_2(\varepsilon)}$
we obtain the estimate
\begin{multline*}
|h(x,w) - \langle F_{\psi(\varepsilon)}(x_{\varepsilon}),w \rangle| 
\le
|h(x,w) - h_{\tau_2(\varepsilon)}(x,w)|+
|h_{\tau_2(\varepsilon)}(x,w) - g(\psi(\varepsilon),x_{\varepsilon}))| \\
=  |h(x,w) - h_{\tau_2(\varepsilon)}(x,w)| +    |h_{\tau_2(\varepsilon)}(x,w) -  g(\mu_{\tau_1(\varepsilon, \tau_2(\varepsilon) ),\tau_2(\varepsilon)},x_{\tau_1(\varepsilon, \tau_2(\varepsilon) ) ,\tau_2(\varepsilon)})|
\le \varepsilon/2 + \varepsilon/2 = \varepsilon.
\end{multline*}
We conclude $h(x,w) = \lim_{\varepsilon \rightarrow 0} \langle F_{\psi(\varepsilon)}(x_{\varepsilon}),w \rangle$.
Note that any clusterpoint $a \in \cp{ (F_{\psi(\varepsilon)}(x_{\varepsilon}))_{\varepsilon} } \subseteq\ggraph{F}_x$ 
has the property that $h(x,w)=\langle a, w \rangle \le H_F(x,w)$ for $w\in\R^m$ fixed. By repeating the steps of the proof for all $w\in\R^m$ the bound
$$
h(x,w) \le H_F(x,w)
$$
is obtained for all $x\in \Omega_1$ and $w\in\R^m$.

It remains to prove $H_F(t,x) \le h(t,x)$: Since $H_F(x,w)= \langle a_w, w\rangle$
for fixed $x\in \R^n$, where $a_w = \lim_{\varepsilon \rightarrow 0 } F_{\tau(\varepsilon)}(x_{\varepsilon})$ 
for some map $\tau \in \mathcal{T}$ and $x_{\varepsilon}\rightarrow x$. Without loss of generality we assume $\tau(\varepsilon) \le \varepsilon$.
It follows that 
$$
H_F(x,w) = \lim_{\varepsilon \rightarrow 0} \langle F_{\tau(\varepsilon)} (x_{\varepsilon}),w \rangle.
$$
For all $\delta>0$ we can find some $\varepsilon'(\delta) \in]0,\delta]$ such that $|x_{\varepsilon}-x| <\delta$ for $\varepsilon \in ]0,\varepsilon'(\delta)]$ and $\tau(\varepsilon) \le \varepsilon \le \varepsilon'(\delta) \le \delta$ for all $\varepsilon \in ]0,\varepsilon'(\delta)]$. It follows that
\begin{multline*}
H_F(x,w) = \lim_{\varepsilon \rightarrow 0}  \langle F_{\tau(\varepsilon)} (x_{\varepsilon}),w \rangle
=\lim_{\delta \rightarrow 0} \sup_{\varepsilon \in ]0, \varepsilon'(\delta)]}  \langle F_{\tau(\varepsilon)} (x_{\varepsilon}),w \rangle \\
\le \lim_{\delta \rightarrow 0} \sup_{\mu \in ]0, \delta], y\in B_{\delta}(x)}  \langle F_{\mu} (y),w \rangle = \lim_{\delta \rightarrow 0}h_{\delta}(x,w) = h(x,w).
\end{multline*}
\end{proof}

\begin{example}
Recall Example \ref{heaviside_ggraph_example}, where we considered the Colombeau embedding of the 
Heaviside function $F=\iota(\Theta)$.
If we put $g:= \Theta \ast \rho$, where $\rho$ is
the mollifier of the embedding $\iota$ with $\int \rho(y) \m{y}=1$ and $\int y^k \rho(y) \m{y} = 0$ for all $k \ge 1$, we have that $F_{\varepsilon}(x):=g(x/\varepsilon)$ is a representative of $F$. 
Note that $\lim_{y \rightarrow -\infty} g(y) =0$ and $\lim_{y \rightarrow +\infty} g(y) =1$.
According to Proposition \ref{generalized_graph_supporting_function_proposition} the supporting function $H_F$ can be obtained by
$$
H_F(x, +1) = \lim_{\delta \rightarrow 0} \sup_{y\in B_{\delta}(x)\cap \Omega_1, \mu \in ]0,\delta]} g(y/\mu) = \lim_{\delta \rightarrow 0} \sup_{z\in \delta^{-1} \cdot B_\delta(x)} g(z) = \left\{ 
\begin{array}{ll} 
0 & x > 0 \\
\sup_{y\in \R} g(y) & x=0 \\
1 & x < 0
\end{array}
\right.
$$
and
$$
H_F(x, -1) = \lim_{\delta \rightarrow 0} \sup_{y\in B_{\delta}(x)\cap \Omega_1, \mu \in ]0,\delta]} -g(y/\mu) = - \lim_{\delta \rightarrow 0} \inf_{z\in \delta^{-1}\cdot B_\delta(x)} g(z) = \left\{ 
\begin{array}{ll} 
0 & x > 0 \\
-\inf_{y\in \R} g(y) & x=0 \\
-1 & x < 0 .
\end{array}
\right. 
$$ 
Since $F \in \colmap{\R}{\R}$ (space dimension $m=1$) it follows that $\ggraph{F}_x$ is convex for all $x\in\R$, thus the supporting function $H_F$
generates the graph by
\begin{multline*}
\ggraph{F} = \{a\in\R \mid \forall w\in\R: \langle a, w\rangle \le H_F(x,w) \} \\
= \left(\{x\in\mathbb{R}\mid x<0\} \times \{0\}\right) \cup \left(\{0\} \times \overline{{\rm Im}(g)} \right) \cup \left(\{x\in\mathbb{R}\mid x>0\} \times \{1\}\right),
\end{multline*}
which is consistent with the result of Example \ref{heaviside_ggraph_example}.
\end{example}

\begin{theorem} \label{colombeau_supporting_function_proposition}
Let $F  \in \G(J \times \R^n)^n$ be a Colombeau map with a representative $(F_{\varepsilon})_{\varepsilon}$ such that
there exists a positive $\beta\in L^1_{\rm loc}(J)$ with
$\sup_{y\in\R^n}{|F_{\varepsilon}(t,y)|} \le \beta(t)$ for $t\in J$. Then the generalized graph $\ggraph{F(t,\cdot)}$
exists for almost all $t\in J$ fixed. Its supporting function defined by
\begin{equation*} 
H_F(t,x,w) = \lim_{\delta \rightarrow 0} \sup_{y\in B_{\delta}(x), \mu \in ]0,\delta]} \langle F_{\mu}(t,y), w \rangle,
\end{equation*}
for almost all $t\in J$ satisfies the following properties:
\begin{enumerate}
\item $t \mapsto H_F(t,x,w)$ is Lebesgue measurable on $J$ for all $x,w \in \R^n$,
\item $x \mapsto H_F(t,x,w)$ is a upper semi-continuous for almost all $t \in J$ and $w \in \R^n$,
\item there exists a positive function $\beta \in L_{\rm loc}^1(J)$ such that 
 $\sup_{x\in\mathbb{R}^n} |H_F(t,x,w/|w|)| \le \beta(t)$ for almost all $t\in J$ and all $w\in \R^n$.
\end{enumerate}
\end{theorem}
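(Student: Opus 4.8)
The plan is to reduce everything to Proposition \ref{generalized_graph_supporting_function_proposition} once we know that $F(t,\cdot)$ is an admissible map for almost all $t$. \textbf{Step 1 (the restriction is c-bounded a.e.).} Since $\beta\in L^1_{\rm loc}(J)$ we have $\beta(t)<\infty$ for almost all $t\in J$; fix such a $t$. Restriction to the slice $\{t\}\times\R^n$ carries the moderateness/negligibility estimates for $F$ on compacta $[a,b]\times K'\csub J\times\R^n$ (with $t\in[a,b]$) to the corresponding estimates on $K'\csub\R^n$, so $F(t,\cdot)$ is a well-defined element of $\G(\R^n)^n$, independent of the representative; and $F_\varepsilon(t,\R^n)\subseteq\overline{B_{\beta(t)}(0)}$ shows it is c-bounded. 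Hence $\ggraph{F(t,\cdot)}$ exists for almost all $t$, and Proposition \ref{generalized_graph_supporting_function_proposition} (applied with $\Omega_1=\Omega_2=\R^n$) identifies $H_F(t,x,w)$ with its supporting function.

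\textbf{Step 2 (properties (iii) and (ii)).} For (iii): $|\langle F_\mu(t,y),w/|w|\rangle|\le|F_\mu(t,y)|\le\beta(t)$ for all $\mu\in]0,1]$, $y$ and $w$, so the defining formula gives $|H_F(t,x,w/|w|)|\le\beta(t)$, and taking $\sup_x$ preserves this; the same $\beta$ works. For (ii): put $\phi_\delta(x):=\sup_{y\in B_\delta(x),\,\mu\in]0,\delta]}\langle F_\mu(t,y),w\rangle$, which is nondecreasing in $\delta$, so $H_F(t,\cdot,w)=\inf_{\delta>0}\phi_\delta$. If $|x'-x|<\delta/2$ then $B_{\delta/2}(x')\subseteq B_\delta(x)$, hence $H_F(t,x',w)\le\phi_{\delta/2}(x')\le\phi_\delta(x)$; letting $x'\to x$ and then $\delta\to0$ yields $\limsup_{x'\to x}H_F(t,x',w)\le H_F(t,x,w)$, i.e.\ upper semi-continuity. (This is also the upper semi-continuity in $x$ of the supporting function of the closed, locally bounded set-valued map $x\mapsto\ggraph{F(t,\cdot)}_x$ provided by Theorem \ref{generalized_graph_theorem}.)

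\textbf{Step 3 (measurability, property (i)).} Monotonicity in $\delta$ lets us replace the limit by a countable infimum, $H_F(t,x,w)=\inf_{k\in\N}\phi_{1/k}(x)$, so it suffices to prove that $t\mapsto\phi_\delta(x)$ is Lebesgue measurable for each fixed $\delta,x,w$. For a fixed $\mu$, $y\mapsto\langle F_\mu(t,y),w\rangle$ is smooth, hence continuous; approaching any point of the closed ball $B_\delta(x)$ from its interior gives $\sup_{y\in B_\delta(x)}\langle F_\mu(t,y),w\rangle=\sup_{y\in B_\delta(x)\cap\Q^n}\langle F_\mu(t,y),w\rangle$, and taking the supremum over $\mu\in]0,\delta]$ and interchanging the two suprema,
\[
\phi_\delta(x)=\sup_{y\in B_\delta(x)\cap\Q^n}\ \sup_{\mu\in]0,\delta]}\langle F_\mu(t,y),w\rangle .
\]
Now the essential observation: for each fixed $y$ and $\mu$ the map $t\mapsto\langle F_\mu(t,y),w\rangle$ is continuous on $J$, so $t\mapsto\sup_{\mu\in]0,\delta]}\langle F_\mu(t,y),w\rangle$ is a supremum of continuous functions, hence lower semi-continuous and in particular Borel measurable — the uncountable supremum over $\mu$ is harmless. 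Consequently $\phi_\delta(x)$ is a countable supremum (over $y\in B_\delta(x)\cap\Q^n$) of measurable functions of $t$, hence measurable, and $H_F(\cdot,x,w)=\inf_k\phi_{1/k}(x)$ is measurable on $J$.

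The one genuinely non-routine point — which I expect to be the main obstacle — is the measurability in Step 3: a priori $\phi_\delta$ is an uncountable supremum over $\mu$ of functions for which no joint regularity in $(\mu,t)$ is available. The resolution is to split the roles of the two variables: the supremum over $y$ is reduced to a countable one using continuity in $y$ together with density of $\Q^n$, while the supremum over $\mu$ is absorbed by the fact that a supremum of continuous functions of $t$ is automatically lower semi-continuous. Everything else is bookkeeping with the definitions and Proposition \ref{generalized_graph_supporting_function_proposition}.
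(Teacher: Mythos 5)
Your proof is correct, and its skeleton coincides with the paper's: restrict to the slices $\{t\}\times\R^n$ with $\beta(t)<\infty$ to obtain a c-bounded map and identify $H_F(t,\cdot,\cdot)$ via Proposition \ref{generalized_graph_supporting_function_proposition}; get (iii) from the trivial bound $|\langle F_\mu(t,y),w/|w|\rangle|\le\beta(t)$; get (ii) from upper semi-continuity in $x$ (the paper routes this through the closed, locally bounded set-valued map $x\mapsto\ggraph{F(t,\cdot)}_x$ and Theorem \ref{set_valued_map_main_theorem}, while your direct monotone-infimum estimate $H_F(t,x',w)\le\phi_{\delta/2}(x')\le\phi_\delta(x)$ is an equally valid shortcut). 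The genuine divergence is exactly where you predicted it, namely (i). The paper invokes Lemma \ref{supremum_approximating_sequence} to produce a sequence $(\mu_{\delta,\gamma},x_{\delta,\gamma})$ realizing the supremum and then declares $t\mapsto\sup_{y,\mu}\langle F_\mu(t,y),w\rangle$ measurable as a pointwise limit of measurable functions; since that approximating sequence depends on $t$, this step is at best loosely written. Your argument avoids the issue by splitting the two suprema: reduce the $y$-supremum to a countable one over $B_\delta(x)\cap\Q^n$ using continuity in $y$, and absorb the uncountable $\mu$-supremum by observing that each $t\mapsto\langle F_\mu(t,y),w\rangle$ is continuous (the representatives are smooth), so the supremum over $\mu$ is lower semi-continuous, hence Borel, in $t$. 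This is more robust than the paper's treatment — it effectively repairs its weakest step — at the mild cost of explicitly using continuity of the fixed representative in $t$, which is of course available.
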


\begin{proof}
Observe that $\sup_{y\in\R^n}{|F_{\varepsilon}(t,y)|} \le \beta(t)$ implies that $x \mapsto F_{\varepsilon}(t,x)$ defines a c-bounded Colombeau map for 
almost all $t\in J$, so the generalized graph $\ggraph{F(t,\cdot)}$ is defined for almost all $t\in J$. 
Thus property (ii) follows since $x \mapsto \ggraph{F(t,x)}$ is a closed, locally bounded set-valued map for almost all $t\in J$, implying that
its supporting function $x \mapsto H_F(t,x,w)$ is upper semi-continuos.

Proposition \ref{generalized_graph_supporting_function_proposition} immediately yields that the supporting function of $\ggraph{F(t,\cdot)}$ can be obtained by
$$
H_F(t,x,w) = \lim_{\delta \rightarrow 0} \sup_{y\in B_{\delta}(x), \mu \in ]0,\delta]} \langle F_{\mu}(t,y), w \rangle.
$$
Property (iii) then follows from the estimate 
$$
\sup_{y\in \R^n, \mu \in ]0,1]}  \langle F_{\mu}(t,y), w/|w| \rangle \le \sup_{y\in \R^n, \mu \in ]0,1]} |F_{\mu}(t,y)| \le \beta(t)
$$
for almost all $t\in J$.

Due to Lemma \ref{supremum_approximating_sequence} we have $\sup_{y\in B_{\delta}(x), \mu \in ]0,\delta]} \langle F_{\mu}(t,y), w \rangle = 
\lim_{\gamma \rightarrow 0} \langle F_{\mu_{\delta, \gamma}}(t,x_{\delta, \gamma}), w \rangle$ for some sequence $(\mu_{\delta, \gamma},x_{\delta, \gamma} )_{\gamma \in ]0,1]}$
with $\mu_{\delta, \gamma} \in ]0,\delta]$ and $x_{\delta, \gamma} \in B_{\delta}(x)$.

It follows that $\sup_{y\in B_{\delta}(x), \mu \in ]0,\delta]} \langle F_{\mu}(t,y), w \rangle$ is Lebesgue measurable as the pointwise limit 
of a net of Lebesgue measurable functions. 
Finally the same argument yields that $t\mapsto H_F(t,x,w)$ is Lebesgue measurable.
\end{proof}

\begin{theorem}[Approximation property of the supporting function $H_F$] \label{supporting_function_approximation_property}
Let $F  \in \G(J \times \R^n , \R^n)$ be a Colombeau map as in Theorem
\ref{colombeau_supporting_function_proposition}, then $H_F$ denotes the supporting function of $\ggraph{F(t,\cdot)}$ 
for almost all $t\in J$.
If $(\xi_{\varepsilon})_{\varepsilon}$ is a net of smooth functions converging uniformly on compact sets
to some $\xi \in \Con(J)^n$, then 
$$
\limsup_{\varepsilon \rightarrow 0} \langle F_{\varepsilon}(s, \xi_{\varepsilon}(s)),w \rangle \le 
H_F(s, \xi(s), w)
$$
for all $w\in \R^m$ and almost all $s\in J$.
\end{theorem}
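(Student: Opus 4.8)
The plan is to read off the estimate directly from the approximation formula for $H_F$ recorded in Theorem~\ref{colombeau_supporting_function_proposition} (which itself rests on Proposition~\ref{generalized_graph_supporting_function_proposition}), exploiting the fact that the parameter $\varepsilon$ of the net $(F_\varepsilon(s,\xi_\varepsilon(s)))_\varepsilon$ can be made to serve as the auxiliary scale $\mu$ appearing in that formula, while the convergence $\xi_\varepsilon(s)\to\xi(s)$ controls the spatial variable.

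First I would fix $w\in\R^n$ and restrict attention to the full-measure set of those $s\in J$ for which $\sup_{y\in\R^n}|F_\varepsilon(s,y)|\le\beta(s)<\infty$ holds for all $\varepsilon$, so that $x\mapsto F_\varepsilon(s,x)$ is a genuine c-bounded Colombeau map and the generalized graph $\ggraph{F(s,\cdot)}$, together with its supporting function, is defined; note that this exceptional null set depends only on $\beta$, hence not on $w$. For such $s$ I abbreviate $x:=\xi(s)$ and, following Theorem~\ref{colombeau_supporting_function_proposition}, set $h_\delta(s,x,w):=\sup_{y\in B_\delta(x),\,\mu\in]0,\delta]}\langle F_\mu(s,y),w\rangle$, so that $\delta\mapsto h_\delta(s,x,w)$ is non-increasing and $H_F(s,x,w)=\lim_{\delta\to0}h_\delta(s,x,w)$.

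Next, fix $\delta>0$. Since $(\xi_\varepsilon)_\varepsilon$ converges to $\xi$ uniformly on compact subsets of $J$, evaluating on the compact set $\{s\}$ gives $\xi_\varepsilon(s)\to\xi(s)=x$, so there is $\varepsilon_0\le\delta$ with $\xi_\varepsilon(s)\in B_\delta(x)$ for all $\varepsilon<\varepsilon_0$. For every such $\varepsilon$ one also has $\varepsilon\le\varepsilon_0\le\delta$, hence the pair $(\varepsilon,\xi_\varepsilon(s))$ lies in $]0,\delta]\times B_\delta(x)$, which yields $\langle F_\varepsilon(s,\xi_\varepsilon(s)),w\rangle\le h_\delta(s,x,w)$. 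Passing to the $\limsup$ in $\varepsilon$ gives $\limsup_{\varepsilon\to0}\langle F_\varepsilon(s,\xi_\varepsilon(s)),w\rangle\le h_\delta(s,x,w)$, and since $\delta>0$ was arbitrary, letting $\delta\to0$ produces the asserted bound $\limsup_{\varepsilon\to0}\langle F_\varepsilon(s,\xi_\varepsilon(s)),w\rangle\le H_F(s,\xi(s),w)$.

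The argument is short, and the only points that need attention are bookkeeping rather than a genuine obstacle: one must fix the null set of bad $s$ before (hence independently of) $w$, and one must note that the closed-ball convention for $B_\delta$ is harmless because $\xi_\varepsilon(s)$ eventually lies in the ball of \emph{any} positive radius about $x$. I expect no further difficulty, since the analytic content is already contained in Theorem~\ref{colombeau_supporting_function_proposition}.
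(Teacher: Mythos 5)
Your proof is correct, and it rests on the same key ingredient as the paper's — the representation $H_F(s,x,w)=\lim_{\delta\to 0}\sup_{y\in B_\delta(x),\,\mu\in]0,\delta]}\langle F_\mu(s,y),w\rangle$ from Theorem \ref{colombeau_supporting_function_proposition} — but the execution is genuinely different and simpler. The paper fixes a compact $K\csub J$, uses the uniform convergence of $(\xi_\varepsilon)_\varepsilon$ to construct a map $\widetilde{\mu}\in\mathcal{T}$ with $\sup_{s\in K}|\xi_\mu(s)-\xi(s)|<\varepsilon$ for $\mu\in]0,\widetilde{\mu}(\varepsilon)]$, and then identifies $\limsup_{\varepsilon\to 0}\langle F_\varepsilon(s,\xi_\varepsilon(s)),w\rangle$ with the limit of the decreasing net $\sup_{\mu\in]0,\varepsilon]}\langle F_\mu(s,\xi_\mu(s)),w\rangle$ and of its subnet indexed by $\widetilde{\mu}(\varepsilon)$, before majorizing by the double supremum over $]0,\varepsilon]\times B_\varepsilon(\xi(s))$. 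You instead fix $s$ in the $w$-independent full-measure set and fix $\delta$ first, and use only the pointwise convergence $\xi_\varepsilon(s)\to\xi(s)$ to place $(\varepsilon,\xi_\varepsilon(s))$ in $]0,\delta]\times B_\delta(\xi(s))$ for all small $\varepsilon$; this bypasses the subnet and monotone-net bookkeeping entirely, and as a by-product shows that uniform convergence on compact sets could be weakened to pointwise (even a.e.) convergence. Two cosmetic points: $\delta\mapsto h_\delta(s,x,w)$ is non-decreasing in $\delta$ (hence non-increasing as $\delta\downarrow 0$), not non-increasing as a function of $\delta$ as you wrote — but you only invoke the existence of the limit, which the cited theorem supplies; and the finiteness of the $\limsup$, which you use implicitly, follows from $|\langle F_\varepsilon(s,\xi_\varepsilon(s)),w\rangle|\le|w|\,\beta(s)$, a bound the paper records explicitly at the start of its proof.
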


\begin{proof}
First of all we observe that
$$
s \mapsto \langle F_{\varepsilon}(s, \xi_{\varepsilon}(s)),w \rangle
$$
is a family of functions in $L^1_{\rm loc}(J)$ and  
$$
\langle F_{\varepsilon}(s, \xi_{\varepsilon}(s)),w \rangle  \le 
\sup_{y\in \R^n} |F_{\varepsilon}(s, y)| \le \beta(s)
$$ for almost all $s\in J$ and $\varepsilon \in ]0,1]$. 
Thus
$$
\limsup_{\varepsilon \rightarrow 0} \langle F_{\varepsilon}(s, \xi_{\varepsilon}(s)),w \rangle 
$$
exists for almost all $s\in J$. 

Let $K\csub J$ be fixed, then there exists some $\widetilde{\mu} \in \mathcal{T}$ (without loss of generality we assume
 $\widetilde{\mu}(\varepsilon) \le \varepsilon$) such that 
$\sup_{s\in K}|\xi_{\mu}(s)-\xi(s)|< \varepsilon$ for all 
$\mu\in]0,\widetilde{\mu}(\varepsilon)]$.
We have that
$$
\left(\sup_{\mu \in ]0,\widetilde{\mu}(\varepsilon)]} \langle F_{\mu}(s, \xi_{\mu}(s)),w \rangle \right)_{\varepsilon}
$$
is a subnet of the net $(\sup_{\mu \in ]0,\varepsilon]} \langle F_{\mu}(s, \xi_{\mu}(s)),w \rangle)_{\varepsilon}$, 
which converges for almost all $s\in K$, thus
$$
\lim_{\varepsilon \rightarrow 0} \sup_{\mu \in ]0,\varepsilon]} \langle F_{\mu}(s, \xi_{\mu}(s)),w \rangle
=
\lim_{\varepsilon \rightarrow 0} \sup_{\mu \in ]0,\widetilde{\mu}(\varepsilon)]} \langle F_{\mu}(s, \xi_{\mu}(s)),w \rangle.
$$
We conclude
\begin{multline*}
\limsup_{\varepsilon \rightarrow 0} \langle F_{\varepsilon}(s, \xi_{\varepsilon}(s)),w \rangle =
\lim_{\varepsilon \rightarrow 0} \sup_{\mu \in ]0,\varepsilon]} \langle F_{\mu}(s, \xi_{\mu}(s)),w \rangle
=
\lim_{\varepsilon \rightarrow 0} \sup_{\mu \in ]0,\widetilde{\mu}(\varepsilon)]} \langle F_{\mu}(s, \xi_{\mu}(s)),w \rangle
\\\le
\lim_{\varepsilon \rightarrow 0} \sup_{\mu \in ]0,\widetilde{\mu}(\varepsilon)], y\in B_{\varepsilon}(\xi(s))} \langle F_{\mu}(s, y),w \rangle 
\le
\lim_{\varepsilon \rightarrow 0} \sup_{\mu \in ]0,\varepsilon], y\in B_{\varepsilon}(\xi(s))} \langle F_{\mu}(s, y),w \rangle
= H_F(s,\xi(s),w)
\end{multline*}
for almost all $s\in K$.
\end{proof}

\chapter{Generalized solutions of ordinary differential equations}

Throughout this chapter $J$ will be some subinterval of $\R$. Starting point of our overview of generalized solution concepts
for ordinary differential equations is the initial value problem
$$
\ode
$$
where the right-hand side $a(t,x)=(a_1(t,x), \dots, a_n(t,x))$ is a map from $J \times \R^n$ to $\R^n$. 
Occasionally we will indicate dependence on the initial conditions by writing $\xi(\cdot;t,x)$ for a solution $\xi$ with $\xi(t)=x$.
 
Let us first consider the well-posedness: The left-hand side of the equation requires a possible solution $\xi$ 
to be at least absolutely continuous in order to ensure differentiability 
almost everywhere on $J$. 
Furthermore it is apparent that the right-hand side of the equation 
has to be in $L^1_{loc}(J)^n$. 
Nevertheless in general it is not sufficient to have $a \in L^1_{\rm loc}(J \times \R^n)^n$ 
in order to have the composition $s \mapsto a(s, f(s))$ in $L^1_{\rm loc}(J)^n$ 
for $f \in AC(J)^n$. 

In the first section we investigate the classical Caratheodory theory, 
which requires the right-hand side $a$ to be in the space 
$L^1_{\rm loc}(J, L^{\infty}(\R^n))^n$ with $a(\cdot, x) \in L^1_{\rm loc}(J)^n$ 
for all $x\in \R^n$ and $a(s,\cdot) \in C(J)^n$ for almost all $s \in J$ 
(referred to as \emph{Caratheodory conditions}), then the existence of 
a solution $\xi \in AC(J)^n$ can be proved.

Of course the continuity required in the space-variable of the 
right-hand side restricts the applicability of the Caratheodory theory. 
Nevertheless if we want to drop this requirement we cannot retain 
on the classical concept of a solution. 

A natural generalization of an ordinary differential equation is the differential inclusion
$$
\odie,
$$
where $A_{t,x}$ is a subset of $\R^n$ for each $(t,x) \in J \times \R^n$. The map
$A:(t,x) \mapsto A_{t,x}$ is called a \emph{set-valued map} (cf. Section \ref{set_valued_map_section}). 
As in the case of Caratheodory theory we have to impose certain conditions (Filippov conditions) on the set-valued map 
$A$ in order to obtain an existence result for the differential inclusion. These results will be presented in the second section.
 
The concept of a differential inclusion can then be utilized to obtain a generalized solution concept 
for ordinary differential equations, when the right-hand side fails continuity in the space variable, 
but is still in $L^1_{\rm loc}(J; L^{\infty}(\R^n))^n$. It is obtained by mapping the right-hand side $a$ to some $A$ 
in the space of set-valued functions satisfying the above-mentioned Filippov conditions
in a way that $A_{t,x} =\{a(t,x)\}$ whenever $a$ fulfills the classical Caratheodory conditions. 
Then the absolutely continuous solution of the differential inclusion is called a  Filippov generalized solution.

Another approach to generalize the classical solution concept for ordinary differential equations 
is the regularization of the right-hand side $a$ by means of convolution with a net of smooth mollifiers, such that
the resulting net of smooth functions $(A_{\varepsilon})_{\varepsilon}$ has the property
$\lim_{\varepsilon \rightarrow 0} \langle A_{\varepsilon}, \varphi \rangle = \langle a, \varphi\rangle$ for all 
$\varphi \in \mathcal{D}'(\R^n)$.
Then a net of solutions $(\xi_{\varepsilon})_{\varepsilon}$ can be obtained by solving the differential equation for fixed 
$\varepsilon \in ]0,1]$ using the classical existence results. If the net of solutions $(\xi_{\varepsilon})_{\varepsilon}$
possesses a subnet converging to some $\xi \in AC(J)^n$, i.e. there exists some $\tau \in \mathcal{T}$ such that
$\xi_{\tau(\varepsilon)}(s) \rightarrow \xi(s)$ uniformly on compact subsets of $J$, then the function $\xi$ 
is a generalized solution in the sense that it satisfies the classical ordinary differential equation 
whenever the initial right-hand side $a$ satisfies the Caratheodory conditions. 

It is remarkable that the latter approach is closely related to the former, since the generalized solution obtained by the
regularization procedure satisfies a certain differential inclusion. The concept of a generalized graph (cf. Chapter \ref{generalized_graph}) 
plays a crucial role in understanding this relation.
 
Another viewpoint of the regularization approach is that the net $(A_{\varepsilon})_{\varepsilon}$ 
can be considered as an element in the Colombeau algebra of generalized functions. 
We put $A := [(A_{\varepsilon})_{\varepsilon}]\in \cf{J\times \R^n}^n$. Note that in general $A=(A_1, \dots, A_n) \ne (\iota(a_1), \cdots, \iota(a_n))$, 
where $\iota: \mathcal{D}'(\R^n) \rightarrow \cf{\R^n}$ denotes the standard embedding, 
since we did not specify the net of smooth mollifier any further.
Occasionally it may be convenient to use mollifiers with non-vanishing moments 
(f.e. positive or compactly supported mollifiers), which is not possible when using $\iota$ to embed the right-hand side $a$.

If the ordinary differential equation
$$
\colode
$$
can be solved in the setting of Colombeau generalized functions by some 
$\xi := [(\xi_{\varepsilon})_{\varepsilon}] \in \cf{J}^n$, we call $\xi$ a Colombeau solution with inital condition 
$(\widetilde{t},\widetilde{x}) \in \widetilde{J} \times \widetilde{\R}^n$.

It generalizes the classical solution concept whenever there exists some $\tau \in \mathcal{T}$ 
such that $(\xi_{\tau(\varepsilon)})_{\varepsilon}$ converges to a function $\zeta \in AC(J)^n$ 
uniformly on compact sets of $J$. 
We say $\xi$ has an absolutely continuous \emph{sub-shadow} $\zeta$\index{sub-shadow}. This notion is justified
by the fact that $\zeta$ is an absolutely continuous shadow of $\xi$, 
if $(\xi_{\varepsilon})_{\varepsilon}$ converges to $\zeta$ uniformly on compact subsets of $J$.

$\zeta$ is a generalized solution in the sense that it satisfies the 
classical ordinary differential equation whenever the initial right-hand side fulfills the 
Caratheodory conditions.

\section{Solutions of ordinary differential equations according to Caratheodory} \label{caratheodory_solution_concept}

In this section we will review the classical Caratheodory theory for ordinary differential equations
\begin{equation} \label{ode}
\ode
\end{equation}
with non-smooth right-hand side $a: J \times \R^n \rightarrow \R^n$ or equivalently the corresponding integral equation
\begin{equation} \label{int_eq}
\xi(s) = x + \int_t^s a(\tau,\xi(\tau)) \m{\tau}.
\end{equation}

We follow closely the introduction from the book \cite{Filippov:88}. 
Throughout the entire section the right-hand side $a=(a_1,\ldots, a_n)$ satisfies the conditions
\begin{enumerate} 
\item $a(t,x)$ is Lebesgue measurable in $t$ for all fixed $x\in \mathbb{R}^n$,
\item $a(t,x)$ is continuous in $x$ for almost all $t \in J$, and
\item $\sup_{x\in\mathbb{R}^n} |a(t,x)| \le \beta(t)$ almost everywhere for some positive function $\beta \in L_{\rm loc}^1(J)$,
\end{enumerate}
referred to as \emph{Caratheodory conditions} (CC). \index{Caratheodory conditions}

Note that the first two Caratheodory conditions ensure Lebesgue measurability of the composition $s\mapsto a(s,f(s))$ 
for all $f\in {C}(J)^n$ (due to Lemma \ref{measurability_of_composition_lemma}), 
while the third condition is crucial in the existence proof.

\begin{theorem}[Existence theorem for ordinary differential equations] \label{caratheodory_ode_existence}
Let $J$ be some subinterval of $\R$ and assume that $a=(a_1,\ldots, a_n)$ satisfies (CC) on $J\times\mathbb{R}^n$. 
Let $(t,x) \in J \times \mathbb{R}^n$, then there exists an absolutely continuous solution $\xi=(\xi_1, \ldots, \xi_n)$ to
the ordinary differential equation
$$
\ode
$$
\end{theorem}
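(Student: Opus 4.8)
The plan is to replace the differential equation by the equivalent integral equation~\eqref{int_eq}, to construct a sequence of approximate solutions on a compact subinterval, to extract a uniformly convergent subsequence by Arzela--Ascoli, and to identify the limit as a genuine solution by a dominated-convergence argument; finally the solution is continued to all of $J$.

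First I would fix a compact subinterval $[\alpha,\omega]\csub J$ with $t\in[\alpha,\omega]$ and put $M:=\int_{\alpha}^{\omega}\beta(\tau)\m{\tau}<\infty$, which is finite since $\beta\in L^1_{\rm loc}(J)$. On $[t,\omega]$ I would build Euler-type polygons with delayed argument: for $k\in\N$ set $\delta_k:=(\omega-t)/k$, let $\xi_k(s):=x$ for $s\in[t,t+\delta_k]$, and define $\xi_k$ successively on the intervals $[t+j\delta_k,t+(j+1)\delta_k]$ by
\[
  \xi_k(s):=x+\int_{t}^{s-\delta_k} a(\tau,\xi_k(\tau))\m{\tau}.
\]
(Alternatively one may mollify $a$ in the space variable to obtain continuous right-hand sides $a_k$ still dominated by $\beta$ and apply Peano's theorem for each fixed $k$; the remainder of the argument is the same.) At each step $\tau\mapsto a(\tau,\xi_k(\tau))$ is Lebesgue measurable by conditions (i) and (ii) together with Lemma~\ref{measurability_of_composition_lemma}, hence integrable by (iii), so the construction is well posed. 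Moreover (iii) yields, for all $s,s'\in[t,\omega]$,
\[
  |\xi_k(s)-\xi_k(s')|\le\int_{\min(s,s')}^{\max(s,s')}\beta(\tau)\m{\tau},\qquad |\xi_k(s)|\le|x|+M,
\]
so $(\xi_k)_k$ is uniformly bounded and, by absolute continuity of $s\mapsto\int_t^s\beta(\tau)\m{\tau}$, equicontinuous on $[t,\omega]$.

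By the theorem of Arzela--Ascoli some subsequence $(\xi_{k_j})_j$ converges uniformly on $[t,\omega]$ to a function $\xi\in\Con([t,\omega])^n$. The decisive step is the limit passage in $\xi_{k_j}(s)=x+\int_t^{s-\delta_{k_j}}a(\tau,\xi_{k_j}(\tau))\m{\tau}$: for almost every $\tau$ the map $a(\tau,\cdot)$ is continuous by (ii), so $a(\tau,\xi_{k_j}(\tau))\to a(\tau,\xi(\tau))$ for a.e.\ $\tau$; the uniform bound $|a(\tau,\xi_{k_j}(\tau))|\le\beta(\tau)$ from (iii), with $\beta$ integrable, lets Lebesgue's dominated convergence theorem apply, while $\int_{s-\delta_{k_j}}^{s}\beta(\tau)\m{\tau}\to0$ neutralizes the delay. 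Hence $\xi(s)=x+\int_t^s a(\tau,\xi(\tau))\m{\tau}$ on $[t,\omega]$, so $\xi$ is absolutely continuous there and satisfies $\dot\xi(s)=a(s,\xi(s))$ for a.e.\ $s$ with $\xi(t)=x$. Running the same construction on $[\alpha,t]$ with reversed orientation and concatenating yields an $\AC$ solution on all of $[\alpha,\omega]$. Exhausting $J$ by compact subintervals $[\alpha_m,\omega_m]\ni t$, performing the construction so that the $k$-th approximant lives on $[\alpha_k,\omega_k]$, and diagonalizing then produces a single $\xi\in\AC(J)^n$ solving the problem on $J$; the fact that the majorant in (iii) is uniform in $x$ is precisely what rules out blow-up and lets the solution reach the endpoints of $J$.

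I expect the main obstacle to be this limit passage in the integral term, where all three Caratheodory conditions are used simultaneously: measurability of the compositions $\tau\mapsto a(\tau,\xi_{k_j}(\tau))$ (via (i), (ii) and Lemma~\ref{measurability_of_composition_lemma}), the a.e.\ pointwise convergence $a(\tau,\xi_{k_j}(\tau))\to a(\tau,\xi(\tau))$ on the co-null set where $a(\tau,\cdot)$ is continuous, and domination by the fixed $L^1_{\rm loc}$ function $\beta$. The bookkeeping for the delayed-argument polygons --- well-definedness on successive subintervals and the vanishing tail $\int_{s-\delta_k}^{s}\beta(\tau)\m{\tau}$ --- is routine but must be set up carefully before the compactness step is invoked.
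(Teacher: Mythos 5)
Your proposal is correct and follows essentially the same route as the paper's proof (Filippov's delayed-argument approximation, equiboundedness and equicontinuity from condition (iii), Arzela--Ascoli, and dominated convergence using the a.e.\ continuity in $x$, then extension from a compact subinterval to all of $J$). The only cosmetic difference is that the paper shifts the time argument, integrating $a(\tau+\lambda_k,\xi_k(\tau))$, which makes the displayed equicontinuity bound $\int_{\min(r,s)}^{\max(r,s)}\beta$ exact, whereas with your unshifted delay the correct bound has the integration limits shifted by $\delta_k$ --- but, as you note, equicontinuity still follows from the absolute continuity of $s\mapsto\int\beta$, so nothing essential changes.
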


\begin{proof}
For sake of completeness we include this classical proof, following Filippov (\cite[Ch. 1,Thm. 1]{Filippov:88}).
Without loss of generality we assume that $t$ lies in the interior of $J$. 

Note that the Caratheodory condition (i) and (ii) imply (due to Lemma \ref{measurability_of_composition_lemma}) 
that for any $f \in C(J)^n$ the composition $s \mapsto a(s,f(s))$ is in $L^1_{\rm loc}(J)$.
Let us consider a compact subset of $J$ in the forward direction, i.e. set $J^{+}:= J \cap [t,T^{+}]$ for some 
$T_{+} \in J$ with $T^{+}>t$.
For any $k\in\N$ we put $\lambda_k:=\frac{T^{+}-t}{k}$ in order to obtain a decomposition
of the interval $J^+$ in $k$ subintervals by $J_l^{+} := [t+(l-1) \lambda_k, t+ l\lambda_k]$ for $1\le l \le k$, such that
$$
J^{+} := \bigcup_{l=1}^k J^+_{l}.
$$ 
We construct a function $\xi_k \in AC(J^+)$ inductively by setting
$$
\xi_k(s) := x +  \int_{t}^{\max{(t,s-\lambda_k)}} a(\tau+ \lambda_k, \xi_k(\tau)) \m{\tau} 
$$
for all $s\in J^+$. Whenever $s\in J_l^+$ for $l\ge 2$, 
the integral expression on the right-hand side of the above definition depends only on $\xi_k$ restricted to $J^+_{l-1}$.
For $s \in  J_1^+$ we have $\xi_k(s) =x$.

Caratheodory condition (iii) implies 
\begin{eqnarray*} 
|\xi_k(s) - \xi_k(r)| = \left|\, \int_{\max{(t,r-\lambda_k)}}^{\max{(t,s-\lambda_k)}} a(\tau+ \lambda_k, \xi_k(\tau)) \m{\tau}\right| 
\le \int^{\max{(r,s)}}_{\min{(r,s)}} \beta(\tau) \m{\tau} \quad \text{ for } s,r \ge t
\end{eqnarray*} 
and $\xi_k(s) \in B_{\rho}(x)$ for all $s\in J^+$, where $\rho := \int^{T_{+}}_t \beta(\tau) \m{\tau}$.
Obviously $(\xi_k)_k$ is a equi-continuous and equi-bounded family in $C(J^+)^n$. The Theorem of Arzela-Ascoli yields an uniformly convergent subsequence 
$\xi_{k_j} \stackrel{j\rightarrow \infty}{\rightarrow} \xi$, where $\xi \in C(J^+)^n$ with $\xi(t)=x$.

Observe that
\begin{multline*}
|\xi_{k_j}(s-\lambda_{k_j}) - \xi(s)| \le |\xi_{k_j}(s-\lambda_{k_j}) - \xi_{k_j}(s)|+|\xi_{k_j}(s) - \xi(s)| \\
= \int_{s- \lambda_{k_j}}^{s} \beta(\tau) \m{\tau} + |\xi_{k_j}(s) - \xi(s)| \stackrel{j \rightarrow \infty}{\rightarrow} 0
\end{multline*}
holds. So we finally obtain by the theorem of dominated convergence that
\begin{multline*}
\xi(s) = x + \lim_{j\rightarrow \infty} \int_{t}^{\max{(t,s-\lambda_{k_j})}} a(\tau+ \lambda_{k_j}, \xi_{k_j}(\tau)) \m{\tau} 
=  
x + \lim_{j\rightarrow \infty} \int_{t}^{\max{(t,s)}} a(\tau, \xi_{k_j}(\tau- \lambda_{k_j})) \m{\tau}  \\=
x +  \int_{t}^{\max{(t,s)}} \lim_{j\rightarrow \infty} a(\tau, \xi_{k_j}(\tau- \lambda_{k_j})) \m{\tau} =
x +  \int_{t}^{\max{(t,s)}} a(\tau, \xi(\tau) ) \m{\tau}.
\end{multline*}
Equivalently $\xi$ satisfies the ordinary differential equation $\dot{\xi}(s)= a(s, \xi(s)),\ \xi(t)=x$ for almost all $s \in J^+$. The proof for backward problem on some $J^{-}:= J \cap [T^{-}, t]$ for some $T^{-} \in J$ with $T^{-}<t$ is analogously.

In the case where $J$ is an open interval or unbounded, it is straight-forward to decompose the domain in countable many compact subintervals. Then the local solutions for each subinterval are continuously glued together (by choosing appropriate initial conditions for the local solutions) to obtain a global solution.
\end{proof}

\begin{definition}\label{solution_set} \index{solution set}
Let $\Omega$ be a non-empty subset of $J \times \R^n$, then we define
$$
\Xi(\Omega) := \{ \xi \in AC(J)^n \mid  \exists (t,x)\in \Omega: \dot{\xi}(s) = a(s,\xi(s)) \text{ a.e., }\ \xi(t)=x \}
$$
denoted as (Caratheodory) solution set with initial conditions in $\Omega$. 
If we put $\Xi_{t,x} := \Xi(\{(t,x)\})$ it obviously holds that 
$$
\Xi(\Omega) = \bigcup_{(t,x) \in \Omega} \Xi_{t,x}.
$$

The solution set $\Xi(\Omega) $ is called forward (resp. backward) unique, if $\xi_1, \xi_2 \in \Xi(\Omega)$
with $\xi_1(t) = \xi_2(t)$ implies $\xi_1(s)=\xi_2(s)$ for all $s\ge t$ (resp. $s\le t$).

In the case where $J= \R$ and $\Omega'$ is a non-empty subset of $\R^n$, we refer to a solution set $\Xi(\R\times \Omega')$ as autonomous, if $\xi\in \Xi(\R \times \Omega')$
implies $\xi(\cdot+r) \in \Xi(\R \times \Omega')$ for any $r\in \R$. 
\end{definition}

\begin{theorem}[Properties of the solution set] \label{caratheodory_solution_set_properties}
Let $\Omega$ be a non-empty subset of $\R^n$, then the solution set $\Xi(J \times \Omega)$ has the following properties:
\begin{enumerate} 
\item For every $(t,x) \in J \times \Omega$, there exists $\xi \in \Xi(J \times \Omega)$ with $\xi(t)=x$.
\item If $t\in J$ and $\Omega$ is closed, then $\Xi(\{t\} \times \Omega)$ is a closed subset of $C(J)^n$. 
\item $(\xi)_{\xi \in \Xi(J \times \Omega)}$ is an equi-continuous family in $C(J)^n$.
\item If $\Omega \csub \R^n$, then $\Xi(\{t\} \times \Omega)$ is a compact subset of $C(J)^n$.
\end{enumerate}
If the right-hand side is time-independent, we put $J=\R$ and the solution set $\Xi(\R \times \Omega)$ is autonomous.
\end{theorem}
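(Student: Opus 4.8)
The plan is to prove the four items almost independently, the only real inputs being the existence Theorem~\ref{caratheodory_ode_existence} and the a priori bound coming from the third Caratheodory condition together with the integral form~\eqref{int_eq} of the equation.

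Item~(i) is an immediate consequence of Theorem~\ref{caratheodory_ode_existence}: for $(t,x)\in J\times\Omega$ that theorem produces an absolutely continuous $\xi$ with $\dot\xi(s)=a(s,\xi(s))$ a.e.\ and $\xi(t)=x$, so $\xi\in\Xi_{t,x}\subseteq\Xi(J\times\Omega)$. For item~(iii) I would pass to~\eqref{int_eq}: any $\xi\in\Xi(J\times\Omega)$ satisfies
\[
|\xi(s)-\xi(r)| = \left|\int_r^s a(\tau,\xi(\tau))\,\m\tau\right| \le \int_{\min(r,s)}^{\max(r,s)}\beta(\tau)\,\m\tau
\]
by the third Caratheodory condition, and since $\beta\in L^1_{\rm loc}(J)$ the right-hand side is a modulus of continuity that does not depend on $\xi$; this yields equi-continuity (indeed equi-absolute-continuity) of $(\xi)_{\xi\in\Xi(J\times\Omega)}$ on every compact subinterval, hence in $C(J)^n$.

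For item~(ii), fix $t\in J$ and let $(\xi_k)_k$ be a sequence in $\Xi(\{t\}\times\Omega)$ converging in $C(J)^n$ (i.e.\ uniformly on compact subintervals) to some $\xi$. Put $x_k:=\xi_k(t)\in\Omega$; then $x_k\to\xi(t)=:x$, and $x\in\Omega$ because $\Omega$ is closed. Passing to the limit in $\xi_k(s)=x_k+\int_t^s a(\tau,\xi_k(\tau))\,\m\tau$ is the crux: for a.e.\ $\tau$ the map $a(\tau,\cdot)$ is continuous by the second Caratheodory condition, hence $a(\tau,\xi_k(\tau))\to a(\tau,\xi(\tau))$ a.e., while the integrands are dominated by $\beta\in L^1_{\rm loc}(J)$ by the third; dominated convergence then gives $\xi(s)=x+\int_t^s a(\tau,\xi(\tau))\,\m\tau$ for all $s\in J$, i.e.\ $\xi\in\Xi_{t,x}\subseteq\Xi(\{t\}\times\Omega)$. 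Item~(iv) follows by Arzela-Ascoli: if $\Omega\csub\R^n$, the family $\Xi(\{t\}\times\Omega)$ is equi-continuous by~(iii) and uniformly bounded on each compact subinterval by $|\xi(s)|\le\sup_{x\in\Omega}|x|+\int_{\min(t,s)}^{\max(t,s)}\beta(\tau)\,\m\tau$, hence relatively compact in $C(J)^n$ for the topology of uniform convergence on compacta, and closed by~(ii), so compact. Finally, when $a$ does not depend on $t$ and $\xi\in\Xi(\R\times\Omega)$ with $\xi(t_0)=x_0\in\Omega$, then for $r\in\R$ the translate $\eta:=\xi(\cdot+r)$ is absolutely continuous, satisfies $\dot\eta(s)=a(\eta(s))$ a.e., and $\eta(t_0-r)=x_0\in\Omega$, so $\eta\in\Xi(\R\times\Omega)$; thus $\Xi(\R\times\Omega)$ is autonomous in the sense of Definition~\ref{solution_set}.

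The main obstacle is the limit passage in~(ii): one must observe that uniform convergence on compacta forces $\xi_k(\tau)\to\xi(\tau)$ at every point $\tau$, invoke the second Caratheodory condition to obtain a.e.\ convergence of the compositions $a(\tau,\xi_k(\tau))$, and use the single dominating function $\beta\in L^1_{\rm loc}(J)$ to justify dominated convergence on each compact subinterval — this is precisely the point where all three Caratheodory conditions are used in concert. Everything else is routine bookkeeping with~\eqref{int_eq} and the standard Arzela-Ascoli theorem.
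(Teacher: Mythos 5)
Your proposal is correct and follows essentially the same route as the paper's proof: existence from Theorem \ref{caratheodory_ode_existence}, equi-continuity and equi-boundedness from the bound $\beta$ via the integral equation \eqref{int_eq}, closedness by passing to the limit in \eqref{int_eq} (you merely make the dominated-convergence step explicit, which the paper leaves implicit), compactness by Arzela-Ascoli plus (ii), and autonomy by translating solutions. No discrepancies worth noting.
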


\begin{proof}
(i) By Theorem \ref{caratheodory_ode_existence} the set $\Xi(J \times \Omega)$ is a non-empty subset of 
$C(J)^n$, such that for every $(t,x) \in J \times \Omega$ there exists a $\xi \in \Xi(J \times \Omega)$ with $\xi(t)=x$. 

(ii) First we prove that $\Xi(\{t\} \times \Omega)$ is a closed set in $C(J)^n$ if $\Omega$ is closed:
Let $(\xi_k)_{k\in\N}$  be sequence in $\Xi(\{t\} \times \Omega)$ uniformly converging (on compact sets)
to some $\xi \in C(J)^n$. 
In particular $x_k=\xi_k(t) \rightarrow \xi(t)=:x \in \Omega$, since $\Omega$ is closed.  
It follows that  
\begin{multline*}
\xi(s) =\lim_{k\rightarrow\infty} \xi_k(s) =
 \lim_{k\rightarrow\infty} \left( x_k+\int_{t}^s a(\tau, \xi_k(\tau), w) \m{\tau} \right) \\
= x +  \int_{t}^s \lim_{k \rightarrow \infty} a(\tau, \xi_k(\tau)) \m{\tau} 
= x + \int_{t}^s a(\tau, \xi(\tau)) \m{\tau}, 
\end{multline*}
which implies $\xi \in \Xi(\{t\} \times \Omega)$ satisfying the initial condition $\xi(t)=x \in \Omega$, 
so $\Xi(\{t\} \times \Omega)$ is a closed subset of $C(J)^n$.

(iii) Using the integral equation (\ref{int_eq}) we have for all $s,t \in J$ that 
\begin{equation} \label{caratheodory_equi_continuous_estimate}
|\xi(s) -\xi(t)| \le  \int_{\min{(t,s)}}^{\max{(t,s)}} \beta(\tau) \m{\tau}
\end{equation}
holds uniformly for all $\xi \in \Xi(J \times \Omega)$, thus $\Xi(J \times \Omega)$ is an equi-continuous family of continuous
functions.

(iv) Since $K\csub \R^n$ the estimate
\begin{eqnarray*}
\sup_{\xi \in \Xi(\{t\} \times K)} |\xi(s)| \le \sup_{x \in K} |x| + 
\sup_{\xi \in \Xi(\{t\} \times K)} \left| \int_t^s a(\tau, \xi(\tau)) \m{\tau} \right| \le \sup_{x \in K} |x| + 
\int_{\min{(t,s)}}^{\max{(t,s)}} \beta(\tau) \m{\tau} < \infty
\end{eqnarray*}
holds for all $s \in J$, thus $(\xi )_{\xi \in \Xi(\{t\} \times K)}$ is an equi-bounded family. 

By the Theorem of Arzela-Ascoli it follows that $\{ \xi : \xi \in \Xi(\{t\} \times K) \}$ is pre-compact in 
$C(J)^n$. The closedness of $\{ \xi :\xi \in \Xi(\{t\} \times K) \}$ in $C(J)^n$ was already shown in (ii), thus
$\Xi(\{t\} \times K ) \csub C(J)^n$.

If the right-hand side of (\ref{ode}) is time-independent, it is  obvious that we can put $J=\R$. For any $\xi \in \Xi( \R \times \Omega)$
with $\xi(t) =x \in \Omega$ we can define $\widetilde{\xi} = \xi(\cdot - r)$ for some $r\in \R$.
Upon change of variable in the integral on the right-hand side of (\ref{int_eq}) it follows directly that 
$\widetilde{\xi}$ again satisfies the ordinary differential equation with modified initial condition 
$\widetilde{\xi}(t+r)=x \in \Omega$, thus $\widetilde{\xi} \in \Xi(\R \times \Omega)$.
\end{proof}

\begin{theorem}[Unique solution sets] \label{flow_map}
If there exists some $\alpha \in L^{1}_{\rm loc}(J)$ such that the right-hand side $a$ satisfies
the forward uniqueness condition
\begin{equation} \label{forward_uniqueness_condition} \index{forward uniqueness condition}
\langle x-y,  a(s,x)-a(s,y) \rangle  \le \alpha(s) |x-y|^2 
\end{equation}
resp. the backward uniqueness condition
\begin{equation} \label{backward_uniqueness_condition} \index{backward uniqueness condition}
 \langle x-y,  a(s,x)-a(s,y) \rangle  \ge - \alpha(s) |x-y|^2
\end{equation}
for almost all $s \in J$ and all $x,y\in \R^n$, then the solution set $\Xi(J \times \R^n)$ is forward (resp. backward) unique. 
\end{theorem}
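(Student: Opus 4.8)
The plan is to show that the squared distance between two solutions agreeing at time $t$ obeys a Gronwall-type differential inequality and therefore vanishes identically on the relevant half-interval. I will carry out the forward case in detail; the backward case is entirely parallel, only the integrating factor and the direction of the inequalities change.

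First I would fix $\xi_1,\xi_2\in\Xi(J\times\R^n)$ with $\xi_1(t)=\xi_2(t)$ and set $\phi(s):=|\xi_1(s)-\xi_2(s)|^2$. Since $\xi_1,\xi_2\in AC(J)^n$ and $z\mapsto|z|^2$ is locally Lipschitz, $\phi$ lies in $W^{1,1}_{\rm loc}(J)$, is non-negative, and satisfies $\phi(t)=0$. Using the chain rule for locally absolutely continuous functions together with $\dot\xi_i(s)=a(s,\xi_i(s))$ for a.e.\ $s$, I obtain for almost every $s\in J$
\begin{multline*}
\phi'(s)=2\langle \xi_1(s)-\xi_2(s),\,\dot\xi_1(s)-\dot\xi_2(s)\rangle \\
=2\langle \xi_1(s)-\xi_2(s),\,a(s,\xi_1(s))-a(s,\xi_2(s))\rangle,
\end{multline*}
and then the forward uniqueness condition (\ref{forward_uniqueness_condition}), applied with $x=\xi_1(s)$ and $y=\xi_2(s)$, yields $\phi'(s)\le 2\alpha(s)\phi(s)$ for a.e.\ $s\in J$.

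Next I would run the Gronwall argument with the integrating factor $\Psi(s):=\exp\!\big(-2\int_t^s\alpha(\sigma)\m{\sigma}\big)$, which is positive and locally absolutely continuous with $\Psi'=-2\alpha\Psi$ a.e.\ (here $\alpha\in L^1_{\rm loc}(J)$ is used). Then $\Psi\phi$ is locally absolutely continuous and $(\Psi\phi)'(s)=\Psi(s)\big(\phi'(s)-2\alpha(s)\phi(s)\big)\le 0$ for a.e.\ $s$, so $\Psi\phi$ is non-increasing on $J\cap[t,\infty)$. Comparing with $\Psi(t)\phi(t)=0$ and using $\Psi>0$, $\phi\ge 0$, I conclude $\phi(s)=0$, i.e.\ $\xi_1(s)=\xi_2(s)$, for all $s\ge t$ in $J$, which is forward uniqueness. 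For the backward statement I would instead use $\widetilde\Psi(s):=\exp\!\big(2\int_t^s\alpha(\sigma)\m{\sigma}\big)$, so that $\widetilde\Psi'=2\alpha\widetilde\Psi$ a.e.; condition (\ref{backward_uniqueness_condition}) now gives $\phi'(s)\ge-2\alpha(s)\phi(s)$ a.e., hence $(\widetilde\Psi\phi)'(s)=\widetilde\Psi(s)\big(\phi'(s)+2\alpha(s)\phi(s)\big)\ge 0$ a.e., so $\widetilde\Psi\phi$ is non-decreasing, and comparing with $\widetilde\Psi(t)\phi(t)=0$ forces $\phi(s)=0$ for all $s\le t$ in $J$.

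The only delicate points — and the place I expect to spend the most care — are the regularity bookkeeping: that $\phi=|\xi_1-\xi_2|^2$ genuinely belongs to $W^{1,1}_{\rm loc}(J)$ with the stated a.e.\ derivative (this is the composition of the $AC$ curve $\xi_1-\xi_2$, which maps compact subintervals into compact sets, with the locally Lipschitz map $z\mapsto|z|^2$, followed by the a.e.\ chain rule for absolutely continuous functions), and that the product $\Psi\phi$ of two locally absolutely continuous functions is again locally absolutely continuous with Leibniz' rule valid a.e. Both are standard facts; in particular, no compactness or fixed-point input is required here, in contrast with the existence Theorem~\ref{caratheodory_ode_existence}.
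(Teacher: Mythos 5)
Your proof is correct and follows essentially the same route as the paper: setting $\zeta(s)=|\xi_1(s)-\xi_2(s)|^2$, deriving $\dot\zeta\le 2\alpha\zeta$ (resp.\ $\dot\zeta\ge-2\alpha\zeta$) a.e.\ from the uniqueness condition, and concluding by Gronwall that $\zeta\equiv 0$ on the forward (resp.\ backward) half-interval. Your explicit integrating-factor version of Gronwall and the regularity bookkeeping for $\zeta\in W^{1,1}_{\rm loc}(J)$ are just a more detailed rendering of the step the paper handles by citing Gronwall's inequality directly.
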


\begin{proof}
We prove only the forward uniqueness (as the proof of backward uniqueness is analogous):
Let $\xi_1, \xi_2 \in \Xi(J \times \Omega)$ with $\xi_1(t)=\xi_2(t)=x$. 
Setting $\zeta(s) :=|\xi_1(s)- \xi_2(s)|^2$ we observe $\zeta(t)=  |\xi_1(t) -\xi_2(t)|^2 = 0$ and using that 
$\xi_1, \xi_2$ satisfy the ordinary differential equation, we derive that the estimate
\begin{eqnarray*}
\partial_s \zeta(s)  &=& 2 \langle \xi_1(s) -\xi_2(s), \dot{\xi_1}(s)-\dot{\xi_2}(s)  \rangle 
= 2 \langle \xi_1(s) -\xi_2(s) , a(s,\xi_1(s)) - a(s,\xi_2(s)) \rangle \\
&\le &  2 \alpha(s) \zeta(s)
\end{eqnarray*}
holds for almost all $s\in J$, thus Gronwall's inequality gives
\begin{eqnarray*}
\zeta(s) \le \zeta(t) \exp{\left(2\int_t^s \alpha(r) \m{r}\right)} = 0
\end{eqnarray*}
for all $s \ge t$. It follows that $\zeta(s) = |\xi_1(s)- \xi_2(s)|^2 \equiv 0$ for all $s\ge t$, 
so $\Xi(J\times \Omega)$ is forward unique. 
\end{proof}

\begin{theorem}[Forward unique flow map] \label{forward_flow_map} \index{flow!forward unique}
Let $\Xi(J\times \R^n)$ be a forward unique solution set. Put $J_t^+:=[t,\infty[ \cap J$, then the map 
defined by
\begin{equation*}
\begin{array}{rll}
 \chi_t: J_t^+ \times \R^n & \rightarrow & \R^n \\
 (s,x) &\mapsto& \xi(s;t,x) 
\end{array}
\end{equation*}
for some $\xi(\cdot;t,x) \in \Xi(J\times \R^n)$ with $\xi(t;t,x)=x$ is uniquely defined,
called \emph{forward flow map}.

It has the properties that
\begin{enumerate}
\item $\chi_t$ is a continuous map, 
\item $\chi_t(s, \cdot)$ is proper and onto for all $s\in J_{t}^+$ fixed, and
\item $\chi_r(s,\chi_t(r,x)) = \chi_t(s,x)$ for $s\ge r \ge t$.
\end{enumerate}

If the right-hand side of (\ref{ode}) is time-independent, we have the additional flow property
$\chi_t(s, x) =\chi_0(s-t, x)$ for all $s \ge t$, so property (iii) can be written in the well-known form
$$
\chi_{t_1}(s, \chi_{t_2}(r,x)) = \chi_{t_1+t_2}(s+ r ,x) 
$$
for $s\in J_{t_1}^+, r\in J_{t_2}^+$ with $t_1,t_2 \in J$.
\end{theorem}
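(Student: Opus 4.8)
The plan is to establish first that $\chi_t$ is well defined, and then to verify the three listed properties together with the autonomous refinement in turn. Well-definedness is immediate: by Theorem~\ref{caratheodory_ode_existence} there is, for every $(t,x)\in J\times\R^n$, at least one $\xi(\cdot;t,x)\in\Xi(J\times\R^n)$ with $\xi(t;t,x)=x$, and the forward uniqueness of $\Xi(J\times\R^n)$ forces any two such solutions to agree on $J_t^+$; hence $(s,x)\mapsto\xi(s;t,x)$ is unambiguous on $J_t^+\times\R^n$.

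For the continuity assertion (i) I would argue by sequences. Fix $(s,x)$ and a sequence $(s_k,x_k)\to(s,x)$ in $J_t^+\times\R^n$, pick $K\csub\R^n$ containing $x$ and all $x_k$, and observe that $\xi(\cdot;t,x_k)\in\Xi(\{t\}\times K)$, which is a compact subset of $C(J)^n$ by Theorem~\ref{caratheodory_solution_set_properties}(iv). Every subsequence of $(\xi(\cdot;t,x_k))_k$ thus has a further subsequence converging in $C(J)^n$ to some $\zeta\in\Xi(\{t\}\times K)$, and since $\zeta(t)=\lim x_k=x$, forward uniqueness identifies $\zeta$ with $\xi(\cdot;t,x)$ on $J_t^+$; a standard subsequence argument then shows that the whole sequence $\xi(\cdot;t,x_k)$ converges to $\xi(\cdot;t,x)$ uniformly on compact sets. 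Finally $|\chi_t(s_k,x_k)-\chi_t(s,x)|\le|\xi(s_k;t,x_k)-\xi(s;t,x_k)|+|\xi(s;t,x_k)-\xi(s;t,x)|$, where the first term tends to $0$ by the equicontinuity in Theorem~\ref{caratheodory_solution_set_properties}(iii) together with $s_k\to s$, and the second by the convergence just obtained.

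Property (ii) then follows from (i) and the a priori estimate (\ref{caratheodory_equi_continuous_estimate}). For properness, let $K\csub\R^n$: the set $\chi_t(s,\cdot)^{-1}(K)$ is closed by continuity of $\chi_t(s,\cdot)$, and it is bounded because $|x|\le|\chi_t(s,x)|+\int_{\min(t,s)}^{\max(t,s)}\beta(\tau)\m{\tau}$ by (\ref{caratheodory_equi_continuous_estimate}) applied to $\xi(\cdot;t,x)$, so the preimage is compact. For surjectivity, given $y\in\R^n$ I would solve the backward problem at $(s,y)$, which Theorem~\ref{caratheodory_ode_existence} provides as some $\eta\in\Xi(J\times\R^n)$ with $\eta(s)=y$; setting $x:=\eta(t)$, forward uniqueness gives $\eta=\xi(\cdot;t,x)$ on $J_t^+$, whence $\chi_t(s,x)=\eta(s)=y$. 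For the cocycle identity (iii), with $s\ge r\ge t$ put $\xi:=\xi(\cdot;t,x)$; since $\xi\in\Xi(J\times\R^n)$ and $\xi(r)=\chi_t(r,x)$, forward uniqueness identifies $\xi$ on $J_r^+$ with the solution defining $\chi_r(\cdot,\chi_t(r,x))$, and evaluation at $s$ yields $\chi_t(s,x)=\chi_r(s,\chi_t(r,x))$. In the time-independent case with $J=\R$, the translate $\eta(\sigma):=\xi(\sigma+t;t,x)$ satisfies $\eta(\sigma)=x+\int_0^\sigma a(\eta(\rho))\m{\rho}$, so $\eta\in\Xi(\R\times\R^n)$ with $\eta(0)=x$ and hence $\chi_t(s,x)=\chi_0(s-t,x)$; substituting this into (iii), with one further use of the same translation identity to line up the base times, produces $\chi_{t_1}(s,\chi_{t_2}(r,x))=\chi_0(s+r-t_1-t_2,x)=\chi_{t_1+t_2}(s+r,x)$.

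I expect the only genuinely delicate point to be the continuity in (i): neither the existence theorem nor forward uniqueness alone suffices, and one has to combine compactness of $\Xi(\{t\}\times K)$ (to extract convergent subsequences), forward uniqueness (to pin every cluster point to $\xi(\cdot;t,x)$, so that the full sequence converges, not merely a subsequence), and the equicontinuity of the solution family (to control the moving time argument $s_k\to s$). Everything else reduces to short deductions from uniqueness, Theorem~\ref{caratheodory_ode_existence}, and the bound (\ref{caratheodory_equi_continuous_estimate}).
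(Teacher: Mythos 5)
Your proof is correct and follows essentially the same route as the paper: compactness of $\Xi(\{t\}\times K)$ combined with forward uniqueness to pin down limit solutions, equicontinuity of the solution family to handle the moving time argument, the $\beta$-integral bound for properness, backward solvability plus forward uniqueness for surjectivity and for the cocycle identity, and translation of solutions in the autonomous case. The only cosmetic caveat is that forward uniqueness identifies the limit of a convergent subsequence with $\xi(\cdot;t,x)$ only on $J_t^+$, so the full-sequence convergence you extract should be stated for the restrictions to $J_t^+$ — which is all your subsequent estimate actually uses.
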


\begin{proof}
First of all we note that $\chi_t$ is uniquely defined, as the solution set $\Xi(J \times \R^n)$ is forward unique.

(i) Considering the continuity of $\chi_t$, we first prove that $\chi_t(s,x)$ is separately continuous. 
The continuity in $s$ is obvious by the definition of $\chi_t$.

Continuity in $x$ we shall prove by contradiction: Assume $\chi_t(s,x)$ is not continuous in $x$ at fixed $s$. Then
there exists a sequence $(x_k)_{k\in\mathbb{N}}$ converging to $x$, $\gamma >0$ and $k_0\in \N$ such that
$|\chi_t(s,x) - \chi_t(s,x_k)|>\gamma$ for all $k\ge k_0$.
Choose $K\csub \R^n$ such that $x_k \in K$ for all $k\in\N$.

For each $x_k$ we can find a $\xi_k\in\Xi(\{t\} \times K)$ such that $\xi_k(t) = x_k$ and $\chi_t(s,x_k)=\xi_k(s)$ for all $s\in J_t^+$. 
Furthermore there exists a $\xi\in\Xi(\{t\} \times K)$ such that $\xi(t) = x$ and $\chi_t(s,x)=\xi(s)$  for all $s\in J_t^+$.
The compactness of $\Xi(\{t\} \times K))$ in $C(J)^n$ yields that, there exists a locally uniformly converging subsequence 
$\xi_{k_l} \stackrel{l \rightarrow \infty}{\rightarrow} \tilde{\xi} \in \Xi(\{t\} \times K))$. 
The forward uniqueness of $\Xi(J \times \R^n)$ gives that $\widetilde{\xi}(s) = \xi(s)$ for all $s\in J_t^+$, thus $\lim_{l\rightarrow \infty} |\chi_t(s,x)-\chi_t(s, x_{k_l})| = 0 < \gamma$ contradicting the assumption.

The equi-continuity of $\Xi(J \times \R^n)$ yields the estimate
\begin{multline*}
|\chi_t(s,x) - \chi_t(s',x')| \le |\chi_t(s,x) - \chi_t(s,x')| + |\chi_t(s,x') - \chi_t(s',x')| \le
|\chi_t(s,x) - \chi_t(s,x')| \\+ \sup_{\xi \in \Xi(J \times \R^n)} |\xi(s) - \xi(s')|,
\end{multline*}
which, as $\chi_t$ was already shown to be separately continuous, implies the joint continuity of $\chi_t$.

(ii)
There exists some $\xi \in \Xi_{t,x}$ with $\chi_t(s,x) = \xi(s)$, thus 
$$
 |\chi_t(s,x) -x| = |\xi(s)-x| = \left|\int_t^s a(\tau,\xi(\tau)) \m{\tau} \right|\le \int_t^s \beta(\tau) \m{\tau}
$$
holds for all $s \ge t$. For any compact set $K\csub \R^n$ we have that 
$\chi_t^{-1}(\{s\}\times K) \subseteq K':=\left\{x\in \R^n \mid \exists x_0 \in K : |x-x_0| \le  \int_t^s \beta(\tau) \m{\tau} \right\}$,
implying the properness of $x \mapsto \chi_t(s,x)$.

To show that $x\mapsto \chi_t(s,x)$ is onto for all $s\in J$ fixed, we have to show $\chi_t(s, \R^n)=\R^n$. 
We prove by contradiction: Assume there exists some $y\not\in \chi_t(s,\R^n)$, then by Theorem
\ref{caratheodory_solution_set_properties} (i) there exists a solution $\xi \in \Xi_{s,y}$ with $\xi(s)=y$. Putting $x:=\xi(t)$ we observe that $\xi \in \Xi_{t,x}$ and
forward-uniqueness of $\Xi(J\times\R^n)$ yields that $\xi(r) = \chi_t(r,x)$ for all $r\ge t$. In particular we have $y =\xi(s)=\chi_t(s,x) \in \chi_t(s,\R^n)$ which is a contradiction. 

(iii)
For fixed $x\in\mathbb{R}^n$ we have that $\chi_t(s,x) = \xi_1(s), s\in J_t^+$ for some $\xi_1 \in \Xi(J \times \R^n)$
with $\xi_1(t)=x$.
Fixing $r\in J_t^{+}$ we may choose $\xi_2 \in \Xi(J \times \R^n)$ with 
$\xi_2(r) = \chi_t(r,x)$ and $\xi_2(s) =\chi_r(s, \chi_t(r,x))$ for $s\ge r$. Since $\xi_1, \xi_2 \in \Xi(J \times \R^n)$ with $\xi_1(r) =  \chi_t(r,x)$ and $\xi_2(r) =  \chi_t(r,x)$ it follows by the forward uniqueness of $\Xi(J \times \R^n)$ that $\xi_1(s)=\xi_2(s)$ for all $s\ge r$, thus
$$
 \chi_r(s,\chi_t(r,x)) = \chi_t(s,x) 
$$
for $s\in J_t^{+} \cap J_r^{+}$ and $r\in J_t^{+}$. 

In the case where the right-hand side of \ref{ode} is time-independent, we put $J=\R$ and have that $\Xi(\R^{n+1})$ is autonomous
due to Theorem \ref{solution_set}. As $\chi_0(s, x) = \xi_1(s)$ for some $\xi_1 \in \Xi(\R^{n+1})$ with $\xi_1(0)=x$
and $\chi_t(s,x) = \xi_2(s)$ for some $\xi_2 \in \Xi(\R^{n+1})$ with $\xi_2(t)=x$. Due to the autonomy of the solution set 
it follows that $\widetilde{\xi_2}(s) := \xi_2(s-t)$ is again in $\Xi(\R^{n+1})$ . As $\widetilde{\xi_2}(t) = \xi_1(t) =x$ the forward uniqueness of $\Xi(\R^{n+1})$ immediately yields $\widetilde{\xi_2}(s) = \xi_1(s)$ for all $s \in J_t^+$, thus
$\chi_t(s,x) = \chi_0(s-t,x)$ for all $s\in J_t^+$. Using this relation and replacing $r$ by $r+t-t_2$ and $s$ by $s+r+t-t_2-t_1$ we can rewrite property (iii) in order to obtain
$$
\chi_{t_1}(s, \chi_{t_2}(r,x)) = \chi_{t_1+t_2}(s+ r ,x) 
$$
for all $r\in J_{t_1}^+, s \in J_{t_2}^+$ with $t_1, t_2 \in \R$.
\end{proof}

\begin{theorem}[Backward unique flow maps]\label{backward_flow_map} \index{flow!backward unique}
Let $\Xi(J\times \R^n)$ be a backward unique solution set. Put $J_t^-:=]-\infty,t] \cap J$, then the map 
defined by
\begin{equation*}
\begin{array}{rll}
 \Phi_t: J_t^+ \times \R^n & \rightarrow & \R^n \\
 (s,x) &\mapsto& \xi(s;t,x) 
\end{array}
\end{equation*}
for some $\xi(\cdot;t,x) \in \Xi(J\times \R^n)$ with $\xi(t;t,x)=x$ is uniquely defined,
called \emph{backward flow map}.

It has the properties that
\begin{enumerate}
\item $\Phi_t$ is a continuous map, 
\item $\Phi_t(s, \cdot)$ is proper for all $s\in J_{t}^-$ fixed, and
\item $\Phi_r(s,\Phi_t(r,x)) = \Phi_t(s,x)$ for $s\le r \le t$.
\end{enumerate}

If the right-hand side of (\ref{ode}) is time-independent, we have the additional property
$ \Phi_t(s, x) =\Phi_0(s-t, x)$ for all $s \le t$, so property (iii) can be written in the well-known form
$$
\Phi_{t_1}(s, \Phi_{t_2}(r,x)) = \Phi_{t_1+t_2}(s+ r ,x) 
$$
for $s\in J_{t_1}^-, r\in J_{t_2}^-$ with $t_1,t_2 \in \R$.
\end{theorem}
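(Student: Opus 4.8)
The plan is to deduce the theorem from the already‑established forward case, Theorem~\ref{forward_flow_map}, via the time‑reversal substitution $s\mapsto -s$. First, uniqueness of $\Phi_t$ is immediate: by hypothesis the solution set $\Xi(J\times\R^n)$ is backward unique (Theorem~\ref{flow_map} with the backward uniqueness condition~(\ref{backward_uniqueness_condition})), so for $(t,x)\in J\times\R^n$ and $s\le t$ the value $\xi(s;t,x)$ does not depend on the choice of $\xi(\cdot;t,x)\in\Xi(J\times\R^n)$ with $\xi(t)=x$.

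Next I set $\tilde J:=-J$ and $\tilde a(s,y):=-a(-s,y)$. A short check shows $\tilde a$ again satisfies the Caratheodory conditions on $\tilde J\times\R^n$, with dominating function $\tilde\beta(s):=\beta(-s)\in L^1_{\rm loc}(\tilde J)$; and, substituting into~(\ref{backward_uniqueness_condition}),
$$
\langle x-y,\tilde a(s,x)-\tilde a(s,y)\rangle=-\langle x-y,a(-s,x)-a(-s,y)\rangle\le\alpha(-s)\,|x-y|^2,
$$
so $\tilde a$ satisfies the forward uniqueness condition~(\ref{forward_uniqueness_condition}) on $\tilde J\times\R^n$ with $\tilde\alpha(s):=\alpha(-s)$. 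Hence Theorem~\ref{forward_flow_map} applies to $\tilde a$ and provides a forward flow map $\tilde\chi_{t}$ with properties (i)--(iii). The link between the two problems is that $\xi$ solves $\dot\xi(s)=a(s,\xi(s))$, $\xi(t)=x$, on $J$ if and only if $\eta(s):=\xi(-s)$ solves $\dot\eta(s)=\tilde a(s,\eta(s))$, $\eta(-t)=x$, on $\tilde J$; in particular
$$
\Phi_t(s,x)=\xi(s;t,x)=\eta(-s;-t,x)=\tilde\chi_{-t}(-s,x)\qquad\text{for }s\le t\ (\text{so }-s\ge -t).
$$

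From this identity I transport the three properties. Continuity of $\Phi_t$ follows by composing the continuous map $\tilde\chi_{-t}$ with the continuous map $(s,x)\mapsto(-s,x)$; properness of $\Phi_t(s,\cdot)=\tilde\chi_{-t}(-s,\cdot)$ is immediate from property~(ii) for $\tilde\chi$; and the composition property
$$
\Phi_r(s,\Phi_t(r,x))=\tilde\chi_{-r}\bigl(-s,\tilde\chi_{-t}(-r,x)\bigr)=\tilde\chi_{-t}(-s,x)=\Phi_t(s,x)
$$
for $s\le r\le t$ follows from property~(iii) for $\tilde\chi$ applied with $-t\le -r\le -s$. Finally, if $a$ is time‑independent then so is $\tilde a$ (namely $\tilde a\equiv -a$), and the flow identity $\tilde\chi_t(s,x)=\tilde\chi_0(s-t,x)$ transports to $\Phi_t(s,x)=\Phi_0(s-t,x)$, whence the stated semigroup form $\Phi_{t_1}(s,\Phi_{t_2}(r,x))=\Phi_{t_1+t_2}(s+r,x)$ follows exactly as in the time‑independent part of Theorem~\ref{forward_flow_map}.

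I expect no genuine analytic obstacle here; the only point demanding care is the bookkeeping of the direction conventions under $s\mapsto -s$, so that the interval $J_t^-$, the inequality $s\le t$, and the backward uniqueness condition map correctly onto $J_{-t}^+$, $(-s)\ge(-t)$, and the forward uniqueness condition. (One also uses that global existence on all of $J$, Theorem~\ref{caratheodory_ode_existence}, guarantees every backward trajectory is an element of $\Xi(J\times\R^n)$, as was implicitly needed in the forward case too.) Alternatively, one may simply repeat the proof of Theorem~\ref{forward_flow_map} verbatim, everywhere replacing $s\ge t$ by $s\le t$, $J_t^+$ by $J_t^-$, and the forward uniqueness condition by~(\ref{backward_uniqueness_condition}).
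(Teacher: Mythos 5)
Your proposal is correct, but it takes a different route from the paper: the paper's proof of Theorem \ref{backward_flow_map} is simply the forward argument of Theorem \ref{forward_flow_map} repeated mutatis mutandis (replace $s\ge t$ by $s\le t$, $J_t^+$ by $J_t^-$, forward by backward uniqueness), whereas you reduce the backward case to the forward theorem by the time reversal $\tilde a(s,y):=-a(-s,y)$, $\eta(s):=\xi(-s)$, and then transport continuity, properness, the flow identity and the autonomous case through the substitution. This is a legitimate and arguably cleaner strategy, since it avoids duplicating the compactness/contradiction arguments. One point to adjust: the theorem's hypothesis is that the solution set $\Xi(J\times\R^n)$ is backward unique, not that the integral condition (\ref{backward_uniqueness_condition}) holds, so you should not derive forward uniqueness of the reversed problem from that condition (which you are not given). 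Instead, note that the bijection $\xi\mapsto\xi(-\cdot)$ identifies $\Xi_{\tilde a}(-J\times\R^n)$ with $\Xi_a(J\times\R^n)$, so backward uniqueness of the latter is literally forward uniqueness of the former; with that one-line replacement your reduction matches exactly the hypothesis of Theorem \ref{forward_flow_map} and the rest of your bookkeeping (including $\Phi_t(s,x)=\tilde\chi_{-t}(-s,x)$, the composition identity for $s\le r\le t$, and $\Phi_t(s,x)=\Phi_0(s-t,x)$ in the autonomous case) goes through as you wrote it.
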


\begin{proof}
Analogous to the proof of Theorem \ref{forward_flow_map}.
\end{proof}

\begin{remark}[Discontinuous flow maps] \index{flow!discontinuous}
Due to Theorem \ref{caratheodory_ode_existence} it is of course possible (regardless of any uniqueness property of the solution set $\Xi(J\times \R^n)$) to define a "global" flow map $\Theta_t: J \times \R^n \rightarrow \R^n, \quad (s,x) \mapsto \xi(s)$ where $\xi \in \Xi(J\times \R^n)$ with $\xi(t) =x$. But such a map is not uniquely defined and
 $\Theta_t(s,\cdot)$ is in general neither continuous, nor onto $\R^n$ for fixed $s,t \in J$.
 Nevertheless at least $(\Theta_t(\cdot, x))_{x \in \R^n}$ is equi-continuous family in $C(J)^n$ and
$\Theta_t(s, \cdot)$ is a proper map, i.e. for all $K \csub \R^n$ there exists some $K' \csub \R^n$ such that
the inverse image $\Theta_t^{-1}(\{s\} \times K)$ is contained in $K'$. 
In the case where $\Xi(J \times \R^n)$ is forward (resp. backward) unique, we have that
 $$
 \Theta_t \mid_{J_t^+} = \chi_t \quad (\textrm{resp. } \Theta_t \mid_{J_t^-} = \Phi_t )
 $$
holds. In the case when the solution set is unique, the globally defined flow map $\Theta_t$ is uniquely defined by 
the forward flow on $J_t^+$ and the backward flow on $J_t^-$, as we will see.
\end{remark}

\begin{theorem}[Unique flow maps] \label{unique_flow_map} \index{flow!properties}
Let $\Xi(J \times \R^n)$ be a unique solution set, then it generates a unique, global (defined on the whole interval J) flow map by
$$
\Theta_t(s,x) := \left\{\begin{array}{ll}
\chi_t(s,x) & s \ge t \\
\Phi_t(s,x) & s < t
\end{array} \right.
$$ for $t\in J$ fixed. 
It satisfies the properties 
\begin{enumerate}
\item $\Theta_t$ is a continuous map,
\item $\Theta_t(s, \cdot)$ is proper for all $s \in J$ fixed, and
\item $\Theta_s(r,\Theta_t(s,x)) = \Theta_t(r,x)$ for $s,r,t \in J$ with the special case (setting $r=t$)\\
 $\Theta_s(t,\Theta_t(s,\cdot)) = {\rm id}_{\R^n}$ for $s,t \in J$.
\end{enumerate}
We can write property (iii) in terms of the forward and backward flow as
$$
\Phi_s(t, \cdot) = (\chi_t(s, \cdot))^{-1} 
$$
for all $s,t\in J$ with $s\ge t$. 

In the case where the right-hand side of (\ref{ode}) is time independent we again obtain
$\Theta_t(s,x) = \Theta_0(s-t,x)$ for all $s,t \in J$ and $x\in \R^n$.
\end{theorem}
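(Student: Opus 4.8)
The plan is to reduce the whole statement to the two preceding flow‑map theorems via one observation: if $\Xi(J\times\R^n)$ is a unique solution set, then through every point $(t,x)\in J\times\R^n$ there passes exactly one $\xi(\cdot;t,x)\in\Xi(J\times\R^n)$ with $\xi(t;t,x)=x$ (existence by Theorem \ref{caratheodory_ode_existence}, uniqueness from the hypothesis), and this solution is defined on all of $J$. Restricted to $J_t^+$ this solution realizes $\chi_t(\cdot,x)$ (Theorem \ref{forward_flow_map}) and restricted to $J_t^-$ it realizes $\Phi_t(\cdot,x)$ (Theorem \ref{backward_flow_map}); since both definitions give the value $x$ at $s=t$, the two flows glue consistently and $\Theta_t(s,x)=\xi(s;t,x)$ for every $s\in J$. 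Uniqueness of $\Theta_t$ is then immediate from forward resp.\ backward uniqueness of the solution set.

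For (i) I would invoke the pasting lemma: $J_t^+$ and $J_t^-$ are closed subsets of $J$ whose union is $J$, $\Theta_t$ agrees on them with the continuous maps $\chi_t$ and $\Phi_t$ (Theorems \ref{forward_flow_map}(i), \ref{backward_flow_map}(i)), and these coincide on the overlap $\{t\}\times\R^n$; hence $\Theta_t$ is continuous on $J\times\R^n$. For (ii), fix $s\in J$: if $s\ge t$ then $\Theta_t(s,\cdot)=\chi_t(s,\cdot)$ is proper by Theorem \ref{forward_flow_map}(ii), if $s\le t$ then $\Theta_t(s,\cdot)=\Phi_t(s,\cdot)$ is proper by Theorem \ref{backward_flow_map}(ii), and at $s=t$ it is the identity.

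For (iii), I fix $x$ and set $\xi:=\xi(\cdot;t,x)$, so $\Theta_t(r,x)=\xi(r)$ for all $r\in J$; put $y:=\Theta_t(s,x)=\xi(s)$. Because $\xi$ is also the unique solution passing through $(s,y)$, we get $\Theta_s(r,y)=\xi(r)$ for all $r\in J$, hence $\Theta_s(r,\Theta_t(s,x))=\xi(r)=\Theta_t(r,x)$; taking $r=t$ and using $\Theta_t(t,x)=x$ yields the special case $\Theta_s(t,\Theta_t(s,\cdot))=\mathrm{id}_{\R^n}$. For the reformulation, specialize to $s\ge t$: then $\Theta_t(s,\cdot)=\chi_t(s,\cdot)$ and $\Theta_s(t,\cdot)=\Phi_s(t,\cdot)$, so the identity just shown reads $\Phi_s(t,\cdot)\circ\chi_t(s,\cdot)=\mathrm{id}_{\R^n}$; running the same argument starting from an arbitrary $y$ (let $x:=\Phi_s(t,y)$ be the value at $t$ of the solution through $(s,y)$, which by uniqueness is the same solution, so $\chi_t(s,x)=y$) gives $\chi_t(s,\cdot)\circ\Phi_s(t,\cdot)=\mathrm{id}_{\R^n}$, whence $\Phi_s(t,\cdot)=(\chi_t(s,\cdot))^{-1}$. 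In the time‑independent case one sets $J=\R$ and uses $\chi_t(s,x)=\chi_0(s-t,x)$ for $s\ge t$ and $\Phi_t(s,x)=\Phi_0(s-t,x)$ for $s\le t$ (both from the two flow‑map theorems) to conclude $\Theta_t(s,x)=\Theta_0(s-t,x)$ in either regime.

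There is no substantial obstacle here --- the result is essentially bookkeeping on top of Theorems \ref{forward_flow_map} and \ref{backward_flow_map}. The only points that need care are verifying that the forward and backward realizations of the solution through $(t,x)$ genuinely agree at $s=t$ (so that the pasted map is well defined), and keeping straight which uniqueness hypothesis does what: injectivity of $\chi_t(s,\cdot)$ comes from backward uniqueness (two solutions agreeing at time $s$ must agree at time $t<s$), while surjectivity comes from forward uniqueness, and together these justify reading the composition identity as an inverse relation.
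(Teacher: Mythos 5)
Your proposal is correct and follows essentially the same route as the paper: glue $\chi_t$ and $\Phi_t$ along $\{s=t\}$, deduce (i) and (ii) directly from Theorems \ref{forward_flow_map} and \ref{backward_flow_map}, and obtain (iii) by identifying both sides with the (unique) solution of $\Xi(J\times\R^n)$ through the relevant point, which is exactly the technique the paper illustrates in its sample case $r\le s\le t$. Your version of (iii) merely packages all orderings of $r,s,t$ into one argument instead of the paper's case distinctions, and your closing remark about which uniqueness hypothesis yields injectivity versus surjectivity of $\chi_t(s,\cdot)$ is accurate.
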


\begin{proof}
The uniqueness of $\Theta_t$ follows immediately by combining the uniqueness results from Theorem \ref{forward_flow_map} and Theorem \ref{backward_flow_map}. It is globally defined since $J_t^{-} \bigcup J_t^+ = J$.

(i) The continuity of $\Theta_t$ is obvious by Definition, as $\chi_t(t,\cdot) = \Phi_t(t,\cdot) = {\rm id}_{\R^n}$, $\chi_t$ is continuous on $J_{t}^+ \times \R^n$ and $\Phi_t$ is continuous on $J_t^{-} \times \R^n$ due to Theorem \ref{forward_flow_map} and \ref{backward_flow_map}.

(ii) $\Theta_t(s, \cdot)$ is proper follows immediately from the properness of $\chi_t(s, \cdot)$ and $\Phi_t(s, \dot)$ 
on the respective domains as proved in Theorem \ref{forward_flow_map} and \ref{backward_flow_map}.

(iii) 
The case where $r\le s \le t$ or $t \le r \le s$ follows again directly from Theorem \ref{forward_flow_map} and \ref{backward_flow_map}.
The $r\le s \le t$, $t \le r \le s$, $r \le t \le s$ and $s \le t \le r$ need some additional work, but the proof is analogous
to the respective proofs in Theorem \ref{forward_flow_map} and \ref{backward_flow_map}. 
To illustrate the proving technique one more time, we consider the case $r\le s \le t$: We have $\Theta_s(r, \Theta_t(s,x)= \Phi_s(r, \chi_t(s,x)) = \xi_1(r)$ for some $\xi_1 \in \Xi(J\times\R^n)$ with $\xi_1(s) =\chi_t(s,x)$ and $\Theta_t(r,x) = \chi_t(r,x)=\xi_2(r)$ for some $\xi_1 \in \Xi(J\times\R^n)$ with $\xi_2(t) = x$. If $r=s$ we have $\xi_1(s) = \xi_2(s)$, thus uniqueness of $\Xi(J\times \R^n)$ implies that $\xi_1 \equiv \xi_2$ on the whole interval $J$.

The relation $\Theta_t(s,x) = \Theta_0(s-t, x)$ for all $s,t \in J$ and $x \in \R^n$ follows directly from
Theorem \ref{forward_flow_map} and \ref{backward_flow_map}.
\end{proof}

\begin{remark}[$C^1$ regularity of the flow] \label{ode_derivatives}
Let $a$ satisfy the Caratheodory condition on $J \times \mathbb{R}^{n}$ and assume that $a(s, \cdot) \in \Con^1(\R^n)^n$ for almost all $s\in J$.
Then a the solution set $\Xi(J)$ is unique and the
uniquely generated flow map $\Theta_t$ satisfies the properties 
\begin{eqnarray*}
(d_x \Theta_t) (s,x) &=& \exp{\left(\int_t^s d_x a(\tau, \Theta_t(\tau,x)) \m{\tau} \right)}  
\end{eqnarray*}
and 
\begin{eqnarray*}
\det(d_x \Theta_t) (s,x) &=& \exp{\left(\int_t^s \div{a}(\tau, \Theta_t(\tau,x)) \m{\tau} \right)}. 
\end{eqnarray*}
for all $(s,x) \in J \times \mathbb{R}^n$. For a proof we refer to \cite[Theorem 1.2.5.]{Hoermander:97}.
\end{remark}

\section{Differential inclusions and integral inequalities} \label{differential_inclusion_equation_section}

In this section we introduce a natural generalization of the ordinary differential equation, namely the \emph{differential inclusion} which is of the form
\begin{equation} \label{odie}
\odie,
\end{equation}
where $A_{t,x}$ is a non-empty, closed and convex subset of $\R^n$ for all $(t,x) \in J \times \R^n$. 

It turns out that the notion of a \emph{set-valued map} (cf. Section \ref{set_valued_map_section}) is suitable for the description of such 
generalized right-hand sides of the form $(t,x)\mapsto A_{t,x}$. 

As in the case of Caratheodory theory we have to impose certain conditions on the set-valued map $A$
in order to obtain an existence result for the differential inclusions.  

Throughout this section we assume that the set-valued map $A$ satisfies
\begin{enumerate} \label{filippov_conditions}
\item $t \mapsto A_{t,x}$ is Lebesgue measurable on $J$ for all fixed $x \in \mathbb{R}^n$,
\item $x \mapsto A_{t,x}$ is a upper semi-continuous for almost all $t \in J$,        
\item there exists a positive function $\beta \in L_{\rm loc}^1(J)$ such that $\sup_{x\in\mathbb{R}^n} |A_{t,x}| \le \beta(t)$ almost everywhere on $J$,
\end{enumerate}
referred to as \emph{Filippov conditions} (FC). \index{Filippov conditions}

It should be noted that $\sup_{x\in \R^n}|A_{t,x}| \le \beta(t)$ implies that $x\mapsto A_{t,x}$
is bounded for almost all $t\in J$. Due to Theorem \ref{set_valued_map_main_theorem} condition (ii)
is equivalent to have $x\mapsto  A_{t,x}$ being a closed set-valued map for almost all $t\in J$.  

We should think of $t \mapsto A_{t,x}$ as a set-valued map in $L^1_{\rm loc}(J; \mathcal{K}_0(\R^n))$.
This means that the Filippov conditions do not depend on the chosen representative of $t \mapsto A_{t,x}$ for $x\in \R^n$ fixed, so
without loss of generality we can assume that $x\mapsto A_{t,x}$ is upper semi-continuous (or equivalently - closed) for all $t\in J$.  

Equivalently we can formulate these conditions for the 
supporting $H$ (according to Theorem \ref{set_valued_map_main_theorem}) by
\begin{enumerate}
\item $t \mapsto H(t,x,w)$ is Lebesgue measurable on $J$ for all $x,w \in \R^n$,
\item $x \mapsto H(t,x,w)$ is upper semi-continuous for almost all $t \in J$ and $w \in \R^n$,        
\item there exists a positive function $\beta \in L_{\rm loc}^1(J)$ such that 
 $\sup_{x\in\mathbb{R}^n} |H(t,x,w/|w|)| \le \beta(t)$ for almost all $t\in J$ and all $w\in \R^n$.
\end{enumerate}

Furthermore the differential inclusion (\ref{odie}) is equivalent to the following integral inequality \index{integral inequality}
\begin{equation}
\langle \xi(s), w\rangle \le \langle x,w\rangle + \int_t^s H(\tau, \xi(\tau), w) \m{\tau} ,\quad \forall w\in \R^n.
\end{equation}

\begin{theorem}[Existence theorem for differential inclusions] \label{filippov_existence_theorem}
Let $A$ be set-valued map satisfying (FC). Let $(t,x) \in J \times \R^n$ be the initial value. Then there exists a solution
$\xi \in AC(J)^n$ of the differential inclusion
\begin{eqnarray*}
\dot{\xi}(s) \in A_{s, \xi(s)} \  a.e., \quad \xi(t)=x.
\end{eqnarray*} 
\end{theorem}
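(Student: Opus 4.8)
The plan is to adapt the Euler-with-delay construction from the proof of Theorem~\ref{caratheodory_ode_existence}, replacing the pointwise evaluation $a(\tau,\xi_k(\tau))$ of the right-hand side by absolutely continuous selections of a \emph{primitive} of a set-valued map with frozen (piecewise constant) state argument, which is exactly the object produced by Theorem~\ref{primitive_set-valued map}. Throughout I work with the supporting function $H$ of $A$ and the set-valued integral of Definition~\ref{set_valued_integral}, so that the differential inclusion is encoded by the family of scalar integral inequalities $\langle \xi(s),w\rangle \le \langle x,w\rangle + \int_t^s H(\tau,\xi(\tau),w)\,\m{\tau}$, $w\in\R^n$. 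As in Theorem~\ref{caratheodory_ode_existence} it suffices to produce a solution on a compact subinterval $J^+:=[t,T^+]\csub J$ (the backward interval $[T^-,t]$ being symmetric, and a global solution on an arbitrary $J$ being obtained by gluing countably many local solutions).

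Fix $k\in\N$, set $\lambda_k:=(T^+-t)/k$, $t_{k,l}:=t+l\lambda_k$, and let $\eta_k\colon J^+\to J^+$ be the piecewise constant delay $\eta_k(\tau):=t_{k,l-1}$ for $\tau\in[t_{k,l-1},t_{k,l}]$. I define $\xi_k$ recursively on the successive subintervals: put $c_{k,1}:=x$, and once $\xi_k$ is defined on $[t,t_{k,l-1}]$ with right endpoint value $c_{k,l}:=\xi_k(t_{k,l-1})$, observe that $\tau\mapsto A_{\tau,c_{k,l}}$ is measurable by Filippov condition (i), convex-valued, and bounded by $\beta$, hence lies in $L^1_{\rm loc}(J;\mathcal{K}_0(\R^n))$; Theorem~\ref{primitive_set-valued map} then yields an absolutely continuous selection $g_{k,l}$ of $s\mapsto\int_{t_{k,l-1}}^s A_{\tau,c_{k,l}}\,\m{\tau}$ with $g_{k,l}(t_{k,l-1})=0$, and I put $\xi_k:=c_{k,l}+g_{k,l}$ on $[t_{k,l-1},t_{k,l}]$ (so $\xi_k$ is continuous across the junctions). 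Telescoping over $l$ and using the description of the set-valued integral through $H$ gives, for all $s\in J^+$ and all $w\in\R^n$,
\[
\langle \xi_k(s),w\rangle \le \langle x,w\rangle + \int_t^s H(\tau,\xi_k(\eta_k(\tau)),w)\,\m{\tau}.
\]
The modulus estimate of Theorem~\ref{primitive_set-valued map}, combined with $\sup_{w}H(\tau,c_{k,l},w/|w|)\le\beta(\tau)$, yields $|\xi_k(s)-\xi_k(r)|\le\int_{\min(r,s)}^{\max(r,s)}\beta(\tau)\,\m{\tau}$ uniformly in $k$, so $(\xi_k)_k$ is equi-continuous and equi-bounded (contained in $B_\rho(x)$ with $\rho:=\int_t^{T^+}\beta$). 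By Arzel\`a--Ascoli a subsequence $\xi_{k_j}$ converges uniformly to some $\xi\in C(J^+)^n$ with $\xi(t)=x$, and passing the modulus estimate to the limit shows $\xi\in AC(J^+)^n$.

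It remains to pass to the limit in the integral inequality. Since $|\xi_{k_j}(\eta_{k_j}(\tau))-\xi_{k_j}(\tau)|\le\int_{\eta_{k_j}(\tau)}^{\tau}\beta\to 0$ uniformly, we get $\xi_{k_j}(\eta_{k_j}(\tau))\to\xi(\tau)$ uniformly on $J^+$. For every $\tau$ outside the fixed null set where Filippov condition (ii) may fail, $H(\tau,\cdot,w)$ is upper semi-continuous, hence $\limsup_j H(\tau,\xi_{k_j}(\eta_{k_j}(\tau)),w)\le H(\tau,\xi(\tau),w)$; moreover $|H(\tau,\xi_{k_j}(\eta_{k_j}(\tau)),w)|\le\beta(\tau)|w|$. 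The reverse Fatou lemma (with dominating function $\beta(\cdot)|w|$) therefore gives
\[
\langle \xi(s),w\rangle = \lim_{j\to\infty}\langle \xi_{k_j}(s),w\rangle \le \langle x,w\rangle + \limsup_{j\to\infty}\int_t^s H(\tau,\xi_{k_j}(\eta_{k_j}(\tau)),w)\,\m{\tau} \le \langle x,w\rangle + \int_t^s H(\tau,\xi(\tau),w)\,\m{\tau}
\]
for all $s\in J^+$ and $w\in\R^n$. Finally, differentiating this inequality at a Lebesgue point of $\tau\mapsto H(\tau,\xi(\tau),w)$ (which exists for a.e.\ $s$, the function being dominated by $\beta|w|$), running $w$ through a countable dense subset of $\R^n$, and using continuity in $w$ of both sides together with convexity and closedness of $A_{s,\xi(s)}$, one recovers $\dot\xi(s)\in A_{s,\xi(s)}$ for almost all $s$.

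The step I expect to be the main obstacle is this passage to the limit: combining the one-sided (upper semi-continuity) control of $H$ with the interchange $\limsup_j\int\le\int\limsup_j$ via the reverse Fatou lemma, while ensuring that the \emph{single} a.e.-$t$ exceptional set coming from Filippov condition (ii) — rather than one depending on $j$ — suffices, together with the attendant measurability bookkeeping (measurability of $\tau\mapsto H(\tau,\xi_{k_j}(\eta_{k_j}(\tau)),w)$ and of $\tau\mapsto H(\tau,\xi(\tau),w)$, which follows as in Lemma~\ref{measurability_of_composition_lemma}). An alternative avoiding the delay scheme would be a Kakutani fixed-point argument for the solution operator on a suitable convex compact subset of $C(J^+)^n$, but it rests on essentially the same semicontinuity estimates and does not seem shorter here.
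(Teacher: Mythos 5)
Your proof is correct, and its analytic core is the same as the paper's: absolutely continuous selections supplied by Theorem \ref{primitive_set-valued map}, equi-continuity from the bound $\beta$, Arzel\`a--Ascoli, then passage to the limit through the supporting function using upper semi-continuity of $x\mapsto H(t,x,w)$ together with dominated convergence/reverse Fatou, and finally differentiation of the resulting integral inequality. The only genuine difference is the approximating sequence: the paper runs a Picard-type successive approximation, taking $\xi_{k+1}$ to be an absolutely continuous selection of $s\mapsto \{x\}+\int_t^s A_{\tau,\xi_k(\tau)}\m{\tau}$ with the full previous iterate as argument, which forces it to invoke the composition-measurability Lemma \ref{measurability_of_composition_lemma} already at the construction stage; your Euler scheme with piecewise-frozen state (mirroring the proof of Theorem \ref{caratheodory_ode_existence}) only needs Filippov condition (i) for constant arguments during the construction and defers the composition lemma to the single limit function $\xi$, at the cost of applying Theorem \ref{primitive_set-valued map} on each mesh interval and tracking the delay $\eta_k$. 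Both routes are of comparable length and neither is more general. One point to tighten (present in the same terse form in the paper as well): from the inequality anchored at $t$ alone, $\langle\xi(s),w\rangle\le\langle x,w\rangle+\int_t^s H(\tau,\xi(\tau),w)\m{\tau}$, one cannot literally differentiate to get $\langle\dot\xi(s),w\rangle\le H(s,\xi(s),w)$ a.e.; you should pass to the limit in the two-point version $\langle\xi_{k_j}(s)-\xi_{k_j}(r),w\rangle\le\int_r^s H(\tau,\xi_{k_j}(\eta_{k_j}(\tau)),w)\m{\tau}$ (which your telescoping delivers for every pair $r\le s$, just as the paper's (\ref{approx_solution}) does for its iterates) and then differentiate at Lebesgue points, before running $w$ through a countable dense set as you describe.
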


\begin{proof}
First of all we notice that for any $f\in C(J)^n$ it holds that the composition $t \mapsto A_{t, f(t)}$ is a Lebesgue measurable set-valued map from $J$ to $\R^n$.
By Definition \ref{set_valued_integral} and Corollary \ref{composition_single_valued_function} the measurability of the set-valued map $t \mapsto A_{t, f(t)}$ is equivalent to the measurability its supporting function
$t\mapsto H(t, f(t), w)$ for all $w\in\R^n$, which follows by Lemma \ref{measurability_of_composition_lemma} 
since $H$ satisfies the Filippov condition (i) and (ii).

We construct a sequence of continuous functions $(\xi_{k})_{k \in \N}$
which approximates the solution of the differential inclusion.

Put $\xi_0(s)=x$ for all $s\in J$, then using Theorem \ref{primitive_set-valued map} we iteratively construct 
a sequence of set-valued maps $(X_k)_{k \in \N}$ with $X_k: J \rightarrow \mathcal{K}_0(\R^n)$ by
$$
X_{k+1}(s) = \{x\} + \int_{t}^{s} A_{\tau,\xi_k(\tau)} \m{\tau}
$$
Furthermore Theorem  \ref{primitive_set-valued map} yields an absolutely continuous 
selection $\xi_{k+1}$ of $X_{k+1}$ with $\xi_{k+1}(t)=x$, so we obtain a family $(\xi_{k})_{k \in \N}$ of
absolutely continuous functions with the property
\begin{equation} \label{approx_solution}
\langle \xi_k(s) - \xi_k(r), w \rangle \le \int_r^s  H(\tau,\xi_k(\tau), w ) \m{\tau}
\end{equation}
for $s,r \in J$ and $w\in\R^n$. 

We observe that the (FC) (iii) yields the estimate
$$ 
| \xi_k(s) - \xi_k(r)| \le \int_{\min{(r,s)}}^{\max{(r,s)}}  \sup_{w\in\R^n}{H(\tau,\xi_{k}(\tau), w/|w| )} \m{\tau} 
\le \int_{\min{(r,s)}}^{\max{(r,s)}} \beta(\tau) \m{\tau} \quad \text{ for } s,r \in J.
$$
Thus $(\xi_k)_{k\in\N}$ is an equi-continuous family. The estimate
$$
|\xi_k(s)| \le |x| + \int_r^s  \sup_{w\in\R^n }H(\tau,\xi_k(\tau), w/|w| ) \m{\tau}
$$
yields the equi-boundedness of $(\xi_k)_{k\in\N}$.

By the Theorem of Arzela Ascoli there exists an uniformly convergent (on compact subsets of $J$) subsequence $\xi_{k_j} \stackrel{j\rightarrow \infty}{\rightarrow} \xi$, where $\xi \in C(J)^n$ with $\xi(t)=x$.

We have for all $s,r\in J$,
\begin{multline} \label{inclusion_equi_cont}
|\xi(s) - \xi(r)| \le |\xi(s) - \xi_{k_j}(s)|+ |\xi_{k_j}(s)-\xi_{k_j}(r)| +|\xi_{k_j}(r) - \xi(r)|  \\
\le |\xi(s) - \xi_{k_j}(s)| +\int_{\min{(r,s)}}^{\max{(r,s)}} \beta(\tau) \m{\tau} +|\xi_{k_j}(r) - \xi(r)| \stackrel{j \rightarrow \infty}{\rightarrow}\int_{\min{(r,s)}}^{\max{(r,s)}} \beta(\tau) \m{\tau} , 
\end{multline}
hence $\xi$ is absolutely continuous in $J$.

By the upper semi-continuity of $x \mapsto H(t,x,w)$, it follows that
$$
\limsup_{j \rightarrow \infty}  H(t, \xi_{k_j}(\tau), w )  \le 
 H(t, \xi(t),w ).  
$$
The theorem of dominated convergence \cite[Theorem 12.24]{HewStr:65} yields
$$
\langle \xi(s),w \rangle  \le \limsup_{j\rightarrow \infty} \langle \xi_{k_j}(s) ,w \rangle = \langle x, w \rangle  + \limsup_{j\rightarrow \infty} \int_t^s H(\tau, \xi_{k_j}(\tau), w )  \m{\tau} \le \langle x, w \rangle + \int_{t}^s H(\tau, \xi(\tau),w) \m{\tau} 
$$
Differentiation with respect to $s$ yields  
$$
\langle \dot{\xi}(s) , w \rangle \le H(s,\xi(s), w)
$$
for all $w\in \R^n$, thus $\dot{\xi}(s) \in A_{s, \xi(s)}$ for almost all $s\in J$ and $\xi(s)=x$.   
\end{proof}

\begin{definition}\label{odi_solution_set} \index{inclusion solution set}
Let $\Omega$ be a non-empty subset of $\R^n$. Analogous to the case of classical ordinary differential 
equations, we define the \emph{inclusion solution set} by
$$
\Xi_I(J \times \Omega) := \{ \xi \in AC(J)^n \mid  \exists (t,x)\in J\times \Omega : 
\dot{\xi}(s) \in A_{s,\xi(s)} \text{ a.e., }\ \xi(t)=x \}.
$$
If we put $\Xi_{t,x} := \Xi_I(\{(t,x)\})$ it obviously holds that 
$$
\Xi_I(J \times \Omega) = \bigcup_{(t,x) \in J \times \Omega} \Xi_{t,x}.
$$

The solution set $\Xi_I(J \times \Omega) $ is called \emph{forward (resp. backward) unique}, if $\xi_1, \xi_2 \in \Xi_I(J \times \Omega)$
with $\xi_1(t) = \xi_2(t)$ implies $\xi_1(s)=\xi_2(s)$ for all $s\ge t$ (resp. $s\le t$).

In the case where $J= \R$, we refer to a solution set $\Xi_I(\R\times \Omega)$ as \emph{autonomous}, if $\xi\in \Xi_I(\R \times \Omega)$
implies $\xi(\cdot+r) \in \Xi_I(\R \times \Omega)$ for any $r\in \R$. 
\end{definition}

\begin{proposition}[Properties of the inclusion solution set] \label{filippov_solution_set_prop} 
Let $\Omega$ be a non-empty subset of $\R^n$, then the inclusion solution set $\Xi_I(J \times \Omega)$ has the following properties:
\begin{enumerate} 
\item For every $(t,x) \in J \times \Omega$, there exists $\xi \in \Xi_I(J \times \Omega)$ with $\xi(t)=x$.
\item If $t\in J$ and $\Omega$ is closed, then $\Xi_I(\{t\} \times \Omega)$ is a closed subset of $C(J)^n$. 
\item $(\xi)_{\xi \in \Xi_I(J \times \Omega)}$ is an equi-continuous family in $C(J)^n$.
\item If $\Omega \csub \R^n$, then $\Xi_I(\{t\} \times \Omega)$ is a compact subset of $C(J)^n$.
\end{enumerate}
If the right-hand side of (\ref{odie}) is time-independent, we can put $J=\R$ and the (Filippov) solution set $\Xi_I (\R \times \Omega)$ is autonomous.
\end{proposition}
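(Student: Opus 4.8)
The plan is to mirror the proof of Theorem~\ref{caratheodory_solution_set_properties} line by line, using the integral-inequality reformulation $\langle\xi(s)-\xi(r),w\rangle\le\int_r^s H(\tau,\xi(\tau),w)\,\m{\tau}$ of the inclusion in place of the integral equation~(\ref{int_eq}) and the upper semi-continuity of $x\mapsto H(t,x,w)$ in place of the continuity of $a(t,\cdot)$. Part~(i) is then immediate from Theorem~\ref{filippov_existence_theorem}, which for each $(t,x)\in J\times\Omega$ furnishes a solution $\xi\in\AC(J)^n$ of the inclusion with $\xi(t)=x$. For part~(iii) I would observe that every $\xi\in\Xi_I(J\times\Omega)$ satisfies $\langle\dot\xi(s),w\rangle\le H(s,\xi(s),w)$ for a.e.\ $s$ and all $w$; taking the supremum over $|w|=1$ and invoking (FC)~(iii) gives $|\dot\xi(s)|\le\beta(s)$ a.e., hence $|\xi(s)-\xi(r)|\le\int_{\min(r,s)}^{\max(r,s)}\beta(\tau)\,\m{\tau}$ uniformly over $\xi\in\Xi_I(J\times\Omega)$ --- the estimate already appearing in~(\ref{inclusion_equi_cont}) --- which shows at once that each $\xi$ is absolutely continuous and that the family is equi-continuous.

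Part~(ii) is the heart of the matter. Let $(\xi_k)_k\subseteq\Xi_I(\{t\}\times\Omega)$ converge to some $\xi$ uniformly on compact subsets of $J$. Then $x_k:=\xi_k(t)\to\xi(t)=:x$, and $x\in\Omega$ since $\Omega$ is closed. Integrating $\langle\dot\xi_k,w\rangle\le H(\cdot,\xi_k,w)$ yields, for all $r\le s$ in $J$ and all $w\in\R^n$,
$$
\langle\xi_k(s)-\xi_k(r),w\rangle\le\int_r^s H(\tau,\xi_k(\tau),w)\,\m{\tau}.
$$
I would pass to the limit on the right for fixed $r,s,w$: since $\xi_k(\tau)\to\xi(\tau)$ pointwise and $x\mapsto H(\tau,x,w)$ is upper semi-continuous for a.e.\ $\tau$, we get $\limsup_k H(\tau,\xi_k(\tau),w)\le H(\tau,\xi(\tau),w)$ a.e., while $|H(\tau,\xi_k(\tau),w)|\le\beta(\tau)|w|$ furnishes an integrable majorant; the reverse Fatou lemma (equivalently, the dominated convergence argument used via \cite[Theorem~12.24]{HewStr:65} in the proof of Theorem~\ref{filippov_existence_theorem}) then gives $\langle\xi(s)-\xi(r),w\rangle\le\int_r^s H(\tau,\xi(\tau),w)\,\m{\tau}$. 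Letting $r\uparrow s$ at Lebesgue points yields $\langle\dot\xi(s),w\rangle\le H(s,\xi(s),w)$ for a.e.\ $s$; running this through a countable dense set of directions and using the continuity of the finite convex function $w\mapsto H(s,\xi(s),w)$ upgrades it to all $w\in\R^n$, so $\dot\xi(s)\in A_{s,\xi(s)}$ a.e.\ and $\xi(t)=x\in\Omega$. Hence $\xi\in\Xi_I(\{t\}\times\Omega)$, which is therefore closed in $C(J)^n$.

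For part~(iv), when $\Omega\csub\R^n$ the bound $|\xi(s)|\le\sup_{y\in\Omega}|y|+\int_{\min(t,s)}^{\max(t,s)}\beta(\tau)\,\m{\tau}$ shows $\{\xi:\xi\in\Xi_I(\{t\}\times\Omega)\}$ is equi-bounded; combined with the equi-continuity of~(iii), the Arzela--Ascoli theorem makes it pre-compact in $C(J)^n$, and part~(ii) (a compact $\Omega$ being closed) gives closedness, hence compactness. Finally, in the time-independent case we set $J=\R$; for $\xi\in\Xi_I(\R\times\Omega)$ with $\xi(t)=x$ and $r\in\R$, the translate $\widetilde\xi:=\xi(\cdot+r)$ again solves the inclusion --- the set-valued right-hand side not depending on time --- and satisfies $\widetilde\xi(t-r)=x\in\Omega$, so $\widetilde\xi\in\Xi_I(\R\times\Omega)$ and the solution set is autonomous.

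I expect the only genuinely delicate step to be the limit passage in~(ii): one must check that the upper semi-continuity of $H$ in the space variable, together with the $L^1_{\rm loc}$ majorant $\beta$ from (FC)~(iii), legitimately produces $\limsup_k\int_r^s H(\tau,\xi_k(\tau),w)\,\m{\tau}\le\int_r^s H(\tau,\xi(\tau),w)\,\m{\tau}$ in the limit; all remaining steps are routine transcriptions of the Caratheodory arguments of Theorem~\ref{caratheodory_solution_set_properties}.
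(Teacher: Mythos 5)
Your proposal is correct and follows essentially the same route as the paper: Theorem \ref{filippov_existence_theorem} for (i), the $\beta$-bound for equi-continuity and absolute continuity, a limsup/dominated-convergence (reverse Fatou) passage using the upper semi-continuity of $x\mapsto H(t,x,w)$ for closedness, Arzela--Ascoli plus (ii) for compactness, and time-translation for autonomy. The only difference is cosmetic: you explicitly differentiate the limiting integral inequality back to $\dot\xi(s)\in A_{s,\xi(s)}$ a.e., whereas the paper simply invokes the stated equivalence between the inclusion and the integral inequality in terms of $H$.
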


\begin{proof}
(i) By Theorem \ref{filippov_existence_theorem} the set $\Xi_I(J \times\Omega)$ is a non-empty subset of 
$C(J)^n$, such that for every $(t,x) \in \Omega$ there exists a $\xi \in \Xi_I(J \times\Omega)$ with $\xi(t)=x$. 

(ii) First we prove that $\Xi_I(\{t\} \times \Omega)$ is a closed set in $C(J)^n$ if $\Omega$ is closed:
Let $(\xi_k)_{k\in\N}$  be sequence in $\Xi_I(\{t\} \times \Omega)$ uniformly converging (on compact sets)
to some $\xi \in C(J)^n$. 
In particular $x_k=\xi_k(t) \rightarrow \xi(t)=:x \in \Omega$, since $\Omega$ is closed.  
It follows that  
\begin{multline*}
\langle \xi(s), w\rangle =\lim_{k\rightarrow\infty} \langle \xi_k(s), w\rangle = \lim_{k\rightarrow\infty} 
\left( \langle x_k, w\rangle + \int_{t}^s H(\tau, \xi_k(\tau), w) \m{\tau} \right) \\
\le \langle x,w \rangle + \int_{t}^s \limsup_{k \rightarrow \infty} H(\tau, \xi_k(\tau), w) \m{\tau} 
\le \langle x,w \rangle 
+ \int_{t}^s H(\tau, \xi(\tau), w) \m{\tau} 
\end{multline*}

which implies $\xi \in \Xi_I(\{t\} \times \Omega)$ satisfying the initial condition $\xi(t)=x \in \Omega$, 
so $\Xi_I(\{t\} \times \Omega)$ is a closed subset of $C(J)^n$.

(iii) For all $t\in J$, there exist some compact neighborhood $K_t \csub J$, such that by (\ref{inclusion_equi_cont})
\begin{equation} \label{filippov_equi_continuous_estimate}
|\xi(s) -\xi(r)| \le  \int_{\min{(r,s)}}^{\max{(r,s)}} \beta(\tau) \m{\tau}
\end{equation}
holds uniformly for all $\xi \in \Xi_I(J \times \Omega)$, thus $\Xi_I(J \times \Omega)$ is a equi-continuous family.

(iv) Since $K\csub \R^n$ the estimate
\begin{multline*}
\sup_{\xi \in \Xi_I(\{t\} \times K)} |\xi(s)| \le \sup_{x \in K} |x| + 
\sup_{\xi \in \Xi_I(\{t\} \times K)} \left| \int_t^s \sup_{w\in \R^n} H(\tau, \xi(\tau),w/|w|) \m{\tau} \right| \\
\le \sup_{x \in K} |x| + \int_{\min{(t,s)}}^{\max{(t,s)}} \beta(\tau) \m{\tau} < \infty
\end{multline*}
holds for all $s \in J$, thus $(\xi )_{\xi \in \Xi_I(\{t\} \times K)}$ is an equi-bounded family. 

By the Theorem of Arzela-Ascoli it follows that $\{ \xi : \xi \in \Xi_I(\{t\} \times K) \}$ is pre-compact in 
$C(J)^n$. The closedness of $\{ \xi :\xi \in \Xi_I(\{t\} \times K) \}$ in $C(J)^n$ was already shown in (ii), thus
$\Xi_I(\{t\} \times K ) \csub C(J)^n$.

If the rightside of (\ref{odie}) is time-independent, it is  obvious that we can put $J=\R$. For any $\xi \in \Xi_I( \R \times \Omega)$
with $\xi(t) =x \in \Omega$ we can define $\widetilde{\xi} = \xi(\cdot - r)$ for some $r\in \R$. 
By integral substitution on the right-hand side of (\ref{int_eq}) it follows directly 
that $\widetilde{\xi}$ again satisfies the ordinary differential equation with modified initial condition 
$\widetilde{\xi}(t+r)=x \in \Omega$, thus $\widetilde{\xi} \in \Xi_I(\R \times \Omega)$ .
\end{proof}

\begin{theorem}[Properties of the inclusion solution set] 
Let $A$ satisfy conditions (FC) and assume there exists some positive 
$\alpha(t)\in L^{1}_{\rm loc}(J)$ such that the supporting function $H$ of $A$ satisfies
\begin{eqnarray} \label{set_valued_function_forward_uniqueness_condition}
H(s,x,x-y)+H(s,y,y-x) \le \alpha(s) |x-y|^2 
\end{eqnarray}
resp. 
\begin{eqnarray} \label{set_valued_function_backward_uniqueness_condition}
 H(s,x,x-y)+H(s,y,y-x)\ge - \alpha(s) |x-y|^2)
\end{eqnarray} 
for all $x,y\in\R^n$ and almost all $s \in J$, then the solution set $\Xi_I(J\times \R^n)$ is a forward (resp. a backward) unique 
transport set. 
\end{theorem}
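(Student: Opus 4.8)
The plan is to mimic the proof of Theorem~\ref{flow_map}, the analogous uniqueness statement for classical Caratheodory solutions, with the single-valued right-hand side replaced by the supporting function $H$ of the set-valued map $A$. First I would take $\xi_1, \xi_2 \in \Xi_I(J \times \R^n)$ with $\xi_1(t) = \xi_2(t) = x$ and set $\zeta(s) := |\xi_1(s) - \xi_2(s)|^2$, so that $\zeta(t) = 0$ and $\zeta$ is absolutely continuous (being a product of absolutely continuous functions on compact intervals). The key point is to control $\dot\zeta(s)$ for almost all $s$. Since $\xi_1, \xi_2$ solve the differential inclusion, we have $\dot\xi_1(s) \in A_{s,\xi_1(s)}$ and $\dot\xi_2(s) \in A_{s,\xi_2(s)}$ a.e., and by Definition~\ref{supporting_function} this means $\langle \dot\xi_1(s), w\rangle \le H(s, \xi_1(s), w)$ and $\langle \dot\xi_2(s), w\rangle \le H(s, \xi_2(s), w)$ for all $w \in \R^n$.

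Next I would compute, for almost every $s$,
$$
\dot\zeta(s) = 2\langle \xi_1(s) - \xi_2(s), \dot\xi_1(s) - \dot\xi_2(s)\rangle
= 2\langle \dot\xi_1(s), \xi_1(s) - \xi_2(s)\rangle + 2\langle \dot\xi_2(s), \xi_2(s) - \xi_1(s)\rangle.
$$
Applying the supporting-function inequality with $w = \xi_1(s) - \xi_2(s)$ in the first term and with $w = \xi_2(s) - \xi_1(s)$ in the second, I get
$$
\dot\zeta(s) \le 2\bigl( H(s, \xi_1(s), \xi_1(s)-\xi_2(s)) + H(s, \xi_2(s), \xi_2(s)-\xi_1(s))\bigr)
\le 2\alpha(s)\,|\xi_1(s)-\xi_2(s)|^2 = 2\alpha(s)\zeta(s),
$$
where the second inequality is exactly the hypothesis \eqref{set_valued_function_forward_uniqueness_condition} applied with $x = \xi_1(s)$, $y = \xi_2(s)$. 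Gronwall's inequality (applied on $[t, s]$, using $\alpha \in L^1_{\rm loc}(J)$) then gives $\zeta(s) \le \zeta(t)\exp(2\int_t^s \alpha(r)\,\m{r}) = 0$ for all $s \ge t$, hence $\xi_1 \equiv \xi_2$ on $[t,\infty) \cap J$, proving forward uniqueness. The backward case uses \eqref{set_valued_function_backward_uniqueness_condition} and Gronwall on $[s,t]$ in the same way; the word "transport set" in the statement should just mean the solution set, viewed as generating a (forward, resp. backward) flow as in the Caratheodory case.

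The main obstacle I anticipate is the measurability/integrability bookkeeping needed to justify the differentiation step: one must know that $\dot\xi_1$, $\dot\xi_2$ exist a.e. and are locally integrable (which follows from $\xi_i \in AC(J)^n$), that $\zeta$ is absolutely continuous so that $\zeta(s) - \zeta(t) = \int_t^s \dot\zeta(r)\,\m{r}$, and — most delicately — that the pointwise inequalities $\langle \dot\xi_i(s), w\rangle \le H(s,\xi_i(s),w)$ hold simultaneously for \emph{all} $w$ at a.e.\ fixed $s$, which is automatic here since at a point $s$ where $\dot\xi_i(s) \in A_{s,\xi_i(s)}$ the inequality holds for every $w$ by definition of the supporting function of the set $A_{s,\xi_i(s)}$. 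One also needs the function $s \mapsto \alpha(s)\zeta(s)$ to be in $L^1_{\rm loc}$, which is clear since $\zeta$ is continuous (hence locally bounded) and $\alpha \in L^1_{\rm loc}(J)$; this legitimizes the use of Gronwall. Apart from this, the argument is a direct transcription of Theorem~\ref{flow_map}, and the hypotheses \eqref{set_valued_function_forward_uniqueness_condition}–\eqref{set_valued_function_backward_uniqueness_condition} were evidently designed to make exactly this computation go through.
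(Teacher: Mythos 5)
Your proposal is correct and coincides with the paper's own proof: the same auxiliary function $\zeta(s)=|\xi_1(s)-\xi_2(s)|^2$, the same application of the supporting-function inequality with $w=\xi_1(s)-\xi_2(s)$ and $w=\xi_2(s)-\xi_1(s)$ followed by hypothesis \eqref{set_valued_function_forward_uniqueness_condition}, and the same Gronwall argument (with the backward case handled analogously). Your additional remarks on the measurability and absolute-continuity bookkeeping only make explicit what the paper leaves implicit.
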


\begin{proof}
We prove only the forward uniqueness (the proof of backward uniqueness is done analogously):
Let $\xi_1, \xi_2 \in \Xi_I(J \times \R^n)$ with $\xi_1(t)=\xi_2(t)=x$. 
Setting $\zeta(s) :=|\xi_1(s)- \xi_2(s)|^2$ we observe $\zeta(t)=  |\xi_1(t) -\xi_2(t)|^2 = 0$ 
and using the definition of $\Xi_I(J\times \R^n)$ and the forward uniqueness 
condition (\ref{set_valued_function_forward_uniqueness_condition}), we obtain the estimate
\begin{eqnarray*}
\partial_s \zeta(s)  &=& 2 \langle \xi_1(s) -\xi_2(s), \dot{\xi_1}(s)-\dot{\xi_2}(s)  \rangle 
\le 2 ( H(s, \xi_1(s) , \xi_1(s) -\xi_2(s) ) + H(s, \xi_2(s) , \xi_2(s) -\xi_1(s) )\\
&\le &  2 \alpha(s) \zeta(s)
\end{eqnarray*}
for almost all $s\in J$, thus Gronwall's inequality gives
\begin{eqnarray*}
\zeta(s) \le \zeta(t) \exp{\left(2\int_t^s \alpha(\tau) \m{\tau}\right)} = 0.
\end{eqnarray*}
It follows that $\zeta(s) = |\xi_1(s)- \xi_2(s)|^2 \equiv 0$ for all $s\ge t$ if $\xi_1(t)=\xi_2(t)$, 
so $\Xi_I(J\times \R^n)$ is a forward unique transport set.
\end{proof}

\begin{theorem}[Forward unique (inclusion) flow map] \label{filippov_forward_flow_map} \index{inclusion flow!forward unique}
Let $\Xi_I(J\times \R^n)$ be a forward unique solution set. Put $J_t^+:=[t,\infty[ \cap J$, then the map 
defined by
\begin{equation*}
\begin{array}{rll}
 \chi_t: J_t^+ \times \R^n & \rightarrow & \R^n \\
 (s,x) &\mapsto& \xi(s;t,x) 
\end{array}
\end{equation*}
for some $\xi(\cdot;t,x) \in \Xi_I(J\times \R^n)$ with $\xi(t;t,x)=x$ is uniquely defined,
called \emph{(inclusion) forward flow map}.

It satisfies the following properties:
\begin{enumerate}
\item $\chi_t$ is a continuous map, 
\item $\chi_t(s, \cdot)$ is proper and onto for all $s\in J_{t}^+$ fixed, and
\item $\chi_r(s,\chi_t(r,x)) = \chi_t(s,x)$ for $s\ge r \ge t$.
\end{enumerate}

If the right-hand side of (\ref{odie}) is time-independent, we have the additional flow property
$\chi_t(s, x) =\chi_0(s-t, x)$ for all $s \ge t$, so property (iii) can be written in the well-known form
$$
\chi_{t_1}(s, \chi_{t_2}(r,x)) = \chi_{t_1+t_2}(s+ r ,x) 
$$
for $s\in J_{t_1}^+, r\in J_{t_2}^+$ with $t_1,t_2 \in \R$.
\end{theorem}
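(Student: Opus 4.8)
The plan is to transcribe the proof of Theorem~\ref{forward_flow_map} almost verbatim, replacing the Caratheodory solution set $\Xi(J\times\R^n)$ by the inclusion solution set $\Xi_I(J\times\R^n)$ and invoking Proposition~\ref{filippov_solution_set_prop} (and Theorem~\ref{filippov_existence_theorem}) wherever Theorem~\ref{caratheodory_solution_set_properties} (and Theorem~\ref{caratheodory_ode_existence}) was used there. Well-definedness of $\chi_t$ is immediate: forward uniqueness of $\Xi_I(J\times\R^n)$ means any two solutions through $(t,x)$ agree on $J_t^+$, so $(s,x)\mapsto\xi(s;t,x)$ does not depend on the chosen solution.

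For (i) I would first prove separate continuity. Continuity in $s$ is clear since $\chi_t(\cdot,x)$ is the restriction of an element of $AC(J)^n$. Continuity in $x$ I would argue by contradiction: if $\chi_t(s,\cdot)$ failed to be continuous at $x$ along some $x_k\to x$ with $|\chi_t(s,x)-\chi_t(s,x_k)|>\gamma$, I pick $K\csub\R^n$ containing all $x_k$ and $x$, lift to $\xi_k\in\Xi_I(\{t\}\times K)$ with $\xi_k(t)=x_k$, extract by compactness of $\Xi_I(\{t\}\times K)$ (Proposition~\ref{filippov_solution_set_prop}(iv)) a locally uniformly convergent subsequence $\xi_{k_l}\to\widetilde\xi\in\Xi_I(\{t\}\times K)$ with $\widetilde\xi(t)=x$, and use forward uniqueness to conclude $\widetilde\xi=\chi_t(\cdot,x)$ on $J_t^+$, contradicting the separation. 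Joint continuity then follows from separate continuity together with the equi-continuity of $\Xi_I(J\times\R^n)$ (Proposition~\ref{filippov_solution_set_prop}(iii)) via the estimate
\[
|\chi_t(s,x)-\chi_t(s',x')| \le |\chi_t(s,x)-\chi_t(s,x')| + \sup_{\xi\in\Xi_I(J\times\R^n)}|\xi(s)-\xi(s')|.
\]

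For (ii), properness comes from the integral-inequality form of the inclusion: for every $w$ one has $\langle\chi_t(s,x)-x,w\rangle\le\int_t^s H(\tau,\chi_t(\tau,x),w)\m{\tau}$, whence by (FC)(iii)
\[
|\chi_t(s,x)-x| = \sup_{|w|=1}\langle\chi_t(s,x)-x,w\rangle \le \int_t^s\beta(\tau)\m{\tau},
\]
so $\chi_t(s,\cdot)^{-1}(K)\subseteq\{x\in\R^n\mid\exists x_0\in K:|x-x_0|\le\int_t^s\beta(\tau)\m{\tau}\}$ for each $K\csub\R^n$. Surjectivity is by contradiction: if $y\notin\chi_t(s,\R^n)$, Proposition~\ref{filippov_solution_set_prop}(i) gives $\xi\in\Xi_I(J\times\R^n)$ with $\xi(s)=y$; setting $x:=\xi(t)$, forward uniqueness gives $\xi(r)=\chi_t(r,x)$ for $r\ge t$, hence $y=\chi_t(s,x)$, a contradiction. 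For (iii), fix $x$ and $r\in J_t^+$: let $\xi_1$ solve with $\xi_1(t)=x$ and $\xi_2$ solve with $\xi_2(r)=\chi_t(r,x)$; since $\xi_1(r)=\xi_2(r)$, forward uniqueness yields $\xi_1\equiv\xi_2$ on $J_r^+$, i.e.\ $\chi_r(s,\chi_t(r,x))=\chi_t(s,x)$ for $s\ge r\ge t$. In the time-independent case $\Xi_I(\R\times\R^n)$ is autonomous (Proposition~\ref{filippov_solution_set_prop}), so translating a solution through $(t,x)$ by $t$ and applying forward uniqueness yields $\chi_t(s,x)=\chi_0(s-t,x)$; substituting this into (iii) as in Theorem~\ref{forward_flow_map} (replace $r$ by $r+t-t_2$ and $s$ by $s+r+t-t_1-t_2$) produces the cocycle identity $\chi_{t_1}(s,\chi_{t_2}(r,x))=\chi_{t_1+t_2}(s+r,x)$.

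Since the statement is the verbatim inclusion-analogue of Theorem~\ref{forward_flow_map}, I do not expect any genuine obstacle; the one place requiring a little care is deriving the a priori bound $|\chi_t(s,x)-x|\le\int_t^s\beta(\tau)\m{\tau}$ from the integral \emph{inequality} rather than from an integral equation, which relies on $|v|=\sup_{|w|=1}\langle v,w\rangle$ together with the bound $\sup_{|w|=1}H(\tau,\cdot,w)\le\beta(\tau)$ coming from (FC)(iii).
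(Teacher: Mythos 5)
Your proposal is correct and follows essentially the same route as the paper: the paper's proof is indeed a verbatim transcription of the Caratheodory case, using Proposition \ref{filippov_solution_set_prop} (well-definedness from forward uniqueness, the compactness/equi-continuity argument for separate and then joint continuity, the contradiction arguments for surjectivity and the cocycle identity, and autonomy in the time-independent case). The only point you flag as needing care — deriving $|\chi_t(s,x)-x|\le\int_t^s\beta(\tau)\m{\tau}$ from the integral inequality via $|v|=\sup_{|w|=1}\langle v,w\rangle$ and (FC)(iii) — is exactly how the paper handles properness as well.
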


\begin{proof}
First of all we note that $\chi_t$ is uniquely defined, as the solution set $\Xi_I(J \times \R^n)$ is forward unique.

(i) Considering the continuity of $\chi_t$, we first prove that $\chi_t(s,x)$ is separately continuous. 
The continuity in $s$ is obvious by the definition of $\chi_t$.

Continuity in $x$ we shall prove by contradiction: Assume $\chi_t(s,x)$ is not continuous in $x$ at fixed $s$. Then
there exists a sequence $(x_k)_{k\in\mathbb{N}}$ converging to $x$, $\gamma >0$ and $k_0\in \N$ such that
$|\chi_t(s,x) - \chi_t(s,x_k)|>\gamma$ for all $k\ge k_0$.
Choose $K\csub \R^n$ such that $x_k \in K$ for all $k\in\N$.

For each $x_k$ we can find a $\xi_k\in\Xi_I(\{t\} \times K)$ such that $\xi_k(t) = x_k$ and $\chi_t(s,x_k)=\xi_k(s)$ for all $s\in J_t^+$. 
Furthermore there exists a $\xi\in\Xi_I(\{t\} \times K)$ such that $\xi(t) = x$ and $\chi_t(s,x)=\xi(s)$  for all $s\in J_t^+$.
The compactness of $\Xi_I(\{t\} \times K))$ in $C(J)^n$ yields that, there exists a locally uniformly converging subsequence 
$\xi_{k_l} \stackrel{l \rightarrow \infty}{\rightarrow} \tilde{\xi} \in \Xi_I(\{t\} \times K))$. 
The forward uniqueness of $\Xi_I(J \times \R^n)$ gives that $\widetilde{\xi}(s) = \xi(s)$ for all $s\in J_t^+$, 
thus $\lim_{l\rightarrow \infty} |\chi_t(s,x)-\chi_t(s, x_{k_l})| = 0 < \gamma$ contradicting the assumption.

The equi-continuity of $\Xi_I(J \times \R^n)$ yields the estimate
\begin{multline*}
|\chi_t(s,x) - \chi_t(s',x')| \le |\chi_t(s,x) - \chi_t(s,x')| + |\chi_t(s,x') - \chi_t(s',x')| \le
|\chi_t(s,x) - \chi_t(s,x')| \\+ \sup_{\xi \in \Xi_I(J \times \R^n)} |\xi(s) - \xi(s')|,
\end{multline*}
which, as $\chi_t$ was already shown to be separately continuous, implies the joint continuity of $\chi_t$.

(ii)
There exists some $\xi \in \Xi_{t,x}$ with $\chi_t(s,x) = \xi(s)$, thus 
$$
 |\chi_t(s,x) -x| = \sup_{w\in \R^n} \langle \chi_t(s,x) -x,w  \rangle  \le \int_t^s \sup_{w \in \R^n} \langle H(\tau,\xi(\tau),w/|w|)  \m{\tau} 
\le \int_t^s \beta(\tau) \m{\tau}
$$
holds for all $s \ge t$. For any compact set $K\csub \R^n$ we have that 
$\chi_t^{-1}(\{s\}\times K) \subseteq K':=\left\{x\in \R^n \mid \exists x_0 \in K : |x-x_0| \le  \int_t^s \beta(\tau) \m{\tau} \right\}$,
implying the properness of $x \mapsto \chi_t(s,x)$.

To show that $x\mapsto \chi_t(s,x)$ is onto for all $s\in J$ fixed, we have to show $\chi_t(s, \R^n)=\R^n$. 
We prove by contradiction: Assume there exists some $y\not\in \chi_t(s,\R^n)$, then by Theorem
\ref{filippov_solution_set_prop} (i) there exists a solution $\xi \in \Xi_{s,y}$ with $\xi(s)=y$. Putting $x:=\xi(t)$ we observe that $\xi \in \Xi_{t,x}$ and
forward-uniqueness of $\Xi_I(J\times\R^n)$ yields that $\xi(r) = \chi_t(r,x)$ for all $r\ge t$. In particular we have $y =\xi(s)=\chi_t(s,x) \in \chi_t(s,\R^n)$ which is a contradiction. 

(iii)
For fixed $x\in\mathbb{R}^n$ we have that $\chi_t(s,x) = \xi_1(s), s\in J_t^+$ for some $\xi_1 \in \Xi_I(J \times \R^n)$
with $\xi_1(t)=x$.
Fixing $r\in J_t^{+}$ we may choose $\xi_2 \in \Xi_I(J \times \R^n)$ with 
$\xi_2(r) = \chi_t(r,x)$ and $\xi_2(s) =\chi_r(s, \chi_t(r,x))$ for $s\ge r$. Since $\xi_1, \xi_2 \in \Xi_I(J \times \R^n)$ with $\xi_1(r) =  \chi_t(r,x)$ and $\xi_2(r) =  \chi_t(r,x)$ it follows by the forward uniqueness of $\Xi_I(J \times \R^n)$ that $\xi_1(s)=\xi_2(s)$ for all $s\ge r$, thus
$$
 \chi_r(s,\chi_t(r,x)) = \chi_t(s,x) 
$$
for $s\in J_t^{+} \cap J_r^{+}$ and $r\in J_t^{+}$. 

In the case where the right-hand side of \ref{odie} is time-independent, we put $J=\R$ and have that $\Xi_I(\R^{n+1})$ is autonomous
due to Theorem \ref{solution_set}. As $\chi_0(s, x) = \xi_1(s)$ for some $\xi_1 \in \Xi_I(\R^{n+1})$ with $\xi_1(0)=x$
and $\chi_t(s,x) = \xi_2(s)$ for some $\xi_2 \in \Xi_I(\R^{n+1})$ with $\xi_2(t)=x$. Due to the autonomy of the solution set 
it follows that $\widetilde{\xi_2}(s) := \xi_2(s-t)$ is again in $\Xi_I(\R^{n+1})$ . As $\widetilde{\xi_2}(t) = \xi_1(t) =x$ the forward uniqueness of $\Xi_I(\R^{n+1})$ immediately yields $\widetilde{\xi_2}(s) = \xi_1(s)$ for all $s \in J_t^+$, thus
$\chi_t(s,x) = \chi_0(s-t,x)$ for all $s\in J_t^+$. Using this relation and replacing $r$ by $r+t-t_2$ and $s$ by $s+r+t-t_2-t_1$ we can rewrite property (iii) in order to obtain
$$
\chi_{t_1}(s, \chi_{t_2}(r,x)) = \chi_{t_1+t_2}(s+ r ,x) 
$$
for all $r\in J_{t_1}^+, s \in J_{t_2}^+$ with $t_1, t_2 \in \R$.
\end{proof}

\begin{theorem}[Backward unique flow maps]\label{filippov_backward_flow_map} \index{inclusion flow!backward unique}
Let $\Xi_I(J\times \R^n)$ be a backward unique solution set. Put $J_t^-:=]-\infty,t] \cap J$, then the map 
defined by
\begin{equation*}
\begin{array}{rll}
 \Phi_t: J_t^+ \times \R^n & \rightarrow & \R^n \\
 (s,x) &\mapsto& \xi(s;t,x) 
\end{array}
\end{equation*}
for some $\xi(\cdot;t,x) \in \Xi_I(J\times \R^n)$ with $\xi(t;t,x)=x$ is uniquely defined,
called \emph{(inclusion) backward flow map}.

It has the properties that
\begin{enumerate}
\item $\Phi_t$ is a continuous map, 
\item $\Phi_t(s, \cdot)$ is proper for all $s\in J_{t}^-$ fixed, and
\item $\Phi_r(s,\Phi_t(r,x)) = \Phi_t(s,x)$ for $s\le r \le t$.
\end{enumerate}

If the right-hand side of \ref{odie} is time-independent, we have the additional property
$ \Phi_t(s, x) =\Phi_0(s-t, x)$ for all $s \le t$, so property (iii) can be written in the well-known form
$$
\Phi_{t_1}(s, \Phi_{t_2}(r,x)) = \Phi_{t_1+t_2}(s+ r ,x) 
$$
for $s\in J_{t_1}^-, r\in J_{t_2}^-$ with $t_1,t_2 \in \R$.
\end{theorem}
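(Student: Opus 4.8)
The plan is to follow the proof of Theorem~\ref{filippov_forward_flow_map} line by line, exchanging the role of forward and backward time: everywhere the forward interval $J_t^+=[t,\infty[\,\cap\,J$ is replaced by $J_t^-=\,]-\infty,t]\cap J$, every inequality $s\ge r\ge t$ by $s\le r\le t$, and the forward uniqueness of $\Xi_I(J\times\R^n)$ by its backward uniqueness. The structural inputs are the same ones used there, all collected in Proposition~\ref{filippov_solution_set_prop}: existence of a solution through every initial point~(i), closedness~(ii) and equi-continuity~(iii) of the inclusion solution set, and compactness of $\Xi_I(\{t\}\times K)$ for every $K\csub\R^n$~(iv). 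That $\Phi_t$ is well defined is immediate from backward uniqueness, since any two $\xi_1,\xi_2\in\Xi_I(J\times\R^n)$ with $\xi_1(t)=\xi_2(t)=x$ agree on $J_t^-$.

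For the continuity statement~(i) I first establish separate continuity. Continuity in $s$ is clear from the definition of $\Phi_t$. Continuity in $x$ at a fixed $s\le t$ I prove by contradiction: given $x_k\to x$ inside some $K\csub\R^n$ with $|\Phi_t(s,x)-\Phi_t(s,x_k)|>\gamma$, choose $\xi_k\in\Xi_I(\{t\}\times K)$ realizing $\Phi_t(\cdot,x_k)$ on $J_t^-$; by the compactness~(iv) a subsequence converges locally uniformly to some $\widetilde\xi\in\Xi_I(\{t\}\times K)$ with $\widetilde\xi(t)=x$, and backward uniqueness forces $\widetilde\xi=\Phi_t(\cdot,x)$ on $J_t^-$, contradicting the persistent gap $\gamma$. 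Joint continuity then follows from the splitting $|\Phi_t(s,x)-\Phi_t(s',x')|\le|\Phi_t(s,x)-\Phi_t(s,x')|+\sup_{\xi\in\Xi_I(J\times\R^n)}|\xi(s)-\xi(s')|$ together with the equi-continuity~(iii).

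For properness~(ii) I use the backward analogue of the estimate in Theorem~\ref{filippov_forward_flow_map}(ii): if $\xi\in\Xi_I(J\times\R^n)$ with $\xi(t)=x$ realizes $\Phi_t(\cdot,x)$, then since $\dot\xi(\tau)\in A_{\tau,\xi(\tau)}$ we have $|\dot\xi(\tau)|\le\sup_{w\in\R^n}H(\tau,\xi(\tau),w/|w|)\le\beta(\tau)$ for almost all $\tau$, hence
\[
|\Phi_t(s,x)-x|\le\int_s^t|\dot\xi(\tau)|\m{\tau}\le\int_s^t\beta(\tau)\m{\tau}\qquad(s\le t),
\]
so $\Phi_t^{-1}(\{s\}\times K)$ is contained in the fixed compact set $\{x\mid\exists x_0\in K:\ |x-x_0|\le\int_s^t\beta(\tau)\m{\tau}\}$, giving properness of $\Phi_t(s,\cdot)$. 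The flow identity~(iii) is the usual concatenation argument: for $s\le r\le t$, let $\xi_1\in\Xi_I(J\times\R^n)$ realize $\Phi_t(\cdot,x)$ and let $\xi_2\in\Xi_I(J\times\R^n)$ satisfy $\xi_2(r)=\Phi_t(r,x)$ and realize $\Phi_r(\cdot,\Phi_t(r,x))$ on $J_r^-$; since $\xi_1(r)=\Phi_t(r,x)=\xi_2(r)$, backward uniqueness forces $\xi_1=\xi_2$ on $J_r^-$, whence $\Phi_r(s,\Phi_t(r,x))=\Phi_t(s,x)$. In the time-independent case one additionally invokes the autonomy of $\Xi_I(\R\times\R^n)$ from Proposition~\ref{filippov_solution_set_prop} to obtain $\Phi_t(s,x)=\Phi_0(s-t,x)$, and then rewrites~(iii) by the substitutions $r\mapsto r+t-t_2$ and $s\mapsto s+r+t-t_1-t_2$, exactly as in Theorem~\ref{filippov_forward_flow_map}.

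There is no genuine difficulty beyond what has already been discharged in Theorem~\ref{filippov_existence_theorem} and Proposition~\ref{filippov_solution_set_prop}; the one point demanding care is the bookkeeping of the time direction — using $J_t^-$ consistently, invoking the \emph{backward} uniqueness hypothesis, and keeping the correct orientation of the limits of integration (so that bounds such as the one above are written with $\int_s^t$, $s\le t$, and do not pick up a spurious sign).
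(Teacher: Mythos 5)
Your proposal is correct and matches the paper's intent: the paper's own proof is just the one-line remark that the argument is analogous to Theorem \ref{filippov_forward_flow_map}, and your write-up carries out exactly that time-reversed adaptation (backward uniqueness for well-definedness and the flow identity, compactness and equi-continuity of the inclusion solution set for continuity, and the $\int_s^t \beta$ bound for properness). No gaps.
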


\begin{proof}
Analogous to the proof of Theorem \ref{filippov_forward_flow_map}.
\end{proof}

\section{Generalized solution concepts for ordinary differential equations} \label{section_generalized_solution_concepts_ode}

It is possible to generalize the classical solution concept for 
ordinary differential equations with discontinuous right-hand side $a\in L^1_{\rm loc}(J; L^{\infty}(\R^n))^n$,
by replacing $a$ with a suitable set-valued map $A$  satisfying (FC) 
and solving the resulting differential inclusion by means of Theorem \ref{filippov_existence_theorem}.

Generalization of the classical solution concept means that whenever a right-hand side 
$a$ satisfies the Caratheodory conditions the graph of the set-valued map $A$ 
and the graph of $a\in L^1_{\rm loc}(J; L^{\infty}(\R^n))^n$ are equal.
One way of mapping functions $a\in L^1_{\rm loc}(J; L^{\infty}(\R^n))^n$ into set-valued maps 
satisfying (FC) is by means of the essential convex hull. 

Another method to generalize the classical solution concept, is to regularize 
the right-hand side $a$ to obtain a net of smooth functions $(A_{\varepsilon})_{\varepsilon \in ]0,1]}$ 
with $\lim_{\varepsilon \rightarrow 0}\langle A_{\varepsilon}, \varphi \rangle = \langle a, \varphi \rangle$ 
for all $\varphi \in\mathcal{D}(\R^n)$.
Then a net of solutions $(\xi_{\varepsilon})_{\varepsilon}$ is obtained by solving
the family of ordinary differential equations
\begin{eqnarray} \label{ode_net}
\dot{\xi_{\varepsilon}}(s) = A_{\varepsilon}(s, \xi_{\varepsilon}(s)), \ \xi_{\varepsilon}(t_{\varepsilon}) = x_{\varepsilon},
\end{eqnarray}
where $(t_{\varepsilon}, x_\varepsilon)_{\varepsilon}$ is converging net with $\lim_{\varepsilon \rightarrow 0} (t_{\varepsilon}, x_{\varepsilon}) = (t,x)$.

Such nets of solutions $(\xi_{\varepsilon})_{\varepsilon}$ have convergent subnets $(\xi_{\tau(\varepsilon)})_{\varepsilon}$ with $\tau \in \mathcal{T}$ and $\xi_{\tau(\varepsilon)} \stackrel{\varepsilon \rightarrow 0}{\rightarrow} \xi \in C(J)^n$ uniformly on compact subsets of $J$, when the net of smooth functions $(A_{\varepsilon})_{\varepsilon}$ satisfies
certain boundedness assumptions. The generalized solution $\xi$ does in general not solve any differential equation, 
but it will turn out that $\xi$ solves a differential inclusion. So obviously these two solution concepts are closely related.

\subsection{The Filippov solution concept} \label{filippov_solution_concept}
If $a\in L^1_{\rm loc}(J; L^{\infty}(\R^n))^n$ we can consider its
essential convex hull $A_{t,x}$ for almost all $t\in J$ and all $x\in \R^n$.
We show that $A: (t,x) \mapsto A_{t,x}$ determines a set-valued map, satisfying 
the conditions (FC).
Furthermore the essential convex hull $A_{t,x}$ is equal to $\{a(t,x)\}$ for almost
all $t\in J$ and all $x\in \R^n$, if $a$ satisfies the Caratheodory conditions.

By Theorem \ref{filippov_existence_theorem} there exists a solution $\xi$ of differential
inclusion
$$
\odie,
$$
which we call \emph{Filippov (generalized) solution} of the ordinary \index{Filippov generalized solution}
differential equation $\dot{\xi} = a(s, \xi(s))$ when $(t,x) \mapsto A_{t,x}$ denotes the essential convex hull of $a$.

This type of generalized solution concept for ordinary differential equations with discontinuous right-hand side
was presented in \cite{Filippov:60}, \cite[Chapter 2]{AubinCellina:84}, and \cite[Chapter 1.4]{Hoermander:97}.

\begin{definition} \label{essential_convex_hull} \index{essential convex hull}
Let $a \in L^{1}_{\rm loc}(J; L^{\infty}(\R^n))^n$ and define the essential supporting function for $a$ by
$$
H_a(t,x, w) := \lim_{\delta \rightarrow 0} \esup_{y \in B_{\delta}(x)}{\langle a(t,y), w \rangle}. 
$$
which is defined for almost all $t \in J$ and all $x,w\in \R^n$.

The generated set-valued map $A: (t,x)\mapsto A_{t,x}$ obtained by
$$
A_{t,x} := \{ y\in \R^n \mid \langle y, w \rangle \le H_a(t,x,w), w \in \R^n \}.
$$
is called \emph{essential convex hull} of $a$.
\end{definition}

\begin{proposition}
The essential convex hull of any function $a \in L^{1}_{\rm loc}(J; L^{\infty}(\R^n))^n$ satisfies (FC).
\end{proposition}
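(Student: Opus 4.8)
The plan is to verify the three Filippov conditions, the first step being the observation that $H_a(t,x,\cdot)$ is, for almost every $t$, the convex supporting function of $A_{t,x}$ -- so that condition~(iii) becomes immediate and condition~(ii) reduces, via Theorem~\ref{set_valued_map_main_theorem}(iii), to the upper semi-continuity of $x\mapsto H_a(t,x,w)$, while condition~(i) will be handled by showing that the graph of $t\mapsto A_{t,x}$ is product-measurable. To carry out the first step I would set $\beta(t):=\esup_{y\in\R^n}|a(t,y)|$, so that $\beta\in L^1_{\rm loc}(J)$ by hypothesis and $|\langle a(t,y),w\rangle|\le\beta(t)|w|$ for almost all $(t,y)$; fix $x$ and a $t$ with $\beta(t)<\infty$. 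The functions $h_\delta(w):=\esup_{y\in B_\delta(x)}\langle a(t,y),w\rangle$ are essential suprema of linear functionals, hence convex and positively homogeneous, and $h_\delta\downarrow H_a(t,x,\cdot)$ as $\delta\downarrow0$; a decreasing pointwise limit of convex positively homogeneous functions inherits these properties, and the bound $|H_a(t,x,w)|\le\beta(t)|w|$ shows it is finite, hence -- being finite and convex on $\R^n$ -- continuous and in particular lower semi-continuous. Thus $H_a(t,x,\cdot)$ is a convex supporting function, the set $A_{t,x}$ of Definition~\ref{essential_convex_hull} belongs to $\mathcal{K}_0(\R^n)$ with supporting function $H_a(t,x,\cdot)$, and $A_{t,x}\subseteq B_{\beta(t)}(0)$, so $|A_{t,x}|\le\beta(t)$: this is condition~(iii). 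On the remaining null set of $t$'s one redefines $A_{t,x}:=\{0\}$.

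For condition~(ii) I would fix such a $t$ and a $w$ and prove that $x\mapsto H_a(t,x,w)$ is upper semi-continuous: writing $g:=\langle a(t,\cdot),w\rangle\in L^\infty(\R^n)$ one has $H_a(t,x,w)=\inf_{\delta>0}\esup_{B_\delta(x)}g$, and if $x_k\to x$ then for every $\delta>0$ and all large $k$ we have $B_{\delta/2}(x_k)\subseteq B_\delta(x)$, so $H_a(t,x_k,w)\le\esup_{B_{\delta/2}(x_k)}g\le\esup_{B_\delta(x)}g$; letting $k\to\infty$ and then $\delta\downarrow0$ gives $\limsup_{k}H_a(t,x_k,w)\le H_a(t,x,w)$. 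Combined with the local bound $|A_{t,x}|\le\beta(t)$ from the first step, Theorem~\ref{set_valued_map_main_theorem}(iii) then yields that $x\mapsto A_{t,x}$ is upper semi-continuous and locally bounded for almost all $t$.

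For condition~(i), fix $x$ and $w$; since $H_a(t,x,w)=\inf_{k\in\N}\esup_{B_{1/k}(x)}\langle a(t,\cdot),w\rangle$, it suffices to show that, for a fixed ball $B$, the map $t\mapsto\esup_{B}\langle a(t,\cdot),w\rangle$ is measurable. Here I would use that $a\in L^1_{\rm loc}(J;L^\infty(\R^n))^n$ means $t\mapsto a(t,\cdot)$ is, off a null set, an $L^\infty$-norm limit of simple functions $\sum_j\mathbf{1}_{E_j}(t)f_j$ with $f_j\in L^\infty(\R^n)$; since $f\mapsto\esup_{B}\langle f,w\rangle$ is $1$-Lipschitz from $L^\infty(B)$ to $\R$, the function $t\mapsto\esup_{B}\langle a(t,\cdot),w\rangle$ is an almost-everywhere limit of simple functions, hence measurable. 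Consequently $t\mapsto H_a(t,x,w)$ is measurable; and since $H_a(t,x,\cdot)$ is continuous, the graph $\{(t,y)\in J\times\R^n\mid y\in A_{t,x}\}=\bigcap_{w\in\Q^n}\{(t,y)\mid\langle y,w\rangle\le H_a(t,x,w)\}$ lies in $\mathcal{L}(J)\otimes\mathcal{B}(\R^n)$, whence $t\mapsto A_{t,x}$ is Lebesgue measurable. I expect this last condition to be the only genuinely delicate point -- passing from an essential supremum over an uncountable ball to a measurable function of $t$; the rest reduces to the convex-analysis facts collected in the Introduction together with Theorem~\ref{set_valued_map_main_theorem}.
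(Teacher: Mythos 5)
Your proof is correct, and its overall architecture --- verifying the three Filippov conditions through the supporting function $H_a$, with $\beta(t)=\esup_{y\in\R^n}|a(t,y)|$ furnishing condition (iii) and the convexity/positive homogeneity/finiteness of $H_a(t,x,\cdot)$ guaranteeing $A_{t,x}\in\mathcal{K}_0(\R^n)$ --- is the same as the paper's. The differences lie in the two technical steps. For upper semi-continuity in $x$ you argue directly from the inclusion $B_{\delta/2}(x_k)\subseteq B_\delta(x)$ for large $k$; the paper instead views $H_a(t,\cdot,w)$ as the decreasing limit of the $h_\delta(t,\cdot,w)$ and appeals to the fact that a pointwise infimum of upper semi-continuous functions is upper semi-continuous. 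Your route is arguably safer, since $x\mapsto\esup_{y\in B_\delta(x)}\langle a(t,y),w\rangle$ need not itself be upper semi-continuous, whereas the infimum is, precisely by your ball-inclusion argument. For measurability in $t$ the paper realizes the essential supremum as a supremum over a sequence of points outside a null set (via Hewitt--Stromberg and Lemma \ref{supremum_approximating_sequence}) and takes a pointwise limes superior of measurable functions, i.e.\ it works with a pointwise a.e.\ representative of $a$; you instead exploit the vector-valued (Bochner/strong measurability) reading of $L^1_{\rm loc}(J;L^\infty(\R^n))^n$, composing $t\mapsto a(t,\cdot)$ with the Lipschitz functional $f\mapsto\esup_{B}\langle f,w\rangle$ (Lipschitz constant $|w|$, not $1$, a harmless slip) and then passing to the countable infimum over $\delta=1/k$, which is legitimate by monotonicity in $\delta$. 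That is a genuinely different device: it is cleaner under the Treves-style framework for $L^p(J;E)$ that the paper sets up, and it avoids the delicate point that in the paper's argument the exceptional null set and the approximating points could a priori depend on $t$. Your closing observation that the graph of $t\mapsto A_{t,x}$ lies in $\mathcal{L}(J)\otimes\mathcal{B}(\R^n)$ is more than is needed --- the paper's reformulation of (FC) in terms of the supporting function makes measurability of $t\mapsto H_a(t,x,w)$ sufficient --- but it is correct and harmless.
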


\begin{proof}
By Definition \ref{essential_convex_hull} it is obvious that $w \mapsto H_a(t,x,w)$ 
is positively homogeneous and convex in $w$ for almost all $t\in J$ and all $x\in\R^n$. 

We put 
$$
h_{\delta}(t,x) := \esup_{y\in B_{\delta}(x)} \langle a(t,y),w \rangle 
$$
By \cite[Theorem 20.14]{HewStr:65} there exists a null set $N$, such that
$$
h_{\delta}(t,x) = \sup_{y\in B_{\delta}(x)/N} \langle a(t,y),w \rangle.
$$ 
Due to Lemma \ref{supremum_approximating_sequence} we can find a sequence 
$(x_k)_{k\in \N}$ in $B_{\delta}(x) / N$ such that
$$
h_{\delta}(t,x,w) = \limsup_{k \rightarrow \infty} \langle a(t,x_k),w \rangle,
$$
thus $(t,x) \mapsto h_{\delta}(t,x,w)$ is measurable as the pointwise 
limes superior of a family of measurable functions. 

Observe that $h_{\delta_1}(t,x,w) \le h_{\delta_2}(t,x,w)$ for $\delta_2 \ge \delta_1$,
so $(h_{\delta}(t,x,w))_{\delta \in ]0,1]}$ is a decreasing net, bounded for almost all $t\in J$.
For $y\in B_{\delta}(x)/N$ it holds that
$h_{\delta}(t,y,w) \ge \langle a(t,x),w \rangle$, thus $h_{\delta}(t,y,w)$ is bounded from below for almost all $t\in J$.
It follows that
$$
\lim_{\delta \rightarrow 0} h_{\delta}(t,x,w) = \inf_{\delta \in ]0,1]} h_{\delta}(t,x,w) 
$$
and since the pointwise infimum of a net of upper semi-continuous functions is again
upper semi-continuous (see \cite[Exercise 7.23 (a)]{HewStr:65}), we have that
$x \mapsto H_a(t,x,w)$ is upper semi-continuous for almost all $t\in J$ and all $w\in \R^n$.

We have
$$
H_a(t,x,w/|w|) \le h_1(t,x,w/|w|) \le \esup_{y \in \R^n } |a(t,y)| \in L^1_{\rm loc}(J)
$$
since $a \in  L^1_{\rm loc}(J; L^\infty(\R^n))^n$.
\end{proof}

\begin{theorem}[Approximation property of the essential convex hull] \label{filippov_approximation_property}
Let $a \in L^{1}_{\rm loc}(J; L^{\infty}(\R^n))^n$ and let $\rho$ be a positive mollifier in $\mathcal{D}(\R^n)$ with
$\int \rho(z) \m{z}=1$. Put $\rho_{\varepsilon}(x) = \varepsilon^{-n} \rho(x/\varepsilon)$.

Then the approximating net $a_{\varepsilon}(s,x) = a(s,\cdot) \ast \rho_{\varepsilon}$ has the property, that if $(\xi_{\varepsilon})_{\varepsilon}$ is a net 
of smooth functions converging to some $\xi \in C(J)^n$ uniformly on compact sets, then it holds
that 
$$
\limsup_{\varepsilon \rightarrow 0} 
\langle a_{\varepsilon}(s,\xi_{\varepsilon}(s)), w \rangle \le  H_a(s,\xi(s),w)
$$
for almost all $s\in J$.
\end{theorem}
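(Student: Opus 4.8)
The plan is to exploit the compact support of the mollifier $\rho$ in order to write the regularized evaluation $a_\varepsilon(s,\xi_\varepsilon(s))$ as a weighted average of the values of $a(s,\cdot)$ over a small ball centred at $\xi_\varepsilon(s)$, and then to shrink that ball into $B_\delta(\xi(s))$. Fix $w\in\R^n$ and choose $R>0$ with $\supp\rho\subseteq B_R(0)$. I restrict attention to the full-measure set of $s\in J$ for which $a(s,\cdot)\in L^\infty(\R^n)^n$ (this is all that is needed, by the hypothesis $a\in L^1_{\rm loc}(J;L^\infty(\R^n))^n$ together with Definition \ref{essential_convex_hull}); for such $s$ the convolution may be evaluated pointwise, and the substitution $y=\xi_\varepsilon(s)-\varepsilon z$ together with $\rho\ge 0$, $\int\rho=1$ gives
\[
  \langle a_\varepsilon(s,\xi_\varepsilon(s)),w\rangle = \int_{B_R(0)} \langle a(s,\xi_\varepsilon(s)-\varepsilon z),w\rangle\,\rho(z)\m{z} \le \esup_{y\in B_{\varepsilon R}(\xi_\varepsilon(s))} \langle a(s,y),w\rangle ,
\]
where the last step uses that the affine change of variables $z\mapsto \xi_\varepsilon(s)-\varepsilon z$ preserves null sets.

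Next I would carry out the geometric comparison. Writing $h_\delta(s,x,w):=\esup_{y\in B_\delta(x)}\langle a(s,y),w\rangle$, note (as in the proof of the preceding Proposition) that $\delta\mapsto h_\delta(s,x,w)$ is nondecreasing, so that by Definition \ref{essential_convex_hull} one has $H_a(s,x,w)=\inf_{\delta>0} h_\delta(s,x,w)=\lim_{\delta\to 0}h_\delta(s,x,w)$. Now fix $\delta>0$. Since $\xi_\varepsilon\to\xi$ uniformly on compact sets, in particular $\xi_\varepsilon(s)\to\xi(s)$, there is $\varepsilon_0=\varepsilon_0(s,\delta)>0$ such that $|\xi_\varepsilon(s)-\xi(s)|<\delta/2$ and $\varepsilon R<\delta/2$ whenever $\varepsilon<\varepsilon_0$; hence $B_{\varepsilon R}(\xi_\varepsilon(s))\subseteq B_\delta(\xi(s))$, and the displayed estimate yields $\langle a_\varepsilon(s,\xi_\varepsilon(s)),w\rangle\le h_\delta(s,\xi(s),w)$ for all $\varepsilon<\varepsilon_0$.

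Finally I would let $\varepsilon\to 0$ to obtain $\limsup_{\varepsilon\to 0}\langle a_\varepsilon(s,\xi_\varepsilon(s)),w\rangle\le h_\delta(s,\xi(s),w)$ for every $\delta>0$, and then let $\delta\to 0$, invoking $\lim_{\delta\to 0}h_\delta(s,\xi(s),w)=H_a(s,\xi(s),w)$, which gives the assertion for almost all $s\in J$ — and, since the exceptional null set (where $a(s,\cdot)\notin L^\infty(\R^n)^n$) is independent of $x$ and $w$, for all $w$ simultaneously. The point that needs the most care is the order of the two limit processes: $\varepsilon_0$ depends on $\delta$, so $\varepsilon\to 0$ must be taken before $\delta\to 0$; apart from that, the only remaining issue is the routine measure-theoretic bookkeeping ensuring that the pointwise manipulation of the convolution is legitimate on a set of full measure in $s$. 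The argument is in fact a direct analogue of the proof of Theorem \ref{supporting_function_approximation_property}, with the essential supremum playing the role of the supremum.
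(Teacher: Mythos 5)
Your proof is correct and follows essentially the same route as the paper's: bound the mollified value by the essential supremum of $\langle a(s,\cdot),w\rangle$ over the ball $B_{\varepsilon R}(\xi_\varepsilon(s))$, absorb this ball into $B_\delta(\xi(s))$ using the convergence of $\xi_\varepsilon(s)$ to $\xi(s)$, and then let $\delta\to 0$ via the definition of $H_a$. The only difference is cosmetic: you take $\limsup_{\varepsilon\to 0}$ at fixed $\delta$ directly (needing only pointwise convergence of $\xi_\varepsilon(s)$), whereas the paper argues through the decreasing net $\sup_{\mu\in]0,\varepsilon]}$ and compares it with the subnet $\sup_{\mu\in]0,\mu'(\varepsilon)]}$ using uniform convergence on compact $K\csub J$ — both yield the same a.e.\ pointwise inequality.
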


\begin{proof}
Let $R > 0$ be such that $\supp{\rho} \subseteq B_{R}(0)$.  
Then we observe that 
$$
\sup_{\mu \in ]0, \varepsilon]} \langle a_{\mu}(s,\xi_{\mu}(s)), w \rangle 
= \sup_{\mu \in ]0, \varepsilon]} \int_{\R^n} \langle a(s,y) ,w \rangle \rho_{\mu}(\xi_{\mu}(s)-y) \m{y} 
\le \sup_{\mu \in ]0, \varepsilon]} \sup_{y \in B_{R \cdot \mu}(\xi_{\mu}(s))} \langle a(s,y) ,w \rangle 
 < \infty.
$$
for almost all $s\in J$.

Let $K\csub J$ fixed.

Then for all $\varepsilon > 0$ there exists 
$\mu'(\varepsilon)\in ]0,\varepsilon/(2R)]$  such 
that $|\xi_{\mu}(s)-\xi(s)|< \varepsilon/2$ for all $s\in K$ and 
$\mu \in]0,\mu'(\varepsilon)]$.
It follows that 
$$B_{R \cdot \mu }(\xi_{\mu}(s)) \subseteq 
B_{R \cdot \mu }(\xi(s)) + B_{\varepsilon/2}(\xi(s)) =
B_{R \cdot \mu+\varepsilon/2 }(\xi(s)) \subseteq B_{\varepsilon}(\xi(s)) 
$$
for all $s\in K$ and $\mu \in ]0, \mu'(\varepsilon)]$, implying 
$$
\sup_{\mu \in ]0, \mu'(\varepsilon)]} 
\sup_{y \in B_{R \cdot \mu}(\xi_{\mu}(s))} \langle a(s,y) ,w \rangle
\le 
\sup_{y \in B_{\varepsilon}(\xi(s))} \langle a(s,y) ,w \rangle. 
$$
Observe that 
$
\left(\sup_{\mu \in ]0, \varepsilon]} \langle a_{\mu}(s,\xi_{\mu}(s)), w \rangle \right)_{\varepsilon}
$
is a decreasing, bounded net for almost all $s\in J$, thus converging.
We conclude that any subnet of the net 
$(\sup_{\mu \in ]0,\varepsilon]} \langle a_{\mu}(s, \xi_{\mu}(s)),w \rangle)_{\varepsilon}$, 
has the same limit, thus
$$
\lim_{\varepsilon \rightarrow 0} \sup_{\mu \in ]0,\varepsilon]} \langle a_{\mu}(s, \xi_{\mu}(s)),w \rangle
=
\lim_{\varepsilon \rightarrow 0} \sup_{\mu \in ]0,\mu'(\varepsilon)]} \langle a_{\mu}(s, \xi_{\mu}(s)),w \rangle.
$$
Finally we obtain
\begin{multline*}
\limsup_{\varepsilon \rightarrow 0} \langle a_{\varepsilon}(s, \xi_{\varepsilon}(s)),w \rangle =
\lim_{\varepsilon \rightarrow 0} \sup_{\mu \in ]0,\varepsilon]} \langle a_{\mu}(s, \xi_{\mu}(s)),w \rangle
=
\lim_{\varepsilon \rightarrow 0} \sup_{\mu \in ]0,\mu'(\varepsilon)]} \langle a_{\mu}(s, \xi_{\mu}(s)),w \rangle
\\\le
\lim_{\varepsilon \rightarrow 0} \sup_{y \in B_{\varepsilon}(\xi(s))} \langle a(s,y) ,w \rangle 
=
H_a(s,\xi(s),w)
\end{multline*}
for almost all $s\in K$ and all $w\in \R^n$.
\end{proof}

\begin{remark}
Theorem \ref{filippov_approximation_property} shows that a Filippov generalized solution for an ordinary differential equation with
discontinuous right-hand side  $a\in L^{1}_{\rm loc}(J; L^{\infty}(\R^n))^n$ can be obtained by regularizing 
the right-hand side by convolution with a positive, smooth and compactly supported mollifier (instead of applying the existence theorem \ref{filippov_existence_theorem} for differential inclusions).
Such an approach for constructing Filippov generalized solutions is pursued in \cite[Chapter 1.4]{Hoermander:97}.

Putting $a_{\varepsilon}(s,x) = a(s,\cdot) \ast \rho_{\varepsilon}$ we have that any
net of regular functions $(\xi_{\varepsilon})_{\varepsilon}$ solving
$$
\dot{\xi_{\varepsilon}} = a_{\varepsilon}(s,\xi_{\varepsilon}(s)),\quad \xi_{\varepsilon}(t) = x
$$
has the property that 
\begin{multline*}
\limsup_{\varepsilon \rightarrow 0} \langle \xi_{\varepsilon}(s), w\rangle 
= \langle x,w\rangle + \limsup_{\varepsilon \rightarrow 0} \int_{t}^s \langle a_{\varepsilon}(\tau,\xi_{\varepsilon}(\tau)), w \rangle \m{\tau}  \\
\le
 \langle x,w\rangle + \int_{t}^s \limsup_{\varepsilon \rightarrow 0} \langle a_{\varepsilon}(\tau,\xi_{\varepsilon}(\tau)), w \rangle \m{\tau} 
\le  \langle x,w\rangle +\int_t^s H_a(\tau,\limsup_{\varepsilon \rightarrow 0}  \xi_{\varepsilon}(\tau),w) \m{\tau}
\end{multline*}
for all $w\in \R^n$, which implies that any subnet $(\xi_{\tau(\varepsilon)})_{\varepsilon}$ (with $\tau\in\mathcal{T}$)
converging uniformly on compact subsets to some $\xi \in C(J)^n$, satisfies the differential inclusion
$$
\dot{\xi}(s) \in A_{s,\xi(s)}, \quad \xi(t)=x,
$$
where $A$ denotes the essential convex hull. 
\end{remark}

\begin{theorem} \label{filippov_forward_uniqueness}
Let $a \in L^{1}_{\rm loc}(J; L^{\infty}(\R^n))^n$ satisfy the forward uniqueness condition
\begin{equation} 
\langle x-y,  a(s,x) - a(s,y) \rangle  \le \alpha(s) |x-y|^2 
\end{equation}
resp. the backward uniqueness condition
\begin{equation}
 \langle x-y,  a(s,x) - a(s,y) \rangle  \ge - \alpha(s) |x-y|^2,
\end{equation}
for some positive $\alpha \in L^1_{\rm loc}(J)$,
then the supporting function of the essential convex hull $H_f$
satisfies the forward uniqueness condition (\ref{set_valued_function_forward_uniqueness_condition})
resp. backward uniqueness condition (\ref{set_valued_function_backward_uniqueness_condition}).
\end{theorem}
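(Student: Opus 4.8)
The plan is to argue directly from the definition $H_a(s,x,w)=\lim_{\delta\to0}\esup_{z\in B_\delta(x)}\langle a(s,z),w\rangle$, combining the two essential suprema appearing in (\ref{set_valued_function_forward_uniqueness_condition}) into a single essential supremum over a product ball and then applying the hypothesis pointwise there. Fix $x,y\in\R^n$, set $w:=x-y$, and fix $s$ outside the null set where one of the following fails: the assumed uniqueness inequality (read, as is forced by $L^\infty$-classes, as holding for a.e.\ $(x,y)\in\R^n\times\R^n$), the bound $\alpha(s)<\infty$, and $M(s):=\esup_{z\in\R^n}|a(s,z)|<\infty$ (this last holds for a.e.\ $s$ since $s\mapsto\|a(s,\cdot)\|_{L^\infty}\in L^1_{\rm loc}(J)$ by the standing assumption on $a$).

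First I would record the two elementary facts about essential suprema that drive the proof. For measurable $f$ on a positive-measure set $A$ and $g$ on a positive-measure set $B$ one has $\esup_A f+\esup_B g=\esup_{A\times B}\big(f(z)+g(z')\big)$: the inequality $\le$ is Fubini applied to $f(z)\le\esup_A f$ a.e.\ and $g(z')\le\esup_B g$ a.e., and $\ge$ comes from intersecting the positive-measure sets $\{f>\esup_A f-\eta\}$ and $\{g>\esup_B g-\eta\}$. Secondly, if $h\le c$ (resp.\ $h\ge c$) a.e.\ on a positive-measure set then $\esup h\le c$ (resp.\ $\esup h\ge c$). Applying the first fact with $A=B_\delta(x)$, $B=B_\delta(y)$, $f(z)=\langle a(s,z),w\rangle$, $g(z')=\langle a(s,z'),-w\rangle$ gives, writing $h^{(1)}_\delta:=\esup_{B_\delta(x)}f$ and $h^{(2)}_\delta:=\esup_{B_\delta(y)}g$,
$$h^{(1)}_\delta+h^{(2)}_\delta=\esup_{(z,z')\in B_\delta(x)\times B_\delta(y)}\langle a(s,z)-a(s,z'),\,w\rangle.$$

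Next I would estimate the integrand pointwise. Since the hypothesis holds for a.e.\ pair of $\R^n\times\R^n$, by Fubini it holds for a.e.\ $(z,z')\in B_\delta(x)\times B_\delta(y)$; decomposing $w=(z-z')+(x-z)+(z'-y)$ and using $\langle a(s,z)-a(s,z'),z-z'\rangle\le\alpha(s)|z-z'|^2\le\alpha(s)(|w|+2\delta)^2$ together with $|a(s,z)-a(s,z')|\le 2M(s)$, $|x-z|\le\delta$, $|z'-y|\le\delta$, one obtains
$$\langle a(s,z)-a(s,z'),\,w\rangle\le\alpha(s)(|w|+2\delta)^2+4M(s)\delta \qquad \text{a.e.\ on }B_\delta(x)\times B_\delta(y).$$
Taking $\esup$ over the product ball and using the identity above, $h^{(1)}_\delta+h^{(2)}_\delta\le\alpha(s)(|w|+2\delta)^2+4M(s)\delta$; letting $\delta\to0$ (the nets $(h^{(i)}_\delta)_\delta$ are decreasing and bounded below, as observed after Definition~\ref{essential_convex_hull}, so the limits exist and equal $H_a(s,x,x-y)$ and $H_a(s,y,y-x)$, while the $O(\delta)$ terms vanish because $\alpha(s),M(s)<\infty$) yields (\ref{set_valued_function_forward_uniqueness_condition}). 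For the backward condition the same decomposition gives the reversed pointwise bound $\langle a(s,z)-a(s,z'),w\rangle\ge-\alpha(s)(|w|+2\delta)^2-4M(s)\delta$ a.e.\ on the product ball, which by the second elementary fact is also a lower bound for the essential supremum, so $h^{(1)}_\delta+h^{(2)}_\delta\ge-\alpha(s)(|w|+2\delta)^2-4M(s)\delta$, and $\delta\to0$ delivers (\ref{set_valued_function_backward_uniqueness_condition}).

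The only delicate point is the bookkeeping with essential suprema in the first two steps: since the uniqueness inequality is merely an a.e.\ statement in $(x,y)$, one must pass through Fubini to know it holds a.e.\ on each product ball, and one must use the additivity of $\esup$ over the two independent variables; everything else is the elementary vector decomposition together with the fact that $\esup_z|a(s,z)|<\infty$ for a.e.\ $s$, which kills the error terms in the limit.
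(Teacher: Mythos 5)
Your proof is correct and follows essentially the same route as the paper's: the same decomposition $x-y=(z_1-z_2)+(x-z_1)+(z_2-y)$, the same $O(\delta)$ error terms controlled by $\alpha(s)$ and the $L^\infty$ bound on $a(s,\cdot)$, and the same passage $\delta\to 0$ in the definition of $H_a$. The only difference is expository: you make explicit the additivity of essential suprema over the product ball and the Fubini step needed to read the a.e.\ hypothesis on $B_\delta(x)\times B_\delta(y)$, points the paper uses implicitly.
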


\begin{proof}
We prove only the case where $a$ satisfies the forward-uniqueness condition.
\begin{multline*}
\langle a(s,z_1), x-y \rangle - \langle a(s,z_2), x-y \rangle
=
\langle a(s,z_1), z_1-z_2 \rangle + \langle a(s,z_2), z_2-z_1 \rangle +
2 \beta(s) (|z_1 - x|+|z_2 - y|)
\\
\le 
|z_1-z_2|^2 \alpha(s) +  4 \delta \beta(s) 
\end{multline*}
for all $z_1 \in B_{\delta}(x), z_2 \in B_{\delta}(y)$ and almost all $s\in J$. It follows that 
\begin{multline*}
\esup_{z\in B_{\delta}(x)} \langle a(s,z), x-y \rangle + \esup_{z\in B_{\delta}(y)} \langle a(s,z), y-x \rangle
\le \sup_{(z_1,z_2)\in B_{\delta}(x) \times  B_{\delta}(y)} |z_1-z_2|^2 \alpha(s) +  4 \delta \beta(s) \\
= |x-y|^2 \alpha(s) + 2 \delta \alpha(s) +  4 \delta \beta(s), 
\end{multline*}
thus 
\begin{multline*}
 H_a(s,x, x-y) + H_a(s,y, y-x) = 
\lim_{\delta \rightarrow 0}
(\esup_{z\in B_{\delta}(x)} \langle a(s,z), x-y \rangle + \esup_{z\in B_{\delta}(y)} \langle a(s,z), y-x \rangle)
\\ \le |x-y|^2 \alpha(s)
\end{multline*}
holds for almost all $s\in J$.
\end{proof}

\subsection{Colombeau solutions for ordinary differential equations}

Finally we consider generalized solutions for ordinary differential equation
in the setting of Colombeau generalized functions. 

Let $(\widetilde{t},\widetilde{x}) \in \widetilde{J \times \R^n}$ be the initial condition, then
we study the ordinary differential equation
\begin{eqnarray*}
\dot{\xi}(s) &=& A(s,\xi(s)), \quad \xi(\widetilde{t})=\widetilde{x},
\end{eqnarray*}
where $A:=[(A_{\varepsilon})_{\varepsilon}] \in \colmap{J\times \R^n}{\R^n}$ such that there exists some representative satisfying the bound
\begin{eqnarray} \label{colombeau_existence_bound}
\sup_{x\in\mathbb{R}^n}|A_{\varepsilon}(t,x)| \le  \beta(t) ,\ \ \varepsilon \in\, ]0,1], \ \text{almost everywhere in}\ t\in J
\end{eqnarray}
for some positive function $\beta \in L^1_{\rm loc}(J)$.
It will turn out that the generalized solution $\xi:=[(\xi_{\varepsilon})_{\varepsilon}]$ then allows 
for picking a subnet $(\xi_{\tau(\varepsilon)})_{\varepsilon}$ converging uniformly on compact subsets of 
$J$ to some $\zeta \in AC(J)^n$ with $\zeta(t)=x$.

\begin{theorem}[Existence] \label{colombeau_existence_theorem} \index{Colombeau generalized solution}
Let $A \in \colmap{J \times \R^n}{\R^n}$ be Colombeau map of natural type 
with a representative $(A_{\varepsilon})_{\varepsilon}$ satisfying (\ref{colombeau_existence_bound}).

If $(\widetilde{t},\widetilde{x}) \in \widetilde{J \times \R^n}$, then there exists a c-bounded solution $\xi \in \G(J)^n$ to the initial value problem
\begin{eqnarray*}
\dot{\xi}(s) &=& A(s,\xi(s)), \quad \xi(\widetilde{t})=\widetilde{x}.
\end{eqnarray*}
Furthermore, there exists some $(t,x) \in \overline{J}\times \Omega$, $\zeta \in AC(J)^n$ such that for any representative $(\xi_{\varepsilon})_{\varepsilon}$ of $\xi$, $(\widetilde{t}_\varepsilon, \widetilde{x}_{\varepsilon})_{\varepsilon}$ of $(\widetilde{t},\widetilde{x})$ there exists subnets $(\widetilde{t}_{\tau(\varepsilon)}$, $\widetilde{x}_{\tau(\varepsilon)})_{\varepsilon}$,$(\xi_{\tau(\varepsilon)})_{\varepsilon}$ with $\tau \in \mathcal{T}$ and
$\lim_{\varepsilon \rightarrow 0} (\widetilde{t}_{\tau(\varepsilon)}, \widetilde{x}_{\tau(\varepsilon)}) =(t,x)$
 and $\xi_{\tau(\varepsilon)} \stackrel{\varepsilon \rightarrow 0}{\rightarrow} \zeta$ uniformly on compact sets of $J$.

It satisfies the integral inequality
\begin{equation} \label{colombeau_solution_pseudo_shadow} 
\langle \zeta(s),w \rangle \le \langle x, w\rangle + \int_t^s H_A(\tau, \zeta(\tau), w) \m{\tau}
\end{equation}
for all $w\in \R^n$, where $(x,w) \mapsto H_A(t,x,w)$ denotes the supporting function of 
the generalized graph $\ggraph{A(t,\cdot)}$.
\end{theorem}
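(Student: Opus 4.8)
The plan is to solve the initial value problem at fixed $\varepsilon$ by the classical Carathéodory existence theorem (Theorem \ref{caratheodory_ode_existence}), applied to the smooth right-hand side $A_{\varepsilon}$, which satisfies the bound (\ref{colombeau_existence_bound}) with the same $\beta$ for all $\varepsilon$. This yields a net $(\xi_{\varepsilon})_{\varepsilon}$ of solutions $\dot{\xi}_{\varepsilon}(s)=A_{\varepsilon}(s,\xi_{\varepsilon}(s))$, $\xi_{\varepsilon}(\widetilde{t}_{\varepsilon})=\widetilde{x}_{\varepsilon}$, where $(\widetilde{t}_{\varepsilon},\widetilde{x}_{\varepsilon})_{\varepsilon}$ is a representative of $(\widetilde{t},\widetilde{x})$. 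First I would show this net is moderate: the integral equation $\xi_{\varepsilon}(s)=\widetilde{x}_{\varepsilon}+\int_{\widetilde{t}_{\varepsilon}}^{s}A_{\varepsilon}(\tau,\xi_{\varepsilon}(\tau))\m{\tau}$ together with the bound gives $|\xi_{\varepsilon}(s)-\widetilde{x}_{\varepsilon}|\le\int_{\min}^{\max}\beta(\tau)\m{\tau}$, so $(\xi_{\varepsilon})_{\varepsilon}$ is uniformly bounded on compact sets (using moderateness of $\widetilde{x}_{\varepsilon}$), and then the same estimate with $\varepsilon$-uniform $\beta$ controls derivatives via the equation; hence $\xi:=[(\xi_{\varepsilon})_{\varepsilon}]\in\G(J)^n$ is a c-bounded Colombeau solution. (One should check independence of the class from the chosen representatives by a standard Gronwall argument, but since no uniqueness is claimed for $\xi$ itself, it suffices that \emph{some} such class exists; c-boundedness follows from the uniform $\beta$-bound.)

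Next I would extract the shadow. Since $(\widetilde{t}_{\varepsilon},\widetilde{x}_{\varepsilon})_{\varepsilon}$ lies in a compact subset of $\overline{J}\times\R^n$ (moderateness plus, if needed, the interpretation of the initial data as living over $\overline{J}$), there is $\tau_0\in\mathcal{T}$ with $(\widetilde{t}_{\tau_0(\varepsilon)},\widetilde{x}_{\tau_0(\varepsilon)})\to(t,x)$. Along this subnet, the estimate $|\xi_{\varepsilon}(s)-\xi_{\varepsilon}(r)|\le\int_{\min(r,s)}^{\max(r,s)}\beta(\tau)\m{\tau}$ shows equi-continuity and (combined with the convergence of the initial values) equi-boundedness on compact sets, so by Arzelà–Ascoli we may pass to a further subnet $\tau\in\mathcal{T}$ (refining $\tau_0$) with $\xi_{\tau(\varepsilon)}\to\zeta$ uniformly on compact subsets of $J$, $\zeta\in C(J)^n$; the same estimate passes to the limit and gives $\zeta\in AC(J)^n$ with $\zeta(t)=x$. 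This handles the uniform-on-compacta convergence claim for one pair of representatives; since changing representatives alters $\xi_{\varepsilon}$ only by a negligible net and $(\widetilde{t}_{\varepsilon},\widetilde{x}_{\varepsilon})$ only by a negligible net, the same $\zeta$ (and $(t,x)$) works for any representatives, after possibly composing with a further element of $\mathcal{T}$.

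Finally, for the integral inequality (\ref{colombeau_solution_pseudo_shadow}): starting from $\langle\xi_{\tau(\varepsilon)}(s),w\rangle=\langle\widetilde{x}_{\tau(\varepsilon)},w\rangle+\int_{\widetilde{t}_{\tau(\varepsilon)}}^{s}\langle A_{\tau(\varepsilon)}(\tau,\xi_{\tau(\varepsilon)}(\tau)),w\rangle\m{\tau}$, I would take $\limsup_{\varepsilon\to0}$, move it inside the integral using Fatou together with the domination $\langle A_{\tau(\varepsilon)}(\tau,\cdot),w\rangle\le\beta(\tau)|w|\in L^1_{\mathrm{loc}}$, and then invoke Theorem \ref{supporting_function_approximation_property} — which is exactly the statement that for $\xi_{\tau(\varepsilon)}\to\zeta$ uniformly on compacta one has $\limsup_{\varepsilon\to0}\langle A_{\tau(\varepsilon)}(s,\xi_{\tau(\varepsilon)}(s)),w\rangle\le H_A(s,\zeta(s),w)$ for a.e.\ $s$ — to conclude $\langle\zeta(s),w\rangle\le\langle x,w\rangle+\int_t^s H_A(\tau,\zeta(\tau),w)\m{\tau}$ for all $w\in\R^n$. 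The hypotheses of Theorem \ref{colombeau_supporting_function_proposition} and \ref{supporting_function_approximation_property} are met because $(A_{\varepsilon})_{\varepsilon}$ satisfies $\sup_{y}|A_{\varepsilon}(t,y)|\le\beta(t)$, so $H_A(t,\cdot,\cdot)$ exists for a.e.\ $t$.

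\textbf{Main obstacle.} The delicate point is the interchange of $\limsup$ with the time integral when the integration endpoints $\widetilde{t}_{\tau(\varepsilon)}$ themselves vary and may converge to a boundary point $t\in\overline{J}$: one must justify that $\int_{\widetilde{t}_{\tau(\varepsilon)}}^{s}\to\int_{t}^{s}$ uniformly enough (using absolute continuity of $\tau\mapsto\int\beta$) and that Fatou applies with the fixed dominating function $\beta|w|$ on a common compact time-interval. Handling the boundary behaviour of $J$ (open or half-open) in the passage to $\zeta\in AC(J)^n$ and in the definition of $H_A$ on a full-measure subset of $J$ requires care, but is otherwise routine given Theorems \ref{colombeau_supporting_function_proposition} and \ref{supporting_function_approximation_property}.
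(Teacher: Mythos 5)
Your proposal follows essentially the same route as the paper's proof: classical solvability at fixed $\varepsilon$, the uniform $\beta$-bound giving equi-boundedness and equi-continuity, Arzel\`a--Ascoli along a subnet (after extracting convergent initial data $(\widetilde{t}_{\tau(\varepsilon)},\widetilde{x}_{\tau(\varepsilon)})\to(t,x)$), absolute continuity of the limit $\zeta$, and then dominated convergence together with Theorem \ref{supporting_function_approximation_property} to obtain the integral inequality (\ref{colombeau_solution_pseudo_shadow}). The only point stated loosely is moderateness: the uniform $\beta$ controls $\xi_{\varepsilon}$ and its first derivative only, while the higher derivatives are handled, as in the paper, by the composition argument of \cite[Proposition 1.2.8]{GKOS:01} (the moderate net $A_{\varepsilon}$ composed with the c-bounded net $(s,\xi_{\varepsilon}(s))$), not by $\beta$ itself.
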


\begin{proof}
Let $(A_{\varepsilon})_{\varepsilon}$ be a representative of $A$ and let $(t_{\varepsilon}, x_{\varepsilon})_{\varepsilon}$ be a representative of $(\widetilde{t},\widetilde{x})$.
By classical existence and uniqueness we obtain a solution $\xi_{\varepsilon}$ for each $\varepsilon \in ]0,1]$ such that
\begin{eqnarray*}
\xi_{\varepsilon}(s) &=& x_{\varepsilon} + \int_{t_{\varepsilon}}^s A_{\varepsilon}(\tau,\xi_{\varepsilon}(\tau)) \m{\tau} 
\end{eqnarray*}
holds.  (\ref{colombeau_existence_bound}) yields $|\xi_{\varepsilon}(s)| \le |x_{\varepsilon}| + \int_{t_\varepsilon}^s \beta(\tau) \m{\tau}$ for all
$s\in J$, hence $(\xi_{\varepsilon})_{\varepsilon}$ is bounded on compact subsets  of $J$. 

Since $(t_{\varepsilon}, x_{\varepsilon})_{\varepsilon}$ is bounded we have that for all $s\in J$ there exists some $C>0$ such that $|\xi_{\varepsilon}(s)| \le C$ for all $\varepsilon \in ]0,1]$, thus $(\xi_{\varepsilon})_{\varepsilon}$ is an equi-bounded family. The estimate
\begin{eqnarray*}
|\xi_{\varepsilon}(s) - \xi_{\varepsilon}(r)| \le  \int_{\min{(r,s)}}^{\max{(r,s)}} \beta(\tau) \m{\tau} \quad s,r \in J, \varepsilon \in ]0,1].
\end{eqnarray*}
yields that the family $(\xi_{\varepsilon})_{\varepsilon}$  is  equi-continuous.

The theorem of Arzela-Ascoli yields a subnet $(\xi_{\tau(\varepsilon)})_{\varepsilon}$ with $\tau \in \mathcal{T}$ converging to some $\zeta \in C(J)^n$ uniformly on compact sets.
Without loss of generality we can assume that $(t_{\tau(\varepsilon)},x_{\tau(\varepsilon)})_{\varepsilon}$ 
converges to some $(t,x) \in \overline{J} \times \R^n$ with $\zeta(t)=x$.

We have for all $s,r\in J$,
\begin{multline*}
|\zeta(s) - \zeta(r)| \le |\zeta(s) - \xi_{ \tau(\varepsilon) }(s)|+ |\xi_{ \tau(\varepsilon) }(s)-\xi_{ \tau(\varepsilon) }(r)| +|\xi_{ \tau(\varepsilon) }(r) - \zeta(r)|  \\
\le |\zeta(s) - \xi_{ \tau(\varepsilon)  }(s)| +\int_{s}^{r} \beta(\tau) \m{\tau} +|\xi_{\tau(\varepsilon)} (r) - \zeta(r)| \stackrel{\varepsilon \rightarrow 0}{\rightarrow}\int_{s}^{r} \beta(\tau) \m{\tau} , 
\end{multline*}
hence $\zeta$ is absolutely continuous on $J$.

Note that the equi-continuity of $(\xi_{\varepsilon})_{\varepsilon}$ implies its c-boundedness. Then
the moderateness of $(\xi_{\varepsilon})_{\varepsilon}$ follows by
the same argument as in the proof of \cite{GKOS:01}[Proposition 1.2.8], since
the derivative $\dot{\xi_{\varepsilon}}(s)$ may be written as the composition of the moderate net $A_{\varepsilon}$
and the c-bounded net $(s, \xi_{\varepsilon}(s))$.

By Theorem \ref{colombeau_supporting_function_proposition} we obtain that
$A$ induces a supporting function satisfying (FC). By the theorem of dominated convergence
\cite[Theorem 12.24]{HewStr:65} and Theorem
\ref{supporting_function_approximation_property} it follows that 
\begin{multline*}
\langle \zeta(s), w \rangle = \lim_{j \rightarrow \infty} \langle \xi_{\tau(\varepsilon)}, w \rangle \le 
 \langle x,w\rangle + \limsup_{j \rightarrow \infty} \int_t^s A_{\tau(\varepsilon)}(\tau, \xi_{\tau(\varepsilon)}(\tau)) \m{\tau} \\
\le 
\langle x,w\rangle + \int_t^s \limsup_{j \rightarrow \infty}  A_{\tau(\varepsilon)}(\tau, \xi_{\tau(\varepsilon)}(\tau)) \m{\tau} \\
\le 
\langle x,w\rangle + \int_t^s H_{A}(\tau, \zeta(\tau), w ) \m{\tau} 
\end{multline*}
for all $w\in\R^n$.
\end{proof}

\begin{remark} \label{colombeau_solution_remark}
By Proposition \ref{generalized_graph_equicontinuous_proposition} the equi-continuity of 
$(\xi_{\varepsilon})_{\varepsilon}$ yields that
$$
\ggraph{\xi} = \bigcup_{s\in J} \{s\} \times \cp{(\xi_{\varepsilon}(s))_{\varepsilon}}.
$$
Then $\zeta \in AC(J)^n$ as in (\ref{colombeau_solution_pseudo_shadow})
is a selection of the set-valued map $s \mapsto \ggraph{\xi}_s$, i.e. $\zeta(s) \in \ggraph{\xi}_s$ for all $s\in J$.

If $(r,z) \in \ggraph{\xi}$ we can find some $\tau \in \mathcal{T}$ such that 
$\lim_{\varepsilon \rightarrow 0} \xi_{\tau(\varepsilon)}(r) = z$. 

Then the Theorem of Arzela-Ascoli yields that it is possible to
choose a subnet of $(\xi_{\tau(\varepsilon)})_{\varepsilon}$ 
converging uniformly on compact sets to some $\zeta \in AC(J)^n$ with $\zeta(r)=z$ and $\zeta(t)=x$.

This shows that the $\ggraph{\xi}$ is the union of $\graph{\zeta}$ of all possible $\zeta\in AC(J)^n$ as in
Theorem \ref{colombeau_existence_theorem}. We can reformulate
property (\ref{colombeau_solution_pseudo_shadow}) to obtain
\begin{equation} \label{hs_graph}
\ggraph{\xi}_s \subseteq \{x\}+ \int_t^s \ch{\ggraph{A} \circ ( \{\tau\} \times \ggraph{\xi}_{\tau} )} \m{\tau}
\end{equation}
where "$\circ$" denotes the composition of set-valued maps (cf. Theorem \ref{set_valued_composition_theorem})
and the integral is according to Definition \ref{set_valued_integral}.
\end{remark}

\begin{example}[Hurd-Sattinger characteristic flow] \label{hs_flow_example} \index{Hurd-Sattinger!characteristic flow}
Consider the ordinary differential equation: 
$$
\dot{\xi}(s) = \Theta(-\xi(s)) ,\quad \xi(t)=x,
$$
where $\Theta = [(\Theta_{\varepsilon})_{\varepsilon}]\in \cf{\R}$  is defined by $\Theta_{\varepsilon}(x) =  \rho_{\varepsilon} \ast H(-\cdot)$
with $\rho_{\varepsilon} = \frac{1}{\varepsilon}\rho (\cdot /  \varepsilon)$  
for some $\rho \in \mathcal{S}(\mathbb{R})$ with $\int \rho(x) \m{x}=1$. 

We put $g(x):= \int_x^{\infty} \rho(z) \m{z}$ and denote $\overline{\ran{g}} = [\alpha_{-},\alpha_{+}]$ for some $\alpha_{-},\alpha_{+} \in \R$.
Note that $\lim_{x \rightarrow -\infty}g(x) = 1$ and $\lim_{x \rightarrow \infty}g(x) = 0$ implies $\alpha_{-}\le 0$ and 
$\alpha_{+}\ge 1$. It holds that $\alpha_{-}=0$ and $\alpha_{+}=1$ if $\rho \ge 0$.

In our example we allow $\rho$ to have non-vanishing moments, 
so $\Theta$ is not necessarily the embedding of the Heaviside function.

Nevertheless $(\Theta_{\varepsilon})_{\varepsilon}$ converges to the Heaviside function in $L^1_{\rm loc}(\R)$
and since $\sup_{x\in\R} |\Theta_{\varepsilon}(-x)| < \sup_{x\in\R} |g(x/\varepsilon)| \le \alpha_+$ it satisfies
(\ref{colombeau_existence_bound}). 

Due to Theorem \ref{colombeau_existence_theorem} there exists a c-bounded Colombeau solution $\xi = [(\xi_{\varepsilon})_{\varepsilon}]$, such that
there exists a converging subnet with $\lim_{\varepsilon \rightarrow 0} \xi_{\tau(\varepsilon)} \rightarrow \zeta$
uniformly on compact set for some $\zeta \in \AC(\R)^n$. 

As in Example \ref{heaviside_ggraph_example} we obtain that the generalized graph of $\Theta$ is
$$
\ggraph{\Theta(-\cdot)} := \left(\{x\in\mathbb{R}\mid x<0\} \times \{1\}\right) \cup \left(\{0\} \times \overline{\ran{g}} \right) \cup \left(\{x\in\mathbb{R}\mid x>0\} \times \{0\}\right).
$$
Using its support function (Example \ref{supportfunction_heaviside_example}) the property 
(\ref{colombeau_solution_pseudo_shadow}) of $\zeta$ translates to 
\begin{eqnarray*}
x + \int_t^s 1_{\zeta(\tau) < 0}(\tau) \m{\tau} \le \zeta(s) \le x + \int_t^s 1_{\zeta(\tau) < 0}(\tau) \m{\tau}.
\end{eqnarray*}
Then it is straight-forward to derive that 
\begin{equation} \label{hs_shadow}
\zeta(s;t,x) \in \left\{
\begin{array}{ll}
\{\min{(x+ s-t,0)}\} & x < 0 \\
\{0\} & x = 0,  s \ge  t \\  
\left[s-t,0\right] & x = 0, s \le  t \\  
\{x\} & x > 0.
\end{array}\right.
\end{equation}
holds, which implies that $\zeta$ is forward unique. It follows that 
$$
\lim_{\varepsilon \rightarrow }\sup_{s\in K\cap[t,+\infty[} |\xi_{\varepsilon}(s)-\zeta(s)|=0
$$
for all $K \csub \R$. 

Note that the support function of $\ggraph{\Theta(-\cdot)}$ does not satisfy the forward uniqueness condition 
(\ref{set_valued_function_forward_uniqueness_condition}) if $\alpha_{+}>1$ or $\alpha_{-}<0$, nevertheless 
$\zeta$ is forward unique due to (\ref{hs_shadow}) regardless of the values $\alpha_{-},\alpha_{+}$.

By Remark \ref{colombeau_solution_remark} we obtain that the Colombeau solution $\xi$ of the Hurd-Sattinger equation
has the property that 
$$
\ggraph{\xi}_s \subseteq \{x\} + \int_t^s \ch{\ggraph{\Theta} \circ (\{\tau\} \times \ggraph{\xi}_\tau)} \m{\tau} =
\{x\} + \int_t^s 1_{\ggraph{\xi}_\tau<0} (\tau)  \m{\tau} 
$$
which in (\ref{hs_shadow}) implies
$$
\ggraph{\xi}_s \subseteq \left\{
\begin{array}{ll}
\{\min{(x+ s-t,0)}\} & x < 0 \\
\{0\} & x = 0,  s \ge  t \\  
\left[s-t,0\right] & x = 0, s \le  t \\  
\{x\} & x > 0.
\end{array}\right.
$$
\end{example}

\begin{theorem}[Forward uniqueness of the distributional shadow] \label{colombeau_uniqueness_theorem} \index{distributional shadow!forward uniqueness}
Let $A  \in \G(\overline{\Omega_T})^n$ with representative $(A_{\varepsilon})_{\varepsilon}$ 
satisfying (\ref{colombeau_existence_bound}) and in addition the forward uniqueness condition
\begin{eqnarray} \label{colombeau_forwarduniqueness_condition}
\langle A_{\varepsilon}(s,x)-A_{\varepsilon'}(s,y), x-y \rangle \le \alpha(s) |x-y|^2 + \gamma_{\varepsilon, \varepsilon'}(s)  
\end{eqnarray}
resp. the backward uniqueness condition
\begin{eqnarray} \label{colombeau_backwarduniqueness_condition}
\langle A_{\varepsilon}(s,x)-A_{\varepsilon'}(s,y), x-y \rangle \ge -\alpha(s) |x-y|^2 - \gamma_{\varepsilon, \varepsilon'}(s)  
\end{eqnarray}
holds for some positive functions $\alpha,\gamma_{\varepsilon, \varepsilon'}\in L^1([0,T])$ such that 
$\int_0^T \gamma_{\varepsilon, \varepsilon'}(\tau) \m{\tau} =O(\varepsilon+\varepsilon')$ as 
$\varepsilon,\varepsilon' \rightarrow 0$. 

Then the supporting function $H_A$ of the generalized graph $\ggraph{A(s,\cdot)}$ satisfies the forward uniqueness property
$$
H_A(s,x,x-y)+H_A(s,y,y-x) \le \alpha(s) |x-y|^2 
$$
resp.
$$ 
H_A(s,x,x-y)+H_A(s,y,y-x)\ge - \alpha(s) |x-y|^2).
$$
\end{theorem}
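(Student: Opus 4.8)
The plan is to express $H_A$ through the generalized graph and then insert the hypothesis~(\ref{colombeau_forwarduniqueness_condition}). By Theorem~\ref{colombeau_supporting_function_proposition} the bound~(\ref{colombeau_existence_bound}) ensures that the generalized graph $\ggraph{A(s,\cdot)}$ exists for almost all $s\in[0,T]$, that $\ggraph{A(s,\cdot)}_x$ is a non-empty compact subset of $B_{\beta(s)}(0)$, and that $w\mapsto H_A(s,x,w)=\sup_{a\in\ggraph{A(s,\cdot)}_x}\langle a,w\rangle$ is the convex supporting function of $\ch{\ggraph{A(s,\cdot)}_x}$. Fix $s$ outside the relevant null set and with $\alpha(s),\beta(s)<\infty$, and fix $x,y\in\R^n$. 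A linear functional attains its maximum over a compact set at a point of the set, so there are $a^\ast\in\ggraph{A(s,\cdot)}_x$ and $b^\ast\in\ggraph{A(s,\cdot)}_y$ with $H_A(s,x,x-y)=\langle a^\ast,x-y\rangle$ and $H_A(s,y,y-x)=\langle b^\ast,y-x\rangle$; moreover, since the target is the metric space $\R^n$, these clusterpoints may be realised along sequences: there are $\varepsilon_j\to0$, nets $x_{\varepsilon_j}\to x$, $y_{\varepsilon_j}\to y$ and $\tau,\sigma\in\mathcal{T}$ with $a^\ast=\lim_j A_{\tau(\varepsilon_j)}(s,x_{\varepsilon_j})$ and $b^\ast=\lim_j A_{\sigma(\varepsilon_j)}(s,y_{\varepsilon_j})$.

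Next I would use the splitting
\[
\langle A_{\tau(\varepsilon_j)}(s,x_{\varepsilon_j})-A_{\sigma(\varepsilon_j)}(s,y_{\varepsilon_j}),\,x-y\rangle
=\langle A_{\tau(\varepsilon_j)}(s,x_{\varepsilon_j})-A_{\sigma(\varepsilon_j)}(s,y_{\varepsilon_j}),\,x_{\varepsilon_j}-y_{\varepsilon_j}\rangle
+\langle A_{\tau(\varepsilon_j)}(s,x_{\varepsilon_j})-A_{\sigma(\varepsilon_j)}(s,y_{\varepsilon_j}),\,(x-x_{\varepsilon_j})-(y-y_{\varepsilon_j})\rangle,
\]
bounding the first term on the right by $\alpha(s)|x_{\varepsilon_j}-y_{\varepsilon_j}|^2+\gamma_{\tau(\varepsilon_j),\sigma(\varepsilon_j)}(s)$ via~(\ref{colombeau_forwarduniqueness_condition}) applied at the points $(x_{\varepsilon_j},y_{\varepsilon_j})$, and the second term by $2\beta(s)\bigl(|x-x_{\varepsilon_j}|+|y-y_{\varepsilon_j}|\bigr)$ via~(\ref{colombeau_existence_bound}). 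Passing to the limit $j\to\infty$, the left-hand side tends to $\langle a^\ast-b^\ast,x-y\rangle=H_A(s,x,x-y)+H_A(s,y,y-x)$, the quantity $\alpha(s)|x_{\varepsilon_j}-y_{\varepsilon_j}|^2$ tends to $\alpha(s)|x-y|^2$, and the cross term tends to $0$, whence $H_A(s,x,x-y)+H_A(s,y,y-x)\le\alpha(s)|x-y|^2+\limsup_{j\to\infty}\gamma_{\tau(\varepsilon_j),\sigma(\varepsilon_j)}(s)$.

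The main obstacle is to dispose of the residual error $\limsup_j\gamma_{\tau(\varepsilon_j),\sigma(\varepsilon_j)}(s)$, which the hypothesis controls only in the integrated sense $\int_0^T\gamma_{\varepsilon,\varepsilon'}(\tau)\m{\tau}=O(\varepsilon+\varepsilon')$, i.e.\ $\gamma_{\tau(\varepsilon_j),\sigma(\varepsilon_j)}\to0$ in $L^1([0,T])$ only. I would handle this by a routine measure-theoretic refinement: run the above construction simultaneously over a countable dense set $D\subseteq\R^n\times\R^n$ of pairs $(x,y)$, for each such pair pass to a further subsequence of $(\varepsilon_j)_j$ along which $\gamma_{\tau(\varepsilon_j),\sigma(\varepsilon_j)}\to0$ almost everywhere (such a subsequence still realises $a^\ast,b^\ast$ as sequential limits) and along which~(\ref{colombeau_forwarduniqueness_condition}) holds a.e., and intersect the countably many resulting co-null sets to obtain a single co-null $S\subseteq[0,T]$ on which the inequality holds for every $(x,y)\in D$. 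Since the set-valued map $x\mapsto\ggraph{A(s,\cdot)}_x$ is upper semi-continuous and locally bounded (Theorem~\ref{colombeau_supporting_function_proposition}), the function $(x,y)\mapsto H_A(s,x,x-y)+H_A(s,y,y-x)$ is upper semi-continuous while the right-hand side is continuous, so for $s\in S$ the inequality extends from $D$ to all $x,y\in\R^n$. The backward uniqueness statement is obtained by the same computation with all inequalities reversed.
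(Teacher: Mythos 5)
Your core computation is exactly the paper's proof: $H_A(s,x,x-y)$ and $H_A(s,y,y-x)$ are realised as limits of $\langle A_{\tau(\varepsilon_j)}(s,x_{\varepsilon_j}),x-y\rangle$ and $\langle A_{\sigma(\varepsilon_j)}(s,y_{\varepsilon_j}),y-x\rangle$ along subnets (the paper invokes the proof of Proposition~\ref{generalized_graph_supporting_function_proposition} for this), the pairing is shifted from $x-y$ to $x_{\varepsilon_j}-y_{\varepsilon_j}$ at the cost of $\beta(s)\bigl(|x-x_{\varepsilon_j}|+|y-y_{\varepsilon_j}|\bigr)$ via (\ref{colombeau_existence_bound}), and (\ref{colombeau_forwarduniqueness_condition}) is applied at the approximating points. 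You are also right that everything then hinges on the residual term $\limsup_j\gamma_{\tau(\varepsilon_j),\sigma(\varepsilon_j)}(s)$: the paper's own proof disposes of it by simply letting it vanish inside the limit, i.e.\ it tacitly uses pointwise (in $s$) smallness of $\gamma$ along the chosen subnets, which the stated hypothesis $\int_0^T\gamma_{\varepsilon,\varepsilon'}(\tau)\m{\tau}=O(\varepsilon+\varepsilon')$ does not give. So up to this point your attempt is, if anything, more explicit than the paper about where the difficulty sits.

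The measure-theoretic refinement you propose does not, however, close this gap, because the quantifiers are circular: the maps $\tau,\sigma$ and the nets $x_{\varepsilon_j},y_{\varepsilon_j}$ are constructed only after the time $s$ has been fixed (they realise suprema over $\ggraph{A(s,\cdot)}_x$ and $\ggraph{A(s,\cdot)}_y$ at that particular $s$), so the co-null subset of $[0,T]$ on which $\gamma_{\tau(\varepsilon_{j_k}),\sigma(\varepsilon_{j_k})}\to 0$ along a further subsequence itself depends on $s$, and nothing guarantees that the fixed $s$ belongs to it; intersecting over a countable dense set of pairs $(x,y)$ removes the dependence on $(x,y)$ but not the dependence on $s$. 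Moreover the obstruction is not merely technical: with $n=1$ take $A_\varepsilon(t,x)=\phi_\varepsilon(t)h(x)$, where $h$ is smooth, bounded, increasing with $h(1)>0$ and $\phi_\varepsilon$ is a smoothed indicator of an interval of length $\varepsilon$ which, as $\varepsilon$ runs through each dyadic block $]2^{-k-1},2^{-k}]$, sweeps once across $[0,T]$. One checks that (\ref{colombeau_existence_bound}) holds and that (\ref{colombeau_forwarduniqueness_condition}) holds with a constant $\alpha$ and $\gamma_{\varepsilon,\varepsilon'}(s)=C\,|\phi_\varepsilon(s)-\phi_{\varepsilon'}(s)|$, whose integral is $O(\varepsilon+\varepsilon')$; yet for every $s$ one has $\sup_{\mu\le\delta}\phi_\mu(s)=1$ and $\inf_{\mu\le\delta}\phi_\mu(s)=0$ for all $\delta$, hence $H_A(s,x,x-y)+H_A(s,y,y-x)\ge h(x)(x-y)>\alpha|x-y|^2$ whenever $x-y>0$ is small. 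Thus no subsequence extraction from the $L^1$-bound alone can yield the claimed pointwise inequality; what is really needed at this step is an additional pointwise (a.e.\ in $s$) control of $\gamma_{\varepsilon,\varepsilon'}(s)$ along the maximizing subnets, which is precisely what the paper's proof uses implicitly.
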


\begin{proof}
By the proof of Theorem \ref{generalized_graph_supporting_function_proposition} we can write
$$
H_A(s,x,x-y) = \lim_{\varepsilon \rightarrow 0} 
\langle A_{\tau_1(\varepsilon)}(x_{\varepsilon}), x-y \rangle
$$
and 
$$
H_A(s,y,y-x) = \lim_{\varepsilon \rightarrow 0} \langle 
A_{\tau_2(\varepsilon)}(y_{\varepsilon}), y-x \rangle
$$
for fixed $x,y \in \R^n$ and
for some $\tau_1, \tau_2 \in\mathcal{T}$ and nets 
$(x_{\varepsilon})_{\varepsilon},(y_{\varepsilon})_{\varepsilon}$
with $x_{\varepsilon} \rightarrow x$, $y_{\varepsilon} \rightarrow y$. 
For almost all $s\in J$ and all $x,y \in \R^n$ we have
\begin{multline*}
H_A(s,x,x-y)+H_A(s,y,y-x) =  \lim_{\varepsilon \rightarrow 0}  
(\langle A_{\tau_1(\varepsilon)}(s,x_{\varepsilon}), x-y \rangle + \langle A_{\tau_2(\varepsilon)}(s,y_{\varepsilon}), y-x \rangle) 
\\
\le
 \lim_{\varepsilon \rightarrow 0}  
(\langle A_{\tau_1(\varepsilon)}(s,x_{\varepsilon}), x_{\varepsilon} - y_{\varepsilon} \rangle 
- \langle A_{\tau_2(\varepsilon)}(s,y_{\varepsilon}),x_{\varepsilon} - y_{\varepsilon} \rangle 
+\beta(s)  ( |x- x_{\varepsilon}|+ |y- y_{\varepsilon}|) ) 
\\
\le
  \lim_{\varepsilon \rightarrow 0} (\alpha(s) |x_{\varepsilon} - y_{\varepsilon}|^2 +
\beta(s)  (|x- x_{\varepsilon}|+ |y- y_{\varepsilon}|)  
+ \gamma_{\tau_1(\varepsilon), \tau_2(\varepsilon)}(t) )  
\le
\alpha(t) |x - y|^2.
\end{multline*}
The proof for the backward uniqueness condition is analogous.
\end{proof}

\begin{remark}
By means of Theorems \ref{colombeau_uniqueness_theorem} it immediately follows that the 'sub-shadow' $\zeta$
of the Colombeau solution $\xi = [(\xi_{\varepsilon})_{\varepsilon}] \in  \cf{J}^n$ (as obtained by Theorem \ref{colombeau_existence_theorem}), is forward unique resp. backward unique if
the right-hand side $A:=[(A_{\varepsilon})_{\varepsilon}] \in \colmap{J \times \R^n}{\R^n}$
has the property that the supporting function of $\ggraph{A(t,\cdot)}$ satisfies the forward uniqueness condition \ref{colombeau_forwarduniqueness_condition}
resp. the backward uniqueness condition \ref{colombeau_backwarduniqueness_condition}.
\end{remark}

\begin{example}\label{colombeau_solution_uniqueness_example}
Let $\widetilde{a}\in {L^1(\R, L^{\infty}(\mathbb{R}^n))}^n$ 
satisfy the classical forward uniqueness condition 
\begin{eqnarray} \label{limit_solution_forward_uniqueness_example}
\langle \widetilde{a}(t,x) - \widetilde{a}(t,y), x-y  \rangle \le 
\widetilde{\alpha}(t) |x-y|^2 \ \ \text{for almost all}\ (t,x), (t,y) \in \R^{n+1}
\end{eqnarray} 
for some positive $\widetilde{\alpha} \in L^1(\R)$.

Assume $\rho \in\mathcal{D}(\mathbb{R}^{n+1})$ positive, $\int_{\mathbb{R}^n} \rho(z) \m{z} = 1$ and define $\rho_{\varepsilon}(t,x):=\varepsilon^{-n-1} \rho(t /\varepsilon,x/\varepsilon)$.  
Consider the Colombeau function $A:=[(A_{\varepsilon})_{\varepsilon}] \in \colmap{\R^{n+1}}{\R^n}$ defined by
\begin{eqnarray*} 
A_{\varepsilon} := \widetilde{a} \ast \rho_{\varepsilon},
\end{eqnarray*}
then $[(A_{\varepsilon})_{\varepsilon}]$ satisfies the condition (\ref{colombeau_existence_bound}) for the existence 
theorem and the forward uniqueness condition (\ref{colombeau_forwarduniqueness_condition}).
\end{example}

\begin{proof}
 Since $\widetilde{a} \in L^{1}(\mathbb{R}, L^{\infty}(\mathbb{R}^n))^n$ it holds that for almost all $t \in \mathbb{R}$, we have
 $\widetilde{\beta}(t):= \sup_{z\in \mathbb{R}^n} |\widetilde{a}(t,z)|$ where $\widetilde{\beta}$ is some positive function in $L^1(\mathbb{R})$. 
 First, observe that
 \begin{multline} \label{colombeau_example_1}
  \sup_{z\in \mathbb{R}^n}  |A_{\varepsilon}(t, z)|   =   
  \sup_{z \in \mathbb{R}^n} \left|\int_{\mathbb{R}}\int_{\mathbb{R}^n}   a(t-\varepsilon \tau, z-\varepsilon y) \rho(\tau,y) \m{y}  \m{\tau}\right|  \\
  \le  \int_{\mathbb{R}} \widetilde{\beta}(t-\varepsilon \tau)   
\left(  \int_{\mathbb{R}^n} \rho(\tau,y) \m{y} \right) \m{\tau} 
 \le \beta(t) \ \ \text{for almost all}\  t\in\R
 \end{multline}
 where $\beta(t):= \sup_{\varepsilon\in]0,1]}\int_{\mathbb{R}} \widetilde{\beta}(t-\varepsilon s) \left(  \int_{\mathbb{R}^n} \rho(\tau,y) \m{y} \right) \m{\tau}$ is a positive function in $L^1(\R)$ and set $\varphi(s) := \int_{\mathbb{R}^n} \rho(s,y) \m{y} $. 
Hence $A_{\varepsilon}(t,x)$ satisfies (\ref{colombeau_existence_bound}).
 Using (\ref{limit_solution_forward_uniqueness_example}) 
and writing $A_{\varepsilon}$ explicitly as convolution integral, a 
 straight-forward calculation and estimation gives
 \begin{multline} \label{expression_colombeau_forwarduniqueness}
 \langle A_{\varepsilon}(t,x) - A_{\varepsilon'}(t,y), x-y\rangle \\ \le 2 \widetilde{\alpha}(t)  |x-y|^2  + 2 C_1 \widetilde{\alpha}(t) (\varepsilon+\varepsilon')^2 + 2 C_2  \sup_{z\in\mathbb{R}^n}{|\widetilde{a}(t,z)|} (\varepsilon+\varepsilon') + r_{\varepsilon, \varepsilon'}(t) \frac{1}{2}(1+|x-y|^2)
 \end{multline}
 where $C_1, C_2 $ are positive constants and 
$r_{\varepsilon,\varepsilon'}(t) := \lambda_{\varepsilon}(t)-\lambda_{\varepsilon'}(t)$ with
 \begin{eqnarray*}
 \lambda_{\varepsilon}(t) = \frac{1}{\varepsilon|x-y|} \int_{\mathbb{R}^n} \int_{\mathbb{R}} \langle \widetilde{a}(s, x-\varepsilon z) - \widetilde{a}(t,x-\varepsilon z) , x-y \rangle \rho(t-s/\varepsilon,z)  \m{s} \m{z} .
 \end{eqnarray*} 
 One easily verifies that
 \begin{eqnarray} \label{expression_lambda_estimate}
 \int_0^T \lambda_{\varepsilon}(t) \m{t} = O(\varepsilon),
 \end{eqnarray}
 hence $\int_{0}^T r_{\varepsilon,\varepsilon'}(\tau) \m{\tau} 
\le \int_{0}^T \lambda_{\varepsilon}(\tau) \m{\tau}+
\int_{0}^T \lambda_{\varepsilon'}(\tau) \m{\tau} = O(\varepsilon+\varepsilon')$. 
  
  The constants $C_1$ and $C_2$ are derived 
  from the estimates
  \begin{eqnarray*}
  \int \int |\varepsilon z-\varepsilon' z'|^2 \phi(z) \phi(z') \m{z} \m{z'} 
  &\le& \int \int ((\varepsilon z)^2 -2 \varepsilon \varepsilon' \langle z',  z \rangle +(\varepsilon' z')^2)  \phi(z) \phi(z') \m{z} \m{z'} \\
  &\le&
  \int \int ((\varepsilon z)^2 + \varepsilon \varepsilon' ((z')^2+ (z)^2) +(\varepsilon' z')^2)  \phi(z) \phi(z') \m{z} \m{z'}\\
  &=&
   2(\varepsilon +\varepsilon')^2  \int z^2 \rho(z) \m{z} =:  2(\varepsilon +\varepsilon')^2  C_1
  \end{eqnarray*}
  and 
  \begin{eqnarray*}
  \int \int |\varepsilon z-\varepsilon' z'| \phi(z) \phi(z') \m{z} \m{z'} 
  &\le& \int \int (|\varepsilon z| +|\varepsilon' z'|)  \phi(z) \phi(z') \m{z} \m{z'} \\
  &\le& (\varepsilon +\varepsilon')  \int |z| \rho(z) \m{z}  =:  (\varepsilon +\varepsilon')  C_2.
  \end{eqnarray*}
  Finally we can put (\ref{expression_colombeau_forwarduniqueness}) and (\ref{expression_lambda_estimate}) 
together to obtain
  \begin{multline*}
  \langle A_{\varepsilon}(t,x) - A_{\varepsilon'}(t,y), x-y\rangle \\ \le
  (2 \widetilde{\alpha}(t)+ \frac{1}{2}\gamma_{\varepsilon, \varepsilon'}(t)) |x-y|^2  + 2 C_1 \widetilde{\alpha}(t) (\varepsilon+\varepsilon')^2 + 2 C_2  \sup_{z\in\mathbb{R}^n}{|\widetilde{a}(t,z)|} (\varepsilon+\varepsilon') + \frac{1}{2} \gamma_{\varepsilon, \varepsilon'}(t) 
  \end{multline*}
 Setting
 \begin{eqnarray*}
 \alpha(t) &:=& (2 \widetilde{\alpha}(t)+ \frac{1}{2} \sup_{\varepsilon, \varepsilon' \in ]0,1]}r_{\varepsilon, \varepsilon'}(t))\\
 \gamma_{\varepsilon, \varepsilon'}(t)&:=& 2 C_1 \widetilde{\alpha}(t) (\varepsilon+\varepsilon')^2 + 2 C_2  \sup_{z\in\mathbb{R}^n}{|\widetilde{a}(t,z)|} (\varepsilon+\varepsilon') + \frac{1}{2} r_{\varepsilon, \varepsilon'}(t). 
 \end{eqnarray*}
 we obtain that $A_{\varepsilon}(t,x)$ satisfies (\ref{colombeau_forwarduniqueness_condition}). 
So the requirements for Theorem 
 \ref{colombeau_existence_theorem} and \ref{colombeau_uniqueness_theorem} are fulfilled.
\end{proof}

\chapter{A comparative study of solution concepts for first order hyperbolic partial differential equations with non-smooth coefficients}

This chapter contains a slightly adapted version of the joint (with G\"unther H\"ormann) article \cite{HalHoer:08}.

According to Hurd and Sattinger in \cite{HS:68}
the issue of a systematic investigation of hyperbolic partial differential equations with discontinuous coefficients as a research topic has been raised by Gelfand in 1959. Here, we attempt a comparative study of some of the theories on that subject which have been put forward since. More precisely, we focus on techniques and concepts that build either on the geometric picture of propagation along \emph{characteristics} or on the functional analytic aspects of \emph{energy estimates}. 

In order to produce a set-up which makes the various methods comparable at all, we had to stay with the special situation of a \emph{scalar partial differential equation with real coefficients}. As a consequence, for example, we do not give full justice to theories whose strengths lie in the application to systems rather than to a single equation. 
A further limitation in our choices comes from the restriction to concepts, hypotheses and mathematical structures which (we were able to) directly relate to distribution theoretic or measure theoretic notions.

To illustrate the basic problem in a simplified lower dimensional situation for a linear conservation law, we consider the following formal differential equation for a density function (or distribution, or generalized function) $u$ depending on time $t$ and spatial position $x$ 
$$
   \d_t u(t,x) + \d_x(a(t,x) u(t,x)) = 0.
$$
Here, $a$ is supposed to be a \emph{real} function (or distribution, or generalized function) and the derivatives shall be interpreted in the distributional or weak sense. This requires either to clarify the meaning of the product $a \cdot u$ or to avoid the strict meaning of ``being a solution''.  

An enormous progress has been made in research on nonlinear conservation laws (cf., e.g.\ \cite{Hoermander:97,BGS:07} and references therein)  of the form
$$
  \d_t u(t,x) + \d_x( g(u(t,x)) ) = 0,  
$$
where $g$ is a (sufficiently) smooth function and $u$ is such that $g(u)$ can be defined in a suitable Banach space of distributions. Note however, that this equation  does not include linear operators of the form described above as long as the nonlinearity $g$ does not include
additional dependence on $(t,x)$ as independent variables (i.e., is not of the more general form $g(t,x,u(t,x))$). Therefore the theories for linear equations described in the present paper are typically not mere corollaries of the nonlinear theories.
Essentially for the same reason we have also not included methods based on Young measures (cf.\ \cite[Chapter V]{Hoermander:97}).

Further omissions in our current paper concern hyperbolic equations of second order. For advanced theories on these we refer to the energy method developed by Colombini-Lerner in \cite{CL:95}. An overview and illustration of non-solvability or non-uniqueness effects with wave equations and remedies using Gevrey classes can be found in \cite{Spagnolo:87}.

Of course, also the case of first-order equations formally ``of principal type'' with non-smooth complex coefficients is of great interest. It seems that the borderline between solvability and non-solvability is essentially around Lipschitz continuity of the coefficients (cf.\ \cite{Jacobowitz:92,Hounie:91,HounieMelo:95}). Moreover, the question of uniqueness of solutions in the first-order case has been addressed at impressive depth in \cite{CL:02}.

Our descriptive tour with examples consists of two parts: Section 4.1 describes concepts and theories extending the classical method of characteristics, while Section 4.2 is devoted to theories built on energy estimates. All but two of the theories or results (namely, in Subsections 4.1.3 and 4.2.3.2) we discuss and summarize are not ours. However, we have put some effort into unifying the language and the set-up, took care to find as simple as possible examples which are still capable of distinguishing certain features, and have occasionally streamlined or refined the original or well-known paths in certain details. 

In more detail, Subsection 4.1.1 starts with Caratheodory's theory of generalized solutions to first-order systems of (nonlinear) ordinary differential equations and adds a more distribution theoretic view to it. In Subsection 4.1.2 we present the generalization in terms of Filippov flows and the application to transport equations according to Poupaud-Rascle. Subsection 4.1.3 provides a further generalization of the characteristic flow as Colombeau generalized map with nice compatibility properties when compared to the Filippov flow. In Subsection 4.1.4 we highlight some aspects or examples of semigroups of operators on Banach spaces stemming from underlying generalized characteristic flows on the space-time domain. We also describe a slightly exotic concept involving the measure theoretic adjustment of coefficients to prescribed characteristics  for $(1+1)$-dimensional equations according to Bouchut-James in Subsection 4.1.5.

Subsection 4.2.1 presents a derivation of energy estimates under very low regularity assumptions on the coefficients and also discusses at some length the functional analytic machinery to produce a solution and a related weak solution concept for the Cauchy problem. Subsection 4.2.2 then compares those three theories, namely by Hurd-Sattinger, Di Perna-Lions, and Lafon-Oberguggenberger,  which are based on regularization techniques combined with energy estimates. Finally, Subsection 4.2.3 briefly describes two related results obtained by paradifferential calculus, the first concerning energy estimates and the solution of the Cauchy problem for a restricted class of operators, the second is a method to reduce equations to equivalent ones with improved regularity of the source term.

As it turns out in summary, none of the solution concepts for the hyperbolic partial differential equation is contained in any of the others in a strict logical sense. However, there is one feature of the Colombeau theoretic approach: it is always possible to model the coefficients and initial data considered in any of the other theories (by suitable convolution regularization) in such a way that the corresponding Cauchy problem becomes uniquely solvable in Colombeau's generalized function algebra.  In many cases the Colombeau generalized solution can be shown to have the appropriate distributional aspect in the sense of heuristically reasonable solution candidates.

\section{Solution concepts based on the characteristic flow}

In this section we introduce solution concepts for first order partial differential equations,
which are based on solving the system of ordinary differential equations for the characteristics and using the resulting characteristic flow to define a solution.

To illustrate the basic notions we consider the following special case of the Cauchy problem in conservative form
$$
  L u :=\partial_t u + \sum_{k=1}^n \d_{x_k}( a_k(t,x)  u) = 0,
  \quad u(0) = u_0 \in\mathcal{D}'(\mathbb{R}^n),
$$ 
where the coefficients $a_k$ are real-valued bounded smooth functions. 
The associated system of ordinary differential equations for the characteristic curves reads
\begin{eqnarray*}
  \dot{\xi_k}(s) = a_k(s, \xi(s)) , \ \ \xi_k(t)=x_k \qquad 
  (k=1,\ldots,n).
\end{eqnarray*}
We use the notation $\xi(s;t,x) = (\xi_1(s;t,x),... ,\xi_n(s;t,x))$, where the variables after the semicolon indicate the initial conditions $x= (x_1, ..., x_n)$ at $t$.
We define the smooth characteristic forward flow
$$ 
    \chi: [0,T] \times \mathbb{R}^n \rightarrow \mathbb{R}^n. 
    \quad  (s,x) \mapsto \xi(s;0,x)
$$
Note that $\chi$ satisfies the relation ($d_x$ denoting the Jacobian with respect to the $x$ variables)
$$
  \d_t \chi(t,x) =  d_x \chi(t,x) \cdot a(t,x) \qquad 
    \forall (t,x) \in [0,T] \times \R^n,
$$
which follows upon differentiation of the characteristic differential equations and the initial data with respect to $t$ and $x_k$ ($k=1,\ldots,n$).
Using this relation a straightforward calculation shows that the distributional solution $u  \in  C^\infty([0,T];\mathcal{D}'(\mathbb{R}^{n}))$ to 
$$
  L u  = 0 , \quad u(0) = u_0 \in\mathcal{D}'(\mathbb{R}^n)
$$
is given by
$$
  \dis{u(t)}{\psi} := \dis{u_0}{\psi(\chi(t,.))} \qquad 
    \forall \psi\in \D(\R^n), 0 \leq t \leq T.
$$
If there is a further zero order term $b \cdot u$ in the differential operator $L$, then the above solution formula is modified by an additional factor involving $b$ and $\chi$ accordingly.

In a physical interpretation the characteristic curves correspond to the trajectories of point particles. This provides an idea for introducing a generalized solution concept when the partial differential operator has non-smooth coefficients:
As long as a continuous flow can be defined, the right-hand side in the above definition of $u$ is still meaningful when we 
assume $u_0 \in \mathcal{D}^{\prime 0}(\mathbb{R}^n)$. The distribution $u$ defined in such a way belongs to $AC([0,T]; \mathcal{D}^{'0}(\mathbb{R}^{n}))$ and will be called a \emph{measure solution}\index{measure solution}.

This approach is not limited to classical solutions of the characteristic system of ordinary differential equations, but can be extended to  more general solution concepts in ODE theory (for example, solutions in the sense of Filippov). Although such a generalized solution will lose the property of solving the partial differential equation in a distributional sense it is a useful generalization with regard to the physical picture.

\subsection{Caratheodory theory}

Let $T>0$ and $\Omega_T = ]0,T[ \times \R^n$. 
Classical Caratheodory theory (cf. Section \ref{caratheodory_solution_concept}) requires the coefficient $a=(a_1, ... , a_n)$ to satisfy
\begin{enumerate}
\item $a(t,x)$ is continuous in $x$ for almost all $t \in [0,T]$,
\item $a(t,x)$ is measurable in $t$ for all fixed $x\in \mathbb{R}^n$ and
\item $\sup_{x\in\mathbb{R}^n} |a(t,x)| \le \beta(t)$ almost everywhere for some positive function $\beta \in L^1([0,T])$.
\end{enumerate}
Then the existence of an absolutely continuous characteristic curve $\xi=(\xi_1, ..., \xi_n)$, which fulfills the ODE almost everywhere, is guaranteed.
Note that the first two Caratheodory conditions ensure Lebesgue measurability of the composition $s\mapsto a(s,f(s))$ for all $f\in {AC}([0,T])^n$, while the
third condition is crucial in the existence proof.\\
A sufficient condition for forward uniqueness of the characteristic system 
is the existence of a positive $\alpha \in L^1([0,T])$,
such that ($\langle . , .\rangle$ denoting the standard inner product on $\mathbb{R}^n$)
\begin{eqnarray*}
  \langle a(t,x) - a(t,y) , x-y \rangle \le \alpha(t) |x-y|^2 
\end{eqnarray*}
for almost all $(t,x), (t,y) \in \ovl{\Omega_T}$ (cf.\ \cite[Theorem 3.2.2]{AgarLak:93}). As well-known from classical ODE theory, forward uniqueness of the characteristic curves yields a continuous forward flow
$$
  \chi: [0,T] \times \mathbb{R}^n \rightarrow \mathbb{R}^n.
  \quad (s,x) \mapsto  \xi(s;0,x)
$$
It is a proper map and for fixed time $\chi(t,.)$ is onto. For the sake of simplicity we assume $a \in C([0,T] \times \mathbb{R}^n)^n$
and $b \in C([0,T] \times \mathbb{R}^n)$.

Let
$$
h_b(t,x) := \exp{ \left( -\int_0^t  b(\tau,\chi(\tau,x)) \m{\tau} \right) },
$$
then $u\in \mathcal{D}^{'}(\Omega_T)$ defined by 
\begin{eqnarray} \label{carflowsol}
\langle u, \varphi \rangle_{\mathcal{D}^{'}(\Omega_T)}  := \int_0^T \langle u_0 , \varphi(t,\chi(t,\cdot)) h_b(t,\cdot) \rangle_{\mathcal{D}^{'0}(\mathbb{R}^n)} \m{t}
\end{eqnarray}
(note that $u$ can be regarded as element in $AC([0,T];\mathcal{D}^{'0}(\mathbb{R}^n))$, so the restriction $u(0)$ is well-defined and equal to $u_0\in\mathcal{D}^{'0}(\mathbb{R}^n)$)
solves the initial value problem 
$$
  Lu:= \partial_t u + 
  \sum_{k=1}^n  \partial_{x_k} (a_k \cdot u) +b u = 0, \ \ u(0)=u_0 
$$
on $\Omega_T$, where $a_k \cdot u$ and $b \cdot u$ denotes the distributional product defined by
\begin{eqnarray*}
\cdot: C(\Omega_T)  \times \mathcal{D}^{'0}(\Omega_T) &\rightarrow& \mathcal{D}^{'0}(\Omega_T)\\
  		(f,u) & \mapsto & (\varphi \mapsto \langle u, f \cdot \varphi \rangle_{\mathcal{D}^{'0}(\Omega_T)}).  
\end{eqnarray*}

Applying $L$ on $u$ we obtain
\begin{multline*}
\langle L u, \varphi \rangle_{\mathcal{D}^{'}(\Omega_T)} = \langle u, -\partial_t \varphi - \sum_{k=1}^n a_k \partial_{x_k} \varphi + b \varphi \rangle_{\mathcal{D}^{'}(\Omega_T)} \\
=
 \int_0^T \langle u_0 , (-\partial_t \varphi - \sum_{k=1}^n a_k \partial_{x_k} \varphi + b \varphi)(t,\chi(t,\cdot)) h_b(t,\cdot) \rangle_{\mathcal{D}^{'0}(\mathbb{R}^n)} \m{t}.
\end{multline*}
Set $\phi(t,x):= \varphi(t,\chi(t,x))$ and $\psi(t,x) := \phi(t,x) \cdot h_b(t,x)$, then we have
$$
\partial_t \phi(t,x) = \partial_t \varphi(t,\chi(t,x)) = (\partial_t \varphi + \sum_{k=1}^n a_k(t,x) \partial_{x_k} \varphi)(t,\chi(t,x)),
$$ and
\begin{multline*}
\partial_t \psi(t,x)  = \partial_t \phi(t,x) h_b(t,x) + \phi(t,x) \partial_t h_b(t,x)  \\
= 
(\partial_t \varphi + \sum_{k=1}^n a_k(t,x) \partial_{x_k} \varphi)(t,\chi(t,x)) \cdot  h_b(t,x)- \varphi(t,\chi(t,x)) b(t,\chi(t,x)) h_b(t,x) \\
=(\partial_t \varphi + \sum_{k=1}^n a_k(t,x) \partial_{x_k} \varphi - b \varphi)(t,\chi(t,x)) \cdot h_b(t,x),  
\end{multline*}
thus
$$
\langle L u, \varphi \rangle_{\mathcal{D}^{'}(\Omega_T)} = -\int_0^T \langle u_0 , \partial_t \psi(t,\cdot) \rangle_{\mathcal{D}^{'0}(\mathbb{R}^n)} \m{t} = 
-\int_0^T \partial_t \langle u_0 ,  \psi(t,\cdot) \rangle_{\mathcal{D}^{'0}(\mathbb{R}^n)} \m{t} =0.
$$
for all $\varphi \in \mathcal{D}(\Omega_T)$. The initial condition $u(0)=u_0$ is satisfied, since $\chi(0,x)=x$, thus $h_b(0,x)=1$.

\begin{remark}
In this sense, we can obtain a distributional solution for the Cauchy problem 
\begin{eqnarray*}
P v:= \partial_t v +\sum_{k=1}^n a_k \partial_{x_k} v + c v = 0, \ \ v(0)=v_0,
\end{eqnarray*}
whenever $a \in C([0,T]\times \mathbb{R}^n)^n$ and $c \in \mathcal{D}'([0,T] \times\mathbb{R}^n)$, such that $-{\rm div}(a) + c \in C([0,T] \times \mathbb{R}^n)$ and
$v_0 \in \mathcal{D}^{'0}(\mathbb{R}^n)$. We simply set $b:= -{\rm div}(a) + c$ and construct the solution as above. In other words, such a solution
solves the equation in a generalized sense, relying on the definition of the action of $Q:=\sum_{k=1}^n a_k \partial_k +c$ on a distribution of order $0$ by
\begin{eqnarray*}
\langle Q v, \varphi \rangle_{\mathcal{D}^{'}(\Omega_T)} := -\langle v , \sum_{k=1}^n a_k \partial_{x_k} \varphi \rangle_{\mathcal{D}^{'0}(\Omega_T)} - \langle v, (-{\rm div}(a) +c) \varphi \rangle_{\mathcal{D}^{'0}(\Omega_T)}. 
\end{eqnarray*}
In case where ${\rm div}(a)$ and $c$ are both continuous, we can define the operator $Q$ classically by using the product $\cdot:
\mathcal{D}^{'0}(\Omega_T) \times C(\Omega_T) \rightarrow \mathcal{D}^{'0}(\Omega_T) $ as above.
\end{remark}

\subsection{Filippov generalized characteristic flow}

As we have seen in the previous subsection, forward unique characteristics give rise to a continuous forward flow. But
in order to solve the characteristic differential equation in the sense of Caratheodory, we needed continuity of the coefficient $a$ in the space variables for almost all $t$. 
In case of more general coefficients $a\in L^1_{\rm loc}(\mathbb{R}, L^{\infty}(\mathbb{R}^n))^n$ we can employ
the notion of Filippov characteristics, which replaces the ordinary system of differential equations by a system of differential inclusions 
(cf. \ref{filippov_solution_concept}). 
The generalized solutions are still absolutely continuous functions.
Again, the forward-uniqueness condition on the coefficient $a$ 
\begin{eqnarray} 
\langle a(s,x) - a(s,y) , x-y \rangle \le \alpha(s) |x-y|^2,\quad x,y\in \R^n 
\end{eqnarray}
for almost all $s\in \R$ yields unique solutions in the Filippov generalized sense. Then the generated Filippov flow is again continuous and will
enable us to define measure-valued solutions of the PDE (cf. \cite{PoupaudRascle:97}), as before.

In the Filippov solution concept the coefficient is replaced by a set-valued function $A:(t,x) \rightarrow A_{t,x}$ where $A_{t,x}$ are non-empty, closed and convex subsets of $\R^n$. It has to satisfy some basic properties (FC) (as introduced in Section \ref{filippov_conditions}) which imply by means of Theorem \ref{filippov_existence_theorem} the solvability of the resulting system of differential inclusions 
\begin{eqnarray*}
\dot{\xi}(s) \in A_{s, \xi(s)}, \ a.e., \ \ \xi(t)=x,
\end{eqnarray*} 
with $\xi \in AC([0,\infty[)^n$. These basic conditions (cf. section \ref{differential_inclusion_equation_section}) are
\begin{enumerate} \label{filippov_conditions_2}
\item $s \mapsto A_{s,x}$ is Lebesgue measurable on $\R$ for all fixed $x \in \mathbb{R}^n$,
\item $x \mapsto A_{s,x}$ is upper semi-continuous for almost all $s \in \R$,        
\item there exists a positive function $\beta \in L_{\rm loc}^1(\R)$ such that $\sup_{x\in\mathbb{R}^n} |A_{s,x}| \le \beta(s)$ almost everywhere.
\end{enumerate}

There are several ways to obtain such a set-valued function $A$ from a coefficient $a\in L^1([0,T]; L^{\infty}(\mathbb{R}^n) )^n$, such that
the classical theory is extended in a compatible way, i.e. the set-valued function $A$ should fulfill $A_{t_0,x_0}:=\{a(t_0,x_0)\}$ whenever $a$ is continuous at $(t_0,x_0)\in [0,\infty[ \times \mathbb{R}^n$.

One way obtaining a set-valued function corresponding to a $a \in L^{1}_{\rm loc}(\mathbb{R}; L^{\infty}(\mathbb{R}^n))^{n}$ is
by means of the essential convex hull $\ech{a}$.
According to Definition \ref{essential_convex_hull} its supporting function is
$$
H_a(t,x, w) = \lim_{\delta \rightarrow 0} \esup_{y \in B_{\delta}(x)}{\langle a(t,y), w \rangle}
$$
for almost all $t\in J$ and all $x,w \in \R^n$, so 
$$
\ech{a}_{t,x} = \{a\in \R^n \mid \forall w\in\R^n: \langle a,w \rangle \le H_a(t,x,w) \}.
$$


\subsubsection{Measure solutions according to Poupaud-Rascle} \label{section_poupaud_rascle}

Let $\Omega_{\infty} := \,]0,\infty[ \times \mathbb{R}^n$.
We assume $a \in L^1_{\rm loc}(\mathbb{R}_{+}; L^{\infty}(\mathbb{R}^n))^n$ to be a coefficient satisfying the forward uniqueness criterion \eqref{forward_uniqueness_condition}.
Let $L u := \partial_t u + \sum_{i=1}^n \partial_{x_i} (a_i u)$ and 
$\xi$ be the forward unique solution to
\begin{eqnarray}
\dot{\xi}(s) & \in & \ech{a}_{s, \xi(s)}, \ \ \xi(t) = x.
\end{eqnarray}
 The map
$$
    \chi: \mathbb{R}_{+} \times \mathbb{R}^n \rightarrow 
    \mathbb{R}^n, \quad (t,x) \mapsto \xi(t;0,x)
$$
is the continuous Filippov (forward) flow.

\begin{definition}[Solution concept according to Poupaud-Rascle]
Let $u_0\in \mathcal{M}_b(\mathbb{R})^n$ be a bounded Borel measure, then the image measure at $t\in[0,\infty[$ is 
\begin{eqnarray} \label{filflowsol}
u(t)(B) :=  \int_{\mathbb{R}^n} 1_{B}(\chi(t,x)) \m{u_0(x)}, 
\end{eqnarray}
where $B\subseteq \mathbb{R}^n$ is some Borel set. The map $u \col [0,\infty[ \to \mathcal{M}_b(\mathbb{R}^n))$ belongs to $C([0,\infty[; \mathcal{M}_b(\mathbb{R}^n))$ and is called
a measure solution in the sense of \emph{Poupaud-Rascle}\index{Poupaud-Rascle!solution concept of} of the initial value problem
\begin{equation} \label{pdo2}
L u:= \partial_t u + \sum_{k=1}^n \partial_{x_k} (a_k \cdot u) = 0, \ \ u(0) = u_0.
\end{equation}
Note that $u$ defines a distribution of order $0$ in $\mathcal{D}'(\Omega_{\infty})$ by
\begin{eqnarray*}
\langle u, \varphi \rangle_{\mathcal{D}'(\Omega_{\infty})} := \int_0^{\infty} \langle u_0,  \varphi(t,\chi(t,x)) \rangle_{\mathcal{D}^{'0}(\mathbb{R}^n)} \m{t}, \quad \quad \forall \varphi\in \mathcal{D}'(\Omega_{\infty}). 
\end{eqnarray*}
\end{definition}

The solution concept of Poupaud-Rascle does not directly solve the partial differential equation in a distributional sense,
but it still reflects the physical picture of a "transport process" as imposed by
the properties of the Filippov characteristics. Nevertheless, in the cited paper of Poupaud-Rascle(\cite{PoupaudRascle:97}) the authors present an a posteriori definition of the particular product $a\cdot u$, which restore the validity of the PDE in a somewhat artificial way. We investigate this in the sequel in some detail.

\begin{definition}[A posteriori definition of a distributional product in the sense of Poupaud-Rascle]
Let $u\in \mathcal{D}^{\prime}(\mathbb{R}^n)$ be a distribution of order $0$ and $a\in {L^1_{\rm loc}([0,\infty[, L^{\infty}(\mathbb{R}^n))}^n$,
satisfying the forward uniqueness condition (\ref{forward_uniqueness_condition}), such that there exists a continuous Filippov flow $\chi$. Furthermore we assume that $u$ is a generalized solution of the initial value problem
as defined in (\ref{filflowsol}). Then we define the product $a \bullet u =(a_k\cdot u)_k$ in $\mathcal{D}'(]0,\infty[\times \mathbb{R}^n)^n$ by
\begin{eqnarray*}
\langle a \bullet u, \varphi \rangle_{\mathcal{D}'(\Omega_{\infty})} &:=& \langle u_0, \int_0^{\infty} \partial_t \chi(t,x) \varphi(t, \chi(t,x) ) \m{t} \rangle_{\mathcal{D}^{'0}(\mathbb{R}^n)}, \ \ \varphi \in \mathcal{D}(\Omega_{\infty}).
\end{eqnarray*}
\end{definition}

\begin{remark}
Note that the product $a\cdot u$ is defined only for distributions $u$ that are generalized solutions (according to Poupaud-Rascle) of the initial value problem 
(\ref{pdo2}) with the coefficient $a$. The domain of the product map $(a,u) \mapsto a\bullet u$, as subspace of $\mathcal{D}^{'0}(\R^n) \times \mathcal{D}^{'0}(\R^n)$  has a complicated structure: Just note that the property to generate a continuous characteristic Filippov flow $\chi$
is not conserved when the sign of the coefficient $a$ changes, as we have seen for the coefficient $a(x)={\rm sign}(x)$.
\end{remark}

\begin{example} \label{example_poupaud_rascle_1}
Consider problem (\ref{pdo2}) with the coefficient $a(x):= -{\rm sign}(x)$ subject to the initial condition $u_0= 1$. Then the continuous Filippov flow
 is given by 
$$
 \chi(t,x) = -(t+x)_{-} H(-x) + (x-t)_{+} H(x).
$$
We have $\chi(t,0)= t_{+}- (-t)_{-} = 0$
and
\begin{eqnarray*}
\partial_t \chi(t,x) = - H(-t-x) H(-x) - H(x-t) H(x) \ \ \text{for almost all } \ t\in[0, \infty[. 
\end{eqnarray*}
The generalized solution $u$ is defined by $\langle u, \varphi \rangle: = \int_0^{\infty} \langle u_0, \phi(t, x) \rangle \m{t}$,
where $\phi(t,x) := \varphi(t, \chi(t,x))$. We have that
\begin{eqnarray*}
\phi(t,x) := \left\{  \begin{array}{ll}
\varphi(t,x+t) & x\le 0, 0\le t \le -x \\
\varphi(t,0) & t \ge |x| \\
\varphi(t,x-t) & x\ge 0, 0\le t \le x ,\\
\end{array}
\right. 
\end{eqnarray*}
thus 
\begin{multline*}
\langle u, \varphi \rangle_{\mathcal{D}^{'0}(\Omega_\infty)} :=\int_0^{\infty} \langle u_0, \phi(t, x) \rangle \m{t}
= \int_0^{\infty} \int_{-\infty}^{\infty}  \phi(t, x) \m{x} \m{t} \\
= 
2 \int_0^{\infty} \varphi(t, 0) t \m{t}  +  \int_0^{\infty} \int_{-\infty}^{-t} \varphi(t,x+t) \m{x} \m{t} +
\int_0^{\infty} \int_{t}^{\infty} \varphi(t,x-t)  \m{x}\m{t} \\
= 2 \int_0^{\infty} \varphi(t, 0) t \m{t}  +  \int_0^{\infty} \int_{-\infty}^{0} \varphi(t,z) \m{z} \m{t} +
\int_0^{\infty} \int_{0}^{\infty} \varphi(t,z)  \m{z}\m{t} 
= \langle 1+ 2t \delta, \varphi(t,\cdot) \rangle_{\mathcal{D}^{'0}(\mathbb{R}^n)} 
\end{multline*}
This generalized solution gives rise to the following product
\begin{eqnarray*}
\langle (-{\rm sign}(x)) \bullet (1+2t \delta(x)), \varphi \rangle_{\mathcal{D}'(\Omega_{\infty})} &:=&
 \langle 1 , \int_0^\infty  \partial_t \xi(t,x) \varphi(t, \xi(t,x)) \m{t} \rangle_{\mathcal{D}^{'0}(\mathbb{R}^n)}
\end{eqnarray*}
in $\mathcal{D}'(\Omega_{\infty})$. Evaluating the right-hand side we obtain 
\begin{multline*}
\langle 1 , \int_0^\infty  \partial_t \chi(t,x) \varphi(t, \chi(t,x)) \m{t} \rangle 
=
\int_{-\infty}^{\infty}  \left( -\int_{0}^{\infty} H(-x)H(-x-t) \varphi(t, x+t) \m{t} \right. \\
\left.
-\int_{0}^{\infty} H(x)H(x-t) \varphi(t, x-t) \m{t} \right) \m{x}.
\end{multline*}
Since $H(-x)H(-x-t) =H(-x-t)$ and $H(x) H(x-t) = H(x-t)$ for $t\ge 0$ the latter gives upon substitution
\begin{eqnarray*}
 \langle 1 , \int_0^\infty  \partial_t \chi(t,x) \varphi(t, \chi(t,x)) \m{t} \rangle 
&=& -\int_{-\infty}^{\infty} {\rm sign}(z)  \int_{0}^{\infty} \varphi(t, z) \m{t}  \m{z},
\end{eqnarray*}
hence
\begin{eqnarray*}
(-{\rm sign}(x)) \bullet (1+2t \delta(x)) = -{\rm sign}(x).
\end{eqnarray*}
However, we cannot define the product if  $-{\rm sign}(x)$ is replaced by $+{\rm sign}(x)$, since
the Filippov characteristics $\xi(t;0,x)$ are no longer forward unique and thus do not generate a continuous  Filippov flow $\chi$.
\end{example}

\begin{example}
We consider the same coefficient $a(x):=- {\rm sign}(x)$ as before, but now we set $u_0:=\delta$. We obtain the generalized solution
\begin{eqnarray*}
\langle u, \varphi \rangle_{\mathcal{D}^{'}(\Omega_{\infty})} &:=&  {\langle 1 \otimes \delta, \varphi(t, \chi(t,x)) \rangle}_{\mathcal{D}^{'}(\Omega_{\infty})} = \int_0^\infty  \varphi(t, \chi(t,0))  \m{t} 
\end{eqnarray*}
This enables us to calculate the product
\begin{eqnarray*}
&&\langle (- {\rm sign}(x)) \bullet \delta(x),\varphi \rangle = -\langle \delta, \int_0^{\infty} \partial_t \chi(t,x) \varphi(t, \xi(t,x)) \m{t} \rangle.
\end{eqnarray*}
Putting $\psi(x)=\int_0^{\infty} \partial_t \xi(t,x) \varphi(t, \xi(t,x)) \m{t}$ and observe that
\begin{eqnarray*}
\psi(x) &:=&\int_0^{\infty} \partial_t \chi(t,x)  \varphi(t, \chi(t,x)) \m{t} = 
\int_0^{-x} \varphi(t, x+t ) \m{t} , \ \text{if}\ x<0
\end{eqnarray*}
and
\begin{eqnarray*}
\psi(x) &=&\int_0^{\infty} \partial_t \chi(t,x)  \varphi(t, \chi(t,x)) \m{t} = 
-\int_0^{x}  \varphi(t,x-t) \m{t},\ \text{if}\ x>0.
\end{eqnarray*}
At $x=0$ we obtain $\psi(0)=\lim_{x\rightarrow 0_{-}} \psi(x)=\lim_{x\rightarrow 0_{+}} \psi(x)=0$, so it follows that
$(- {\rm sign}) \bullet \delta = 0$.
\end{example}

\begin{example}\label{test1}
Let $a(t,x):= 2 H(-x)$, so that the Filippov flow is given by
\begin{eqnarray*}
\chi(t,x) = -(x+2 t)_{-} H(-x) + x H(x). 
\end{eqnarray*}
We have $\chi(t,0)= -2t_{-} =0$ and 
\begin{eqnarray*}
\partial_t \chi(t,x):=2 H(-x-2t) H(-x).
\end{eqnarray*}
Hence $\partial_t \chi(t,0) = 0$ for almost all $t \in [0, \infty[$.
If $u_0=1$ the generalized solution is
\begin{eqnarray*}
\langle u, \varphi \rangle_{\mathcal{D}^{'0}(\Omega_{\infty})} := \int_0^{\infty} \int_{-\infty}^{\infty} \phi(t, x) \m{x} \m{t}, 
\end{eqnarray*}
where $\phi(t,x) = \varphi(t, \chi(t,x))$. Since
\begin{eqnarray*}
\phi(t,x)\mid_{\{x< -2t\}} &=& \varphi(t, x+2t) \\
\phi(t,x)\mid_{\{-2t \le x \le 0 \}} &=& \varphi(t, 0) \\
\phi(t,x)\mid_{\{0 < x   \}} &=& \varphi(t, x) ,
\end{eqnarray*}
we obtain
\begin{multline*}
\langle u, \varphi \rangle_{\mathcal{D}^{'0}(\Omega_{\infty})} 
= \int_0^{\infty}
 \left( \int_{-\infty}^{-2t} \varphi(t, x+2t) \m{x}  + 2t \varphi(t, 0)  + \int^{\infty}_{0} \varphi(t, x) \m{x} \right) \m{t} 
= \langle 1+ t \delta, \varphi(t,\cdot) \rangle_{\mathcal{D}^{'0}(\mathbb{R}^n)},
\end{multline*}

hence $u= 1+2 t \delta(x)$.
Again we determine the product $(2H(-x)) \bullet  (1+2 t \delta(x))$ by
\begin{multline*}
\langle 2 H(-x) \bullet  (1+2t \delta(x)), \varphi \rangle_{\mathcal{D}'(\Omega_{\infty})} = 
2 \int_{-\infty}^{\infty} \int_0^{\infty} H(-x) H(-x-2t) \varphi(t, x+2t) \m{t} \m{x} \\
=
2 \int_{-\infty}^{\infty} \int_0^{\infty}  H(-x-2t) \varphi(t, x+2t) \m{t} \m{x} 
= 2 \int_0^{\infty} \int_{-\infty}^{\infty} H(-z) \varphi(t,z)  \m{z} \m{t} = \langle 1\otimes 2 H(-\cdot), \varphi \rangle_{\mathcal{D}'(\Omega_{\infty})}.
\end{multline*}
We obtain $(2H(-x)) \bullet  (1+2 t \delta(x)) = 2 H(-x)$. 
Observe that together with the result in Example (\ref{example_poupaud_rascle_1}) $(-{\rm sign}(x)) \bullet (1+2 t \delta(x))=(2 H(-x) - 1 ) \bullet (1+2 t \delta(x))$ we 
can conclude that either $(-1) \bullet  (1+2 t \delta(x)) $ is not defined or the product $\bullet$ is not distributive.
In fact , it is not difficult to see that $(-1) \bullet  (1+2 t \delta(x)) $ cannot be defined in this way, neither can $1 \bullet  (1+2 t \delta(x))$.
\end{example}

\begin{example}[Generalization of Example \ref{example_poupaud_rascle_1}] \label{poupaud_rascle_jump} 
Let $c_1 \ge c_2$ be two constants, and $\alpha \in [c_1, c_2]$.
Consider the $a(t,x):=c_1 H(\alpha t- x) + c_2 H(x- \alpha t)$. We set $t_1(x):= \frac{-x}{c_1-\alpha}$ if $x<0$ and $t_2(x):= \frac{x}{\alpha-c_2}$  for $x>0$. 
The unique Filippov flow is given by
\begin{eqnarray*}
\chi(t,x) =\left\{ 
\begin{array}{ll}
c_1 t + x  &  x < 0, t < t_1(x)\\
\alpha t & x<0, t\ge t_1(x) \\
\alpha t & x=0, \\
c_2 t +x & x>0, t\le t_2(x) \\
 \alpha t & x>0 , t\ge t_2(x)
\end{array}\right.
\end{eqnarray*}
The generalized solution of the initial value problem $Lu:=\partial_t u + \partial_x (a \cdot u)=0, \ \ u(0)=u_0\in L^1_{\rm loc}(\mathbb{R})$,
according to Poupaud-Rascle is given by 
\begin{multline*}
\langle u, \varphi\rangle_{\mathcal{D}^{'0}(\Omega_T)}=
\int_0^T \langle u_0, \varphi(t, \chi(t,\cdot)) \rangle_{\mathcal{D}^{'0}(\mathbb{R})} \m{t} = \int_{-\infty}^0 \int_{0}^{t_1(x)} u_0(x)\varphi(t,c_1 t+x ) \m{t} \m{x} \\
+
\int_{-\infty}^0 \int_{t_1(x)}^{T} u_0(x)\varphi(t,\alpha t ) \m{t} \m{x} + 
\int_{0}^{\infty} \int_{0}^{t_2(x)} u_0(x)\varphi(t,c_2 t+x ) \m{t} \m{x} 
+
\int_{0}^{\infty} \int_{t_2(x)}^{T} u_0(x) \varphi(t,\alpha t ) \m{t} \m{x}  \\
=
  \int_{0}^{T}\int_{-\infty}^{-t(c_1-\alpha)} u_0(x)\varphi(t,c_1 t+x )  \m{x} \m{t} 
+
\int_{0}^{T}  \int_{-t(c_2 -\alpha)}^{\infty} u_0(x)\varphi(t,c_2 t+x )  \m{x} \m{t} \\ +  \int_{0}^T \left(\int_{-t(c_1-\alpha)}^{t(\alpha-c_2)} u_0(x)  \m{x} \right) \varphi(t,\alpha t ) \m{t},
\end{multline*}
hence 
\begin{eqnarray*}
u:= u_0(x-c_1 t) H(\alpha t-x) + u_0(x-c_2 t) H(x-\alpha t) + \left(\int_{-t(c_1-\alpha)}^{t(\alpha-c_2)} u_0(x) \m{x}\right) \delta(x-\alpha t).
\end{eqnarray*}
\end{example}

\subsection{Semi-groups defined by characteristic flows}

Let $X$ be a Banach space and $(\Sigma_t)_{t\in [0,\infty[}$ be a family of bounded operators $\Sigma_t$ on $X$.
Consider the following conditions: 
\begin{enumerate}
\item $\Sigma_0 = {\rm id}$ 
\item $\Sigma_s \circ \Sigma_t = \Sigma_{s+t}$ for all $s,t \in [0,\infty[$ and 
\item the orbit maps 
\begin{eqnarray*}
\sigma_{u_0}: [0,\infty[ &\rightarrow& X \\ 
t &\mapsto& \Sigma_t(u_0)
\end{eqnarray*}
are continuous for every $u_0 \in X$. 
\end{enumerate}
If (i) and (ii) are satisfied, then we call $(\Sigma_t)$ a \emph{semi-group}\index{semi-group} acting on $X$.
If  in addition property (iii) holds, we say  $(\Sigma_t)_{t\in [0,\infty[}$ is a semi-group of type $C_0$.

We briefly investigate how the solution concepts discussed in Subsections 1.1 and 1.2 fit into the picture
of semi-group theory when the coefficient $a$ is time-independent. First we return to the classical Caratheodory case: 
Let $a\in C(\mathbb{R}^n)^n$ and assume that $a$ suffices the forward uniqueness condition (\ref{forward_uniqueness_condition}).
This implies that the characteristic flow $\chi : \overline{\Omega_{\infty}} \rightarrow \mathbb{R}^n$ is continuous and $\chi(t,\cdot)$ is onto $\mathbb{R}^n$ for fixed $t\in[0,\infty[$. Furthermore we have $\chi(s,\chi(r,x)) = \chi(s+r,x)$ for all $ x \in \mathbb{R}^n$ and $r,s \in [0,T] $ with $s+r \in [0,T]$, since $a$ is time independent.

Consider the inital value problem $P u = \partial_t + \sum_{k=1}^n a_k \partial_{x_k} u =0$ with initial condition
$u(0) = u_0\in C_{0}(\mathbb{R}^n)$ (i.e. vanishes at infinity). It is easy to verify that
\begin{eqnarray*}
\Sigma_t : C_0(\mathbb{R}^n) &\rightarrow& C_0(\mathbb{R}^n)\\
           u_0 &\mapsto& \chi^{\ast} u_0 
\end{eqnarray*}
defines $C_0$ semigroup on the Banach space $C_0(\overline{\Omega_{\infty}})$:
Note that $\Sigma_t$ is a bounded operator on $C_0(\mathbb{R}^n)$ for each $t\in [0,\infty[$, as $\chi(t,\mathbb{R}^n) = \mathbb{R}^n$, so
\begin{eqnarray*}
\| \Sigma_t (u_0) \|_{\infty} &=& \sup_{x\in \mathbb{R}^n} \|u_0(\chi(t,x)) \| = \sup_{x\in \mathbb{R}^n} \|u_0(\chi(t,x)) \| = \sup_{x\in \mathbb{R}^n} \|u_0(x)\| =\|u_0\|_{\infty}.
\end{eqnarray*}
We have that $\| \Sigma_t  \| = 1$ for all $t\in[0,\infty[$.
Condition (i) and (ii) follow directly from the flow properties of $\chi$. The continuity condition (iii), which is equivalent to
\begin{eqnarray*}
\lim_{t  \rightarrow 0_{+}}\| \Sigma_t(u_0) - u_0\|_{\infty} = \lim_{t  \rightarrow 0_{+}} \sup_{x\in \mathbb{R}^n} \|u_0(\chi(t,x)) - u_0(x)\| = 0,
\end{eqnarray*}
holds, since $(\chi(t,x))_{x\in\mathbb{R}^n}$ is an equicontinuous family and $u_0$ vanishes at infinity.

\begin{remark}
For a coefficient $a$ in $L^{\infty}(\mathbb{R}^n)^n$ we can also define a semi-group on $C_0(\mathbb{R}^n)$  
by $\Sigma_t(u_0):= u_0(\chi(t,x))$, where $\chi$ is the generalized Filippov flow as introduced earlier. This is due to the fact,
that the Filippov flow has almost the same properties as the Caratheodory flow. 
\end{remark}

It seems natural to understand the  solution concepts as defined by (\ref{carflowsol}) and (\ref{filflowsol})
as action of the dual semigroup $(\Sigma_t^{\ast})$ on 
\begin{eqnarray*}
 \langle u(t), \varphi \rangle_{\mathcal{D}^{'0}(\mathbb{R}^n)} = \langle \Sigma_t^{\ast} u_0, \varphi \rangle_{\mathcal{D}^{'0}(\mathbb{R}^n)} 
=\langle u_0, \Sigma_t (\varphi) \rangle_{\mathcal{D}^{'0}(\mathbb{R}^n)}. 
\end{eqnarray*}
 the Banach space of finite complex Radon measures, the dual space of $C_0(\mathbb{R}^n)$ 
(cf. \cite[Chapter 4]{Davies:80}, \cite[Chapter 1.10]{Pazy:83} or \cite[Chapter IX.13]{Yosida:80} for the general setting).
However, in general the dual semi-group is not of class $C_0$ (cf. \cite[Example 1.31]{Davies:80}). This is only guaranteed if we start from a $C_0$ semi-group  defined on a reflexive Banach space. 

Nevertheless the solution concepts in (\ref{carflowsol}) and (\ref{filflowsol}) still yield the semi group properties (i) and (ii) with 
weak-$\ast$ continuity replacing the strong continuity property (iii).

The situation is much easier with Hilbert spaces, of course. We conclude with an example involving a discontinuous coefficient.


\begin{example} \label{semigroup_L2_example}
Let $a\in L^{\infty}(\mathbb{R})$ such that there exist $c_0,c_1>0$ such that
$c_1 < a(x) < c_2$ almost everywhere. We want to solve the initial value problem
\begin{eqnarray*}
Pu=\partial_t u + a(x) \partial_x u =0, \ \ u(0)=u_0 \in L^2(\mathbb{R})
\end{eqnarray*}
for $u \in AC([0,T]; L^2(\mathbb{R})) \cap L^1([0,T]; H^1(\mathbb{R}))$.

Let $A(x) = \int_{0}^{x} a(y)^{-1} \m{y}$, which is Lipschitz continuous and strictly increasing (thus globally invertible) 
and observe that $\chi(t,x) = A^{-1}(t+A(x))$ defines the (forward)characteristic flow that solves
$$
\chi(t,x) = x + \int_0^{t} a(\chi(\tau,x)) \m{\tau}.
$$
Let $Q:= - a(x) \partial_x$ with domain $D(Q):=  H^1(\mathbb{R})$.
The resolvent of $Q$ for $\Re(\mu) > 0$ is obtained from the equation
\begin{eqnarray*}
(-Q + \mu ) v = f, \ \ f\in L^2(\mathbb{R}).
\end{eqnarray*}
Upon division by  $a$ we deduce
\begin{eqnarray} \label{characteristic_equation}
\partial_x v +\frac{\mu}{a}{v} = \frac{f}{a}.
\end{eqnarray}

Let us first consider uniqueness: Let  $w \in H^1(\R)$ satisfy
\begin{equation}  \label{semigroup_L2_homogenous}
\partial_x w + \frac{\mu}{a} w = 0.
\end{equation}
Since $w$ is absolutely continuous we have
$$
w(x) = C \exp{\left(- 2 {\rm Re}(\mu) \int_{-\infty}^{x} \frac{1}{a(z) }  \m{z} \right)}
$$
for some constant $C$. But $w \in L^2(\mathbb{R})$ if and only if $C=0$, thus $w = 0$.
%
%
%

Existence:
One easily verifies that
\begin{multline*}
v(x)=(R(\mu) f)(x) := \int_{-\infty}^{x} \exp{\left(- \mu \int_{y}^x a(z)^{-1} \m{z} \right)} \frac{f(y)}{a(y)} \m{y}  
= \int_{-\infty}^{x} \exp{(- \mu (A(x)-A(y)) )} \frac{f(y)}{a(y)} \m{y}    
\end{multline*} 
is a solution of (\ref{characteristic_equation}) in $AC(\mathbb{R})$. 
Upon substitution $y \mapsto z = A(x) - A(y)$ in the right-most integral we obtain
\begin{eqnarray} \label{laplace_transform}
(R(\mu) f)(x) = \int_{0}^{\infty} \exp{(-\mu z)} f(\chi(-z,x))  \m{z},
\end{eqnarray}
which is the Laplace transform of $f(\chi(-.,x))$.

We denote the kernel of the integral operator $R(\mu)$ by
\begin{eqnarray*}
M(x,y) &:=& H(x-y) \exp{(- \mu (A(x)-A(y))} a(y)^{-1}.
\end{eqnarray*} 
We briefly sketch the derivation of $L^2$ estimates for the operator powers $R(\mu)^k$ for ${\rm Re}(\mu)>0$:
Note that $R(\mu)^k$ is an iterated integral operator of the form
\begin{multline*}
 R(\mu)^k f (x) :=R(\mu)^{k-1} \left(\int_{\mathbb{R}} M(\cdot,z_1) f(z_1) \m{z_1} \right) (x)\\= 
 \int_{\mathbb{R}} \cdots  \int_{\mathbb{R}} M(x, z_{k}) M(z_k, z_{k-1}) \ldots M(z_2,z_1) f(z_1) \m{z_{k-1}} \ldots \m{z_2} \m{z_1}. 
\end{multline*}
To simplify notation let $z=(z_1, \dots, z_k)$, ${\mathrm d}^k z = \m{z_1} \dots \m{z_k}$, $h(z):=\exp{ (-\mu \sum_{l=1}^k z_l)}$, and 
$g(x,z): = f(\chi(-\sum_{l=1}^k z_l, x))$. Using the flow property of $\chi$ we obtain that
\begin{eqnarray*}
R(\mu)^k f (x) = \int_{[0, \infty[^k} h(z) g(z,x) \,{\mathrm d}^k z 
\end{eqnarray*}
holds, hence by by the integral Minkowski inequality
\begin{multline} \label{semigroup_L2_existence}
  \|R(\mu)^k f \|_{L^2} \leq \left( 
  \int_{\mathbb{R}}\left(\int_{[0, \infty[^k} |h(z)| |g(z,x)| \,{\mathrm d}^k z \right)^2  \m{x} \right)^{1/2}\\
  \leq 
 \int_{[0, \infty[^k}  |h(z)|     
\left(\int_{\mathbb{R}}  |g(z,x)|^2  \m{x} \right)^{1/2} \,{\mathrm d}^k z.
\end{multline}
Since
$$
  \int_{\mathbb{R}}  |g(z,x)|^2 \m{x}  
  = \int_{\mathbb{R}} |f(\chi (-\sum_{l=1}^k z_l, x) )|^2  \m{x}
    = \int_{\mathbb{R}} |f(y)|^2  
    \left|\frac{a( y) }{a(\chi (\sum_{l=1}^k z_l, y ))}\right| \m{y} 
\le 
\frac{c_1}{c_0} \, \| f\|_{L^2}^2,
$$
we conclude
$$
  \| R(\mu)^k f \|_{L^2} \le  
  \sqrt{\frac{c_1}{c_0}}\cdot \| f\|_{L^2} \cdot
  \int_{]-\infty,0]^k}  |h(z)|    
 \, {\mathrm d}^k z = 
   \frac{\sqrt{\frac{c_1}{c_0}}}{{\rm Re(\mu)}^{k}}\cdot \| f\|_{L^2} . 
$$
The  Hille-Yosida theorem 
(\cite[Theorem 5.2]{Pazy:83})
yields that $Q$ generates the $C_0$ semigroup 
\begin{eqnarray*}
 \Sigma_t : L^2(\mathbb{R}) &\rightarrow& L^2(\mathbb{R})\\
     	u_0 &\mapsto& u_0(\chi(-t,x)).
\end{eqnarray*}
The resolvent operator $\mu \mapsto R(\mu)$ (defined for ${\rm Re}(\mu)> 0$) is the Laplace transform of the semigroup $t \rightarrow  \Sigma_t$
as indicated in (\ref{laplace_transform}). 

\end{example}

\begin{remark}
Since $L^2(\mathbb{R}^n)$ is reflexive the dual semigroup is $C_0$ as well and has as its generator the adjoint operator $Q^*$.
\end{remark}

\begin{remark}
If we assume additional regularity on the coefficient, e.g.\ $a\in C^{\sigma}_{\ast}(\mathbb{R})$ with $\sigma>0$,  in Example \ref{semigroup_L2_example}, then we obtain a $C_0$ semigroup $(\Sigma_t)$ acting on the
Hilbert space $H^s(\mathbb{R})$ with $0 \leq s < \sigma$. We may then use the fact that the (square of the) Sobolev norm  $\norm{v}{s}^2$  is equivalent to the following expression  (cf.\ \cite[Equation (7.9.4)]{Hoermander:V1})
$$
   \int |v(x)|^2 \m{x} + C_s \int \int \frac{|v(x) - v(y)|^2}{|x-y|^{n + 2 s}} \m{x} \m{y},
$$
where the constant $C_s$ depends only on the dimension $n$ and $s$. 
From this it can be shown that we may have $D(Q)=H^{s+1}(\mathbb{R})$ as domain of $Q$ (this also corresponds to the special case of the mapping properties stated in \cite[Chapter 2]{Taylor:91}).
Clearly, uniqueness in the characteristic equation (\ref{semigroup_L2_homogenous}) is still valid. A corresponding variant of the estimate (\ref{semigroup_L2_existence}) for the powers of the resolvent operator $R(\mu)^k$ on $H^s(\mathbb{R})$ is obtained by  the following calculation (with the notation $h$ and $g$ as in Example \ref{semigroup_L2_example}):
\begin{multline*}
 \|R(\mu)^k f \|_{s}^2 = \|R(\mu)^k f \|_{0}^2 +   
  \int_{\mathbb{R}} \int_\mathbb{R} \frac{|(R(\mu)^k f)(x)-(R(\mu)^k f)(y)|^2}{|x-y|^{1+2s}} \m{x} \m{y} \\
  \le   
 \frac{\frac{c_1}{c_0}}{{\rm Re(\mu)}^{2k}}\cdot \| f\|_{0}^2  +
\int_{\mathbb{R}} \int_\mathbb{R} \left(\, \int_{[0,\infty[^k} |h(z)| \frac{|g(x,z) - g(y,z)|}{|x-y|^{\frac{1+2s}{2}}} \m{z} \right)^2  \m{x} \m{y} 
\\
  \le \frac{\frac{c_1}{c_0}}{{\rm Re(\mu)}^{2k}}\cdot \| f\|_{0}^2
    + \left( \, 
\int_{[0,\infty[^k}  |h(z)| \left( \int_{\mathbb{R}} \int_\mathbb{R}  \frac{|g(x,z) - g(y,z)|^2}{|x-y|^{1+2s}} \m{x} \m{y}   \right)^{1/2}   \,{\mathrm d}^k z \right)^2.
\end{multline*}
To carry out the $x$ and $y$ integrations we use the substitutions $x' = \chi(-\sum_{l=1}^k z_l, x)$, $y' = \chi(-\sum_{l=1}^k z_l, y)$ to obtain
\begin{multline*}
\int_{\mathbb{R}} \int_\mathbb{R}  \frac{|g(x,z) - g(y,z)|^2}{|x-y|^{1+2s}} \m{x} \m{y}
 =
\int_{\mathbb{R}} \int_\mathbb{R}  \frac{| f( \chi(-\sum_{l=1}^k z_l, x) )-   f(\chi(-\sum_{l=1}^k z_l, y) )|^2}{|x-y|^{1+2s}} \m{x} \m{y} \\
 =
\int_{\mathbb{R}} \int_\mathbb{R}  \frac{| f( x') -  f(y')|^2}{|\chi(\sum_{l=1}^k z_l, x')) -\chi(\sum_{l=1}^k z_l, y') |^{1+2s}} \left|\frac{a(\chi(\sum_{l=1}^k z_l, x'))}{a(x')}\right| \left|\frac{a(\chi(\sum_{l=1}^k z_l, y'))}{a(y')}\right| \m{x'} \m{y'}. 
\end{multline*}
Now,  by the mean value theorem we have $|\chi(\sum_{l=1}^k z_l, x)) -\chi(\sum_{l=1}^k z_l, y)| \ge \frac{c_0}{c_1} |x-y|$ 
and the assumed bounds for $a$ give $\left|\frac{a(\chi(\sum_{l=1}^k z_l, \cdot))}{a(\cdot)}\right|\le \frac{c_1}{c_0}$, 
thus we arrive at
$$
\int_{\mathbb{R}} \int_\mathbb{R}  
  \frac{|g(x,z) - g(y,z)|^2}{|x-y|^{1+2s}} \m{x} \m{y}
 \leq
  \left(\frac{c_1}{c_0}\right)^{3+2s} 
  \int_{\mathbb{R}} \int_\mathbb{R}  \frac{| f( x) -  f(y)|^2}{|x-y|^{1+2s}} \m{x} \m{y}
$$
Again by $\int_{[0,\infty[^k}  |h(z)| \,{\mathrm d}^k z = \frac{1}{{\rm Re}(\mu)^k}$  we conclude 
$$
 \|R(\mu)^k f \|_{s}^2 \le \frac{\frac{c_1}{c_0}}{{\rm Re(\mu)}^{2 k}} \cdot \| f\|_{0}^2   +
\frac{\left(\frac{c_1}{c_0}\right)^{3+2s}  }{{\rm Re(\mu)}^{2 k}} \int_{\mathbb{R}} \int_\mathbb{R}  \frac{| f( x') -  f(y')|^2}{|x'-y'|^{1+2s}} \m{x'}  \m{y'} 
\le
  \frac{\left(\frac{c_1}{c_0}\right)^{{3+2s}}  }{{\rm Re(\mu)}^{2 k}} 
    \|f\|_s^2,
$$
i.e., $\displaystyle{\|R(\mu)^k f \|_{s} \leq \frac{\left(\frac{c_1}{c_0}\right)^{(3+2s)/2}  }{{\rm Re(\mu)}^{k}}} \|f\|_s$.
\end{remark}



\subsection{Measurable coefficients with prescribed characteristics}

This subsection discusses a solution concept according to Bouchut-James (\cite{BochutJames:98}), which is settled in one space dimension and --- from the distribution theoretic point of view --- can be considered as exotic.
The basic idea is to interpret the multiplication $a \cdot u$ occurring in the partial differential equation as a product of a (locally finite) Borel measure $u$ and a function $a$ from the set $\mathcal{B}^{\infty}$ of real bounded and Borel measurable functions.

\paragraph{Multiplication of Radon measures by bounded Borel functions:}
We may identify locally finite Borel measures on $\mathbb{R}$ with (positive) Radon-measures, that is the non-negative linear functionals on $C_c(\mathbb{R})$ (\cite[Remark 19.49]{HewStr:65}). Moreover, the space $\mathcal{D}^{'0}(\mathbb{R})$ is the space of complex Radon-measures, which allows for a decomposition of any
$u\in \mathcal{D}^{'0}(\mathbb{R})$ in the form $u= \nu_{+} -\nu_{-} + i (\eta_{+} - \eta_{-})$, where $\nu_{+},\nu_{-},\eta_{+},\eta_{+}$ are positive Radon-measures.

The product of a bounded Borel function $a\in  \mathcal{B}^{\infty}(\mathbb{R})$  with a positive Radon measure $\mu$ is defined to be the
measure given by
\begin{eqnarray*}
(a\odot \mu)(B) := \int_{\mathbb{R}^n} 1_B(x) a(x) \m{\mu(x)} , \ \
\end{eqnarray*} 
for all Borel sets $B$ in $\mathbb{R}$. Clearly, $a\odot \mu$ is again a locally finite Borel measure.

The product employed in \cite{BochutJames:98} is the extension of $\odot$ to $ \mathcal{B}^{\infty}(\mathbb{R}) \times \mathcal{D}^{'0}(\mathbb{R})$ in a bilinear way, i.e.
\begin{eqnarray*}
\diamond: \mathcal{B}^{\infty}(\mathbb{R}) \times \mathcal{D}^{'0}(\mathbb{R}) &\rightarrow& \mathcal{D}^{'0}(\mathbb{R})\\
		(a, u) &\mapsto&   
a_{+}\odot \nu_{+} + a_{-}\odot \nu_{-} - ( a_{-}\odot \nu_{+}+a_{+}\odot \nu_{-}) \\
&&+i (a_{+}\odot\eta_{+} + a_{-}\odot \eta_{-} - ( a_{-}\odot \eta_{+}+a_{+}\odot \eta_{-}) ).
\end{eqnarray*}

Consider the following sequence of maps:
$$
C_b(\mathbb{R}) \stackrel{\iota_1}{\hookrightarrow} \mathcal{B}^{\infty}(\mathbb{R}) \stackrel{\lambda}{\rightarrow} L^{\infty}_{\rm loc}(\mathbb{R}) \stackrel{\iota_2}{\hookrightarrow} \mathcal{D}^{(0)\prime}(\mathbb{R})
$$
where $\iota_1, \iota_2$ are the standard embeddings and $\lambda$ sends bounded Borel functions to the corresponding classes modulo functions vanishing almost everywhere in
the sense of the Lebesgue measure.
%
%
Although we may identify $C_b(\mathbb{R})$ and $L^{\infty}_{\rm loc}$ with subspaces of $\mathcal{D}^{\prime 0}(\mathbb{R})$ this is not true of $\mathcal{B}^{\infty}(\mathbb{R})$, since $\lambda$ is not injective. Note that $\iota_2 \circ \lambda \circ \iota_1$ is injective though. The following example illustrates some consequences of the non-injectivity of the map $\lambda$ for the properties of the product $\diamond$.

\begin{example}
Let $\alpha\in\mathbb{R}$ and $a_{\alpha}(x):= 1,  x\ne 0$ and $a_{\alpha}(0):=\alpha$ and $u=\delta \in \mathcal{D}^{\prime0}(\mathbb{R})$.
 Note that $\lambda\circ \iota_1 (a_\alpha) =1$ as a distribution and the standard distributional product gives $\lambda\circ \iota_1(a_{\alpha}) \cdot u = 1 \cdot u = \delta$ for all $\alpha\in\mathbb{R}$. On the other hand $a_{\alpha}\diamond \delta = \alpha \delta$. 
\end{example}

The product $\diamond$ will be used in the solution concept for transport equations on $\overline{\Omega_T} = [0,T] \times \mathbb{R}$ with coefficient $a$
in $\mathcal{B}^{\infty}(\ovl{\Omega_T})$ and solution $u\in \mathcal{B}^{\infty}([0,T];\mathcal{D}^{'0}(\mathbb{R}))$, i.e., $u$ is a family of distributions $(u(t))_{t\in[0,T]}$ such
that $\langle u(t), \varphi\rangle_{\mathcal{D}^{'0}(\mathbb{R})}$ is a bounded Borel function on $[0,T]$ for all $\varphi \in C_c(\mathbb{R})$. The
extension of the product $\diamond$ to this space causes no difficulty.

\paragraph{The solution concept according to Bouchut and James:} A key ingredient for the solution concept according to Bouchut-James, is to stick to a particular representative of the coefficient (in the $L^{\infty}$ sense), by prescribing the value of the coefficient $a$ at curves of discontinuity. We refer to the following requirements on the coefficient $a\in \mathcal{B}^{\infty}(\Omega_T)$ as \emph{Bouchut-James conditions}
\index{Bouchut-James conditions}: Assume there exists a decomposition $\Omega_T=\mathcal{C} \cup \mathcal{D} \cup \mathcal{S}$ such that
\begin{enumerate} \label{bochut_james_conditions}
\item $\mathcal{S}$ is a discrete subset $\Omega_T$,
\item $\mathcal{C}$ is open, $a$ is continuous on $\mathcal{C}$,
\item $\mathcal{D}$ is a one-dimensional $C^1$-submanifold  of ${\Omega_T}$,
i.e., for each $(t_0,x_0) \in \mathcal{D}$ there exists a neighborhood $V$ of $(t_0,x_0)$ and a $C^1$ parametrization of the form $t \mapsto (t,\xi(t))$ in $\D \cap V$.
Furthermore, $a$ has limit values for each $(t,x) \in \mathcal{D}$ from both sides in $\mathcal{C} \setminus \mathcal{D}$. These limits are denoted by $a_{+}(t,x)$ and $a_{-}(t,x)$.
\item $a(t,x) \in [a_{-}(t,x), a_{+}(t,x)]$ for all $(t,x) \in \mathcal{D}$,
\item for any point $(t_0,x_0) \in \mathcal{D}$ with neighborhood $V$ and local parametrization $\xi$ as in (iii), we have 
 $\dot{\xi}(t) = a(t,\xi(t)) $.
\end{enumerate}
Condition (v) prescribes the values of the coefficient $a(t,x)$ on the curves of discontinuity in such a way that
the characteristic differential equation holds. In this sense,  a coefficient satisfying $(i)-(v)$ is a piecewise continuous bounded function, where the (non-intersecting) curves of discontinuity can be parametrized as regular $C^1$ curves.

The Bouchut-James solution concept interprets hyperbolic Cauchy problems in $(1+1)$ dimension as
\begin{eqnarray} \label{pdo_P_bochut_james}
P u :=\partial_t u +  a \diamond \partial_{x}  u = 0, \ \ u(0)=u_0 \in {BV}_{loc}(\mathbb{R})  
\end{eqnarray}
and 
\begin{eqnarray} \label{pdo_L_bochut_james}
L u:= \partial_t u + \partial_{x} (a \diamond u)=0, \ \ u(0)=u_0 \in \mathcal{D}'^{ 0}(\mathbb{R}).
\end{eqnarray}

Note that $P$ (resp.\ L) is well-defined on the set $\mathcal{B}^{\infty}([0,T];{BV}_{loc}(\mathbb{R}))$ (resp.\ $\mathcal{B}^{\infty}([0,T]; \mathcal{D}^{\prime 0}(\mathbb{R}))$).

Now the main results of Bouchut-James \cite{BochutJames:98} are:

\begin{theorem}\cite[Theorem 3.4]{BochutJames:98}\label{BJexistence}
Assume that $a$ satisfies the Bouchut-James conditions (i)-(v).
For any $u_0\in BV_{\rm loc}(\mathbb{R})$ there exists $u\in {\rm Lip}([0,T]; L^1_{\rm loc}(\mathbb{R})) \cap \mathcal{B}^{\infty}([0,T]; BV_{\rm loc}(\mathbb{R}))$
solving (\ref{pdo_P_bochut_james}) and such that for any $x_1<x_2$ we have for all $t\in[0,T]$
\begin{eqnarray*}
{\rm Var}_I (u(t,\cdot)) &\le& {\rm Var}_J(u_0),\\
\|u(t,\cdot)\|_{L^{\infty}(I)} &\le& \|u_0\|_{L^{\infty}(J)}
\end{eqnarray*}
where $I:=]x_1,x_2[$ and $J:=]x_1-\|a\|_{\infty} t, x_2 + \|a\|_{\infty} t [$. If in addition the coefficient $a$ 
satisfies the one-sided Lipschitz condition
\begin{eqnarray*}
\langle a(t,x) -a(t,y), x-y \rangle \le \alpha(t) |x-y|^2 \ \ \text{for almost all} \ (t,x), (t,y) \in \Omega_T,
\end{eqnarray*}
where $\alpha \in L^1([0,T])$, then the solution $u$ is unique.
\end{theorem}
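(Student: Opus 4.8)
The statement is due to Bouchut and James \cite{BochutJames:98}; the plan is to recover their argument within the framework of characteristic flows developed above, splitting the proof into an \emph{existence} part and a \emph{uniqueness} part, and to keep track of where the Bouchut--James conditions (iv)--(v) — which single out a distinguished representative of $a$ — enter decisively. Throughout, the guiding principle is that in one space dimension the solution of the non-conservative equation is transported without change of value along characteristics, so that the whole problem is controlled by the geometry of the (generalized) characteristic flow associated with the prescribed representative of $a$.

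\textbf{Existence.} First I would mollify in the space variable, $a_{\varepsilon}(t,\cdot) := a(t,\cdot)\ast\rho_{\varepsilon}$ with $\rho$ a positive mollifier, so that $a_{\varepsilon}\in C^{\infty}$ with $\norm{a_{\varepsilon}}{\infty}\le\norm{a}{\infty}$, and likewise set $u_{0,\varepsilon} := u_0\ast\rho_{\varepsilon}$, which keeps $\mathrm{Var}(u_{0,\varepsilon})\le\mathrm{Var}(u_0)$ locally. For each $\varepsilon$ the smooth Cauchy problem $\d_t u_{\varepsilon}+a_{\varepsilon}\d_x u_{\varepsilon}=0$, $u_{\varepsilon}(0)=u_{0,\varepsilon}$, is solved by $u_{\varepsilon}(t,x)=u_{0,\varepsilon}(\chi_{\varepsilon}(0;t,x))$, where $\chi_{\varepsilon}$ is the smooth characteristic flow (cf.\ Section \ref{caratheodory_solution_concept}). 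The decisive point in one space dimension is that $x\mapsto\chi_{\varepsilon}(0;t,x)$ is an increasing diffeomorphism, because characteristics of a scalar equation cannot cross; hence $\mathrm{Var}_I(u_{\varepsilon}(t,\cdot))\le\mathrm{Var}_J(u_{0,\varepsilon})$ and $\norm{u_{\varepsilon}(t,\cdot)}{L^{\infty}(I)}\le\norm{u_{0,\varepsilon}}{L^{\infty}(J)}$ with $J=\,]x_1-\norm{a}{\infty}t,\,x_2+\norm{a}{\infty}t[$ the domain of dependence, uniformly in $\varepsilon$. By Helly's selection theorem (applied on an exhausting sequence of intervals and a countable dense set of times, with a diagonal extraction) a subsequence $u_{\varepsilon_j}$ converges in $L^1_{\rm loc}$ to some $u\in\mathcal{B}^{\infty}([0,T];BV_{\rm loc}(\R))$; lower semicontinuity of the variation and of the $L^{\infty}$-norm under $L^1_{\rm loc}$-convergence gives the asserted bounds, and a uniform estimate on $t\mapsto u_{\varepsilon}(t,\cdot)$ in $L^1_{\rm loc}$ (from $\norm{u_{\varepsilon}(t)-u_{\varepsilon}(s)}{L^1_{\rm loc}}\le\norm{a}{\infty}|t-s|\,\mathrm{Var}(u_{0,\varepsilon})$ locally) yields the $\Lip([0,T];L^1_{\rm loc})$ regularity.

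\textbf{Passage to the limit --- the main obstacle.} It remains to check that the limit $u$ actually solves (\ref{pdo_P_bochut_james}) \emph{with the product $\diamond$ and the distinguished representative of $a$}. Away from the discontinuity set $\mathcal{D}$ the coefficient is continuous and $a_{\varepsilon}\to a$ locally uniformly, so the equation passes to the limit there in the ordinary distributional sense. The difficulty concentrates on $\mathcal{D}$: if $u$ carries a jump along a curve $t\mapsto(t,\xi(t))$ in $\mathcal{D}$, then $a\diamond\d_x u$ weights the atomic part of $\d_x u$ by the \emph{prescribed} value $a(t,\xi(t))$, and consistency of the limit equation is exactly a Rankine--Hugoniot identity stating that the jump travels with speed $\dot\xi(t)$. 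Condition (v) asserts $\dot\xi(t)=a(t,\xi(t))$ and condition (iv) guarantees $a(t,\xi(t))\in[a_-(t,\xi(t)),a_+(t,\xi(t))]$, so a characteristic of the regularized problem entering the $\varepsilon$-layer around $\mathcal{D}$ is trapped by it and follows $\xi$ in the limit; tracking a monotone family of such characteristics and passing to the limit in $u_{\varepsilon}(t,x)=u_{0,\varepsilon}(\chi_{\varepsilon}(0;t,x))$ shows that the trace of $u$ on $\mathcal{D}$ and its propagation are precisely those encoded by $a\diamond\d_x u$. This compatibility of the convolution regularization with the $\diamond$-product — equivalently, the coincidence of the regularized flow's limit with the flow of the distinguished representative — is the technically delicate step, and it is exactly what conditions (i)--(v) are designed to make work.

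\textbf{Uniqueness under the one-sided Lipschitz condition.} Assume now $\langle a(t,x)-a(t,y),x-y\rangle\le\alpha(t)|x-y|^2$ with $\alpha\in L^1([0,T])$. By Theorem \ref{flow_map} the characteristic system is forward unique, hence by Theorem \ref{forward_flow_map} there is a continuous forward flow $\chi$ with $\chi(t,\R)=\R$, and the backward characteristic from each $(t,x)$ down to time $0$ is well defined for the distinguished representative of $a$. By linearity it suffices to show $u_0=0$ forces $u\equiv 0$. I would argue by duality against the adjoint (conservative) equation: given $\varphi_T\in\Conc(\R)$, construct $\varphi\in C([0,T];\mathcal{D}'^{0}(\R))$ solving $\d_t\varphi+\d_x(a\diamond\varphi)=0$ backward from $\varphi(T)=\varphi_T$ by the Poupaud--Rascle pushforward along $\chi$ (cf.\ Subsection \ref{section_poupaud_rascle}), which is available precisely because forward uniqueness of characteristics makes the conservative equation well posed backward in time. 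Pairing $u$ with $\varphi$ and using that both solve their respective equations — the boundary contributions on $\mathcal{D}$ cancelling by the very definition of $\diamond$ together with conditions (iv)--(v) — gives $\langle u(T),\varphi_T\rangle=\langle u(0),\varphi(0)\rangle=0$; since $\varphi_T$ is arbitrary, $u(T)=0$, and $T\in[0,T]$ being arbitrary, $u\equiv 0$. The one-sided Lipschitz hypothesis is invoked only here, but essentially: it is the borderline condition producing the forward uniqueness that underlies both the well-posed dual problem and the identification of any solution with the flow-transported one.
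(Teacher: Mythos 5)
The paper itself contains no proof of this statement --- it is quoted verbatim from Bouchut--James \cite{BochutJames:98} --- so the only question is whether your sketch would stand on its own, and as written it does not. Your a priori estimates are fine (for smooth $a_\varepsilon$ the backward characteristic map $x\mapsto\chi_\varepsilon(0;t,x)$ is an increasing diffeomorphism, so the $\mathrm{Var}$, $L^\infty$ and $\mathrm{Lip}([0,T];L^1_{\rm loc})$ bounds and the Helly extraction all go through), but the heart of the theorem is precisely the step you label ``the main obstacle'' and then only gesture at: showing that the limit satisfies $\partial_t u + a\diamond\partial_x u=0$ \emph{for the prescribed representative} of $a$. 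Convolution cannot see the values of $a$ on the null set $\mathcal{D}$, so this requires an actual argument that the jump set of the limit $u$ is contained in $\mathcal{D}$ together with characteristic curves inside $\mathcal{C}$, that the one-sided traces of $u_\varepsilon$ converge to the traces of $u$, and that the nonconservative Rankine--Hugoniot relation $\dot\xi(t)=a(t,\xi(t))$ (i.e.\ conditions (iv)--(v)) then makes the atomic part of $a\diamond\partial_x u$ cancel against $\partial_t u$; asserting that characteristics are ``trapped by the layer'' is not a proof, and in the non-OSL case (upward jumps $a_-<a_+$ are allowed by (i)--(v)) the behaviour of backward characteristics near $\mathcal{D}$ and the possible mollifier-dependence of the limit are exactly the delicate points.

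The uniqueness paragraph has a concrete flaw in addition to being schematic. To run the duality argument for the \emph{forward} nonconservative problem you need test functions solving the \emph{backward} Cauchy problem for the conservative equation $\partial_t\varphi+\partial_x(a\diamond\varphi)=0$, $\varphi(T)=\varphi_T$. The assumed one-sided Lipschitz bound gives forward uniqueness of characteristics, hence (via Poupaud--Rascle) forward well-posedness of the conservative equation; after time reversal the coefficient $-a(T-s,x)$ satisfies only the lower one-sided bound, so the backward conservative problem is \emph{not} solvable by the forward flow machinery you invoke --- indeed under OSL it is genuinely non-unique, and Bouchut--James's construction of distinguished (``reversible'') backward solutions, together with a Leibniz-type rule justifying $\frac{d}{dt}\int u\varphi\,dx=0$ across $\mathcal{D}$ for the $\diamond$-product, is the substantial content of their uniqueness proof. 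Claiming the interface terms cancel ``by the very definition of $\diamond$'' replaces this with the conclusion. So both halves of your argument stop exactly where the real work begins.
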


\begin{theorem}\cite[Theorem 3.6]{BochutJames:98} 
\label{BJexistence}
Assume that $a$ satisfies the Bouchut-James conditions (i)-(v).
Then it follows that for any $u_0\in \mathcal{D}^{'0}(\mathbb{R})$ there exists $u \in C([0,T]; \mathcal{D}^{'0}(\mathbb{R}))$
solving (\ref{pdo_L_bochut_james}). 
If $a$ satisfies in addition the one-sided Lipschitz condition
\begin{eqnarray*}
\langle a(t,x) -a(t,y), x-y \rangle \le \alpha(t) |x-y|^2 \ \ \text{for almost all} \ (t,x), (t,y) \in \Omega_T,
\end{eqnarray*}
where $\alpha \in L^1([0,T])$, then the solution $u$ is unique.
\end{theorem}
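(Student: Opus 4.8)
The plan is to construct $u$ as a push-forward of the initial measure along the characteristic flow, and then to obtain uniqueness by a duality argument against the Lipschitz solutions of the \emph{non-conservative} equation provided by the preceding theorem. First I would record that, under the Bouchut-James conditions (i)--(v), the characteristic ODE $\dot{\xi}(s)=a(s,\xi(s))$ is within reach of Caratheodory/Filippov theory: on $\mathcal{C}$ the coefficient $a$ is continuous, while condition (v) forces every arc of $\mathcal{D}$ to be itself an integral curve, so the discontinuity set is ``sticky'' and is never crossed transversally. Appealing to Theorem~\ref{caratheodory_ode_existence} (resp.\ Theorem~\ref{filippov_existence_theorem}) together with the flow-map results of Section~\ref{caratheodory_solution_concept}, one gets a forward evolution $x\mapsto\xi(t;s,x)=:\chi(t,s,x)$ which is continuous, proper and surjective in $x$, and which obeys the finite propagation bound $|\chi(t,s,x)-x|\le\|a\|_\infty\,|t-s|$.

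Next I would \emph{define} $u$ by the push-forward
\[
  \langle u(t),\varphi\rangle_{\mathcal{D}^{\prime 0}(\R)}
    := \langle u_0,\varphi\circ\chi(t,0,\cdot)\rangle_{\mathcal{D}^{\prime 0}(\R)},
    \qquad \varphi\in C_c(\R),
\]
which makes sense for any complex Radon measure $u_0$ (split $u_0$ into its Jordan parts and transport each). The finite-propagation bound plus continuity and local equi-continuity of $\chi$ give $u\in C([0,T];\mathcal{D}^{\prime 0}(\R))$, exactly as in the Poupaud-Rascle construction of Subsection~\ref{section_poupaud_rascle}. To verify $Lu=0$ I would regularize: put $a_\varepsilon=a\ast\rho_\varepsilon$, $u_{0,\varepsilon}=u_0\ast\rho_\varepsilon$, solve $\partial_t u_\varepsilon+\partial_x(a_\varepsilon u_\varepsilon)=0$ classically (its solution is the smooth push-forward of $u_{0,\varepsilon}$ along the smooth flow $\chi_\varepsilon$), and let $\varepsilon\to0$. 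Stability of the push-forward under locally uniform convergence of flows yields $u_\varepsilon(t)\rightharpoonup u(t)$ weakly-$\ast$, so it only remains to identify the limiting flux. Off $\mathcal{D}$ the coefficient is continuous, hence there $a_\varepsilon u_\varepsilon\rightharpoonup a\,u$ with no ambiguity; on $\mathcal{D}$ one shows, using condition (v), that the singular part of $u(t)$ is carried along the arcs of $\mathcal{D}$ with velocity equal to the prescribed trace $a|_{\mathcal{D}}$, so that the limit is precisely $a\diamond u$ in the sense of the bilinear product. Passing to the limit in the regularized equation then gives $Lu=0$, $u(0)=u_0$.

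For uniqueness under the one-sided Lipschitz condition $\langle a(t,x)-a(t,y),x-y\rangle\le\alpha(t)|x-y|^2$, I would first observe that this is exactly the forward-uniqueness hypothesis \eqref{forward_uniqueness_condition} of Theorem~\ref{flow_map}; the Gronwall argument there makes $\chi(t,s,\cdot)$ single-valued, a genuine flow, and globally Lipschitz in $x$ with constant $\exp(\int_s^t\alpha(r)\m{r})$. Consequently, for $\psi\in C_c^\infty(\R)$ the function $v(s,x):=\psi(\chi(T,s,x))$ is Lipschitz in $x$, hence lies in $\mathcal{B}^\infty([0,T];BV_{\rm loc}(\R))$, and --- being constant along characteristics --- solves the non-conservative problem $\partial_s v+a\diamond\partial_x v=0$, $v(T)=\psi$; this is an instance of the preceding theorem. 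Now if $u$ solves $Lu=0$ with $u(0)=0$, I would differentiate $s\mapsto\langle u(s),v(s,\cdot)\rangle_{\mathcal{D}^{\prime 0}(\R)}$ and get $\langle\partial_s u+\partial_x(a\diamond u),v\rangle+\langle u,\partial_s v+a\diamond\partial_x v\rangle=0$, after justifying both the integration-by-parts identity $\langle\partial_x(a\diamond u),v\rangle=-\langle a\diamond u,\partial_x v\rangle$ and the compatibility of the two occurrences of $\diamond$ (a measure times a $BV$ function versus a $\mathcal{B}^\infty$ function times a measure) --- a lemma of \cite{BochutJames:98}. Integrating yields $\langle u(T),\psi\rangle=\langle u(0),v(0,\cdot)\rangle=0$ for all $\psi$, so $u(T)=0$; running the same argument on each $[0,t]$ gives $u\equiv0$, and subtracting two solutions of the original problem gives uniqueness.

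The main obstacle is the identification of the weak-$\ast$ limit of $a_\varepsilon u_\varepsilon$ with $a\diamond u$: this is precisely where the rigid normalization of the coefficient through conditions (iv)--(v) is indispensable, and it needs a quantitative ``no mass leaves $\mathcal{D}$ at the wrong speed'' estimate rather than a soft compactness argument. A secondary but genuine difficulty is that $\diamond$ is not associative and its first factor lives in $\mathcal{B}^\infty$ rather than in a space of distributions (cf.\ the Remark following the definition of $\diamond$), so both the flux identification and the integration-by-parts step in the duality argument must be carried out directly at the level of the Jordan decompositions of the measures involved. For the complete details I would follow \cite[Section~3]{BochutJames:98}.
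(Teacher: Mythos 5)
First, a point of comparison: the thesis does not prove this statement at all --- it is quoted verbatim as an imported result, with the proof delegated to \cite{BochutJames:98}. So your proposal cannot be matched against an argument in the paper; it has to stand on its own, and since you yourself defer the two decisive steps (the identification of the weak-$\ast$ limit of $a_\varepsilon u_\varepsilon$ with $a\diamond u$, and the integration-by-parts/compatibility lemma for $\diamond$ in the duality argument) to \cite[Section~3]{BochutJames:98}, what you have is a plan rather than a proof.

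More importantly, the existence half of the plan has a genuine gap. You build $u$ as the push-forward of $u_0$ along a forward characteristic flow $\chi$ that you claim is continuous, proper and surjective under the Bouchut-James conditions (i)--(v) alone. But (i)--(v) do not give forward uniqueness of characteristics, hence no continuous flow: the coefficient $a(x)=\mathrm{sign}(x)$ with the prescribed value $a(0)=0$ satisfies (i)--(v) (the line $x=0$ is a characteristic, and $0\in[-1,1]$), yet forward characteristics branch at $x=0$ --- exactly the situation in which, as the thesis itself notes in the Poupaud--Rascle discussion and in the Hurd--Sattinger example, no continuous Filippov forward flow exists. Condition (v) makes $\mathcal{D}$ invariant, but it does not prevent characteristics from leaving $\mathcal{D}$ on both sides, so ``sticky and never crossed transversally'' does not yield single-valuedness. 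Since the theorem asserts existence for every $u_0\in\mathcal{D}^{\prime 0}(\R)$ under (i)--(v) only (uniqueness being reserved for the one-sided Lipschitz case), your construction fails precisely in the regime the theorem is about. The route that actually works in one space dimension, and the one Bouchut--James take, is to exploit the preceding theorem: write $u_0=\partial_x V_0$ with $V_0\in BV_{\rm loc}(\R)$, solve the non-conservative problem $\partial_t V + a\diamond\partial_x V=0$, $V(0)=V_0$, in $\mathcal{B}^\infty([0,T];BV_{\rm loc}(\R))\cap\mathrm{Lip}([0,T];L^1_{\rm loc}(\R))$, and set $u=\partial_x V$; differentiating the equation in $x$ then gives $\partial_t u+\partial_x(a\diamond u)=0$ with $u\in C([0,T];\mathcal{D}^{\prime 0}(\R))$, with no flow needed. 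Your uniqueness argument by duality against Lipschitz solutions of the non-conservative equation under the one-sided Lipschitz condition is in the right spirit (it is essentially the reversible-solution duality of Bouchut--James), but as written it still rests on the unproven compatibility and integration-by-parts properties of $\diamond$ that you postpone, so it too remains a sketch.
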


We compare the solution concept of Bouchut-James with the generalized solutions according to Poupaud-Rascle.
\begin{example} \label{BJ_jump} 
We come back to Example \ref{poupaud_rascle_jump}, where $a(t,x):=c_1 H(\alpha t- x) + c_2 H(x- \alpha t)$ with $c_2 < c_1$ and $\alpha\in [c_2,c_1]$.
Let $\lambda \in L^1([0,T])$ such that $c_1 \ge \lambda(t) \ge c_2$. 
Consider a representative $a$ in $\mathcal{B}^{\infty}(\overline{\Omega_T})$ of the coefficient $a$ given by
\begin{eqnarray*}
\widetilde{a}(t,x) =\left\{ 
\begin{array}{ll}
c_1 & x< \alpha t \\
\lambda(t) & x=\alpha t \\
c_2 &  x > \alpha t
\end{array}\right. .
\end{eqnarray*}

We investigate wether the distribution $u$ given in Example \ref{poupaud_rascle_jump} solves (\ref{pdo_L_bochut_james}) in the sense of Bouchut-James.

Let us consider the case $u_0\equiv 1$. Then we obtain the solution $u= 1+t (c_1-c_2) \delta(x-\alpha t)$.
Note that the requirement that $\widetilde{a}$ fulfills the Bouchut-James conditions (i)-(v) forces $\lambda(t) := \alpha$ for all $t\in [0,T]$. Thus we have 
$$
  \widetilde{a} \diamond v = 
   c_1  H(\alpha t-x)  + \alpha \delta(x-\alpha t)  t ( c_1 -c_2) + 
   c_2 H(x - \alpha t) 
$$
and therefore
$\partial_x (\widetilde{a} \diamond v) =   \alpha  t (c_1-c_2) \delta'(x-\alpha t) - (c_1-c_2) \delta(x-\alpha t)$. Since
$ \partial_t v =  (c_1-c_2) \delta(x-\alpha t) - \alpha t (c_1-c_2) \delta'(x-\alpha t)$ we deduce that
$u$ solves the Cauchy problem in the sense of Bouchut-James.

Finally, we check whether the differential equation is fulfilled, if we employ the model product (cf.\ \cite[Chapter 7]{O:92} or the introduction) instead of $\diamond$. 
Let $[a\cdot v]$ denote the model product. We have that
\begin{multline*}
  [\widetilde{a} \cdot v] = 
  [(c_1  H(\alpha t -x)+ c_2  H(x-\alpha t)) \cdot u] \\   
  = c_1 H(\alpha t -x) + c_2 H(x-\alpha t) + 
  c_1 t (c_1 -c_2) [H(\alpha t -x)   \cdot \delta(x-\alpha t)] \\
  = c_2 t (c_1 -c_2) [H(x-\alpha t) \cdot \delta(x-\alpha t)] 
+ c_1 H(\alpha t -x) + c_2 H(x-\alpha t) +  \frac{t}{2} (c_1+c_2) (c_1 -c_2)  \delta(x-\alpha t), 
\end{multline*}
hence
\begin{eqnarray*}
\partial_x [a \cdot v] &=& (c_2-c_1) \delta(x-\alpha t) - \frac{t}{2} (c_1+c_2) (c_1 -c_2)  \delta'(x-\alpha t),\\
\partial_t v &=&   (c_1-c_2) \delta(x-\alpha t) - \alpha t (c_1-c_2) \delta'(x-\alpha t).
\end{eqnarray*}
Therefore $v$ solves the initial value problem
\begin{eqnarray*}
\partial_t v + \partial_x [a\cdot v ] =0,\ \ v(0)=1,
\end{eqnarray*}
if $\alpha=\frac{1}{2}(c_1+c_2)$.

Note that the coefficients $H(-x)$ (with $c_1=1,c_2=0, \alpha=0$), $-H(x)$ (with $c_1=0,c_2=-1, \alpha=0$), and $-{\rm sign}(x)$ (when  $c_1=1,c_2=-1, \alpha=0$) are included as special cases of the example
presented here. In case the coefficient reads
$$
  a(x):= H(-x)
$$ 
the unique solution in the sense of Bouchut-James is given by 
$$ 
   u = 1+t \delta.
$$
It has been shown in \cite[Theorem 5]{HdH:01} that no distributional solution  exists in this case when the model product is employed.
\end{example}

\section{Solutions from energy estimates}

\subsection{Direct energy estimates}
We briefly review the standard techniques of energy estimates for the initial value problem 
\begin{equation} \label{initial}
\begin{split}
P u &:= \partial_t u +\sum_{j=1}^n a_j \,\partial_{x_j} u + c\, u = f \qquad \text{in } ]0,T[ \times \R^n, \\ 
u(0)&= u_0 \in L^2(\R^n). 
\end{split}
\end{equation}

Let $q \in [2,\infty]$. We assume that $f \in L^1([0,T];L^2(\R^n))$, 
$a = (a_1, ..., a_n) \in L^1([0,T]; W^{1,q}(\mathbb{R}^n))^n$ with real components,
$c \in  L^1([0,T]; L^q(\mathbb{R}^n))$
and in addition
\begin{equation} \label{div_cond} 
  \frac{1}{2} \, {\rm div}_x (a) -c \quad \in 
    L^1([0,T];L^{\infty}(\mathbb{R}^n)).
\end{equation}

\subsubsection{Example derivation of an energy estimate} \index{energy estimate}

We browse through the typical steps that lead to an estimate in the norm of $L^{\infty}([0,T]; L^2(\mathbb{R}^n))$ for any 
\begin{eqnarray*}
u\in AC([0,T]; L^2(\mathbb{R}^n)) \cap L^{\infty}([0,T]; W^{1,p}(\mathbb{R}^n))
\end{eqnarray*}
with $p \in [2,\infty]$ such that $\frac{1}{q} + \frac{1}{p} = \frac{1}{2}$ in terms of corresponding norms for $u(0)$ and $Pu$.



We write $P = \d_t + Q$ with $Q:=\sum_{k=1}^n a_k(t,x) \partial_{x_k} + c(t,x)$ and observe that 
$$
  P u \in  L^{1}([0,T];L^2(\mathbb{R}^n))
$$ 
holds since $\partial_t u \in L^{1}([0,T];L^2(\mathbb{R}^n))$ and
$Qu \in L^{1}([0,T];L^2(\mathbb{R}^n))$ (the latter follows from the facts that $\d_{x_j} u(t,.) \in L^2$ and $L^q \cdot L^p \subseteq L^2$ when $2/p + 2/q = 1$).
Hence
$r \mapsto {\rm Re}(\langle (P u)(r) , u(r) \rangle_0)$ is defined and in $L^1([0,T])$. Furthermore, the map $t \mapsto \| u(t,\cdot)\|_0$ is continuous. 

We put 
$$
  h(r):= \|\frac{1}{2} \div_x (a(r,\cdot)) - 
    c(r,\cdot)\|_{\infty} \quad \text{ and } \quad
   \lambda(r) := 2 \int_0^r h(s) \m{s} \geq 0 
   \qquad (r \in [0,T]).
$$ 
By assumption, $h \in L^1([0,T])$ and $\lambda \in AC([0,T])$. 

The standard integration by parts argument gives the G\r{a}rding-type inequality
\begin{equation} \label{Garding}
   \frac{1}{2} (\langle Qu(\tau),u(\tau) \rangle_0 + 
   \langle u(\tau), Qu(\tau) \rangle_0 ) =  
   {\rm Re}(\langle Qu(\tau), u(\tau) \rangle_0)  
   \ge - h(\tau) \| u(\tau) \|_0^2,
\end{equation}
and thus
\begin{eqnarray*}
  \lefteqn{\int_0^\tau e^{-\lambda(r)} 
     {\rm Re}(\langle (P u)(r) , u(r) \rangle_0)\, \m{r}} \\
   &=& \frac{1}{2} \int_0^\tau e^{-\lambda(r)} 
       \diff{r} \| u(r) \|_0^2 \m{r} + 
     \int_0^\tau e^{-\lambda(r)} 
     {\rm Re} \langle  (Q u)(r), u(r) \rangle_0 \m{r} \\
   &\ge& \frac{1}{2} e^{-\lambda(\tau)} 
     \| u(\tau) \|_0^2 - \frac{1}{2} \| u(0) \|_0^2 - 
     \int_0^\tau  \underbrace{\left(h(r) - 
     \frac{\dot{\lambda}(r)}{2}\right)}_{=0} 
     e^{-\lambda(r)} \| u(r) \|_0^2 \m{r}.
\end{eqnarray*}
Therefore
\begin{eqnarray*}
  e^{-\lambda(\tau)} \|  u(\tau) \|_0^2 &\le&  
  \| u(0) \|_0^2  + 
    2 \int_0^\tau e^{-\lambda(r)} \| (P u)(r) \|_0  
    \| u(r) \|_0 \m{r} \\
  &\leq& \| u(0) \|_0^2 +  
    2 \!\!\sup_{r\in[0,\tau]}{ \Big( e^{-\lambda(r)/2} 
    \|   u(r) \|_0 \Big)}  \int_0^\tau e^{-\lambda(r)/2} 
    \| (P u)(r) \|_0 \m{r},
\end{eqnarray*}
where we may take the supremum over $\tau \in [0,t]$ on the left-hand side and thus replace $\tau$ by $t$ on the right-hand upper bound. A simple algebraic manipulation then gives
$$
  \left( \sup_{r\in[0,t]} \| e^{-\lambda(r)/2} 
     u(r)\|_0- \int_0^t e^{-\lambda(r)/2}  
     \|(P u)(r)\|_0   \m{r} \right)^2  
  \le  \left(\| u(0) \|_0+    \int_0^t  
    e^{-\lambda(r)/2} \| (P u)(r) \|_0  \m{r}\right)^2.
$$

Upon removing the squares and multiplying by $\exp(\lambda(t)/2)$ we obtain the following basic inequality.
\paragraph{Energy estimate:}
\begin{multline}\label{energyest}
  \sup_{r\in[0,t]}\| u(r) \|_0 \le
   \exp (\int_0^t h(\sigma) \m{\sigma}) \cdot 
   \| u(0) \|_0 +  2   \exp(\int_0^t h(\sigma) \m{\sigma})  
   \cdot \int_0^t  \| (P u)(r) \|_0 \m{r} \\
   = \exp (\int_0^t h(\sigma) \m{\sigma}) \left( 
   \| u(0) \|_0 + 2 \int_0^t  \| (P u)(r) \|_0 \m{r} \right). 
\end{multline}
We recall that the exponential factor depends explicitly on the coefficients $a$ and $c$ via $h(r) = 
\|\frac{1}{2} \div_x (a(r,\cdot)) - c(r,\cdot)\|_{\infty}$.

Note that this derivation of an energy estimate relied on the G\r{a}rding inequality \eqref{Garding}. 

\begin{example}[Failure of the G\r{a}rding-inequality \eqref{Garding}]
Let $\alpha\in \,]1/2,1[$ and define $a \col \R \to \R$ by $a(x) := 1+x_{+}^{\alpha}$ when $x\le 1$, and $a(x) := 2$ when $ x > 1$. 
 We have $a \in C_{\ast}^{\alpha}(\mathbb{R}) \setminus {\rm Lip}(\mathbb{R})$. 

Let $Q \col H^1(\R) \to L^2(\R)$ be the operator defined by $(Q v) (x):= a(x) v'(x)$ for all $v \in H^1(\R)$. Note that compared to the general form of the  operator $Q$ in the derivation of the energy estimate above we have here $c = 0$, $a \in C^{\infty}([0,T]; W^{1,2}(\R))$ but $\div a /2 - c = a'/2 \not\in L^{\infty}(\R)$.

Since $Q$ is time independent, inequality \eqref{Garding} with some $h \in L^1([0,T])$ (not necessarily of the form given above) would imply 
$$
  \exists C \in \R, \; 
  \forall v \in C^{\infty}_{\textrm{c}}(\R): \quad 
  {\rm Re} (\langle Q v, v \rangle_0)  
   \ge - C \| v \|_0^2.
$$ 
We will show that there is no constant $C \in \R$ such that the latter holds. Thus \eqref{Garding} cannot hold for $Q$ (for any $h \in L^1([0,T])$).

Let $\rho\in C^1(\mathbb{R})$ be symmetric, non-negative, with
support in $[-1,1]$, $\|\rho\|_0 =1$,  and such that $\rho'(x)<0$ when $0 < x < 1$.
We define $v_{\varepsilon}(x):= \varepsilon^{-1/2} \rho(x/\varepsilon)$  ($x \in \R$, $\eps > 0$). Then clearly $v_{\varepsilon} \in C^{\infty}_{\textrm{c}}(\R) \subseteq H^1(\mathbb{R})$ and $\|v_\varepsilon \|_0 = 1$ for all $\eps > 0$, but 
\begin{eqnarray*}
 \lefteqn{\langle Q v_{\varepsilon}, v_{\varepsilon} \rangle_0 =  
  \int a(x) v_{\varepsilon}'(x) v_{\varepsilon}(x) \m{x}} \\
  &=& \underbrace{\int_{-\infty}^{\infty} v_{\varepsilon}'(x) 
  v_{\varepsilon}(x) \m{x}}_{= 0} + \int_0^1 x^{\alpha} 
  v_{\varepsilon}'(x) v_{\varepsilon}(x) \m{x} + 
   \underbrace{\int_1^{\infty} v_{\varepsilon}'(x) 
  v_{\varepsilon}(x) \m{x}}_{=0} \\
  &=& \varepsilon^{\alpha - 1} \int_0^{1} 
  z^{\alpha} \rho'(z) \rho(z) \m{z} \rightarrow - \infty 
  \qquad (\eps \to 0).
\end{eqnarray*}

We remark that even for $a \in C_{\ast}^{1}(\R) \setminus {\rm Lip}(\mathbb{R})$ the G\r{a}rding inequality may fail as well: for example, with $a(x):= - x \log{|x|} \rho(x)$ we have $a \in C_{\ast}^{1}(\R) \cap W^{1,q}(\R)$ for all $q \in [1,\infty[$, but 
$$
  \langle Q v_{\varepsilon}, v_{\varepsilon} \rangle_0 = 
   - 2 \int_0^1 \rho(\eps z) z \log |\varepsilon z|  
   \rho'(z) \rho(z) \m{z}  
   \leq  2 |{\log{\varepsilon}}| \int_0^1 z  
   \rho(\eps z) \rho'(z) \rho(z) \m{z}  \rightarrow - \infty,
$$
since $\lim_{\eps \to 0} \int_0^1 z \rho(\eps z) \rho'(z) \rho(z) \m{z} = \rho(0) \int_0^1 z \rho'(z) \rho(z) \m{z} < 0$.

\end{example}

\begin{remark}\label{adj_rem} (i) Let $Q^{\ast}$ denote the formal adjoint of $Q$ with respect to the $L^2$ inner product (on $x$-space). Due to our regularity assumptions on $a$ and $c$ we have for any $\vphi \in H^1$  (since $a$ is real)
$$
  Q^{\ast} \vphi = \sum_{j=1}^n (- a_j \d_{x_j} \vphi) 
  + (\bar{c} - \div_x(a)) \vphi,
$$
where the new coefficients $-a$, respectively $\bar{c} - \div_x(a)$, in place of $a$, respectively $c$, satisfy the exact same regularity assumptions, including the condition
$$
  \frac{1}{2} \div_x(- a) - (\bar{c} - \div_x(a)) =
  \ovl{\frac{\div_x(a) }{2} - c} \quad \in
   L^1([0,T];L^{\infty}(\mathbb{R}^n)).
$$
Thus the basic energy estimate \eqref{energyest} applies to $ \pm \d_t + Q^\ast$ as well. In particular, the function $h$ in the exponential factor occurring in the energy estimates is the same for $Q$ and $Q^\ast$.

(ii) Although the method of derivation discussed above relied on a G\r{a}rding-type inequality, it seems that in essence energy estimates are, in a vague sense, a necessary condition for a hyperbolic equation to hold in any meaningful context of ``suitable Banach spaces of distributions''. In other words, whenever a hyperbolic differential equation can be interpreted directly in terms of such Banach spaces it allows to draw consequences on combinations of corresponding norms of any solution. For example, if the operator $Q$ above generates a strongly continuous evolution system on some Banach space, then basic norm estimates for solutions follow from general principles of that theory (cf.\ \cite{Pazy:83,Tanabe:79}).
 \end{remark}

On the other hand, energy estimates are widely used to establish existence of solutions to \eqref{initial} by duality and an application of the Hahn-Banach theorem. We recall the basic steps of such method in the following.

\subsubsection{Existence proof based on the energy estimate}


Let $R_T:=\{(t,x)\in \mathbb{R}^{n+1} \mid  t < T\}$. By abuse of notation we denote the trivial extension of a function $v\in C_c^{\infty}(R_T)$ by zero for $t\ge T$
again by $v$. Then $\mathcal{L}:= \{ f \in C^{\infty}([0,T]\times \R^n) \mid  \exists v\in C_c^{\infty}(R_T) \text{ with }
f = (-\partial_t v+ Q^{\ast} v) \mid_{[0,T] \times \R^n} \}$. For $0  \le t \leq T$ and $v\in C_c^{\infty}(R_T)$ we use the notation 
$w(t):= v(T-t)$ and $g(t):= (-\partial_t v + Q^{\ast}v)(t)$.
Then we have
\begin{eqnarray*}
  (\partial_t + Q^{\ast}(T-t)) w(t) &=& g(T-t)\\
  w(0) &=&  0
\end{eqnarray*}
and an application of \eqref{energyest} (with $Q^*$ in place of $Q$; cf.\ Remark \ref{adj_rem}(i) above) yields 
\begin{multline*}
\sup_{r\in[0,T]} \| w(r) \|_{0} 
  \le  2 \exp{\left(\int_0^T h( \sigma ) \m{\sigma}\right)} 
  \int_0^T  
  \| (-\partial_t+ Q^{\ast} v)(T-r) \|_{0} \m{r} 
  = C_h \int_0^T \|g(r)\|_{0} \m{r}.
\end{multline*}
We may deduce that for $f \in L^1([0,T]; L^2(\mathbb{R}^n))$ and $v\in C_c^{\infty}(R_T)$
\begin{multline*}
   \int_0^T \langle f(r), v(r) \rangle_0 \m{r} + 
   \langle u_0, v(0) \rangle_0 \le 
   \int_0^T \|f(r)\|_{0} \|v(r)\|_{0} \m{r}
   + \|u_0\|_{0}\, \|v(0)\|_{0}  \\
    \le C \sup_{r\in[0,T]}\|w(r)\|_{0} 
    \le C C_h \int_0^T \|g(r)\|_{0} \m{r},  
\end{multline*}
where $C$ depends on $f$ and $u_0$. Therefore the assignment
$g = (-\partial_t v + Q^{\ast} v) \mid_{[0,T] \times \R^n}\mapsto 
\int_0^T \langle f(r), v(r) \rangle_0 \m{r} + \langle u_0, v(0) \rangle_0$ defines a conjugate-linear functional $\nu \col \mathcal{L} \to \mathbb{C}$ on the  subspace $\mathcal{L}$ of $L^1([0,T]; L^2(\mathbb{R}^n))$ such that
$|\nu(g)| \le \sup_{0 \leq r \leq T} \|g(r)\|_{0}$.  Hahn-Banach extension of $\nu$ yields a conjugate-linear functional
$\nu': L^1([0,T];L^2(\mathbb{R}^n)) \to \mathbb{C}$ with the same norm estimate. 

Since $L^1([0,T]; L^2(\mathbb{R}^n))' \cong L^{\infty}([0,T]; L^2(\mathbb{R}^n))$ there is $u \in L^{\infty}([0,T]; L^2(\mathbb{R}^n))$ such that 
$\nu'(g) = \langle u, g \rangle$ for all $g\in L^1([0,T]; L^2(\mathbb{R}^n))$. When applied to $g = (-\partial_t v + Q^{\ast} v) \mid_{[0,T] \times \R^n}$  with $v\in C_c^{\infty}(R_T)$ we obtain
\begin{multline} \label{DE_orig}
  \int_0^T \langle u(t),  -\partial_t v(t) + 
  ( Q^{\ast} v )(t) \rangle_0  \m{t} =
  \dis{u}{((-\d_t + Q^{\ast}) v) \mid_{[0,T] \times \R^n}}  \\
  = \int_0^T \langle f(t), v(t,.)\rangle_0 \m{t} + 
  \langle u_0, v(0) \rangle_0. 
\end{multline}

\subsubsection{Model discussion of the weak solution concept}

\paragraph{Case of smooth symbol:}
If the coefficients of $Q$ (and thus of $Q^{\ast}$) are $C^\infty$ then the above identity implies that $u$ is a distributional solution to the 
partial differential equation $P u = f$ in $]0,T[ \times \R^n$.
In fact, with $\vphi \in C_c^{\infty}(]0,T[ \times \R^n)$ in place of $v$ we have
$$
  \dis{(\d_t + Q)u}{\vphi} = \dis{u}{(-\d_t + Q^{\ast}) \vphi} 
  = \int_0^T \langle f(t), \phi(t,.)\rangle_0 \m{t} = \dis{f}{\vphi}.
$$
Moreover, since $Q u \in L^{\infty}([0,T];H^{-1}(\R^n))$ the differential equation implies that
$$
  \d_t u = f - Q u \quad \in L^1([0,T];H^{-1}(\R^n))
$$
and thus $u \in AC([0,T];H^{-1}(\R^n))$. In particular, it makes sense to speak of the initial value $u(0) \in \D'(\R^n)$. Integrating by parts on the left-hand side of \eqref{DE_orig} (now reading \eqref{DE_orig} from right to left, and duality brackets in appropriate dual pairs of spaces) yields for any $v\in C_c^{\infty}(R_T)$
\begin{multline*}
   \int_0^T \langle f(t), v(t)\rangle_0 \m{t} + 
  \langle u_0, v(0) \rangle_0 = 
  \int_0^T \underbrace{\dis{\d_t u (t) + Q u(t)}{v(t)}}_{\langle f(t), v(t,.)\rangle_0} \m{t} - 
  \dis{u(T)}{\underbrace{v(T)}_{= 0}}_0 + \dis{u(0)}{v(0)}_0,
\end{multline*}
hence $u(0) = u_0$

Of course, uniqueness of the solution as well as more precise regularity properties can be deduced in case of $C^{\infty}$ coefficients: For any $s \in \R$, $f \in L^1([0,T]; H^s(\R^n))$, and $u_0 \in H^s(\R^n)$ the solution $u$ is unique in the space $C([0,T];H^s(\R^n))$ (cf.\ \cite[Theorem 23.1.2]{Hoermander:V3}).  

\paragraph{Case of non-smooth symbol:}
The weaker regularity assumptions made above imply $Q^{\ast} v \in L^1([0,T];L^2(\R^n))$ for all $v \in C_c^{\infty}(R_T)$. We may thus define $Q u \in \D'(]0,T[ \times \R^n)$ by putting 
$$
  \dis{Qu}{\vphi} := \inp{u}{Q^{\ast} \bar{\vphi}}_0 
     \qquad \forall \vphi \in C_c^{\infty}(]0,T[ \times \R^n).
$$ 
Then equation \eqref{DE_orig} can be read as an equation in $\D'(]0,T[ \times \R^n)$, namely
$$ 
    \dis{\d_t u + Q u}{\vphi} = \dis{f}{\vphi}  
    \qquad \forall \vphi \in C_c^{\infty}(]0,T[ \times \R^n).  
$$
Furthermore, we can again show that the inital datum is attained: Note that in $Q u = \sum a_j \d_{x_j} u$ each term can be interpreted as a multiplication of functions in $L^1([0,T];H^1(\R^n))$ with distributions in $L^{\infty}([0,T];H^{-1}(\R^n))$ (since $u \in L^{\infty}([0,T];L^2(\R^n))$) in the sense of the duality method (cf.\ \cite[Chapter II, Section 5]{O:92}). Applying Proposition 5.2 in \cite{O:92} to the spatial variables in the products then yields $Qu \in L^1([0,T]; W^{-1,1}(\mathbb{R}^n))$. Reasoning similarly as above, the differential equation then gives
$$
  \d_t u = f - Q u \quad \in L^1([0,T];W^{-1,1}(\R^n)),
$$
which implies $u \in AC([0,T];W^{-1,1}(\R^n))$ and further also that $u(0) = u_0$.    

Again higher regularity of $u$ with respect to the time variable, namely $u \in C([0,T];L^2(\R^n))$ can be shown by means of regularization and passage to the limit (e.g., similarly as in \cite[proof of Theorem 2.8]{BGS:07}).


\subsection{Regularization and energy estimates}

Several advanced theories make use of regularization techniques or concepts at crucial steps in their construction of solutions. Some of these theories succeed by regularization and a careful passage to the limit via energy estimates (as with Hurd-Sattinger and Di Perna-Lions theories presented below). Others even base their solution concept on a further generalization of the weak solution concept beyond distribution and measure spaces and still obtain existence of solutions essentially from asymptotic stability of energy estimates (cf.\ the Lafon-Oberguggenberger theory below). 
 
We introduce the following notation for partial differential operators that will be used in the sequel
\begin{eqnarray} 
  P u & := & \partial_t u + 
    \sum_{k=1}^n a_k \, \partial_{x_k}  u + c\, u 
    \label{pdo1} \\
  L u & := & \partial_t u + 
   \sum_{j=1}^n \partial_{x_j} (a_j\, u) + b\, u.
\end{eqnarray}

\subsubsection{Hurd-Sattinger theory}

We give a brief summary of the results from the first part in Hurd-Sattinger's classic paper \cite{HS:68}. We consider the Cauchy problem for the operator $L$ on the closure of the domain $\Omega := \, ]0,\infty[ \times \R^n$.

\begin{definition} Let $f \in L^2_\loc(\ovl{\Omega})$ and $a_j$ ($j=1,\ldots,n$) as well as $b \in L^2_\loc(\ovl{\Omega})$. A weak solution in the sense of Hurd-Sattinger of the partial differential equation
$$
   L u = f \qquad \text{on } \Omega
$$
with initial condition $u_0 \in L^2_{\rm{loc}}$ is a function  
$u \in L^2_{\rm{loc}}(\ovl{\Omega})$ such that for all $\phi \in C_c^{1}(\mathbb{R}^{n+1})$ we have
\begin{multline}
   \int_{\ovl{\Omega}} \Big(- u(t,x) \phi(t,x) - 
  \sum_{j=1}^n 
  a_j(t,x) u(t,x) \partial_{x_j} \phi(t,x) 
  + b(t,x) u(t,x) \phi(t,x) \Big) \m{(t,x)} \\
  = \int_{\ovl{\Omega}} f(t,x) \phi(t,x) \m{(t,x)}
 + \int_{\mathbb{R}^n} u_0(x) \phi(0,x) \m{x}.
\end{multline}
\end{definition}

Note that if all coefficients are  $C^{\infty}$ functions then a solution in the above sense solves the partial differential equation on $\Omega$ in the sense of distributions.

\begin{theorem}\label{HS_thm} Let $a_j$ ($j=1,\ldots,n$), $b$, and $f$ belong to $L^2_\loc(\ovl{\Omega})$ and $u_0 \in L^2_{\rm{loc}}$.
Assume, in addition, that the following conditions are satisfied:
\begin{enumerate}

\item There exists $c_1 > 0$ such that for almost all $(t,x) \in \ovl{\Omega}$: \enspace $a_k (t,x) \le c_1$ \enspace 
($k=1\ldots,n$).

\item There exists a function $\mu \in L^1_{\rm{loc}}([0,\infty[)$, $\mu \geq 0$, such that $b(t,x) \ge - \mu(t)$ for almost all $(t,x) \in \ovl{\Omega}$.

\item For each $k \in \{1,\ldots,n\}$ there exists $0\le\mu_k \in L^1_{\rm{loc}}([0,\infty[)$ such that for almost all $(t,x) \in \ovl{\Omega}$
$$ 
\frac{a_k(t,x) - a_k(t,x_1,\ldots,x_{k-1},r,x_{k+1}\ldots,x_n)}{x_k - r} \ge - \mu_k(t) \qquad \text{for almost all } r \in \R.
$$
\end{enumerate}
Then there exists a weak solution $u \in L^2_{\rm{loc}}(\ovl{\Omega})$ to $L u = f$ with initial condition $u_0$.
\end{theorem}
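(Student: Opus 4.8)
\textbf{Proof plan for Theorem \ref{HS_thm}.}
The plan is to follow the same duality-plus-Hahn-Banach scheme as in the smooth energy-estimate argument above, but carried out locally (because of the $L^2_\loc$ hypotheses) and with the formal adjoint $L^\ast$ playing the role that $Q^\ast$ did earlier. First I would fix an arbitrary $T>0$ and a large ball, so that it suffices to produce a solution on a bounded cylinder $Q_T := \,]0,T[\,\times B_R(0)$ and then exhaust $\ovl{\Omega}$ by such cylinders, gluing the local solutions by a uniqueness/compatibility argument (or, more economically, by extracting a weak-$\ast$ convergent subnet of the local solutions as $T,R\to\infty$, using a diagonal procedure). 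On $\ovl{Q_T}$ the coefficients $a_k,b$ and the data $f$ lie in $L^2$, and after multiplying by a fixed cutoff we may treat them as globally $L^2$ in $x$ with $L^1$-in-$t$ behaviour, which is what the energy machinery needs.

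The core step is the energy estimate for the \emph{adjoint} operator $L^\ast v = -\partial_t v - \sum_j a_j\partial_{x_j}v + (b-\div_x a)v$ applied to test functions $v\in C_c^1(R_T)$ with $v=0$ near $t=T$ (as in the existence proof of Subsection 4.2.1). Here the three structural hypotheses (i)--(iii) are exactly what is needed to run the G\r{a}rding-type estimate: condition (iii) is the one-sided Lipschitz (``quasi-monotonicity'') bound on $a_k$ in the $x_k$-variable which replaces the $W^{1,\infty}$ control of $\div_x a$ and produces, after integration by parts in a mollified form, a lower bound $\mathrm{Re}\,\langle \sum_j a_j\partial_{x_j}v,\,v\rangle_0 \ge -\big(\sum_k\mu_k(t)\big)\|v(t)\|_0^2$; conditions (i)--(ii) give the bound $a_k\le c_1$ and $b\ge-\mu(t)$ needed to control the zero-order term and the boundary-in-$x$ contributions. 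Setting $h(t):=\mu(t)+\tfrac12\sum_k\mu_k(t)\in L^1_\loc$ and $\lambda(t):=2\int_0^t h$, the same Gronwall-type manipulation as in \eqref{energyest} then yields
$$
 \sup_{0\le r\le T}\|v(r)\|_0 \;\le\; 2\,\exp\!\Big(\int_0^T h(\sigma)\,\m{\sigma}\Big)\int_0^T \|(L^\ast v)(r)\|_0\,\m{r}.
$$
Because (iii) is only a \emph{one-sided} bound and $a_k$ is merely $L^2$, the integration by parts must first be justified on regularizations $a_k\ast\rho_\delta$ (for which $\div_x$ makes classical sense) and the one-sided bound must be shown to pass to the limit $\delta\to 0$ in the relevant quadratic form; this passage is the technical heart and the main obstacle of the proof.

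Granting the adjoint energy estimate, the remainder is routine: the assignment $(L^\ast v)|_{[0,T]\times\R^n}\mapsto \int_0^T\langle f(r),v(r)\rangle_0\,\m{r} + \langle u_0,v(0)\rangle_0$ is a well-defined conjugate-linear functional on the subspace $\{L^\ast v : v\in C_c^1(R_T)\}$ of $L^1([0,T];L^2)$, bounded by $\sup_r\|L^\ast v(r)\|_0$ thanks to the estimate; Hahn--Banach extends it to all of $L^1([0,T];L^2)$, and the dual identification $L^1([0,T];L^2)' \cong L^\infty([0,T];L^2)$ furnishes $u\in L^\infty([0,T];L^2)\subseteq L^2_\loc(\ovl{Q_T})$ with $\langle u, L^\ast v\rangle = \int_0^T\langle f,v\rangle_0 + \langle u_0,v(0)\rangle_0$ for all such $v$. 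Reading this identity against $\phi\in C_c^1(]0,T[\times B_R)$ recovers precisely the weak formulation $Lu=f$, and testing against $v$ that do not vanish at $t=0$ (after first checking, as in Subsection 4.2.1, that $Lu\in L^1([0,T];W^{-1,1})$ so that $u\in AC([0,T];W^{-1,1})$ and $u(0)$ is meaningful) yields $u(0)=u_0$. Finally, letting $R\to\infty$ and $T\to\infty$ along a sequence and extracting a subnet converging weak-$\ast$ in $L^2$ on each compact subset of $\ovl{\Omega}$ produces the desired global weak solution $u\in L^2_\loc(\ovl{\Omega})$.
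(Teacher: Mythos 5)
The paper itself does not prove Theorem \ref{HS_thm}: it is quoted from Hurd--Sattinger \cite{HS:68}, whose method (as the paper itself describes at the start of Subsection 4.2.2) is to mollify the coefficients --- which preserves the one-sided hypotheses (i)--(iii) --- solve the regularized problems, prove energy estimates \emph{uniform in the regularization} over regions/weights adapted to the bound $a_k\le c_1$, and pass to a weak limit. Your adjoint-duality/Hahn--Banach route is a genuinely different scheme, and it has a real gap precisely at the point where the $L^2_{\rm loc}$ hypotheses and condition (i) must be used: localization. Cutting off the data changes the problem, and there is no two-sided finite propagation speed available to relate the cut-off problem to the original one on a compact set, because $a_k$ is only bounded \emph{above}; characteristics can move arbitrarily fast in the negative coordinate directions. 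Cutting off the coefficients is worse: $\partial_{x_k}(\chi a_k)=\chi\,\partial_{x_k}a_k+a_k\,\partial_{x_k}\chi$, and since $a_k$ is merely in $L^2_{\rm loc}$ (in particular unbounded below), the term $a_k\,\partial_{x_k}\chi$ destroys hypothesis (iii). Your fallback, running the duality argument on cylinders $]0,T[\times B_R$ (which does work for test functions supported in $B_R$, since no lateral boundary terms appear) and then extracting a weak-$\ast$ limit on compacts, also fails as stated: Hahn--Banach only yields $\|u_R\|_{L^\infty([0,T];L^2(B_R))}\lesssim \|u_0\|_{L^2(B_R)}+\|f\|_{L^1([0,T];L^2(B_R))}$, which blows up as $R\to\infty$ for data that are only locally square integrable, and nothing in your construction converts this into a bound on a fixed compact set uniform in $R$. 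Producing such local a priori bounds from $a_k\le c_1$ (via domain-of-dependence regions or weights moving with speed $c_1$) is exactly the content of the Hurd--Sattinger argument, and your plan never engages with it.

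Two further points need repair, though they are more easily fixed. First, the formal adjoint of the conservative operator $Lu=\partial_t u+\sum_j\partial_{x_j}(a_ju)+bu$ is $L^{\ast}v=-\partial_t v-\sum_j a_j\partial_{x_j}v+bv$; the extra term $-(\div_x a)\,v$ you wrote does not belong there, and under the present hypotheses $\div_x a$ is not a function (only a measure bounded below), so with your formula the set $\{L^\ast v\}$ would not even sit in $L^1([0,T];L^2)$ and the duality scheme would not parse. Second, with the correct adjoint no regularization of $a$ is needed for the key quadratic-form inequality: condition (iii) says $x_k\mapsto a_k(t,x)+\mu_k(t)x_k$ is nondecreasing, hence $\partial_{x_k}a_k\ge-\mu_k(t)$ distributionally, and therefore
\begin{equation*}
2\,{\rm Re}\,\langle a_k\partial_{x_k}v,\,v\rangle_0=\int a_k\,\partial_{x_k}|v|^2\m{x}\;\le\;\mu_k(t)\,\|v(t)\|_0^2
\end{equation*}
directly for $v\in C^1_c$. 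Note this is an \emph{upper} bound on ${\rm Re}\,\langle\sum_j a_j\partial_{x_j}v,v\rangle_0$, not the lower bound you asserted; it happens to be the direction the backward-in-time adjoint estimate actually needs, but the sign bookkeeping in your sketch is off, and the "technical heart" is not the passage $\delta\to0$ in the quadratic form (which is immediate as above) but the missing local estimates described in the previous paragraph.
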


Concerning the meaning of condition (iii) in Theorem \ref{HS_thm} we mention two aspects:
\begin{itemize}
\item In one space dimension we obtain
$\frac{a(x,t)-a(t,y)}{x-y} \ge - \mu_1(t)$,
which resembles a one-sided Lipschitz continuity condition in the $x$ variable (apart from the fact that $\mu_1(t)$ need not be finite or defined for all $t$). In particular, it excludes jumps downward (seen when going from smaller to larger values in the $x$ argument).

\item Heuristically --- replacing difference quotients by partial derivatives --- condition (iii) can be read as $\div a(t,x) \geq - \sum \mu_k(t)$, thus giving an $L^1$ lower bound on the divergence of $a$. We observe that upon formally applying the Leibniz rule in the operator $L$ we cast it in the form $P$ as in \eqref{pdo1} with  $c = \div a + b$. In combination with condition (ii) of Theorem \ref{HS_thm}, we obtain that $\frac{1}{2} \div a - c =  -(\frac{1}{2} \div a + b)$ has an $L^1$ upper bound (uniformly in $x$), which can be considered a substitute for condition \eqref{div_cond} used in the derivation of direct energy estimates in Subsection 2.1.
\end{itemize}  

\begin{remark} Hurd-Sattinger (\cite{HS:68}) also give a uniqueness result for first-order systems in case of a single space variable and $b = 0$. For scalar equations the hypotheses require condition (i) to be strengthened to boundedness from above and from below and  condition (iii) to be replaced by a Lipschitz property with an upper bound instead; in particular, no jumps upward are possible.  
\end{remark}

\begin{example} For the operator $L$ in one space dimension and coefficients $a(x) = \rm{sign}(x)$ and $b = 0$, the Poupaud-Rascle theory is not applicable (as mentioned in \cite[Section 1, Example 2]{PoupaudRascle:97}), but Hurd-Sattinger theory ensures existence of weak solutions, if the initial value belongs to $L^2_{\rm{loc}}$.
\end{example}


\subsubsection{Di Perna-Lions theory}

The weak solution concept introduced by Di Perna-Lions in \cite{DiPernaLions:89} for the Cauchy problem for the operator $P$ on a finite-time domain $[0,T] \times \R^n$ can be interpreted in the following way.

\begin{definition}\label{dPL_sol} Let $T > 0$, $1 \leq p \leq \infty$, $\frac{1}{p} + \frac{1}{q} = 1$, $f \in L^1([0,T]; L^p(\R^n))$, $a_k \in L^{1}([0,T];L^q_{\rm{loc}}(\mathbb{R}^n))$ ($k=1,\ldots,n$), and $c\in L^1([0,T];L^q_{\rm loc}(\mathbb{R}^n))$ such that 
$$
  \div(a) - c  \quad \in 
  L^{1}([0,T];L^q_{\rm{loc}}(\mathbb{R}^n)).
$$ 
A function $u\in L^{\infty}([0,T]; L^{p}(\mathbb{R}^n))$ is called a weak solution in the sense of Di Perna-Lions  of the partial differential equation 
$$
 P u = f \qquad \text{on } ]0,T[\, \times \R^n
$$
with initial value $u_0 \in L^p(\R^n)$, if 
\begin{multline} 
  \int_0^T \int_{\mathbb{R}^n} u(t,x) \Big( -
  \partial_t \varphi(t,x) \m{x}  
  - \sum_{k=1}^n a_k(t,x) 
  \d_{x_k}\varphi(t,x) \Big) \m{x} \m{t} \\ 
  + \int_0^T \int_{\mathbb{R}^n}
  u(t,x) \big( -
   \div a(t,x) + c(t,x) \big) 
  \varphi(t,x) DCm{x} \m{t} \\
  = \int_0^T \int_{\mathbb{R}^n} f(t,x) 
  \varphi(t,x) \m{x} \m{t}  + 
  \int_{\mathbb{R}^n} u_0(x) \varphi(0,x) \m{x} 
\end{multline}
holds for all $\varphi \in C^{\infty}([0,T], \mathbb{R}^n)$ with compact support in $[0,T[ \times \R^n$.
\end{definition}

Clearly, in case of $C^{\infty}$ coefficients we obtain a distributional solution of the partial differential equation in $]0,T[ \times \R^n$.

\begin{theorem}\label{DiPL_thm} Existence of a weak solution $u \in L^{\infty}([0,T]; L^p(\mathbb{R}^n))$ in the sense of and with assumptions as in Definition \ref{dPL_sol} is guaranteed under the additional hypothesis 
\begin{eqnarray*}
  \frac{1}{p} {\rm div}(a) - c \in \quad
     L^1([0,T];L^{\infty}(\R^n)),
  && \rm{if}\ p>1, \\ 
    {\rm div}(a), c \quad \in 
    L^1([0,T];L^{\infty}(\R^n)),
    && \rm{if}\ p=1.
\end{eqnarray*}
\end{theorem}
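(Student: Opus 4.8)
The plan is to obtain $u$ as a weak-$*$ limit of classical solutions of regularized Cauchy problems, the crucial uniform bound being an $L^p$-analogue of the energy estimate \eqref{energyest}. Observe first that, since $a$ is real, the weak formulation of Definition \ref{dPL_sol} is exactly the identity
\[
  \dis{u}{-\d_t\varphi - \sum_k a_k\,\d_{x_k}\varphi + (c-\div a)\varphi}_{[0,T]\times\R^n} = \int_0^T \dis{f(t)}{\varphi(t)}_0\,\m{t} + \dis{u_0}{\varphi(0,\cdot)}_0 ,
\]
i.e.\ $\dis{u}{P^{*}\varphi} = \dis{f}{\varphi} + \dis{u_0}{\varphi(0,\cdot)}$ with $P^{*}\varphi = -\d_t\varphi - \sum_k a_k\,\d_{x_k}\varphi + (c-\div a)\varphi$. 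Since $\varphi$ vanishes near $t=T$, the boundary contribution there drops out, so the initial condition $u(0)=u_0$ is built into the identity and need not be recovered separately.

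Next I would regularize. Mollifying in $x$ and truncating the coefficients on $\{|x|>1/\eps\}$ (which is immaterial for the limit passage, as all admissible $\varphi$ are compactly supported), set $a^\eps_k$, $c^\eps$ to be smooth, bounded, with $a^\eps_k\to a_k$, $c^\eps\to c$ and $\div a^\eps - c^\eps\to\div a-c$ in $L^1([0,T];L^{p'}_\loc(\R^n))$, and choose smooth compactly supported $f^\eps\to f$ in $L^1([0,T];L^p(\R^n))$, $u^\eps_0\to u_0$ in $L^p(\R^n)$; mollification does not increase $L^\infty$-norms, so the structural bound of the hypothesis survives. For each fixed $\eps$ the coefficient $a^\eps$ satisfies Carath\'eodory-type conditions, so the method of characteristics of Section 4.1, or classical hyperbolic theory, furnishes a solution $u^\eps$ of $\d_t u^\eps + \sum_k a^\eps_k\,\d_{x_k}u^\eps + c^\eps u^\eps = f^\eps$, $u^\eps(0)=u^\eps_0$, with $u^\eps\in C([0,T];L^p(\R^n))$, smooth and compactly supported in $x$; integrating this equation against a test function and integrating by parts reproduces the weak formulation of Definition \ref{dPL_sol} with $\eps$-coefficients.

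The heart of the matter is the uniform bound. Multiplying the regularized equation by $|u^\eps|^{p-2}\overline{u^\eps}$ and integrating over $\R^n$, one integration by parts in the transport term gives
\[
  \diff{t}\,\|u^\eps(t)\|_{L^p}^p = \int_{\R^n}\bigl(\div a^\eps(t)-p\,\Re c^\eps(t)\bigr)|u^\eps(t)|^p\,\m{x} + p\,\Re\!\int_{\R^n} f^\eps(t)\,|u^\eps(t)|^{p-2}\overline{u^\eps(t)}\,\m{x}.
\]
Writing $\div a^\eps - p\,\Re c^\eps = p\bigl(\tfrac1p\div a^\eps - \Re c^\eps\bigr)$, the first term is bounded by $p\,h(t)\,\|u^\eps(t)\|_{L^p}^p$ with $h(t):=\|\tfrac1p\div a(t,\cdot)-c(t,\cdot)\|_{L^\infty}\in L^1([0,T])$ by hypothesis (for $p=1$ using the separate bounds $\div a,\,c\in L^1([0,T];L^\infty)$); by H\"older the second term is at most $p\,\|f^\eps(t)\|_{L^p}\,\|u^\eps(t)\|_{L^p}^{p-1}$. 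Dividing by $\|u^\eps(t)\|_{L^p}^{p-1}$ and applying Gronwall's lemma yields, uniformly in $\eps$,
\[
  \sup_{0\le t\le T}\|u^\eps(t)\|_{L^p} \le \exp\!\Bigl(\int_0^T h(\sigma)\,\m{\sigma}\Bigr)\Bigl(\|u^\eps_0\|_{L^p}+\int_0^T\|f^\eps(r)\|_{L^p}\,\m{r}\Bigr) \le C ,
\]
compare \eqref{energyest}. For $1<p\le\infty$ the family $(u^\eps)$ is then bounded in $L^\infty([0,T];L^p(\R^n))$, the dual of the separable space $L^1([0,T];L^{p'}(\R^n))$; passing to a subsequence, $u^\eps$ converges weak-$*$ there to some $u$, and testing against $\psi\in L^{p'}(\R^n)$ on subintervals shows $u\in L^\infty([0,T];L^p(\R^n))$. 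The limit passage is routine: because $\varphi$ has compact support and $a_k,\,c-\div a\in L^1([0,T];L^{p'}_\loc(\R^n))$, the coefficients $a^\eps_k\,\d_{x_k}\varphi$ and $(c^\eps-\div a^\eps)\varphi$ converge to $a_k\,\d_{x_k}\varphi$ and $(c-\div a)\varphi$ \emph{strongly} in $L^1([0,T];L^{p'}(\R^n))$, so pairing with the bounded weak-$*$ convergent $(u^\eps)$, together with $f^\eps\to f$ and $u^\eps_0\to u_0$, gives the identity of Definition \ref{dPL_sol} for $u$.

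The main obstacle is the case $p=1$: then $L^\infty([0,T];L^1(\R^n))$ is not a dual space, so the uniform bound does not by itself produce a weak-$*$ convergent subsequence. One must instead invoke the Dunford--Pettis theorem, and for that establish that $(u^\eps)$ is equi-integrable and tight on $[0,T]\times\{|x|\le R\}$ for every $R$ --- uniform smallness of $\int_{\{|u^\eps|>M\}}|u^\eps(t,x)|\,\m{x}$ as $M\to\infty$ and of $\int_{|x|>R}|u^\eps(t,x)|\,\m{x}$ as $R\to\infty$. This is exactly where the \emph{strengthened} hypothesis $\div a,\,c\in L^1([0,T];L^\infty(\R^n))$ (rather than just $\div a-c\in L^1 L^\infty$) enters: running the $L^1$-energy computation against truncations of $u^\eps$ and against slowly growing spatial weights, these two quantities are controlled uniformly in $\eps$, yielding the weak $L^1$-compactness needed to extract $u$; the remaining limit passage is then as above. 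A secondary technical point is the bookkeeping in the truncation/mollification of the coefficients (and, if needed, the mild growth control of $a$ at spatial infinity in the Di Perna--Lions hypotheses), which is harmless but must be checked.
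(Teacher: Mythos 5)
The paper itself contains no proof of Theorem \ref{DiPL_thm}: it is quoted as a known result from \cite{DiPernaLions:89}, so there is nothing internal to compare against, and your sketch reproduces essentially the standard existence argument of that reference --- regularize coefficients and data, derive the uniform $L^p$ bound by testing the regularized equation with $|u^\eps|^{p-2}\overline{u^\eps}$ (which is exactly where the hypothesis $\frac{1}{p}\div a - c\in L^1([0,T];L^\infty(\R^n))$ enters), extract a weak-$*$ limit for $p>1$, and invoke Dunford--Pettis for $p=1$ --- and it is essentially sound. The only steps you assert rather than carry out are the endpoint $p=\infty$, where the $|u|^{p-2}\bar u$ multiplier must be replaced by a sup bound along the regularized characteristics (this is where $c\in L^1([0,T];L^\infty)$ alone is used), and the $p=1$ equi-integrability, which needs a concrete device such as splitting $u_0$ and $f$ into a bounded part plus an $L^1$-small part, or renormalizing with a convex superlinear $\beta$ chosen by De la Vall\'ee Poussin so that $\beta(|u_0|)\in L^1$; both are standard but should be spelled out, whereas tightness at spatial infinity is not actually needed since the test functions are compactly supported and $u\in L^\infty([0,T];L^1)$ then follows from the uniform bound by lower semicontinuity.
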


\begin{remark}\label{DiPL_uni} Uniqueness holds in general under the
 additional hypotheses that $c, {\rm div}(a) \in L^1([0,T]; L^{\infty}(\mathbb{R}^n))$, and for  $j=1,\ldots,n$ also $a_j \in L^1([0,T]; W^{1,q}_{\rm loc}(\mathbb{R}^n))$ as well as
\begin{eqnarray*}
  \frac{a_j}{1+|x|} \quad \in \;
  L^1([0,T]; L^1(\mathbb{R}^n)) + 
  L^1([0,T]; L^{\infty}(\mathbb{R}^n)).
\end{eqnarray*}
\end{remark}

\begin{example}[Hurd-Sattinger applicable, but not Di Perna-Lions] Note that with a single spatial variable boundedness of $\div(a) = a'$ implies Lipschitz continuity. Hence, if $a \in H^1(\R)$ is not Lipschitz continuous but satisfies the one-sided Lipschitz condition in Hurd-Sattinger's existence Theorem \ref{HS_thm} (iii), then a weak solution in the sense of Hurd-Sattinger to the problem
$$
   \d_t u + \d_x(a u) = f \in L^2(\R^2), \quad
   u \mid_{t=0} = u_0 \in L^2(\R)
$$   
is guaranteed to exist, whereas the general statement of DiPerna-Lions' existence theory (Theorem \ref{DiPL_thm} with $p=q=2$) is not applicable to the formally equivalent problem
$$
  \d_t u + a \d_x u + a' u = f \in L^2(\R^2), \quad
   u \mid_{t=0} = u_0 \in L^2(\R).
$$ 
\end{example}

\begin{example}[Di Perna-Lions applicable, but not Hurd-Sattinger] 
Let $0 < \sigma < 1$ and consider the identical coefficient functions $a_1 = a_2 \in C_{*,\mathrm{comp}}^{\sigma}(\R^2)$ (i.e., comsupporteduported functions in $C_*^\sigma(\R^2)$) given by
$$
  a_1(x,y) = a_2(x,y) = 
  -\frac{1}{\sigma} (x-y)_{+}^{\sigma}\, \chi(x,y),
$$
where $\chi \in \mathcal{D}(\mathbb{R}^2)$ such that $\chi = 1$ near $(0,0)$. Note that $a_1$ is not Lipschitz continuous, since for $x > 0$ but $x$ sufficiently small the difference quotient
$$
  \frac{a_1(x,0) - a_1(0,0)}{x} = - \frac{x^{\sigma - 1}}{\sigma}
$$ 
is unbounded as $x \to 0$. In particular, the latter observation shows that the Hurd-Sattinger existence theory is not applicable (condition (iii) in Theorem \ref{HS_thm} is violated) to the Cauchy problem for the operator
$$
   L u = \d_t u + \d_x(a_1 u) + \d_y(a_2 u).
$$

On the other hand, we can show that with $a = (a_1,a_2)$ the DiPerna-Lions existence theory is applicable to the Cauchy problem
\begin{eqnarray*}
  \partial_t u + a_1 \d_x u + a_2 \d_y u  + (\div a)\, u 
  = f \in L^1([0,T]; L^p(\mathbb{R}^2)), 
  \quad u \mid_{t=0} = u_0 \in L^p(\R^2).
\end{eqnarray*} 

To begin with, we observe that
\begin{align*}
  \d_x a_1(x,y) = \d_x a_2(x,y) &=
  - \frac{\chi(x,y)}{(x-y)_+^{1-\sigma}} - 
  \frac{1}{\sigma} (x-y)_+^\sigma \, \d_x \chi(x,y) \\
  \d_y a_1(x,y) = \d_y a_2(x,y) &=
  \frac{\chi(x,y)}{(x-y)_+^{1-\sigma}} - 
  \frac{1}{\sigma} (x-y)_+^\sigma \, \d_y \chi(x,y)  
\end{align*} 
yields
$$
 \div a (x,y) = - 
  \frac{1}{\sigma} (x-y)_+^\sigma \, \div \chi(x,y) 
  \in C_{*,\mathrm{comp}}^\sigma(\R^2).
$$
Note that in the notation of Definition \ref{dPL_sol} and Theorem \ref{DiPL_thm} we have $c = \div a \in L^\infty(\R^2)$ (and time-independent). Therefore, the basic assumptions for the solution concept to make sense as well as the hypotheses of the existence statement are clearly satisfied.

As for uniqueness, we remark that all the conditions mentioned in Remark \ref{DiPL_uni} are met if and only if $\sigma > 1/p$.
\end{example}

\begin{remark} We mention that with coefficients as in the above example, the system of characteristic differential equations has  forward-unique solutions, hence the Poupaud-Rascle solution concept for measures is also applicable. 
\end{remark}


\subsubsection{Lafon-Oberguggenberger theory}

The theory for symmetric hyperbolic systems presented in \cite{LO:91} by Lafon-Oberguggenberger allows for Colombeau generalized functions as coefficients as well as inital data and right-hand side. Thus we consider the following hyperbolic Cauchy
problem in $\R^{n+1}$
\begin{align}
  P u = \d_t u + \sum_{j=1}^{n} a_j \d_{x_j} u + c u = f   
\label{hypsys_equ} \\
  u \mid_{t = 0} = u_0,
\label{hypsys_ini}
\end{align}
where $a_j$ ($j=1,\ldots,n$), $c$ are real valued generalized
functions in $\G(\R^{n+1})$ (in the sense that all representatives are
real valued smooth functions), $f \in \G(\R^{n+1})$, and initial value $u_0 \in\G(\R^n)$. 

The coefficients will be subject to some restriction on the allowed
divergence in terms of $\eps$-dependence. A Colombeau function
$v \in\G(\R^d)$ is said to be of \emph{logarithmic type} if it has a
representative $(v_\eps)$ with the following property: there are constants
$N\in\N$, $C>0$, and $1 > \eta>0$ such that
\begin{equation*}
  \sup\limits_{y\in\R^d} |v_\eps(y)| \leq N \log\big(\frac{C}{\eps}\big) 
  \qquad 0 <  \eps < \eta \; .
\end{equation*}
(This property then holds for any representative.) By a suitable modification of \cite[Proposition 1.5]{O:89}
it is always possible to model any finite order distribution as coefficient with such properties (in the sense that the Colombeau coefficient is associated to the original distributional coefficient).

\begin{theorem}\label{LO_thm} Assume that $a_j$ and $c$ are constant for large $|x|$ and that 
$\d_{x_k} a_j$ ($k=1,\ldots,n$) as well as $c$ are of logarithmic type. Then given initial data $u_0 \in\G(\R^n)$ and right-hand side $f \in \G(\R^{n+1})$, the Cauchy problem (\ref{hypsys_equ})-(\ref{hypsys_ini}) has a unique solution $u \in\G(\R^{n+1})$.
\end{theorem}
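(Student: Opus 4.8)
The plan is to prove the result representative-wise, reducing the Colombeau statement to a family of classical smooth Cauchy problems together with the moderateness and negligibility bookkeeping that is characteristic of the special algebra. First I would fix representatives $(a_{j,\varepsilon})_\varepsilon$, $(c_\varepsilon)_\varepsilon$, $(f_\varepsilon)_\varepsilon$, $(u_{0,\varepsilon})_\varepsilon$ of the data, noting that by hypothesis each $a_{j,\varepsilon}$, $c_\varepsilon$ is smooth and constant for large $|x|$, so for each fixed $\varepsilon$ the symmetric hyperbolic (here scalar) Cauchy problem
$$
  \partial_t u_\varepsilon + \sum_{j=1}^n a_{j,\varepsilon}\, \partial_{x_j} u_\varepsilon + c_\varepsilon\, u_\varepsilon = f_\varepsilon, \qquad u_\varepsilon\mid_{t=0} = u_{0,\varepsilon},
$$
has a unique smooth solution $u_\varepsilon \in C^\infty(\R^{n+1})$ by the classical theory of linear symmetric hyperbolic systems with smooth coefficients (finite speed of propagation takes care of the behaviour for large $|x|$). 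This defines a candidate net $(u_\varepsilon)_\varepsilon$; the content of the theorem is that $(u_\varepsilon)_\varepsilon$ is moderate and that its class is independent of all choices.

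The main step is the moderateness estimate, and this is where the logarithmic-type hypothesis enters. The plan is to run the energy estimate of Subsection 4.2.1 (the basic inequality \eqref{energyest}) at fixed $\varepsilon$ in the Sobolev scale $H^s(\R^n)$ for each $s \in \N_0$. The exponential factor there is $\exp(\int_0^t h_\varepsilon(\sigma)\,\m\sigma)$ with $h_\varepsilon$ controlled by $\|\tfrac12 \div a_\varepsilon - c_\varepsilon\|_{L^\infty}$ (and, for higher $s$, by $L^\infty$-norms of finitely many $x$-derivatives of $a_{j,\varepsilon}$ and $c_\varepsilon$, after commuting $\partial_x^\alpha$ through $P$ and using Moser-type product estimates). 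Since $\partial_{x_k} a_{j,\varepsilon}$ and $c_\varepsilon$ are of logarithmic type, each such $L^\infty$-norm is $O(\log(1/\varepsilon))$, hence $\int_0^t h_\varepsilon(\sigma)\,\m\sigma = O(\log(1/\varepsilon))$ on every compact time interval, and therefore
$$
  \exp\Big(\int_0^t h_\varepsilon(\sigma)\,\m\sigma\Big) = O(\varepsilon^{-N})
$$
for some $N$ — this is precisely the point of the logarithmic-type restriction: it converts the $h_\varepsilon$-blow-up into a polynomial (moderate) bound on the exponential amplification. Combined with the moderateness of $(u_{0,\varepsilon})_\varepsilon$ in each $H^s$ and of $(f_\varepsilon)_\varepsilon$ in each $L^1([0,T];H^s)$, the energy estimate gives $\sup_{t\le T}\|u_\varepsilon(t)\|_s = O(\varepsilon^{-p_s})$; reading the equation then bounds $\partial_t u_\varepsilon$, and iterating on $t$-derivatives (differentiating the equation in $t$) yields the full family of moderateness bounds $\sup_{K}|\partial^\gamma u_\varepsilon| = O(\varepsilon^{-p})$ on compact $K\subseteq\R^{n+1}$. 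A standard Sobolev embedding argument passes from the $H^s$-estimates to these sup-norm estimates.

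For well-definedness and uniqueness I would argue that the difference of two solution nets (arising from two choices of representatives, or from two solutions of the Colombeau problem) satisfies the same equation with negligible data and a negligible right-hand side; the very same energy estimate, now with the $O(\varepsilon^{-N})$ amplification factor multiplied against $O(\varepsilon^q)$ data for arbitrary $q$, forces the difference net to be negligible in every $H^s$, hence (by embedding) negligible in $\G(\R^{n+1})$. This gives both that $u := [(u_\varepsilon)_\varepsilon]$ is a well-defined element of $\G(\R^{n+1})$ solving \eqref{hypsys_equ}--\eqref{hypsys_ini}, and that it is the unique such element. I expect the main obstacle to be the higher-order energy estimates: commuting $\partial_x^\alpha$ past the variable-coefficient operator $P$ produces commutator terms involving products of derivatives of $a_{j,\varepsilon}$ with derivatives of $u_\varepsilon$, and one must check carefully — via Moser/Gagliardo--Nirenberg product inequalities — that the accumulated $\varepsilon$-powers stay polynomial, i.e. that the logarithmic-type control on the \emph{first} derivatives of the coefficients, together with ordinary moderateness of their higher derivatives, is enough to close the induction on $s$ with only polynomial-in-$\varepsilon$ losses. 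Everything else is routine Colombeau bookkeeping on top of the classical linear hyperbolic theory.
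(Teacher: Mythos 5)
First, note that the paper does not prove this theorem at all: it is quoted as a survey item from the Lafon--Oberguggenberger article \cite{LO:91}, so your proposal can only be measured against the original proof. Your overall strategy is indeed the one used there: solve the regularized problem classically for each fixed $\eps$, derive higher-order energy estimates whose Gronwall/exponential amplification factor involves only $\|\mathrm{div}_x a_\eps\|_{L^\infty}$, $\|c_\eps\|_{L^\infty}$ and first spatial derivatives of the coefficients, use the logarithmic-type hypothesis to turn $\exp(O(\log(1/\eps)))$ into $O(\eps^{-N})$, and then feed negligible data through the same estimates to get uniqueness and independence of representatives. Your remark about the commutator terms is also essentially right: only the $|\beta|=1$ derivatives of $a_{j,\eps}$ hit the top-order norm and must be of log-type, while higher derivatives are merely moderate and multiply lower-order norms already controlled by the induction on $s$.

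There is, however, a genuine gap at the point where you write ``combined with the moderateness of $(u_{0,\eps})_\eps$ in each $H^s$ and of $(f_\eps)_\eps$ in each $L^1([0,T];H^s)$''. The data are only assumed to lie in $\G(\R^n)$ and $\G(\R^{n+1})$, i.e.\ the representatives satisfy moderate \emph{sup-norm bounds on compact sets} and need not decay at all; in general $u_{0,\eps}\notin H^s(\R^n)$, so the global Sobolev energy estimate you invoke cannot even be written down, let alone yield moderate bounds. The actual proof must localize: one estimates $u_\eps$ on a fixed compact set by energy inequalities on backward cones (or with cut-off data), using finite propagation speed, and the hypotheses that the $a_j$ are constant for large $|x|$ together with the logarithmic bounds are exactly what keeps the relevant domains of dependence under control as $\eps\to 0$, so that \emph{local} moderateness of $u_0$ and $f$ suffices. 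Your sketch mentions finite speed of propagation only to justify classical solvability at fixed $\eps$ and never confronts this localization; as written, the moderateness (and likewise the negligibility step for uniqueness) does not go through for general data in $\G$. Note that this is not a pedantic point: if the size of the coefficients themselves (equivalently, of the propagation speed) is not controlled uniformly in $\eps$, local moderateness of the data genuinely fails to propagate, so this is precisely the step where the structural hypotheses on the coefficients have to be used and your argument needs to be repaired.
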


We also mention the following {\bf consistency result} which shows that
Colombeau theory includes the classically solvable cases: If we assume that the coefficients $a_j$ and $c$ are $C^\infty$ then we have the following consistency with classical and distributional solutions (cf.\ \cite{LO:91})
\begin{itemize}
\item If $f$ and $u_0$ are $C^\infty$ functions then the generalized solution
  $u \in\G(\R^{n+1})$ is equal (in $\G$) to the classical smooth
  solution.
\item If $f \in L^2(\R;H^s(\R^n))$ and $u_0 \in H^s(\R^n)$ for some
  $s\in\R$, then the generalized solution $u \in\G(\R^{n+1})$ is
  associated to the classical solution belonging to $C(\R;
  H^s(\R^n))$.
\end{itemize}

\begin{example} Consider the $(1+1)$-dimensional operator
$$
   L u  = \d_t u  + \d_x (H(-x) u).
$$
Since the coefficient (of the formal principal part) has a jump downward neither Hurd-Sattinger nor Di Perna-Lions theory is applicable. In fact, it has been shown in \cite[Section 2]{HdH:01} that none of the distributional products from the coherent hierarchy (cf.\ \cite{O:92} and the introductionary section) applied to $H(-x) \cdot u$ is capable of allowing for distributional solutions of the homogeneous Cauchy problem for arbitrary smooth initial data.

Recall from Section 1 that measure solutions according to Bouchut-James exist for the corresponding Cauchy problem, if the Heaviside function (usually understood as a class of functions in $L^\infty$) is replaced by the particular Borel measurable representative with value $0$ at $x = 0$.  For example, the initial value $u_0 = 1$ then yields the measure solution $u = 1 + t \delta(x)$ in the sense of Bouchut-James as seen in Example \ref{BJ_jump}. 

However, Colombeau generalized solutions are easily obtained --- even for arbitrary generalized initial data --- if the coefficient $H(-x)$ is regularized by convolution with a delta net of the form $\rho_\eps(x) =  \log(1/\eps) \rho(x \log(1/\eps))$ ($0 < \eps < 1$), where $\rho \in C_c^\infty(\R)$ with $\int \rho = 1$. Let $a$ denote the class of this regularization in the Colombeau algebra $\G$, then the operator $L$ may now be written equivalently in the form
$$
  P u = \d_t u + a \d_x u + a' u,
$$
where $a' \approx \delta$ and $u \in \G$. Due to the logarithmic scale in the regularization the hypotheses of Theorem  \ref{LO_thm} are satisfied and the corresponding Cauchy problem is uniquely solvable. Moreover, for most interesting initial data (e.g.\  Dirac measures or $L^1_{\mathrm{loc}}$) weak limits of the Colombeau solution $u$ are known to exist and can be computed (cf.\ \cite[Section 6]{HdH:01}). In particular, for the initial value $u_0 = 1$ we obtain the measure solution $u = 1 + t \delta(x)$ as such a distributional shadow.

\end{example}

\begin{remark} (i) The basic results of Lafon-Oberguggenberger have been extended to the case of (scalar) pseudodifferential equations with generalized symbols in \cite{GH:04}.  Special cases and very instructive examples can be found in \cite{O:88}, and an application of Colombeau theory to the linear acoustics system is presented in \cite{O:89}).

(ii) Colombeau-theoretic approaches allow for a further flexibility even in interpreting distributional differential equations with smooth coefficients. For example, in \cite{CHO:96} the concept of regularized derivatives is used, where partial differentiation is replaced by convolution with the corresponding derivative of a delta sequence. When acting on distributions this concept produces the usual differential operator actions in the limit. When considered as operators in Colombeau spaces, one can prove (cf.\ \cite[Theorem 4.1]{CHO:96}) that evolution equations with smooth coefficients all whose derivatives are bounded have unique generalized function solutions for initial data and right-hand side in generalized functions.
In particular, famous examples like the Lewy equation become solvable and Zuily's non-uniqueness examples become uniquely solvable then.
\end{remark}

\subsection{Paradifferential techniques}

\subsubsection{Energy estimates}

Bony's paradifferential calculus has been successfully applied  in nonlinear analysis and, in particular, to regularity theory for  nonlinear partial differential equations. An ingredient in such approaches is often a refined regularity assessment of corresponding linearizations of the differential operators involved. A recent account of M\'{e}tivier's methods and results of this type can be found in \cite[Subsection 2.1.3]{BGS:07}, or with more details on microlocal properties in \cite{Hoermander:97}.

Let $s \in \R$ and $H^s_w(\R^n)$ denote the Sobolev space $H^s(\R^n)$ equipped with the weak topology. We consider a differential operator of the form
$$
  \widetilde{P}_v (x,t;\d_t,\d_x) := 
  \d_t + \sum_{j=1}^n a_j(v(x,t)) \, \d_j,
$$
where $a_j \in C^\infty(\R)$ ($j=1\ldots,n$) and $v \in L^\infty([0,T];H^s(\R^n)) \cap C([0,T];H^s_w(\R^n))$ such that $\d_t v \in L^\infty([0,T];H^{s-1}(\R^n)) \cap C([0,T];H^{s-1}_w(\R^n))$. 

\begin{remark}
Not all hyperbolic first-order differential operators with coefficients of regularity as above can be written in the special form of $\widetilde{P}_v$. In fact, this amounts to writing any given list $w_1, \ldots, w_n$ of such functions as $w_j = a_j \circ v$ ($j=1\ldots,n$) with $a_j \in C^\infty(\R)$ and $v$ as above. The latter is, in general, not possible, which can be seen from the following example: consider the Lipschitz continuous functions $w_1(t) = |t|$ and $w_2(t) = t$; if $w_1 = a_1 \circ v$ and $w_2 = a_2 \circ v$ with a Lipschitz continuous function $v$, then $v$ is necessarily non-differentiable at $0$; on the other hand 
$$
  1 = w_2'(0) = \lim_{h \to 0} (a_2(v(h)) - a_2(v(0)))/h 
   = \lim_{h \to 0} a_2'(\xi(h)) (v(h) - v(0)) /h,
$$ 
where $\xi(h)$ lies between $v(0)$ and $v(h)$; hence $a_2'(\xi(h)) \to a_2'(v(0))$ and the second factor $(v(h) - v(0)) /h$ stays bounded, but is not convergent; in case $a_2'(v(0)) = 0$ we obtain the contradiction $1 = 0$, in case $a_2'(v(0)) \neq 0$ we have a contradiction to convergence of the difference quotient for $w_2$.   
\end{remark}

The key technique in analyzing the operator $\widetilde{P}_v$ is to replace all terms $a_j(v)  \d_j$ by $T_{a_j(v)}  \d_j$, i.e., partial differentiation followed by the para-product operator $T_{a_j(v)}$, and then employ estimates of the error terms as well as a paradifferential variant of G\r{a}rding's inequality  (cf.\ \cite[Appendix C.3-4]{BGS:07}). This leads to the following result.
\begin{theorem}[{\cite[Theorem 2.7]{BGS:07}}]
If $s > \frac{n}{2} + 1$, then for any $f \in L^\infty([0,T];H^s(\R^n)) \cap C([0,T];H^s_w(\R^n))$ and $u_0 \in H^s(\R^n)$ the Cauchy problem
$$
  \widetilde{P}_v u = f, \quad u \mid_{t=0} = u_0 
$$
has a unique solution $u \in L^2([0,T];H^s(\R^n))$. Moreover, $u$ belongs to $C([0,T];H^s(\R^n))$ and there are constants $K, \gamma, C \geq 0$ such that $u$ satisfies the energy estimate
$$
  \norm{u(t)}{s}^2 \leq K e^{\gamma t} \norm{u(0)}{s}^2 
    + C \int_0^t e^{\gamma (t - \tau)} 
    \norm{\widetilde{P}_v u(\tau)}{s}^2 \m{\tau}.
$$
\end{theorem}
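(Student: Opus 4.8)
The plan is to follow the paradifferential scheme already sketched before the statement (and carried out in detail in \cite[Appendix C.3--4]{BGS:07}): paralinearize the coefficients, derive an a priori energy estimate at the $H^s$ level, and then obtain existence by regularization plus weak compactness, with uniqueness and time-continuity as by-products. First I would record the mapping properties of the coefficients. Since $s > n/2 + 1 > n/2$, the space $H^s(\R^n)$ is a Banach algebra stable under left composition with smooth functions (after subtracting $a_j(0)$), so $a_j(v) \in L^\infty([0,T];H^s(\R^n)) \cap C([0,T];H^s_w(\R^n))$ with norm controlled by a function of $\|v\|_{L^\infty_t H^s_x}$, and $\d_t(a_j(v)) = a_j'(v)\,\d_t v$ inherits the regularity of $\d_t v$. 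Bony's decomposition $a_j(v)\,w = T_{a_j(v)}w + T_w a_j(v) + R(a_j(v),w)$ splits off a principal para-product term $T_{a_j(v)}$ of order $0$ and two remainders $T_w a_j(v)$, $R(a_j(v),w)$ that are smoothing of order $s - n/2 > 1$; taking $w = \d_k u$ these remainders map $H^s$ into $H^s$ with norm linear in $\norm{u}{s}$, so the equation becomes $\d_t u + \sum_j T_{a_j(v)}\d_j u = f + g(u,v)$ with $g$ of order $0$ in $u$.

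Next I would derive the a priori estimate. Apply $\Lambda^s = \langle D\rangle^s$, pair with $\Lambda^s u$ in $L^2$ and take real parts: the time term gives $\tfrac12\tfrac{d}{dt}\norm{u(t)}{s}^2$; commuting $\Lambda^s$ past $T_{a_j(v)}$ costs a commutator $[\Lambda^s,T_{a_j(v)}]\d_j$ bounded on $L^2$ by a function of $\|\nabla a_j(v)\|_{L^\infty}$ (finite since $a_j(v) \in C^1$); and the symmetrized first-order para-product $\tfrac12\sum_j\bigl(T_{a_j(v)}\d_j + (T_{a_j(v)}\d_j)^\ast\bigr)$ is bounded from below by $-C\norm{u}{s}^2$ via the paradifferential G\r{a}rding inequality, using that the $a_j$ are real-valued — this is the exact analogue of inequality \eqref{Garding}. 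Collecting all contributions gives a differential inequality $\tfrac{d}{dt}\norm{u(t)}{s}^2 \le \gamma\norm{u(t)}{s}^2 + C\norm{f(t)}{s}\,\norm{u(t)}{s}$ with $\gamma,C$ depending only on $n$, $s$, $T$ and $\|v\|_{L^\infty_t H^s_x}$, and Gr\"onwall's lemma yields the stated energy estimate.

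Finally, for existence I would mollify $v$, $f$, $u_0$ to smooth approximants $v_\varepsilon$, $f_\varepsilon$, $u_{0,\varepsilon}$ (with $v_\varepsilon \to v$ in $L^\infty_t H^s_x$ and $\d_t v_\varepsilon \to \d_t v$ in $L^\infty_t H^{s-1}_x$), solve the now smooth-coefficient Cauchy problems classically to get $u_\varepsilon \in C([0,T];H^s)$, use the $\varepsilon$-uniform energy estimate to extract a weak-$\ast$ limit $u \in L^\infty([0,T];H^s)$, and pass to the limit in the paradifferential equation using continuity of the para-products in the coefficients. Then $\d_t u \in L^\infty([0,T];H^{s-1})$ gives $u \in C([0,T];H^{s-1}) \cap L^\infty([0,T];H^s) \subseteq C([0,T];H^s_w)$; applying the energy estimate forward and backward in time shows $t\mapsto\norm{u(t)}{s}$ is continuous, and a weakly continuous $H^s$-valued curve with continuous norm is strongly continuous, so $u \in C([0,T];H^s)$; uniqueness follows by applying the energy estimate to the difference of two solutions, which solves the homogeneous problem with zero initial data. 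The hard part will be the a priori step: the commutator bound for $[\Lambda^s,T_{a_j(v)}]$ and the paradifferential G\r{a}rding inequality with merely $C^1$ (i.e.\ $H^s$, $s>n/2+1$) coefficients, together with verifying that the Bony remainders $T_w a_j(v)$ and $R(a_j(v),w)$ land in $H^s$ with norms linear in $\norm{u}{s}$ — precisely the estimates assembled in \cite[Appendix C.3--4]{BGS:07}, after which the conclusion is the standard Friedrichs--Gr\"onwall argument of Subsection 4.2.1.
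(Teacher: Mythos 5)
Your proposal follows exactly the route the paper indicates for this result, which is taken verbatim from \cite[Theorem 2.7]{BGS:07}: replace each $a_j(v)\,\d_j$ by the para-product term $T_{a_j(v)}\d_j$, control the Bony remainders and the commutator with $\langle D\rangle^s$, invoke the paradifferential G\r{a}rding inequality, and close with Gr\"onwall, regularization and weak compactness. The outline is correct, with the technical weight resting, as you note, on the commutator and para-G\r{a}rding estimates of \cite[Appendix C.3--4]{BGS:07}.
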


\subsubsection{Improvement of regularity in one-way wave equations}

We briefly recall some basic notions and properties concerning symbols with certain H\"older regularity in $x$ and smoothness in $\xi$  \`{a} la Taylor (cf.\ \cite{Taylor:91}). 

\begin{definition} Let $r > 0$, $0 < \delta < 1$, and $m \in \R$. A continuous function $p: \R^n \times \R^n \to \C$ belongs to the symbol space $C_\ast^r S^m_{1,\delta}$, if for every fixed $x \in \R^n$ the map $\xi \mapsto p(x,\xi)$ is smooth and 
for all $\al \in \N_0^n$ there exists $C_\al > 0$ such that
$$
  |\d_\xi^\al p(x,\xi)| \leq C_\al (1 + |\xi|)^{m - |\al|}
  \qquad \forall x, \xi \in \R^n
$$
and
$$
  \norm{\d_\xi^\al p(.,\xi)}{C_\ast^r} \leq 
    C_\al (1 + |\xi|)^{m - |\al| + r \delta}
    \qquad \forall \xi \in \R^n.
$$
\end{definition}

Basic examples are, of course, provided by symbols of differential operators $\sum a_\al \d^\al$ with coefficient functions $a_\al \in C_\ast^r$ ($|\al| \leq m$) or any symbol of the form $p(x,\xi) = a(x) h(x,\xi)$, where $a \in C_\ast^r$ and $h$ is a smooth symbol of order $m$.

\paragraph{Symbol smoothing:} By a coupling of a Littlewood-Paley decomposition in $\xi$-space with convolution regularization in $x$-space via a $\delta$-dependent scale one obtains a decomposition of any symbol $p \in C_\ast^r S^m_{1,\delta}$ in the form 
$$
 p = p^\sharp + p^\flat, \quad \text{ where } p^{\sharp} 
     \in S^m_{1,\delta} \text{ and } p^{\flat} 
     \in C_\ast^r S^{m - r \delta}_{1,\delta}.
$$
Observe that $p^\sharp$ is $C^\infty$ and of the same order whereas $p^\flat$ has the same regularity as $p$ but is of lower order.

\paragraph{Mapping properties:}
Let $0 < \delta < 1$ and $-(1-\delta) r < s < r$. Then any symbol  
$p\in C_\ast^r S^{m}_{1,\delta}$ defines a continuous linear operator
$p(x,D): H^{s+m}(\mathbb{R}^n) \rightarrow H^s(\mathbb{R}^n)$.

\paragraph{Elliptic symbols:} $p \in C_\ast^r S^m_{1,\delta}$ is said to be elliptic, if there are constants $C, R > 0$ such that
$$
   |p(x,\xi)| \geq C (1 + |\xi|)^m 
     \qquad \forall \xi \in \R^n, |\xi| \geq R.
$$

One-way wave equations result typically from second-order partial differential equations by a pseudodifferential decoupling into two first-order equations (cf.\ \cite[Section
IX.1]{Taylor:81}). For example, this has become a standard technique in mathematical geophysics for the decoupling of modes in seismic wave propagation (cf.\ \cite{SdH:02}). The corresponding Cauchy problem with seismic source term $f \in C^{\infty}([0,T]; H^{s}(\mathbb{R}^n))$ (with $s \in \R$) and initial value of the displacement $u_0 \in H^{s+1}(\mathbb{R}^n)$ is of the form
\begin{eqnarray}
  \partial_t u + i \, Q(x,D) u &=& f \label{one-way-PDE}\\
  u\mid_{t=0} &=& u_0 \label{one-way-initial},
\end{eqnarray}
where $Q$ has real-valued elliptic symbol $q \in C_\ast^r S^1$ with $r > s$.

\begin{lemma}\label{ell_lemma}
If $q\in C^{r}S^m_{1,0}$ is elliptic, then $q^{\sharp} \in S^{m}_{1,\delta}$ is also elliptic. 
\end{lemma}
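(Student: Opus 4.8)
The plan is to use that symbol smoothing writes $q$ as the sum of a smooth symbol $q^\sharp$ of the \emph{same} order $m$ and a remainder $q^\flat$ of \emph{strictly lower} order; for large frequencies the remainder is then dominated by the elliptic lower bound for $q$, and this lower bound survives passage to $q^\sharp$.

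First I would fix the smoothing parameter $\delta$ with $0 < \delta < 1$ and invoke the decomposition $q = q^\sharp + q^\flat$, where $q^\sharp \in S^m_{1,\delta}$ and $q^\flat \in C_\ast^r S^{m-r\delta}_{1,\delta}$ (the symbol-smoothing procedure applies since $C_\ast^r S^m_{1,0} \subseteq C_\ast^r S^m_{1,\delta}$). Taking the multi-index $\alpha = 0$ in the first family of symbol estimates defining the class $C_\ast^r S^{m-r\delta}_{1,\delta}$ yields a constant $C_0 > 0$ with
$$
  |q^\flat(x,\xi)| \leq C_0 (1 + |\xi|)^{m - r\delta} \qquad \forall\, x,\xi \in \R^n .
$$

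Next, I would combine this with the ellipticity hypothesis for $q$: there are $C, R > 0$ such that $|q(x,\xi)| \geq C (1+|\xi|)^m$ for all $x$ and all $|\xi| \geq R$. The reverse triangle inequality then gives, for $|\xi| \geq R$,
$$
  |q^\sharp(x,\xi)| = |q(x,\xi) - q^\flat(x,\xi)| \geq |q(x,\xi)| - |q^\flat(x,\xi)| \geq (1+|\xi|)^m \bigl( C - C_0 (1+|\xi|)^{-r\delta} \bigr).
$$
Since $r > 0$ and $\delta > 0$, the exponent $-r\delta$ is negative, so $(1+|\xi|)^{-r\delta} \to 0$ as $|\xi| \to \infty$; hence there is $R' \geq R$ with $C_0 (1+|\xi|)^{-r\delta} \leq C/2$ for all $|\xi| \geq R'$. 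Therefore $|q^\sharp(x,\xi)| \geq \tfrac{C}{2}(1+|\xi|)^m$ for all $x$ and all $|\xi| \geq R'$, which is precisely the ellipticity of $q^\sharp$.

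No genuine obstacle is expected; the argument is a routine perturbation estimate once the orders of the two pieces are tracked. The only point deserving a moment's attention is that the smoothing must be carried out with $\delta$ strictly positive, so that $q^\flat$ is genuinely of order $m - r\delta < m$ and thus negligible at high frequency — for $\delta = 0$ the decomposition would not improve anything and the statement would be vacuous. All the constants above ($C_0$ from the symbol class of $q^\flat$, and $C,R$ from the ellipticity of $q$) are uniform in $x$, so the resulting lower bound for $q^\sharp$ is uniform in $x$, as required by the definition of an elliptic symbol.
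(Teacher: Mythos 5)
Your proof is correct and follows essentially the same route as the paper: decompose $q = q^\sharp + q^\flat$, bound $|q^\flat|$ by its symbol estimate of order $m - r\delta$, and absorb it into the elliptic lower bound for $q$ at large $|\xi|$ (the paper writes the triangle inequality as $|q| \le |q^\sharp| + |q^\flat|$ rather than the reverse form, but the estimate is identical). No gaps.
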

\begin{proof}
By ellipticity of $q$ and the symbol properties of $q^\flat$ there are constants $C_1, C_2, R > 0$ such that 
\begin{eqnarray*}
C_1(1+|\xi|)^m \le |q(x,\xi)|\le |q^{\sharp}(x,\xi)| +|q^{\flat}(x,\xi)| \le  |q^{\sharp}(x,\xi)| + C_2 (1+|\xi|)^{m-r\delta}
\end{eqnarray*}
holds for all $x, \xi \in \R^n$ with $|\xi| \geq R > 0$. 
Therefore
$$
|q^{\sharp}(x,\xi)| \ge 
  (C_1 - C_2 (1 + |\xi|)^{-r\delta}) (1+|\xi|)^m 
  \geq C (1 + |\xi|)^m  
  \qquad \forall x, \xi \in \R^n, |\xi| \geq R'
$$
for suitably chosen constants $C$ and $R' > 0$.
\end{proof}

Let $0 < \delta < 1$. We have the decomposition $q = q^{\sharp} + q^{\flat}$, where
$q^{\sharp} \in S^1_{1,\delta}$ and $q^{\flat} \in C^{r} S^{1-\delta r}_{1,\delta}$. By Lemma \ref{ell_lemma} $Q^{\sharp} = q^\sharp(x,D)$ is elliptic and thus possesses a parametrix $E^{\sharp} \in S^{-1}_{1,\delta}$.

We have\
$$
  (\partial_t + i Q) E^{\sharp} f = 
  (\partial_t + i Q^\sharp + i Q^\flat) E^{\sharp} f 
  = \partial_t E^{\sharp} f + i Q^{\sharp} E^{\sharp} f 
    + i Q^{\flat} E^{\sharp} f
  =  \partial_t E^{\sharp} f + f +i R^{\sharp} f +
    i Q^{\flat} E^{\sharp} f,
$$
where $R^{\sharp}$ is a regularizing operator.
Therefore
$$
   (\partial_t + i Q) (u - E^{\sharp} f) = 
   -\partial_t E^{\sharp} f  -i R^{\sharp} f - 
   i Q^{\flat} E^{\sharp} f =: \widetilde{f},
$$
where the regularity of the right-hand side $\widetilde{f}$ can be deduced from the following facts
$$
  \partial_t E^{\sharp} f \in 
    C^{\infty}([0,T]; H^{s+1}(\mathbb{R}^n)), \quad
 R^{\sharp} f \in C^{\infty}([0,T]; H^{\infty}(\mathbb{R}^n)),
 \quad
 Q^{\flat} E^{\sharp} f \in C^{\infty}([0,T]; H^{s +\delta r}(\mathbb{R}^n)).
$$
Hence $\widetilde{f} \in C^{\infty}([0,T]; H^{s + \min{(\delta r,1)}}(\mathbb{R}^n))$.

If we put  
$w= u+ E^{\sharp} f$  and $w_0 := u_0 + E^{\sharp} f(0)$, 
then the original Cauchy problem (\ref{one-way-PDE}-\ref{one-way-initial}) is reduced to solving the Cauchy problem
$$
  \partial_t w + i\, Q(x,D) w = \widetilde{f}, 
  \qquad w \mid_{t=0} = w_0,
$$
where the spatial regularity of the source term on the right-hand side has been raised by $\min (\delta r,1)$.

\begin{remark} In case of a homogeneous ($1+1$)-dimensional partial differential equation the precise H\"older-regularity properties  
of classical as well as generalized solutions
have been determined in \cite[Section 3]{GH:04b}.
\end{remark}

\chapter{Microlocal analysis of generalized pullbacks of Colombeau functions}

This chapter is based on the article \cite{HalSim:07}.

The pullback of a general distribution by a $C^{\infty}$-function in classical distribution theory, 
as defined in \cite[Theorem 8.2.4]{Hoermander:V1}, exists if the normal bundle of the $C^{\infty}$-function
intersected with the wave front set of the distribution is empty.
These microlocal restrictions reflect also the well-known fact that in general distribution theory one cannot carry out multiplications unrestrictedly,
since the product of two distributions can formally be written as the pullback of a tensor product of the two factors by the diagonal map $\delta:x \mapsto (x,x)$.

Generalized functions in the sense of Colombeau extend distribution theory in a way that it becomes a differential algebra with a
product that preserves the classical product $\cdot: C^{\infty} \times C^{\infty} \rightarrow C^{\infty}$. 
In addition \cite[Proposition 1.2.8]{GKOS:01} states that the Colombeau algebra of generalized functions allows the definition of
a pullback by any c-bounded generalized function.
The classical concept of a wave front set has been extended to generalized functions of Colombeau type in \cite{DPS:98,Hoermann:99,NPS:98}.

In this Chapter we investigate how the generalized wave front set of a Colombeau function $u$ transforms under the 
pullback by a c-bounded map $f$. Our main theorem is a result corresponding to the classical Theorem \cite[Theorem 8.2.4]{Hoermander:V1}.



\section{Transformation of wave front sets}\label{sectionwfset}

In order to obtain a microlocal inclusion (corresponding to \cite[eq. 8.2.4]{Hoermander:V1}) for generalized pullbacks, 
we need the notion of a generalized normal bundle and find a method to transform wave front set under 
c-bounded generalized maps.

In this section we consider a c-bounded generalized map $f\in \colmap{\Omega_1}{\Omega_2}$, 
where $\Omega_1 \subseteq \mathbb{R}^n,\ \Omega_2 \subseteq \mathbb{R}^m$ are open sets and assume that
$\Gamma \subseteq \Omega_2 \times S^{m-1}$ is a closed set. 

Although the pullback of a Colombeau function $u$ by any c-bounded generalized map $f$ is well-defined, 
we cannot derive a general microlocal inclusion 
for the pullback without requiring further properties for the generalized map $f$.
We define an open subdomain $\domainset{f}$ of $\Omega_1 \times S^{m-1}$, 
where the generalized map $(x,\eta) \mapsto {}^T\! df_{\varepsilon}(x) \eta$ has certain properties 
which are needed to obtain a microlocal inclusion relation. This leads to the notion of a generalized normal bundle.

\begin{definition} \label{unfavdef}
Let $f \in\colmap{\Omega_1}{\Omega_2}$ be a c-bounded generalized map, then 
we define the open set $\domainset{f}$ by 
\begin{equation*} \begin{split}
\domainset{f} := &\{ (x,\eta) \in \Omega_1 \times S^{m-1} \mid \exists\  \rm{\ neighborhood\ }X \times V \subseteq \Omega_1 \times S^{m-1} \rm{\ of\ } (x,\eta) \\
&\rm{\ and\ a\ positive\ net\ of\ slow\ scale\ } 
(\sigma_{\varepsilon})_{\varepsilon}, \exists \alpha, \beta\in ]0,\infty[,\exists \varepsilon'\in ]0,1]:\\
&\inf_{(x,\eta)\in X\times V} 
   |\sigma_{\varepsilon}  {}^T\! df_{\varepsilon}(x) \eta | \ge \alpha  \ \rm{and}\\
&\sup_{(x,\eta) \in X\times V^{\perp}} |\sigma_{\varepsilon}  {}^T\! df_{\varepsilon}(x) \eta | \le \beta \rm{\ for\ all\ } \varepsilon < \varepsilon'  \},
\end{split}
\end{equation*}
where $V^{\perp}:= \{\eta \in S^{m-1} \mid \exists \eta_0 \in V: \langle \eta, \eta_0 \rangle = 0 \}$. 
Then the \emph{generalized normal bundle} \index{normal bundle!generalized} of $f$ is defined by
\begin{equation*}
\gnormalb{f} := \{(y,\eta) \in \Omega_2 \times S^{m-1} \mid (x,y) \in \ggraph{f}, (x,\eta) \not \in \domainset{f} \}.
\end{equation*}
This is in correspondence to the classical normal bundle. 

Furthermore, we define the \emph{wave front unfavorable support} \index{wave front unfavorable support} of 
$f$ with respect to a closed set $\Gamma\subseteq \Omega_2 \times S^{m-1}$ by
\begin{eqnarray*} 
\unfavsupp{f}{\Gamma}:= \{ x\in \mathbb{R}^n \mid  (x,y) \in \ggraph{f}, (x,\eta) \not \in \domainset{f}, 
(y,\eta) \in \Gamma \}. 
\end{eqnarray*}
\end{definition}

\begin{example}\label{mgexample}
Consider the c-bounded generalized map defined by 
\begin{eqnarray*}
f_{\varepsilon}(x,y)=(x+ \gamma_{\varepsilon} y, x-\gamma_{\varepsilon} y),
\end{eqnarray*}
where $(\gamma_{\varepsilon})_{\varepsilon}$ is a positive net converging to zero.
\end{example}
Then the transposed Jacobian is
\begin{eqnarray*}
{}^T\!df_{\varepsilon}(x,y) := \left( \begin{array}{cc} 
1 &  1 \\
\gamma_{\varepsilon}  & - \gamma_{\varepsilon} 
\end{array}\right),
\end{eqnarray*}
which is constant with respect to the $(x,y)$ variable, so we put $M_{\varepsilon} := {}^T\!df_{\varepsilon}(x,y)$. 

We observe that $M_{\varepsilon} \eta= (\eta_1 +\eta_2, \gamma_{\varepsilon}(\eta_1-\eta_2))$.
Setting $\nu_1:=( 1/\sqrt{2}, - 1/\sqrt{2})$ and $\nu_2:=( 1/\sqrt{2}, 1/\sqrt{2})$ we observe 
$$
|\sigma_{\varepsilon} M_{\varepsilon} \nu_1| = \sqrt{2} \sigma_{\varepsilon}  \gamma_{\varepsilon}, \quad
|\sigma_{\varepsilon} M_{\varepsilon} \nu_2| = \sqrt{2} \sigma_{\varepsilon}
$$
and $\langle M_{\varepsilon} \nu_1, M_{\varepsilon} \nu_2 \rangle = 0$.

We can write any $\eta \in S^1$  in terms of the orthonormal basis $(\nu_k)_{k=1,2}$ by
$\eta= \langle \eta, \nu_1 \rangle  \nu_1 + \langle \eta, \nu_2 \rangle \nu_2$. 
Note that $\langle \eta, \nu_1 \rangle^2 +\langle \eta, \nu_2 \rangle^2=1$ since $|\eta|=1$. 

For any $\eta \in S^1$ we have
\begin{multline*}
|\sigma_{\varepsilon} M_{\varepsilon} \eta|^2 = 
|\sigma_{\varepsilon} M_{\varepsilon} ( \langle \eta, \nu_1 \rangle \nu_1 + \langle \eta, \nu_2 \rangle \nu_2) |^2 \\
=  \langle \eta, \nu_1 \rangle^2 |\sigma_{\varepsilon} M_{\varepsilon} \nu_1|^2  +  
\langle \eta, \nu_2 \rangle^2 |\sigma_{\varepsilon} M_{\varepsilon} \nu_2|^2 \\
= 2 \sigma_{\varepsilon}^2 (\langle \eta, \nu_1 \rangle^2 \gamma_{\varepsilon}^2 + 
 \langle \eta, \nu_2 \rangle^2 ) = 2 \sigma_{\varepsilon}^2 (1 -  \langle \eta, \nu_1 \rangle^2  (1-\gamma_{\varepsilon}^2 )). 
\end{multline*}

Now for any $\eta_0 \in S^1$ with $\eta_0 \ne \pm \nu_1$ we can find a neighborhood $V_0$ such that there exists a 
$\delta \in ]0,1]$ with the property that $|\langle \eta, \nu_1 \rangle| \le 1-\delta$ holds for all $\eta \in V_0$. 
Choosing $\sigma_{\varepsilon} := 1$ 
we obtain 
\begin{multline*}
\inf_{(x,\eta)\in \mathbb{R}^n  \times V_0} |\sigma_{\varepsilon}  {}^T\! df_{\varepsilon}(x) \eta | = 
\sqrt{2}\sigma_{\varepsilon} \inf_{\eta \in V_0}  \sqrt{1 -  \langle \eta, \nu_1 \rangle^2  (1-\gamma_{\varepsilon}^2 )} \\
= \sqrt{2} \sigma_{\varepsilon} \sqrt{1- (1-\delta)^2  (1-\gamma_{\varepsilon}^2 )} \rightarrow \sqrt{2}  \sqrt{2 \delta-\delta^2} 
\end{multline*}
and
\begin{multline*}
\sup_{ (x,\eta)\in \mathbb{R}^n  \times V_0^{\perp}}    |\sigma_{\varepsilon}  {}^T\! df_{\varepsilon}(x) \eta | = 
\sqrt{2} \sigma_{\varepsilon} \sup_{\eta \in S^1}  \sqrt{1 -  \langle \eta, \nu_1 \rangle^2  (1-\gamma_{\varepsilon}^2 )} \\
\le  \sqrt{2}\sigma_{\varepsilon}   =  \sqrt{2},
\end{multline*}
so it follows immediately that $\domainset{f} \supseteq \mathbb{R}^2 \times (S^1 /\{\pm \nu_1\})$.

It remains to check whether $\mathbb{R}^2 \times \{\pm \nu_1\}$ belongs to $\domainset{f}$:
Let $V \subseteq S^1$ be a neighborhood of $\nu_1$, then it follows that
$$
\inf_{(x,\eta)\in \mathbb{R}^n  \times V} |\sigma_{\varepsilon}  {}^T\! df_{\varepsilon}(x) \eta | =  
\sqrt{2} \sigma_{\varepsilon} \inf_{\eta \in V}  \sqrt{1 -  \langle \eta, \nu_1 \rangle^2  (1-\gamma_{\varepsilon}^2 )} = 
\sqrt{2} \sigma_{\varepsilon}   \gamma_{\varepsilon} ,
$$
and 
$$
\sup_{ (x,\eta)\in \mathbb{R}^n  \times V^{\perp}}    |\sigma_{\varepsilon}  {}^T\! df_{\varepsilon}(x) \eta | = 
\sqrt{2} \sigma_{\varepsilon} \sup_{\eta \in V^{\perp}}  \sqrt{1 - \langle \eta, \nu_1 \rangle^2  (1-\gamma_{\varepsilon}^2 )} 
= \sqrt{2} \sigma_{\varepsilon}, 
$$
since the set $V^{\perp}$ by definition contains a vector $\nu'$ with $\langle \nu_1, \nu^{\prime}\rangle =0$.
Any slow scaled net $(\sigma_{\varepsilon})_{\varepsilon}$ with the property that $\sqrt{2} \sigma_{\varepsilon} \gamma_{\varepsilon}$ is bounded
away from zero, satisfies $\sigma_{\varepsilon} \rightarrow \infty$ (since $\gamma_{\varepsilon}$ tends to zero).
It immediately follows that $\domainset{f} \cap (\mathbb{R}^2 \times \{\pm \nu_1\}) = \emptyset$ , 
thus $\domainset{f} = \mathbb{R} \times (S^1 \backslash \{\pm \nu\})$.

\begin{remark} \label{closed_set_remark}
Note that any non-empty closed set $\Gamma \subseteq \Omega_2 \times S^{m-1}$ can be
considered as a set-valued map 
$$
\supp{(\Gamma)}:= \pi_1(\Gamma) \rightarrow S^{m-1}, \quad y \mapsto \Gamma_y,
$$
where $\Gamma_y := \pi_2(\Gamma \cap(\{y\}\times S^{m-1}))$. 
Then Theorem \ref{set_valued_map_main_theorem} yields that $y \mapsto \Gamma_y$ is upper semi-continuous, i.e.
for all $y\in \supp{(\Gamma)}$ and $W$ some open neighborhood of $\Gamma_y$, there exists some open neighborhood $Y \subseteq \Omega_2$ 
of $y$ such that $\Gamma_Y = \Gamma_{Y\cap\supp{(\Gamma)}} \subseteq W$, where  $\Gamma_Y = \bigcup_{y\in Y} \Gamma_y$ 
as in Definition \ref{set_valued_map_def}.
Due to Proposition \ref{usc_corr_proposition} the upper semi-continuity of $\Gamma$ yields that $\supp{(\Gamma)}$ is a closed subset of $\Omega_2$, so
we obtain for all $y\not\in \supp{(\Gamma)}$, that there exists some neighborhood $Y$ of $y$ such that $\Gamma_Y =\emptyset$.
  
Finally it is straight-forward to prove that for all $Y_0 \csub \Omega_2$ and $W$ some open neighborhood of 
$\Gamma_{Y_0}$, there exists an open neighborhood $Y \subseteq \Omega_2$ of $Y_0$ such that
$\Gamma_Y \subseteq W$.
\end{remark}

\begin{lemma} \label{normal_bundle_lemma}
The generalized normal bundle $\gnormalb{f}$ and the wave front unfavorable support $\unfavsupp{f}{\Gamma}$ of $f$ (with respect to $\Gamma$) are closed sets.
If $\gnormalb{f} \cap \Gamma =\emptyset$, then $\unfavsupp{f}{\Gamma} = \emptyset$.
\end{lemma}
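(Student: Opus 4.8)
\textbf{Proof plan for Lemma \ref{normal_bundle_lemma}.}

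The plan is to prove closedness of $\gnormalb{f}$ first, then deduce closedness of $\unfavsupp{f}{\Gamma}$ by essentially the same argument together with the fact (Remark \ref{closed_set_remark}) that $\Gamma$ induces an upper semi-continuous set-valued map, and finally to obtain the implication $\gnormalb{f} \cap \Gamma = \emptyset \Rightarrow \unfavsupp{f}{\Gamma} = \emptyset$ as a direct unravelling of the definitions. The key structural observation is that $\domainset{f}$ is open by Definition \ref{unfavdef} (its defining property is stated in terms of existence of a neighborhood on which certain estimates hold, hence the set is a union of such open boxes $X \times V$), so its complement $\Omega_1 \times S^{m-1} \setminus \domainset{f}$ is closed in $\Omega_1 \times S^{m-1}$.

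For closedness of $\gnormalb{f}$: write $\gnormalb{f} = \pi_2'(\ggraph{f} \times_{\pi} ((\Omega_1 \times S^{m-1})\setminus \domainset{f}))$ more concretely as follows. Let $(y_k, \eta_k) \to (y,\eta)$ be a convergent sequence (or net) in $\gnormalb{f} \subseteq \Omega_2 \times S^{m-1}$. For each $k$ pick $x_k \in \Omega_1$ with $(x_k, y_k) \in \ggraph{f}$ and $(x_k, \eta_k) \notin \domainset{f}$. By Theorem \ref{generalized_graph_theorem}, $\ggraph{f}$ is closed and locally bounded, so $x \mapsto \ggraph{f}_x$ is a closed, locally bounded set-valued map; equivalently, by Proposition \ref{closed_corr_proposition}, $\ggraph{f} \cap (K \times \Omega_2) \Subset \Omega_1 \times \Omega_2$ for every $K \Subset \Omega_1$. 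I would instead use the inverse map: by Theorem \ref{inverse_set_valued_theorem}(v) (or (iv)), since $\ggraph{f}$ is closed and locally bounded the inverse set-valued map $y \mapsto \ggraph{f}^{-1}_y$ is upper semi-continuous and locally bounded, in particular the $x_k$, all lying in $\ggraph{f}^{-1}_{\{y_k : k\}} \cup \ggraph{f}^{-1}_y$ which is a bounded set, have a convergent subnet $x_{k} \to x$ (relabelling). Closedness of $\ggraph{f}$ gives $(x, y) \in \ggraph{f}$. Since $(x_k, \eta_k) \to (x,\eta)$ and $(x_k,\eta_k)$ lie in the closed set $(\Omega_1 \times S^{m-1}) \setminus \domainset{f}$, we get $(x,\eta) \notin \domainset{f}$. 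Hence $(y,\eta) \in \gnormalb{f}$.

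For closedness of $\unfavsupp{f}{\Gamma}$: run the same argument. If $x_k \to x$ with $x_k \in \unfavsupp{f}{\Gamma}$, choose $y_k$ with $(x_k, y_k) \in \ggraph{f}$, $(x_k,\eta_k) \notin \domainset{f}$, $(y_k,\eta_k)\in\Gamma$; by local boundedness of $\ggraph{f}$ (Theorem \ref{generalized_graph_theorem}) the $y_k$ stay in a compact set, and compactness of $S^{m-1}$ lets us pass to a subnet with $y_k \to y$, $\eta_k \to \eta$. Then $(x,y)\in\ggraph{f}$ by closedness of the generalized graph, $(y,\eta)\in\Gamma$ by closedness of $\Gamma$, and $(x,\eta)\notin\domainset{f}$ by closedness of the complement of $\domainset{f}$, so $x \in \unfavsupp{f}{\Gamma}$. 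Finally, the implication: if $x \in \unfavsupp{f}{\Gamma}$ were nonempty, pick such $x$ with witnessing $y,\eta$; then $(x,y) \in \ggraph{f}$ and $(x,\eta)\notin\domainset{f}$ give $(y,\eta) \in \gnormalb{f}$ by definition, while $(y,\eta)\in\Gamma$; this contradicts $\gnormalb{f}\cap\Gamma=\emptyset$. Hence $\unfavsupp{f}{\Gamma}=\emptyset$.

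The main obstacle I expect is the extraction of a convergent subnet of the auxiliary points $x_k$ (resp.\ $y_k$): one must be careful that these stay in a fixed compact set of $\Omega_1$ (resp.\ $\Omega_2$), which is exactly what local boundedness of $\ggraph{f}$ and of its inverse deliver, but it requires invoking Theorem \ref{generalized_graph_theorem} and Theorem \ref{inverse_set_valued_theorem} in the right direction. Everything else is a routine limit argument using that $\domainset{f}$ is open (hence its complement closed) and that $\Gamma$ is closed by hypothesis.
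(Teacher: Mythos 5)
Your treatment of $\unfavsupp{f}{\Gamma}$ and of the final implication is fine: the net argument (witnesses $y_k,\eta_k$, confinement of the $y_k$ in a compact subset of $\Omega_2$ via Proposition \ref{closed_corr_proposition}/Theorem \ref{generalized_graph_theorem}, then closedness of $\ggraph{f}$, of $\Gamma$ and of $\domainset{f}^c$) is a correct, genuinely different route from the paper, which instead shows directly that the complement is open: starting from $x_0\notin\unfavsupp{f}{\Gamma}$ it builds a product neighborhood $X_1\times V_0\subseteq\domainset{f}$ of $\{x_0\}\times\Gamma_{Y_0}$ using the compactness of $\Gamma_{Y_0}$, Remark \ref{closed_set_remark} and the upper semi-continuity of $x\mapsto\ggraph{f}_x$, with no subnet extraction at all. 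The only care your version needs is to use the compact containment $\ggraph{f}\cap(K\times\Omega_2)\csub\Omega_1\times\Omega_2$ rather than mere boundedness of the $y_k$, so that the limit $y$ stays in $\Omega_2$; you do cite the right proposition for this.

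The genuine gap is in your argument for closedness of $\gnormalb{f}$. You extract a convergent subnet of the witnesses $x_k$ by claiming that $y\mapsto\ggraph{f}^{-1}_y$ is upper semi-continuous and locally bounded, citing Theorem \ref{inverse_set_valued_theorem}(iv)/(v). But (v) requires the set-valued map $x\mapsto\ggraph{f}_x$ to be \emph{proper}, and (iv) only yields that the inverse is proper, not locally bounded; properness of the graph map is not a consequence of c-boundedness (for a constant map the fibre $\ggraph{f}^{-1}_y$ is all of $\Omega_1$). So the $x_k$ need not stay in a compact subset of $\Omega_1$, and the limit argument collapses. This is not a repairable bookkeeping issue: for the ($\varepsilon$-independent, c-bounded) map $f_\varepsilon(x)=(\arctan x,0)$ from $\R$ into $\R^2$ one computes $\domainset{f}=\R\times\{\eta\in S^1:\eta_1\neq 0\}$ and $\ggraph{f}$ equal to the classical graph, whence $\gnormalb{f}=\bigl(\,]-\tfrac{\pi}{2},\tfrac{\pi}{2}[\times\{0\}\bigr)\times\{(0,\pm1)\}$, which is not closed in $\R^2\times S^1$; the witnesses $x_k\to\infty$ are exactly what your subnet step would have to tame. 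Note that the paper's own proof never establishes (nor later uses) closedness of $\gnormalb{f}$: it only proves the statement about $\unfavsupp{f}{\Gamma}$, which together with the trivial implication $\gnormalb{f}\cap\Gamma=\emptyset\Rightarrow\unfavsupp{f}{\Gamma}=\emptyset$ (your last paragraph, which is correct) is all that Theorem \ref{maintheorem} requires. So for the $\gnormalb{f}$ clause you would need an additional hypothesis (properness of the graph map) or should simply drop that part of the argument.
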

\begin{proof}

Let $x_0 \not \in \unfavsupp{f}{\Gamma}$ and put $Y_0 := \ggraph{f}_{x_0}$, then since $\domainset{f}$ is open there exist open 
neighborhoods $X_0$  of $x_0$ and  $V_0$ of $\Gamma_{Y_0}$ such that $X_0 \times V_0 \subseteq \domainset{f}$.
As pointed out in Remark \ref{closed_set_remark} there exists a neighborhood $Y$ of $Y_0$, such that
$V_0$ is still a neighborhood of $\Gamma_Y$.  The upper semi-continuity of $\ggraph{f}$ provides a neighborhood $X'$ of
$x_0$ such that $\ggraph{f} \cap (X' \times \Omega_2) \subseteq X' \times Y$.
Now we can choose a smaller neighborhood $X_1 \subset X' \cap X_0$ (such that $X' \cap X_0$ is a neighborhood of $X_1$) 
of $x_0$, 
such that $X_1 \times V_0 \subseteq \domainset{f}$ and $\ggraph{f} \cap (X_1 \times \Omega_2) \subseteq X_1 \times Y$.  
For all $x_1\in X_1$ and $y\in \Omega_2$ with $(x_1,y)\in \ggraph{f}$, it holds that $(x_1,y) \in X_1 \times Y$ and if 
$(y,\eta) \in \Gamma$ it follows
that $\eta \in \Gamma_Y \subseteq V$ and thus $(x_1,\eta) \in \domainset{f}$. 
It follows that $x_1\not\in \unfavsupp{f}{\Gamma}$ for all $x_1\in X_1$, $X_1 \cap \unfavsupp{f}{\Gamma} =\emptyset$ 
and thus $\unfavsupp{f}{\Gamma}^c$ is an open set.
\end{proof}

A substantial step in the proof of the main theorem is the application of a generalized stationary phase theorem (cf. Appendix B). 
In order to obtain a lower bound for the gradient of the occurring phase function we have to consider the map
\begin{eqnarray*}
\Omega_1 \times S^{m-1} \times S^{n-1} & \rightarrow& S^{n-1} \\
(x,\eta,\xi)	&	\mapsto & \left|\frac{{}^T df_{\varepsilon}(x) \eta}{|{}^T df_{\varepsilon}(x) \eta|} -\xi \right|.
\end{eqnarray*}

\begin{lemma}
Let $W \subseteq S^{n-1}$ be a neighborhood of some $\eta_0 \in S^{n-1}$, then we have for any
$\eta_1,\eta_2 \in W$ that
$$
\frac{\eta_1 -\eta_2}{|\eta_1 - \eta_2|} \in W^{\perp} := \{\eta \in S^{n-1} \mid \exists \eta' \in W : \langle \eta', \eta\rangle = 0 \}.
$$
\end{lemma}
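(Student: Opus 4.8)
The statement is a purely elementary fact about unit vectors: if $\eta_1,\eta_2$ both lie in a neighborhood $W$ of $\eta_0\in S^{n-1}$, then the normalized difference $(\eta_1-\eta_2)/|\eta_1-\eta_2|$ lies in $W^{\perp}$, i.e.\ it is orthogonal to \emph{some} vector of $W$. The natural approach is to exhibit such a vector explicitly. Since $|\eta_1|=|\eta_2|=1$, the difference $\eta_1-\eta_2$ is orthogonal to the \emph{sum} $\eta_1+\eta_2$: indeed $\langle \eta_1-\eta_2,\eta_1+\eta_2\rangle = |\eta_1|^2-|\eta_2|^2 = 0$. So the candidate witness is the normalized sum $\nu := (\eta_1+\eta_2)/|\eta_1+\eta_2|$, and I must check two things: that $\nu$ is well-defined (i.e.\ $\eta_1+\eta_2\neq 0$) and that $\nu\in W$.

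First I would dispose of a degenerate case: if $\eta_1=\eta_2$ then $\eta_1-\eta_2=0$ and the normalized difference is not defined, so implicitly the statement is only claimed for $\eta_1\neq\eta_2$; I will assume $\eta_1\neq\eta_2$. Next, to guarantee $\eta_1+\eta_2\neq 0$ and $\nu\in W$, I would shrink $W$ if necessary: since $W$ is a neighborhood of $\eta_0$ in $S^{n-1}$, I may assume $W$ is contained in an open geodesic ball of radius less than $\pi/2$ around $\eta_0$, equivalently that $\langle\eta,\eta_0\rangle > 0$ for all $\eta\in W$ — or even just work with the given $W$ and note that the conclusion $\nu\in W^{\perp}$ only requires $\nu$ to be orthogonal to \emph{some} element of $W$, for which it suffices that $\nu$ itself lies in $W$ when $W$ is "small enough", or more robustly that $\eta_1$ and $\eta_2$ themselves are not antipodal. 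Concretely: the line segment from $\eta_1$ to $\eta_2$ does not pass through the origin unless $\eta_2=-\eta_1$; excluding the antipodal case (which is automatic if $W$ is contained in an open half-sphere, a harmless assumption since $W$ can be shrunk around $\eta_0$), the midpoint $(\eta_1+\eta_2)/2$ is nonzero, so $\nu$ is well-defined.

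The remaining point — that $\nu\in W$, so that $\nu$ genuinely is an element of $W$ orthogonal to $(\eta_1-\eta_2)/|\eta_1-\eta_2|$ — follows from convexity-type considerations: $\nu$ is the normalization of a convex combination of $\eta_1,\eta_2\in W$, and for $W$ a sufficiently small spherically convex neighborhood of $\eta_0$ (again, shrink $W$; the lemma as used only needs \emph{some} such $W$, since in the application $W$ arises as a chosen small neighborhood), the normalized convex hull of $W$ stays in $W$. Thus $\nu\in W$ and $\langle\nu,(\eta_1-\eta_2)/|\eta_1-\eta_2|\rangle = 0$, which is exactly the assertion $(\eta_1-\eta_2)/|\eta_1-\eta_2|\in W^{\perp}$.

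\textbf{Main obstacle.} There is no hard analytic step; the only subtlety is bookkeeping about the size of $W$. The proof silently needs $W$ to be small enough that (a) $\eta_1,\eta_2\in W$ are never antipodal and (b) the normalized sum of two points of $W$ again lies in $W$. Both are guaranteed by taking $W$ inside an open geodesic ball of radius $<\pi/2$ about $\eta_0$, and in the intended application $W$ is precisely such a chosen neighborhood, so this is not a real restriction — but it should be stated cleanly rather than glossed over. I would therefore open the proof by "shrinking $W$ we may assume $\langle\eta,\eta_0\rangle>0$ for all $\eta\in W$", then give the two-line orthogonality computation $\langle\eta_1-\eta_2,\eta_1+\eta_2\rangle=0$, observe $\eta_1+\eta_2\neq 0$, and conclude.
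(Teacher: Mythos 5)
Your core computation is exactly the one in the paper: both you and the author take the normalized sum $\nu=(\eta_1+\eta_2)/|\eta_1+\eta_2|$ as the witness and use $\langle \eta_1-\eta_2,\,\eta_1+\eta_2\rangle=|\eta_1|^2-|\eta_2|^2=0$. The divergence --- and the genuine problem --- is in how you handle the size of $W$. You propose to ``shrink $W$'' so that it lies in an open hemisphere and is spherically convex, and then conclude $\nu\in W$. But shrinking is not available here: the lemma asserts the conclusion for \emph{every} pair $\eta_1,\eta_2$ in the \emph{given} $W$, and proving it for a smaller neighborhood $W'\subseteq W$ only covers the pairs lying in $W'$ (that $W'^{\perp}\subseteq W^{\perp}$ helps the conclusion, but it does not recover the pairs you discarded). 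Likewise, demanding that the normalized midpoint of two points of $W$ again lie in $W$ is a geodesic-convexity hypothesis that a general neighborhood does not satisfy and that the statement does not grant you. So as written your argument proves a modified statement (small, geodesically convex $W$), not the lemma as stated.

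The paper deals with this differently: it assumes $W\subseteq C_{\delta}(\eta_0)=\{\eta\in S^{n-1}\mid\langle\eta_0,\eta\rangle\ge 1-\delta\}$ for some $\delta\in\,]0,1[$ (dismissing the contrary case as trivial) and then shows quantitatively that the witness stays in this \emph{cap}, via $\langle\nu,\eta_0\rangle\ge 2(1-\delta)/|\eta_1+\eta_2|\ge 1-\delta$ since $|\eta_1+\eta_2|\le 2$; no convexity of $W$ itself is invoked, the cap does the work, and $\delta<1$ also rules out the antipodal degeneracy you worried about. Your instinct that some restriction on $W$ is unavoidable is sound: for an arbitrary neighborhood the claim actually fails (take $W$ a small cap around $\eta_0$ together with one isolated point $p$ near $-\eta_0$, and $\eta_1=\eta_0$, $\eta_2=p$; then no element of $W$ is orthogonal to $(\eta_1-\eta_2)/|\eta_1-\eta_2|$), and indeed the paper's own argument really secures the witness only in $C_{\delta}(\eta_0)$, i.e.\ it proves the lemma in the form in which $W$ is such a cap, which is how it is used for the neighborhoods $V$ later on. But the repair should be the paper's --- fix a cap of angular radius less than $\pi/2$ and verify the witness lands in it --- rather than your reduction by shrinking, which is the step of your proposal that does not go through.
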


\begin{proof}
Assume $W \subseteq C_{\delta}(\eta_0) := \{\eta \in S^{n-1} \mid \langle \eta_0, \eta \rangle \ge 1- \delta\}$ for some 
$\delta \in ]0,1[$. If no such $\delta \in ]0,1[$ exists $C_{0}(\eta_0) \subset W$ implies $W^{\perp} = S^{n-1}$, so the statement
is trivial.
Observe that 
$$
\langle \frac{\eta_1 +\eta_2}{|\eta_1 + \eta_2|} , \eta_0 \rangle =
\frac{1}{|\eta_1 + \eta_2|} \left( \langle \eta_1, \eta_0 \rangle+
\langle \eta_2 , \eta_0 \rangle \right) \ge \frac{2 (1-\delta)}{\sqrt{2(1+ \langle \eta_1 ,\eta_2 \rangle})} \ge 1-\delta
$$
and since
$$
\langle \frac{\eta_1 +\eta_2}{|\eta_1 + \eta_2|} , \frac{\eta_1 -\eta_2}{|\eta_1 - \eta_2|} \rangle 
= \frac{|\eta_1|^2 - |\eta_2|^2}{|\eta_1 + \eta_2||\eta_1 - \eta_2|}=0
$$
which implies $\frac{\eta_1 -\eta_2}{|\eta_1 - \eta_2|} \in W^{\perp}$.
\end{proof}

So we introduce the following notation:
\begin{lemma}\label{notionlemma}
We define the Colombeau map $M \in \colmap{\domainset{f}}{S^{n-1}}$ by
\begin{eqnarray*}
M_{\varepsilon}(x,\eta):=\frac{{}^T\!df_{\varepsilon}(x) \eta }{|{}^T\!df_{\varepsilon}(x) \eta|}
\end{eqnarray*}
on the domain $\domainset{f}$, then $\eta \mapsto M_{\varepsilon}(x,\eta)$ defines an equi-continuous Colombeau function for fixed $x$. 
\end{lemma}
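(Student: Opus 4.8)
The plan is to unwind the two claims in Lemma~\ref{notionlemma} separately: first that $M$ is well-defined as an element of $\colmap{\domainset{f}}{S^{n-1}}$, and second that for fixed $x$ the net $\eta\mapsto M_\varepsilon(x,\eta)$ is equi-continuous on the fibre. For the first claim I would fix a point $(x_0,\eta_0)\in\domainset{f}$ and use the defining neighborhood $X\times V$ together with the associated slow-scale net $(\sigma_\varepsilon)_\varepsilon$ and constants $\alpha,\beta$. On $X\times V$ the lower bound $|\sigma_\varepsilon\,{}^T\!df_\varepsilon(x)\eta|\ge\alpha$ shows that ${}^T\!df_\varepsilon(x)\eta$ never vanishes for $\varepsilon<\varepsilon'$, so the normalization $M_\varepsilon(x,\eta)={}^T\!df_\varepsilon(x)\eta/|{}^T\!df_\varepsilon(x)\eta|$ is a smooth $S^{n-1}$-valued function there; since the quotient lands in the compact set $S^{n-1}$, c-boundedness is automatic. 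Moderateness on compact subsets of $X\times V$ follows from the quotient rule: the numerator is moderate (it is a derivative-composition of the c-bounded net $(x,\eta)$ with the moderate net ${}^T\!df_\varepsilon$), and the denominator is bounded below by $\alpha/\sigma_\varepsilon$ with $(\sigma_\varepsilon)_\varepsilon$ slow scaled, so all $\varepsilon$-powers that enter are at worst polynomial. Independence of the representative of $f$ is inherited from $\ggraph{f}$ being representative-independent together with the same slow-scale division estimate. Covering $\domainset{f}$ by such neighborhoods and using that $\G$ is a sheaf then yields a globally defined $M\in\colmap{\domainset{f}}{S^{n-1}}$.

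For the equi-continuity claim I would again work locally on a neighborhood $X\times V$ of a given $(x_0,\eta_0)$ with its slow-scale net $(\sigma_\varepsilon)_\varepsilon$ and constants $\alpha,\beta$, and fix $x\in X$. The point is to estimate $|M_\varepsilon(x,\eta_1)-M_\varepsilon(x,\eta_2)|$ for $\eta_1,\eta_2\in V$ uniformly in $\varepsilon$. Writing $v_i:=\sigma_\varepsilon\,{}^T\!df_\varepsilon(x)\eta_i$, we have $M_\varepsilon(x,\eta_i)=v_i/|v_i|$, and the elementary inequality $\bigl|v_1/|v_1|-v_2/|v_2|\bigr|\le 2|v_1-v_2|/\min(|v_1|,|v_2|)$ combined with $|v_i|\ge\alpha$ gives the bound $\le (2/\alpha)\,|v_1-v_2|=(2/\alpha)\,\sigma_\varepsilon\,|{}^T\!df_\varepsilon(x)(\eta_1-\eta_2)|$. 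Now $\eta_1-\eta_2$ is a multiple of a unit vector lying in $V^\perp$ (by the preceding lemma, applied with $W=V$), so the second defining inequality of $\domainset{f}$, namely $\sup_{(x,\eta)\in X\times V^\perp}|\sigma_\varepsilon\,{}^T\!df_\varepsilon(x)\eta|\le\beta$, yields $\sigma_\varepsilon\,|{}^T\!df_\varepsilon(x)(\eta_1-\eta_2)|\le\beta\,|\eta_1-\eta_2|$. Hence $|M_\varepsilon(x,\eta_1)-M_\varepsilon(x,\eta_2)|\le (2\beta/\alpha)\,|\eta_1-\eta_2|$ for all $\varepsilon<\varepsilon'$, which is precisely an equi-Lipschitz, hence equi-continuous, estimate with constant independent of $\varepsilon$; the finitely many $\varepsilon\in[\varepsilon',1]$ are handled individually by ordinary continuity, after possibly enlarging the constant.

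The main obstacle, and the step deserving the most care, is making the reduction $\eta_1-\eta_2\in\mathbb{R}_{+}\cdot V^\perp$ airtight and ensuring the neighborhood $V$ used for equi-continuity is the same one (or contained in one) furnishing the $\domainset{f}$ data, so that the constants $\alpha,\beta,\sigma_\varepsilon$ are legitimately available on that set; this is where the geometry of $V^\perp$ from the lemma preceding Lemma~\ref{notionlemma} does the real work, and one must check that shrinking $V$ does not destroy the bounds (it does not, since both inequalities are stated with $\sup$/$\inf$ over $X\times V$ resp.\ $X\times V^\perp$ and only improve under restriction). Everything else is routine: the quotient-rule moderateness bookkeeping and the sheaf-theoretic patching are standard in the Colombeau framework and I would state them without belaboring the $\varepsilon$-exponents.
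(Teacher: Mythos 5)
Your proof is correct and follows essentially the same route as the paper: the elementary Lipschitz estimate for the normalization map $v\mapsto v/|v|$ away from the origin, the lower bound $\alpha$ and upper bound $\beta$ from the defining inequalities of $\domainset{f}$ (with the slow-scale net $\sigma_{\varepsilon}$ cancelling in the ratio $\beta/\alpha$), and the preceding lemma guaranteeing that $(\eta_1-\eta_2)/|\eta_1-\eta_2|\in V^{\perp}$, yielding the same uniform bound $2(\beta/\alpha)\,|\eta_1-\eta_2|$. The extra remarks on well-definedness and on $\varepsilon\in[\varepsilon',1]$ are harmless additions not needed in the paper's argument.
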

\begin{proof}
The map $g:\xi \rightarrow \frac{\xi}{|\xi|}$
is equi-continuous on $\mathbb{R}^n / B_{\delta}(0)$ for any fixed $\delta>0$ since
\begin{eqnarray*}
\left| \frac{\xi}{|\xi|} - \frac{\eta}{|\eta|} \right| = \frac{1}{|\xi|} \left| \xi - \frac{\eta|\xi|}{|\eta|} \right| =\frac{1}{|\xi|}
\left| \xi -\eta + \frac{\eta}{|\eta|} \left( |\eta|-|\xi| \right) \right| \le \frac{2}{|\xi|} |\xi-\eta|.
\end{eqnarray*}
Let $(x_0,\eta_0) \in D_{f}$, then we can find some neighborhood $X\times V \subseteq \Omega_1 \times S^{m-1}$ of $(x_0,\eta_0)$ such that
\begin{eqnarray*}
\inf_{(x,\eta)\in X\times V}  | \sigma_{\varepsilon}  {}^T\! df_{\varepsilon}(x) \eta|  &\ge& \alpha \quad \text{and } \sup_{(x,\eta) \in X\times V^{\perp}} |  \sigma_{\varepsilon}  {}^T\! df_{\varepsilon}(x) \eta | \le \beta
\end{eqnarray*}
for some slow scaled net $(\sigma_{\varepsilon})_{\varepsilon}$ and $\alpha, \beta \in ]0, \infty[$.
Then we conclude that
\begin{equation*}\begin{split}
&|M_{\varepsilon}(x,\eta)- M_{\varepsilon}(x,\xi)| \le \frac{2}{  | {}^T df_{\varepsilon}(x) \eta |} \left|{}^T\! df_{\varepsilon}(x) \eta-{}^T\! df_{\varepsilon}(x) \xi  \right| \\ &=  \frac{2}{ |  \sigma_{\varepsilon} {}^T\! df_{\varepsilon}(x) \eta| } 
\left| \sigma_{\varepsilon} {}^T\! df_{\varepsilon}(x) (\eta-\xi)  \right| \le
2 \frac{ \sup_{(x,\zeta) \in X\times V^{\perp}} |\sigma_{\varepsilon} {}^T\! df_{\varepsilon}(x) \zeta |}{ 
 \inf_{(x,\eta)\in X\times V}   |\sigma_{\varepsilon} {}^T\! df_{\varepsilon}(x) \eta| } \\
&\cdot |\xi -\eta| \le  2 |\xi- \eta| \frac{\beta}{\alpha}
\end{split}
\end{equation*}
uniformly for all $x\in X_0$ and $\xi,\eta \in V$.
\end{proof}

\begin{remark}
By the preceding Lemma we have obtained that $\left[(\eta \mapsto M_{\varepsilon}(x,\eta)\mid_{\domainset{f}})_{\varepsilon}\right]$ is an equi-continuous Colombeau map, thus it follows by Proposition \ref{generalized_graph_equicontinuous_proposition} that
\begin{eqnarray*}
\ggraph{M}_{x,\eta} &=&
 \left\{ \xi \in S^{n-1} \mid \exists \text{\ a \ net \ }(x_{\varepsilon})_{\varepsilon} 
\text{\ in\ } \Omega_1: \lim_{\varepsilon\rightarrow 0} x_{\varepsilon}=x \in \Omega_1 \right. 
\\ &&\left. \text{ and } y\in\cp{(M_{\tau(\varepsilon)}(x_{\varepsilon},\eta))_{\varepsilon}}  
 \text{for\ some\  map}\ \tau \in \mathcal{T} \right\},
\end{eqnarray*}
which simplifies the determination of the generalized graph $\ggraph{M}$ considerably.
\end{remark}

\begin{definition} \label{transformed_wf_set}
Let $f\in\colmap{\Omega_1}{\Omega_2}$ be a c-bounded generalized map and assume that $\domainset{f} \ne \emptyset$.
We define the pullback of some closed set $\Gamma \subseteq  \Omega_1 \times S^{n-1}$ by
$$
f^{\ast} \Gamma := \ggraph{M} \circ ( \{\bullet\} \times (\Gamma \circ \ggraph{f})_{\bullet} ),
$$
where $\{\bullet\} \times (\Gamma \circ \ggraph{f})_{\bullet}$ denotes the set-valued map defined by
$x \mapsto \{x\} \times (\Gamma \circ \ggraph{f})_{x}$,
using the notation from Definition \ref{set_valued_map_def} and \ref {set_valued_map_inverse}. 

If $\domainset{f} = \emptyset$ we put $(f^{\ast} \Gamma)_x =  S^{n-1}$ for all $x\in\Omega_1$.
\end{definition}

\begin{remark}
Obviously the set-valued map $f^{\ast} \Gamma$ is defined as the composition of upper semi-continuous and 
locally bounded set-valued maps, thus $f^{\ast} \Gamma$ is again an upper semi-continuous and locally bounded set-valued map 
$\Omega_1 \mapsto \mathcal{F}_0(S^{n-1})$. According to Theorem \ref{set_valued_composition_theorem} the pullback is 
defined by 
$$
(f^{\ast} \Gamma)_x = \bigcup_{\eta \in \widetilde{\Gamma}_x}\ggraph{M}_{x,\eta}
$$
where $\widetilde{\Gamma}_x:= \bigcup_{y\in \ggraph{f}_x} \Gamma_y$.
The upper semi-continuity of $f^{\ast} \Gamma$ yields that for any neighborhood $W \subseteq S^{n-1}$ of $(f^{\ast} \Gamma)_x$ there exists
some neighborhood $X$ of $x$ such that $(f^{\ast} \Gamma)_X \subseteq W$.
\end{remark}

\section{Generalized pullbacks of Colombeau functions}

In this section we prove the main result which gives a microlocal inclusion relation for the generalized pullback of a Colombeau function.
The proof of the theorem relies on a generalized stationary phase theorem, the details of which are discussed in the Appendix B.

\begin{definition} \label{specialsets}
Let $f \in \colmap{\Omega_1}{\Omega_2}$ be a c-bounded generalized map. Then we call $f$ slow scaled in all
derivatives on the open set $X_0\subseteq \Omega_1$, if for all $\alpha\in\mathbb{N}^n$ there exists slow scaled nets $(r_{\alpha,\varepsilon})_{\varepsilon}$
such that
\begin{eqnarray}\label{slowscale}
\sup_{x \in X_0} |\partial^{\alpha} f_{\varepsilon}(x)| \le C_{\alpha} r_{\alpha,\varepsilon} \ \rm{as} \ \varepsilon \rightarrow 0
\end{eqnarray}
holds, where $C_{\alpha}$ are constants. Furthermore we call
$f$ \emph{slow scaled in all derivatives} \index{slow scaled!in all derivatives} at $x_0\in\Omega_1$, if there exists a neighborhood $X_0$ of $x_0$ such that (\ref{slowscale}) holds for all $\alpha \in \mathbb{N}^n$.

Define the sets
\begin{eqnarray*}
S^f := \{ x \in \Omega_1 \mid f \rm{\ is\ slow scaled\ in\ all\ derivatives\ at\ } x\}
\end{eqnarray*}
and 
\begin{eqnarray*}
K^f(u) := \bigcap_{k \in  \widetilde{\Omega_2}  } \pi_1(\ggraph{f} \cap \rm{supp}(u-k) \times \Omega_2) .
\end{eqnarray*}
\end{definition}

\begin{remark}
The set $\pi_1(\ggraph{f} \cap \Omega_1 \times \rm{supp}(u-k))$ is closed for all $k \in \widetilde{\Omega_2}$ due to Proposition \ref{usc_corr_proposition} since $\ggraph{f}$ is upper semi-continuous and $\rm{supp}(u-k)$ is closed. This implies that $\ksupp{f}(u)$ is closed in the relative topology of $\Omega_1$.
\end{remark}

\begin{lemma}\label{lemmacool}
Let $f \in \colmap{\Omega_1}{\Omega_2}$ be a c-bounded generalized map. Furthermore let $x_0 \not \in \unfavsupp{f}{\Gamma} \cup (\slsupp{f})^c$ and $Y_0 := \pi_2(\ggraph{f} \cap \{x_0\}\times \Omega_2)$. If $W$ is some open neighborhood of $(f^{\ast} \Gamma)_{x_0}$ , 
then there exist neighborhoods $X$ of $x_0$, $V$ of $\Gamma_{Y_0}$ 
and $Y$ of $Y_0$ 
with the following properties:
\begin{equation*}\begin{split}
&X \times V \subseteq \domainset{f}(\Gamma),\ (f^{\ast} \Gamma)_X \subseteq W,\
f_{\varepsilon}(X) \subseteq Y \ \rm{for\ all\ }\ \varepsilon < \varepsilon', \\ 
&(f_{\varepsilon}) \rm{\ is\ slow scaling\ in\ all\ derivatives\ on\ } X \rm{\ and\ } 
\Gamma_{Y} \subseteq V \end{split}
\end{equation*}
Furthermore there exists a positive constant $c>0$, such that
\begin{equation*}
\inf_{(x,\eta,\xi)\in X \times V \times W^c} \left|M_{\varepsilon}(x,\eta) - \xi\right|>c \ \rm{for\ all\ }\varepsilon< \varepsilon'
\end{equation*}
holds (we are using the notation from Lemma \ref{notionlemma}).
\end{lemma}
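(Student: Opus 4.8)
The statement packages together several pointwise facts, each of which must be propagated to a joint neighborhood of $x_0$, and the final quantitative estimate is then obtained by a compactness-and-contradiction argument. The plan is to produce the required neighborhoods one after the other, shrinking as we go, and only at the end combine everything.

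\emph{Step 1: the neighborhood coming from $\domainset{f}$ and the upper semi-continuity of $\ggraph{f}$.} Since $x_0 \notin \unfavsupp{f}{\Gamma}$, exactly as in the proof of Lemma \ref{normal_bundle_lemma} there are open neighborhoods $X_0$ of $x_0$ and $V_0$ of $\Gamma_{Y_0}$ with $X_0 \times V_0 \subseteq \domainset{f}$. By Remark \ref{closed_set_remark} (applied with $Y_0 \csub \Omega_2$) there is a neighborhood $Y_1$ of $Y_0$ such that $\Gamma_{Y_1} \subseteq V_0$, and by Corollary \ref{generalized_graph_usc_corollary} (upper semi-continuity of $\ggraph{f}$) there is a neighborhood $X_1 \subseteq X_0$ of $x_0$ with $\ggraph{f}_{X_1} \subseteq Y_1$; shrinking once more we may take $X_1$ so that $X_0$ is still a neighborhood of $\overline{X_1}$. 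This gives $X_1 \times V_0 \subseteq \domainset{f}$, $\Gamma_{Y_1} \subseteq V_0$, and $\ggraph{f}_{X_1} \subseteq Y_1$.

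\emph{Step 2: the slow-scale neighborhood and the c-boundedness neighborhood.} Since $x_0 \in \slsupp{f}$ there is, by Definition \ref{specialsets}, a neighborhood $X_2$ of $x_0$ on which $(f_\varepsilon)_\varepsilon$ is slow scaled in all derivatives. By Lemma \ref{generalized_graph_approximation_property} applied with $K_1 := \{x_0\}$ (so $K_2 = Y_0$) and the neighborhood $Y_1$ of $Y_0$, there is a neighborhood $X_3$ of $x_0$ and $\varepsilon_1 \in ]0,1]$ with $f_\varepsilon(X_3) \subseteq Y_1$ for all $\varepsilon \le \varepsilon_1$. Now set $X := X_1 \cap X_2 \cap X_3$ (intersected with a smaller set so that each of $X_1,X_2,X_3$ remains a neighborhood of $X$), $V := V_0$, $Y := Y_1$. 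Upper semi-continuity of the set-valued map $f^\ast\Gamma$ (see the Remark after Definition \ref{transformed_wf_set}) gives, possibly after a further shrinking of $X$, that $(f^\ast\Gamma)_X \subseteq W$. At this point all the listed set-inclusion properties hold for $(X,V,Y)$ and some $\varepsilon' \le \varepsilon_1$.

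\emph{Step 3: the quantitative lower bound, by contradiction.} Suppose no constant $c>0$ works. Then for each $k \in \N$ there is $\varepsilon_k < 1/k$ and $(x_k,\eta_k,\xi_k) \in X \times V \times W^c$ with $|M_{\varepsilon_k}(x_k,\eta_k) - \xi_k| \le 1/k$. Choosing a subnet we may assume $x_k \to x' \in \overline{X} \subseteq X_0$, $\eta_k \to \eta' \in \overline{V} \subseteq \overline{V_0}$, $\xi_k \to \xi'$; since $W^c$ is closed, $\xi' \in W^c$. By construction $(x',\eta')$ still lies in $\domainset{f}$ (we arranged $X_0 \times V_0 \subseteq \domainset{f}$), so the map $M$ of Lemma \ref{notionlemma} is defined there and, by the Remark following that lemma together with Proposition \ref{generalized_graph_equicontinuous_proposition}, the cluster point $\xi' = \lim_k M_{\varepsilon_k}(x_k,\eta_k)$ belongs to $\ggraph{M}_{x',\eta'}$. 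Tracking $\eta'$ back: $\eta' \in \overline{V_0}$, but we must locate it in $\widetilde{\Gamma}_{x'}$. Here is the delicate point — one has to be slightly careful that the cluster analysis actually lands in $\widetilde{\Gamma}_{x'} = \bigcup_{y \in \ggraph{f}_{x'}}\Gamma_y$ rather than merely in $\overline{V_0}$; the way out is to note that in Definition \ref{transformed_wf_set} the pullback is built from $\Gamma \circ \ggraph{f}$ followed by $\ggraph{M}$, and that the net witnessing $\eta_k \in V$ can be chosen (by the same neighborhood-chasing as in Step 1, applied to the approximating nets) so that $\eta_k \in \Gamma_{y_k}$ with $y_k \in \ggraph{f}_{x_k}$; passing to the limit and using upper semi-continuity of $\Gamma$ and of $\ggraph{f}$ gives $\eta' \in \widetilde{\Gamma}_{x'}$. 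Hence $\xi' \in \ggraph{M}_{x',\eta'} \subseteq (f^\ast\Gamma)_{x'}$. Finally $x' \in \overline{X}$ and, after the last shrinking in Step 2, $(f^\ast\Gamma)_{\overline{X}} \subseteq (f^\ast\Gamma)_{X'} \subseteq W$ for a slightly larger $X'$ (using upper semi-continuity once more), so $\xi' \in W$, contradicting $\xi' \in W^c$.

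\textbf{Main obstacle.} The routine parts are the repeated neighborhood shrinkings (Steps 1–2); the one genuinely delicate step is the limit argument in Step 3, namely verifying that the clustering of the $\eta_k$'s respects the fibered structure $\widetilde{\Gamma}_{x}= \bigcup_{y\in\ggraph{f}_x}\Gamma_y$ and not just the ambient neighborhood $V$. This is exactly where upper semi-continuity of $\ggraph{f}$ and of the set-valued map induced by $\Gamma$ (Remark \ref{closed_set_remark}, Theorem \ref{set_valued_map_main_theorem}) must be invoked simultaneously, and where one has to be careful to choose the approximating nets compatibly rather than treating $x_k$, $\eta_k$ as unrelated.
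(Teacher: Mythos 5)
Your Steps 1--2 reproduce the paper's construction of $X$, $V$, $Y$ (open set from $\domainset{f}$, Remark \ref{closed_set_remark} to shrink $\Gamma_Y\subseteq V$, Lemma \ref{generalized_graph_approximation_property} for $f_\varepsilon(X)\subseteq Y$, slow scale on $X$, upper semi-continuity of $f^{\ast}\Gamma$ for $(f^{\ast}\Gamma)_X\subseteq W$), and that part is fine. The gap is in Step 3. In the compactness-contradiction the triples $(x_k,\eta_k,\xi_k)$ are \emph{given} to you by the failure of the uniform bound: $\eta_k$ is an arbitrary point of the neighborhood $V$, because the infimum in the statement runs over all of $X\times V\times W^c$. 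You therefore have no freedom to ``choose the net so that $\eta_k\in\Gamma_{y_k}$ with $y_k\in\ggraph{f}_{x_k}$''; after passing to a subnet you only know $\eta'\in\overline{V}$, and the cluster point $\xi'\in\ggraph{M}_{x',\eta'}$ need not lie in $(f^{\ast}\Gamma)_{x'}$, since $(f^{\ast}\Gamma)_{x'}$ only collects the sets $\ggraph{M}_{x',\eta}$ for $\eta$ in the fibre $\bigcup_{y\in\ggraph{f}_{x'}}\Gamma_y$, in general a proper subset of $\overline{V}$. So the inclusion $(f^{\ast}\Gamma)_X\subseteq W$ obtained in Step 2 is too weak to feed the contradiction: you need control of $M_\varepsilon$ (equivalently of $\ggraph{M}$) on \emph{all} of $X\times V$, not only over the fibres.

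That is precisely the extra ingredient in the paper's proof: it applies the approximation property of the generalized graph, Lemma \ref{generalized_graph_approximation_property}, to the c-bounded Colombeau map $M\in\colmap{\domainset{f}}{S^{n-1}}$ of Lemma \ref{notionlemma} itself, with the compact set $K_1:=\{x_0\}\times\Gamma_{Y_0}\csub\domainset{f}$, using that $\pi_2(\ggraph{M}\cap (K_1\times S^{n-1}))=(f^{\ast}\Gamma)_{x_0}$ and that $W$ is a neighborhood of this compact set. This produces neighborhoods $X''$ of $x_0$, $V''$ of $\Gamma_{Y_0}$ and $\varepsilon'$ with $M_\varepsilon(X''\times V'')\subseteq W$ for all $\varepsilon<\varepsilon'$, i.e.\ uniform containment over the whole product neighborhood; the lower bound on the compact set $X\times V\times W^c$ then follows (for each $\varepsilon$ the infimum is attained by continuity and compactness, and applying the approximation lemma to a slightly smaller neighborhood $W_0$ with $\overline{W_0}\subseteq W$ even gives the $\varepsilon$-uniform constant $c=\operatorname{dist}(\overline{W_0},W^c)>0$). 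Your argument can be repaired along the same lines: before launching the contradiction, either invoke Lemma \ref{generalized_graph_approximation_property} for $M$, or use that $(x,\eta)\mapsto\ggraph{M}_{x,\eta}$ is a closed, locally bounded, hence upper semi-continuous set-valued map on $\domainset{f}$ (Theorem \ref{generalized_graph_theorem} and Corollary \ref{generalized_graph_usc_corollary} applied to $M$) to secure $\ggraph{M}_{X\times V}\subseteq W$; with that, the rest of your Step 3 goes through.
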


\begin{proof}
From $x_0 \not \in \unfavsupp{f}{\Gamma}$ it follows that $\{x_0 \}\times \Gamma_{Y_0}$
is a compact subset of the open set $\domainset{f}$. Thus we can find some
neighborhood $X' \times V' \subseteq \domainset{f}$ of $\{x_0\} \times \Gamma_{Y_0}$.
Since $x_0 \in \slsupp{f}$ we can assume without loss of generality that $f$ is slow scale in all derivatives on
the compact set $X'$.

Since $W$ is a neighborhood of $(f^{\ast}\Gamma)_{x_0}$ we have that
\begin{eqnarray*}
(f^{\ast}\Gamma)_{x_0}  = \pi_2( \ggraph{M} \cap \{x_0\} \times \Gamma_{Y_0} \times S^{n-1}) \csub W
\end{eqnarray*}
where $\ggraph{M}$ denotes the generalized graph of the generalized map defined by $(x,\eta) \mapsto M_{\varepsilon}(x,\eta)$ on the open domain $\domainset{f}$.
By Lemma \ref{generalized_graph_approximation_property} 
there exist neighborhoods $X'', V''$ of $x_0$ resp. $\Gamma_{Y_0}$ such that
\begin{eqnarray} \label{propcool}
M_{\varepsilon}(X'' \times V'') \subseteq W 
\end{eqnarray}
for all $\varepsilon < \varepsilon'$. Let $V:= V' \cap V''$, then by Remark \ref{closed_set_remark} we can find some neighborhood $Y$ of $Y_0$ such that $\Gamma_Y \subseteq V$. By Lemma \ref{generalized_graph_approximation_property} there exists some neighborhood $X'''$ of $x_0$ such 
that $f_{\varepsilon}(X''') \subseteq Y$ for small $\varepsilon$. Let $X:=X' \cap X'' \cap X'''$ and
$Z:= X \times V \times W^c$. Then there exists $(x_{\varepsilon},y_{\varepsilon},\xi_{\varepsilon})\in Z$ (note that $(x,\eta,\xi) \mapsto |M_{\varepsilon}(x,\eta) - \xi |$ is a continuous function for each $\varepsilon \in ]0,1]$ and $Z$ is a compact set) such that
\begin{equation*}
c_{\varepsilon} := \inf_{(x,\eta,\xi) \in Z} |M_{\varepsilon}(x,\eta) - \xi | = |M_{\varepsilon}(x_{\varepsilon},\eta_{\varepsilon}) - \xi_{\varepsilon} |
\end{equation*}
holds for some net $(x_{\varepsilon}, \eta_{\varepsilon}, \xi_{\varepsilon})_{\varepsilon}$.
By (\ref{propcool}) we have that $c_{\varepsilon} > c >0$ holds for all $\varepsilon < \varepsilon'$, where $c$ is a constant.
\end{proof}

\begin{lemma} \label{minilemma}
Let $v,w \in \mathbb{R}^n$ , then the inequality
$$
|\alpha v -(1-\alpha)w|^2 \ge \frac{|v|^2 |w|^2}{|v+w|^2} \left(1- (\langle v/|v|, w/|w|\rangle)^2\right)
$$ 
holds for all $\alpha \in [0,1]$.
\end{lemma}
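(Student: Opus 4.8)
The statement is a purely finite-dimensional inequality about two vectors $v,w\in\mathbb{R}^n$ and a convex-combination parameter $\alpha\in[0,1]$, so there is no need to invoke anything from the Colombeau machinery; the plan is to reduce everything to the scalar quantities $|v|$, $|w|$ and the inner product $\langle v,w\rangle$, and then to minimize over $\alpha$.

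\textbf{Step 1: expand the left-hand side.} First I would write
$$
|\alpha v-(1-\alpha)w|^2=\alpha^2|v|^2-2\alpha(1-\alpha)\langle v,w\rangle+(1-\alpha)^2|w|^2,
$$
which is a quadratic polynomial in $\alpha$ with positive leading coefficient $|v|^2+|w|^2+2\langle v,w\rangle=|v+w|^2$ (assuming $v+w\neq 0$; the degenerate case $v+w=0$ forces $w=-v$, where the right-hand side is $|v|^2(1-1)=0$ and the inequality is trivial).

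\textbf{Step 2: minimize the quadratic over $\alpha\in\mathbb{R}$.} Completing the square in $\alpha$, the unconstrained minimum of $q(\alpha):=\alpha^2|v|^2-2\alpha(1-\alpha)\langle v,w\rangle+(1-\alpha)^2|w|^2$ over all real $\alpha$ equals $(|v|^2|w|^2-\langle v,w\rangle^2)/|v+w|^2$; this is a short computation using that $q(\alpha)=|v+w|^2\alpha^2-2(\langle v,w\rangle+|w|^2-\langle v,w\rangle)\alpha\cdot(\text{collect})+|w|^2$ — more cleanly, treat $\alpha v-(1-\alpha)w = \alpha(v+w)-w$, so $q(\alpha)=|\alpha(v+w)-w|^2$, whose minimum over $\alpha\in\mathbb{R}$ is the squared distance from $w$ to the line $\mathbb{R}(v+w)$, namely $|w|^2-\langle w, (v+w)/|v+w|\rangle^2$. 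A direct algebraic simplification of $|w|^2|v+w|^2-\langle w,v+w\rangle^2$ gives $|v|^2|w|^2-\langle v,w\rangle^2$, hence the minimum is exactly $(|v|^2|w|^2-\langle v,w\rangle^2)/|v+w|^2$, which equals the claimed right-hand side after factoring $|v|^2|w|^2$ out and writing $1-(\langle v/|v|,w/|w|\rangle)^2$ (this presupposes $v,w\neq 0$, and the cases $v=0$ or $w=0$ are again trivial since the right-hand side vanishes). Since the restriction to $\alpha\in[0,1]$ can only increase the infimum, the inequality follows for all $\alpha\in[0,1]$.

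\textbf{Main obstacle.} There is essentially no conceptual obstacle; the only thing to be careful about is the bookkeeping of degenerate cases ($v=0$, $w=0$, or $v+w=0$), where the right-hand side should be read as $0$, and the clean identification $\alpha v-(1-\alpha)w=\alpha(v+w)-w$ that turns the problem into a one-line orthogonal-projection estimate rather than a messy discriminant computation. I would present the geometric reformulation (distance from $w$ to the line spanned by $v+w$) as the main idea and relegate the identity $|w|^2|v+w|^2-\langle w,v+w\rangle^2=|v|^2|w|^2-\langle v,w\rangle^2$ to a one-line verification.
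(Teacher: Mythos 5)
Your proposal is correct and takes essentially the same route as the paper: both minimize the quadratic $q(\alpha)=|\alpha v-(1-\alpha)w|^2$ over $\alpha$ and identify the unconstrained minimum with the right-hand side, so the constrained infimum over $[0,1]$ can only be larger. Your identity $\alpha v-(1-\alpha)w=\alpha(v+w)-w$ merely replaces the paper's differentiate-in-$\alpha$, evaluate-at-$\alpha_0$, check-the-endpoints computation with an orthogonal-projection argument, and your explicit treatment of the degenerate cases ($v=0$, $w=0$, $v+w=0$) is a small bonus in care.
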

\begin{proof}
Observe that
\begin{multline*}
|\alpha v -(1-\alpha)w|^2 = \alpha^2 |v|^2 - 2\alpha(1-\alpha) \langle v, w \rangle + (1-\alpha)^2 |w|^2 \\
 = \alpha^2 |v+w|^2  -2 \alpha (|w|^2 + \langle v,w \rangle) + |w|^2
\end{multline*}
and differentiation in $\alpha$ shows that the expression has a local extremum at $\alpha_0:=(\langle v,w \rangle + |w|^2)/|v+w|^2$.
Since the second derivative with respect to $\alpha$ is $|v-w|^2 > 0$ for $v\ne -w$ (the case $v=-w$ is trivial), we have a local minimum at $\alpha_0$. Finally we obtain
$$
|\alpha v -(1-\alpha)w|^2  \ge |\alpha_0 v -(1-\alpha_0)w|^2 = \frac{|v|^2 |w|^2}{|v+w|^2} \left(1- \langle v/|v|, w/|w|\rangle^2\right),
$$
by checking the boundary case $\alpha\in \{0,1\}$ we verify that it is a global lower bound. 
\end{proof}

\begin{theorem} \label{maintheorem}
Let $f \in \colmap{\Omega_1}{\Omega_2}$ be a c-bounded generalized map with representative $(f_{\varepsilon})_{\varepsilon}$.
For $u \in \col{ \Omega_2}$ with representative $(u_{\varepsilon})_{\varepsilon}$, we define the pullback  $f^{\ast} u$  by
\begin{eqnarray*}
f^{\ast} u := (u_{\varepsilon}(f_{\varepsilon}(x)))_{\varepsilon} +\mathcal{N}(\Omega_1).
\end{eqnarray*}
It satisfies the microlocal inclusion relation
\begin{eqnarray*}
\wf{f^{\ast} u} \subseteq f^{\ast} \wf{u}\ \bigcup\ \left(\unfavsupp{f}{\wf{u}} \times S^{n-1}\right)\ \bigcup\ 
\left(\left(\ksupp{f}(u) \cap (\slsupp{f})^c \right) \times S^{n-1}\right).
\end{eqnarray*}
\end{theorem}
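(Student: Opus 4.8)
The strategy is to reduce everything to a local statement: fix a point $(x_0,\xi_0)$ in $\Omega_1\times S^{n-1}$ that does \emph{not} lie in the right-hand side, and show that $(x_0,\xi_0)\notin\wf{f^{\ast}u}$. Write the complement of the right-hand side; then $x_0\notin\unfavsupp{f}{\wf{u}}$, and moreover $x_0\notin\ksupp{f}(u)\cap(\slsupp{f})^c$. The latter splits into two cases. If $x_0\notin\ksupp{f}(u)$, then there is a generalized number $k\in\widetilde{\Omega_2}$ with $x_0\notin\pi_1(\ggraph{f}\cap\supp(u-k)\times\Omega_2)$; by upper semi-continuity of $\ggraph{f}$ (Proposition \ref{usc_corr_proposition}) and closedness of $\supp(u-k)$ this set is closed, so there is a neighborhood $X$ of $x_0$ with $f_{\varepsilon}(X)$ eventually avoiding $\supp(u-k)$, whence $u_{\varepsilon}\circ f_{\varepsilon}$ agrees with the constant $k_{\varepsilon}$ near $x_0$ and $f^{\ast}u$ is (locally) $\mathcal{G}^{\infty}$, so $(x_0,\xi)\notin\wf{f^{\ast}u}$ for every $\xi$. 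Otherwise $x_0\in\slsupp{f}$, i.e.\ $f$ is slow scaled in all derivatives near $x_0$, and this is the substantive case where the stationary phase argument is needed; here we also have $\xi_0\notin(f^{\ast}\wf{u})_{x_0}$ and $x_0\notin\unfavsupp{f}{\wf{u}}$ to exploit.

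For the main case, first apply Lemma \ref{lemmacool} with $\Gamma=\wf{u}$: since $x_0\notin\unfavsupp{f}{\wf{u}}\cup(\slsupp{f})^c$, and $W$ a small open neighborhood of $(f^{\ast}\wf{u})_{x_0}$ chosen so that $\xi_0\notin W$, we obtain neighborhoods $X$ of $x_0$, $V$ of $\wf{u}_{Y_0}$, $Y$ of $Y_0:=\ggraph{f}_{x_0}$, a slow-scaled net $(\sigma_{\varepsilon})_{\varepsilon}$, constants $\alpha,\beta,c>0$ and $\varepsilon'$ such that on $X$: $f_{\varepsilon}(X)\subseteq Y$, $\wf{u}_Y\subseteq V$, $X\times V\subseteq\domainset{f}$, $f$ is slow scaled in all derivatives on $X$, and $\inf_{(x,\eta,\xi)\in X\times V\times W^c}|M_{\varepsilon}(x,\eta)-\xi|>c$. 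Pick a cutoff $\varphi\in C^{\infty}_c(X)$ with $\varphi(x_0)\neq0$ and a conic (in a neighborhood on $S^{n-1}$) cutoff around $\xi_0$. To estimate $\widehat{\varphi\cdot(f^{\ast}u)}(\lambda\xi)$ for $\lambda$ large and $\xi$ near $\xi_0$, insert a Fourier-type decomposition of $u_{\varepsilon}$ adapted to $V$ (a microlocal partition relative to $\wf{u}$): away from $V$, $\widehat{u_{\varepsilon}}$ is rapidly decreasing of slow-scale type by the definition of the generalized wave front set, and that contribution is handled by the slow-scale smoothness of $f_{\varepsilon}$ and $\varphi$; for the piece with frequencies in the cone over $V$, one is left with an oscillatory integral
\begin{equation*}
\int\int \varphi(x)\,a_{\varepsilon}(\mu\eta)\,
e^{i\langle f_{\varepsilon}(x),\mu\eta\rangle - i\langle x,\lambda\xi\rangle}\,
d x\, d(\mu\eta),
\end{equation*}
with phase $\Phi_{\varepsilon}(x)=\langle f_{\varepsilon}(x),\mu\eta\rangle-\langle x,\lambda\xi\rangle$. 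Its gradient is $\nabla_x\Phi_{\varepsilon}(x)=\mu\,{}^{T}\!df_{\varepsilon}(x)\eta-\lambda\xi$, and on the support we have $\eta\in V$, $\xi\in W^c$; using $|{}^{T}\!df_{\varepsilon}(x)\eta|\ge\alpha/\sigma_{\varepsilon}$ together with $|M_{\varepsilon}(x,\eta)-\xi|>c$ (and Lemma \ref{minilemma} to combine the $\mu$- and $\lambda$-directions after normalization) gives a lower bound $|\nabla_x\Phi_{\varepsilon}(x)|\ge c'(\mu/\sigma_{\varepsilon}+\lambda)$ for a new constant $c'>0$, i.e.\ non-stationarity uniformly in $\varepsilon$ up to slow-scale factors. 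Then the generalized stationary (non-)phase estimate from Appendix B yields, for every $N$, a bound $O(\sigma_{\varepsilon}^{N}(1+\mu+\lambda)^{-N})$, and integrating in $\mu$ and summing with the rapidly decreasing frequency pieces shows that $\widehat{\varphi\cdot(f^{\ast}u)}(\lambda\xi)$ is rapidly decreasing of slow-scale (hence moderate) type in the directions near $\xi_0$. This is exactly the statement $(x_0,\xi_0)\notin\wf{f^{\ast}u}$.

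\textbf{Main obstacle.} The delicate point is the bookkeeping of the two independent large parameters $\lambda$ (the output frequency) and $\mu$ (the internal frequency from the Fourier representation of $u_{\varepsilon}$), in the presence of the slow-scale net $\sigma_{\varepsilon}$: one must verify that the lower bound for $|\nabla_x\Phi_{\varepsilon}|$ does not degenerate when $\mu$ and $\lambda$ are of very different sizes (this is where Lemma \ref{minilemma}, applied to $v\propto\mu\,{}^{T}\!df_{\varepsilon}(x)\eta$ and $w\propto-\lambda\xi$ after rescaling, and the separation estimate $|M_{\varepsilon}(x,\eta)-\xi|>c$ are essential), and that all constants and the powers of $\sigma_{\varepsilon}$ accumulated through repeated integration by parts stay slow scaled, so that moderateness is preserved in the limit $\varepsilon\to0$. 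A secondary technical nuisance is making the microlocal splitting of $u_{\varepsilon}$ into a near-$V$ conic piece and a rapidly decreasing remainder uniform in $\varepsilon$ and compatible with the composition by $f_{\varepsilon}$; this uses that $\wf{u}_Y\subseteq V$ on a whole neighborhood, guaranteed by Lemma \ref{lemmacool}, together with the upper semi-continuity remarks following Definition \ref{transformed_wf_set}.
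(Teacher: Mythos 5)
Your proposal follows essentially the same route as the paper's proof: the same case split (local constancy off $\ksupp{f}(u)$ making the pullback locally $\mathcal{G}^{\infty}$, versus the slow-scale case), the same use of Lemma \ref{lemmacool} to produce the cutoffs and the separation bound $|M_{\varepsilon}(x,\eta)-\xi|>c$, the Fourier representation of the localized $u_{\varepsilon}$ split into the cone over $V$ and its complement, and the generalized stationary phase theorem of Appendix B with the gradient lower bound obtained via Lemma \ref{minilemma}. The only cosmetic difference is that you keep two large parameters $\mu,\lambda$ where the paper normalizes to the single parameter $\omega=|\xi|+|\eta|$; the substance of the argument is the same.
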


\begin{remark}
It is apparent that the microlocal inclusion relation for $\wf{f^{\ast} u}$ is splitted into three different parts:
\begin{itemize}
\item $f^{\ast} \wf{u}$: This part corresponds to its classical counterpart (cf. \cite[eq. 8.2.4.]{Hoermander:V1}).
\item $\unfavsupp{f}{\wf{u}} \times S^{n-1}$: According to Lemma \ref{normal_bundle_lemma} this set is non-empty whenever
 $\gnormalb{f} \cap \wf{u}$ is non-empty. 
 In the classical case the pullback of a distribution $u$ by a smooth map $f$ is not defined if $\gnormalb{f} \cap \wf{u}$
 is non-empty, so there is no classical counterpart to this set.
\item $\left(\ksupp{f}(u) \cap (\slsupp{f})^c \right) \times S^{n-1}$: 
$\left(\ksupp{f}(u) \cap (\slsupp{f})^c \right)$ is the set of all points $x\in \Omega_1$, where the generalized map 
$f$ is not of slow-scale in all derivatives, whenever the Colombeau map $u$ is not constant in a neighborhood of $\ggraph{f}_x$.
This set is required since we allow non-regular Colombeau maps $f$, which are only c-bounded. So the singularities of the map $f$ may cause
singularities of $f^{\ast} u$. There is no classical correspondence, since in \cite[Theorem 8.2.4]{Hoermander:V1} 
the map $f$ is assumed to be smooth.
\end{itemize}
\end{remark}

\begin{proof}[Proof of Theorem \ref{maintheorem}]
According to \cite[Proposition 1.2.8]{GKOS:01} the pullback $f^{\ast} u$ is a well-defined Colombeau function in $\mathcal{G}(\Omega_1)$. Let $(x_0,\xi_0) \in \Omega_1 \times S^{n-1}$ and $Y_0 =\{ y \in \Omega_2 \mid (x_0, y) \in \ggraph{f}(\Omega_1) \}$.
We set $\Gamma:=\wf{ u }$ and $\Gamma_y:=\{\eta \mid (y,\eta) \in \Gamma\}$.
In order to prove the statement we show that if $(x_0,\xi_0) \not \in f^{\ast} \Gamma  \bigcup \unfavsupp{f}{\Gamma} \times S^{n-1} \bigcup \left(\ksupp{f}(u) \cap (\slsupp{f})^c \right) \times S^{n-1}$ it follows that $(x_0, \xi_0)\not \in \wf{f^{\ast} u} $.

So let us assume that $(x_0,\xi_0) \not \in f^{\ast} \Gamma  \bigcup \unfavsupp{f}{\Gamma} \times S^{n-1} \bigcup \left(\ksupp{f}(u) \cap (\slsupp{f})^c \right) \times S^{n-1}$. Then
we have $\xi_0 \not \in (f^{\ast} \Gamma)_{x_0}$ and $x_0 \not \in \unfavsupp{f}{\Gamma} \bigcup \ksupp{f}(u) \cap (\slsupp{f})^c$. This allows splitting the proof in two parts for the cases $x_0 \not\in \ksupp{f}(u) \cup 
\unfavsupp{f}{\Gamma}$ and 
$x_0 \not\in (\slsupp{f})^c \cup \unfavsupp{f}{\Gamma}$.\\

In the first case ($x_0 \not\in \ksupp{f}(u) \cup \unfavsupp{f}{\Gamma}$): Since $x_0 \not\in \ksupp{f}(u)$ 
there exists a generalized constant $k\in \widetilde{\mathbb{R}}^n$ with representative $(k_{\varepsilon})_{\varepsilon}$, such that $ \ggraph{f} \cap \{x_0\}\times \rm{supp}(u-k) = \emptyset$. Hence
for all $y \in Y_0$ it holds that $y \not \in \rm{supp}(u-k)$.
Since $Y_0$ and $\rm{supp}(u-k)$ are two disjoint closed sets,
we can find a closed neighborhood $Y$ of $Y_0$ such that $Y \cap \rm{supp}(u)=\emptyset$.
Let $\chi \in C_{c}^{\infty}(\Omega_2)$ with the property that $\chi \equiv 1$ on some compact neighborhood $Y' \subseteq Y^{\circ}$ of $Y_0$
and $\rm{supp}(\chi) \subseteq Y$. By Lemma \ref{generalized_graph_approximation_property}
there exists a neighborhood $X'$ of $x_0$ and $\varepsilon'\in]0,1]$ such that $f_{\varepsilon}(X') \subseteq Y'$ for $\varepsilon<\varepsilon'$.

In order to show that $\xi_0 \not \in (\wf{f^{\ast}u})_{x_0}$ for $x_0 \not\in \ksupp{f}(u)$ we have to find a smooth function $\varphi$ with support on a neighborhood of $x_0$, such that $\mathcal{F}(f^{\ast}u \varphi)$ is rapidly decreasing on some neighborhood of $\xi_0$.
We choose $\varphi$ to be a smooth function with $\rm{supp}(\varphi) \subseteq X'$. For all $\varepsilon < \varepsilon'$ the
identity $f_{\varepsilon}^{\ast} u \cdot \varphi = f_{\varepsilon}^{\ast}(\chi u) \cdot \varphi$ holds, since the functions $\chi$ and $\varphi$ where chosen
such that $ (\chi \circ f_{\varepsilon})\cdot \varphi \equiv 1$ for $\varepsilon < \varepsilon'$.
We have that $\chi \cdot (u_{\varepsilon}-k_{\varepsilon}) \in \colneg{\Omega_2}$ and it follows that 
$f^{\ast}u\cdot \varphi$ is $\mathcal{G}^{\infty}$, so $\xi_0 \not \in (\wf{f^{\ast}u})_{x_0}$. \\

The second case ($x_0 \not \in \unfavsupp{f}{\Gamma} \cup (\slsupp{f})^c$ and $\xi_0 \not \in (f^{\ast} \Gamma)_{x_0}$) is more difficult to prove:
Again we have to find a smooth cutoff function $\varphi$ with compact support containing $x_0$, such that 
$\mathcal{F}(f^{\ast}u \varphi)$ is rapidly decreasing on some neighborhood of $\xi_0$. The main step will be the application of the generalized
stationary phase theorem (cf. Appendix B), which requires a suitable lower bound on the norm of the gradient 
of the occurring phase function (it will turn out that $x_0 \not \in \unfavsupp{f}{\Gamma}$ is essential) and slow-scaledness of the phase function in all derivatives 
in a neighborhood of $x_0$.

First of all we note that $(f_{\varepsilon})$ is slow scaling in all derivatives in some neighborhood of $x_0$, since $x_0\in \slsupp{f}$.

Now let $W \subseteq S^{n-1}$ be an open neighborhood of $(f^{\ast} \Gamma)_{x_0}$  such that $W^c$ is a neighborhood of $\xi_0$ (this is possible since $\xi_0 \not \in (f^{\ast} \Gamma)_{x_0}$), then Lemma \ref{lemmacool} implies that there exist neighborhoods $X$ of $x_0$, $Y$ of $Y_0$, 
and $V$ of $\Gamma_{Y_0}$ such that $(f^{\ast} \Gamma)_X \subseteq W$, $f_{\varepsilon}(X) \subseteq Y$ for all $\varepsilon < \varepsilon'$, 
$\Gamma_{Y} \subseteq V$, and 
\begin{equation}\label{eq1}
\inf_{(x,\eta_1,\xi_1) \in X\times V \times W^c}  \left|M_{\varepsilon}(x,\eta_1) -  \xi_1 \right| > d >0
\end{equation}
holds for all $\varepsilon < \varepsilon'$, where $d$ is some positive constant. Without loss
of generality $(f_{\varepsilon})$ is slow scaling in all derivatives on $X$. 

Let $\chi \in C_{c}^{\infty}(\Omega_2)$ with the property that $\chi \equiv 1$ on some compact neighborhood $Y' \subseteq Y^{\circ}$ of $Y_0$
and $\rm{supp}(\chi) \subseteq Y$. By Lemma \ref{generalized_graph_approximation_property} there exists a neighborhood $X'$ of $x_0$ and $\varepsilon'\in]0,1]$ such that 
$f_{\varepsilon}(X') \subseteq Y'$ for $\varepsilon<\varepsilon'$. Without loss of generality we can assume that $X' \subseteq X$.

In order to show that $\xi_0 \not \in \wf{f^{\ast} u}_{x_0}$ we are going to prove that 
there exists a smooth function $\varphi$ with support on a neighborhood of $x_0$, such that $\mathcal{F}(f^{\ast}u \varphi)$ is rapidly decreasing on $W^c$ (which is a neighborhood of $\xi_0$), which implies $(x_0, \xi_0) \not \in \wf{f^{\ast} u}$.

We choose $\varphi$ to be a smooth function with $\rm{supp}(\varphi) \subseteq X'$. For all $\varepsilon < \varepsilon'$ the
identity $(f_{\varepsilon}^{\ast}) u_{\varepsilon}\ \varphi = f_{\varepsilon}^{\ast}(\chi u_{\varepsilon})\ \varphi$ holds, since the functions $\chi$ and $\varphi$ where chosen such that $ (\chi \circ f_{\varepsilon})\cdot \varphi \equiv 1$ for $\varepsilon < \varepsilon_1$. 

Set $\widetilde{V}:=\{\eta \in S^{m-1}\mid  \exists \eta_1 \in V, \lambda \in \mathbb{R}_+: \eta=\lambda \cdot \eta_1 \}$ and
$\widetilde{W}:=\{\xi \in S^{m-1}\mid  \exists \xi \in W, \lambda \in \mathbb{R}_+: \xi=\lambda \cdot \xi_1 \}$. 
Obviously $\four{u \chi}$ is rapidly decreasing on $V^c$. We have that
\begin{eqnarray*}
&&
|\four{( f^{\ast} u)_{\varepsilon} \varphi}(\xi)| = |\four{(f^{\ast} \chi u)_{\varepsilon} \varphi } (\xi)| \\
&=&\left| \int_{\mathbb{R}^m} \widehat{\chi u_{\varepsilon}}(\eta) 
\left(\int_{\mathbb{R}^n} \exp{(i \langle f_{\varepsilon} (x), \eta \rangle - i\langle x, \xi \rangle )} \varphi(x) \m{x} \right) \m{\eta}\right|\\
&=&\left|\int_{\mathbb{R}^m} \widehat{\chi u_{\varepsilon}}(\eta) I_{\varepsilon}(\xi, \eta) \m{\eta} \right| 
= \left|\int_{\widetilde{V}} \widehat{\chi u_{\varepsilon}}(\eta) I_{\varepsilon}(\xi, \eta) \m{\eta} \right| + 
\left|\int_{\widetilde{V}^c} \widehat{\chi u_{\varepsilon}}(\eta) I_{\varepsilon}(\xi, \eta) \m{\eta} \right|,
\end{eqnarray*}
where we have set
\begin{eqnarray*}
I_{\varepsilon}(\xi, \eta)  := \int \exp{(i \langle f_{\varepsilon} (x), \eta \rangle - i\langle x, \xi \rangle )} \varphi(x) \m{x}.
\end{eqnarray*}
We intend to apply the stationary phase theorem \ref{statphas} (cf. the Appendix B) with
\begin{eqnarray*}
\omega&:=&|\xi|+|\eta| \\
\phi_{\varepsilon} &:=& \langle f_{\varepsilon} (x), \frac{\eta}{|\eta|+|\xi|} \rangle - \langle x, \frac{\xi}{|\eta|+|\xi|} \rangle. 
\end{eqnarray*}
Thus we have to find a bound for the gradient of the phase function $|\phi_{\varepsilon}'(x)| = \left|  {}^T\!df_{\varepsilon}(x) \alpha \eta_1  -  (1-\alpha)\xi_1 \right|$ from below for all $\alpha\in[0,1]$, $\eta_1 = \eta/|\eta|$, $\xi_1=\xi/|\xi|$ and $x \in X$.
By optimization in the parameter $\alpha$ (according to Lemma \ref{minilemma}) and using the notation of Lemma \ref{notionlemma}  we can bound the expression
\begin{eqnarray*}
\inf_{x \in X,\eta_1 \in V, \xi_1 \in W,\alpha\in[0,1]}  \left| {}^T\!df_{\varepsilon}(x) \alpha \eta_1  -  (1-\alpha) \xi_1 \right|
\end{eqnarray*}
from below by
\begin{eqnarray*}
&& 
\inf_{(x,\eta_1,\xi_1) \in X\times V \times W^c} \frac{| 
{}^{T}\! df_{\varepsilon}(x) \eta_1| }{   |{}^{T}\! df_{\varepsilon}(x) \eta_1 + \xi_1|}
\sqrt{\left|
1- \Big \langle M_{\varepsilon}(x,\eta_1) , \xi_1 
\Big \rangle^2 \right|} \\
&&=
\inf_{(x,\eta_1,\xi_1) \in X\times V \times W^c} \frac{1}{2} \frac{ |  {}^{T}\! df_{\varepsilon}(x) \eta_1| }
{| {}^{T}\! df_{\varepsilon}(x) \eta_1 + \xi_1|}
\left| M_{\varepsilon}(x,\eta_1) -  \xi_1 \right|  \\
&&\ge   \inf_{(x,\eta_1,\xi_1) \in X\times V \times W^c} \frac{1}{4} 
\frac{  |{}^{T}\! df_{\varepsilon}(x) \eta_1| }{
    |{}^T\!df_{\varepsilon}(x) \eta_1| + 1}
\inf_{(x,\eta_1,\xi_1) \in X\times V \times W^c}
  \left|M_{\varepsilon}(x,\eta_1)
 -  \xi_1 \right| \\
 &&\ge   C \sigma_{\varepsilon}^{-1} \inf_{(x,\eta_1,\xi_1) \in X\times V \times W^c}  \left|
M_{\varepsilon}(x,\eta_1)
 -  \xi_1 \right|,  
\end{eqnarray*}
where $\sigma_{\varepsilon}$ is the slow scaling net from the Definition \ref{unfavdef}. 
By (\ref{eq1}) we have
\begin{equation*} \label{supernet}
\inf_{(x,\eta_1,\xi_1) \in X\times V \times W^c}  \left|
M_{\varepsilon}(x,\eta_1)
 -  \xi_1 \right| > d >0 
\end{equation*}
for $\varepsilon< \varepsilon'$, where $d$ is a positive constant.
Thus the gradient of the phase function is uniformly bounded from below by
$|\phi_{\varepsilon}'(x)| \ge  C d \cdot \sigma_{\varepsilon}^{-1}$ 
for all  $(x,\xi ,\eta) \in X \times \widetilde{V} \times \widetilde{W}^c$. The stationary phase theorem \ref{statphas} yields
\begin{eqnarray}\label{superest1}
\left|I_{\varepsilon}(\xi, \eta) \right| &\le& C_q \varepsilon^{-1} (1+|\xi|+|\eta|)^{-q}
\end{eqnarray}
for all $q \in \mathbb{N}_0$. Note that we use $C, C_p, C_q$ and $C_k,l$ as generic constants.
In the case where $(x, \eta, \xi) \in X \times \widetilde{V}^c \times \widetilde{W}^c$ the stationary phase theorem (now with the phase function $\exp{(-i\langle x, \xi \rangle)}$)
gives
\begin{equation} \begin{split} \label{superexpr}
&|\xi|^{k} |I_{\varepsilon}(\xi, \eta)|=| \xi|^{k}  \left|\int  \exp{(- i\langle x, \xi \rangle )}  \exp{(i \langle f_{\varepsilon} (x), \eta \rangle)
 } \varphi(x)  \m{x} \right| \\
&\sum_{|\alpha| \le k} \left|D^{\alpha}_x \left( \exp{(i |\eta| \langle f_{\varepsilon} (x), \eta/|\eta| \rangle )} \varphi \right)\right|  \\ 
\end{split}
\end{equation}
and by repeated use of the chain rule we obtain the estimate
\begin{equation*}
\begin{split}
&\le\sum_{|\alpha| \le k} \sum_{\beta \le \alpha} c_1(\beta) \sup_{x\in X_0} \left|\partial^{\alpha-\beta} \varphi(x)\right| \sum_{l=1}^{|\beta|} |\eta|^l \cdot \\
&\sum_{\gamma_1+...+\gamma_l = \beta} d(\gamma_1, ... ,\gamma_l)  \prod_{1\le i \le l} 
\sup_{x\in X_0} \left|\partial^{\gamma_i} \langle  f_{\varepsilon}(x), \eta/|\eta| \rangle\right|
\end{split}
\end{equation*}
where $\gamma_1 + \gamma_2 + \cdots + \gamma_l$ denotes a partition of the multi-index $\beta$ in exactly
$l$ multi-indices, that add up componentwise to $\beta$. Using the
notation $|g|_{k} := \max_{|\alpha|=k} |\partial^{\beta} f(x)|$, we can bound the expression by  
\begin{equation*}
\begin{split}
&\le
C_{k,2} \sum_{|\alpha| \le k} \sum_{\beta \le \alpha}  \sum_{l=0}^{|\beta|} |\eta|^l \max_{\gamma_1+...+\gamma_l = \beta}{\left( \prod_{1\le i \le l} 
\sup_{x\in X, \eta \in \widetilde{V}^c} \left|\langle  f_{\varepsilon}(x), \eta/|\eta| \rangle\right|_{|\gamma_i|} \right)} \\
&\le 
C_{k,2} \sum_{|\alpha| \le k} \sum_{\beta \le \alpha}  \sum_{l=0}^{|\beta|} |\eta|^l \max_{0\le j \le |\beta| - l + 1}{    
\left( \sup_{x\in X, \eta \in \widetilde{V}^c} \left|\langle  f_{\varepsilon}(x), \eta/|\eta| \rangle\right|_{j}  \right)^l } \\
&\le
C_{k,3}  (1+|\eta|)^k \max_{l=1}^{k} \max_{0\le j \le k - l + 1}{    
\left( \sup_{x\in X, \eta \in \widetilde{V}^c} \left|\langle  f_{\varepsilon}(x), \eta/|\eta| \rangle\right|_{j}  \right)^l }
.\end{split}
\end{equation*}
Since $f_{\varepsilon}(x)$ is of slow scale in all derivatives at $x_0$, it holds that for all $j \in \mathbb{N}_0$ there exists 
constants $C_j$ and slow scaled nets $r_{\varepsilon,j}$ such that
\begin{equation*}
\sup_{(x,\eta_1) \in X\times V^c}{\left|\langle  f_{\varepsilon}(x), \eta_1  \rangle\right|_{j}} \le C_j r_{\varepsilon,j} 
\end{equation*}
holds for small $\varepsilon$.
Summing up the estimates of the form (\ref{superexpr}) give that there exists constants $C_p$ such that
\begin{eqnarray}\label{superest2}
|I_{\varepsilon}(\xi, \eta)| \le C_p (1+|\eta|)^{-p} \varepsilon^{-1} (1+|\xi|)^{p}
\end{eqnarray} 
holds for all $p\in \mathbb{N}_0$ and $(\eta,\xi) \in \widetilde{V}^c \times \widetilde{W}$.  
We observe that
\begin{equation*}\label{last}
\begin{split}
|\four{( f^{\ast} u)_{\varepsilon} \varphi}(\xi)| =& \left|\int_{\mathbb{R}^m} \widehat{\chi u_{\varepsilon}}(\eta) I_{\varepsilon}(\xi, \eta) \m{\eta} \right|\\ 
\le&
C_q  \varepsilon^{-1} \int_{\widetilde{V}} |\widehat{\chi u_{\varepsilon}}(\eta)|  (1+|\xi|+|\eta|)^{-q} \m{\eta}
\\ &+ 
C_p (1+|\xi|)^{-p} \varepsilon^{-1}  \int_{\widetilde{V}^c} |\widehat{\chi u_{\varepsilon}}(\eta)|  (1+|\eta|)^{p} \m{\eta} 
\end{split}
\end{equation*}
holds and using (\ref{superest1}) and (\ref{superest2}) leads to the upper bound
\begin{equation*}
\begin{split}
& C_q \varepsilon^{-1} \sup_{\eta \in \tilde{V} }\left|(1+|\eta|)^{-k} \widehat{\chi u_{\varepsilon}}(\eta) \right|  \int_{\tilde{V}}    (1+|\xi|+|\eta|)^{k-q} \m{\eta}  \\
& + C_p (1+|\xi|)^{-p} \varepsilon^{-1-n} \int_{\widetilde{V}^c} (1+|\eta|)^{p-l}  \m{\eta}. 
\end{split}
\end{equation*}
Finally we set $k:=q-p-m$ and $l:=p+n-1$ and obtain
\begin{equation*}
\begin{split}
& C_q \varepsilon^{-1} \sup_{\eta \in \tilde{V} }\left|(1+|\eta|)^{-k} \widehat{\chi u_{\varepsilon}}(\eta) \right|  \int_{\tilde{V}}    (1+|\xi|+|\eta|)^{k-q} \m{\eta}  \\
& + C_p (1+|\xi|)^{-p} \varepsilon^{-1-n} \int_{\widetilde{V}^c} (1+|\eta|)^{p-l}  \m{\eta} \\
& \le C_{p,q,m,n} \varepsilon^{-1-n} (1+|\xi|)^{-p} 
\end{split}
\end{equation*}
for all $\xi \in \widetilde{W}^c$ and $C_{p,q,m,n}$ some constant depending on $p,q,m$ and $n$.
It follows that $(x_0,\xi_0)  \not \in  \wf{f^{\ast}u}$. 
\end{proof}

\section{Examples}
\begin{example}[Multiplication of Colombeau functions]
This example was presented in \cite[Example 4.2]{HK:01}  in order to show that 
an inclusion relation like in \cite[Theorem 8.2.10]{Hoermander:V1} for the wave front set of a product of distributions,
cannot be extended to Colombeau function with wave front sets in unfavorable position. 

Consider the Colombeau functions $u$ and $v$ defined by
\begin{eqnarray*}
u_{\varepsilon}&:=& {\varepsilon}^{-1} \rho({\varepsilon}^{-1} (x + \gamma_{\varepsilon} y))\\
v_{\varepsilon}&:=& {\varepsilon}^{-1} \rho({\varepsilon}^{-1} (x - \gamma_{\varepsilon} y)),
\end{eqnarray*}
where $\gamma_{\varepsilon}$ is some net with $\lim_{\varepsilon \rightarrow 0} \gamma_{\varepsilon} = 0$. Note that 
in \cite[Example 4.2]{HK:01} $\gamma_{\varepsilon} := \varepsilon^{1/2}$.
These Colombeau functions are both associated to $\delta(x) \otimes 1(y)$ and the wave front sets $\wf{u}=\wf{v} = 
\{0\}\times\mathbb{R} \times \{(\pm 1,0)\}$ are in an unfavorable position.
\end{example}
We are going to apply Theorem (\ref{maintheorem}) in order to calculate $WF(u\cdot v)$. First we rewrite $u \cdot v = f^{\ast} \iota(\delta)$,
where $f_{\varepsilon}(x,y)=(x+ \gamma_{\varepsilon} y, x-\gamma_{\varepsilon} y)$
with $\gamma_{\varepsilon}$ some net tending to zero. In Example (\ref{mgexample}) we already showed that $\domainset{f}= \mathbb{R}^2 \times S^1 / (\pm 1, \mp1)$.
From \cite[Theorem 15]{HdH:01} it follows that $WF(\iota(\delta)) = \{(0,0)\} \times S^{1}$.
The wave front unfavorable support of $f$ with respect to $WF(\iota(\delta))$ according to Definition \ref{unfavdef} is
\begin{eqnarray*}
\unfavsupp{f}{(0,0)\times S^1} = \{(0,0)\}.
\end{eqnarray*}
Since $f_{\varepsilon}(x,y)$ is of slow scale in all derivatives at all $x\in\mathbb{R}^2$, it follows that $(\slsupp{f})^c=\emptyset$.
Now Theorem \ref{maintheorem} gives that
\begin{eqnarray*}
\wf{u \cdot v} \subseteq (0,0) \times S^{1}, 
\end{eqnarray*}
which is consistent with the result in \cite[Example 4.2]{HK:01}.

\begin{example}[Hurd-Sattinger]
Let us consider the initial value problem
\begin{equation} \label{ivp} \begin{split}
&\partial_t u + \Theta \partial_x u + \Theta' u  = 0   \\
&u(0,x)  = u_0 \in \col{\mathbb{R}},
\end{split} 
\end{equation}
where $\Theta \in \col{\mathbb{R}^2}$  is defined by $\Theta_{\varepsilon}(x) =  \rho_{\gamma_{\varepsilon}} \ast H(-\cdot)$
 with $\rho_{\gamma_{\varepsilon}} = \frac{1}{\gamma_{\varepsilon}}\rho (\cdot /  \gamma_{\varepsilon})$  
where $\rho \in \mathcal{S}(\mathbb{R})$ and $\int \rho(x) \m{x}=1$ and $\gamma_{\varepsilon}= 1/\log{(1/\varepsilon)}$ 
is a net of slow scale. For the initial value we choose $u_0 := \iota(\delta_{-s_0})$ a delta like singularity at $-s_0$ 
(for a positive $s_0>0$).
The Hurd-Sattinger example was first given in \cite{HS:68} (it was shown that it is not solvable in $L^1_{\rm{loc}}$,
 when distributional products are employed). 
It was further investigated in \cite{HdH:01} with methods from Colombeau theory. In \cite{GH:05b} 
the wave front set $\rm{WF}_{\gamma}$  (with respect to the slow scale net $\gamma$) of the Colombeau solution was calculated. 
For sake of simplicity we do only consider the standard generalized wave front set $\rm{WF}$, which is smaller since
it neglects the singularities coming from the coefficient $\Theta$.
\end{example}
We can write
\begin{eqnarray*}
\Theta_{\varepsilon}(x) = g(x/\gamma_{\varepsilon}),
\end{eqnarray*}
where  $g(x):=\int_{x}^{\infty} \rho(z) \m{z}$ with $\lim_{x \rightarrow -\infty} g(x)=0$ and $\lim_{x \rightarrow +\infty} g(x)=1$ 
(this implies that $\overline{\ran{g}}=[\alpha_{+},\alpha_{-}]$ for some $\alpha_{-}\le 0, \alpha_{+}\ge 1$).

We have already considered (cf. Example \ref{hs_flow_example}) the ordinary differential equation for the characteristic curves
$$
\partial_s \xi_{\varepsilon}(s) = \Theta_{\varepsilon}(\xi_{\varepsilon}(s)),\quad\xi_{\varepsilon}(t)=x,
$$
and we obtained
$$
\ggraph{\xi(0;\bullet)}_{t,x} \subseteq \left\{
\begin{array}{ll}
\{\min{(x-t,0)}\} & x < 0 \\
\left[-t,0\right] & x = 0, 0 \le  t \\  
\{x\} & x > 0.
\end{array}\right.
$$

If we set $f_{\varepsilon}(t,x):=(t, \xi_{\varepsilon}(0;t,x))$ and $f := [(f_{\varepsilon})_{\varepsilon}]$
then
\begin{equation*}
u = f^{\ast} (1 \otimes u_{0})  \cdot \partial_x f(t,x)
\end{equation*}
is a solution of the initial value problem (\ref{ivp}).

We observe that the derivatives of the characteristic flow satisfies
\begin{eqnarray*}
\partial_t \xi_{\varepsilon}(0;t,x) &=& - \Theta_{\varepsilon}(\xi_\varepsilon(0;t,x)) \ \text{and} \
\partial_x \xi_{\varepsilon}(0;t,x) = \frac{\Theta_{\varepsilon}(\xi_\varepsilon(0;t,x))}{\Theta_\varepsilon(x) },
\end{eqnarray*}
which follows from Remark \ref{ode_derivatives}.
So $\partial_x f_{\varepsilon}(t,x)$ is slow scaled in all derivatives, 
due to the fact that $\Theta_{\varepsilon}(x) = g(x/\gamma_{\varepsilon})$ where
$\gamma_{\varepsilon}$ is a net of slow scale. We may conclude that 
$\wf{u} \subseteq \wf{f^{\ast} (1 \otimes u_{0})}$, which enable us the apply Theorem \ref{maintheorem}
in order to estimate the wave front set of $u$.

We put $M=[(M_{\varepsilon})_{\varepsilon}] \in \colmap{\domainset{f}}{S^1}$ defined by
$$
M_{\varepsilon}(t,x,\eta) = \frac{{}^T df_{\varepsilon}(t,x)\eta}{|{}^T df_{\varepsilon}(t,x)\eta|}
$$
as in Lemma \ref{notionlemma}. Note that
$$
 {}^T df_{\varepsilon}(t,x)
= \left( \begin{array}{rr} 
1 &  - \Theta_{\varepsilon}(\xi_\varepsilon(0;t,x))\\ 
0 & \frac{\Theta_{\varepsilon}(\xi_\varepsilon(0;t,x))}{\Theta_\varepsilon(x) }  
\end{array} \right).
$$
For $\eta_{\pm} := (0, \pm 1) \in S^{1}$ 
(which are the only irregular directions coming from the wave front set of $1\otimes u_0$),
we obtain 
\begin{eqnarray*}
M_{\varepsilon}(t,x,\eta_{\pm}) &=&\frac{df_{\varepsilon}(t,x)^T \eta_{\pm}}{|df_{\varepsilon}(t,x)^T \eta_{\pm}|}\\
&=&\pm \left( \frac{-1}{\sqrt{1+ \Theta_\varepsilon(x)^{-2}}}, \frac{1}{\sqrt{1+\Theta_\varepsilon(x)^2}} \right).
\end{eqnarray*}
Remarkably the result does only depend on the coefficient $\Theta_{\varepsilon}(x)$. In Example 
\ref{heaviside_ggraph_example} we obtained 
$$
\ggraph{(\Theta_{\varepsilon})_{\varepsilon}}
=
\left(\{x\in\mathbb{R}\mid x<0\} \times \{1\}\right) \cup \left(\{0\} \times \overline{\ran{g}} \right) \cup \left(\{x\in\mathbb{R}\mid x>0\} \times \{0\}\right),
$$
so Theorem \ref{generalized_graph_composition} yields
\begin{multline*}
\ggraph{M(\cdot,\eta_{\pm})}_{t,x} \subseteq 
 \left( \left( \frac{\mp 1}{\sqrt{1+ \bullet^{-2}}}, \frac{\pm 1}{\sqrt{1+\bullet^2}} \right) \circ \ggraph{\Theta} \right)_{t,x} =
\\  \left\{  
\begin{array}{ll} 
\{\frac{1}{\sqrt{2}}(\mp 1, \pm 1)\}   &  \text{for}\ x<0\\
\{(0,\pm 1)\} &  \text{for}\ x>0\\ 
\bigcup_{\alpha\in[\alpha_{-},\alpha_{+}]} \{\left(\mp \frac{\alpha}{\sqrt{1+\alpha^2}} ,\pm  \frac{1}{\sqrt{1+\alpha^2}} \right)\}  
 ,\ &\text{for}\ x=0.
\end{array} \right.   
\end{multline*}
Furthermore we observe that 
\begin{multline*}
(\wf{1\otimes u_0} \circ \ggraph{f})_{t,x} \\ \subseteq
 \left\{
\begin{array}{ll}
\wf{1\otimes u_0}_{t,\min{(x-t,0)}}  & x < 0 \\
\bigcup_{z\in \left[-t,0\right]}  \wf{1\otimes u_0}_{t,z}   & x = 0 \\  
\wf{1\otimes u_0}_{t,x} = \emptyset & x > 0
\end{array}\right\} 
=\left\{
\begin{array}{ll}
\{(0, \pm 1)\}  & x < 0, t=x-s_0 \\
\{(0, \pm 1)\}  &x = 0, t \ge  -s_0 \\  
\emptyset & x > 0
\end{array}\right. \\
\end{multline*}
and according to Definition \ref{transformed_wf_set} we obtain
\begin{multline*}
(f^{\ast}(\wf{1 \otimes u_0}))_{t,x}= \ggraph{M} \circ ( \{(t,x)\} \times (\wf{1\otimes u_0} \circ \ggraph{f})_{t,x}) 
\\
=
\left\{ \begin{array}{ll}
 \{ (\mp 1, \pm 1) \}  & t<s_0, x=t-s_0 \\
\bigcup_{\alpha \in [\alpha_{-}, \alpha_{+}]}  \{ (\frac{\mp \alpha}{\sqrt{1+\alpha^2}}, \frac{\pm 1}{\sqrt{1+\alpha^2}} ) \} & t=s_0,x=0  \\
\{ (0, \pm 1 )\}  & t>s_0, x=0 
\end{array}
\right.
\end{multline*}
and Theorem \ref{maintheorem} yields $\wf{u} \subseteq f^{\ast}(\wf{1 \otimes u_0})$.

\setcounter{secnumdepth}{0}
\setcounter{theorem}{0}

\renewcommand{\thetheorem}{\thechapter.\arabic{theorem}}
\setcounter{equation}{0}
\renewcommand{\theequation}{\arabic{equation}}

\begin{appendix}

\chapter{Sets and measure theory}

In the Appendix A we have collected a few auxillary measure- and set-theoretic results.
Most of the presented material can be considered common knowledge,
nevertheless for sake of completeness we decided to include a proof for all results, 
adapted to the special needs and notation of earlier sections.

\begin{definition}
Let $\mathcal{I}:=]0,\iota_0]$ with $\iota_0 \in ]0,1]$ and 
$(M_{\iota})_{\iota \in \mathcal{I}}$ a family of subset of $\R^n$. 
\begin{trivlist}
\item
$(M_{\iota})_{\iota \in \mathcal{I}}$ it an ascending family of sets, 
when $M_{\alpha} \supseteq M_{\beta}$ for $\alpha \le \beta$.
\item
$(M_{\iota})_{\iota \in \mathcal{I}}$ it an descending family of sets, 
when $M_{\alpha} \subseteq M_{\beta}$ for $\alpha \le \beta$.
\end{trivlist}
Furthermore we define the limes superior by
$$
\limsup_{\iota \rightarrow 0}M_{\iota} := \bigcap_{\iota_1 \in \mathcal{I}} \bigcup_{\iota \in ]0,\iota_1]} M_{\iota}
$$
and the limes inferior by
$$
\liminf_{\iota \rightarrow 0}M_{\iota} := \bigcup_{\iota_1 \in \mathcal{I}} \bigcap_{\iota \in ]0,\iota_1]} M_{\iota}.
$$
In general it holds that $\liminf_{\iota \rightarrow 0}M_{\iota} \subseteq \limsup_{\iota \rightarrow 0}M_{\iota}$.
When we have $M:=\liminf_{\iota \rightarrow 0}M_{\iota} = \limsup_{\iota \rightarrow 0}M_{\iota}$, we
say the the net $(M_{\iota})_{\iota \in \mathcal{I}}$ converges to the set $M$ and use the notation 
$\lim_{\iota \rightarrow 0}M_{\iota}:=M$. 
\end{definition}

\begin{lemma} \label{limit_of_sets_lemma}
If $(M_{\iota})_{\iota \in \mathcal{I}}$ is an ascending (resp. descending) family of sets, it converges 
$$
\lim_{\iota \rightarrow 0}M_{\iota} = \bigcup_{\iota \in \mathcal{I}} M_{\iota} ,
$$
resp.
$$
\lim_{\iota \rightarrow 0}M_{\iota} = \bigcap_{\iota \in \mathcal{I}} M_{\iota}.
$$
\end{lemma}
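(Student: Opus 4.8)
The plan is to prove the two cases (ascending and descending) directly from the definitions of $\limsup$ and $\liminf$, since the statement is just an unravelling of set-theoretic quantifiers once monotonicity is exploited. Recall that in general $\liminf_{\iota\to 0}M_\iota\subseteq\limsup_{\iota\to 0}M_\iota$, so in each case it suffices to identify both sides with the claimed union (resp.\ intersection), which then forces convergence in the sense of the preceding definition.

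First I would treat the ascending case, where $M_\alpha\supseteq M_\beta$ whenever $\alpha\le\beta$. The key observation is that for any fixed $\iota_1\in\mathcal{I}$ we have $\bigcup_{\iota\in]0,\iota_1]}M_\iota = M_{\iota_1}$ by monotonicity, because every $M_\iota$ with $\iota\le\iota_1$ is contained in $M_{\iota_1}$, while $M_{\iota_1}$ itself occurs in the union. Hence
$$
\limsup_{\iota\to 0}M_\iota = \bigcap_{\iota_1\in\mathcal{I}}\bigcup_{\iota\in]0,\iota_1]}M_\iota = \bigcap_{\iota_1\in\mathcal{I}}M_{\iota_1}.
$$
But $(M_{\iota_1})_{\iota_1}$ is ascending as $\iota_1\to 0$, so $\bigcap_{\iota_1\in\mathcal{I}}M_{\iota_1}=M_{\iota_0}$ is not what we want directly; rather I note that for the $\liminf$ the inner intersection $\bigcap_{\iota\in]0,\iota_1]}M_\iota$ need not simplify so cleanly, so instead I would argue as follows. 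For the lower inclusion, $\bigcup_{\iota\in\mathcal{I}}M_\iota\subseteq\liminf_{\iota\to 0}M_\iota$: if $x\in M_{\iota_1}$ for some $\iota_1$, then by ascending monotonicity $x\in M_\iota$ for all $\iota\in]0,\iota_1]$, i.e.\ $x\in\bigcap_{\iota\in]0,\iota_1]}M_\iota\subseteq\liminf$. For the upper inclusion, $\limsup_{\iota\to 0}M_\iota\subseteq\bigcup_{\iota\in\mathcal{I}}M_\iota$ is immediate since for any single $\iota_1$, $\limsup\subseteq\bigcup_{\iota\in]0,\iota_1]}M_\iota\subseteq\bigcup_{\iota\in\mathcal{I}}M_\iota$. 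Chaining $\bigcup M_\iota\subseteq\liminf\subseteq\limsup\subseteq\bigcup M_\iota$ yields equality throughout and the claimed limit.

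Next I would treat the descending case, $M_\alpha\subseteq M_\beta$ for $\alpha\le\beta$, which is dual. Here for the upper inclusion $\limsup_{\iota\to 0}M_\iota\subseteq\bigcap_{\iota\in\mathcal{I}}M_\iota$: given $x\in\limsup$ and any $\iota_1\in\mathcal{I}$, we have $x\in\bigcup_{\iota\in]0,\iota_1]}M_\iota$, say $x\in M_\iota$ with $\iota\le\iota_1$, and descending monotonicity gives $x\in M_{\iota_1}$; since $\iota_1$ was arbitrary, $x\in\bigcap_{\iota_1\in\mathcal{I}}M_{\iota_1}$. For the lower inclusion $\bigcap_{\iota\in\mathcal{I}}M_\iota\subseteq\liminf_{\iota\to 0}M_\iota$: if $x$ lies in every $M_\iota$, then trivially $x\in\bigcap_{\iota\in]0,\iota_1]}M_\iota$ for each $\iota_1$, hence $x\in\liminf$. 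Together with the general inclusion $\liminf\subseteq\limsup$ this sandwiches all three sets between $\bigcap_{\iota\in\mathcal{I}}M_\iota$ and itself, giving convergence with $\lim_{\iota\to 0}M_\iota=\bigcap_{\iota\in\mathcal{I}}M_\iota$.

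I do not anticipate a genuine obstacle here; the only point requiring a little care is making sure the monotonicity is used in the correct direction in each of the four inclusions, and that the two ``easy'' inclusions (the ones not using monotonicity, valid for any family) are invoked correctly so that the sandwich closes. No measure theory or topology is needed — this is purely a manipulation of nested unions and intersections — so the proof will be short.
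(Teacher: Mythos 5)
Your final sandwich argument is correct and is essentially the paper's own proof: the paper likewise uses monotonicity to control the inner union/intersection and then invokes the general inclusion $\liminf_{\iota\rightarrow 0}M_\iota\subseteq\limsup_{\iota\rightarrow 0}M_\iota$ to close the chain. One caveat: the ``key observation'' you state at the start of the ascending case, $\bigcup_{\iota\in]0,\iota_1]}M_\iota=M_{\iota_1}$, is false under the paper's convention — ascending means $M_\iota\supseteq M_{\iota_1}$ for $\iota\le\iota_1$, so that union is the \emph{whole} union $\bigcup_{\iota\in\mathcal{I}}M_\iota$ (the collapse to $M_{\iota_1}$ is what happens in the descending case), and consequently the intermediate identity $\limsup_{\iota\rightarrow 0}M_\iota=\bigcap_{\iota_1\in\mathcal{I}}M_{\iota_1}$ is also wrong rather than merely ``not what we want.'' Since you abandon that computation and the four inclusions you actually commit to use monotonicity in the correct direction, nothing in the proof breaks, but that aside should be deleted or corrected.
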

\begin{proof}
If $(M_{\iota})_{\iota \in \mathcal{I}}$ is an ascending family of sets, we have that 
$\bigcup_{\iota\in ]0,\iota_1]} = \bigcup_{\iota\in I} M_{\iota}$ and 
$M_{\iota_1} \subseteq \bigcap_{\iota \in ]0,\iota_1]} M_{\iota}$, thus 
$\limsup_{\iota \rightarrow 0} M_{\iota} = \bigcup_{\iota\in \mathcal{I}} M_{\iota} \subseteq \liminf_{\iota \rightarrow 0}M_{\iota}$
which implies the statement $\lim_{\iota \rightarrow 0} M_{\iota} = \bigcup_{\iota\in \mathcal{I}} M_{\iota} $.
The proof for the descending family is analogous.
\end{proof}

\begin{lemma} \label{supremum_ascending_family_of_sets}
Let $(M_{\iota})_{\iota \in \mathcal{I}}$ be an ascending family of sets such that 
$M:= \lim_{\iota \rightarrow 0} M_{\iota}=\bigcup_{\iota \in\mathcal{I}}M_{\iota}$ exists.  If $f: \R^n \rightarrow \R$ is some function,
then it follows that 
$$
\sup_{x \in M} f(x) = \lim_{\iota \rightarrow 0}{ (\sup_{x \in M_{\iota}} f(x)) } = \sup_{\iota \in \mathcal{I}}{ (\sup_{x \in M_{\iota}} f(x)) }
$$ 
and
$$
\inf_{x \in M} f(x) = \lim_{\iota \rightarrow 0}{ (\inf_{x \in M_{\iota}} f(x)) } = \inf_{\iota \in \mathcal{I}}{ (\inf_{x \in M_{\iota}} f(x)) }.
$$ 
\end{lemma}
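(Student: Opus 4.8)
\textbf{Proof plan for Lemma \ref{supremum_ascending_family_of_sets}.}

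The plan is to reduce the statement to Lemma \ref{limit_of_sets_lemma}, which already tells us that $M = \bigcup_{\iota \in \mathcal{I}} M_{\iota}$, and then to translate the set-theoretic identity into the two elementary facts about suprema and infima of a function over a nested union. I would treat the supremum claim in full and then note that the infimum claim follows by applying the supremum claim to $-f$.

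First I would observe that, since $(M_{\iota})_{\iota \in \mathcal{I}}$ is ascending, the family of real numbers $s_{\iota} := \sup_{x \in M_{\iota}} f(x)$ is monotone: if $\alpha \le \beta$ then $M_{\alpha} \supseteq M_{\beta}$, hence $s_{\alpha} \ge s_{\beta}$. Consequently $\lim_{\iota \rightarrow 0} s_{\iota}$ exists in $\R \cup \{+\infty\}$ and equals $\sup_{\iota \in \mathcal{I}} s_{\iota}$; this disposes of the second equality in the display and lets me concentrate on showing $\sup_{x \in M} f(x) = \sup_{\iota \in \mathcal{I}} s_{\iota}$. For the inequality ``$\ge$'': each $M_{\iota} \subseteq M$, so $s_{\iota} \le \sup_{x \in M} f(x)$ for every $\iota$, and taking the supremum over $\iota$ gives $\sup_{\iota \in \mathcal{I}} s_{\iota} \le \sup_{x \in M} f(x)$. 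For the inequality ``$\le$'': let $x \in M = \bigcup_{\iota \in \mathcal{I}} M_{\iota}$, so $x \in M_{\iota_x}$ for some $\iota_x \in \mathcal{I}$; then $f(x) \le s_{\iota_x} \le \sup_{\iota \in \mathcal{I}} s_{\iota}$, and since $x \in M$ was arbitrary, $\sup_{x \in M} f(x) \le \sup_{\iota \in \mathcal{I}} s_{\iota}$. Combining the two inequalities yields the supremum identity, and with the monotonicity observation above, all three expressions in the first display coincide.

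Finally, for the infimum statement I would set $g := -f$ and apply the supremum identity to $g$: since $\inf_{x \in A} f(x) = -\sup_{x \in A} g(x)$ for any nonempty set $A$, and since the family $\iota \mapsto \sup_{x \in M_{\iota}} g(x)$ is decreasing (so its limit as $\iota \to 0$ is its supremum over $\iota$, and negating turns this into the infimum over $\iota$), one gets $\inf_{x \in M} f(x) = \lim_{\iota \rightarrow 0} \inf_{x \in M_{\iota}} f(x) = \inf_{\iota \in \mathcal{I}} \inf_{x \in M_{\iota}} f(x)$. There is essentially no obstacle here: the only points requiring a word of care are that the relevant sets are nonempty (otherwise $\sup$ and $\inf$ degenerate) and that the identities should be read in the extended reals $\R \cup \{\pm\infty\}$, where the manipulations $\inf = -\sup(-\,\cdot\,)$ and the interchange of $\lim$ with $\sup$ of a monotone family remain valid. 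I would state this convention once at the start of the proof.
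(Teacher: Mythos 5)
Your proof is correct and follows essentially the same route as the paper: reduce to $M=\bigcup_{\iota}M_{\iota}$ via Lemma \ref{limit_of_sets_lemma}, use monotonicity of $\iota\mapsto\sup_{M_{\iota}}f$ to identify the limit with the supremum over $\iota$, and prove the two inequalities by $M_{\iota}\subseteq M$ and by locating each $x\in M$ in some $M_{\iota}$. The only (harmless) deviation is that you obtain the infimum statement by applying the supremum identity to $-f$, whereas the paper simply repeats the symmetric argument for $\beta_{\iota}:=\inf_{M_{\iota}}f$.
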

\begin{proof}
Note that by Lemma \ref{limit_of_sets_lemma} it holds that  
$M = \bigcup_{\iota \in\mathcal{I}} M_{\iota}$. 

We set $\alpha_{\iota} := \sup_{x\in M_{\iota}} f(x)$ and since $(M_{\iota})_{\iota \in \mathcal{I}}$ is an ascending family we have that
it is a monotone increasing net. As $M= \bigcup_{\iota \in \mathcal{I}} M_{\iota}$ we have that 
$\alpha_{\iota} \le \sup_{x\in M} f(x)$.
It follows immediately that 
$\alpha:= \lim_{\iota\rightarrow 0}{\alpha_\iota}= \sup \{ \alpha_\iota \mid \iota \in \mathcal{I}\}$.
For all $x\in M$ there exists some $\iota \in \mathcal{I}$ such that $x\in M_{\iota}$, thus $f(x) \le \alpha_{\iota}$ which implies that $\sup{ \{ \alpha_{\iota} \mid \iota \in \mathcal{I} \} }$ is an upper bound of the set $f(M)$. As $\sup_{x\in M} f(x)$ is the smallest upper bound of $f$ we have $\alpha = \sup_{x\in M} f(x)$.

We set $\beta_{\iota} := \inf_{x\in M_{\iota}} f(x)$ and since $(M_{\iota})_{\iota \in \mathcal{I}}$ is an ascending family we have that
it is a monotone decreasing net. As $M= \bigcup_{\iota \in \mathcal{I}} M_{\iota}$ we have that 
$\beta_{\iota} \ge \inf_{x\in M}f(x)$.
It follows immediately that 
$\beta:= \lim_{\iota\rightarrow 0}{\beta_\iota} = \inf_{\iota \in \mathcal{I}} \beta_\iota $.
For all $x\in M$ there exists some $\iota \in \mathcal{I}$ such that $x\in M_{\iota}$, thus $f(x) \ge \beta_{\iota}$ which implies that $\inf_{\iota \in \mathcal{I}} \beta_{\iota} $
is an lower bound of the set $f(M)$. As $\inf_{x\in M} f(x)$ is the greatest lower bound of $f(M)$ we have $\beta = \inf{M}$.
\end{proof}

\begin{lemma} \label{inf_descending_family_of_sets}
Let $(M_{\iota})_{\iota \in \mathcal{I}}$ be an descending family of sets in $\R^n$ such that
$M:= \lim_{\iota \rightarrow 0} M_{\iota}=\bigcap_{\iota \in\mathcal{I}}M_{\iota}$ exists. If $f: \R^n \rightarrow \R$ is some function,
then it follows that
$$
\sup_{x\in M} f(x) = \lim_{\iota \rightarrow 0}{ (\sup_{x\in M_{\iota}} f(x) ) } = \inf_{\iota \in \mathcal{I}}{ (\sup_{x\in M_{\iota}} f(x) ) }.
$$  
and
$$
\inf_{x\in M} f(x) = \lim_{\iota \rightarrow 0}{ (\inf_{x\in M_{\iota}}f(x) ) } = \sup_{\iota \in \mathcal{I}}{ (\inf_{x\in M_{\iota}}f(x)) }.
$$ 
\end{lemma}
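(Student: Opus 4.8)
\textbf{Proof plan for Lemma \ref{inf_descending_family_of_sets}.}

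The plan is to reduce this statement to Lemma \ref{supremum_ascending_family_of_sets} by passing to complements of the index-cutoffs, or — more directly — to mimic the proof of that lemma with the monotonicity reversed. I will carry out the two assertions (the $\sup$ identity and the $\inf$ identity) separately but in parallel, since the second follows from the first upon replacing $f$ by $-f$ (note $\sup_{x} (-f(x)) = -\inf_x f(x)$), so strictly speaking only the $\sup$ identity needs a genuine argument.

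First I would record that by hypothesis $M = \lim_{\iota\to 0} M_\iota = \bigcap_{\iota\in\mathcal{I}} M_\iota$ (this uses Lemma \ref{limit_of_sets_lemma}, since $(M_\iota)$ is descending). Set $\alpha_\iota := \sup_{x\in M_\iota} f(x)$. Because $(M_\iota)$ is a descending family, $\alpha\le\beta$ implies $M_\alpha \subseteq M_\beta$, hence $\alpha_\alpha \le \alpha_\beta$; that is, $\iota \mapsto \alpha_\iota$ is monotone decreasing as $\iota\to 0$, so $\lim_{\iota\to 0}\alpha_\iota = \inf_{\iota\in\mathcal{I}}\alpha_\iota =: \alpha$. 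Since $M\subseteq M_\iota$ for every $\iota$, we get $\sup_{x\in M} f(x) \le \alpha_\iota$ for all $\iota$, and therefore $\sup_{x\in M}f(x) \le \alpha$. This gives one inequality for free.

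The reverse inequality $\alpha \le \sup_{x\in M}f(x)$ is the only place where something must be shown, and it is where the $\sup$ and $\inf$ cases genuinely differ from Lemma \ref{supremum_ascending_family_of_sets} — for an \emph{ascending} family one picks, for each $x\in M$, a single index $\iota$ with $x\in M_\iota$; for a \emph{descending} family no such trick is available, and in fact the reverse inequality can \emph{fail} without further hypotheses (e.g. $M_\iota = ]0,\iota]$, $f\equiv 1$ on each $M_\iota$ but $M=\emptyset$). So I expect the main obstacle to be exactly this: the statement as printed is only correct under an additional compactness-type assumption (for instance that the $M_\iota$ are compact and nonempty, and $f$ upper semi-continuous), which must be present in whatever earlier version of the hypotheses applies here. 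Granting that, the argument is a standard diagonal/compactness extraction: for each $\iota$ choose $x_\iota \in M_{\iota}$ with $f(x_\iota) \ge \alpha_\iota - \iota$; the net $(x_\iota)$ lies in the compact set $M_{\iota_0}$, so by the characterization of clusterpoints via subnets in $\mathcal{T}$ (cf.\ the discussion around \eqref{big_tau}) it has a clusterpoint $x^\ast$; since $(M_\iota)$ is descending and closed, $x^\ast \in \bigcap_\iota \overline{M_\iota} = M$; and upper semi-continuity of $f$ together with $f(x_\iota)\to\alpha$ gives $f(x^\ast) \ge \alpha$, whence $\sup_{x\in M}f(x)\ge\alpha$. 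Combining the two inequalities yields $\sup_{x\in M}f(x)=\alpha=\lim_{\iota\to 0}\alpha_\iota=\inf_{\iota\in\mathcal{I}}\alpha_\iota$. The $\inf$ identity then follows by applying the $\sup$ identity to $-f$, or symmetrically with $\beta_\iota := \inf_{x\in M_\iota}f(x)$, which is monotone increasing in the descending family so that $\lim_{\iota\to 0}\beta_\iota = \sup_{\iota\in\mathcal{I}}\beta_\iota$, and the analogous extraction (with lower semi-continuity of $f$) closing the gap.

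Finally I would remark that in the uses made of this lemma in the main text (the proof of Theorem \ref{set_valued_map_main_theorem}(iii)), the sets $K_\delta$ in question are indeed compact with nonempty intersection $\{x\}\times F_x$ and the relevant function $(z,a)\mapsto\langle a,w\rangle$ is continuous, so the hypotheses needed for the extraction step are automatically in force there.
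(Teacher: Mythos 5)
Your proposal is essentially correct, but it takes a genuinely different — and in fact more careful — route than the paper. The paper's own proof is a near-verbatim transplant of the ascending-family argument from Lemma \ref{supremum_ascending_family_of_sets}: it sets $\alpha_\iota := \sup M_\iota$ (silently dropping $f$ from the notation altogether), observes that the net is monotone decreasing so that $\lim_{\iota\to 0}\alpha_\iota = \inf_\iota \alpha_\iota =: \alpha$, notes $\alpha_\iota \ge \sup M$, and then argues that $\alpha$ is an upper bound of $M$, concluding $\alpha = \sup M$. But both of these steps deliver the same inequality $\sup_{x\in M} f(x) \le \alpha$; the reverse inequality $\alpha \le \sup_{x\in M} f(x)$ — the only nontrivial one in the descending case, since the "pick an index containing $x$" trick of the ascending proof is unavailable — is never addressed. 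Your counterexample ($M_\iota = \,]0,\iota]$, $f\equiv 1$, $M=\emptyset$; one can also arrange nonempty $M$, e.g.\ $M_\iota = [0,\iota]$ with $f(0)=0$, $f=1$ on $]0,1]$) shows this inequality can genuinely fail, so the lemma as printed is false without extra hypotheses and the paper's proof has a real gap exactly where you located it. Your repair — compactness of the $M_\iota$ together with upper (resp.\ lower) semi-continuity of $f$, followed by the clusterpoint extraction and the observation that a clusterpoint of points $x_\iota\in M_\iota$ lies in every closed $M_{\iota'}$, hence in $M$ — is the standard correct argument, and your closing remark is apt: in the one place the lemma is invoked, the proof of Theorem \ref{set_valued_map_main_theorem}(iii), the sets $K_\delta$ are compact (recall $B_\delta(x)$ is the closed ball and Proposition \ref{closed_corr_proposition} gives compactness of the truncated graph), their intersection is $\{x\}\times F_x\neq\emptyset$, and $(z,a)\mapsto\langle a,w\rangle$ is continuous, so your strengthened hypotheses are satisfied there and the application is unharmed. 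In short: your route buys correctness at the price of adding the compactness/semicontinuity assumptions that the statement actually needs, whereas the paper's shorter mirror-image argument does not close the decisive inequality.
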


\begin{proof}
Note that by Lemma \ref{limit_of_sets_lemma} it holds that  
$M = \bigcap_{\iota \in\mathcal{I}} M_{\iota}$. 

We set $\alpha_{\iota} := \sup{M_{\iota}}$ and since $(M_{\iota})_{\iota \in \mathcal{I}}$ is a descending family we have that
it is a monotone decreasing net. As $M= \bigcap_{\iota \in \mathcal{I}} M_{\iota}$ we have that 
$\alpha_{\iota} \ge \sup{M}$.
It follows immediately that 
$\alpha:= \lim_{\iota\rightarrow 0}{\alpha_\iota}= \inf \{ \alpha_\iota \mid \iota \in \mathcal{I}\}$.
For all $x\in M$ we have that $x\in M_{\iota}$ for all $\iota \in \mathcal{I}$, thus $x \le \alpha_{\iota}$ which implies that $\inf{ \{ \alpha_{\iota} \mid \iota \in \mathcal{I} \} }$
is an upper bound of $M$. As $\sup{M}$ is the smallest upper bound of $M$ we have $\alpha = \sup{M}$.

We set $\beta_{\iota} := \inf{M_{\iota}}$ and since $(M_{\iota})_{\iota \in \mathcal{I}}$ is a descending family we have that
it is a monotone increasing net. As $M= \bigcap_{\iota \in \mathcal{I}} M_{\iota}$ we have that 
$\beta_{\iota} \le \inf{M}$.
It follows immediately that 
$\beta:= \lim_{\iota\rightarrow 0}{\beta_\iota}= \sup \{ \beta_\iota \mid \iota \in \mathcal{I}\}$.
For all $x\in M$ we have that $x\in M_{\iota}$ for all $\iota \in \mathcal{I}$, thus $x \ge \beta_{\iota}$ which implies that $\sup{ \{ \beta_{\iota} \mid \iota \in \mathcal{I} \} }$
is an lower bound of $M$. As $\inf{M}$ is the greatest lower bound of $M$ we have $\beta = \inf{M}$.
\end{proof}

\begin{lemma} \label{supremum_approximating_sequence}
Let $f : \Omega \rightarrow \R$ be a function, then for any bounded set $C \subseteq \Omega$ there exists a sequence $(x_k)_{k\in\N}$ in $C$ such that
$$
\sup_{y\in C} f(y) = \limsup_{k\rightarrow \infty} f(x_k).
$$
\end{lemma}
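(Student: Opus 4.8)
The statement to prove is Lemma~\ref{supremum_approximating_sequence}: for $f\colon\Omega\to\mathbb{R}$ and $C\subseteq\Omega$ bounded, there is a sequence $(x_k)_{k\in\mathbb{N}}$ in $C$ with $\sup_{y\in C}f(y)=\limsup_{k\to\infty}f(x_k)$. The plan is to treat two cases according to whether $s:=\sup_{y\in C}f(y)$ is finite or $+\infty$ (note $C\neq\emptyset$ is implicit, since otherwise the supremum is $-\infty$ and the statement would need the empty-set convention; I will assume $C\neq\emptyset$, which is the only case of interest in the applications). In the finite case, by definition of the supremum, for each $k\in\mathbb{N}$ there exists $x_k\in C$ with $s-\tfrac1k< f(x_k)\le s$. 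In the case $s=+\infty$, for each $k$ there exists $x_k\in C$ with $f(x_k)>k$.

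The key steps, in order: First, fix the case $s<\infty$ and pick $(x_k)$ as above using the characterization of $\sup$ as least upper bound. Second, observe $s-\tfrac1k< f(x_k)\le s$ forces $\lim_{k\to\infty}f(x_k)=s$ (the sequence $(f(x_k))_k$ converges to $s$ by the squeeze argument), hence in particular $\limsup_{k\to\infty}f(x_k)=s=\sup_{y\in C}f(y)$. Third, handle $s=+\infty$: with $f(x_k)>k$ we get $f(x_k)\to+\infty$, so $\limsup_{k\to\infty}f(x_k)=+\infty=\sup_{y\in C}f(y)$. In both cases the constructed sequence lies in $C$, which is what is required. The boundedness of $C$ plays no role in the argument as stated; it is presumably retained only because the lemma is invoked for bounded (indeed compact-closure) sets elsewhere, and one could additionally remark that when $C$ is bounded one may, if desired, pass to a subsequence so that $(x_k)$ also converges in $\overline{C}\subseteq\mathbb{R}^n$ by Bolzano--Weierstrass, though this is not needed for the statement.

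\textbf{Main obstacle.} There is essentially no obstacle here: the proof is a direct unwinding of the definition of the supremum of a subset of $\mathbb{R}\cup\{+\infty\}$, together with the elementary fact that a sequence squeezed between $s-\tfrac1k$ and $s$ converges to $s$. The only point requiring a modicum of care is the bookkeeping of the $+\infty$ case and the (harmless) empty-set degeneracy, and making sure one writes $\limsup$ rather than $\lim$ in the statement even though the constructed sequence actually has a genuine limit --- the weaker $\limsup$ formulation is all that is claimed and all that the later applications (e.g.\ in Proposition~\ref{generalized_graph_supporting_function_proposition} and Lemma~\ref{corr_approx_lemma}) use.
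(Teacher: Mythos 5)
Your proof is correct and follows essentially the same route as the paper's: split into the cases $\sup_{y\in C}f(y)<\infty$ and $=+\infty$, and in each case use the least-upper-bound property to select points of $C$ whose values converge to the supremum (you pick $x_k$ with $s-\tfrac1k<f(x_k)\le s$ directly, while the paper builds the sequence recursively so that the gap $M-f(x_k)$ halves at each step — a cosmetic difference only). Your side remarks, that the boundedness of $C$ is never used and that only $C\neq\emptyset$ matters, are also accurate and apply equally to the paper's argument.
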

\begin{proof}
Denote $M:=\sup_{y\in C} f(y)$.

If $M=\infty$, we can pick find $x_k \in C$ such that $f(x_k)\ge 1/k$ for all $k\in \N$. It is now trivial that
$\limsup_{k \rightarrow \infty} f(x_k) = \infty$.

In the case $M< \infty$:
Let $x_k\in C$, such that $f(x_k) < M$, then we can find some 
$x_{k+1}$ such that $M- f(x_{k+1}) \le 1/2 (M- f(x_{k}))$.

The last step followed by contradiction: Assuming $M- f(x) > 1/2 (M- f(x_{k}))$ for all $x\in C$ holds, implies 
$f(x) < 1/2 (M +  f(x_{k}))$. The right-hand side is obviously an upper bound, smaller than $M$, contradicting the fact
that $M$ is the smallest upper bound of the set $\{f(x) \mid x\in C\}$. 

Starting with any $x_1 \in C$, we iteratively obtain a sequence with $M- f(x_k) \le 2^{-k}$, thus
$$
\sup_{y\in C} f(y) - \limsup_{k\rightarrow \infty} f(x_k) =0.
$$ 
\end{proof}

\begin{lemma} \label{closed_map_of_neighboorhood}
Let $f: \Omega_1 \rightarrow \Omega_2$ be a  continuous map.
If $(M_{\iota})_{\iota \in \mathcal{I}}$ be an descending family of sets, such that 
$\lim_{\iota \rightarrow 0} M_{\iota} = \bigcap_{\iota \in \mathcal{I}} M_{\iota}$ is a compact subset of $\Omega_1$, then it follows that
$$
\lim_{\iota \rightarrow 0} f(M_{\iota}) = f(\lim_{\iota \rightarrow 0}  M_{\iota}). 
$$
\end{lemma}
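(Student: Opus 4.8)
The statement to be proven is that for a continuous map $f:\Omega_1\to\Omega_2$ and a descending family $(M_\iota)_{\iota\in\mathcal I}$ whose intersection $M:=\bigcap_{\iota\in\mathcal I}M_\iota$ is a compact subset of $\Omega_1$, one has $\lim_{\iota\to 0}f(M_\iota)=f(M)$. By Lemma \ref{limit_of_sets_lemma} the image family $(f(M_\iota))_{\iota\in\mathcal I}$ is again descending, so its limit exists and equals $\bigcap_{\iota\in\mathcal I}f(M_\iota)$; thus it suffices to show $\bigcap_{\iota\in\mathcal I}f(M_\iota)=f(\bigcap_{\iota\in\mathcal I}M_\iota)$. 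The plan is to prove the two inclusions separately.

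\textbf{The easy inclusion.} First I would observe that $f(M)\subseteq f(M_\iota)$ for every $\iota$, since $M\subseteq M_\iota$; hence $f(M)\subseteq\bigcap_{\iota}f(M_\iota)$. This direction uses only monotonicity and requires no compactness and no continuity.

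\textbf{The hard inclusion.} The reverse inclusion $\bigcap_{\iota}f(M_\iota)\subseteq f(M)$ is where compactness enters, and it is the main obstacle: without it the statement is false (e.g.\ $M_\iota=[\,\iota,\infty[$, $f(x)=1/x$, where $\bigcap f(M_\iota)$ may pick up a boundary value not attained on $\bigcap M_\iota=\emptyset$). The argument I would give: take $y\in\bigcap_{\iota}f(M_\iota)$. Then for each $\iota\in\mathcal I$ the set $f^{-1}(\{y\})\cap M_\iota$ is non-empty. I would like to conclude $f^{-1}(\{y\})\cap M\neq\emptyset$, i.e.\ $\bigcap_{\iota}\bigl(f^{-1}(\{y\})\cap M_\iota\bigr)\neq\emptyset$. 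To do this cleanly I would first pass to the closures: set $N_\iota:=\overline{f^{-1}(\{y\})\cap M_\iota}$. These form a descending family of closed sets, all non-empty, and all contained in the fixed compact set $\overline{M_{\iota_0}}$ (or better, in a compact neighbourhood — here one should be a little careful, since $M_\iota$ need not be closed; but $M=\bigcap M_\iota$ is compact, and for $\iota$ small $N_\iota$ is contained in any fixed compact neighbourhood of $M$, using the descending property). By the finite intersection property of compact sets, $\bigcap_{\iota}N_\iota\neq\emptyset$; pick $x$ in this intersection. Then $x\in\overline{f^{-1}(\{y\})\cap M_\iota}$ for all $\iota$; a standard sequential/net argument together with continuity of $f$ gives $f(x)=y$ (since $f$ maps each approximating point to $y$, and $f$ is continuous, the limit goes to $y$), and $x\in\bigcap_\iota\overline{M_\iota}$. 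It remains to upgrade $x\in\bigcap_\iota\overline{M_\iota}$ to $x\in\bigcap_\iota M_\iota=M$; this is the one genuinely delicate point, and I expect it to be handled by exploiting that the $M_\iota$ are descending and $M$ is compact — in fact it is simplest to note that $\bigcap_\iota\overline{M_\iota}=\overline{\bigcap_\iota M_\iota}=\bigcap_\iota M_\iota$ need \emph{not} hold in general, so instead I would rather run the argument directly on $N'_\iota:=\{z\in M_\iota\mid f(z)=y\}$ intersected with a fixed compact neighbourhood $K$ of $M$: choosing $x_\iota\in N'_\iota$, the net $(x_\iota)$ lies in $K$, so it has a cluster point $x\in K$; continuity of $f$ forces $f(x)=y$, and for every fixed $\iota_1$ we have $x_\iota\in M_{\iota_1}$ for all $\iota\le\iota_1$, hence $x\in\overline{M_{\iota_1}}$ — and here I would additionally use that $x\in M_{\iota_1}$ because $x$ being a cluster point of a net eventually inside $M_{\iota_1}$ means $x\in\overline{M_{\iota_1}}$, which combined with $x\in M_{\iota_2}$ for $\iota_2<\iota_1$... the cleanest fix is to assume, as is implicit in the intended application where the $M_\iota$ are of the form $B_\iota(x_0)\cap\Omega_1$, that one may replace $M_\iota$ by closed sets at the cost of nothing, or simply to conclude $x\in\bigcap_\iota M_\iota$ directly since the family is descending: if $x\notin M_{\iota_0}$ for some $\iota_0$, then $x\notin M_\iota$ for all $\iota\le\iota_0$, contradicting that $x$ is a cluster point of $(x_\iota)_{\iota\le\iota_0}\subseteq M_{\iota_0}$ only if $M_{\iota_0}$ is closed. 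Therefore the honest statement of the lemma as used requires, and I would add, the hypothesis (or note) that each $M_\iota$ is closed, or that $M_\iota\supseteq M_{\iota'}\supseteq\cdots$ with $M=\bigcap M_\iota$ compact and the relevant $M_\iota$ eventually closed; under that reading, $x\in M$, so $y=f(x)\in f(M)$, completing the inclusion and the proof.

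\textbf{Summary of structure and main difficulty.} So the proof is: reduce to an equality of intersections via Lemma \ref{limit_of_sets_lemma}; the inclusion $f(M)\subseteq\bigcap f(M_\iota)$ is trivial; the converse is a finite-intersection-property / compactness argument on the fibres $f^{-1}(\{y\})\cap M_\iota$, using continuity of $f$ to pass to the limit. The main obstacle is precisely the passage from $x\in\bigcap_\iota\overline{M_\iota}$ (or $x$ a cluster point of points in the $M_\iota$) to $x\in M$, which is why compactness — and a small closedness caveat on the $M_\iota$, satisfied in the applications where $M_\iota$ are balls intersected with the domain — is essential.
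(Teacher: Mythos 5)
Your strategy—reduce to showing $\bigcap_\iota f(M_\iota)\subseteq f(M)$ and argue on the fibres $f^{-1}(\{y\})\cap M_\iota$ via a cluster-point / finite-intersection argument—is genuinely different from the paper's, and it has a real gap at exactly the point you flag. The step ``for $\iota$ small, $N_\iota$ (equivalently, the chosen points $x_\iota$) lie in a fixed compact neighbourhood of $M$, using the descending property'' does not follow from the hypotheses: a descending family with compact intersection need not eventually enter any bounded neighbourhood of $M$. Take $\Omega_1=\Omega_2=\mathbb{R}$, $M_\iota=\{0\}\cup[1/\iota,\infty[$ and $f=\sin$: these $M_\iota$ are closed and descending, $M=\bigcap_\iota M_\iota=\{0\}$ is compact, yet no $M_\iota$ lies in a compact neighbourhood of $M$, your net $(x_\iota)$ can escape to infinity, and indeed $\bigcap_\iota f(M_\iota)=[-1,1]\neq\{0\}=f(M)$. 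This also shows that the patch you propose at the end—adding the hypothesis that the $M_\iota$ be closed—does not close the gap: closedness neither produces the compact containment needed for a cluster point nor rescues the conclusion. (Your other worry, that a cluster point only lands in $\overline{M_{\iota_1}}$ rather than in $M_{\iota_1}$, is also legitimate—compare $M_\iota=\,]0,\iota[$ with $f\equiv 0$—but it is a separate defect from the localization one.) What is really needed beyond ``descending with compact intersection'' is that for every $\delta>0$ some $M_{\iota_0}$ is contained in $B_\delta(M)$, as happens in the intended applications where all $M_\iota$ sit inside a fixed compact set.

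For comparison, the paper's proof never touches fibres, nets, or closedness: assuming $y\in\bigcap_\iota f(M_\iota)\setminus f(M)$, it uses compactness of $f(M)$ to pick $\varepsilon>0$ with $y\notin B_\varepsilon(f(M))$, continuity of $f$ together with compactness of $M$ to find $\delta>0$ with $f(B_\delta(M))\subseteq B_\varepsilon(f(M))$, and then the containment $M_{\iota_0}\subseteq B_\delta(M)$ for some $\iota_0$ to conclude $\bigcap_\iota f(M_\iota)\subseteq f(M_{\iota_0})\subseteq B_\varepsilon(f(M))$, contradicting the choice of $y$. Note that in that argument the entire weight of the hypotheses rests on the single containment $M_{\iota_0}\subseteq B_\delta(M)$—precisely the localization property your argument also needs and cannot be derived from the stated assumptions alone. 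So you identified the right crux, but your justification of it fails, and the closedness hypothesis you add both changes the statement and still does not suffice; a correct write-up must derive or explicitly assume that eventual containment in neighbourhoods of $M$.
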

\begin{proof}
First of all we note that $f(\bigcap_{\iota \in \mathcal{I}} M_{\iota}) \subseteq \bigcap_{\iota \in \mathcal{I}} f( M_{\iota})$.
We proove by contradiction: Assume there exists some $y\in \bigcap_{\iota \in \mathcal{I}} f( M_{\iota})$ such 
that $y\not \in f(M)$. As $f(M)$ is a compact subset of $\Omega_2$ we have that there exists some $\varepsilon >0$, with
$B_{\varepsilon}(f(M)) \bigcap \{y\}=\emptyset$. 
The continuity of $f$ implies that there exists some $\delta>0$ such that 
$f(B_{\delta}(M)) \subseteq B_{\varepsilon}(f(M))$. 
Now there exist some $\iota_0 \in \mathcal{I}$ such that $M_{\iota_0} \subseteq B_{\delta}(M)$.
Assuming the contrary, that $B_{\delta}(M) \subset M_{\iota}$ for all $\iota\in \mathcal{I}$, would lead to
the contradiction $B_{\delta}(M) \subseteq \lim_{\iota \rightarrow 0} M_{\iota}$. 
Finally we obtain $\bigcap_{\iota \in \mathcal{I}} f(M_{\iota})  \subseteq f(M_{\iota_0}) \subseteq f(B_{\delta}(M))$
$$
y \not \in B_{\varepsilon}(f(M))  \supseteq f(B_{\delta}(M)) \supseteq f(M_{\iota_0}) \supseteq 
\bigcap_{\iota \in \mathcal{I}} f(M_{\iota}),
$$
which contradicts the inital choice of $y$, prooving the statement.
\end{proof}

\begin{lemma} \label{supremum_uppersemicontinuous_functions}
Let $h: \R^n \rightarrow \R$ be a upper semi-continuous function and $K\csub \R^n$, then the
supremum $\sup_{z\in K} h(z)$ is attained, i.e. there exists a $z_0 \in K$ such that 
$\sup_{z\in K} h(z) = h(z_0)<\infty$.
\end{lemma}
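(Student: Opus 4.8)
The statement to prove is Lemma~\ref{supremum_uppersemicontinuous_functions}: an upper semi-continuous function $h\colon\R^n\to\R$ attains its supremum on a compact set $K\csub\R^n$. The plan is to run the classical Weierstrass-type argument adapted to upper semi-continuity. First I would set $M:=\sup_{z\in K}h(z)\in\R\cup\{+\infty\}$ and pick, by Lemma~\ref{supremum_approximating_sequence}, a sequence $(z_k)_{k\in\N}$ in the bounded set $K$ with $\limsup_{k\to\infty}h(z_k)=M$. Since $K$ is compact (hence bounded and closed), Bolzano--Weierstrass yields a convergent subsequence $z_{k_j}\to z_0$ with $z_0\in K$; moreover one can pass to a further subsequence, still denoted $(z_{k_j})$, along which $h(z_{k_j})$ actually converges to $M$ (not merely has $M$ as a $\limsup$).

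Next I would invoke upper semi-continuity of $h$ at $z_0$, which gives $\limsup_{j\to\infty}h(z_{k_j})\le h(z_0)$. Combined with $\lim_{j\to\infty}h(z_{k_j})=M$ this yields $M\le h(z_0)$. On the other hand $h(z_0)\le M$ trivially since $z_0\in K$ and $M$ is the supremum over $K$. Hence $h(z_0)=M$. In particular $M=h(z_0)<\infty$ because $h$ is real-valued, so the case $M=+\infty$ cannot occur and the supremum is attained at $z_0\in K$.

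A minor alternative, if one prefers not to extract a second subsequence, is to argue directly with the $\limsup$: from $\limsup_k h(z_k)=M$ and $z_{k_j}\to z_0$ one still gets $M\le\limsup_j h(z_{k_j})\le h(z_0)\le M$ by upper semi-continuity; but the cleanest exposition is the subsequence-of-a-subsequence route above, and in either case the extraction of the $M$-realizing sequence is handled by the already-established Lemma~\ref{supremum_approximating_sequence}.

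The argument has essentially no obstacle; the only points requiring slight care are (i) legitimately combining Bolzano--Weierstrass with the approximating sequence so that one single subsequence both converges in $K$ and realizes $M$ as a limit, and (ii) correctly using the $\limsup$-formulation of upper semi-continuity — namely $\limsup_{z\to z_0}h(z)\le h(z_0)$ implies $\limsup_{j}h(z_{k_j})\le h(z_0)$ for any sequence $z_{k_j}\to z_0$. Both are routine, so the proof will be short.
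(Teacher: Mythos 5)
Your overall route -- the approximating sequence from Lemma \ref{supremum_approximating_sequence}, compactness of $K$, and the $\limsup$ formulation of upper semi-continuity -- is exactly the paper's argument; in fact the paper's own proof is the same idea written even more loosely (it applies $h$ directly to ``$\limsup_{k}x_k$''). However, as literally written your two extractions are in the wrong order: once you have already fixed a Bolzano--Weierstrass subsequence $z_{k_j}\to z_0$, the value $\limsup_j h(z_{k_j})$ may be strictly smaller than $M=\limsup_k h(z_k)$ (if $(z_k)$ alternates between a point where $h$ is near $M$ and a point where it is not, Bolzano--Weierstrass may hand you the ``bad'' constant subsequence), so at that stage you can neither pass to a further subsequence along which $h\to M$ nor assert $M\le\limsup_j h(z_{k_j})$ as in your alternative. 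The repair is the routine point you yourself flag in (i): first use the definition of $\limsup$ to choose a subsequence with $h(z_{k_j})\to M$ (respectively $\to+\infty$ if $M$ were $+\infty$), and only then apply Bolzano--Weierstrass to \emph{that} subsequence; along the resulting sub-subsequence $h$ still tends to $M$, its spatial limit $z_0$ lies in $K$, and upper semi-continuity at $z_0$ gives $M\le h(z_0)\le M$, with $h(z_0)<\infty$ since $h$ is real-valued, which also rules out $M=+\infty$. With the order of extraction corrected your proof is complete and coincides with the paper's approach.
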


\begin{proof}
Let $(x_k)_{k \in \N}$ the sequence from the proof of Lemma \ref{supremum_approximating_sequence}.
By the upper semi-continuity we get that
$$
\sup_{y\in K} f(y)  =\limsup_{k \rightarrow \infty} h(x_k) \le h( \limsup_{k \rightarrow \infty}x_k )  
$$
and since ${(x_k)_{k\in \N}} \subseteq K$ we have that $x:=\limsup_{k \rightarrow \infty}x_k $ is a point in $K$.
Furthermore the semi-continuity of $h$ implies pointwise boundedness, thus $h(x)<\infty$.
\end{proof}

%

\begin{lemma} \label{L1_Linfty_norm}
Let $g: J \times \R^n \rightarrow \R$ be a function such that
\begin{enumerate}
\item $t \mapsto g(t,x)$ is Lebesgue measurable for all $x\in \R$, 
\item $x \mapsto g(t,x)$ is upper semi-continuous for almost all $t\in J$, and 
\item there exists a positive function $\beta\in L^1_{\rm loc}(J)$ such that
$\sup_{z\in \R^n} |g(t,z)| \le \beta(t)$ for almost all $t\in J$,
\end{enumerate}
then $g \in L^1_{\rm loc}(J; L^{\infty}(\R^n))$.
\end{lemma}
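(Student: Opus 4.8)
The claim is that the three pointwise hypotheses on $g$ — Lebesgue measurability in $t$, upper semi-continuity in $x$ for a.e.\ $t$, and the dominating bound $\sup_{z\in\R^n}|g(t,z)|\le\beta(t)$ a.e.\ with $\beta\in L^1_{\rm loc}(J)$ — force $g$ to define an element of $L^1_{\rm loc}(J;L^\infty(\R^n))$. The crux is really the following: we must exhibit, for almost every $t$, a genuine $L^\infty(\R^n)$ representative of $x\mapsto g(t,x)$ and then show the resulting map $t\mapsto \|g(t,\cdot)\|_{L^\infty(\R^n)}$ is Lebesgue measurable and locally integrable. Since the bound already gives $\|g(t,\cdot)\|_{L^\infty}\le\beta(t)$ a.e., local integrability is immediate once measurability of this norm function is established. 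So the work is entirely about measurability: producing a strongly measurable (equivalently, essentially separably valued and weakly measurable) Banach-space valued map.

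First I would reduce the $L^\infty$-norm to a countable supremum. For fixed $t$ in the full-measure set where $x\mapsto g(t,x)$ is upper semi-continuous, upper semi-continuity implies that the supremum over $\R^n$ equals the supremum over any countable dense subset $Q\subseteq\R^n$ (more precisely, for an u.s.c.\ function $\sup_{x\in\R^n}g(t,x)=\sup_{x\in Q}g(t,x)$, since the superlevel sets $\{g(t,\cdot)\ge c\}$ are closed hence meet $Q$ whenever nonempty — this is exactly the mechanism used in the proof that the essential convex hull satisfies (FC), via \cite[Theorem 20.14]{HewStr:65} and Lemma \ref{supremum_approximating_sequence}). Because $g$ is bounded a.e.\ by $\beta(t)$, the essential supremum $\esup_{z\in\R^n}|g(t,z)|$ coincides with $\sup_{z\in Q}|g(t,z)|$ for a.e.\ $t$; here upper semi-continuity of $x\mapsto|g(t,x)|$ is what removes the exceptional null set in $x$. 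Thus
$$
   \|g(t,\cdot)\|_{L^\infty(\R^n)} \;=\; \sup_{z\in Q}|g(t,z)| \qquad\text{for a.e. } t\in J.
$$
Now each $t\mapsto|g(t,z)|$ is Lebesgue measurable by hypothesis (i), and a countable supremum of measurable functions is measurable, so $t\mapsto\|g(t,\cdot)\|_{L^\infty}$ is Lebesgue measurable; combined with the bound $\le\beta(t)$ it lies in $L^1_{\rm loc}(J)$.

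It remains to upgrade this to Bochner measurability of $t\mapsto g(t,\cdot)\in L^\infty(\R^n)$, i.e.\ to check the hypotheses of the Pettis measurability theorem (as in \cite[Sections 24 and 39]{Treves:75}). For essential separability of the range one may approximate: for fixed $t$, the u.s.c.\ bounded function $g(t,\cdot)$ can be realized as a monotone limit of the simple functions built from the countable dense set $Q$ and rational levels, and these lie in a fixed separable subspace of $L^\infty(\R^n)$ independent of $t$; alternatively, one argues directly that the a.e.-defined map factors through the separable space generated by the countably many slices $\{g(\cdot,z):z\in Q\}$. Weak measurability follows because, for a test functional given by integration against $\varphi\in L^1(\R^n)$, Fubini (legitimate by the joint measurability consequence of (i) and (ii) — see Lemma \ref{measurability_of_composition_lemma} and the surrounding discussion) gives $t\mapsto\int g(t,x)\varphi(x)\,\m x$ measurable, and the weak-$*$ functionals of $L^\infty$ are handled the same way on a separating set. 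The main obstacle I anticipate is precisely this measurability bookkeeping — making the passage from ``pointwise measurable in $t$, u.s.c.\ in $x$'' to ``strongly Lebesgue–Bochner measurable'' airtight, since the proof of the essential-convex-hull analogue (the Proposition following Definition \ref{essential_convex_hull}) does exactly this kind of argument and can be cited or imitated almost verbatim, replacing $\langle a(t,y),w\rangle$ there by $|g(t,y)|$ here.
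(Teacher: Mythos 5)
Your reduction to a countable dense set is the step that fails, and it is the heart of your argument. For an upper semi-continuous function the supremum over $\R^n$ is in general \emph{not} equal to the supremum over a countable dense set $Q$: the justification you give (nonempty closed superlevel sets must meet $Q$) is incorrect, since only \emph{open} sets are forced to meet a dense set. Concretely, $x\mapsto 1_{\{x_0\}}(x)$ with $x_0\notin Q$ is u.s.c.\ with supremum $1$ but supremum $0$ over $Q$; the property you are invoking is the one enjoyed by \emph{lower} semi-continuous functions. Passing to the essential supremum does not repair this: the indicator of a closed, nowhere dense set of positive measure chosen disjoint from $Q$ is u.s.c.\ and has $\esup$ equal to $1$ while $\sup_Q$ is $0$. (Moreover $x\mapsto|g(t,x)|$ need not be u.s.c.\ when $g(t,\cdot)$ is, so the reduction would in any case have to treat $\sup_x g(t,x)$ and $\sup_x(-g(t,x))$ separately.) The paper's proof avoids dense sets entirely: it exhausts $\R^n$ by compacts $K_l$, uses that an u.s.c.\ function attains its supremum on a compact set (Lemma \ref{supremum_uppersemicontinuous_functions}, combined with Lemma \ref{supremum_ascending_family_of_sets}) to express $\sup_{z\in\R^n}|g(t,z)|$ as a pointwise $\liminf$ of Lebesgue measurable functions of $t$, and then obtains local integrability from the bound by $\beta$ via Fatou/dominated convergence. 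If you want a countable reduction in the spirit of yours, the correct mechanism is the one used in Lemma \ref{measurability_of_composition_lemma}: write the u.s.c.\ function as a decreasing limit of functions continuous in $x$ and measurable in $t$, for which the supremum over a compact set does coincide with the supremum over a countable dense subset.

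A second, smaller point: your plan to upgrade to Bochner (strong) measurability of $t\mapsto g(t,\cdot)\in L^{\infty}(\R^n)$ via Pettis is not what the paper does and is in general not available, because $L^{\infty}(\R^n)$ is non-separable; for instance $(t,x)\mapsto 1_{\{x\le t\}}$ satisfies (i)--(iii) but its values at distinct times are at mutual $L^\infty$-distance $1$, so the map is not essentially separably valued. The statement intended (and proved) in the paper is the weaker, sufficient one: $t\mapsto\sup_{z\in\R^n}|g(t,z)|$ is Lebesgue measurable and locally integrable, dominated by $\beta$.
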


\begin{proof}
It has to be shown that  $t \mapsto \sup_{z \in \R^n} |g(t,z)|$ is a Lebesgue measurable function. 
Let $(K_l)_{l \in \N}$ be an ascending family of compact sets such that $\bigcup_{l=1}^\infty K_l = \R^n$, then 
we have by Lemma \ref{supremum_ascending_family_of_sets} that 
$$
\sup_{z\in \R^n}{h(t,z)} = \liminf_{l\rightarrow \infty} \left( \sup_{z\in K_l} |h(t,z)|\right).
$$
Since $g$ satisfies property (ii), i.e. the upper semi-continuity of $x \mapsto h(t,x)$ for almost all $t\in J$, we conclude by
Lemma \ref{supremum_uppersemicontinuous_functions} that 
there exists a sequence $(z_l)_{l\in\N}$ with $z_l \in K_l$ such that
$$
\sup_{z\in \R^n}{h(t,z)} = \liminf_{l\rightarrow \infty}  |h(t,z_l)|
$$
for almost all $t\in J$. Property (i) yields that $(|h(t,z_l)|)_{l \in \N}$ is a family of Lebesgue measurable functions. It follows by
\cite[Theorem 11.12]{HewStr:65} that $\liminf_{l\rightarrow \infty}  |h(t,z_l)|$ is Lebesgue measurable.
Since $|h(t,z_{l})| \le \beta(t)$ for almost all $t\in J$ as assumed by (iii), we obtain 
due to \cite[Theorem 12.24]{HewStr:65} 
$$
\int_M \sup_{z\in \R^n}{h(\tau,z)} \m{\tau} =  \int_M  \liminf_{l\rightarrow \infty} \left( \sup_{z\in K_l} |h(\tau,z)|\right)\m{\tau} \le \liminf_{l\rightarrow \infty} \int_M \left( \sup_{z\in K_l} |h(\tau,z)|\right)\m{\tau} \le
\int_M \beta(\tau) \m{\tau} 
$$ 
for all $M \csub \R^n$ which implies $g\in  L^1_{\rm loc}(J; L^{\infty}(\R^n))$.
\end{proof}

\begin{lemma} \label{measurability_of_composition_lemma}
Let $h:J \times \R^n \rightarrow \R$ be a function satisfying properties (i) - (ii) of Lemma \ref{L1_Linfty_norm}, then for any Lebesgue measurable map  
$\xi: J \rightarrow \R^n, t \mapsto \xi(t)$, the composition
$$
t \mapsto h(t,\xi(t))
$$
is a Lebesgue measurable function from $J \rightarrow \R$.
\end{lemma}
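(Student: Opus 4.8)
\textbf{Proof plan for Lemma \ref{measurability_of_composition_lemma}.}

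The plan is to reduce the measurability of the composition $t \mapsto h(t,\xi(t))$ to measurability of countably many ``nice'' pieces, exploiting the upper semi-continuity of $h$ in the space variable together with measurability of $h$ in the time variable, in the standard Scorza-Dragoni / Carath\'eodory manner. First I would observe that, since $\xi$ is Lebesgue measurable, there is a sequence of simple (countably-valued) Lebesgue measurable functions $(\xi_k)_{k\in\N}$ with $\xi_k(t) \to \xi(t)$ for all $t\in J$; indeed one can take $\xi_k$ to be constant on each set of a countable Borel partition of $\R^n$ into small cells, with the value a chosen point of the cell, so that $|\xi_k(t)-\xi(t)| \le 2^{-k}$ for all $t$. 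For each fixed $k$, the function $t \mapsto h(t,\xi_k(t))$ is Lebesgue measurable: on the (measurable) set where $\xi_k$ takes a fixed value $x_j \in \R^n$ it equals $t \mapsto h(t,x_j)$, which is measurable by hypothesis (i), and $J$ is the countable disjoint union of these sets, so the composition is measurable as a countable patching of measurable functions.

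Next I would pass to the limit using hypothesis (ii). For almost all $t\in J$ the map $x \mapsto h(t,x)$ is upper semi-continuous, hence
$$
\limsup_{k\rightarrow\infty} h(t,\xi_k(t)) \le h(t,\xi(t))
$$
because $\xi_k(t)\to\xi(t)$. To obtain the reverse inequality I would perturb: for fixed $m\in\N$, replace $\xi_k$ by a modification $\xi_k^{(m)}$ that on each cell takes values ranging over a finite $2^{-m}$-net of a bounded enlargement of the cell, and form, for each $t$, the finite maximum $g_k^{(m)}(t) := \max h(t, z)$ over the finitely many net points $z$ within distance $2^{-k}+2^{-m}$ of $\xi(t)$; each $g_k^{(m)}$ is measurable (finite max of measurable functions, patched over a countable partition), and for $k$ large $g_k^{(m)}(t) \ge h(t,x)$ for every $x$ in a $2^{-m}$-neighbourhood of $\xi(t)$. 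Taking $k\to\infty$ and then $m\to\infty$, upper semi-continuity of $x\mapsto h(t,x)$ at $\xi(t)$ forces $\inf_m \liminf_k g_k^{(m)}(t) = h(t,\xi(t))$ for a.e.\ $t$. Thus $t\mapsto h(t,\xi(t))$ is expressed as a countable combination of $\liminf$, $\limsup$ and finite maxima of measurable functions, hence measurable by the standard stability of measurable functions under countable limit operations (cf.\ \cite[Theorem 11.12]{HewStr:65}).

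Alternatively — and this is the cleaner route I would actually write up — I would invoke the equivalence already recorded in the excerpt: by Theorem \ref{set_valued_map_main_theorem}(iii), the upper semi-continuity of $x\mapsto h(t,x)$ combined with (i) means that the single-valued map $(t,x) \mapsto \{h(t,x)\}$, viewed as a (closed, convex-valued) set-valued map with supporting function $h$ itself, is a Carath\'eodory-type map; then one appeals to the fact that such a map composed with a measurable selection $\xi$ yields a measurable function, which is exactly the content needed. Concretely, the set $\{t : h(t,\xi(t)) > \lambda\}$ can be written, using u.s.c.\ in $x$ and continuity of $\xi$ along a measurable approximation, as a countable union/intersection of sets of the form $\{t : h(t,q) > \lambda - 1/j\}$ with $q$ ranging over $\Q^n$, intersected with preimages $\xi^{-1}(B)$ of rational balls; each such set is measurable, so the superlevel set is measurable and hence $t\mapsto h(t,\xi(t))$ is measurable.

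\textbf{Main obstacle.} The delicate point is the direction $h(t,\xi(t)) \le \limsup_k (\text{approximations})$, i.e.\ making upper semi-continuity ``see'' the limiting argument $\xi(t)$ from the discrete approximants $\xi_k(t)$: u.s.c.\ gives an upper bound on nearby values but we need to recover the value at the limit point itself, which is why one must enlarge the finite nets (the parameter $m$ above) and take a second limit; handling the null set on which (ii) fails, and checking that the doubly-indexed countable family of auxiliary functions is genuinely measurable before taking limits, is the part requiring care. Everything else is routine patching of measurable functions over countable measurable partitions. Note the hypotheses used are exactly (i) and (ii) of Lemma \ref{L1_Linfty_norm}; property (iii) (the $L^1$ bound) is not needed here, only measurability is claimed.
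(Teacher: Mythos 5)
Your first paragraph coincides with the first part of the paper's own proof (approximate $\xi$ by countably-valued measurable functions and patch over the partition; this settles the case where $h(t,\cdot)$ is continuous), and the one-sided estimate $\limsup_k h(t,\xi_k(t))\le h(t,\xi(t))$ is correct. The gap is the reverse direction, and your repair does not close it: the claim that for large $k$ the finite-net maximum $g_k^{(m)}(t)$ dominates $h(t,x)$ for all $x$ in a $2^{-m}$-neighbourhood of $\xi(t)$ is false, because an upper semi-continuous function may strictly exceed, at a single point, all of its values at the prescribed net points. Concretely, take $n=1$, $\xi(t)=t$, fix a non-measurable set $N\subseteq J$ and put $h(t,x):=1$ if $x=t$ and $t\in N$, and $h(t,x):=0$ otherwise. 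Then (i) and (ii) of Lemma \ref{L1_Linfty_norm} hold (for fixed $x$, $h(\cdot,x)$ is the indicator of at most one point; for fixed $t$, $h(t,\cdot)$ is the indicator of at most one point, hence upper semi-continuous), while $h(t,\xi(t))=1_N(t)$. Every function $g_k^{(m)}$ built from values of $h$ at countably many prescribed spatial points vanishes almost everywhere, and your proposed description of $\{t: h(t,\xi(t))>\lambda\}$ through the sets $\{t: h(t,q)>\lambda-1/j\}$, $q\in\Q^n$, collapses for the same reason. So no scheme that only evaluates $h$ at countably many fixed spatial points can be repaired; the example even shows that (i)--(ii) alone do not force measurability of the composition, so the entire difficulty sits in regularizing $h$ itself, not $\xi$. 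Your ``cleaner alternative'' is also not available: a single-valued map $x\mapsto\{h(t,x)\}$ is upper semi-continuous as a set-valued map iff $h(t,\cdot)$ is continuous (its supporting function involves both $h$ and $-h$; cf.\ Example \ref{continuous_map_example} and Theorem \ref{set_valued_map_main_theorem}(iii)), and the fact that Carath\'eodory-type set-valued maps compose measurably with measurable selections is, within this paper, deduced \emph{from} the present lemma (see the proof of Theorem \ref{filippov_existence_theorem}), so invoking it here is circular.

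The paper's proof instead approximates $h$ from above: it sets $h_k(t,x):=\sup_{y\in B_{1/k}(x)}h(t,y)$ and $\varphi_k(t,x):=\int_{B_1(0)}h_{2k}(t,x-(2k)^{-1}y)\rho(y)\m{y}$, so that $\varphi_k$ is continuous in $x$, $h\le\varphi_k\le h_k$, and $h=\inf_k\varphi_k$ pointwise by upper semi-continuity (the supremum is over a full ball, which dominates the value at the centre -- this is exactly the ingredient your net maxima lack); then the continuous case from your first paragraph is applied to each $\varphi_k$ and the composition is a countable infimum of measurable functions. Note, however, that this route tacitly uses that $h_k$ (hence $\varphi_k$) again satisfies condition (i); in the counterexample above $h_k(\cdot,x)$ is the indicator of $N$ intersected with an interval and is not measurable, so this $t$-measurability is the genuine crux and must be supplied by additional structure (e.g.\ continuity in $x$, as in the Carath\'eodory applications where the lemma is used) rather than by (i)--(ii) alone.
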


\begin{proof}
First we proove the simpler case where $x\mapsto h(t,x)$ is continuous for almost $t\in J$ instead of property (ii).
We approximate $\xi$ by simple functions $s_n(t):=\sum_{k=1}^n a_{n,k} 1_{A_{n,k}}(t)$  (see \cite[Theorem 11.35]{HewStr:65}),  such that
$\xi$ is the pointwise limit of $(s_n)_{n\in \N}$.
Note $A_{n,k}$ are Lebesgue measurable subsets of $J$. We have 
$$
f(t, s_n(t)) = \sum_{k=1}^n F(t,a_{n,k})  1_{A_{n,k}}(t),
$$
so $t \mapsto f(t,s_n(t))$ is Lebesgue measurable as the finite sum of measurable functions.
From the continuity of $x\mapsto f(t,x)$ it follows that
$\lim_{n \rightarrow \infty} f(t,s_n(t)) = f(t, \xi(t))$ holds pointwise for almost all $t\in J$, thus $t \mapsto f(t, \xi(t))$ is Lebesgue measurable as 
the limit of measurable functions (see \cite[Theorem 11.12]{HewStr:65}).

The case where $x \mapsto f(t,x)$ is only upper semi-continuous is much more difficult. The idea is to approximate this function by functions
$f(t,x) = \inf_{k\in \N} \varphi_k(t,x)$, where $(\varphi_k)_{k\in \N}$ satisfy (i) and $x\mapsto \varphi_k(t,x)$ is continuous.
First of all we set $h_k(t,x) := \sup_{y\in B_{1/k}(x)} f(t,y)$, which is again upper semi-continuous and fulfills $h_k(t,y) \ge f(t,x)$ 
for all $y\in B_{1/k}(x)$. 
By the upper semi-continuity of $x\mapsto f(t,x)$ we have that for all $l\in N$ there exists a constant $\delta_l$ of $x$ such that
$f(t,y) \le f(t,x) + 1/l$ for all $y \in B_{\delta_l}(x)$. Choose $k_0 \in \N$ such that $B_{1/k_0}(x) \subseteq B_{\delta_l}(X)$, then
$h_{k}(t,x) \le f(t,x) + 1/l$ for all $k \ge k_0$. It immediately follows that $\inf_{k\in\N} h_k(t,x) = f(t,x)$.

Let $\rho \in C_c^{\infty}(\R^n)$ be positive with $\supp(\rho) \subseteq B_1(0)$ and $\int \rho(x) \m{x} =1$.
Then we define $\varphi_k(t,x) := \int_{B_1(0)} h_{2k}(t,x- (2k)^{-1} y) \rho(y) \m{y}$ by convolution, so $\varphi_k \in L^1_{\rm loc}(J;C^{\infty}(\R^n))$.
Observe that $|x- (x-(2k)^{-1} y)| \le (2k)^{-1} |y|$, thus $x-(2k)^{-1}y \in B_{1/2k}(x)$  and  $h_{2k}(t,x-(2k)^{-1} y) \ge f(t,x)$ which implies
$$
\varphi_k(t,x) = \int_{B_{1}(0)} h_k(t,x-k^{-1}y) \rho(y) \m{y} \ge \int_{B_{1}(0)} f(t,x) \rho(y) \m{y} = f(t,x)
$$
for all $(t,x) \in J \times \R^n$ and $k\in \N$.

Setting
$z_k =(2k)^{-1}y$ and $w \in B_{1/2k}(x-z_k)$ we obtain $|w -(x- z_k)| \le 1/(2k)$, so $||x-w| -|z_k|| \le 1/(2k)$ which implies $|x-w| \le 1/k$ for all $w\in B_{1/2k}(x-z_k)$.
Conclude that $h_{2k}(t,x-z_k) := \sup_{w \in B_{1/(2k)}(x-z_k)} f(t,w) \le \sup_{w \in B_{1/k}(x)} f(t,w) = h_{k}(t,x)$. 
It follows that 
$$
f(t,x) = \inf_{k \in \N} h_k(t,x) \ge \inf_{k \in \N} \varphi_k(t,x) \ge f(t,x),
$$
thus $f(t,x) = \inf_{k \in \N} \varphi_k(t,x)$. It follows by the first part of the proof that $t \mapsto \varphi_k(t,\xi(t))$ is Lebesgue measurable
which yields that $t \mapsto f(t,\xi(t))$ is Lebegue measurable as the infimum of Lebesgue measurable functions  (see \cite[Theorem 11.12]{HewStr:65}).
\end{proof}

\chapter{A generalized Stationary Phase theorem}

Appendix B contains a straight-forward generalization of the stationary phase theorem
for nets of smooth functions. 
We essentially follow the proof of the classical stationary phase theorem as presented in 
\cite[Theorem 7.7.5]{Hoermander:V1}, but with a pedantic book-keeping on the $\varepsilon$-dependence of the estimates.
Due to the technical nature of this result, we decided to put it in the appendix.

\setcounter{theorem}{0}

\begin{definition} \label{growthnot}
Suppose that $g\in \col{\Omega}$ and let $(g_{\varepsilon})_{\varepsilon}$ be some representative, then we
introduce the notation
\begin{eqnarray*}
|g_{\varepsilon}|_k := \sum_{|\alpha|=k} |\partial^{\alpha} g_{\varepsilon} |
\end{eqnarray*}
and
\begin{eqnarray*}
\mu_{K, k,\varepsilon}(g) &:=& \sup_{K} |g_{\varepsilon}|_k \\
\mu_{K, k,\varepsilon}^{\ast}(g)&:=& \max_{l\le k}{\mu_{K, l, \varepsilon}(g)}.
\end{eqnarray*}
\end{definition}

\begin{lemma} \label{boundderivlemma}
Suppose that $g\in \col{\Omega}$ with an non-negative representative $(g_{\varepsilon})_{\varepsilon}$, such that there exists
$\varepsilon'>0$, with $g_{\varepsilon}(x) \ge 0$ for $\varepsilon <\varepsilon'$ and all $x\in\Omega$. Then it follows
that for any $K \csub \Omega$ there exists some constant $C$ such that
\begin{eqnarray*}
\sum_{|\alpha|=1} |\partial^{\alpha} g_{\varepsilon}(x)|  
&\le& C \sqrt{g_{\varepsilon}(x)} \sqrt{\mu_{M,2,\varepsilon}^{\ast}(g)}
\end{eqnarray*}
holds for all $x\in K$ and $M$ is some compact set with $K \csub M^{\circ}$.
\end{lemma}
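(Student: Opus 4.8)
The plan is to reduce the statement to a one-variable Taylor estimate applied in each coordinate direction — the classical device for bounding the first derivatives of a non-negative $C^{2}$ function in terms of its values and its second derivatives, carried out with control on the constants.

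First I would dispose of the trivial case: if $\mu:=\mu^{\ast}_{M,2,\varepsilon}(g)=0$, then in particular $\sup_{M}|g_{\varepsilon}|=\mu_{M,0,\varepsilon}(g)=0$, so $g_{\varepsilon}$ vanishes on $M$, hence on a neighbourhood of $K$, and both sides of the asserted inequality vanish. So assume $\mu>0$. Next, fix once and for all a number $\delta>0$ with $\overline{B_{\delta}(x)}\subseteq M$ for every $x\in K$; such $\delta$ exists since $K$ is compact, $M^{\circ}$ is open and $K\csub M^{\circ}$ (any $\delta$ below $\dist{K}{\R^{n}\setminus M^{\circ}}$ works). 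The crucial preliminary observation is that, since $K\subseteq M$, for every $x\in K$ and $\varepsilon<\varepsilon'$
\[
 g_{\varepsilon}(x)=|g_{\varepsilon}(x)|\le\sup_{M}|g_{\varepsilon}|=\mu_{M,0,\varepsilon}(g)\le\mu ,
\]
so that $\sqrt{\mu\,g_{\varepsilon}(x)}\ge g_{\varepsilon}(x)$; this will be used at the end.

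Now fix $x\in K$, an index $j\in\{1,\dots,n\}$ and $\varepsilon<\varepsilon'$, and set $\phi(t):=g_{\varepsilon}(x+t e_{j})$ for $|t|\le\delta$. Then $\phi\in C^{2}([-\delta,\delta])$, $\phi\ge0$ by non-negativity of the chosen representative, $\phi'(0)=\partial_{j}g_{\varepsilon}(x)$, and $|\phi''(t)|=|\partial_{j}^{2}g_{\varepsilon}(x+t e_{j})|\le\sup_{M}|g_{\varepsilon}|_{2}=\mu_{M,2,\varepsilon}(g)\le\mu$ because $x+t e_{j}\in B_{\delta}(x)\subseteq M$. Taylor's formula with integral remainder gives, for $|t|\le\delta$,
\[
 0\le\phi(t)\le\phi(0)+t\,\phi'(0)+\tfrac{t^{2}}{2}\,\mu .
\]
I then split into two cases. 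If $|\phi'(0)|\le\mu\delta$, the point $t_{0}:=-\phi'(0)/\mu$ lies in $[-\delta,\delta]$; substituting $t=t_{0}$ yields $\phi'(0)^{2}\le2\mu\,\phi(0)$, i.e.\ $|\partial_{j}g_{\varepsilon}(x)|\le\sqrt{2}\,\sqrt{\mu\,g_{\varepsilon}(x)}$. If instead $|\phi'(0)|>\mu\delta$, I substitute $t=-\delta\,\sgn(\phi'(0))$ and obtain $0\le\phi(0)-\delta|\phi'(0)|+\tfrac{\delta^{2}}{2}\mu<\phi(0)-\delta|\phi'(0)|+\tfrac{\delta}{2}|\phi'(0)|$, whence $\tfrac{\delta}{2}|\phi'(0)|<\phi(0)=g_{\varepsilon}(x)$, so $|\partial_{j}g_{\varepsilon}(x)|<\tfrac{2}{\delta}g_{\varepsilon}(x)\le\tfrac{2}{\delta}\sqrt{\mu\,g_{\varepsilon}(x)}$ by the preliminary observation. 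In either case $|\partial_{j}g_{\varepsilon}(x)|\le\max(\sqrt{2},\,2/\delta)\sqrt{\mu\,g_{\varepsilon}(x)}$. Summing over $j=1,\dots,n$ and using $\sum_{|\alpha|=1}|\partial^{\alpha}g_{\varepsilon}(x)|=\sum_{j=1}^{n}|\partial_{j}g_{\varepsilon}(x)|$ gives the claim with $C:=n\,\max(\sqrt{2},\,2/\delta)$, which depends only on $n$, $K$ and $M$ (through $\delta$) and is in particular independent of $\varepsilon$.

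The only step requiring genuine care is the second case, where the unconstrained minimiser of the quadratic $t\mapsto\phi(0)+t\phi'(0)+\tfrac{t^{2}}{2}\mu$ may fall outside $[-\delta,\delta]$, so the discriminant inequality is not available directly; the bound $\mu\ge g_{\varepsilon}(x)$ valid on $K$ is exactly what repairs this and makes the estimate uniform. Everything else is routine.
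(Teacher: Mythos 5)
Your proof is correct and follows essentially the same route as the paper: a one-dimensional Taylor expansion along each coordinate direction over a segment of fixed length $\delta$ staying in $M$, a case distinction according to whether the optimal evaluation point lies in $[-\delta,\delta]$, and the bound $g_{\varepsilon}(x)\le\mu^{\ast}_{M,2,\varepsilon}(g)$ on $K$ to absorb the boundary case, yielding a constant depending only on $n$ and $\delta$. The only cosmetic differences are that you split on $|\phi'(0)|$ versus $\mu\delta$ while the paper splits on $\delta^{2}m_{\varepsilon}(k)$ versus $4g_{\varepsilon}(x_0)$, and that you explicitly treat the degenerate case $\mu^{\ast}_{M,2,\varepsilon}(g)=0$, which the paper leaves implicit.
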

\begin{proof}
Let $K$ be some compact set and $M \csub \Omega$ such that $K \csub M^{\circ}$. 
Then we can find some $\delta >0$ such that $B_{\delta}(K) \subseteq M$ holds. 
Choose some $x_0\in K$.
By Taylor's formula we obtain for any $x= x_0+ \alpha e_k$ with $k=1,...,n$ and $\alpha \in [-\delta,\delta]$  (note that $x_0+\alpha e_k \in B_{\delta}(x_0) \subseteq M$) that
\begin{eqnarray*}
0\le g_{\varepsilon}(x) \le g_{\varepsilon}(x_0) + \partial_{k} g_{\varepsilon}(x_0) \alpha + \frac{1}{2} m_{\varepsilon}(k) \alpha^2
\end{eqnarray*}
holds with 
\begin{eqnarray*}
m_{\varepsilon}(k) :=  \max{( \sup_{x\in M} |\partial_{k}^2 g_{\varepsilon}(x)|, 2 g_{\varepsilon}(x_0) \delta^{-2} )}.
\end{eqnarray*}
Now we distinguish two cases: If $ \delta^2 m_{\varepsilon}(k) \le 4 g_{\varepsilon}(x_0)$, then we put $\alpha=\pm \delta$ 
and get
\begin{eqnarray*}
\delta|\partial_{k} g_{\varepsilon}(x_0)| &\le& g_{\varepsilon} (x_0) + \frac{\delta^2}{2} m_{\varepsilon}(k) \\
&=&  \sqrt{(g_{\varepsilon} (x_0) + \frac{\delta^2}{2} m_{\varepsilon}(k))^2 } \\
&=&  \sqrt{(g_{\varepsilon}(x_0)^2 + \delta^2 g_{\varepsilon}(x_0) m_{\varepsilon}(k)+ \frac{\delta^4}{4} m_{\varepsilon}(k)^2 } \\
&\le& \sqrt{(g_{\varepsilon}(x_0)^2 +\delta^2 g_{\varepsilon}(x_0) m_{\varepsilon}(v)+ {\delta^2} m_{\varepsilon}(k) g_{\varepsilon}(x_0)} 
\\&=&\sqrt{g_{\varepsilon}(x_0)} \sqrt{ g_{\varepsilon}(x_0) +2 \delta^2 m_{\varepsilon}(k) }.
\end{eqnarray*}
In the case where $ \delta^2 m_{\varepsilon}(k) > 4 g_{\varepsilon}(x_0)$ we set $\alpha= \pm \sqrt{2 g_{\varepsilon}(x_0)/m_{\varepsilon}(k)}\le\delta$
to obtain
\begin{eqnarray*}
|\partial_{k} g_{\varepsilon}(x_0)| &\le& \sqrt{m_{\varepsilon}(k)/2} \sqrt{g_{\varepsilon} (x_0)} + \frac{ \sqrt{2 g_{\varepsilon} (x_0)} m_{\varepsilon}(k) } {2 \sqrt{m_{\varepsilon }(k)} }\\   
&=& \sqrt{2 m_{\varepsilon}(k)}\sqrt{g_{\varepsilon}(x_0)}
\end{eqnarray*}
We can finally conclude that the estimate 
\begin{multline*}
|\partial_{k} g_{\varepsilon}(x_0)|^2
\le \delta^{-2} g_{\varepsilon}(x_0) ( g_{\varepsilon}(x_0) +2 \delta^2 m_{\varepsilon}(k) ) \\
\le \delta^{-2} g_{\varepsilon} (x_0) ( \sup_{x\in K} g_{\varepsilon}(x) +2 \delta^2 m_{\varepsilon}(k) )
\le \max{(\delta^{-2}, 2, 2 \delta^{-4})} g_{\varepsilon} (x_0) \mu_{M,2,\varepsilon}^{\ast}(g)
\end{multline*}
holds for all $x_0\in K$ where $\delta$ and $\mu_{M,2,\varepsilon}^{\ast}(g)$ are independent of $x_0$. The statement follows
by adding up the estimates $\sum_{|\alpha|=1} |\partial^{\alpha} g_{\varepsilon}(x)| = \sum_{k=1}^n |\partial_k g_{\varepsilon}(x)|$ and putting $C=n \cdot \sqrt{\max{(\delta^{-2}, 2, 2 \delta^{-4})}}$.
\end{proof}

\begin{theorem}[Stationary phase theorem] \label{statphas}
Let $u\in \colcomp{\Omega}$ with support $K \csub \Omega$ and ${\phi}_\varepsilon \in \mathcal{E}(\Omega)$ with the property that
there exists an $\varepsilon_0 >0$ and $m\in \mathbb{N}$ such that
\begin{eqnarray}\label{stphprop}
\inf_{x\in K} (|{\phi}_{\varepsilon}'(x) |) \ge \lambda_{\varepsilon} \ \ 
{\rm for \ all \ }\varepsilon \le \varepsilon_0
\end{eqnarray}
holds, with $\lambda_{\varepsilon}$ some net tending to zero. Then we have that
\begin{eqnarray} \label{statexpr1}
(v_{\varepsilon})_{\varepsilon}:= \left(\int u_{\varepsilon}(x) 
\exp{(i \omega {\phi}_{\varepsilon} (x))} d\!x\right)_{\varepsilon}+\colneg{\Omega}
\end{eqnarray}
is a Colombeau function in the $\omega$ variable and it is bounded by
\begin{eqnarray} \label{statexpr2}
\omega^k |v_{\varepsilon}(\omega)| \le L_{k,\varepsilon} \lambda_{\varepsilon}^{-k} 
\sum_{|\alpha| \le k} \sup_K |D^{\alpha}u_{\varepsilon}| ,
\end{eqnarray}
where  
\begin{eqnarray*}
L_{k,\varepsilon}:=C_k \max{ \{1, \mu^{\ast}_{M,k,\varepsilon}(\phi_{\varepsilon})^{2k^2}   \}}
\end{eqnarray*}
and $M$ is some compact set with $K\subset M^{\circ}$.
\end{theorem}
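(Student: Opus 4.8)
The approach is to mimic the classical stationary phase proof (\cite[Theorem 7.7.5]{Hoermander:V1}) by introducing the first-order differential operator $L_\varepsilon$ that reproduces $\exp(i\omega\phi_\varepsilon)$, and then integrating by parts $k$ times, while at every step keeping explicit track of how the bounds depend on $\varepsilon$ through the net $\lambda_\varepsilon$ and through the seminorms $\mu^\ast_{M,k,\varepsilon}(\phi_\varepsilon)$. Concretely, on a neighborhood of $K$ where $|\phi_\varepsilon'|\ge\lambda_\varepsilon$ I would set
$$
L_\varepsilon := \frac{1}{i\omega\,|\phi_\varepsilon'|^2}\sum_{j=1}^n \overline{\partial_j\phi_\varepsilon}\,\partial_j,
$$
so that $L_\varepsilon\bigl(e^{i\omega\phi_\varepsilon}\bigr)=e^{i\omega\phi_\varepsilon}$, and its transpose $L_\varepsilon^t$ acts on the amplitude. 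Then
$$
v_\varepsilon(\omega)=\int u_\varepsilon(x)\,e^{i\omega\phi_\varepsilon(x)}\,{\mathrm d}x
=\int \bigl((L_\varepsilon^t)^k u_\varepsilon\bigr)(x)\,e^{i\omega\phi_\varepsilon(x)}\,{\mathrm d}x,
$$
and the task reduces to estimating $\sup_K|(L_\varepsilon^t)^k u_\varepsilon|$ in terms of $\omega^{-k}$, $\lambda_\varepsilon^{-k}$, the derivatives of $u_\varepsilon$ up to order $k$, and a power of $\mu^\ast_{M,k,\varepsilon}(\phi_\varepsilon)$.

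\textbf{Key steps.} First I would record that $(v_\varepsilon)_\varepsilon$ is moderate in $\omega$ on compact sets (trivially, since $u_\varepsilon$ is compactly supported and moderate) and that the construction is independent of the representative of $u$ modulo $\colneg{\Omega}$, so that \eqref{statexpr1} indeed defines a Colombeau function. Second, the combinatorial heart: expanding $(L_\varepsilon^t)^k$ by the Leibniz and chain rules produces a sum of terms of the schematic form
$$
\frac{1}{(i\omega)^k}\cdot\frac{(\text{product of up to }2k\text{ derivatives of }\phi_\varepsilon)\cdot(\text{a derivative of }u_\varepsilon\text{ of order}\le k)}{|\phi_\varepsilon'|^{m}}
$$
with $m$ between $k$ and $2k$. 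Bounding the denominator below by $\lambda_\varepsilon^m\le\lambda_\varepsilon^{2k}$ when $\lambda_\varepsilon\le 1$ (and absorbing the opposite case into the constant), and bounding each derivative of $\phi_\varepsilon$ of order $\ge 1$ by $\mu^\ast_{M,k,\varepsilon}(\phi_\varepsilon)$, one needs to be careful that factors $|\phi_\varepsilon'|$ appearing in the numerator (from differentiating $|\phi_\varepsilon'|^2$) may be much smaller than $\mu^\ast$; here I would instead use that $|\partial_j\phi_\varepsilon|\le\mu^\ast_{M,1,\varepsilon}(\phi_\varepsilon)\le\mu^\ast_{M,k,\varepsilon}(\phi_\varepsilon)$ directly. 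Counting exponents, each of the $k$ applications of $L_\varepsilon^t$ contributes at most a bounded number of derivative factors, and a crude but sufficient count gives a total power of $\mu^\ast_{M,k,\varepsilon}(\phi_\varepsilon)$ not exceeding $2k^2$; this is exactly the shape of $L_{k,\varepsilon}$ in the statement. Collecting everything yields \eqref{statexpr2} with $C_k$ depending only on $k$ and the dimension $n$ (and on the neighborhood data, i.e.\ on $M$ and $K$).

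\textbf{Main obstacle.} The delicate point is \emph{not} the integration by parts but the uniform-in-$\varepsilon$ bookkeeping of the power of $\mu^\ast_{M,k,\varepsilon}(\phi_\varepsilon)$: one must verify that differentiating the factor $1/|\phi_\varepsilon'|^2$ repeatedly does not force a larger negative power of $\lambda_\varepsilon$ than $\lambda_\varepsilon^{-k}$ claimed — in the classical case $\lambda$ is a fixed constant so this is invisible, but here $\lambda_\varepsilon\to 0$. The resolution is that each derivative of $1/|\phi_\varepsilon'|^2$ is of the form (derivative of $\phi_\varepsilon$)$\cdot$(derivative of $\phi_\varepsilon$)$/|\phi_\varepsilon'|^4$ and one \emph{trades} the extra power $|\phi_\varepsilon'|^{-2}$ against numerator factors that are themselves bounded by $\mu^\ast$, not by powers of $\lambda_\varepsilon$; so the net effect on the $\lambda_\varepsilon$-power of one application of $L_\varepsilon^t$ is exactly $\lambda_\varepsilon^{-1}$ times a bounded power of $\mu^\ast$, and iterating $k$ times gives $\lambda_\varepsilon^{-k}$ as required. (Optionally, Lemma \ref{boundderivlemma} could be invoked when $\phi_\varepsilon$ has additional sign structure, but for the stated theorem it is not needed.) Once this accounting is set up, the remaining estimates are routine applications of the Leibniz rule and the sup-norm, and the theorem follows.
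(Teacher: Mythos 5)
Your overall strategy is the same as the paper's: the paper also integrates by parts $k$ times, writing one application of your $L_\varepsilon^t$ explicitly via the auxiliary amplitudes $u_{\nu,\varepsilon}=N_\varepsilon^{-1}\partial_\nu\phi_\varepsilon\,u_\varepsilon$ with $N_\varepsilon=|\phi_\varepsilon'|^2$, and then runs a double induction with explicit $\varepsilon$-bookkeeping. The genuine gap in your proposal is precisely the exponent count for $\lambda_\varepsilon$. The ``trading'' argument in your last paragraph fails: when a derivative falls on the coefficient $\partial_\nu\phi_\varepsilon/|\phi_\varepsilon'|^2$, one of the resulting terms is $\partial_\nu^2\phi_\varepsilon/|\phi_\varepsilon'|^2$, which contains no first-order factor $\partial_i\phi_\varepsilon$ in the numerator that could be traded against the denominator; on $K$ it is only bounded by $\mu^{\ast}_{M,2,\varepsilon}(\phi_\varepsilon)\,\lambda_\varepsilon^{-2}$. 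Hence, if among the $k$ applications of $L_\varepsilon^t$ the derivative hits the amplitude $r$ times and the coefficient $k-r$ times, the resulting term has size of order $(\mu^{\ast}_{M,k+1,\varepsilon}(\phi_\varepsilon))^{k-r}\,|\phi_\varepsilon'|^{\,r-2k}\,|u_\varepsilon|_r$ --- the classical H\"ormander weights $|\phi'|^{|\alpha|-2k}$ --- and the crude sup-norm bound obtainable this way is $\lambda_\varepsilon^{-2k}$, not $\lambda_\varepsilon^{-k}$. This is in fact what your own schematic count two paragraphs earlier asserts ($m$ between $k$ and $2k$, bounded by $\lambda_\varepsilon^{-2k}$); the concluding paragraph contradicts it without supplying an argument, so the decisive quantitative step of the proof is missing.

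For comparison, the paper does not evade this by a different operator but by carrying the half-powers of $N_\varepsilon$ through the induction, namely the estimate (\ref{statexpr3}), $N_\varepsilon^{1/2}\sum_\nu|u_{\nu,\varepsilon}|_m\le M_{m,\varepsilon}\sum_{r\le m}|u_\varepsilon|_r N_\varepsilon^{(r-m)/2}$, whose key input is $|N_\varepsilon|_1\le C\sqrt{N_\varepsilon}\sqrt{\mu^{\ast}_{M,2,\varepsilon}(N_\varepsilon)}$ from Lemma \ref{boundderivlemma} --- the lemma you propose to drop; one can also obtain that bound by Cauchy--Schwarz from $\nabla N_\varepsilon=2\sum_i\partial_i\phi_\varepsilon\,\nabla\partial_i\phi_\varepsilon$, but some estimate of this kind is indispensable, since it is what limits the loss per step to one half-power of $N_\varepsilon$ in the weighted sense. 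The paper's induction culminates in the weighted bound $\omega^k|v_\varepsilon(\omega)|\le L_{k,\varepsilon}\sum_{r\le k}\sup_K\bigl(|u_\varepsilon|_r\,N_\varepsilon^{r/2-k}\bigr)$, and only in its very last line replaces $N_\varepsilon^{r/2-k}$ by $\lambda_\varepsilon^{-k}$ --- a reduction that needs information of the type $N_\varepsilon\ge 1$ and is not furnished by your trading argument either, since for $r<k$ the weight really is $|\phi_\varepsilon'|^{r-2k}$. So, to arrive at (\ref{statexpr2}) as stated, you would have to reproduce and justify that final passage; as written, your plan yields at best the estimate with $\lambda_\varepsilon^{-2k}$ (or the weighted form), and the paragraph meant to close exactly this gap is where the argument breaks down.
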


\begin{proof}
It is obvious that (\ref{statexpr1}) is a well-defined Colombeau function.
The proof follows closely the proof of classical stationary phase theorem in \cite[Theorem 7.7.1]{Hoermander:V1}.
By $N_{\varepsilon}(x):=|\phi_{\varepsilon}'(x)|^2$ we denote the square of the norm of the gradient of the phase function. Let
\begin{eqnarray*}
u_{\nu, \varepsilon} := N^{-1}_{\varepsilon} \frac{\partial \phi_{\varepsilon}}{\partial x_{\nu}} u_{\varepsilon}
\end{eqnarray*}
and since
\begin{eqnarray*}
i \omega \frac{\partial \phi_{\varepsilon}}{\partial x_{\nu}} \exp{(i \omega \phi_{\varepsilon})} = \partial_{\nu} 
\exp{(i \omega \phi_{\varepsilon})}
\end{eqnarray*}
we obtain after an integration by parts 
\begin{eqnarray*}
\int u_{\varepsilon} \exp{(i \omega \phi_{\varepsilon})} dx =
\frac{i}{\omega} \sum_{\nu} \int (\partial_{\nu} u_{\nu, \varepsilon}) \exp{(i \omega \phi_{\varepsilon})} dx
\end{eqnarray*}
(using the notation introduced in Definition \ref{growthnot}).
We prove by induction:
For $k=0$ we have the obvious bound
\begin{eqnarray*}
 |\int u_{\varepsilon} \exp{(i \omega \phi_{\varepsilon})} dx | \le C  \sup_K |u_{\varepsilon}(x)|.
\end{eqnarray*}

Assume that the bound (\ref{statexpr2}) holds for power $k-1$, then we have that
\begin{equation*}\begin{split}
\omega^k | \int u_{\varepsilon} \exp{(i \omega \phi_{\varepsilon})} dx | = &
\omega^{k-1} |\sum_{\nu} \int (\partial_{\nu} u_{\nu, \varepsilon}) \exp{(i \omega \phi_{\varepsilon})} dx|\\
\le& 
L_{k-1,\varepsilon}\sum_{m=0}^{k-1}   
\sup_K  \left(\sum_{\nu=1}^n |u_{\nu, \varepsilon}|_{\mu+1}  N^{m/2-k+1}_{\varepsilon} \right) \label{statexpr8}
\end{split}
\end{equation*}
holds. In the next step we are going to show that 
\begin{eqnarray} \label{statexpr3}
N^{\frac{1}{2}} \sum_{\nu} |u_{\nu, \varepsilon}|_{m}  \le M_{m,\varepsilon} 
\sum_{r=0}^m |u_{\varepsilon}|_{r} N_{\varepsilon}^{\frac{r-m}{2}} 
\end{eqnarray} holds. Again we are prooving by induction: For $m=0$ we have
\begin{eqnarray} \label{statexpr66}
N^{\frac{1}{2}} \sum_{\nu} |u_{\nu, \varepsilon}| = |u_{\varepsilon}| \sum_{\nu}  
N^{-\frac{1}{2}} |\frac{\partial \phi_{\varepsilon} }{\partial x_{\nu} } | \le n |u_{\varepsilon}|
\end{eqnarray}
and let us now assume that (\ref{statexpr3}) holds up to $m-1$. Let $\alpha$ be any multi-index with $|\alpha|=m$.
We apply $\partial^{\alpha}$ on
\begin{eqnarray*}
N_{\varepsilon} u_{\nu, \varepsilon}= u_{\varepsilon} \frac{\partial \phi_{\varepsilon} }{\partial x_{\nu} }
\end{eqnarray*}
and obtain
$$
\partial^{\alpha} (N_{\varepsilon} u_{\nu, \varepsilon})=
\sum_{\beta \le \alpha} {\alpha \choose \beta} (\partial^{\beta}u_{\varepsilon}) 
(\partial^{\alpha-\beta+ e_{\nu}} \phi_{\varepsilon}).
$$
It follows that
\begin{multline}  \label{statexpr5}
\left|N_{\varepsilon} \partial^{\alpha} u^{\varepsilon}_{\nu}\right| = 
\left|- \sum_{\beta\ne 0, \beta \le \alpha} {\alpha \choose \beta} \partial^{\beta} N_{\varepsilon}
\partial^{\alpha-\beta} u_{\nu, \varepsilon} + 
\sum_{ \beta \le \alpha} {\alpha \choose \beta} \partial^{\beta} u_{\varepsilon}
\partial^{\alpha-\beta+e_{\nu}} \phi_{\nu}\right|\\
\le C \left( \sum_{l=1}^m |N_{\varepsilon}|_{l} |u_{\nu, \varepsilon}|_{m-l} + 
\sum_{l=0}^{m} |\phi_{\varepsilon}|_{m-l+1} |u_{\varepsilon}|_{l} \right) 
\le C \left( |N_{\varepsilon}|_{1} |u_{\nu, \varepsilon}|_{m-1} + 
|\phi_{\varepsilon}|_{1} |u_{\varepsilon}|_{m} \right.\\ 
+ \max_{l=0}^{m-2} {(\sup{|N_{\varepsilon}|_{m-l}}, 
\sup{|\phi_{\varepsilon}|_{m-l+1})}} \sum_{l=0}^{m-2}  (|u_{\nu, \varepsilon}|_{l} 
+  |u_{\varepsilon}|_{l}) 
+\left. |u_{\varepsilon}|_{m-1} |\phi_{\varepsilon}|_{2}\right). 
\end{multline} 
Now we can apply Lemma \ref{boundderivlemma} to bound $|N_{\varepsilon}|_1$. This yields 
\begin{eqnarray*} 
|N_{\varepsilon}|_1 &=& \sum_{|\alpha|=1} |\partial^{\alpha} N_{\varepsilon}| 
\le  C_1 \sqrt{N_{\varepsilon}} \sqrt{\mu^{\ast}_{L,2,\varepsilon}(N_{\varepsilon})} \\  
|\phi_{\varepsilon}|_1 &\le& C_2 \sqrt{N_{\varepsilon}}
\end{eqnarray*}
and we can verify that
\begin{multline*}
|\partial^{\alpha} N_{\varepsilon}| = |\sum_{i=1}^n \partial^{\alpha} (\partial^{e_i} \phi_{\varepsilon}(x))^2|
\le K_1  \sum_{i=1}^n \sum_{\beta \le \alpha} |\partial^{\beta + e_i}  \phi_{\varepsilon}(x)| 
|\partial^{\alpha-\beta + e_i}  \phi_{\varepsilon}(x)| \\
\le K_2 \sum_{k\le |\alpha|} |\phi_{\varepsilon}(x)|_{|\alpha|-k+1} |\phi_{\varepsilon}(x)|_{k+1}
\end{multline*}
holds. This yields
\begin{equation*}\begin{split}
& \sup_K |N_{\varepsilon}|_l \le K_3 \sum_{k\le l} \sup_K |\phi_{\varepsilon}(x)|_{l-k+1} \sup_K |\phi_{\varepsilon}(x)|_{k+1}\\ 
&\le K_4 \mu^{\ast}_{K,l+1,\varepsilon}(\phi_{\varepsilon})^2.
\end{split}
\end{equation*}
We introduce
\begin{eqnarray*}
\sigma_{\varepsilon,m}&:=&\max_{l=2}^{m} {(\sup{|N_{\varepsilon}|_{l}}, 
\sup{|\phi_{\varepsilon}|_{l+1})}} \\
&\le&\max{(K_4 \mu^{\ast}_{K, m+1,\varepsilon}(\phi_{\varepsilon})^2, \mu^{\ast}_{K, m+1,\varepsilon}(\phi_{\varepsilon}) )}
\end{eqnarray*}
in order to simplify the notation.
Using these bounds we can estimate (\ref{statexpr5}) by
\begin{multline*} 
 C \left( |N_{\varepsilon}|^{1/2} C_1 \sqrt{ \mu^{\ast}_{L,2,\varepsilon}(N_{\varepsilon})} |u_{\nu, \varepsilon}|_{m-1} + 
C_2 |N_{\varepsilon}|^{1/2}  
|u_{\varepsilon}|_{m} \right. \\
 \left.+ C_3 \sigma_{\varepsilon}^{(m-2)}  \left( \sum_{l=1}^{m-2}  (|u_{m, \varepsilon}|_{l}  
+  |u_{\varepsilon}|_{l}) \right)+   |\phi_{\varepsilon}|_2 |u_{\varepsilon}|_{m-1} \right) \\
\le 
C_5 \sigma_{\varepsilon}^{(m-2)}  \left( 
|N_{\varepsilon}|^{1/2} |u_{\nu, \varepsilon}|_{m-1} 
+ |N_{\varepsilon}|^{1/2}  |u_{\varepsilon}|_{m} 
+ \sum_{l=1}^{m-2}  |u_{\nu, \varepsilon}|_{l}     +  \sum_{l=1}^{m-1} |u_{\varepsilon}|_{m-1} \right) 
\end{multline*}
and the induction hypothesis (\ref{statexpr3}) for $m-1$ gives
\begin{multline*}
\le 
C_5  \sigma_{\varepsilon}^{(m)}  \cdot 
\left( 
  M_{m-1,\varepsilon} \sum_{r=0}^{m-1} |u_{\varepsilon}|_{r} N_{\varepsilon}^{\frac{r-m+1}{2}} 
+ |N_{\varepsilon}|^{1/2}  |u_{\varepsilon}|_{m} \right.
 \left.
+ \sum_{l=1}^{m-2}  M_{l,\varepsilon} \sum_{r=0}^l |u_{\varepsilon}|_{r} N_{\varepsilon}^{\frac{r-l-1}{2}} 
  +  \sum_{l=1}^{m-1} |u_{\varepsilon}|_{l} \right) \\
\le C_5  \sigma_{\varepsilon}^{(m)} \cdot 
\left( 
M_{m-1,\varepsilon} \sum_{r=0}^{m} |u_{\varepsilon}|_{r} N_{\varepsilon}^{\frac{r-m+1}{2}}  
+ \sum_{l=0}^{m-2}  \max{\{1,M_{l,\varepsilon}\}} \sum_{r=0}^{l+1} |u_{\varepsilon}|_{r} N_{\varepsilon}^{\frac{r-l-1}{2}}    
\right) \\
\le
C_5 \sigma_{\varepsilon}^{(m)}
\left(\sum_{l=0}^{m-1}  \max{\{1,M_{l,\varepsilon}\}} \right) \sum_{r=0}^{l+1} |u_{\varepsilon}|_{r} N_{\varepsilon}^{\frac{r-l-1}{2}} 
\le
C_6  \sigma_{\varepsilon}^{(m)} 
\max_{l=0}^{m-1}  \max\{1,M_{l,\varepsilon}\}  \sum_{r=0}^{m} |u_{\varepsilon}|_{r} N_{\varepsilon}^{\frac{r-m-1}{2}}. 
\end{multline*}
It follows immediately that 
\begin{eqnarray}
\sum_{\nu} |u_{\nu, \varepsilon}|_{m}  \le M_{m,\varepsilon} \sum_{r=0}^m |u|_{r} N^{\frac{r-m}{2}} 
\end{eqnarray} 
holds, where $M_{m,\varepsilon}$ is defined by
$$
M_{m,\varepsilon} := \sigma_{\varepsilon}^{(m)}  \max_{l=0}^{m-1}  \max{\{1,M_{l,\varepsilon}\}}=C \sigma_{\varepsilon}^{(m)} \Pi_{i=1}^{m-1} \max{\{\sigma_{\varepsilon}^{(i)},1\}}
$$
is a generalized number. We are finally able to estimate (\ref{statexpr8}) from above by 
\begin{eqnarray*}
&& L_{k-1,\varepsilon} \sum_{m=0}^{k-1}   
\sup_K  \left(  M_{m+1,\varepsilon} \sum_{r=0}^{m+1} |u_{\varepsilon}|_{r} N_{\varepsilon}^{\frac{r-m-1}{2}}  
N^{\frac{m+1}{2}-k}_{\varepsilon} \right)\\
&\le &
L_{k-1,\varepsilon}M_{k,\varepsilon}  (k-1) \sum_{r=0}^{k}  
\sup_K  \left(   |u_{\varepsilon}|_{r} N_{\varepsilon}^{\frac{r}{2}-k} \right).
\end{eqnarray*}
The generalized constant $L_{k,\varepsilon}$ is recursivly defined by
\begin{eqnarray*}
L_{k,\varepsilon}&:=& L_{k-1,\varepsilon}M_{k,\varepsilon}  (k-1) = (k-1)! \prod_{l=1}^{k} M_{l,\varepsilon}  \\
&=& C (k-1)! \prod_{l=1}^{k}  \sigma_{\varepsilon}^{(l)}  \prod_{i=1}^{l-1} \max{\{\sigma_{\varepsilon}^{(i)},1\}} \\
&\le&
C_k  \left(\prod_{l=1}^{k}  \max{\{1,\mu^{\ast}_{L, l,\varepsilon}(\phi_{\varepsilon})^2\}}    \right) \prod_{l=1}^{k} \prod_{i=1}^{l-1}  \max{\{1, \mu^{\ast}_{L, i,\varepsilon}(\phi_{\varepsilon})^2   \}} \\
&\le& C_k \max{ \{1, \mu^{\ast}_{L, k,\varepsilon}(\phi_{\varepsilon})^{2k^2}   \}} 
\end{eqnarray*}
Note that for $0\le r\le k$ we have that
\begin{eqnarray*}
N_{\varepsilon}^{\frac{r}{2} -k } \le \min{ \{1,\inf_{K} |\phi_{\varepsilon}'(x)|^{-k}\} } \le \lambda_{\varepsilon}^{-k}
\end{eqnarray*}
holds.
\end{proof}

\end{appendix}

\backmatter

\printindex

\bibliographystyle{abbrv}

\bibliography{simhal}

\newpage

\ihead{} 
\chead{}
\ohead{}

\begin{center}
\huge{\textbf{Abstract} }\\*[1cm]
\end{center}

\normalsize

As the title ``Generalized regularity and solution concepts for differential equations'' suggests, the main topic of my thesis
is the investigation of generalized solution concepts for differential equations, in particular 
first order hyperbolic partial differential equations with real-valued, non-smooth coefficients and their characteristic system of ordinary
differential equations.

In Colombeau theory there have been developed existence results that yield solutions for ordinary and partial differential equations beyond the scope of classical approaches. Nevertheless this comes at the price of sacrificing regularity (in general a Colombeau solution may even lack a distributional shadow). It is prevailing in the Colombeau setting that the question of mere existence of solutions is much easier to answer than to determine their regularity properties (i.e. if a distributional shadow exists and how regular it is).
In order order to address these regularity question and encouraged by the fact that the solution of a (homogeneous) first order partial differential equation can be written as a pullback of the initial condition by the characteristic backward flow, a main topic of my thesis deals with the microlocal analysis of pullbacks of c-bounded Colombeau generalized functions. 

Another topic is the comparsion of Colombeau techniques for solving ordinary and partial differential equations to other generalized solution concepts, which has led to a joint article with G\"unther H\"ormann. A useful tool for this  purpose is the concept of a generalized graph, which has been developed in the thesis. 
\newline

Wie der Titel ``Verallgemeinerte Regularit\"at und L\"osungskonzepte f\"ur Differentialgleichungen'' andeutet, ist das Hauptthema die Untersuchung von verallgemeinerten L\"osungskonzepten f\"ur Differentialgleichungen, insbesondere f\"ur hyperbolische, partielle Differentialgleichungen erster Ordnung mit reellen, nicht-glatten Koeffizienten und deren charakteristisches System von gew\"ohnlichen Differentialgleichungen.

 In der Colombeau Theorie gibt es Resultate die L\"osungen f\"ur gew\"ohnliche und partielle
Differentialgleichungen liefern, die ausserhalb der Reichweite g\"angiger klassischer L\"osungskonzepte liegen. Nichts destotrotz werden diese Resultate auf Kosten der Regularit\"at solcher L\"osungen erzielt (im Allgemeinen folgt nicht, dass Colombeau L\"osungen einen distributionellen Schatten besitzen m\"ussen). Es ist meist einfacher im Rahmen der Colombeau Theorie die Existenz einer L\"osung zu zeigen, als der deren Regularit\"atseigenschaften zu bestimmen (d.h. ob ein distributioneller Schatten existiert und wie regul\"ar dieser ist). Um solche Regularit\"atsfragen zu behandeln und durch die Tatsache best\"arkt, dass die L\"osung einer (homogenen) partiellen Differentialgleichung erster Ordnung als Pullback der Anfangsbedingung durch den charakteristischen R\"uckw\"artsfluss bestimmt werden kann, ist ein Teil meiner Dissertation mit der mikrolokalen Analyse von Pullbacks kompakt beschr\"ankter Colombeau verallgemeinerter Funktionen befasst.

 Ein weiteres Thema ist der Vergleich der Colombeau Techniken zum L\"osen gew\"ohnlicher und partieller Differentialgleichungen mit anderen verallgemeinerten L\"osungskonzepten. Ein n\"utzliches Werkzeug f\"ur diese Zwecke ist das Konzept eines verallgemeinerten Graphen, das im Rahmen dieser Arbeit ebenfalls entwickelt worden ist.

\newpage

\ihead{} 
\chead{}
\ohead{}

\huge \center{\textbf{Curriculum Vitae}}\\*[1cm]
\normalsize
\textbf{Simon L. Haller}, born 1978, February 22th in Salzburg.
\\*[1.0cm]

\begin{tabular}
[t]{l  l}
&Volkschule/Hauptschule Friedburg,\\
6/1996&BORG Neumarkt a. Wallersee.\\ \\
10/1996&Military service: EF-course.\\ \\
10/1997&Medicine at the University of Innsbruck.\\ \\
10/1999&Finished first part of Medicine.\\ \\
10/1999&Physics at the University of Innsbruck.\\ \\
3/2000&Mathematics  at the University of Innsbruck.\\ \\
10/2000&Proceeding my studies at the University of Vienna.\\ \\
04/2001 - 12/2003& Working part time as Java software developer.\\ \\
06/2001&Finished first part of Physics with distinction. \\ \\
09/2001&Finished first part of Mathematics with distinction.\\ \\
03/2005&Master degree in Physics with distinction.\\ \\
04/2005 - 06/2008&Research assistant at the University of Vienna (DIANA research group).\\ \\
\end{tabular}

\end{document}